\def\Diff{\mathop{\rm Diff}\nolimits}
\def\End{\mathop{\rm End}\nolimits}
\def\Id{\mathop{\rm Id}\nolimits}
\def\Im{\mathop{\rm Im}\nolimits}
\def\Ad{\mathop{\rm Ad}\nolimits}
\def\ad{\mathop{\rm ad}\nolimits}
\def\det{\mathop{\rm det}\nolimits}
\def\Tr{\mathop{\rm Tr}\nolimits}
\def\Hom{\mathop{\rm Hom}\nolimits}
\def\Tot{\mathop{\rm Tot}\nolimits}
\def\Cb{{\mathbb C}}
\def\Nb{{\mathbb N}}
\def\Rb{{\mathbb R}}
\def\Zb{{\mathbb Z}}
\def\Ac{{\cal A}}
\def\Fc{{\cal F}}
\def\Hc{{\cal H}}
\def\Lc{{\cal L}}
\def\Mc{{\cal M}}
\def\Pc{{\cal P}}
\def\Uc{{\cal U}}
\def\Vc{{\cal V}}
\def\Lc{{\cal L}}
\def\Dc{{\cal D}}
\def\a{\alpha}
\def\b{\beta}
\def\d{\delta}
\def\D{\Delta}
\def\g{\gamma}
\def\G{\Gamma}
\def\om{\omega}
\def\Om{\Omega}
\def\s{\sigma}
\def\t{\theta}
\def\z{\zeta}
\def\ve{\varepsilon}
\def\vp{\varphi}
\def\nr{\natural}
\def\x{\xi}
\def\0b{\bf 0}
\def\fl{\forall}
\def\nb{\nabla}
\def\ot{\otimes}
\def\ra{\rightarrow}
\def\rt{\triangleright}
\def\lt{\triangleleft}
\def\cl{\blacktriangleright\hspace{-4pt} < }
\def\al{>\hspace{-4pt}\vartriangleleft}
\def\acl{\blacktriangleright\hspace{-4pt}\vartriangleleft }
\def\dcp{\vartriangleright\hspace{-4pt}\vartriangleleft }
\def\bD{\blacktriangledown}
\def\bi{\bowtie}
\def\hd{\overset{\ra}{\partial}}
\def \vd{\uparrow\hspace{-4pt}\partial}
\def\hs{\overset{\ra}{\sigma}}
\def \vs{\uparrow\hspace{-4pt}\sigma}
\def\hta{\overset{\ra}{\tau}}
\def \vta{\uparrow\hspace{-4pt}\tau}
\def\hb{\overset{\ra}{b}}
\def \vb{\uparrow\hspace{-4pt}b}
\def\hB{\overset{\ra}{B}}
\def \vB{\uparrow\hspace{-4pt}B}
\def\p{\partial}
\def\bi{\bowtie}
\def\hd{\overset{\ra}{\partial}}
\def \vd{\uparrow\hspace{-4pt}\partial}
\def\hs{\overset{\ra}{\sigma}}
\def \vs{\uparrow\hspace{-4pt}\sigma}
\def\hta{\overset{\ra}{\tau}}
\def \vta{\uparrow\hspace{-4pt}\tau}
\def\hb{\overset{\ra}{b}}
\def\hP{\overset{\ra}{\p}}
\def \vb{\uparrow\hspace{-4pt}b}
\def \vP{{\uparrow\hspace{-1pt}\p}}
\def\hB{\overset{\ra}{B}}
\def \vB{\uparrow\hspace{-4pt}B}
\def\p{\partial}
\def\0D{\Delta^{(0)}}
\def\1D{\Delta^{(1)}}
\def\Db{\blacktriangledown}
\def\wg{\wedge}
\def\td{\tilde}
\def\cop{{^{\rm cop}}}
\newcommand{\wbar}[1]{\overline{#1}}
\newcommand{\FD}{\mathfrak{D}}
\newcommand{\Fa}{\mathfrak{a}}
\newcommand{\Fg}{\mathfrak{g}}
\newcommand{\Fh}{\mathfrak{h}}
\newcommand{\Fm}{\mathfrak{m}}
\newcommand{\Fl}{\mathfrak{l}}
\newcommand{\Fn}{\mathfrak{n}}
\newcommand{\Fd}{\mathfrak{d}}
\newcommand{\Fs}{\mathfrak{s}}
\newcommand{\FZ}{\mathfrak{Z}}
\newtheorem{theorem}{Theorem}[section]
\newtheorem{remark}[theorem]{Remark}
\newtheorem{proposition}[theorem]{Proposition}
\newtheorem{lemma}[theorem]{Lemma}
\newtheorem{corollary}[theorem]{Corollary}
\newtheorem{example}[theorem]{Example}
\newtheorem{definition}[theorem]{Definition}
\def\build#1_#2^#3{\mathrel{
\mathop{\kern 0pt#1}\limits_{#2}^{#3}}}
\newcommand{\ps}[1]{~\hspace{-4pt}_{^{(#1)}}}
\newcommand{\pr}[1]{~\hspace{-4pt}_{_{\{#1\}}}}
\newcommand{\ns}[1]{~\hspace{-4pt}_{_{{<#1>}}}}
\newcommand{\sns}[1]{~\hspace{-4pt}_{_{{<\overline{#1}>}}}}
\newcommand{\nsb}[1]{~\hspace{-4pt}_{^{[#1]}}}
\newcommand{\snsb}[1]{~\hspace{-4pt}_{_{{[\overline{#1}]}}}}
\def\odots{\ot\cdots\ot}
\def\wdots{\wedge\dots\wedge}
\newcommand{\nm}[1]{{\mid}#1{\mid}}
\def\one{{\bf 1}}
\def\bfR{{\bf R}}
\numberwithin{equation}{section}
 \newcommand{\ie}{{\it i.e.\/}\ }
\def\a{\alpha}
\def\b{\beta}
\def\d{\delta}
\def\g{\gamma}
\def\om{\omega}
\def\s{\sigma}
\def\t{\theta}
\def\ve{\varepsilon}
\def\vp{\varphi}
\def\z{\zeta}
\def\D{\Delta}
\def\G{\Gamma}
\def\Om{\Omega}
\def\dtone{\left.\frac{d}{dt_1}\right|_{_{t_1=0}}}
\def\dtn{\left.\frac{d}{dt_n}\right|_{_{t_n=0}}}
\def\dt{\left.\frac{d}{dt}\right|_{_{t=0}}}
\def\ds{\left.\frac{d}{ds}\right|_{_{s=0}}}
\newcommand{\mdt}[1]{\left.\frac{d}{dt_{#1}}\right|_{_{t_{#1}=0}}}
\def\fl{\forall}
\def\nb{\nabla}
\def\ot{\otimes}
\def\part{\partial}
\def\wdg{\wedge}
\def\ra{\rightarrow}
\def\text{\hbox}
\def\fl{\forall}
\def\nb{\nabla}
\def\ot{\otimes}
\def\ra{\rightarrow}
\def\wdg{\wedge}
\def\Ad{\mathop{\rm Ad}\nolimits}
\def\Diff{\mathop{\rm Diff}\nolimits}
\def\End{\mathop{\rm End}\nolimits}
\def\Hom{\mathop{\rm Hom}\nolimits}
\def\Id{\mathop{\rm Id}\nolimits}
\def\exp{\mathop{\rm exp}\nolimits}
\def\build#1_#2^#3{\mathrel{
\mathop{\kern 0pt#1}\limits_{#2}^{#3}}}
\numberwithin{equation}{section}
\begin{document}

\title{Hopf-cyclic cohomology of bicrossed product Hopf algebras}
\author{Serkan S\"utl\"u}
\predegree{Master of Science, Bo\~gazi\c{c}i University, 2007}
\degree{Doctor of Philosophy}
\gau{Mathematics and Statistics}
\supervisor{Bahram Rangipour, Ph.D.,\, Mathematics \& Statistics}
\examboard{David Bremner,\, Ph.D.,\, Computer Science, Chair \\ & Colin Ingalls, Ph.D.,\, Mathematics \& Statistics,\\  & Barry Monson,\, Ph.D.,\, Mathematics \& Statistics}
\externalexam{Tomasz Brzezinski, Ph.D.,\, Mathematics Department, \\ & University of Wales, Swansea, UK}

\date{December, 2012}
\copyrightyear{2013}
\setlength\parindent{0pt}

\unbtitlepage

\tableofcontents

\chapter*{Dedication}
\addcontentsline{toc}{chapter}{Dedication}

This dissertation is dedicated to my mathematics teachers
\begin{itemize}
\item [] Mehmet BUL who taught me mathematics for the first time,
\item [] Prof. Hamdi ARIKAN who inspired me to pursue a career in mathematics, and
\item [] Prof. Cemal KO\c{C}... I wish I could have spent more time with him.
\end{itemize}

\chapter*{Abstract}
\addcontentsline{toc}{chapter}{Abstract}

In this dissertation we study the coefficients spaces (SAYD modules) of Hopf-cyclic cohomology theory over a certain family of bicrossed product Hopf algebras, and we compute the Hopf-cyclic cohomology of such Hopf algebras with coefficients.

\medskip

We associate a Hopf algebra, what we call a Lie-Hopf algebra, to any matched pair of Lie groups, Lie algebras and affine algebraic groups via the semi-dualization procedure of Majid. We then identify the SAYD modules over Lie-Hopf algebras with the representations and corepresentations of the total Lie group, Lie algebra or the affine algebraic group of the matched pair.

\medskip

First we classify the SAYD modules that correspond only to the representations of a total Lie group (algebra). We call them induced SAYD modules. We then generalize this identification, focusing on the matched pair of Lie algebras. We establish a one-to-one correspondence between the SAYD modules over the Lie-Hopf algebra associated to a matched pair of Lie algebras and certain SAYD modules over the total Lie algebra.

\medskip

Once the SAYD modules are associated to the representations and the corepresentations of Lie algebras, nontrivial examples can be constructed. This way, we illustrate a highly nontrivial 4-dimensional SAYD module over the Schwarzian Hopf algebra $\Hc_{\rm 1S}$.

\medskip

In addition, we discuss the periodic cyclic cohomology of Lie-Hopf algebras with nontrivial SAYD coefficients. We obtain a general van Est isomorphism identifying the periodic cyclic cohomology of a Lie-Hopf algebra with the (relative) Lie algebra cohomology of the corresponding total Lie algebra.

\chapter*{Acknowledgements}
\addcontentsline{toc}{chapter}{Acknowledgments}

I would like to acknowledge my indebtedness and render my warmest thanks to my supervisor, Bahram Rangipour. I have been extremely lucky to have a supervisor who cared so much about me. None of this work would have been possible without his guidance and encouragement. I can only say a proper thanks to him through my future work.

\medskip

It is not an easy task to review a thesis. I am grateful for the thoughtful and detailed comments of the members of the examining board: Professor Barry Monson, Professor Colin Ingalls, Professor David Bremner and Professor Tomasz Brzezinski.

\medskip

Completing this work would have been all the more difficult were it not for the support and friendship
provided by the secretaries of the Department of Mathematics \& Statistics. I am grateful to them for assisting me in many different ways.

\medskip

I thank Dr. Atabey Kaygun, Dr. \.{I}smail G\"ulo\~glu, Dr. M\"uge Kanuni Er and Dr. Song\"ul Esin for their endless support throughout my Ph.D. study. I also thank Ali Gharouni, Aydin Sarraf, Mohammad Abu Zaytoon, Mohammad Hassanzadeh and Reza Ghamarshoushtari  for their generous friendship and support all along.

\medskip

I would like to thank T\"UB\.ITAK (The Scientific and Technological Research Council of Turkey) for supporting me in the graduate school.

\medskip

Finally I would like to thank Nimet - Emre S\"utl\"u, Emel - M\"usl\"um - Gizem \c Cetinta\c s, and Eda-Noyan-Ya\~g\i z Bing\"ol, whose encouragement and support has never faltered. Thank you !

\medskip

And Seda, my wonderful wife... Her patience, support, and love have been the reasons why I did not turn around and run away. She was always there for me whenever I needed help. I cannot imagine someone more special in my life.

\chapter*{Table of Notations}
\addcontentsline{toc}{chapter}{Table of Notations}

\begin{table}[!h]
\begin{tabular}{l c l}
$A$ && \text{an algebra}\\
$a,a',b,b'$ && \text{elements of $A$}\\
$\mu$ && \text{multiplication}\\
$\eta$ && \text{unit}\\
$C$ && \text{a coalgebra}\\
$c$ && \text{a typical element in $C$}\\
$\D$ && \text{comultiplication}\\
$\ve$ && \text{counit}\\
$H$ && \text{a Hopf algebra}\\
$h,g$ && \text{elements of $H$}\\
$\Fc$&&\text{a commutative Hopf algebra}\\
$f,f',g,g'$&&\text{elements of $\Fc$}\\
$\Uc,\Vc$&&\text{a cocommutative Hopf algebra}\\
$X,u$&&\text{elements in $\Uc$}\\
$\xi,u'$&&\text{elements in $\Vc$}\\
$V$&&\text{a vector space, a module, a comodule,}\\
&&\text{or a SAYD module over a Hopf algebra}\\
$\Db,\nb$&&\text{coactions over a comodule}\\
$\Db_V$&&\text{coaction over $V$}\\
$\Db_{\Fg}$&&\text{a $\Fg$-coaction}\\
$\rt$&&\text{a left action}\\
$\lt$&&\text{a right action}\\
$\ast, \bullet, \cdot$&&\text{actions}\\
\end{tabular}
\end{table}
\begin{table}[!h]
\begin{tabular}{lcl}
$K,H,G,G_1,G_2$&&\text{(Lie) groups}\\
$\phi$&&\text{a typical element of the group $G$}\\
$\vp$&&\text{a typical element of the group $G_1$}\\
$\psi$&&\text{a typical element of the group $G_2$}\\
${\rm GL}(n)$&&\text{the general linear group}\\
${\rm GL}(n)^{\rm aff}$&&\text{the group of affine motions}\\
${\rm PGL}(n)$&&\text{the projective linear group}\\
$\Cb[G]$&&\text{group algebra}\\
$\Ad$&&\text{the adjoint representation of a Lie group}\\
$\Fg,\Fg_1,\Fg_2,\Fh,\Fs$&&\text{Lie algebras}\\
$X$&&\text{an element in $\Fg_1$}\\
$\xi$&&\text{an element in $\Fg_2$}\\
$s\ell(n)$&&\text{the Lie algebra of the special linear group}\\
$g\ell(n)$&&\text{the Lie algebra of the general linear group}\\
$g\ell(n)^{\rm aff}$&&\text{the Lie algebra of the group ${\rm GL}(n)^{\rm aff}$}\\
$pg\ell(n)$&&\text{the Lie algebra of the projective linear group}\\
$\ad$&&\text{the adjoint representation of a Lie algebra}\\
$\Lc_X$&&\text{action of a Lie algebra element $X$, Lie derivative with respect to $X$}\\
$\Rb P^n$ &&\text{real projective space}\\
$M$ &&\text{a (foliated) manifold}\\
$C^\infty_c(M)$ &&\text{the set of smooth functions on the manifold $M$ with compact support}\\
$\G$ && \text{holonomy pseudogroup of a foliation}\\
$\al$&&\text{left cross product for algebras}\\
$\Ac_\G$ &&\text{a crossed product algebra of the form $C^\infty_c(M)\rtimes \G$}\\
$\cl$&&\text{right cross coproduct for coalgebras}\\
$\acl$&&\text{bicrossed coproduct for Hopf algebras}\\
$\dcp$&&\text{double crossed coproduct for Hopf algebras or groups,}\\
&&\text{double crossed sum for Lie algebras}\\
$\Hc$&&\text{a bicrossed product Hopf algebra}\\
$\Fc\acl \Uc$&&\text{bicrossed product of the Hopf algebras $\Fc$ and $\Uc$}\\
$\Uc\dcp \Vc$&&\text{double crossed product of the Hopf algebras $\Uc$ and $\Vc$}\\
$G_1\dcp G_2$&&\text{double crossed product of the groups $G_1$ and $G_2$}\\
$\Fg_1\dcp \Fg_2$&&\text{double crossed sum of the Lie algebras $\Fg_1$ and $\Fg_2$}\\
$^C\Mc$&&\text{the category of left $C$-comodules}\\
\end{tabular}
\end{table}

\chapter{Introduction}

The basic idea of non-commutative differential geometry is to ``quantize" the calculus of differential forms via the operator theoretic generalization
\begin{equation}\label{aux-noncommutative-differential-geometry}
df = [F,f]
\end{equation}
of differential. In this setting $f$ is an element of an involutive algebra $\Ac$ represented in a Hilbert space $\Hc$ and $F$ is a self-adjoint operator on $\Hc$ that satisfies $F^2 = \Id$.

\medskip

The data required to develop a calculus based on \eqref{aux-noncommutative-differential-geometry} is given by the following property of the triple $(\Ac,\Hc,F)$.

\begin{definition}[\cite{Conn85}]
The triple $(\Ac,\Hc,F)$ is called an (even) $n$-summable Fredholm module if
\begin{enumerate}
\item $\Hc = \Hc^+ \oplus \Hc^-$ is a $\Zb/2$-graded Hilbert space with grading operator $\g$ such that $\g(h) =(-1)^{{\rm deg}\; h}h$ for any homogeneous $h \in\Hc$.
    \item There is a graded homomorphism $\pi:\Ac \to \mathscr{L}(\Hc)$, the algebra of bounded operators on $\Hc$, in other words $\Hc$ is a $\Zb/2$-graded $\Ac$-module.
        \item $F \in \mathscr{L}(\Hc)$, $F^2 =\Id$, $F\g = -\g F$ and for any homogeneous $a \in \Ac$, $Fa - (-1)^{{\rm deg}\; a}aF \in \mathscr{L}^n(\Hc)$, where $\mathscr{L}^n(\Hc)$ is the ideal of $n$th Schatten class operators in $\mathscr{L}(\Hc)$.
\end{enumerate}
\end{definition}

When the algebra $\Ac$ is the algebra $C^\infty(M)$ of smooth functions on a manifold $M$, the basic examples of triples $(\Ac,\Hc,F)$ arises from the elliptic operators on $M$. However, there are examples of $n$-summable Fredholm modules so that $\Ac$ is not commutative.

\medskip

To any $n$-summable Fredholm module there associates an $n$-dimensional cycle over the algebra $\Ac$, \ie a triple $(\Om,d,\int)$ consists of a graded algebra $\Om = \oplus_{j=0}^n\Om^j$, a graded derivation $d:\Om \to \Om$ of degree 1 that satisfies $d^2 = 0$, a closed graded trace $\int:\Om \to \Cb$ and finally an algebra map $\rho:\Ac \to  \Om^0$.

\medskip

Indeed, by definition for any $a \in \Ac$, $da = [F,a] \in \mathscr{L}^n(\Hc)$. For $j \in \Nb$, let $\Om^j$ be the linear span of the elements
\begin{equation}
(a^0 + \lambda\Id)da^1\ldots da^j
\end{equation}
in $\mathscr{L}^{n/j}(\Hc)$. Then the inclusion $\mathscr{L}^{n/j_1}(\Hc) \times \mathscr{L}^{n/j_2}(\Hc) \subset \mathscr{L}^{n/{j_1+j_2}}(\Hc)$ endows $\Om = \oplus_{j=0}^n\Om^j$ with a differential graded algebra structure, the differential being
\begin{equation}
d\om = [F,\om].
\end{equation}
Finally, the linear functional
\begin{equation}
\int:\Om^n \to \Cb, \qquad \int \om = {\rm Trace}(\g\om)
\end{equation}
is a closed linear trace.

\medskip

An $n$-dimensional cycle over $\Ac$ is essentially determined by the $(n+1)$-linear functional
\begin{equation}
\tau(a^0,\ldots,a^n) = \int\rho(a^0)d\rho(a^1)\ldots d\rho(a^n),
\end{equation}
called the character of the cycle. Then it is straightforward to derive a characterization of such functionals. A linear functional $\tau:\Ac^{\ot \;n+1} \to \Cb$ is the character of an $n$-dimensional cycle over an algebra $\Ac$ if and only if
\begin{enumerate}
\item $\tau(a^0,\ldots, a^n) = (-1)^n\tau(a^n,a^0,\ldots, a^{n-1})$,
\item $b\tau = 0$,
\end{enumerate}
where
\begin{align}
\begin{split}
&b\tau(a^0,\ldots, a^{n+1}) = \\
& \sum_{j=0}^n(-1)^j\tau(a^0, \ldots, a^ja^{j+1},\ldots,a^{n+1}) + (-1)^{n+1}\tau(a^{n+1}a^0,a^1,\ldots,a^n).
\end{split}
\end{align}
As a result, the pair $(C_\lambda(\Ac) = \oplus_{n \geq 0}C^n_\lambda(\Ac),b)$ form a chain complex, where $C^n_\lambda(\Ac)$ is the space of characters of $n$-summable Fredholm modules over $\Ac$. The cohomology groups of this complex, called the cyclic cohomology of $\Ac$, are denoted by $H_\lambda^\bullet(\Ac)$.

\medskip

From the inclusion $\mathscr{L}^n(\Hc) \subset \mathscr{L}^m(\Hc)$ when $n \leq m$, it follows that if $(\Ac,\Hc,F)$ is an $n$-summable Fredholm module, then it is also an $(n+2k)$-summable Fredholm module. Moreover, the $(n+2k)$-dimensional character $\tau_{n+2k} \in H^{n+2k}_\lambda(\Ac)$ of this $n$-summable Fredholm module can be obtained from the $n$-dimensional character $\tau_n \in H^n_\lambda(\Ac)$ as $\tau_{n+2k} = S^k\tau_n$. This defines the periodicity map
\begin{equation}
S:H^n_\lambda(\Ac) \to H^{n+2}_\lambda(\Ac)
\end{equation}
and hence the periodic cyclic cohomology
\begin{equation}
HP(\Ac) = \underset{\longrightarrow}{\rm lim}(H^\bullet_\lambda(\Ac),S) = \left(\underset{n\geq 0}{\oplus}H^n_\lambda(\Ac)\right) / \sim, \qquad \tau \sim S\tau.
\end{equation}

The cyclic cohomology theory is an analogue of the de Rham homology of currents, and it plays a similar role in noncommutative geometry to that of de Rham cohomology in differential topology. Indeed, if $\Ac = C^\infty(M)$ for a smooth compact manifold $M$, then focusing on the continuous cyclic cochains,
\begin{equation}
H_{\lambda, {\rm cont}}(C^\infty(M)) = H^{\rm dR}(M).
\end{equation}
Moreover, by the periodicity of the periodic cyclic cohomology groups, when $k \in \Nb$ is large enough the $2k$-dimensional character $\tau_{2k}$ of any finitely summable Fredholm module determines the same element in $HP(\Ac)$. Such an assignment defines a map from the set of finitely summable Fredholm modules over $\Ac$ to the periodic cyclic cohomology of $\Ac$.

\medskip

This assignment is independent of the homotopy class of finitely summable Fredholm modules, and hence yields the map
\begin{equation}
{\rm ch}_\ast: \left\{  \begin{array}{c}
             \text{homotopy classes of finitely summable} \\
             \text{Fredholm modules over}\;\Ac
           \end{array}
 \right\} \ra HP(\Ac),
\end{equation}
which is an analogue of the Chern character
\begin{equation}
{\rm ch}^\ast: K_\ast(M) \ra H^{\rm dR}(M)
\end{equation}
in K-homology \cite{BaumDoug82}, in the sense that for $\Ac =C^\infty(M)$ the diagram
$$
\xymatrix {
 \ar[d] {\left\{  \begin{array}{c}
             \text{homotopy classes of finitely summable} \\
             \text{Fredholm modules over}\;C^\infty(M)
           \end{array}
 \right\} } \ar[r] &  HP_{\rm cont}(C^\infty(M))   \ar[d] \\
 K_\ast(M)   \ar[r]_{{\rm ch}^\ast} &  H^{\rm dR}(M)
}
$$
is commutative.

\medskip

The most important application of the theory is that the index map can be captured via a natural pairing between the periodic cyclic cohomology and the K-theory. More precisely, if $(M,\mathscr{F})$ is a foliated manifold and $D$ is a transversally elliptic operator on $M$, that is an elliptic operator on the leaf space $M/\mathscr{F}$, then
\begin{equation}
{\rm Index}_D(E) = \langle {\rm ch}_\ast(D), {\rm ch}^\ast(E) \rangle, \quad \forall E \in K_\ast(\Ac_\Gamma),
\end{equation}
where $\Ac_\Gamma$ is the convolution algebra of the smooth \'etale groupoid associated to the holonomy pseudogroup $\Gamma$ of the foliation.

\medskip

As a result, it became an important problem to provide a general formula for a representative of the cocycle ${\rm ch}_\ast(D)$, which is in the periodic cyclic cohomology of the convolution algebra $\Ac_\Gamma$.

\medskip

Such a formula in local terms is accomplished in \cite{ConnMosc95} as a finite sum of the expressions of the form
\begin{equation} \label{aux-index}
{\int \!\!\!\!\!\! -}  a^0 [D, a^1]^{(k_1)} \ldots [D, a^n]^{(k_n)}  \, \vert D \vert^{-(n+2k_1+\ldots+2k_n)},
\end{equation}
in terms of residues (generalizing the Wodzicki-Guillemin-Manin residue and the Dixmier trace), where $[D, a]^{(k)}$ stands for the $k$th iterated commutator of $D^2$  with $[D,a]$.

\medskip

While on the one hand this formula is observed to reduce to the local form of the Atiyah-Singer index theorem in case $D$ is a Dirac operator on the manifold $M$, on the other hand it was realized that the explicit computation of the terms of the ``index cocycle'' ${\rm ch}_\ast(D)$ is exceedingly difficult even for foliations of codimension 1. To this end, an ``organizing principle'' was needed.

\medskip

For a foliation of codimension $n$, the above mentioned organizing principle is introduced in \cite{ConnMosc98} as the action of a Hopf algebra $\Hc_n$. The basic idea was to reduce the expressions \eqref{aux-index} to genuine integrals and hence to replace them with the expressions of the form
\begin{equation}
\tau_\Gamma(a^0h^1(a^1)\ldots h^n(a^n)),
\end{equation}
where $\tau_\Gamma:\Ac_\Gamma \to \Cb$ is the canonical trace on the algebra $\Ac_\Gamma$, and $h^1,\ldots, h^n:\Ac_\Gamma \to \Ac_\Gamma$ are the ``transverse" differential operators that form a Hopf algebra $\Hc_n$ depending only on the codimension of the foliation. This action of $\Hc_n$ on $\Ac_\Gamma$ preserves (up to a character) the canonical trace $\tau_\Gamma$ and hence gives rise to a characteristic map in cyclic cohomology whose domain is the cyclic cohomology for Hopf algebras and whose range includes the index cocycle.

\medskip

In \cite{ConnMosc98}, the cyclic cohomology theory for algebras is adapted to the Hopf algebras $\Hc_n$ and the cyclic cohomology of $\Hc_n$ is identified, via an explicit van Est type isomorphism, with the Gelfand-Fuchs cohomology of the Lie algebra of formal vector fields on $\Rb^n$. Moreover, the  cyclic cohomology of the universal enveloping algebra $U(\Fg)$ of a Lie algebra $\Fg$ is identified  with the Lie algebra homology of $\Fg$ with coefficients in the $\Fg$-module $\Cb$ with the module structure given by ${\rm Tr \circ ad}:\Fg \to \Cb$.

\medskip

The remarkable similarity between the passage from cyclic cohomology of algebras to cyclic cohomology of Hopf algebras and the passage from de Rham cohomology to Lie algebra cohomology via the invariant de Rham cohomology is used in \cite{KhalRang03} to introduce the invariant cyclic homology as a generalization of the cyclic cohomology of Hopf algebras and its dual theory \cite{KhalRang02}.

\medskip

The idea of an invariant differential complex is also used in \cite{HajaKhalRangSomm04-II} to provide finally a common denominator to the cyclic theories whose cyclicity is based on Hopf algebraic structures. This, in turn, led to the (co)cyclic complexes of (co)algebras associated to (co)module (co)algebra (co)actions of Hopf algebras on (co)algebras, and hence the name Hopf-cyclic cohomology and the notion of symmetry in noncommutative geometry.

\medskip

Another remarkable discovery made in \cite{HajaKhalRangSomm04-II} concerned the space of coefficients for Hopf-cyclic cohomology theories. These coefficient spaces are called stable-anti-Yetter-Drinfeld modules (SAYD modules in short). A SAYD module is a module and comodule over a Hopf algebra with the compatibility conditions between the action and the coaction obtained from a modification of the Yetter-Drinfeld compatibility conditions.

\medskip

First examples of SAYD modules are introduced and studied in \cite{HajaKhalRangSomm04-I}. It is also observed that the theory introduced in \cite{ConnMosc98} corresponds to the theory with 1-dimensional (trivial) coefficients. More precisely, in \cite{ConnMosc98} the coefficient space $\Cb$ was regarded as a module over $\Hc_n$ via a character $\d:\Hc_n \to \Cb$, and a comodule over $\Hc_n$ by a group-like element $\s \in \Hc_n$. Such a pair $(\d,\s)$ is called a modular pair in involution (MPI for short), and the corresponding SAYD module is denoted by $\,^\s\Cb_\d$.

\medskip

In \cite{MoscRang07}, a direct method is presented to compute the periodic Hopf-cyclic cohomology of Hopf algebras of transverse symmetry of codimension 1, in particular the Connes-Moscovici Hopf algebra $\Hc_1$ and its Schwarzian factor $\Hc_{\rm 1S}$. The method used in \cite{MoscRang07} is based on two computational devices. The first is a Cartan homotopy formula for the Hopf-cyclic cohomology of coalgebras with coefficients in SAYD modules. The second one is the construction of a bicocyclic module for the bicrossed product Hopf algebras. This allows one to compute Hopf-cyclic cohomology by means of an Eilenberg-Zilber theorem \cite{GetzJone93}.

\medskip

The key observation to associate a bicocyclic module to the Hopf algebras of transverse symmetry of codimension 1 is the bicrossed product structure of these Hopf algebras \cite{ConnMosc98,ConnMosc00,HadfMaji07}.

\medskip

The method presented in \cite{MoscRang07} was later upgraded in \cite{MoscRang09} to cover a wider class of Hopf algebras. More precisely, a Hopf algebra $\Hc_\Pi$ was associated to any infinite primitive pseudogroup $\Pi$ of Lie-Cartan \cite{Cart09}. This family of Hopf algebras includes the Connes-Moscovici Hopf algebra $\Hc_n$ by $\Pi = {\rm Diff}(\Rb^n)$. Along with the lines of \cite{SingSter65,Guil70}, the Hopf algebra $\Hc_\Pi$ associated to the pseudogroup $\Pi$ can be regarded as the ``quantum group" counterpart of the infinite dimensional primitive Lie algebra of  $\Pi$.

\medskip

For any Lie-Cartan type pseudogroup $\Pi$, due to a splitting of the group ${\rm Diff}(\Rb^n) \cap \Pi$, it was observed in \cite{MoscRang09} that each member $\Hc_\Pi$ of this class of Hopf algebras admits a bicrossed product decomposition, and hence a bicocyclic module. On the other hand, each Hopf algebra $\Hc_\Pi$ comes equipped with a modular character $\d_\Pi:\Hc_\Pi \to \Cb$ so that $(\d_\Pi,1)$ is a modular pair in involution, \ie $\Cb$ is a SAYD module over the Hopf algebra $\Hc_\Pi$ via the module structure determined by $\d_\Pi$ and the trivial comodule structure. As a result, it becomes possible to present a direct approach to the computation of the periodic Hopf-cyclic cohomology $HP(\Hc_\Pi,\Cb_{\d_\Pi})$.

\medskip

The main application of \cite{MoscRang09} is to focus on the Connes-Moscovici Hopf algebra $\Hc_n$ and to compute the explicit cocycle representatives for a basis of the relative periodic Hopf-cyclic cohomology of $\Hc_n$ relative to $U(g\ell(n))$, namely $HP(\Hc_n,U(g\ell(n)),\Cb_{\d_\Pi})$. These representatives correspond to universal Chern classes.

\medskip

By \cite{BottHaef72} it is known that the Gelfand-Fuchs cohomology is the most effective framework to describe characteristic classes of foliations. On the other hand, the Gelfand-Fuchs cohomology of the Lie algebra of formal vector fields on $\Rb^n$ is identified in \cite{ConnMosc98} with the Hopf-cyclic cohomology of the Hopf algebra $\Hc_n$ with coefficients in the 1 dimensional SAYD module $\Cb_\d$. It is also illustrated in \cite{ConnMosc} by an explicit computation that, in the codimension 1 case, the characteristic classes of transversally oriented foliations are captured in the range of the characteristic map constructed in \cite{ConnMosc98}.

\medskip

This correspondence revealed a link between this new cohomology theory and the classical theory of characteristic classes of foliations as well as index theory.

\medskip

That the entire transverse structure is captured by the action of the Hopf algebra $\Hc_n$ and the lack of explicit examples of higher dimensional SAYD modules led us to search for the representations and the corepresentations of bicrossed product Hopf algebras as well as to investigate the effect of the higher dimensional coefficient spaces on transverse geometry.

\medskip

Our perspective is much in the same spirit of \cite{MoscRang09} philosophically, yet completely different - more algebraic and more direct - in method. We associate a Hopf algebra to any matched pair of Lie groups, Lie algebras and affine algebraic groups via the semidualization \cite{Maji90,Majid-book} and any such Hopf algebra a priori admits a bicrossed product decomposition.

\medskip

The building blocks for this bicrossed product decomposition consist of two relatively naive Hopf algebras, one of which is a universal enveloping algebra  while the other is an algebra of representative functions. For the sake of brevity we call these Hopf algebras the ``Lie-Hopf algebras".

\medskip

The merit of this point of view manifests itself while studying the SAYD modules over Lie-Hopf algebras. It turns out that the SAYD modules can be identified with the representations and corepresentations of the total Lie group, Lie algebra or the affine algebraic group of the matched pair. The key tool we developed is the Lie algebra coaction.

\medskip

We first observe that any Lie-Hopf algebra comes equipped with a modular pair $(\d,\s)$ in involution. Nontriviality of the group-like element for general Lie-Hopf algebras points out the refinement of our approach even in this early stage.

\medskip

We then complete our classification of SAYD modules in two steps. First we classify the SAYD modules that correspond only to the representations of the total Lie group, Lie algebra or affine algebraic group. We call them ``induced" SAYD modules. In the second step, focusing on the matched pair of Lie algebras, we establish a one-to-one correspondence between the SAYD modules over the Lie-Hopf algebra associated to a matched pair of Lie algebras and SAYD modules over the total Lie algebra. This is the place where we use the notion of Lie algebra coaction.

\medskip

Once the SAYD modules are associated to the representations and the corepresentations of Lie algebras, it becomes easy to produce nontrivial examples. We illustrate a highly nontrivial 4-dimensional SAYD module over the Schwarzian Hopf algebra $\Hc_{\rm 1S}$. As for the Connes-Moscovici Hopf algebras $\Hc_n$ on the other hand, we conclude that the only SAYD module on $\Hc_n$ is the 1-dimensional $\Cb_\d$ introduced in \cite{ConnMosc98}.

\medskip

In addition, we discuss the periodic cyclic cohomology of Lie-Hopf algebras with nontrivial SAYD coefficients. In the case of induced SAYD coefficients, we upgrade the technique used in \cite{MoscRang09} to obtain a van Est type isomorphism identifying the periodic cyclic cohomology of a Lie-Hopf algebra with the (relative) Lie algebra cohomology.

\medskip

In the general case, however, the isomorphism that identifies the periodic cyclic cohomology of the Lie-Hopf algebra associated to a matched pair of Lie algebras with the (relative) Lie algebra cohomology of the total Lie algebra is on the level of $E_1$-terms of spectral sequences associated to natural filtrations on the corresponding coefficient spaces.

\medskip

Let us now give a brief outline.

\medskip

In order to be able to give as self-contained as possible an exposition, in Chapter \ref{chapter-preliminaries} we provide a rather detailed background material that will be needed in the sequel. Starting from the (co)actions of two Hopf algebras on each other, we recall the crossed product Hopf algebras. These are the types of Hopf algebras whose (co)representations we want to study under the name of SAYD modules. Then we recall the definition of such (co)representation spaces as the coefficient spaces for the Hopf-cyclic cohomology. We conclude this chapter with a summary of Lie algebra (co)homology.

\medskip

Chapter \ref{chapter-geometric-hopf-algebras} is about the bicrossed product Hopf algebras in which we are interested. We callthem Lie-Hopf algebras. They are built on a commutative Hopf algebra of the representative functions (on a Lie group, Lie algebra or an affine algebraic group) and the universal enveloping algebra of a Lie algebra. So we first recall these building blocks and then the construction of the Lie-Hopf algebras by the semidualization of a Lie group, Lie algebra and an affine algebraic group.

\medskip

Next, in Chapter \ref{chapter-hopf-cyclic-coefficients}, we deal with the SAYD modules over the Lie-Hopf algebras in terms of the (co)representations of the relevant Lie group, Lie algebra or the affine algebraic group that gives rise to the Lie-Hopf algebra under consideration. In the first part, we consider those which correspond to the representations (and trivial corepresentation) under the name of induced SAYD modules. Then in the second part we study the general case (in full generality). In order to illustrate our theory we construct a highly nontrivial 4-dimensional SAYD module over the Schwarzian quotient $\Hc_{\rm 1S}$ of the Connes-Moscovici Hopf algebra $\Hc_1$.

\medskip

In Chapter \ref{chapter-hopf-cyclic-cohomology} we study the Hopf-cyclic cohomology of Lie-Hopf algebras with coefficients in SAYD modules. At first we discuss the Hopf-cyclic cohomology of the commutative representative Hopf algebras. Then we continue with the Hopf-cyclic cohomology of Lie-Hopf algebras with coefficients in induced SAYD modules. Finally we deal with the Hopf-cyclic cohomology with SAYD coefficients (of nontrivial corepresentation) of a Lie-Hopf algebra that corresponds to the Lie algebra decomposition. As an example, we compute the periodic Hopf-cyclic cohomology of $\Hc_{\rm 1S}$ with coefficients.

\medskip

Finally Chapter \ref{chapter-future-research} is devoted to a discussion of future research towards a noncommutative analogue of the theory of characteristic classes of foliations.

\chapter{Preliminaries}\label{chapter-preliminaries}

In this chapter we provide background material that will be needed in the sequel. We first recall the bicrossed product and double crossed product constructions. Our terminology is along the lines of \cite{Majid-book}. Such Hopf algebras are our main objects. We want to study their (co)representations under the title of SAYD modules. Since SAYD modules serve as the coefficient spaces for the Hopf-cyclic cohomology, it is natural that we recall also this theory here. To this end we follow \cite{ConnMosc98,ConnMosc,HajaKhalRangSomm04-II,HajaKhalRangSomm04-I}. Finally we include a summary of Lie algebra (co)homology following \cite{ChevEile,HochSerr53,Knap} as it is naturally related to the Hopf-cyclic cohomology \cite{ConnMosc98}.

\medskip

All vector spaces and their tensor product are over $\Cb$, unless otherwise specified. An algebra is a triple $(A,\mu,\eta)$, where the structure maps $\mu:A \ot A \to A$ and $\eta: \Cb \to A$ denote the multiplication and the unit. A coalgebra is also a triple $(C,\D,\ve)$, where the structure maps $\D:C \to C \ot C$ and $\ve: C \to \Cb$ denote the comultiplication and the counit. A Hopf algebra is a six-tuple $(H,\mu,\eta,\D,\ve,S)$, where $(H,\mu,\eta)$ is an algebra, $(H,\D,\ve)$ is a coalgebra and $S:H \to H$ is the antipode. We assume the antipode to be invertible.

\medskip

We will use the Sweedler's notation \cite{Sweed-book}. We denote a comultiplication by $\D(c) = c\ps{1} \ot c\ps{2}$, a left coaction by $\nabla(v) = v\ns{-1} \ot v\ns{0}$ and a right coaction by $\nabla(v) = v\ns{0} \ot v\ns{1}$, summation suppressed. By coassociativity, we simply write
\begin{equation}
\D(c\ps{1}) \ot c\ps{2} = c\ps{1} \ot \D(c\ps{2}) = c\ps{1} \ot c\ps{2} \ot c\ps{3},
\end{equation}
as well as
\begin{equation}
\nabla^2(v) = v\ns{-1} \ot \nb(v\ns{0}) = v\ns{-2} \ot v\ns{-1} \ot v\ns{0}
\end{equation}
and
\begin{equation}
\nabla^2(v) = \nb(v\ns{0}) \ot v\ns{1} = v\ns{0} \ot v\ns{1} \ot v\ns{2}.
\end{equation}

More detailed information on coalgebras, Hopf algebras and their comodules can be found in \cite{BrzeWisb}.

\medskip

Unless stated otherwise, a Lie algebra $\Fg$ is always assumed to be finite dimensional with a basis
\begin{equation}
\Big\{X_1, \ldots, X_N\Big\}
\end{equation}
and a dual basis
\begin{equation}
\Big\{\t^1, \ldots, \t^N\Big\}.
\end{equation}
We denote the structure constants of the Lie algebra $\Fg$ by $C^k_{ij}$, \ie
\begin{equation}
[X_i,X_j]=\sum_kC_{ij}^k X_k\;.
\end{equation}

All Lie groups and affine algebraic groups are assumed to be complex and connected.

\medskip

If $A$ is an algebra and $I \subseteq A$ is an (two-sided) ideal, we simply write $I \leq A$.

\medskip

For an algebra $A$, by $A^{\rm op}$ we mean the opposite algebra, \ie the same vector space with the reverse order of multiplication. Similarly, for a coalgebra $C$, by $C\cop$ we mean to the cooposite coalgebra, \ie the same vector space with the reversed comultiplication.

\section{Crossed product Hopf algebras}

Starting from the Hopf algebra (co)actions on (co)algebras that provide the notion of symmetry in noncommutative geometry, in this section we will recall two crossed product constructions for Hopf algebras: the bicrossed product and the double crossed product constructions.

\subsection{Hopf algebra (co)actions on (co)algebras}

Let $V$ be a vector space and $A$ be an algebra. A left action of $A$ on $V$ is a linear map
\begin{equation}
\rt:A \ot V \to V, \quad a \ot v \mapsto a \rt v,
\end{equation}
such that
\begin{equation}
(ab) \rt v = a \rt (b \rt v), \qquad 1 \rt v = v
\end{equation}
for any $a,b \in A$ and any $v \in V$. In this case we say that $V$ is a left $A$-module, or equivalently, $V$ is a representation space of $A$.

\medskip

Let $V$ and $W$ be two left $H$-modules. A linear map $f:V \to W$ is called left $H$-linear, or a left $H$-module map, if \begin{equation}
f(h \rt v) = h \rt f(v).
\end{equation}

\begin{definition}\label{definition-module-algebra}
Let $A$ be an algebra and $H$ be a Hopf algebra. Then $A$ is called a left $H$-module algebra if it is a left $H$-module and
\begin{equation}
h \rt (ab) = (h\ps{1} \rt a)(h\ps{2} \rt b), \quad h \rt 1 = \ve(h)1.
\end{equation}
\end{definition}

\begin{remark}\rm{
If $A$ is a left $H$-module, then $A \ot A$ is also a left $H$-module via
\begin{equation}\label{aux-tensor-product-H-module}
h \rt (a \ot b) := h\ps{1} \rt a \ot h\ps{2} \rt b,
\end{equation}
for any $h \in H$, and any $a,b \in A$. On the other hand, the ground field $\Cb$ is a left $H$-module via counit, \ie for any $h \in H$, and $\a \in \Cb$,
\begin{equation}\label{aux-trivial-H-module}
h \rt \a := \ve(h)\a.
\end{equation}
As a result, $A$ is a left $H$-module algebra if and only if the algebra structure maps $\mu:A \ot A \to A$ and $\eta: \Cb \to A$ are $H$-linear.
}\end{remark}

\begin{example}\rm{
A Hopf algebra $H$ is a left $H$-module algebra by the left adjoint action, \ie
\begin{equation}
h \rt g = h\ps{1}gS(h\ps{2}),
\end{equation}
for any $h,g \in H$.
}\end{example}

\begin{proposition}[\cite{Majid-book}]
Let $A$ be an algebra and $H$ be a Hopf algebra. Then $A$ is a left $H$-module algebra if and only if $A \ot H$ is an algebra, which is called the left cross product algebra and is denoted by $A \al H$, with multiplication
\begin{equation}
(a \al h)(a' \al h') = a(h\ps{1} \rt a') \al h\ps{2}h'
\end{equation}
and unit $1 \ot 1 \in A \ot H$.
\end{proposition}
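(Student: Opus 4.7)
The plan is to prove both directions by direct Sweedler-notation computation, treating the linear map $\rt : H \ot A \to A$ as the given data in either direction. Assume first that $A$ is a left $H$-module algebra. The unit axioms reduce to quick checks:
$$(1 \al 1)(a' \al h') = (1 \rt a') \al h' = a' \al h'$$
uses $1 \rt a' = a'$, while
$$(a \al h)(1 \al 1) = a(h\ps{1} \rt 1) \al h\ps{2} = a\,\ve(h\ps{1}) \al h\ps{2} = a \al h$$
uses $h \rt 1 = \ve(h)1$ together with the counit axiom. For associativity, expanding both sides yields
$$((a \al h)(a' \al h'))(a'' \al h'') = a\,(h\ps{1} \rt a')\,(h\ps{2}h'\ps{1} \rt a'') \al h\ps{3}h'\ps{2}h''$$
and
$$(a \al h)((a' \al h')(a'' \al h'')) = a\,(h\ps{1} \rt (a'(h'\ps{1} \rt a''))) \al h\ps{2}h'\ps{2}h'';$$
I would then apply the module-algebra relation $h \rt (xy) = (h\ps{1} \rt x)(h\ps{2} \rt y)$ to the inner expression on the right, followed by the module law $h \rt (g \rt a) = (hg) \rt a$, and reindex the Sweedler components of $h$ via coassociativity to match the first expression.

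For the converse, I would assume the stated formula defines an associative unital multiplication on $A \ot H$ with unit $1 \al 1$, and recover the module algebra axioms by specialization. The right-unit law $(a \al h)(1 \al 1) = a \al h$ reads $a(h\ps{1} \rt 1) \al h\ps{2} = a \al h$; applying $\mathrm{id} \ot \ve$ to the second tensor slot yields $h \rt 1 = \ve(h)1$. The left-unit law similarly gives $1 \rt a = a$. To extract the remaining axioms, I would apply associativity to the triple $(1 \al h)(a' \al h')(a'' \al 1)$ and then apply $\mathrm{id} \ot \ve$ to the $H$-component, producing in $A$ the identity
$$h \rt (a'(h' \rt a'')) = (h\ps{1} \rt a')(h\ps{2}h' \rt a'').$$
Setting $h' = 1$ recovers the compatibility $h \rt (a'a'') = (h\ps{1} \rt a')(h\ps{2} \rt a'')$, while setting $a' = 1$ and using $h\ps{1} \rt 1 = \ve(h\ps{1})1$ yields the module law $(hh') \rt a'' = h \rt (h' \rt a'')$.

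The main obstacle is purely bookkeeping: one must keep the Sweedler indices of $h$ and $h'$ aligned when expanding both sides of the associator, and choose the specializations in the converse carefully enough that each module algebra axiom is isolated cleanly. No conceptual subtlety arises, since the two formulations simply repackage the same data.
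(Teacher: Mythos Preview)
Your argument is correct in both directions; the Sweedler bookkeeping is handled properly, and the specializations in the converse cleanly isolate each axiom. Note, however, that the paper does not supply its own proof of this proposition: it is stated with a citation to \cite{Majid-book} and no argument follows. Your direct verification is exactly the standard one found there, so there is nothing further to compare.
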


\begin{definition}\label{definition-module-coalgebra}
Let $C$ be a coalgebra and $H$ be a Hopf algebra. Then $C$ is called a left $H$-module coalgebra if it is a left $H$-module and
\begin{equation}
\D(h \rt c) = (h\ps{1} \rt c\ps{1}) \ot (h\ps{2} \rt c\ps{2}), \quad \ve(h \rt c) = \ve(h)\ve(c).
\end{equation}
\end{definition}

\begin{remark}\rm{
For a coalgebra $C$, we now have the left $H$-modules $C \ot C$ by \eqref{aux-tensor-product-H-module} and $\Cb$ by \eqref{aux-trivial-H-module}. As a result, $C$ is a left $H$-module coalgebra if and only if the coalgebra structure maps $\D:C \ot C \to C$ and $\ve: C \to \Cb$ are $H$-linear.
}\end{remark}

\begin{example}\rm{
A Hopf algebra $H$ is a left $H$-module coalgebra by its own multiplication, \ie
\begin{equation}
h \rt g = hg,
\end{equation}
for any $h,g \in H$.
}\end{example}

Let $V$ be a vector space and $C$ be a coalgebra. A right coaction of $C$ on $V$ is a linear map
\begin{equation}
\nabla:V \to V \ot C, \quad v \mapsto v\ns{0} \ot v\ns{1},
\end{equation}
such that
\begin{equation}
v\ns{0} \ot \D(v\ns{1}) = \nb(v\ns{0}) \ot v\ns{1}, \quad v=v\ns{0}\ve(v\ns{1})
\end{equation}
for any $v \in V$. In this case we say that $V$ is a right $C$-comodule, or equivalently, $V$ is a corepresentation space of $C$. A left $C$-comodule is defined similarly.

\medskip

A linear map $f:V \to W$ of right $C$-comodules is called right $C$-colinear, or equivalently a right $C$-comodule map, if
\begin{equation}
\nb(f(v)) = f(v\ns{0}) \ot v\ns{1},
\end{equation}
for any $v \in V$.

\begin{definition}\label{definition-comodule-algebra}
Let $H$ be a Hopf algebra and $A$ be an algebra. Then $A$ is called a right $H$-comodule algebra if it is a right $H$-comodule and
\begin{equation}
\nb(ab) = a\ns{0}b\ns{0} \ot a\ns{1}b\ns{1}, \quad \nb(1) = 1 \ot 1,
\end{equation}
for any $a,b \in A$.
\end{definition}

\begin{remark}\rm{
If $A$ is a right $H$-comodule, then $A \ot A$ is also a right $H$-comodule via
\begin{equation}\label{aux-tensor-product-H-comodule}
a \ot b \mapsto a\ns{0} \ot b\ns{0} \ot a\ns{1}b\ns{1},
\end{equation}
for any $a,b \in A$. On the other hand, the ground field $\Cb$ is a right $H$-comodule via
\begin{equation}\label{aux-trivial-H-comodule}
\a \mapsto \a \ot 1,
\end{equation}
for any $\a \in \Cb$. As a result, $A$ is a right $H$-comodule algebra if and only if the algebra structure maps $\mu:A \ot A \to A$ and $\eta: \Cb \to A$ are right $H$-colinear.
}\end{remark}

\begin{example}\rm{
A Hopf algebra $H$ is a right $H$-comodule algebra by its own comultiplication $\D:H \to H \ot H$.
}\end{example}

\begin{definition}\label{definition-comodule-coalgebra}
Let $H$ be a Hopf algebra and $C$ be a coalgebra. Then $C$ is called a right $H$-comodule coalgebra if it is a right $H$-comodule and
\begin{equation}\label{aux-eq-comodule-coalgebra}
c\ns{0}\ps{1} \ot c\ns{0}\ps{2} \ot c\ns{1} = c\ps{1}\ns{0} \ot c\ps{2}\ns{0} \ot c\ps{1}\ns{1}c\ps{2}\ns{1}, \quad \ve(c\ns{0})c\ns{1} = \ve(c),
\end{equation}
for any $c \in C$.
\end{definition}

\begin{remark}\rm{
For any right $H$-comodule $C$, the tensor product $C \ot C$ and the ground field $\Cb$ are also a right $H$-comodules via \eqref{aux-tensor-product-H-comodule} and \eqref{aux-trivial-H-comodule} respectively. As a result, $C$ is a right $H$-comodule coalgebra if and only if the coalgebra structure maps $\D:C \to C \ot C$ and $\ve: C \to \Cb$ are right $H$-colinear.
}\end{remark}

\begin{example}\label{example-comodule-coalgebra-adjoint}\rm{
A Hopf algebra $H$ is a right $H$-comodule coalgebra by the right adjoint coaction, \ie
\begin{equation}
\nb:H \to H \ot H, \quad h \mapsto h\ps{2} \ot S(h\ps{1})h\ps{3}
\end{equation}
for any $h \in H$.
}\end{example}

\begin{proposition}[\cite{Majid-book}]
Let $C$ be a coalgebra and $H$ be a Hopf algebra. Then $C$ is a right $H$-comodule coalgebra if and only if $H \ot C$ is a coalgebra, which is called a right cross coproduct coalgebra and is denoted by $H \cl C$, via
\begin{equation}
\Delta(h\cl c)= h\ps{1}\cl c\ps{1}\ns{0}\ot  h\ps{2}c\ps{1}\ns{1}\cl c\ps{2}, \quad \ve(h\cl c)=\ve(h)\ve(c).
\end{equation}
\end{proposition}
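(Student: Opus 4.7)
The plan is to prove the biconditional by verifying, in one direction, that the comodule coalgebra axioms \eqref{aux-eq-comodule-coalgebra} imply that $(\Delta,\ve)$ as given define a coalgebra structure on $H \ot C$, and then reversing the essential calculations to recover \eqref{aux-eq-comodule-coalgebra} from the coassociativity and counitality of $H \cl C$.

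For the forward direction, I would first dispose of the counit axioms. Applying $\ve \ot \Id$ to $\Delta(h \cl c)$ yields $\ve(h\ps{1})\ve(c\ps{1}\ns{0}) \, h\ps{2}c\ps{1}\ns{1} \cl c\ps{2}$, which collapses to $h \cl c$ after using $\ve(c\ns{0})c\ns{1} = \ve(c)$ from \eqref{aux-eq-comodule-coalgebra} followed by the counit property of $\Delta_C$; similarly the right counit identity reduces to the counit property of $\Delta_H$ together with $c\ns{0}\ve(c\ns{1}) = c$, which follows from the coaction axioms. The core work is coassociativity. I would compute $(\Delta \ot \Id)\Delta(h \cl c)$ directly, producing an expression of the form
\begin{equation}
h\ps{1} \cl c\ps{1}\ns{0}\ps{1}\ns{0} \ot h\ps{2} c\ps{1}\ns{0}\ps{1}\ns{1} \cl c\ps{1}\ns{0}\ps{2} \ot h\ps{3} c\ps{1}\ns{1} \cl c\ps{2},
\end{equation}
and compare it to $(\Id \ot \Delta)\Delta(h \cl c)$, which expands to
\begin{equation}
h\ps{1} \cl c\ps{1}\ns{0} \ot h\ps{2} c\ps{1}\ns{1}\ps{1} \cl c\ps{2}\ps{1}\ns{0} \ot h\ps{3} c\ps{1}\ns{1}\ps{2} c\ps{2}\ps{1}\ns{1} \cl c\ps{2}\ps{2}.
\end{equation}
Coassociativity of $\Delta_C$ reduces each side to a common expression in $c\ps{1},c\ps{2},c\ps{3}$; the compatibility of $\Delta_C$ with $\nabla$ in \eqref{aux-eq-comodule-coalgebra}, together with coassociativity of $\nabla$, then lets me rewrite both sides identically as $h\ps{1} \cl c\ps{1}\ns{0} \ot h\ps{2}c\ps{1}\ns{1}c\ps{2}\ns{1} \cl c\ps{2}\ns{0} \ot h\ps{3}c\ps{2}\ns{2}c\ps{3}\ns{1} \cl c\ps{3}\ns{0}$, establishing equality.

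For the converse, assuming $H \cl C$ is a coalgebra, I would specialize to $h = 1$ in both the counitality and coassociativity identities and apply $\ve \ot \Id$ or $\Id \ot \Id \ot \ve$ in appropriate slots to isolate the two axioms of \eqref{aux-eq-comodule-coalgebra}. The right counit identity with $h = 1$ gives $c\ns{0}\ve(c\ns{1}) = c$, which is already known from $\nabla$ being a coaction; the genuine content is obtained by applying $\ve \ot \Id \ot \Id$ to $(\Delta \ot \Id)\Delta(1 \cl c)$ and comparing with the image of $(\Id \ot \Delta)\Delta(1 \cl c)$, which after using the Hopf algebra counit on the $H$-slots produces precisely the first relation of \eqref{aux-eq-comodule-coalgebra}. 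The counit relation $\ve(c\ns{0})c\ns{1} = \ve(c)$ is extracted analogously.

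The main obstacle is bookkeeping in the coassociativity computation: both sides involve iterated applications of $\Delta_C$ and $\nabla$ with no visible symmetry, and several intermediate rewriting steps are needed before the compatibility axiom in \eqref{aux-eq-comodule-coalgebra} is applied in the correct Sweedler slot. The cleanest way to manage this is to introduce intermediate notation for $c\ps{1}\ns{0}\ps{1}$, etc., and apply \eqref{aux-eq-comodule-coalgebra} to the outermost coaction first, then reduce both sides to a canonical three-factor form before comparing.
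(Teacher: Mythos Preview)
The paper does not supply its own proof of this proposition: it is stated with a citation to \cite{Majid-book} and no argument is given. Your direct verification --- checking counitality and coassociativity of $(\Delta,\ve)$ on $H\ot C$ using the comodule coalgebra axioms \eqref{aux-eq-comodule-coalgebra}, and then specializing to $h=1$ and stripping off $H$-factors via $\ve$ to recover those axioms in the converse direction --- is exactly the standard computation one finds in Majid's book, so your approach is correct and aligned with the cited source. One minor caution: the ``common form'' you write for the two sides of coassociativity has slightly inconsistent indexing (the factor $c\ps{2}\ns{2}c\ps{3}\ns{1}$ in the third slot does not match the pattern in the second), so when you carry this out in full you should track the Sweedler indices more carefully; the cleanest route is to apply \eqref{aux-eq-comodule-coalgebra} to $c\ps{1}\ns{0}$ in the $(\Delta\ot\Id)\Delta$ expression and coassociativity of $\nabla$ to $c\ps{1}\ns{1}\ps{1}\ot c\ps{1}\ns{1}\ps{2}$ in the $(\Id\ot\Delta)\Delta$ expression, after which both sides match term by term.
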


\subsection{Double crossed product Hopf algebras}

In this subsection we recall the double crossed product construction. A double crossed product Hopf algebra is built on the tensor product of two Hopf algebras and is uniquely determined by this factorization property. The data summarizing the double crossed product construction is called a ``mutual pair of Hopf algebras''.

\begin{definition}
A pair of Hopf algebras $(\Uc,\Vc)$ is called a mutual pair of Hopf algebras if $\Vc$ is a
right $\Uc$-module coalgebra, $\Uc$ is  a left  $\Vc$-module coalgebra and
\begin{align}\label{aux-mutual-pair-1}
&v\rt(uu')= (v\ps{1}\rt u\ps{1})((v\ps{2}\lt u\ps{2})\rt  u'),\quad  1\lt u=\ve(u),\\ \label{aux-mutual-pair-2}
&(vv')\lt u= (v\lt(v'\ps{1}\rt u\ps{1}))(v'\ps{2}\lt u\ps{2}),\quad
v\rt 1=\ve(v),\\\label{aux-mutual-pair-3}
& v\ps{1}\lt u\ps{1}\ot v\ps{2}\rt u\ps{2}= v\ps{2}\lt u\ps{2}\ot v\ps{1}\rt u\ps{1},
\end{align}
for any $u,u' \in \Uc$ and $v,v' \in \Vc$.
\end{definition}
Having a  mutual   pair of Hopf algebras, we construct a Hopf algebra on $\Uc \ot \Vc$, called the double crossed product Hopf algebra of the mutual pair $(\Uc,\Vc)$. This Hopf algebra is denoted by $\Uc\dcp \Vc$. As a coalgebra $\Uc\dcp\Vc$ is isomorphic to $\Uc\ot\Vc$, \ie it is equipped with the tensor product coalgebra structure. However, its algebra structure is defined by the rule
\begin{equation}
(u\dcp v)(u'\dcp v'):= u(v\ps{1}\rt u'\ps{1})\dcp (v\ps{2}\lt u'\ps{2})v'
\end{equation}
for any $u,u' \in \Uc$ and $v,v' \in \Vc$, where $1\dcp 1$ is the multiplicative unit. Finally, the antipode of $\Uc\dcp \Vc$ is given by
\begin{equation}
S(u\dcp v)= S(v\ps{1})\rt S(u\ps{1})\dcp S(v\ps{2})\lt S(u\ps{2}).
\end{equation}

It follows from the construction that $\Uc$ and $\Vc$ are Hopf subalgebras of the double crossed product $\Uc \dcp \Vc$. Moreover, the converse is also true.

\begin{theorem}[\cite{Majid-book}]\label{theorem-mutual-pair-universal-property}
Let $H$ be a Hopf algebra that factorizes into two Hopf subalgebras
\begin{equation}
\Uc \hookrightarrow H \hookleftarrow \Vc,
\end{equation}
and that
\begin{equation}
\Uc \ot \Vc \to H, \quad u \ot v \mapsto uv
\end{equation}
is an isomorphism of vector spaces. Then $(\Uc,\Vc)$ form a mutual pair of Hopf algebras and $H \cong \Uc \dcp \Vc$ as Hopf algebras.
\end{theorem}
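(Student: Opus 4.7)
The plan is to reverse-engineer the mutual pair structure on $(\Uc,\Vc)$ from the hypothesized factorization. The central tool is the twist map $\Psi : \Vc \ot \Uc \to \Uc \ot \Vc$ characterized by the requirement $vu = \mu(\Psi(v \ot u))$ for all $v \in \Vc$ and $u \in \Uc$; equivalently, $\Psi$ is the composition of the multiplication $\Vc \ot \Uc \hookrightarrow H \ot H \to H$ with the inverse of the hypothesized isomorphism $\mu : \Uc \ot \Vc \to H$.

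From $\Psi$ I would extract two candidate actions by applying counits:
\[
v \rt u \;:=\; (\Id \ot \ve)\Psi(v \ot u) \in \Uc, \qquad v \lt u \;:=\; (\ve \ot \Id)\Psi(v \ot u) \in \Vc.
\]
The critical structural fact is that $\Psi$ is a coalgebra homomorphism when both source and target carry the tensor-product coalgebra structure. This follows because $\mu$ is itself a coalgebra map (multiplication in the bialgebra $H$ is a coalgebra map, and its restriction to $\Uc \ot \Vc$ is precisely $\mu$), hence so is its inverse; meanwhile the multiplication $H \ot H \to H$ is a coalgebra map since $H$ is a bialgebra. Once $\Psi$ is known to be a coalgebra map, a direct counit argument yields the closed form
\[
\Psi(v \ot u) \;=\; (v\ps{1} \rt u\ps{1}) \ot (v\ps{2} \lt u\ps{2}),
\]
which is equivalent to the factorization identity $vu = (v\ps{1} \rt u\ps{1})(v\ps{2} \lt u\ps{2})$ in $H$. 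The module-coalgebra axioms of Definition \ref{definition-module-coalgebra} for these actions are then immediate from the coalgebra map property of $\Psi$ together with the fact that $\ve_H$ is an algebra map.

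Next I would verify the three mutual pair compatibility identities \eqref{aux-mutual-pair-1}, \eqref{aux-mutual-pair-2}, and \eqref{aux-mutual-pair-3}. Each reduces to an equality in $H$ that is then transported back through the injectivity of $\mu$. For instance, \eqref{aux-mutual-pair-1} is proved by computing $v(uu') \in H$ in two ways: once directly by twice applying $\Psi$ to bring $v$ past $u$ and then past $u'$, and once as $(vu)u'$ by applying $\Psi$ once to $vu$ and then moving the resulting $\Vc$-factor across $u'$. The identity \eqref{aux-mutual-pair-2} reflects the associativity $(vv')u = v(v'u)$. The symmetry identity \eqref{aux-mutual-pair-3} falls out of applying $\Id \ot \ve \ot \ve \ot \Id$ to the coalgebra map equation $\D_{\Uc \ot \Vc} \circ \Psi = (\Psi \ot \Psi) \circ \D_{\Vc \ot \Uc}$ and rearranging with counitality on the two interior factors. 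This bookkeeping is the most delicate step, and will be the main obstacle: one must carefully track several iterated coproducts and cancel Sweedler indices via repeated counitality.

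Finally, having established the mutual pair structure, I would show that the map $\mu: \Uc \dcp \Vc \to H$, $u \dcp v \mapsto uv$, is a Hopf algebra isomorphism. It is a coalgebra isomorphism by construction, since $\Uc \dcp \Vc$ carries the tensor product coalgebra structure and $\mu$ is already a coalgebra map, while it is bijective by hypothesis. It is an algebra map because
\[
\mu\big((u \dcp v)(u' \dcp v')\big) \;=\; u(v\ps{1} \rt u'\ps{1})(v\ps{2} \lt u'\ps{2})v' \;=\; u(vu')v' \;=\; (uv)(u'v'),
\]
where the middle equality is precisely the factorization identity provided by $\Psi$. Preservation of the antipode is automatic for any bialgebra isomorphism, completing the proof.
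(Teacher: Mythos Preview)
The paper does not supply its own proof of this theorem; it is quoted from \cite{Majid-book} without argument and used as background. Your proposal is correct and is essentially the standard proof found in that reference: define the twist $\Psi$ from the factorization, observe it is a coalgebra map because multiplication in a bialgebra is, extract the two actions via the counits, and verify the mutual pair axioms by associativity of $H$ together with injectivity of $\mu$. There is nothing in the paper to compare against beyond this.
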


An important example of the double crossed product construction is the quantum double $D(H)$ that we recall here for a finite dimensional Hopf algebra $H$. Later on we will consider the representations of $D(H)$.

\begin{example}\rm{
Let $H$ be a finite dimensional Hopf algebra. Then $(H^{\ast \rm op},H)$ is a mutual pair of Hopf algebras by the coadjoint actions, \ie
\begin{equation}
h \rt \phi = \phi\ps{2} (S(\phi\ps{1})\phi\ps{3})(h)
\end{equation}
and
\begin{equation}
\phi \rt h = h\ps{2} \phi(S(h\ps{1})h\ps{3})
\end{equation}
viewed as a right action of $H^{\ast \rm op}$. As a result, we have the double crossed product Hopf algebra $D(H) = H^{\ast \rm op} \dcp H$.
}\end{example}

The examples of the double crossed product construction of interest here follow from the group or Lie algebra decompositions described below. We first recall such decompositions in an abstract setting.

\begin{definition}\label{definition-matched-pair-of-groups}
A pair of groups $(G,H)$ is called a matched pair of groups if there are maps
\begin{equation}\label{aux-matched-pair-group-action-I}
H \times G \to G, \quad (h,g) \mapsto h \rt g
\end{equation}
and
\begin{equation}\label{aux-matched-pair-group-action-II}
H \times G \to H, \quad (h,g) \mapsto h \lt g
\end{equation}
such that
\begin{align}\label{aux-matched-pair-group-actions-I}
& (hh') \rt g = h \rt (h' \rt g), \quad 1 \rt g = g, \\\label{aux-matched-pair-group-actions-II}
& h \lt (gg') = (h \lt g) \lt g', \quad h \lt 1 = h, \\\label{aux-matched-pair-group-actions-III}
& h \rt (gg') = (h \rt g)((h \lt g) \rt g'), \quad h \rt 1 = 1, \\\label{aux-matched-pair-group-actions-IV}
& (hh') \lt g = (h \lt (h' \rt g))(h' \lt g), \quad 1 \lt g = 1,
\end{align}
for any $h,h' \in H$ and $g,g' \in G$.
\end{definition}

Given a matched pair of groups, we construct a double crossed product group $G \dcp H$, which is isomorphic to $G \times H$ as the underlying set, with multiplication
\begin{equation}
(g \dcp h)(g' \dcp h') = g(h \rt g') \dcp (h \lt g')h', \quad \forall g,g' \in G,\;h,h' \in H
\end{equation}
and unit $1 \dcp 1 \in G \dcp H$. In this case, $G$ and $H$ are both subgroups of $G \dcp H$. Conversely, if  a group $K$ has two subgroups
\begin{equation}
G \hookrightarrow K \hookleftarrow H,
\end{equation}
such that $K = G\times H$ as sets, then the pair $(G,H)$ of subgroups is a matched pair of groups and $K \cong G \dcp H$ as groups. In this case, the mutual actions of the subgroups are obtained by
\begin{equation}\label{aux-matched-pair-groups-action}
hg = (h \rt g)(h \lt g), \quad \forall g \in G,\; h \in H.
\end{equation}

\begin{remark}\label{remark-matched-pair-group-algebras}\rm{
The pair of groups $(G,H)$ is a matched pair of groups if and only if the pair $(\Cb [G], \Cb [H])$ of group algebras is a mutual pair of Hopf algebras.
}\end{remark}

Let us conclude this subsection by reviewing the analogue discussion for Lie algebras.

\begin{definition}\label{definition-matched-pair-of-Lie-algebras}
A pair of Lie algebras  $(\Fg_1, \Fg_2)$  is called a matched pair of Lie algebras if there are linear maps
\begin{equation}\label{aux-matched-pair-Lie-algebra-action-I}
\Fg_2 \ot \Fg_1 \ra \Fg_1, \quad \x \ot X  \mapsto \x\rt X
\end{equation}
and
\begin{equation}\label{aux-matched-pair-Lie-algebra-action-II}
\Fg_2 \ot \Fg_1\ra \Fg_2, \quad \x \ot X  \mapsto \x \lt X
\end{equation}
such that
\begin{align}
&[\z,\x]\rt X=\z\rt(\x\rt X)-\x\rt(\z\rt X),\\
 & \z\lt[X, Y]=(\z\lt X)\lt Y-(\z\lt Y)\lt X, \\
 &\z\rt[X, Y]=[\z\rt X, Y]+[X,\z\rt Y] +
(\z\lt X)\rt Y-(\z\lt Y)\rt X\\
&[\z,\x]\lt X=[\z\lt X,\x]+[\z,\x\lt X]+ \z\lt(\x\rt X)-\x\lt(\z\rt X),
\end{align}
for any $X,Y \in \Fg_1$ and any $\z,\x \in \Fg_2$.
\end{definition}
Given a matched pair of  Lie algebras $(\Fg_1,\Fg_2)$, we define the double crossed sum Lie algebra $\Fg_1\bowtie \Fg_2$ whose underlying vector space is $\Fg_1\oplus\Fg_2$ and whose Lie bracket is
\begin{equation}
[X\oplus\z,  Z\oplus\x]=([X, Z]+\z\rt Z-\x\rt X)\oplus ([\z,\x]+\z\lt Z-\x\lt X).
\end{equation}
It is immediate that both $\Fg_1$ and $\Fg_2$ are Lie subalgebras of $\Fg_1\bowtie\Fg_2$ via obvious inclusions. Conversely,  if for a Lie algebra $\Fg$ there are two Lie subalgebras $\Fg_1$ and $\Fg_2$ so that $\Fg=\Fg_1\oplus\Fg_2$ as vector  spaces, then $(\Fg_1, \Fg_2)$  forms a matched pair of Lie algebras and $\Fg\cong \Fg_1\bowtie \Fg_2$ as Lie algebras. In this case the actions of $\Fg_1$ on $\Fg_2$ and $\Fg_2$ on $\Fg_1$ are uniquely determined  by
\begin{equation}\label{aux-actions-matched-pair-Lie-algebras}
[\x,X]=\x\rt X+\x\lt X, \quad \forall \x\in \Fg_2\; X\in\Fg_1.
\end{equation}

\begin{remark}\label{remark-matched-pair-enveloping-algebras}\rm{
For a matched pair of Lie algebras $(\Fg_1,\Fg_2)$, the pair of universal enveloping algebras $(U(\Fg_1),U(\Fg_2))$  becomes a mutual pair of Hopf algebras. Moreover, $U(\Fg)$ and $ U(\Fg_1)\dcp U(\Fg_2)$ are isomorphic as Hopf algebras.
}\end{remark}

In terms of the inclusions
\begin{equation}
i_1:U(\Fg_1) \to U(\Fg_1 \dcp \Fg_2) \quad \mbox{ and } \quad i_2:U(\Fg_2) \to U(\Fg_1 \dcp \Fg_2),
\end{equation}
the isomorphism $U(\Fg) \cong U(\Fg_1)\dcp U(\Fg_2)$ is
\begin{equation}
\mu\circ (i_1 \ot i_2):U(\Fg_1) \dcp U(\Fg_2) \to U(\Fg).
\end{equation}
Here $\mu$ is the multiplication on $U(\Fg)$. In this case, there is a linear map
\begin{equation}
\Psi:U(\Fg_2) \dcp U(\Fg_1) \to U(\Fg_1) \dcp U(\Fg_2),
\end{equation}
satisfying
\begin{equation}
\mu \circ (i_2 \ot i_1) = \mu \circ (i_1 \ot i_2) \circ \Psi\,.
\end{equation}
The mutual actions of $U(\Fg_1)$ and $U(\Fg_2)$ are then defined as
\begin{equation}\label{aux-mutual-actions-U(g1)-U(g2)}
\rt := (\Id_{U(\Fg_2)} \ot \ve) \circ \Psi, \qquad \lt := (\ve \ot \Id_{U(\Fg_1)}) \circ \Psi\,.
\end{equation}

\subsection{Bicrossed product Hopf algebras}

In this subsection we recall the bicrossed product construction. Similar to the double crossed product construction, a bicrossed product Hopf algebra also admits the tensor product of two Hopf algebras as the underlying vector space. The data summarizing this construction is called the matched pair of Hopf algebras.

\begin{definition}
A pair of Hopf algebras $(\Fc,\Uc)$ is called a matched pair of Hopf algebras if $\Fc$ is a left $\Uc$-module algebra, $\Uc$ is a right $\Fc$-comodule coalgebra and
\begin{align}\label{aux-matched-pair-1}
&\ve(u\rt f)=\ve(u)\ve(f), \\  \label{aux-matched-pair-2}
&\Delta(u\rt f)= u\ps{1}\ns{0} \rt f\ps{1}\ot u\ps{1}\ns{1}(u\ps{2}\rt f\ps{2}), \\  \label{aux-matched-pair-3}
 &\nb(1)=1\ot
1, \\ \label{aux-matched-pair-4}
&\nb(uv)= u\ps{1}\ns{0} v\ns{0}\ot u\ps{1}\ns{1}(u\ps{2}\rt v\ns{1}),\\  \label{aux-matched-pair-5} &
 u\ps{2}\ns{0}\ot (u\ps{1}\rt
f) u\ps{2}\ns{1}= u\ps{1}\ns{0}\ot u\ps{1}\ns{1}(u\ps{2}\rt f),
\end{align}
for any $u \in \Uc$ and $f \in \Fc$.
\end{definition}

We then form a new Hopf algebra $\Fc\acl \Uc$, called the  bicrossed product of the matched pair  $(\Fc , \Uc)$. It has $\Fc\cl \Uc$ as the underlying coalgebra, and $\Fc\al \Uc$ as the underlying algebra. The antipode is defined by
\begin{equation}\label{aux-antipode-bicrossed-product}
S(f\acl u)=(1\acl S(u\ns{0}))(S(fu\ns{1})\acl 1) , \qquad f \in \Fc , \, u \in \Uc.
\end{equation}

\begin{example}\rm{
Let $H$ be a Hopf algebra and $H^{\rm op}$ be the Hopf algebra with the opposite algebra structure. Then $H$ is a left $H^{\rm op}$-module algebra by
\begin{equation}
h^{\rm op} \rt g := S(h\ps{1})gh\ps{2},
\end{equation}
and $H$ is a $H^{\rm op}$-comodule coalgebra by the right adjoint coaction of Example \ref{example-comodule-coalgebra-adjoint}. Moreover, $(H^{\rm op},H)$ forms a matched pair of Hopf algebras and hence the bicrossed product Hopf algebra $H^{\rm op} \acl H$.
}\end{example}

The bicrossed product Hopf algebras of concern here follow from the Lie group, affine algebraic group or Lie algebra decompositions via semidualization. We conclude this subsection by mentioning this procedure briefly.

\begin{definition}
A pair of Hopf algebras $(H,K)$ are called a dual pair of Hopf algebras if there is a linear map
\begin{equation}\label{aux-dual-pairing-Hopf-algebras}
\langle,\rangle:H \ot K \to \Cb, \quad h \ot k \mapsto \langle h, k\rangle
\end{equation}
such that
\begin{align}
&\langle h, kk'\rangle =  \langle h\ps{1}, k\rangle\langle h\ps{2}, k'\rangle,\quad \langle h, 1\rangle=\ve(h),\\
&\langle hh', k\rangle=\langle h, k\ps{1}\rangle\langle h', k\ps{2}\rangle,\quad \langle 1, k\rangle=\ve(k), \\
& \langle h, S(k)\rangle = \langle S(h), k\rangle,
\end{align}
for any $h,h' \in H$ and any $k,k' \in K$.
\end{definition}

If the pairing \eqref{aux-dual-pairing-Hopf-algebras} is non-degenerate, \ie
\begin{itemize}
\item [(i).] $\langle h, k\rangle = 0, \forall k \in K \Rightarrow h = 0$,
\item [(ii).] $\langle h, k\rangle = 0, \forall h \in H \Rightarrow k = 0$,
\end{itemize}
then the pair $(H,K)$ is called a non-degenerate dual pair of Hopf algebras.

\medskip

For a finite dimensional Hopf algebra $H$ and its algebraic dual $H^\ast$, the pair $(H,H^\ast)$ is a non-degenerate dual pair of Hopf algebras. In the infinite dimensional case, the dual Hopf algebra of $H$ is defined to be
\begin{equation}
H^\circ = \Big\{f\in H^\ast \mid \exists \; I\leq \ker f \text{ such that } \; \dim((\ker f)/I) < \infty  \Big\}.
\end{equation}
However, the pairing between $H$ and $H^\circ$ is not necessarily non-degenerate, see for instance \cite[Section 2.2]{Abe-book}.

\medskip

We use the duality for the following result.

\begin{proposition}
Let $(H,K)$ be a non-degenerate dual pair of Hopf algebras. Then for a Hopf algebra $L$, the pair $(L,H)$ is a mutual pair Hopf algebras if and only if $(K,L)$ is a matched pair of Hopf algebras.
\end{proposition}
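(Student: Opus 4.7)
The plan is to use the non-degenerate Hopf pairing $\langle\,,\,\rangle:H\ot K\to\Cb$ as a dictionary that translates back and forth between the (action, action) data of the mutual pair $(L,H)$ and the (action, coaction) data of the matched pair $(K,L)$. Explicitly, I would convert the right $L$-action $\lt:H\ot L\to H$ into a left $L$-action $\rt:L\ot K\to K$ via the transposition
\begin{equation*}
\langle h\lt u,f\rangle=\langle h,u\rt f\rangle,\qquad h\in H,\ u\in L,\ f\in K,
\end{equation*}
and the left $H$-action $\rt:H\ot L\to L$ into a right $K$-coaction $\nb:L\to L\ot K$ characterized by
\begin{equation*}
h\rt u=\langle h,u\ns{1}\rangle\,u\ns{0},\qquad h\in H,\ u\in L.
\end{equation*}
Non-degeneracy of the pairing is what makes both of these correspondences bijective, and it also lets one replace an identity in $K$ or in $L\ot K$ by the family of scalar identities obtained from pairing against arbitrary elements of $H$ (respectively $H\ot H$).

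Once this dictionary is set up the proof reduces to a systematic pairing computation. First I would check that the defining properties of the mutual pair structure on $(L,H)$ (right $L$-module coalgebra structure on $H$, left $H$-module coalgebra structure on $L$) correspond term by term to the defining properties of the matched pair structure on $(K,L)$ (left $L$-module algebra structure on $K$, right $K$-comodule coalgebra structure on $L$). Next I would verify that each of the five compatibility axioms of a matched pair follows, upon pairing with an appropriate test element of $H$, from one of the three compatibility axioms of the mutual pair. For example, to obtain
\begin{equation*}
\Delta(u\rt f)=u\ps{1}\ns{0}\rt f\ps{1}\ot u\ps{1}\ns{1}(u\ps{2}\rt f\ps{2}),
\end{equation*}
I would pair with $h\ot h'$ and rewrite $\langle hh',u\rt f\rangle=\langle(hh')\lt u,f\rangle$, then expand $(hh')\lt u$ using the mutual-pair identity for $(vv')\lt u$ and re-invoke the dictionary; non-degeneracy then identifies the two sides. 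The counit axiom $\ve(u\rt f)=\ve(u)\ve(f)$ and the unit-coaction axiom $\nb(1)=1\ot 1$ fall out of $1\lt u=\ve(u)$ and $v\rt 1=\ve(v)$ respectively; the multiplicativity $\nb(uv)=u\ps{1}\ns{0}v\ns{0}\ot u\ps{1}\ns{1}(u\ps{2}\rt v\ns{1})$ follows by expanding $h\rt(uv)$ through the mutual-pair identity for $v\rt(uu')$; and the twisting relation is precisely the cocommuting identity
\begin{equation*}
v\ps{1}\lt u\ps{1}\ot v\ps{2}\rt u\ps{2}=v\ps{2}\lt u\ps{2}\ot v\ps{1}\rt u\ps{1}
\end{equation*}
paired in its first tensor factor with an element $f\in K$. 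The converse direction is obtained by reading each of these calculations in the opposite direction; all the manipulations are symmetric in the roles of action and coaction once the dictionary is fixed.

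The one genuinely nontrivial step, and the main obstacle, is ensuring that the dual of the left $H$-action on $L$ actually lands in $L\ot K$ rather than merely in some completion or larger dual space. This is automatic when $H$ or $K$ is finite dimensional, and in the general setting it is tantamount to a rationality condition: for each $u\in L$ the linear map $h\mapsto h\rt u:H\to L$ must be of the form $\sum_i\langle h,k_i\rangle u_i$ for finitely many $k_i\in K$ and $u_i\in L$. Once this rationality is assumed (as it holds in all the geometric examples used in the sequel), the equivalence of the two structures becomes a purely formal exercise in non-degenerate duality.
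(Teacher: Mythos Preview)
The paper does not actually prove this proposition: it is stated at the end of the subsection on bicrossed products and the text then moves directly to the next section. So there is nothing in the paper to compare your argument against; the proposition is treated as a background fact underlying the semidualization procedure (cf.\ Majid's book, cited as \cite{Majid-book}).

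Evaluating your proposal on its own merits: the dictionary you set up is exactly the right one, and the axiom-by-axiom verification via pairing against test elements is the standard route. Your identification of the rationality obstacle is the genuinely important point. In full generality the statement as written is not literally true without some finiteness or rationality hypothesis: the transpose of the $H$-action on $L$ gives a map $L\to\Hom(H,L)$, and one needs this to factor through $L\ot K\hookrightarrow\Hom(H,L)$. The paper sidesteps this because in every application the pair $(H,K)$ is of the form $(U(\Fg_2),R(\Fg_2))$ or $(\Cb[G_2],R(G_2))$, and the $H$-action on $L=U(\Fg_1)$ is locally finite (it comes from a finite-dimensional Lie algebra action on $\Fg_1$), so the coaction lands in the right place by the equivalence between locally finite $U(\Fg_2)$-modules and $R(\Fg_2)$-comodules that the paper invokes repeatedly. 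Your proof sketch is correct modulo making this hypothesis explicit, and your caveat at the end is well placed.
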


\section{Hopf-cyclic cohomology}

As our main interest is the Hopf cyclic cohomology of certain bicrossed product Hopf algebras with coefficients, in this section we recall this theory briefly. We first outline the theory of coefficients by recalling the modular pair in involutions (MPI) as well as the stable anti-Yetter-Drinfeld (SAYD) modules from \cite{ConnMosc,ConnMosc00,HajaKhalRangSomm04-I}. We then summarize the cyclic cohomology theory for Hopf algebras. More explicitly, to any of the four Hopf symmetries described in definitions \ref{definition-module-algebra}, \ref{definition-module-coalgebra}, \ref{definition-comodule-algebra} and \ref{definition-comodule-coalgebra}, there corresponds a Hopf-cyclic complex that computes the Hopf-cyclic (co)homology of a (co)algebra with coefficients in a SAYD module. In this subsection we recall from \cite{ConnMosc98,ConnMosc00,HajaKhalRangSomm04-II,MoscRang09} the Hopf-cyclic cohomology of an $H$-module coalgebra with coefficients in a SAYD module over a Hopf algebra $H$.

\subsection{Hopf-cyclic coefficients}

Let $H$ be a Hopf algebra. By definition, a character $\d: H\ra \Cb$ is an algebra map. On the other hand, a group-like element $\s\in H$ is the dual object of the character, \ie $\D(\s)=\s\ot \s$ and $\ve(\s)=1$.

\begin{definition}
Let $H$ be a Hopf algebra, $\d: H\ra \Cb$ be an algebra map and $\s\in H$ be a group-like element. The pair $(\d,\s)$ is called a modular pair in involution if
\begin{equation}
\d(\s)=1, \quad \text{and}\quad  S_\d^2=\Ad_\s,
\end{equation}
where ${\rm Ad}_\s(h)= \s h\s^{-1}$, for any $h \in H$ and  $S_\d$ is defined by
\begin{equation}\label{aux-twisted-antipode}
S_\d(h)=\d(h\ps{1})S(h\ps{2}), \quad h \in H.
\end{equation}
\end{definition}

Let us next recall the definition of a right-left stable-anti-Yetter-Drinfeld module over a Hopf algebra.

\begin{definition}
Let $H$ be a Hopf algebra and $V$ be a vector space. Then $V$ is called a right-left anti-Yetter-Drinfeld module over $H$ if it is a right $H$-module, a left $H$-comodule via $\Db:V \to H \ot V$, and
\begin{equation}\label{aux-SAYD-condition}
\Db(v\cdot h)= S(h\ps{3})v\ns{-1}h\ps{1}\ot v\ns{0}\cdot h\ps{2},
\end{equation}
for any $v\in V$ and $h\in H$. Moreover, $V$ is called stable if
\begin{equation}
v\ns{0}\cdot v\ns{-1}=v,
\end{equation}
\end{definition}

Replacing $S$ by $S^{-1}$ we arrive at the definition of Yetter-Drinfeld module (YD module in short). For the sake of completeness we include the definition below.

\begin{definition}
Let $H$ be a Hopf algebra and $V$ be a left $H$-module and right $H$-comodule via $\Db(v)= v\ns{-1}\ot v\ns{0}$. Then $V$ is called a left-right Yetter-Drinfeld module (YD module for short) over $H$  if
\begin{equation}\label{aux-left-right-YD-module}
(h\ps{2}\cdot v)\ns{0}\ot (h\ps{2}\cdot v)\ns{1}h\ps{1}= h\ps{1}\cdot  v\ns{0}\ot h\ps{2}v\ns{1},
\end{equation}
or equivalently
\begin{equation}\label{aux-left-right-YD-module-equivalent-condition}
\Db(h\cdot v)= h\ps{2}\cdot v\ns{0}\ot  h\ps{3}v\ns{1} S^{-1}(h\ps{1}).
\end{equation}
Similarly, $V$ is called a right-left Yetter-Drinfeld module over $V$ if
\begin{equation}\label{aux-right-left-YD-module}
h\ps{2}(v\cdot h\ps{1})\ns{-1}\ot (v\cdot h\ps{1})\ns{0}= v\ns{-1}h\ps{1}\ot v\ns{0}\cdot h\ps{2},
\end{equation}
equivalently
\begin{equation}\label{aux-right-left-YD-module-equivalent-condition}
\Db(v\cdot h)= S^{-1}(h\ps{3})v\ns{-1} h\ps{1}\ot v\ns{0}\cdot h\ps{2}.
\end{equation}
\end{definition}

Finally we recall \cite[Lemma 2.2]{HajaKhalRangSomm04-I} which reveals the link between the notions of MPI and SAYD.

\begin{lemma}[\cite{HajaKhalRangSomm04-I}]
Let $H$ be a Hopf algebra, $\d: H\ra \Cb$ be a character and $\s\in H$ be a group-like element. Then $\Cb$ is a right-left SAYD module over the Hopf algebra $H$ via $\d$ and $\s$ if and only if $(\d,\s)$ is an MPI.
\end{lemma}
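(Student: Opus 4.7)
The plan is to unfold the SAYD conditions for $V=\Cb$ with the natural action and coaction, and match them against the two MPI axioms.

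First I would specialize: with $v=1\in\Cb$, set $1\cdot h:=\d(h)$ and $\Db(1):=\s\ot 1$. These are well-defined because $\d$ is an algebra map and $\s$ is group-like. The stability condition $v\ns{0}\cdot v\ns{-1}=v$ collapses immediately to $\d(\s)=1$. The AYD compatibility \eqref{aux-SAYD-condition} evaluated at $v=1$, after applying the definition \eqref{aux-twisted-antipode} of the twisted antipode and using $\D^2(h)=h\ps{1}\ot h\ps{2}\ot h\ps{3}$, becomes the identity in $H$
$$
\d(h)\s \;=\; \d(h\ps{2})S(h\ps{3})\s h\ps{1} \;=\; S_\d(h\ps{2})\s h\ps{1}, \qquad \forall h\in H. \qquad (\star)
$$
Thus the lemma reduces to the equivalence $(\star)\Leftrightarrow S_\d^2=\Ad_\s$, granted $\d(\s)=1$.

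For MPI $\Rightarrow$ SAYD, I would substitute $\s h\ps{1}=S_\d^2(h\ps{1})\s$ on the right hand side of $(\star)$, so that it becomes $S_\d(h\ps{2})S_\d^2(h\ps{1})\s$. Anti-multiplicativity of $S_\d$, which holds because $\d$ is a character, rewrites the product as $S_\d\bigl(S_\d(h\ps{1})h\ps{2}\bigr)\s$, and the standard convolution identity $S_\d(h\ps{1})h\ps{2}=\d(h)\,1_H$---itself a direct consequence of the antipode axiom applied to $S_\d=\d\ast S$---collapses this to $\d(h)\s$, as required.

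For SAYD $\Rightarrow$ MPI, I would reverse the logic. The displayed chain in the previous paragraph is unconditional, so $(\star)$ is equivalent to
$$
S_\d(h\ps{2})\s h\ps{1}\;=\;S_\d(h\ps{2})S_\d^2(h\ps{1})\s, \qquad \forall h\in H,
$$
and the task is to cancel the common left factor $S_\d(h\ps{2})$ in the Sweedler sum. Both sides have the shape $S_\d(h\ps{2})\phi(h\ps{1})$, i.e.\ the opposite convolution of $S_\d$ with $\phi$ on $\Hom(H,H)$, once with $\phi(h)=\s h$ and once with $\phi(h)=S_\d^2(h)\s$. Since $S_\d$ factors as a product of $S$ and $\d$ and each of these is invertible in the opposite convolution---with the inverse of $S$ being $S^2$ by virtue of $S(h\ps{2})S^2(h\ps{1})=\ve(h)\,1_H$ and the inverse of $\d$ being $\d\circ S$---one obtains an explicit opposite-convolution inverse $h\mapsto \d^{-1}(h\ps{2})S^2(h\ps{1})$ for $S_\d$. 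Convolving with it on the left cancels $S_\d(h\ps{2})$ and yields $\s h = S_\d^2(h)\s$ for all $h\in H$, i.e.\ $\Ad_\s=S_\d^2$. Combined with $\d(\s)=1$ from stability, this is precisely MPI.

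The main obstacle is the last cancellation step: one has to verify that $S_\d$ is invertible in the opposite convolution on $\Hom(H,H)$ and keep careful track of Sweedler indices as the convolution inverse is applied. The computation is routine once the opposite-convolution inverse has been identified, but without that identification the Sweedler bookkeeping quickly becomes unwieldy.
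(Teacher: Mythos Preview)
The paper does not prove this lemma; it merely cites it from \cite{HajaKhalRangSomm04-I} as background material, so there is no proof in the paper to compare against. Your proposal is a correct and complete argument: the reduction of AYD and stability to $(\star)$ and $\d(\s)=1$ is accurate, the forward direction is clean, and the cancellation in the reverse direction via the opposite-convolution inverse $T(h)=\d(S(h\ps{2}))S^2(h\ps{1})$ of $S_\d$ is valid (using that the paper assumes $S$ invertible).
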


\subsection{Hopf-cyclic complex}

Let $V$ be a right-left SAYD module over a Hopf algebra $H$. Then we have the graded space
\begin{equation}
C(H,V) := \bigoplus_{q\geq 0} C^q(H,V), \quad C^q(H,V):= V\ot H^{\ot q}
\end{equation}
with the coface operators
\begin{align}
&\p_i: C^q(H,V)\ra C^{q+1}(H,V), \quad 0\le i\le q+1\\
&\p_0(v\ot h^1\odots h^q)=v\ot 1\ot h^1\odots h^q,\\
&\p_i(v\ot h^1\odots h^q)=v\ot h^1\odots h^i\ps{1}\ot h^i\ps{2}\odots h^q, \quad 1\le i\le q \\
&\p_{q+1}(v\ot h^1\odots h^q)=v\ns{0}\ot h^1\odots h^q\ot v\ns{-1},
\end{align}
the codegeneracy operators
\begin{align}
&\s_j: C^q(H,V)\ra C^{q-1}(H,V), \quad 0\le j\le q-1 \\
&\s_j (v\ot h^1\odots h^q)= (v\ot h^1\odots \ve(h^{j+1})\odots h^q),
\end{align}
and the cyclic operator
\begin{align}
&\tau: C^q(H,V)\ra C^q(H,V),\\
&\tau(v\ot h^1\odots h^q)=v\ns{0}\cdot h^1\ps{1}\ot S(h^1\ps{2})\cdot(h^2\odots h^q\ot v\ns{-1}),
\end{align}
where $H$ acts on $H^{\ot q}$ diagonally, see \eqref{aux-tensor-product-H-module}.

\medskip

The graded space $C(H,V)$ endowed with the above operators is then a cocyclic module. This means that $\p_i,$ $\s_j$ and $\tau$ satisfy
\begin{equation}
\p_j  \p_i = \p_i  \p_{j-1}, \, \, i < j  , \qquad \s_j \s_i = \s_i
\s_{j+1},  \, \,  i \leq j
\end{equation}
\begin{equation}
\s_j  \p_i = \left\{ \begin{matrix} \p_i  \s_{j-1} \hfill &i < j
\hfill \cr 1_n \hfill &\hbox{if} \ i=j \ \hbox{or} \ i = j+1 \cr
\p_{i-1}  \s_j \hfill &i > j+1  ;  \hfill \cr
\end{matrix} \right.
\end{equation}

\begin{eqnarray}
\tau_n  \p_i  = \p_{i-1}  \tau_{n-1} ,
 \quad && 1 \leq i \leq n ,  \quad \tau_n  \p_0 =
\p_n \\ \label{cj} \tau_n  \s_i = \s_{i-1} \tau_{n+1} , \quad &&
1 \leq i \leq n , \quad \tau_n  \s_0 = \s_n  \tau_{n+1}^2 \\
\label{ce} \tau_n^{n+1} &=& 1_n  \, .
\end{eqnarray}

Using the coface operators we define the Hochschild coboundary
\begin{equation}
b: C^{q}(H,V)\ra C^{q+1}(H,V), \qquad b:=\sum_{i=0}^{q+1}(-1)^i\p_i.
\end{equation}
Then $b^2=0$ and as a result we have the Hochschild complex $(C^\bullet(H,V),b)$ of the coalgebra $H$ with coefficients in the bicomodule $V$. Here, we view $V$ as a bicomodule with the trivial right $H$-comodule structure. The cohomology of the complex $(C(H,V),b)$ is denoted by $H_{\rm coalg}(H,V)$.

\medskip

Using the rest of the operators we define the Connes boundary operator,
\begin{equation}
B: C^{q}(H,V)\ra C^{q-1}(H,V), \qquad B:=\left(\sum_{i=0}^{q-1}(-1)^{qi}\tau^{i}\right) \s_{q-1}\tau.
\end{equation}

It is shown in \cite{Conn83} that for any cocyclic module we have $b^2=B^2=(b+B)^2=0$. As a result, we have the cyclic cohomology of $H$ with coefficients in the SAYD module $V$, which is  denoted by $HC(H,V)$, as the total cohomology of the bicomplex
\begin{align}
C^{p,q}(H,V)= \left\{ \begin{matrix} V\ot H^{\ot q-p},&\quad\text{if}\quad 0\le p\le q, \\
&\\
0, & \text{otherwise.}
\end{matrix}\right.
\end{align}

One also defines the periodic cyclic cohomology of $H$ with coefficients in $V$, denoted by $HP(H,V)$, as the total cohomology of the bicomplex
\begin{align}
C^{p,q}(H,V)= \left\{ \begin{matrix} V\ot H^{\ot q-p},&\quad\text{if}\quad  p\le q, \\
&\\
0, & \text{otherwise.}
\end{matrix}\right.
\end{align}

\section{Lie algebra (co)homology}

Much in the same way the cyclic cohomology of algebras generalizes de Rham homology of currents, the Hopf-cyclic cohomology of Hopf algebras generalizes Lie algebra homology. Extending this result to the theory with coefficients, our main objective here is to identify the Hopf-cyclic cohomology of certain bicrossed product Hopf algebras in terms of (relative) Lie algebra homology and cohomology. We thus recall the (relative) Lie algebra (co)homology from \cite{ChevEile,Knap} and \cite{HochSerr53}.

\medskip

Let  $\Fg$ be a Lie algebra and $V$ be a right  $\Fg$-module.  We first recall the Lie algebra homology complex
\begin{equation}
C(\Fg, V) = \bigoplus_{q\geq0}C_q(\Fg, V), \quad C_q(\Fg, V) := \wg^q \Fg \ot V
\end{equation}
with the Chevalley-Eilenberg boundary map
\begin{equation}
\xymatrix{\cdots \ar[r]^{\p_{\rm CE}\;\;\;\;\;\;\;\;} & C_2(\Fg,V) \ar[r]^{\p_{\rm CE}\;\;\;} & C_1(\Fg,V) \ar[r]^{\;\;\;\;\;\;\;\p_{\rm CE}} & V},
\end{equation}
which is explicitly defined by
\begin{align}
\begin{split}
& \p_{\rm CE}(X_0 \wdots X_{q-1} \ot v ) = \sum_{i= 0}^{q-1} (-1)^i  X_0 \wdots \widehat{X}_i \wdots
X_{q-1} \ot v \cdot X_i + \\
& \sum_{0 \leq i<j \leq q-1} (-1)^{i+j} [X_i, X_j] \wg X_0 \wdots \widehat{X}_i \wdots
\widehat{X}_j \wdots X_{q-1} \ot v,
\end{split}
\end{align}
where $\widehat{X}_i$ indicates the omission of the argument $X_i$. The homology of the complex $(C(\Fg, V),\p_{\rm CE})$ is called the Lie algebra homology of $\Fg$ with coefficients in $V$ and is denoted by $H_{\bullet}(\Fg,V)$.

\medskip

Next we recall the Lie algebra cohomology complex
\begin{equation}
W(\Fg, V) = \bigoplus_{q\geq0}W^q(\Fg, V), \quad W^q(\Fg,V)=\Hom(\wedge^q \Fg ,V),
\end{equation}
where $\Hom(\wedge^q \Fg ,V)$ is the vector space of all alternating linear maps on $\Fg^{\ot q}$ with values in $V$. The Chevalley-Eilenberg coboundary
\begin{equation}
\xymatrix{V\ar[r]^{d_{\rm CE}\;\;\;\;\;\;\;\;}&W^1(\Fg,V)\ar[r]^{d_{\rm CE}}& W^2(\Fg,V)\ar[r]^{\;\;\;\;\;\;d_{\rm CE}}&\cdots\;,}
\end{equation}
is defined by
 \begin{align}\label{aux-Chevalley-Eilenberg-coboundary}
 \begin{split}
& d_{\rm CE}(\a)(X_0, \ldots,X_q)=\sum_{0 \leq i<j \leq q} (-1)^{i+j}\a([X_i,X_j], X_0\ldots \widehat{X}_i, \ldots, \widehat{X}_j, \ldots, X_q)+\\
& ~~~~~~~~~~~~~~~~~~~~~~~\sum_{i=0}^q (-1)^{i+1}\a(X_0,\ldots,\widehat{X}_i,\ldots X_q)\cdot X_i,
\end{split}
 \end{align}
for any $\a \in W^q(\Fg,V)$.

\medskip

Alternatively, we may identify  $W^q(\Fg,V)$ with $\wg^q\Fg^\ast \ot V$ and the coboundary $d_{\rm CE}$ with
\begin{align}
\begin{split}
& d_{\rm CE}(v)= - \t^i \ot v \cdot X_i,\\
& d_{\rm CE}(\b \ot v)= d_{\rm dR}(\b)\ot v  - \t^i \wg \b \ot v \cdot X_i,
\end{split}
\end{align}
where
\begin{equation}
d_{\rm dR}:\wedge^p\Fg^\ast\ra \wedge^{p+1}\Fg^\ast, \quad d_{\rm dR}(\t^i)=-\frac{1}{2}C^i_{jk}\t^j\wg\t^k
\end{equation}
is the de Rham differential. The cohomology of the complex $(W(\Fg,V),d_{\rm CE})$ is denoted by $H^\bullet(\Fg,V)$ and is called the Lie algebra cohomology of $\Fg$ with coefficients in $V$.

\medskip

Finally we recall the relative Lie algebra cohomology. For a Lie subalgebra $\Fh\subseteq \Fg$ we define the relative complex by
\begin{equation}
W(\Fg,\Fh,V) = \bigoplus_{q\geq0}W^q(\Fg,\Fh,V),
\end{equation}
where
\begin{equation}
W^q(\Fg,\Fh,M)=\left\{\theta \in W^q(\Fg,M) \,|\, \iota(X) \theta = \Lc_X(\theta) = 0, \, X\in \Fh\right\}.
\end{equation}
Here,
\begin{equation}
\iota(X)(\theta)(X_1,\ldots,X_{q})=\theta(X,X_1,\ldots,X_q)
\end{equation}
is the contraction operator and
\begin{align}
& \Lc_X(\theta)(X_1,\ldots,X_{q})=\\\notag
& \sum_{i=1}^q(-1)^i\theta([X,X_i], X_1,\ldots, \widehat{X}_i,\ldots,X_q)+\theta(X_1,\ldots,X_q)X
\end{align}
is the Lie derivative.

\medskip

We can identify $W^q(\Fg,\Fh,V)$ with $\Hom_{\Fh}(\wg^q(\Fg/\Fh),V)=(\wedge^q(\Fg/\Fh)^\ast \ot V)^\Fh$, where the action of $\Fh$ on $\Fg/\Fh$ is induced  by the  adjoint action of $\Fh$ on $\Fg$.

\medskip

It is shown in \cite{ChevEile} that the Chevalley-Eilenberg coboundary  \eqref{aux-Chevalley-Eilenberg-coboundary} is well defined on $W(\Fg,\Fh,V)$. The cohomology of the relative subcomplex $(W(\Fg,\Fh,V),d_{\rm CE})$ is denoted by $H^\bullet(\Fg,\Fh,V)$ and is called the relative Lie algebra cohomology of $\Fh\subseteq \Fg$ with coefficients in $V$.

\chapter{Geometric Hopf algebras}\label{chapter-geometric-hopf-algebras}

In \cite{HochMost57} and \cite{Hoch59}, Hochschild and Mostow showed that the algebra of representative functions on Lie groups, Lie algebras and algebraic groups form a commutative Hopf algebra whose coalgebra structure  encodes the  group (Lie algebra) structure of the group (Lie algebra). In \cite{Hoch61} and \cite{HochMost62} they invented a cohomology theory  for these objects, called  the representative cohomology. This cohomology theory is defined on the representatively injective resolutions and they proved a van Est  theorem  by computing the representative cohomology  in terms of the relative Lie algebra cohomology  for suitable pairs of Lie algebras.

\medskip

Here we extend the work of Hochschild-Mostow by associating a bicrossed product Hopf algebra, which we call a Lie-Hopf algebra, to any matched pair of Lie algebras, Lie groups, and  algebraic groups. A Lie-Hopf algebra is built on a commutative Hopf algebra of the representative functions (on a Lie group, Lie algebra or an affine algebraic group) and a universal enveloping algebra of a Lie algebra. Hence, we first recall these building blocks and then we describe the construction of the Lie-Hopf algebras by the semidualization of Lie groups, Lie algebras and affine algebraic groups.

\section{Commutative representative Hopf algebras}

In this section we recall the commutative Hopf algebras $R(G)$ of representative functions on a Lie group $G$, $R(\Fg)$ of representative functions on the universal enveloping algebra $U(\Fg)$ and $\mathscr{P}(G)$ of polynomial functions on an affine algebraic group $G$.

\medskip

Let $G$ be a Lie group and $\rho: G \to {\rm GL}(V)$ be a finite dimensional smooth representation of $G$. Then for any linear functional $\pi \in \End(V)^\ast$, the composition $\pi \circ \rho$ is called a representative function on $G$ associated with the representation space $V$. We denote by $R(G)$ the set of all representative functions on $G$.

\medskip

The group $G$ acts on $R(G)$ by the left and the right translations, defined by
\begin{equation}
(\psi \cdot f)(\vp) := f(\vp\psi), \quad (f \cdot \psi)(\vp) := f(\psi\vp),
\end{equation}
for any $\psi,\vp \in G$ and any $f \in R(G)$. We have the following characterization of the representative functions.

\begin{proposition}[\cite{HochMost57}]\label{proposition-elements-of-R(G)}
Let $f:G \to \mathbb{C}$ be a smooth function. Then the following are equivalent:
\begin{itemize}
\item [(i).] $f$ is a representative function.
\item [(ii).] The right translations  $\{f \cdot \psi \,|\, \psi \in G\}$ span a finite dimensional vector space over $\mathbb{C}$.
\item [(iii).] The left translations  $\{\psi \cdot f \,|\, \psi \in G\}$ span a finite dimensional vector space over $\mathbb{C}$.
\end{itemize}
\end{proposition}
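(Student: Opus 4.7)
The plan is to establish the cycle of implications (i) $\Ra$ (ii), (i) $\Ra$ (iii), (ii) $\Ra$ (i), (iii) $\Ra$ (i), the two ``from (i)'' arguments running in parallel and likewise the two ``to (i)'' arguments running in parallel.

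For (i) $\Ra$ (ii) and (i) $\Ra$ (iii), I would write $f = \pi \circ \rho$ with $\rho:G\to\GL(V)$ a smooth finite-dimensional representation and $\pi \in \End(V)^\ast$. Picking a basis $\{e_i\}$ of $V$ with dual basis $\{e^i\}$, the matrix coefficients $\rho_{ij}(g):=e^i(\rho(g)e_j)$ satisfy $\rho_{ij}(\psi\vp)=\sum_k\rho_{ik}(\psi)\rho_{kj}(\vp)$ because $\rho$ is a homomorphism, while $f=\sum_{i,j}\pi^{ij}\rho_{ij}$ expressed in the dual basis of $\End(V)^\ast$. The identity
\[
(f\cdot \psi)(\vp) = f(\psi\vp) = \sum_{i,j,k}\pi^{ij}\rho_{ik}(\psi)\rho_{kj}(\vp)
\]
then exhibits every right translate $f\cdot\psi$ as an element of the finite-dimensional span of the functions $\rho_{kj}$, proving (ii); the analogous expansion of $(\psi\cdot f)(\vp) = f(\vp\psi)$ proves (iii).

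For (iii) $\Ra$ (i), I would set $W:=\mathrm{span}\{\psi\cdot f \mid \psi \in G\}$, finite-dimensional by hypothesis. The computation $\psi\cdot(\psi'\cdot f)=(\psi\psi')\cdot f$ shows $W$ is $G$-stable under left translation, giving a group homomorphism $\rho:G\to\GL(W)$, $\rho(\psi)(h):=\psi\cdot h$. Defining $\tilde\pi\in\End(W)^\ast$ by $\tilde\pi(A):=(A(f))(1)$ yields
\[
\tilde\pi(\rho(\psi)) = (\psi\cdot f)(1) = f(\psi),
\]
so $f=\tilde\pi\circ\rho$ is representative. The implication (ii) $\Ra$ (i) is handled symmetrically by setting $V:=\mathrm{span}\{f\cdot\psi\}$, noting that right translation makes $V$ a $G$-module for the action $\psi\cdot h := h\cdot\psi^{-1}$, and converting to a bona fide left representation via this same device.

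The genuine point that requires more than bookkeeping is the smoothness of $\rho:G\to\GL(W)$. Fixing a basis $\{h_1,\ldots,h_n\}$ of $W$, the matrix coefficients $c_{ij}$ of $\rho$ in this basis are determined by $h_i(\vp\psi)=\sum_j c_{ij}(\psi)h_j(\vp)$ for all $\vp\in G$. Since the $h_j$ are linearly independent as functions on $G$, one can choose points $\vp_1,\ldots,\vp_n\in G$ making the evaluation matrix $(h_j(\vp_k))_{j,k}$ invertible; inverting this linear system expresses each $c_{ij}(\psi)$ as a $\Cb$-linear combination of the manifestly smooth functions $\psi\mapsto h_i(\vp_k\psi)$, hence $c_{ij}$ is smooth. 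I expect this smoothness verification to be the only step requiring care beyond direct unfolding of definitions.
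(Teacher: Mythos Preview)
The paper does not supply its own proof of this proposition; it is simply quoted from \cite{HochMost57} and used as a black box. Your argument is correct and is essentially the standard proof one finds in that reference: matrix coefficients of a finite-dimensional representation give finitely many functions whose span absorbs all translates, and conversely the finite-dimensional span of translates is itself a smooth representation on which evaluation at the identity recovers $f$. Your care with the smoothness of $\rho$ via inverting an evaluation matrix is exactly the point that distinguishes this from the purely algebraic version of the statement, and it is handled correctly.
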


By this proposition, the set $R(G)$ of representative functions on the group $G$ forms a subalgebra of the algebra $C^\infty(G)$ of smooth functions on $G$.

\medskip

The coalgebra structure of $R(G)$ follows from the observation that for
\begin{equation}
\d:C^\infty(G) \to C^\infty(G\times G), \quad \d(f)(\psi,\vp) = f(\psi\vp)
\end{equation}
and
\begin{equation}
\pi:C^\infty(G) \ot C^\infty(G) \to C^\infty(G\times G), \quad \pi(f \ot g)(\psi,\vp) = f(\psi)g(\vp),
\end{equation}
we have $\d(f) \subseteq \Im\pi$ if and only if $f \in R(G)$, \cite[Theorem 2.2.7]{Abe-book}.

\medskip

As a result, the set $R(G)$ of representative functions on the Lie group $G$ forms a commutative Hopf algebra via
\begin{align}
&\mu: R(G)\ot R(G)\ra R(G), && \mu(f\ot g)(\psi)=f(\psi)g(\psi),\\
&\eta:\Cb\ra R(G),&& \eta(1)=\ve:\Cb [G]  \to \Cb,\\
&\D:R(G)\ra R(G)\ot R(G),&& \D(f)=\pi^{-1}(\d(f)), \\
&\ve:R(G) \to \Cb,&& \ve(f) = f(1), \\
& S: R(G)\ra R(G), &&S(f)(\psi)=f(\psi^{-1}).
\end{align}

Next we recall the commutative Hopf algebra of representative functions on the universal enveloping algebra of a Lie algebra $\Fg$. Let us define
\begin{equation}
R(\Fg)=\Big\{f\in \Hom(U(\Fg),\Cb)\,|\, \exists I\leq U(\Fg), I \subseteq \ker f, \, \dim((\ker f)/I) < \infty   \Big\}.
\end{equation}
On one hand, $R(\Fg)$ has the dual algebra structure of the cocommutative coalgebra $U(\Fg)$. On the other hand, the finite codimensionality condition for the ideal $I \subseteq \ker f$ guarantees, for instance by
\cite[Theorem 2.2.12]{Abe-book} or \cite{Hoch59}, that  for any $f\in R(\Fg)$  there exist  a  finite number of
functions $f_i', f_i'' \in R(\Fg)$ such that
\begin{equation}
f(u^1u^2)=\sum_{i}f_i'(u^1)f_i''(u^2),\quad u^1,u^2\in U(\Fg).
\end{equation}

In short, the Hopf algebra structure of $R(\Fg)$ is given by
\begin{align}
&\mu: R(\Fg)\ot R(\Fg)\ra R(\Fg), && \mu(f\ot g)(u)=f(u\ps{1})g(u\ps{2}),\\
&\eta:\Cb\ra R(\Fg),&& \eta(1)=\ve:U(\Fg) \to \Cb,\\
&\D:R(\Fg)\ra R(\Fg)\ot R(\Fg),&& \D(f)=\sum_if_i'\ot f_i'',\\
& \ve:R(\Fg)\ra\Cb,&&\ve(f) = f(1), \\
& S: R(\Fg)\ra R(\Fg), &&S(f)(u)=f(S(u)).
\end{align}

Finally we recall the commutative Hopf algebra of polynomial functions on an affine algebraic group $G$.

\medskip

Let $G$ be an affine algebraic group and $\mathscr{P}(G)$ be the set of polynomial maps on $G$, which is a finitely generated subalgebra of the set ${\rm Map}(G,\Cb)$ of all maps $G \to \Cb$, that separates the points of $G$. In other words, if for $\psi, \vp \in G$, $\psi \neq \vp$ then there exists $f \in \mathscr{P}(G)$ such that $f(\psi) \neq f(\vp)$, and $G$ is the group $\widehat{\mathscr{P}(G)}$ of all algebra homomorphisms $\mathscr{P}(G) \to \Cb$, \ie
\begin{equation}
G \cong \widehat{\mathscr{P}(G)}, \quad \psi \mapsto \widehat{\psi}, \quad \widehat{\psi}(f) = f(\psi)
\end{equation}
for any $f \in \mathscr{P}(G)$.

\medskip

By definition $\mathscr{P}(G)$ is an algebra. Moreover, the multiplication, inverse and the unit of the group $G$ induce a Hopf algebra structure on $\mathscr{P}(G)$ that we recall here by
\begin{align}
&\mu: \mathscr{P}(G)\ot \mathscr{P}(G)\ra \mathscr{P}(G), && \mu(f\ot g)(\psi)=f(\psi)g(\psi),\\
&\eta:\Cb\ra \mathscr{P}(G),&& \eta(1)=\ve:\Cb [G]  \to \Cb,\\
&\D:\mathscr{P}(G)\ra \mathscr{P}(G)\ot \mathscr{P}(G),&& \D(f)(\psi,\vp)=f(\psi\vp), \\
&\ve:\mathscr{P}(G) \to \Cb,&& \ve(f) = f(1), \\
& S: \mathscr{P}(G)\ra \mathscr{P}(G), &&S(f)(\psi)=f(\psi^{-1}).
\end{align}

Finally we remark that $\mathscr{P}(G)$ is a Hopf subalgebra of $R(G)$, \cite[Section 1.3]{Hoch-book}. In the definition of $R(G)$ for an affine algebraic group $G$, we consider the rational representations of $G$, \cite[Section 4]{Hoch59} and \cite[Section 2]{HochMost57}.

\section{Noncommutative geometric Hopf algebras}

In this section we construct a matched pair of Hopf algebras (hence a bicrossed product Hopf algebra) out of a matched pair of Lie algebras, Lie groups or affine algebraic groups. More precisely, to the matched pair of Lie algebras $(\Fg_1,\Fg_2)$ we associate $R(\Fg_2)\acl U(\Fg_1)$. Similarly we associate the Hopf algebra $R(G_2)\acl U(\Fg_1)$ to a matched pair of Lie groups $(G_1,G_2)$, where $\Fg_1$ is the Lie algebra of the Lie group $G_1$. Finally we construct the Hopf algebra $\mathscr{P}(G_2)\acl U(\Fg_1)$ for a matched pair of affine algebraic groups $(G_1,G_2)$, where $\Fg_1$ is the Lie algebra of $G_1$.

\subsection{Lie-Hopf algebras}

In order not to check the matched pair conditions \eqref{aux-matched-pair-1} to \eqref{aux-matched-pair-5} repeatedly, we use the fact that we are interested in only representative Hopf algebras and universal enveloping algebras. We therefore introduce the notion of a Lie-Hopf algebra.

\medskip

Let $\Fc$ be a commutative Hopf algebra on which a Lie algebra  $\Fg$ acts  by derivations. We endow the vector space $\Fg\ot \Fc$ with the bracket
\begin{equation}\label{aux-bracket-on-g-ot-F}
[X\ot f, Y\ot g]= [X,Y]\ot fg+ Y\ot \ve(f)X\rt g- X\ot \ve(g) Y\rt f
\end{equation}
for any $X,Y \in \Fg$ and $f,g \in \Fc$.

\begin{lemma}
Let $\Fg$ act on a commutative Hopf algebra $\Fc$ and suppose $\ve(X\rt f)=0$ for any $X\in \Fg$ and $f\in \Fc$. Then  the bracket  \eqref{aux-bracket-on-g-ot-F} endows  $\Fg\ot \Fc$ with a  Lie algebra structure.
\end{lemma}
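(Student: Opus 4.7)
The plan is to check the two defining axioms of a Lie bracket — antisymmetry and the Jacobi identity — directly from the formula \eqref{aux-bracket-on-g-ot-F}. Antisymmetry is immediate: expanding $[Y\ot g, X\ot f]$ by the definition, using $[Y,X]=-[X,Y]$ in $\Fg$ together with the commutativity $gf=fg$ of $\Fc$, one recovers the negative of $[X\ot f,Y\ot g]$ term by term.

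For the Jacobi identity I would expand $[[X\ot f, Y\ot g], Z\ot h]$ by first applying \eqref{aux-bracket-on-g-ot-F} to $[X\ot f,Y\ot g]$, producing three summands, and then applying it again to each of these summands bracketed with $Z\ot h$. A priori each secondary bracket produces three terms; however, two of them contain a factor of the form $\ve(X\rt g)$ or $\ve(Y\rt f)$, and these drop out by the standing hypothesis $\ve(X\rt f)=0$. The factor $Z\rt(fg)$ that arises in the very first expansion splits by the Leibniz rule as $(Z\rt f)g + f(Z\rt g)$, which is available since $\Fg$ acts by derivations. The net effect is eight elementary summands per iterated bracket, hence twenty-four after summing over the three cyclic rotations of $(X\ot f,Y\ot g,Z\ot h)$.

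These twenty-four summands fall into three groups that cancel by three distinct mechanisms. First, the three ``pure'' terms of the form $[[X,Y],Z]\ot fgh$ and its cyclic rotations vanish by Jacobi in $\Fg$ after using commutativity of $\Fc$ to align coefficients. Second, nine ``double-counit'' summands organize into three triples, each supported on a single factor $X\ot(\cdot)$, $Y\ot(\cdot)$ or $Z\ot(\cdot)$ with coefficient $\ve(g)\ve(h)$, $\ve(h)\ve(f)$ or $\ve(f)\ve(g)$ respectively; each triple cancels thanks to the identity $[X,Y]\rt h = X\rt(Y\rt h) - Y\rt(X\rt h)$, i.e.\ the fact that the action $\Fg \to \mathrm{Der}(\Fc)$ is a Lie-algebra homomorphism. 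Third, the remaining twelve ``mixed'' terms — each carrying a single counit factor — pair off into six cancelling pairs, three of which use only antisymmetry of the bracket in $\Fg$ while the other three additionally invoke commutativity of $\Fc$ to identify expressions such as $f(Z\rt g)$ with $(Z\rt g)f$.

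The main obstacle is not conceptual but organizational: every cancellation above is a one-line verification once the relevant summands have been identified, but arranging the table of twenty-four terms so that the three triples and six pairs can be read off without sign errors requires care, especially when relabelling under the two cyclic rotations of $(X\ot f,Y\ot g,Z\ot h)$.
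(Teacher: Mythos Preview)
Your proof is correct and follows essentially the same direct-verification route as the paper: expand each cyclic iterated bracket via \eqref{aux-bracket-on-g-ot-F}, drop the terms containing $\ve(X\rt g)$ by hypothesis, and sum. The paper leaves $Z\rt(fg)$ unsplit (obtaining $7$ summands per iterated bracket rather than your $8$) and then simply states that the sum vanishes ``using the fact that $\Fg$ is a Lie algebra acting on $\Fc$ by derivations''; your organization of the cancellations into the Jacobi triple, three action-homomorphism triples, and six mixed pairs is more explicit than what the paper provides, but the argument is the same.
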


\begin{proof}
That the bracket is antisymmetric is straightforward.  We then need to check the Jacobi identity. To this end, we observe
\begin{align}
\begin{split}\label{aux-jacobi-1}
&[[X\ot f, Y\ot g], Z\ot h]=[[X,Y],Z]\ot fgh+Z\ot \ve(fg)[X,Y]\rt h-\\
&[X,Y]\ot \ve(h)Z\rt (fg)+[Y,Z]\ot \ve(f)hX\rt g - Y\ot \ve(h)\ve(f)Z\rt (X\rt g)-\\
&[X,Z]\ot \ve(g)hY\rt f+X\ot \ve(h)\ve(g)Z\rt(Y\rt f),
\end{split}
\end{align}
\begin{align}
\begin{split}\label{aux-jacobi-2}
&[[ Y\ot g, Z\ot h],X\ot f]= [[Y,Z],X]\ot fgh+ X\ot \ve(gh)[Y,Z]\rt f-\\
 &[Y,Z]\ot \ve(f) X\rt (gh)+[Z,X]\ot \ve(g)f Y\rt h- Z\ot \ve(f)\ve(g) X\rt(Y\rt h)-\\
&[Y,X]\ot \ve(h)fZ\rt g+ Y\ot \ve(f)\ve(h) X\rt(Z\rt g),
\end{split}
\end{align}
and
\begin{align}
\begin{split}\label{aux-jacobi-3}
&[[Z\ot h,X\ot f],Y\ot g]=[[Z,X],Y]\ot fgh+ Y\ot \ve(hf)[Z,X]\rt g- \\
&[Z,X]\ot \ve(g)Y\rt(hf)+[X,Y]\ot \ve(h)g Z\rt f- X\ot \ve(h)\ve(g)Y\rt(Z\rt f)-\\
&[Z,Y]\ot \ve(f)g X\rt h+ Z\ot \ve(g)\ve(f) Y\rt(X\rt h).
\end{split}
\end{align}
Summing up \eqref{aux-jacobi-1}, \eqref{aux-jacobi-2} and \eqref{aux-jacobi-3}, and using the fact that $\Fg$ is a Lie algebra  acting on $\Fc$ by derivations, we get
\begin{equation}
[[X\ot f, Y\ot g], Z\ot h]+[[ Y\ot g, Z\ot h],X\ot f]+ [[Z\ot h,X\ot f],Y\ot g]=0.
\end{equation}
\end{proof}

Now we assume that $\Fc$ coacts on $\Fg$ from the right via $\Db_\Fg:\Fg\ra \Fg\ot \Fc$. We define the first-order  matrix coefficients $ f^i_j\in \Fc$ of this coaction by
\begin{equation}\label{aux-first-order-matrix-coefficients}
\Db_\Fg(X_j)= \sum_i X_i\ot f_j^i.
\end{equation}
Using the coassociativity of the coaction $\Db_\Fg:\Fg\ra \Fg\ot \Fc$ we observe that
\begin{equation}\label{aux-comultiplication-matrix-coefficients}
\D(f_i^j)=\sum_{k=1}^n f_k^j\ot f_i^k.
\end{equation}

We then define the second-order  matrix coefficients by
\begin{equation}
 f_{j,k}^i:=  X_{k}\rt f^{i}_j.
\end{equation}

\begin{definition}
We say that the coaction $\Db_\Fg:\Fg\ra \Fg\ot \Fc$ satisfies the structure identity of $\Fg$  if
\begin{equation}\label{aux-Bianchi}
f_{j,i}^k-f_{i,j}^k=\sum_{s,r} C_{sr}^k f_{i}^rf_{j}^s+\sum_lC_{ij}^lf_l^k\,.
\end{equation}
\end{definition}

\begin{lemma}\label{lemma-structure-identity}
The coaction  $\Db_\Fg:\Fg\ra \Fg\ot \Fc$ satisfies the structure identity of $\Fg$  if and only  if  $\Db_\Fg:\Fg\ra \Fg\ot \Fc$ is a Lie algebra map.
\end{lemma}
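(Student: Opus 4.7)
The plan is to directly translate the Lie algebra map condition on basis elements into the structure identity \eqref{aux-Bianchi} via a bookkeeping computation. Since everything is linear in $\Fg$, it suffices to check
\begin{equation}
\Db_\Fg([X_i,X_j]) = [\Db_\Fg(X_i),\Db_\Fg(X_j)]
\end{equation}
for all pairs of basis elements $X_i,X_j \in \Fg$.

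First I would compute the left-hand side. Using $[X_i,X_j]=\sum_k C^k_{ij}X_k$ and \eqref{aux-first-order-matrix-coefficients}, one immediately gets
\begin{equation}
\Db_\Fg([X_i,X_j]) = \sum_{k,l} C^k_{ij}\, X_l \ot f^l_k.
\end{equation}
Next I would compute the right-hand side using the bracket \eqref{aux-bracket-on-g-ot-F}. A key auxiliary fact, used to simplify two of the three terms, is that $\ve(f^r_i)=\delta^r_i$; this comes from applying $(\Id\ot\ve)$ to $\Db_\Fg(X_i)=\sum_r X_r\ot f^r_i$ and invoking the counit axiom of the coaction. With this in hand,
\begin{align}
\begin{split}
[\Db_\Fg(X_i),\Db_\Fg(X_j)] &= \sum_{r,s}\Bigl([X_r,X_s]\ot f^r_i f^s_j + X_s\ot \ve(f^r_i)\,X_r\rt f^s_j \\
&\qquad\qquad - X_r\ot \ve(f^s_j)\,X_s\rt f^r_i\Bigr) \\
&= \sum_{r,s,k} C^k_{rs}\, X_k\ot f^r_i f^s_j + \sum_s X_s\ot f^s_{j,i} - \sum_r X_r\ot f^r_{i,j},
\end{split}
\end{align}
where the last two sums are rewritten via the definition $f^i_{j,k}=X_k\rt f^i_j$ of the second-order matrix coefficients.

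Then I would equate the coefficients of $X_k$ in both expressions (the basis $\{X_k\}$ allows one to read off a scalar identity in $\Fc$), obtaining
\begin{equation}
\sum_l C^l_{ij}\, f^k_l = \sum_{r,s} C^k_{rs}\, f^r_i f^s_j + f^k_{j,i} - f^k_{i,j}.
\end{equation}
Rearranging and using antisymmetry $C^k_{rs}=-C^k_{sr}$ of the structure constants, this is exactly the structure identity \eqref{aux-Bianchi}. Conversely, if \eqref{aux-Bianchi} holds for all $i,j,k$, reversing the above steps recovers the Lie map condition on the basis, and by bilinearity on all of $\Fg$.

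The only subtlety — really more a matter of care than an obstacle — is the sign/index convention in the cross term $\sum_{r,s}C^k_{rs}f^r_i f^s_j$, which must be matched to the $\sum_{s,r}C^k_{sr}f^r_i f^s_j$ appearing in \eqref{aux-Bianchi} via the antisymmetry of $C^k_{\bullet\bullet}$. Once this is tracked, the equivalence is immediate.
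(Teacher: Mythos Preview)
Your proof is correct and follows essentially the same route as the paper's: both expand $[\Db_\Fg(X_i),\Db_\Fg(X_j)]$ via the bracket \eqref{aux-bracket-on-g-ot-F}, use $\ve(f^r_i)=\delta^r_i$, and match against $\Db_\Fg([X_i,X_j])$. The only cosmetic difference is that the paper substitutes \eqref{aux-Bianchi} into the bracket computation and watches the $C^k_{rs}$ and $C^k_{sr}$ terms cancel, whereas you equate coefficients of $X_k$ and rearrange to obtain \eqref{aux-Bianchi}; these are the same manipulation read in opposite directions.
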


\begin{proof}
Assuming that the structure identity \eqref{aux-Bianchi} holds, we need to  show that
\begin{equation}
[\Db_\Fg(X_i), \Db_\Fg(X_j)] =\Db_\Fg([X_i,X_j])=\Db_\Fg(\sum_k C^k_{ij}X_k)= \sum_{k,l} C^k_{ij}X_l\ot f^l_k.
\end{equation}
To this end, using \eqref{aux-Bianchi} we observe that
\begin{align}
\begin{split}
&[\Db_\Fg(X_i), \Db_\Fg(X_j)]=\sum_{p,q}[X_p\ot f^p_i, X_q\ot f^q_j]\\
&=\sum_{p,q}[X_p,X_q]\ot f^p_if^q_j+\sum_{p,q} X_q\ot \ve(f^p_i)X_p\rt f^q_j- \sum_{p,q}X_p\ot \ve(f^q_j)X_q\rt f^p_i\\
&=\sum_{k,r,s} C^k_{rs}X_k\ot f^r_if^s_j+ \sum_k X_k\ot X_i\rt f^k_j- \sum_kX_k\ot X_j\rt f^k_i\\
&=\sum_{k,r,s}(C^k_{rs}X_k\ot f^r_if^s_j+ \sum_kX_k\ot (f^k_{j,i}- f^k_{i,j}))\\
&=\sum_{k,r,s}C^k_{rs}X_k\ot f^r_if^s_j+ \sum_{k,r,s}C^k_{sr}X_k\ot f^r_if^s_j+\sum_{k,l} C^k_{ij}X_l\ot f^l_k=\sum_{k,l}C^k_{ij}X_l\ot f^l_k.
\end{split}
\end{align}
The reverse  statement is similar.
\end{proof}

Using the  action of $\Fg$ on $\Fc$ and the coaction of  $\Fc$  on $\Fg$ we define
\begin{equation}
X\bullet (f^1 \ot f^2)= X\ns{0}\rt f^1\ot X\ns{1} f^2 + f^1\ot X\rt f^2,
\end{equation}
which turns out to be an action of $\Fg$ on $\Fc\ot \Fc$.

\begin{definition}\label{definition-Lie-Hopf}
Let a Lie algebra $\Fg$ act on a commutative Hopf algebra $\Fc$ by derivations. We say that $\Fc$ is a $\Fg$-Hopf algebra if
 \begin{enumerate}
   \item $\Fc$ coacts on $\Fg$ and its coaction satisfies the structure identity of $\Fg$.
   \item $\D$ and $\ve$ are $\Fg$-linear in the sense that $\D(X\rt f)=X\bullet\D(f)$ and $\ve(X\rt f)=0$, for any $f\in \Fc$ and $X\in \Fg$.
    \end{enumerate}
\end{definition}

Let $\Fc$ be a  $\Fg$-Hopf algebra. Since the Lie algebra $\Fg$ acts on $\Fc$ by derivations, $\Fc$ becomes a $U(\Fg)$-module algebra. On the other hand, we extend the coaction $\Db_\Fg:\Fg \to \Fg \ot \Fc$ to a coaction $\Db:U(\Fg) \to U(\Fg) \ot \Fc$ inductively via the rule \eqref{aux-matched-pair-4} and $\Db(1)=1\ot 1$.

\begin{lemma}\label{lemma-U-coaction}
The extension of $\Db_\Fg:\Fg\ra \Fg\ot \Fc$  to $\Db:U(\Fg)\ra U(\Fg)\ot \Fc$  via \eqref{aux-matched-pair-4} is well-defined.
\end{lemma}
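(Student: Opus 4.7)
The plan is to extend $\Db_\Fg$ to $\Db \colon U(\Fg) \to U(\Fg) \ot \Fc$ by setting $\Db(1) = 1 \ot 1$ and iterating \eqref{aux-matched-pair-4}, and then to verify that this prescription is consistent with the PBW relations. Since $U(\Fg)$ is the quotient of $T(\Fg)$ by the two-sided ideal generated by $XY - YX - [X,Y]$ for $X, Y \in \Fg$, and \eqref{aux-matched-pair-4} is bilinear in its two arguments, the whole question reduces to checking
\begin{equation}
\Db(XY) - \Db(YX) = \Db([X,Y]) \quad \text{in } U(\Fg) \ot \Fc
\end{equation}
for all $X, Y \in \Fg$.

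To compute the left-hand side I would unpack \eqref{aux-matched-pair-4} with $u = X$, $v = Y$, using $\D(X) = X \ot 1 + 1 \ot X$ and $\Db(1) = 1 \ot 1$, obtaining
\begin{equation}
\Db(XY) = X\ns{0} Y\ns{0} \ot X\ns{1} Y\ns{1} + Y\ns{0} \ot X \rt Y\ns{1},
\end{equation}
and the analogue for $\Db(YX)$ by swapping $X \leftrightarrow Y$. Subtracting, and using the commutativity of $\Fc$ to identify $Y\ns{1} X\ns{1}$ with $X\ns{1} Y\ns{1}$, the quadratic pieces fuse in $U(\Fg)$ into a commutator, giving
\begin{equation}
\Db(XY) - \Db(YX) = [X\ns{0}, Y\ns{0}] \ot X\ns{1} Y\ns{1} + Y\ns{0} \ot X \rt Y\ns{1} - X\ns{0} \ot Y \rt X\ns{1}.
\end{equation}

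For the right-hand side I would note that $\Db([X,Y]) = \Db_\Fg([X,Y])$ since $[X,Y] \in \Fg$, and then invoke Lemma \ref{lemma-structure-identity}: because $\Fc$ is a $\Fg$-Hopf algebra (Definition \ref{definition-Lie-Hopf}), $\Db_\Fg$ is a Lie algebra map into $\Fg \ot \Fc$ equipped with the bracket \eqref{aux-bracket-on-g-ot-F}. Evaluating $[\Db_\Fg(X), \Db_\Fg(Y)]$ via \eqref{aux-bracket-on-g-ot-F} produces three terms, the two mixed ones carrying scalar factors $\ve(X\ns{1})$ and $\ve(Y\ns{1})$; collapsing these with the counit identity $v\ns{0}\ve(v\ns{1}) = v$ for a coaction, together with linearity of $\rt$ in its first argument, reproduces exactly the right-hand side of the previous display. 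This confirms the required identity, so $\Db$ factors through $U(\Fg)$.

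The one genuinely substantive step is the matching in the preceding paragraph via the counit collapse; it relies both on the structure identity of Lemma \ref{lemma-structure-identity} (built into the definition of a $\Fg$-Hopf algebra) and on the commutativity of $\Fc$, which is what lets the quadratic pieces consolidate into a commutator. Everything else amounts to Sweedler-notation bookkeeping with \eqref{aux-matched-pair-4}.
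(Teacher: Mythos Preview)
Your proposal is correct and follows essentially the same route as the paper's proof: both reduce well-definedness to the single identity $\Db(XY-YX)=\Db_\Fg([X,Y])$, expand $\Db(XY)$ via \eqref{aux-matched-pair-4} using primitivity of $X$, invoke commutativity of $\Fc$ to merge the quadratic terms into a commutator, and match against $[\Db_\Fg(X),\Db_\Fg(Y)]$ via Lemma~\ref{lemma-structure-identity}. The paper's version is just terser, omitting the explicit counit collapse you spell out.
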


\begin{proof}
We need to prove that $\Db_\Fg([X,Y])=\Db(XY-YX)$. Using \eqref{aux-matched-pair-4}, Lemma \ref{lemma-structure-identity} and the fact that $\Fc$ is commutative, we see that
\begin{align}\label{aux-eq-in-lemma-U-coaction}
\begin{split}
&\Db(XY-YX) \\
& = [X\ns{0},Y\ns{0}]\ot X\ns{1}Y\ns{1}+ Y\ns{0}\ot X\rt Y\ns{1}-  X\ns{0}\ot Y\rt X\ns{1} \\
&=\Db_\Fg([X,Y]).
\end{split}
\end{align}
\end{proof}

We are now ready to express the main result of this subsection.

\begin{theorem}\label{theorem-Lie-Hopf-matched-pair}
Let $\Fc$ be a $\Fg$-Hopf algebra. Then via the coaction of $\Fc$ on $U(\Fg)$ defined above and the natural action of $U(\Fg)$ on $\Fc$, the pair $(\Fc,U(\Fg))$ becomes a matched pair of Hopf algebras. Conversely, for a commutative Hopf algebra $\Fc$, if $(\Fc,U(\Fg))$ is a matched pair of Hopf algebras then $\Fc$ is a $\Fg$-Hopf algebra.
\end{theorem}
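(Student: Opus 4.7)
The plan is to verify each of the five matched pair axioms \eqref{aux-matched-pair-1}--\eqref{aux-matched-pair-5} after first promoting the data of a $\Fg$-Hopf algebra to the level of $U(\Fg)$; the converse direction then amounts to restricting these arguments to $\Fg \subseteq U(\Fg)$. For the forward direction, since $\Fg$ acts by derivations on the commutative algebra $\Fc$, this action extends uniquely to an action of $U(\Fg)$ that makes $\Fc$ a left $U(\Fg)$-module algebra in the sense of Definition \ref{definition-module-algebra}. The coaction $\Db_\Fg$ is extended inductively to $\Db: U(\Fg) \to U(\Fg) \ot \Fc$ by imposing the rule \eqref{aux-matched-pair-4} on products together with $\Db(1) = 1 \ot 1$; well-definedness on commutators is Lemma \ref{lemma-U-coaction}. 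The comodule coalgebra identities \eqref{aux-eq-comodule-coalgebra} for the extended $\Db$ are then checked by induction on monomial length, the base case $u = X \in \Fg$ reducing, via $\Db_\Fg(X) \in \Fg \ot \Fc$, to the fact that elements of $\Fg$ are primitive and that $\ve$ vanishes on $\Fg$.

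With these structures in place, axioms \eqref{aux-matched-pair-3} and \eqref{aux-matched-pair-4} hold by construction. Axiom \eqref{aux-matched-pair-5} is automatic: the commutativity of $\Fc$ lets one swap the two factors in the left-hand side, after which the cocommutativity of $U(\Fg)$ identifies the result with the right-hand side. Axiom \eqref{aux-matched-pair-1} is verified by induction on monomial length, with base case $u = X$ given by the hypothesis $\ve(X \rt f) = 0 = \ve(X)\ve(f)$. Axiom \eqref{aux-matched-pair-2} is similarly inductive, its base case $u = X \in \Fg$ being exactly the $\Fg$-linearity hypothesis $\D(X \rt f) = X \bullet \D(f)$, once $X \bullet \D(f)$ is unfolded using the definition of the matrix coefficients in \eqref{aux-first-order-matrix-coefficients}.

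For the converse, assume $(\Fc, U(\Fg))$ is a matched pair of Hopf algebras. Restricting the $U(\Fg)$-action on $\Fc$ to $\Fg$, axiom \eqref{aux-matched-pair-2} applied to a primitive $X$ yields the Leibniz rule, so $\Fg$ acts on $\Fc$ by derivations; similarly \eqref{aux-matched-pair-1} restricted to $\Fg$ gives $\ve(X \rt f) = 0$. The comodule coalgebra identity \eqref{aux-eq-comodule-coalgebra} applied to a primitive $X \in \Fg$ forces the ``$U(\Fg)$-part'' of $\Db(X)$ to lie in the primitive subspace, so $\Db$ restricts to $\Db_\Fg: \Fg \to \Fg \ot \Fc$. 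To recover the structure identity I would run the calculation \eqref{aux-eq-in-lemma-U-coaction} of Lemma \ref{lemma-U-coaction} in reverse: axiom \eqref{aux-matched-pair-4} applied to $XY - YX$, combined with the known value of $\Db_\Fg([X,Y])$, forces exactly the relation of Lemma \ref{lemma-structure-identity}. Finally, the $\Fg$-linearity of $\D$ and $\ve$ are just the restrictions of \eqref{aux-matched-pair-2} and \eqref{aux-matched-pair-1} to $\Fg$.

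The main technical point will be the inductive verification, in the forward direction, that the extended coaction $\Db$ on $U(\Fg)$ satisfies the comodule coalgebra identities \eqref{aux-eq-comodule-coalgebra} and the compatibility \eqref{aux-matched-pair-2}. All the required ingredients -- the $\Fg$-linearity of $\D$ and $\ve$, the commutativity of $\Fc$, the cocommutativity of $U(\Fg)$, and the recursive definition \eqref{aux-matched-pair-4} -- are at hand, but combining them requires careful Sweedler-index bookkeeping. Once this structural step is in place, the remaining matched pair axioms follow either by straightforward induction or, in the case of \eqref{aux-matched-pair-5}, by direct appeal to (co)commutativity.
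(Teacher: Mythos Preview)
Your proposal is correct and follows essentially the same route as the paper's proof: induction on monomial length to propagate \eqref{aux-matched-pair-2} and the comodule coalgebra identities from $\Fg$ to $U(\Fg)$, with \eqref{aux-matched-pair-3} and \eqref{aux-matched-pair-4} holding by construction; and for the converse, restricting the comodule coalgebra identity to primitives to see $\Db$ lands in $\Fg \ot \Fc$, then running \eqref{aux-eq-in-lemma-U-coaction} backwards for the structure identity.

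Two small remarks. First, you are slightly more careful than the paper in that you explicitly dispatch \eqref{aux-matched-pair-5} via commutativity of $\Fc$ and cocommutativity of $U(\Fg)$; the paper's proof silently omits this axiom, so your addition is an improvement. Second, in the converse direction you write that ``axiom \eqref{aux-matched-pair-2} applied to a primitive $X$ yields the Leibniz rule''. This is a misattribution: \eqref{aux-matched-pair-2} concerns $\Delta(u \rt f)$, not $u \rt (fg)$. The Leibniz rule for $X \in \Fg$ is simply the module algebra axiom (Definition~\ref{definition-module-algebra}) specialized to a primitive element, which is part of the matched pair data before any of \eqref{aux-matched-pair-1}--\eqref{aux-matched-pair-5} are invoked. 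The paper phrases this as ``Since $\Fc$ is a $U(\Fg)$-module algebra, $\Fg$ acts on $\Fc$ by derivations.'' This does not affect the validity of your argument, only the labeling.
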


\begin{proof}
Let $\Fc$ be a $\Fg$-Hopf algebra. We need to verify that the matched pair conditions are satisfied. The axioms \eqref{aux-matched-pair-1} and \eqref{aux-matched-pair-3} hold by definition. Hence we need only to prove \eqref{aux-matched-pair-2} and \eqref{aux-matched-pair-4}.

\medskip

By definition of the coaction $\Db:U(\Fg)\ra U(\Fg)\ot \Fc$, the axiom \eqref{aux-matched-pair-4} holds for any $u,v\in U(\Fg)$.

\medskip

Next we check  \eqref{aux-matched-pair-2}, which holds for $X\in \Fg$ and $f\in \Fc$ by Definition \ref{definition-Lie-Hopf}. Let us assume that  it  is satisfied for $u,v\in U(\Fg)$, and any $f\in \Fc$. By using \eqref{aux-matched-pair-4}  we see that
\begin{align}\label{aux-eq-in-proof-1}
\begin{split}
& (uv)\ps{1}\ns{0}\ot (uv)\ps{1}\ns{1}\ot (uv)\ps{2}\\
&= u\ps{1}\ns{0}v\ps{1}\ns{0}\ot u\ps{1}\ns{1}(u\ps{2}\rt v\ps{1}\ns{1})\ot u\ps{3}v\ps{2}.
\end{split}
\end{align}
Using \eqref{aux-eq-in-proof-1} and the fact that $\Fc$ is $U(\Fg)$-module algebra we prove our claim by observing that
\begin{align}
\begin{split}
&\D(uv\rt f)= u\ps{1}\ns{0}\rt (v\rt f)\ps{1}\ot u\ps{1}\ns{1}(u\ps{2}\rt (v\rt f)\ps{2})\\
&= u\ps{1}\ns{0}\rt (v\ps{1}\ns{0}\rt f\ps{1})\ot u\ps{1}\ns{1}(u\ps{2}\rt (v\ps{1}\ns{1}(v\ps{2}\rt f\ps{2})\\
&= u\ps{1}\ns{0}\rt (v\ps{1}\ns{0}\rt f\ps{1})\ot u\ps{1}\ns{1}(u\ps{2}\rt (v\ps{1}\ns{1})(u\ps{3}v\ps{2}\rt f\ps{2})\\
&= (uv)\ps{1}\ns{0}\rt f\ps{1}\ot (uv)\ps{1}\ns{1}((uv)\ps{2}\rt f\ps{2}).
\end{split}
\end{align}

Finally,  we check that $U(\Fg)$ is  a $\Fc$-comodule coalgebra . To this end, we need to verify \eqref{aux-eq-comodule-coalgebra}, which is obviously satisfied  for any $X\in \Fg$. Let us assume that \eqref{aux-eq-comodule-coalgebra} is satisfied for $u,v\in U(\Fg)$, and prove that it is satisfied for $uv \in U(\Fg)$ too. Indeed, by using \eqref{aux-matched-pair-4}, the fact that $U(\Fg)$ is cocommutative, $\Fc$ is commutative, and that $\Fc$ is a $U(\Fg)$-module algebra, we observe that
\begin{align}
\begin{split}
& (uv)\ps{1}\ns{0}\ot (uv)\ps{2}\ns{0}\ot (uv)\ps{1}\ns{1}(uv)\ps{2}\ns{1}\\
&= (u\ps{1}v\ps{1})\ns{0}\ot (u\ps{2}v\ps{2})\ns{0}\ot (u\ps{1}v\ps{1})\ns{1}(u\ps{2}v\ps{2})\ns{1}\\
&= u\ps{1}\ns{0}v\ps{1}\ns{0}\ot u\ps{3}\ns{0}v\ps{2}\ns{0}\ot u\ps{1}\ns{1}(u\ps{2}\rt v\ps{1}\ns{1})u\ps{3}\ns{1}(u\ps{4}\rt v\ps{2}\ns{1})\\
&= u\ps{1}\ns{0}v\ps{1}\ns{0}\ot u\ps{2}\ns{0}v\ps{2}\ns{0}\ot u\ps{1}\ns{1} u\ps{2}\ns{1}(u\ps{3}\rt (v\ps{1}\ns{1}v\ps{2}\ns{1})\\
&= u\ps{1}\ns{0}\ps{1}v\ns{0}\ps{1}\ot u\ps{1}\ns{0}\ps{2}v\ns{0}\ps{2}\ot u\ps{1}\ns{1} (u\ps{2}\rt (v\ns{1})\\
&= (u\ps{1}\ns{0}v\ns{0})\ps{1}\ot (u\ps{1}\ns{0}v\ns{0})\ps{2}\ot u\ps{1}\ns{1}(u\ps{2}\rt v\ns{1})\\
&= (uv)\ns{0}\ps{1}\ot (uv)\ns{0}\ps{2}\ot (uv)\ns{1}.
\end{split}
\end{align}

Conversely, let $\Fc$ be a commutative Hopf algebra and $(\Fc,U(\Fg))$ be a matched pair of Hopf algebras. Let us denote the coaction of  $\Fc$ on $U(\Fg)$ by $\Db_{\Uc}:U(\Fg) \to U(\Fg) \ot \Fc$. First we prove that the restriction of $\Db_\Uc:U(\Fg)\ra U(\Fg)\ot \Fc$ on $\Fg$ lands in $\Fg\ot \Fc$. Indeed, since $U(\Fg)$ is $\Fc$-comodule coalgebra, we see that
\begin{align}
\begin{split}
& X\ns{0}\ot 1\ot X\ns{1}+ 1\ot X\ns{0}\ot X\ns{1}\\
 &= X\ps{1}\ns{0}\ot X\ps{2}\ns{0} \ot X\ps{1}\ns{1}X\ps{2}\ns{1}\\
&= X\ns{0}\ps{1}\ot X\ns{0}\ps{2}\ot X\ns{1}.
\end{split}
\end{align}
This shows that for any $X\in\Fg$, $\Db_\Uc(X)$ belongs to $\Pc\ot \Fc$, where $\Pc$ is the Lie algebra of primitive elements of $U(\Fg)$. Since $\Pc=\Fg$, we get a coaction $\Db_\Fg:\Fg\ra \Fg\ot \Fc$ which is the restriction of $\Db_\Uc$.

\medskip

Since $\Fc$ is  a $U(\Fg)$-module algebra, $\Fg$ acts on $\Fc$ by derivations. The equality \eqref{aux-eq-in-lemma-U-coaction} together with \eqref{aux-matched-pair-4} shows that the coaction $\Db_g:\Fg\ra \Fg\ot \Fc$ is a map of Lie algebras. Finally \eqref{aux-matched-pair-2} implies that $\D$ is $\Fg$-linear. So we have proved that $\Fc$ is a $\Fg$-Hopf algebra.
\end{proof}

\subsection{Matched pair of Hopf algebras associated to matched pair of Lie algebras}

In this subsection we associate  a bicrossed product Hopf algebra to any matched pair of Lie algebras $(\Fg_1,\Fg_2)$.

\medskip

Let $(\Fg_1,\Fg_2)$ be a matched pair of Lie algebras. Using the left action of $U(\Fg_2)$ on $\Fg_1$ we define the functionals $f_i^j:U(\Fg_2) \to \Cb$ for $1 \leq i,j \leq \dim\Fg_1 = N$ by
\begin{equation}
u'\rt X_i=f_i^j(u') X_j, \quad \forall X_i,X_j \in \Fg_1,\; u' \in U(\Fg_2).
\end{equation}
Equivalently, using the dual basis elements,
\begin{equation}
f_i^j(u')=<u'\rt X_i,\t^j>.
\end{equation}

\begin{lemma}
For $1\le i,j\le N$, the functions $f_i^j:U(\Fg_2) \to \Cb$ are representative functions.
\end{lemma}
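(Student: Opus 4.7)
The plan is to show that each $f_i^j$ lies in $R(\Fg_2)$ by exhibiting a finite codimensional ideal of $U(\Fg_2)$ contained in its kernel. The idea is that $f_i^j$ are nothing but the matrix coefficients of a finite dimensional representation of $U(\Fg_2)$, and such matrix coefficients are automatically representative.

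First I would establish that the left action $\rt: U(\Fg_2) \otimes U(\Fg_1) \to U(\Fg_1)$ coming from the mutual pair structure (Remark \ref{remark-matched-pair-enveloping-algebras}) restricts to a linear action $U(\Fg_2) \otimes \Fg_1 \to \Fg_1$. Since $U(\Fg_1)$ is a left $U(\Fg_2)$-module coalgebra, for $X \in \Fg_1$ primitive one computes
\begin{equation}
\Delta(u' \rt X) = (u'\ps{1} \rt X) \otimes (u'\ps{2} \rt 1) + (u'\ps{1} \rt 1) \otimes (u'\ps{2} \rt X) = (u' \rt X) \otimes 1 + 1 \otimes (u' \rt X),
\end{equation}
using $v \rt 1 = \ve(v)$ from \eqref{aux-mutual-pair-2}, so $u' \rt X$ remains primitive, i.e.\ in $\Fg_1$. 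Hence the formula $u' \rt X_i = f_i^j(u') X_j$ makes sense and defines a genuine representation $\pi: U(\Fg_2) \to \End(\Fg_1)$ by $\pi(u')(X_i) = u' \rt X_i$.

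Next, set $I := \ker \pi$. Since $\pi$ is an algebra map, $I$ is a two-sided ideal of $U(\Fg_2)$; since $\End(\Fg_1) \cong M_N(\Cb)$ is $N^2$-dimensional, the quotient $U(\Fg_2)/I$ embeds into a finite dimensional algebra, so $I$ has finite codimension in $U(\Fg_2)$. By construction, any $u' \in I$ satisfies $u' \rt X_i = 0$ for all $i$, hence $f_i^j(u') = 0$ for all $i,j$. Thus $I \subseteq \ker f_i^j$ with $\dim(U(\Fg_2)/I) < \infty$, which is exactly the defining condition for $f_i^j \in R(\Fg_2)$.

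The only point that requires a small bit of care is the first step, i.e.\ verifying that the action from the mutual pair preserves $\Fg_1 \subset U(\Fg_1)$; once that is granted the remainder is an immediate consequence of the finite dimensionality of $\Fg_1$ combined with the definition of $R(\Fg_2)$. I do not anticipate a serious obstacle, as the module coalgebra axiom is exactly what forces the action to preserve the space of primitives.
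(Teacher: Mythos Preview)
Your proof is correct and follows essentially the same approach as the paper: exhibit a finite codimensional ideal of $U(\Fg_2)$ contained in $\ker f_i^j$, using the finite dimensionality of $\Fg_1$. The only difference is that the paper takes, for each fixed $i$, the left ideal $I_i=\{u'\in U(\Fg_2)\mid u'\rt X_i=0\}$ (the annihilator of a single basis vector), whereas you take the two-sided ideal $I=\ker\pi=\bigcap_i I_i$ (the kernel of the full representation); your choice has the minor advantage of matching the paper's definition of $R(\Fg_2)$, which asks for a two-sided ideal.
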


\begin{proof}
 For $1\le i\le N$, we set $I_i=\{u'\in U(\Fg_2)\mid u'\rt X_i=0\}$. Since $\Fg_1$ is finite dimensional, $I_i$ is a finite codimensional left ideal of $U(\Fg_2)$, being the kernel of the linear map
\begin{equation}
U(\Fg_2) \to \Fg_1, \quad v \mapsto u' \rt X_i.
\end{equation}
Moreover, $I_i \subseteq \ker f_i^j$ for any $1\le j\le N$.
\end{proof}

As a result, we have the coaction
\begin{equation}\label{aux-g-coaction}
\Db_{\rm Alg}:\Fg_1\ra \Fg_1\ot R(\Fg_2),\quad \Db_{\rm Alg}(X_i)=\sum_{k=1}^N X_j\ot f_i^j,
\end{equation}
admitting $f_i^j:U(\Fg_2) \to \Cb$ for $1\le i,j\le N$ as the first order matrix coefficients.

\medskip

By the work of  Harish-Chandra  \cite{Harish-Chandra49} we know that $R(\Fg_2)$ separates elements of $U(\Fg_2)$. This results in a non-degenerate dual pair of Hopf algebras $(R(\Fg_2),U(\Fg_2))$ with the pairing
\begin{equation}\label{aux-F-V-pairing}
<f,u'>:=f(u'), \quad \forall f \in R(\Fg_2),\; u' \in U(\Fg_2).
\end{equation}
We use the pairing \eqref{aux-F-V-pairing} to define the action
\begin{equation}\label{aux-action-U-to-F}
U(\Fg_1)\ot R(\Fg_2) \ra R(\Fg_2), \quad <u\rt f, u'>=<f, u'\lt u>
\end{equation}
of $U(\Fg_1)$ on $R(\Fg_2)$. This way, we define the higher order matrix coefficients in $R(\Fg_2)$ by
\begin{equation}
 f_{i,i_1,\dots,i_k}^j:= X_{i_k}\cdots X_{i_1}\rt f_i^j.
\end{equation}

\begin{lemma}\label{lemma-structure-identity-g-1}
The coaction \eqref{aux-g-coaction} satisfies the structure identity of $\Fg_1$.
\end{lemma}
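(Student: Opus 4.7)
The plan is to invoke Lemma \ref{lemma-structure-identity} to reduce the structure identity to the assertion that the coaction $\Db_{\rm Alg} : \Fg_1 \to \Fg_1 \ot R(\Fg_2)$ is a map of Lie algebras, and then to derive the latter as a direct consequence of the mutual pair axiom \eqref{aux-mutual-pair-1} applied to the pair of Hopf algebras $(U(\Fg_1), U(\Fg_2))$ furnished by Remark \ref{remark-matched-pair-enveloping-algebras}.

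To legitimately apply Lemma \ref{lemma-structure-identity}, I first check its hypotheses: that $\Fg_1$ acts on $R(\Fg_2)$ by derivations with $\ve(X \rt f) = 0$, so that the bracket \eqref{aux-bracket-on-g-ot-F} on $\Fg_1 \ot R(\Fg_2)$ is well-defined. Both follow from the dualization \eqref{aux-action-U-to-F} together with the fact that $U(\Fg_2)$ is a right $U(\Fg_1)$-module coalgebra (so that $\D(u' \lt X) = (u'\ps{1} \lt X) \ot u'\ps{2} + u'\ps{1} \ot (u'\ps{2} \lt X)$ for primitive $X \in \Fg_1$) and the identity $1 \lt X = \ve(X) = 0$.

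Next, I unwind the Lie algebra map condition $\Db_{\rm Alg}([X_i, X_j]) = [\Db_{\rm Alg}(X_i), \Db_{\rm Alg}(X_j)]$ using the bracket formula \eqref{aux-bracket-on-g-ot-F}. Since $R(\Fg_2)$ separates points of $U(\Fg_2)$, verifying this equation in $\Fg_1 \ot R(\Fg_2)$ amounts to verifying it after pairing the $R(\Fg_2)$-factor with an arbitrary $u' \in U(\Fg_2)$. Using $f_i^j(u') = \langle u' \rt X_i, \t^j \rangle$ together with $[X_s, X_r] = C_{sr}^k X_k$, this pairing reduces to the identity
\begin{equation*}
u' \rt [X_i, X_j] = [u'\ps{1} \rt X_i, u'\ps{2} \rt X_j] + (u' \lt X_i) \rt X_j - (u' \lt X_j) \rt X_i
\end{equation*}
in $\Fg_1$.

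Finally, this identity is obtained by antisymmetrizing, in the $\Fg_1$-arguments, the mutual pair axiom $v \rt (uu') = (v\ps{1} \rt u\ps{1})((v\ps{2} \lt u\ps{2}) \rt u')$, specialized to $v = u'$ and to primitive $u = X_i$, $u' = X_j$, for which $\D X = X \ot 1 + 1 \ot X$. The cross terms $(u' \lt X_i) \rt X_j$ and $(u' \lt X_j) \rt X_i$ emerge from the second summand on the right, while the cocommutativity of $U(\Fg_2)$ lets me rewrite $(u'\ps{1} \rt X_i)(u'\ps{2} \rt X_j) - (u'\ps{1} \rt X_j)(u'\ps{2} \rt X_i)$ as the commutator $[u'\ps{1} \rt X_i, u'\ps{2} \rt X_j]$ in $U(\Fg_1)$. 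The main care needed is in the bookkeeping of Sweedler indices and signs; no essential obstacle is anticipated, as the structure identity is essentially a reformulation of the mutual pair compatibility already embedded in Remark \ref{remark-matched-pair-enveloping-algebras}.
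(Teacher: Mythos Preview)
Your proposal is correct and follows essentially the same route as the paper. Both arguments hinge on the identity
\[
u'\rt[X_i,X_j]= [u'\ps{1}\rt X_i,\,u'\ps{2}\rt X_j]+ (u'\lt X_i)\rt X_j-(u'\lt X_j)\rt X_i,
\]
obtained by antisymmetrizing the mutual-pair axiom \eqref{aux-mutual-pair-1} on primitive elements, and on the nondegeneracy of the pairing between $R(\Fg_2)$ and $U(\Fg_2)$ to pass from equality after evaluation on every $u'$ to equality in $\Fg_1\ot R(\Fg_2)$. The paper verifies the structure identity \eqref{aux-Bianchi} directly by evaluating both sides on $u'$, whereas you first invoke Lemma \ref{lemma-structure-identity} to recast the statement as $\Db_{\rm Alg}$ being a Lie algebra map and then evaluate; the computations are the same up to that cosmetic repackaging. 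One small notational slip: in your last paragraph you write ``$v = u'$'' and ``$u' = X_j$'' in the same sentence, overloading the symbol $u'$; the meaning is clear from context, but it would read better to keep the $U(\Fg_2)$-variable distinct from the second $U(\Fg_1)$-argument.
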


\begin{proof}
We should prove \eqref{aux-Bianchi}. We first use \eqref{aux-mutual-pair-1} to observe that for any $u' \in U(\Fg_2)$,
\begin{equation}\label{aux-action-V-on-bracket}
u'\rt[X,Y]= [u'\ps{1}\rt X,u'\ps{2}\rt Y]+ (u'\lt X)\rt Y-(u'\lt Y)\rt X.
\end{equation}
We then apply both sides of \eqref{aux-Bianchi} to an arbitrary element of $u'\in U(\Fg_2)$ and use \eqref{aux-action-V-on-bracket} to observe
\begin{align}
\begin{split}
& \sum_{l,k}C^l_{ij}f^k_l(v)X_k= \sum_l C^l_{i,j}u'\rt X_l =u'\rt[X_i, X_j]\\
&= [v\ps{1}\rt X_i, u'\ps{2}\rt X_j]+ (u'\lt X_i)\rt X_j-(u'\lt X_j)\rt X_i\\
& =\sum_{r,s} f^r_i(v\ps{1})f^s_j(u'\ps{2})[X_r,X_s]+ \sum_k  f^k_j(u'\lt X_i)X_k-f^k_i(u'\lt X_j)X_k\\
&=\sum_{k,r,s} C^k_{rs} (f^r_if^s_j)(u')X_k+\sum_k f^k_{j,i}(u')X_k- \sum_k f^k_{i,j}(u')X_k.
\end{split}
\end{align}
\end{proof}

\begin{proposition}\label{Proposition-matched-Lie-Hopf-Lie}
For any matched pair of Lie algebras $(\Fg_1,\Fg_2)$, the Hopf algebra $R(\Fg_2)$ is a $\Fg_1$-Hopf algebra.
\end{proposition}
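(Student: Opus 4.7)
The plan is to verify the three ingredients of Definition \ref{definition-Lie-Hopf} for $\Fc = R(\Fg_2)$ and $\Fg = \Fg_1$: that $\Fg_1$ acts on $R(\Fg_2)$ by derivations, that the coaction satisfies the structure identity of $\Fg_1$, and that $\D$ and $\ve$ are $\Fg_1$-linear. The coaction $\Db_{\rm Alg}:\Fg_1 \to \Fg_1 \ot R(\Fg_2)$ is already given by \eqref{aux-g-coaction}, and the action of $\Fg_1$ on $R(\Fg_2)$ is the restriction of the $U(\Fg_1)$-action \eqref{aux-action-U-to-F}. The essential tool throughout is the non-degenerate pairing \eqref{aux-F-V-pairing} between $R(\Fg_2)$ and $U(\Fg_2)$, which transfers any identity in $R(\Fg_2)$ or $R(\Fg_2)^{\ot 2}$ into a dual identity in $U(\Fg_2)$ produced by the matched pair axioms of $(\Fg_1,\Fg_2)$.

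First I would check that the action makes $R(\Fg_2)$ into a $U(\Fg_1)$-module algebra, which in particular forces $\Fg_1$ to act by derivations. Since $(U(\Fg_1), U(\Fg_2))$ is a mutual pair of Hopf algebras (Remark \ref{remark-matched-pair-enveloping-algebras}), $U(\Fg_2)$ is a right $U(\Fg_1)$-module coalgebra, so $\D(u'\lt u) = (u'\ps{1} \lt u\ps{1}) \ot (u'\ps{2} \lt u\ps{2})$ for any $u' \in U(\Fg_2)$, $u \in U(\Fg_1)$. For $X \in \Fg_1$ primitive this collapses to $\D(u'\lt X) = (u'\ps{1} \lt X) \ot u'\ps{2} + u'\ps{1} \ot (u'\ps{2} \lt X)$, and dualizing via $\langle X \rt (fg), u'\rangle = \langle f \ot g, \D(u' \lt X)\rangle$ yields $X \rt (fg) = (X \rt f) g + f (X \rt g)$ after invoking non-degeneracy of the pairing.

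Next I would verify the compatibility conditions. For the counit, the axiom $1 \lt u = \ve(u)$ in \eqref{aux-mutual-pair-1} together with $\ve(X) = 0$ gives $\ve(X \rt f) = \langle f, 1 \lt X\rangle = 0$. For $\D(X \rt f) = X \bullet \D(f)$, pair both sides with an arbitrary $u' \ot v' \in U(\Fg_2)^{\ot 2}$: using \eqref{aux-mutual-pair-2} together with $\D(X) = X \ot 1 + 1 \ot X$, the left-hand side decomposes as
\[
\langle f, (u'v') \lt X\rangle = \langle f, (u' \lt (v'\ps{1} \rt X)) v'\ps{2}\rangle + \langle f, u'(v' \lt X)\rangle.
\]
Unpacking the first summand with the defining identity $v'\ps{1} \rt X_i = f_i^j(v'\ps{1}) X_j$ of the coaction \eqref{aux-g-coaction} and the algebra-coalgebra duality in $R(\Fg_2)$ produces $\langle (X_j \rt f\ps{1}) \ot (f_i^j f\ps{2}), u' \ot v'\rangle$, while the second gives $\langle f\ps{1} \ot (X_i \rt f\ps{2}), u' \ot v'\rangle$. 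Their sum is exactly $\langle X_i \bullet \D(f), u' \ot v'\rangle$ by the definition of $\bullet$, and non-degeneracy of the pairing finishes the argument.

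Finally, $\Db_{\rm Alg}$ satisfies the structure identity of $\Fg_1$ by Lemma \ref{lemma-structure-identity-g-1}, and its coassociativity and counitality are the dualizations of the module axioms $(u'v') \rt X = u' \rt (v' \rt X)$ and $1 \rt X = X$, as reflected in \eqref{aux-comultiplication-matrix-coefficients}. I expect the main obstacle to be the bookkeeping in the $\D$-linearity step, where the two nonsymmetric summands produced by \eqref{aux-mutual-pair-2} and $\D(X)$ must be matched precisely against the two summands defining $\bullet$; every other step is a routine dualization of a matched pair axiom.
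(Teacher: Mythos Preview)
Your proposal is correct and follows essentially the same route as the paper's proof: both verify the conditions of Definition~\ref{definition-Lie-Hopf} by dualizing the mutual-pair axioms of $(U(\Fg_1),U(\Fg_2))$ through the non-degenerate pairing \eqref{aux-F-V-pairing}, invoking Lemma~\ref{lemma-structure-identity-g-1} for the structure identity and computing $\D(X\rt f)$ against a generic $u'\ot v'\in U(\Fg_2)^{\ot 2}$ via \eqref{aux-mutual-pair-2}. You are slightly more thorough than the paper in that you explicitly check the derivation property of the $\Fg_1$-action (which the paper tacitly assumes from the dual module-coalgebra structure) and the coassociativity of $\Db_{\rm Alg}$; these are harmless additions and the core $\D$-linearity computation is identical.
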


\begin{proof}
By Lemma \ref{lemma-structure-identity-g-1}, the coaction \eqref{aux-g-coaction} satisfies the structure identity of $\Fg_1$. Hence, in view of Theorem \ref{theorem-Lie-Hopf-matched-pair}, it suffices to prove that $\ve(X\rt f)=0$ and $\D(X\rt f)=X\bullet \D(f)$.

\medskip

We observe that
\begin{equation}
\ve(X\rt f)= (X\rt f)(1)= f(1\lt X)=0.
\end{equation}
Then it remains to show that $\D(X\rt f)=X\bullet\D(f)$. Indeed,
\begin{align}
\begin{split}
&\D(X\rt f)(u'^1\ot u'^2)= X\rt f(u'^1u'^2)= f(u'^1u'^2\lt X) \\
&= f(u'^1\lt(u'^2\ps{1}\rt X)u'^2\ps{2})+  f(u'^1(u'^2\rt X))=\\
&= f\ps{1}(u'^1\lt(X\ns{1})(u'^2\ps{1}) X\ns{0}) f\ps{2}(u'^2\ps{2}) +  f\ps{1}(u'^1) f\ps{2}(u'^2\lt X)\\
&= (X\ns{0}\rt f\ps{1})(u'^1)X\ns{1}(u'^2\ps{1}) f\ps{2}(u'^2\ps{2}) +  f\ps{1}(u'^1) (X\rt f\ps{2})(u'^2)\\
&=(X\bullet \D(f))(u'^1\ot u'^2).
\end{split}
\end{align}
\end{proof}

The following theorem summarizes the main result of this subsection.

\begin{theorem}
Let $(\Fg_1, \Fg_2)$ be a  matched pair Lie algebras. Then,   via the canonical action and coaction defined in \eqref{aux-action-U-to-F} and   \eqref{aux-g-coaction} respectively,  $(R(\Fg_2),U(\Fg_1))$ is a matched pair of Hopf algebras.
\end{theorem}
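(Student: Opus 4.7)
The plan is to deduce the theorem immediately by combining the two key results already established earlier in the chapter, namely Proposition \ref{Proposition-matched-Lie-Hopf-Lie} and Theorem \ref{theorem-Lie-Hopf-matched-pair}. The hard conceptual work has already been done: the former identifies $R(\Fg_2)$ as a $\Fg_1$-Hopf algebra in the sense of Definition \ref{definition-Lie-Hopf}, and the latter converts any $\Fg$-Hopf algebra datum into a matched pair of Hopf algebras $(\Fc, U(\Fg))$.

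First I would invoke Proposition \ref{Proposition-matched-Lie-Hopf-Lie} to conclude that $R(\Fg_2)$, equipped with the $\Fg_1$-action given by \eqref{aux-action-U-to-F} restricted to $\Fg_1 \subseteq U(\Fg_1)$ and the coaction $\Db_{\rm Alg}:\Fg_1 \to \Fg_1 \otimes R(\Fg_2)$ of \eqref{aux-g-coaction}, is a $\Fg_1$-Hopf algebra. This already encodes that $\Fg_1$ acts by derivations (through \eqref{aux-action-U-to-F}), that $\Db_{\rm Alg}$ satisfies the structure identity of $\Fg_1$ (Lemma \ref{lemma-structure-identity-g-1}), and that $\Delta$ and $\varepsilon$ are $\Fg_1$-linear.

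Next I would apply Theorem \ref{theorem-Lie-Hopf-matched-pair} with $\Fc = R(\Fg_2)$ and $\Fg = \Fg_1$. This produces a matched pair of Hopf algebras $(R(\Fg_2), U(\Fg_1))$, where the $U(\Fg_1)$-action on $R(\Fg_2)$ is the natural extension of the $\Fg_1$-action, and the $R(\Fg_2)$-coaction on $U(\Fg_1)$ is the extension of $\Db_{\rm Alg}$ to all of $U(\Fg_1)$ guaranteed by Lemma \ref{lemma-U-coaction}.

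The only small verification I would add is a sentence confirming that the extended $U(\Fg_1)$-action on $R(\Fg_2)$ coincides with the one defined in \eqref{aux-action-U-to-F}, which is immediate since both are determined by their restriction to $\Fg_1$ together with the requirement that $R(\Fg_2)$ be a $U(\Fg_1)$-module algebra. Since there is essentially no new computation, no step stands out as a real obstacle; the entire argument is a synthesis of results already proved, and the theorem is best stated as a corollary of Proposition \ref{Proposition-matched-Lie-Hopf-Lie} and Theorem \ref{theorem-Lie-Hopf-matched-pair}.
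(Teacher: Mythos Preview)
Your proposal is correct and matches the paper's proof essentially verbatim: the paper simply invokes Proposition \ref{Proposition-matched-Lie-Hopf-Lie} to conclude that $R(\Fg_2)$ is a $\Fg_1$-Hopf algebra, and then applies Theorem \ref{theorem-Lie-Hopf-matched-pair}. Your added remark about the extended $U(\Fg_1)$-action agreeing with \eqref{aux-action-U-to-F} is a harmless clarification the paper leaves implicit.
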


\begin{proof}
By Proposition \ref{Proposition-matched-Lie-Hopf-Lie},  $R(\Fg_2)$ is a $\Fg_1$-Hopf algebra, and hence the claim follows from Theorem \ref{theorem-Lie-Hopf-matched-pair}.
\end{proof}

As a result, to any  matched pair $(\Fg_1,\Fg_2)$ of Lie algebras, we associate a Hopf algebra
\begin{equation}
\Hc(\Fg_1,\Fg_2):=R(\Fg_2)\acl U(\Fg_1).
\end{equation}

Let us now provide a concrete example of this construction.

\subsubsection{Projective Hopf algebra via Lie algebra decomposition}

We will construct a bicrossed product Hopf algebra, more precisely a $g\ell(n)^{\rm aff}$-Hopf algebra $\Fc$, out of a decomposition of the Lie algebra $pg\ell(n)$ of the Lie group $\rm{PGL}(n,\mathbb{R})$ of projective transformations of $\Rb P^n$. Recall that $\rm{PGL}(n,\mathbb{R})$ is defined as $\rm{GL}(n+1)$ modulo its center.

\medskip

It is noted in \cite{KobaNaga64} that the Lie algebra $pg\ell(n)$ is of the form
\begin{equation}
pg\ell(n) = \Fm \oplus g\ell(n,\Rb) \oplus \Fm^*
\end{equation}
where, $\Fm$ is the $n$-dimensional abelian Lie algebra of the column vectors whereas $\Fm^*$ is the $n$-dimensional abelian Lie algebra of the row vectors.

\medskip

Let us fix a basis of $\Fm$ as $\{X_s\, |\, 1 \leq s \leq n\}$, a basis of $g\ell(n)$ as $\{X_p^q\, |\, 1 \leq p,q \leq n\}$, and a basis of $\Fm^*$ as $\{X^r\, |\, 1 \leq r \leq n\}$. Accordingly, the bracket of the Lie algebra $pg\ell(n)$ is given by \begin{align}\label{commutatorrelationsforglnaffine}
\begin{split}
& [X_s,X_l] = 0, \qquad [X_p^q, X_i^j] = \delta_i^qX_p^j - \delta_p^jX_i^q \\
& [X^r,X^t] = 0, \qquad [X_p^q, X_s] = \delta_s^qX_p \\
& [X^r, X_p^q] = \delta_p^rX^q, \qquad [X_s, X^r] = X_s^r + \delta_s^r\sum_a X_a^a.
\end{split}
\end{align}
As a result, we have the (vector space) decomposition
\begin{equation}
pg\ell(n) = g\ell(n)^{\rm aff} \oplus \Fm^*.
\end{equation}
By \eqref{aux-actions-matched-pair-Lie-algebras} we immediately deduce the complete list of mutual actions of the Lie algebras $g\ell(n)^{\rm aff}$ and $\Fm^\ast$ on each other. As for the left action of $\Fm^\ast$ on $g\ell(n)^{\rm aff}$, we have
\begin{equation}
X^r \rt X_k = -X^r_k - \delta^r_k \sum_a X^a_a, \qquad X^r \rt X_p^q = 0.
\end{equation}
For the functions $f^i_{jk} \in R(\Fm^\ast)$ defined by
\begin{equation}
X^r \rt X_k = f^i_{jk}(X^r)X_i^j,
\end{equation}
it is straightforward to observe
\begin{equation}\label{aux-f-ijk}
f^i_{jk}(X^r) = -\delta^i_j\delta^r_k - \delta^i_k\delta^r_j,
\end{equation}
which immediately implies that
\begin{equation}
f^i_{jk} = \frac{1}{2}(\delta^i_jf^k_{kk} + \delta^i_kf^j_{jj}).
\end{equation}
As a result, we have the coaction
\begin{align}
\begin{split}
& \Db_{\rm Alg}:g\ell(n)^{\rm aff} \to g\ell(n)^{\rm aff} \ot R(\Fm^*) \\
& X_k \mapsto X_k \ot 1 + X_i^j \ot f^i_{jk} \\
& X_p^q \mapsto X_p^q \ot 1.
\end{split}
\end{align}
Let us next determine the left action of $g\ell(n)^{\rm aff}$ on $R(\Fm^*)$. To this end, we consider the right action
\begin{equation}\label{aux-action-glnaff-on-m-ast}
X^r \lt X_l = 0, \qquad X^r \lt X_p^q = \delta^r_pX^q
\end{equation}
of $g\ell(n)^{\rm aff}$ on $\Fm^*$. The first equation then yields
\begin{equation}\label{aux-f-ikjl-relation}
f^i_{jkl} + \frac{1}{2}(f^i_{al}f^a_{jk} - f^i_{aj}f^a_{kl} - f^i_{ak}f^a_{jl}) =0.
\end{equation}
Indeed, it is immediate to deduce from \eqref{aux-action-glnaff-on-m-ast} that $f^i_{jkl} := X_l \rt f^i_{jk}$ and that the multiplications $f^i_{al}f^a_{jk}, f^i_{aj}f^a_{kl}$ and $f^i_{ak}f^a_{jl}$ are zero on the Lie algebra elements. As for an element $X^rX^s \in U(\Fm^*)$, by \eqref{aux-mutual-actions-U(g1)-U(g2)} we first observe
\begin{align}
\begin{split}
& X^rX^s \lt X_l = X^r(X^s \lt X_l) + (X^r \lt X_l)X^s + X^r \lt (X^s \rt X_l) = \\
& X^r \lt (X^s \rt X_l) = X^r \lt (-X^s_l - \delta^s_l\sum_a X^a_a) = -\delta^r_lX^s - \delta^s_lX^r.
\end{split}
\end{align}
Then on the one hand
\begin{align}
\begin{split}
& f^i_{jkl}(X^rX^s) = -\delta^r_lf^i_{jk}(X^s) - \delta^s_lf^i_{jk}(X^r) \\
& = -\delta^r_l(-\delta^i_j\delta^s_k - \delta^i_k\delta^s_j) - \delta^s_l(-\delta^i_j\delta^r_k - \delta^i_k\delta^r_j) \\
& = \delta^i_j\delta^r_l\delta^s_k + \delta^i_k\delta^r_l\delta^s_j + \delta^i_j\delta^r_k\delta^s_l + \delta^i_k\delta^r_j\delta^s_l,
\end{split}
\end{align}
and on the other hand
\begin{align}
\begin{split}
& (f^i_{al}f^a_{jk} - f^i_{aj}f^a_{kl} - f^i_{ak}f^a_{jl})(X^rX^s) = f^i_{al}(X^r)f^a_{jk}(X^s) + f^i_{al}(X^s)f^a_{jk}(X^r) \\
& - f^i_{aj}(X^r)f^a_{kl}(X^s) - f^i_{aj}(X^s)f^a_{kl}(X^r) - f^i_{ak}(X^r)f^a_{jl}(X^s) - f^i_{ak}(X^s)f^a_{jl}(X^r) \\
& = -2\delta^i_j\delta^r_l\delta^s_k - 2\delta^i_k\delta^r_l\delta^s_j - 2\delta^i_j\delta^r_k\delta^s_l - 2\delta^i_k\delta^r_j\delta^s_l.
\end{split}
\end{align}
Hence, an induction implies \eqref{aux-f-ikjl-relation}.

\medskip

Next we see that the latter in \eqref{aux-action-glnaff-on-m-ast} yields
\begin{equation}
X_p^q \rt f^i_{jk} = \delta^q_jf^i_{pk} + \delta^q_kf^i_{jp} - \delta^i_pf^q_{jk}.
\end{equation}
Indeed,
\begin{align}
\begin{split}
& (X_p^q \rt f^i_{jk})(X^r) = f^i_{jk}(X^r \lt X_p^q) = \delta^r_pf^i_{jk}(X^q) = -\delta^r_p\delta^i_j\delta^q_k - \delta^r_p\delta^i_k\delta^q_j \\
& = \delta^q_j(-\delta^i_p\delta^r_k - \delta^i_k\delta^r_p) + \delta^q_k(-\delta^i_j\delta^r_p - \delta^i_p\delta^r_j) - \delta^i_p(-\delta^q_j\delta^r_k - \delta^q_k\delta^r_j) \\
& = \delta^q_jf^i_{pk}(X^r) + \delta^q_kf^i_{jp}(X^r) - \delta^i_pf^q_{jk}(X^r) = (\delta^q_jf^i_{pk} + \delta^q_kf^i_{jp} - \delta^i_pf^q_{jk})(X^r),
\end{split}
\end{align}
and the result follows once again by induction.

\medskip

Finally we define the projective Hopf algebra
\begin{equation}
\Hc_{n\rm Proj} := R(\Fm^\ast) \acl U(g\ell(n)^{\rm aff}).
\end{equation}
Using the notation $f_k := f^k_{kk}$, the Hopf algebra $R(\Fm^\ast)$ of representative functions is the polynomial Hopf algebra generated by $\{f_k\, |\, 1 \leq k \leq n\}$ with the coaction
\begin{align}\label{coactionLieproj}
\begin{split}
&\Db_{\rm Alg}: g\ell(n)^{\rm aff} \to g\ell(n)^{\rm aff} \ot R(\Fm^\ast) \\
& X_k \mapsto X_k \ot 1 + \frac{1}{2}X_k^a \ot f_a + \frac{1}{2}\sum_a X_a^a \ot f_k \\
& X_p^q \mapsto X_p^q \ot 1,
\end{split}
\end{align}
and the action
\begin{equation}
X_l \rt f_k = \frac{1}{2}f_kf_l, \qquad X_i^j \rt f_k = \delta_k^jf_i.
\end{equation}
It is now fairly straightforward to check that the coaction \eqref{coactionLieproj} is a map of Lie algebras (equivalently the coaction \eqref{coactionLieproj} satisfies the structure identity of $g\ell(n)^{\rm aff}$) and the coalgebra structure maps of $R(\Fm^\ast)$ are $g\ell(n)^{\rm aff}$-linear.

\medskip

As a result, the Hopf algebra $\Hc_{\rm n\, Proj}$ is generated by
\begin{equation}
\{1,f_l,X_k,X^j_i\,|\, 1 \leq i,j,k,l \leq n\}
\end{equation}
and the Hopf algebra structure is given by
\begin{align}
\begin{split}
& [X_i^j,X_p^q] = \d^j_pX_i^q - \d^q_iX_p^j, \qquad [X_k,X_i^j] = \d^j_kX_i, \\
& [X_k,f_l] = \frac{1}{2}f_kf_l, \qquad [X_i^j,f_l] = \d^j_lf_i,\\
& \D(f_l) = f_l \ot 1 + 1 \ot f_l, \qquad \D(X_i^j) = X_i^j \ot 1 + 1 \ot X_i^j,\\
& \D(X_k) = X_k \ot 1 + 1 \ot X_k + \frac{1}{2}X_k^a \ot f_a + \frac{1}{2}\sum_a X_a^a \ot f_k,\\
& \ve(f_l) = \ve(X_k) = \ve(X_i^j) = 0,\\
& S(f_l) = -f_l, \quad S(X_i^j) = -X_i^j, \quad S(X_k) = -X_k + \frac{1}{2}X_k^a f_a + \frac{1}{2}\sum_a X_a^a f_k.
\end{split}
\end{align}

\medskip

Finally, for $n=1$ the Hopf algebra $\Hc_{\rm n\,Proj}$ coincides with the Schwarzian quotient of the Connes-Moscovici Hopf algebra $\Hc_1$, \ie
\begin{equation}
\Hc_{1\rm Proj} = \Hc_{1\rm S}.
\end{equation}

\subsection{Matched pair of  Hopf algebras associated to matched pair of Lie groups}

In this subsection, our aim is to associate a bicrossed product Hopf algebra to any matched pair of Lie groups $(G_1,G_2)$.

\medskip

Let $(G_1,G_2)$ be a matched pair of Lie groups, in which case the mutual actions $\rt: G_2 \times G_1 \to G_1$ and $\lt: G_2 \times G_1 \to G_2$ are assumed to be smooth.

\medskip

Let  $\Fg_1$ and $\Fg_2$ be the Lie algebras of $G_1$ and $G_2$ respectively. Then we can define a right action of $\Fg_1$ on $\Fg_2$  by taking the derivative of the right action of $G_1$ on $G_2$. Similarly, we define a left action of $\Fg_2$ on $\Fg_1$. By a routine argument it follows that if $(G_1,G_2)$ is a matched pair of Lie groups, then $(\Fg_1,\Fg_2)$ is a matched pair of Lie algebras.

\medskip

Our first objective is to prove that $R(G_2)$ is a $\Fg_1$-Hopf algebra. We first define a right coaction of $R(G_2)$ on $\Fg_1$. To do so, we differentiate the right action of $G_2$ on $G_1$ to get a right action of $G_2$ on $\Fg_1$. We then  introduce the functions $f_i^j:G_2 \to \mathbb{C}$, for $1 \leq i,j \leq N = \dim\Fg_1$ by
\begin{align}\label{psi-act-X-j}
\psi \rt X_i = \sum_{j} X_jf_i^j(\psi), \quad X\in \Fg_1, \psi\in G_2.
\end{align}

\begin{lemma}\label{Lemma f-i-j-in-R(G)}
The functions $f_i^j:G_2 \to \mathbb{C}$, $1 \leq i,j \leq N$, defined by \eqref{psi-act-X-j} are representative functions.
\end{lemma}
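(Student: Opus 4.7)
The plan is to verify that each $f^j_i$ satisfies condition (ii) of Proposition \ref{proposition-elements-of-R(G)}, i.e.\ that the right translates span a finite dimensional space. Smoothness of the $f^j_i$ is essentially automatic: the map $G_2 \times G_1 \to G_1$, $(\psi,\vp) \mapsto \psi \rt \vp$ is smooth by hypothesis, and fixes the identity $1 \in G_1$ for every $\psi$, so differentiating at $1$ in the $G_1$ variable produces a smooth family of linear maps $\Fg_1 \to \Fg_1$. The entries of this family with respect to the basis $\{X_1, \ldots, X_N\}$ are precisely the $f^j_i$.

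The heart of the proof is the derivation of a ``cocycle identity''
\begin{equation}
f^j_i(\psi\vp) = \sum_{k} f^j_k(\psi)\, f^k_i(\vp), \qquad \psi,\vp \in G_2,
\end{equation}
which I would obtain from the action property $(\psi\vp) \rt X_i = \psi \rt (\vp \rt X_i)$. Expanding the right hand side using \eqref{psi-act-X-j}, one gets $\psi \rt \sum_k X_k f^k_i(\vp)$; since each $\psi \in G_2$ acts on $\Fg_1$ as a linear endomorphism (it is the derivative at $1$ of a diffeomorphism of $G_1$ fixing $1$), this equals $\sum_k f^k_i(\vp)(\psi \rt X_k) = \sum_{j,k} f^j_k(\psi) f^k_i(\vp) X_j$. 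Comparing with the direct expansion $(\psi\vp) \rt X_i = \sum_j f^j_i(\psi\vp) X_j$ and using linear independence of $\{X_1, \ldots, X_N\}$ yields the identity.

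From here the conclusion is immediate: by definition of the right translation, $(f^j_i \cdot \psi)(\vp) = f^j_i(\psi\vp)$, so the cocycle identity says
\begin{equation}
f^j_i \cdot \psi \;=\; \sum_{k=1}^{N} f^j_k(\psi)\, f^k_i \;\in\; \mathrm{span}_{\Cb}\{f^k_i \mid 1 \leq k \leq N\}.
\end{equation}
Thus $\{f^j_i \cdot \psi \mid \psi \in G_2\}$ is contained in a vector space of dimension at most $N$, and Proposition \ref{proposition-elements-of-R(G)}(ii) gives $f^j_i \in R(G_2)$.

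The only delicate point is the justification that the action of a single group element $\psi \in G_2$ on $\Fg_1$ is linear. This is precisely what allows one to pull the scalars $f^k_i(\vp)$ out of the expression $\psi \rt \sum_k X_k f^k_i(\vp)$, and without it the argument fails. The point, however, is built into the definition: differentiating $\rt: G_2 \times G_1 \to G_1$ at the $G_1$-identity yields, for each fixed $\psi$, a linear endomorphism of $T_1 G_1 = \Fg_1$, so this step is safe. Everything else in the argument is then formal.
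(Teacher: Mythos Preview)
Your proof is correct and follows essentially the same route as the paper: derive the cocycle identity $f^j_i(\psi\vp)=\sum_k f^j_k(\psi)f^k_i(\vp)$ from the action axiom $(\psi\vp)\rt X_i=\psi\rt(\vp\rt X_i)$, then invoke Proposition~\ref{proposition-elements-of-R(G)}. The only cosmetic difference is that you use right translations (criterion (ii)) and land in $\mathrm{span}\{f^k_i:k\}$, whereas the paper uses left translations (criterion (iii)) and lands in $\mathrm{span}\{f^j_l:l\}$; your explicit remarks on smoothness and on linearity of $\psi\rt(-)$ on $\Fg_1$ are welcome clarifications that the paper leaves implicit.
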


\begin{proof}
For $\psi_1,\psi_2 \in G_2$, we observe that
\begin{align}
\begin{split}
& \sum_j X_jf_i^j(\psi_1\psi_2) = \psi_1\psi_2 \rt X_i = \psi_1 \rt (\psi_2 \rt X_i) \\
 &=\sum_{l}  \psi_1 \rt X_lf_i^l(\psi_2) = \sum_{j,l} X_jf_l^j(\psi_1)f_i^l(\psi_2).
 \end{split}
\end{align}

Therefore,
\begin{equation}
\psi_2 \cdot f_i^j = \sum _lf_i^l(\psi_2)f_l^j\;.
\end{equation}
In other words, $\psi_2 \cdot f_i^j \in {\rm Span}\{f_i^j\}$ for any $\psi_2 \in G_2$. The claim then follows from Proposition \ref{proposition-elements-of-R(G)}.
\end{proof}

As a result of Lemma \ref{Lemma f-i-j-in-R(G)}, the equation \eqref{psi-act-X-j} determines the coaction
\begin{align}\label{aux-coaction-for-lie-group}
\Db_{\rm Gr}: \Fg_1 \to \Fg_1 \otimes R(G_2), \quad \Db_{\rm Gr}(X_i):= \sum_jX_j \otimes f_i^j
\end{align}
that admits $f_i^j:G_2 \to \Cb$, $1 \leq i,j \leq N$, as the matrix coefficients.

\medskip

Let us recall the natural left action of $G_1$ on $C^{\infty}(G_2)$ defined by
 \begin{equation}
 \vp\rt f(\psi):= f(\psi\lt \vp),
 \end{equation}
 and define the derivative of this action by
\begin{equation}\label{aux-action-for-lie-group}
X \rt f := \dt {\rm exp}(tX) \rt f, \quad X\in \Fg_1, f\in R(G_2).
\end{equation}
In fact, considering $R(G_2) \subseteq C^{\infty}(G_2)$, this is nothing but the derivative $d_e \rho(X)|_{R(G_2)}$ of the representation $\rho: G_1 \to {\rm GL}(C^{\infty}(G_2))$ at the identity.

\begin{lemma}
For any $X \in \Fg_1$ and $f \in R(G_2)$, we have $X \rt f \in R(G_2)$. Moreover we have
\begin{equation}\label{aux-Db-equivariancy}
\D(X\rt f)= X \bullet \D(f).
\end{equation}
\end{lemma}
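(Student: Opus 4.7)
The plan is to compute $(X\rt f)(\psi_0\psi)$ explicitly for $X = X_i$ a basis element of $\Fg_1$ and arbitrary $\psi_0,\psi\in G_2$, and then deduce both assertions from that single identity; the general case $X\in\Fg_1$ follows by linearity.

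First I would use the matched pair axiom \eqref{aux-matched-pair-group-actions-IV} with $h = \psi_0$, $h' = \psi$ and $g = \exp(tX_i)$ to write
\[
(\psi_0\psi)\lt\exp(tX_i) = \bigl(\psi_0\lt(\psi\rt\exp(tX_i))\bigr)\bigl(\psi\lt\exp(tX_i)\bigr).
\]
Writing $\D(f) = f\ps{1}\ot f\ps{2}$ so that $f(\psi_0\psi) = f\ps{1}(\psi_0)f\ps{2}(\psi)$, I would then apply $\dt$ and the Leibniz rule. The contribution from the second factor is $f\ps{1}(\psi_0)\,(X_i\rt f\ps{2})(\psi)$ directly by definition \eqref{aux-action-for-lie-group}. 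For the first factor I use that $\psi\rt\exp(tX_i)$ is a curve in $G_1$ starting at $1$ with initial velocity $\psi\rt X_i = \sum_j X_j f_i^j(\psi)$, by the very definition of the coaction \eqref{aux-coaction-for-lie-group}; the chain rule, combined again with \eqref{aux-action-for-lie-group}, then yields $\sum_j f_i^j(\psi)(X_j\rt f\ps{1})(\psi_0)$. Assembling the pieces gives
\[
(X_i\rt f)(\psi_0\psi) = \sum_j (X_j\rt f\ps{1})(\psi_0)\,f_i^j(\psi)\,f\ps{2}(\psi) + f\ps{1}(\psi_0)\,(X_i\rt f\ps{2})(\psi).
\]

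The key feature of this closed form is that its right-hand side is a finite sum of products of a function of $\psi_0$ with a function of $\psi$, so $\d(X_i\rt f)$ lies in the image of the map $\pi:C^\infty(G_2)\ot C^\infty(G_2) \to C^\infty(G_2\times G_2)$. By the characterization \cite[Theorem 2.2.7]{Abe-book} already invoked to define $R(G_2)$, this forces $X_i\rt f\in R(G_2)$, settling the first claim. With membership established, $\D(X_i\rt f)\in R(G_2)\ot R(G_2)$ is well defined, and the very same identity, read against $\Db_{\rm Gr}(X_i) = X_i\ns{0}\ot X_i\ns{1}=\sum_j X_j\ot f_i^j$, is precisely the image under $\pi$ of $X_i\ns{0}\rt f\ps{1}\ot X_i\ns{1} f\ps{2} + f\ps{1}\ot X_i\rt f\ps{2} = X_i\bullet\D(f)$. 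Injectivity of $\pi$ on $R(G_2)\ot R(G_2)$ (which follows from linear independence of functions) then yields $\D(X_i\rt f) = X_i\bullet\D(f)$.

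The main obstacle is really the ordering of the two parts: before the first claim is in hand, the expression $X\bullet\D(f)$ is not even manifestly an element of $R(G_2)\ot R(G_2)$, since its summands contain terms of the shape $X\rt f\ps{1}$ and $X\rt f\ps{2}$. The whole point of computing $(X\rt f)(\psi_0\psi)$ by hand is to produce a factorization through $\pi$ that establishes representativity first, so that the same formula can afterwards be reread as the desired coalgebraic compatibility rather than being used to define one of its sides.
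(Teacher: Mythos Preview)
Your proof is correct and follows essentially the same route as the paper: both compute $(X\rt f)(\psi_1\psi_2)$ via the matched-pair identity \eqref{aux-matched-pair-group-actions-IV} and the Leibniz rule, obtain the same two-term expansion, and then read off both claims from it. The only stylistic difference is that the paper concludes $X\rt f\in R(G_2)$ by observing that the left translations $\psi_2\cdot(X\rt f)$ lie in the finite-dimensional span $\{X_i\rt f'_k,\ f'_k\}$ and invoking Proposition~\ref{proposition-elements-of-R(G)}, whereas you invoke the equivalent characterization $\d(f)\subseteq\Im\pi$ from \cite{Abe-book}; both are valid and were already recalled in the text.
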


\begin{proof}
For any $\psi_1,\psi_2 \in G_2$, using ${(\rt )\circ \rm exp }={\rm exp}\circ {d_e\rt}$ we observe that
\begin{align}
\begin{split}
& (X \rt f)(\psi_1\psi_2) = \dt  f(\psi_1\psi_2 \lt {\rm exp}(tX))  \\
&= \dt  f((\psi_1 \lt (\psi_2 \rt {\rm exp}(tX)))(\psi_2 \lt {\rm exp}(tX))) \\
 &= \dt  f((\psi_1 \lt (\psi_2 \rt {\rm exp}(tX))\psi_2) + \dt f(\psi_1(\psi_2 \lt {\rm exp}(tX)))  \\
 & =\dt  (\psi_2 \cdot f)(\psi_1 \lt (\psi_2 \rt {\rm exp}(tX))) + \dt  (f \cdot \psi_1)(\psi_2 \lt {\rm exp}(tX))\\
 &=f\ps{2}(\psi_2)((\psi_2 \rt X) \rt f\ps{1})(\psi_1) + f\ps{1}(\psi_1)(X \rt f\ps{2})(\psi_2).
 \end{split}
\end{align}
As a result,
\begin{equation}
\psi_2 \cdot (X \rt f) =  f\ps{2}(\psi_2)(\psi_2 \rt X) \rt f\ps{1} + (X \rt f\ps{2})(\psi_2)f\ps{1}.
\end{equation}
Hence, we conclude that $\psi_2 \cdot (X \rt f) \in {\rm Span}\{X_i \rt f'_k, f'_k\},$ by writing
\begin{equation}
\Db_{\rm Gr}(X) = \sum_i X_i \ot g^i, \quad \D(f) = \sum_k f'_k \ot f''_k.
\end{equation}
We have proved that the left translations of $X \rt f$ span a finite dimensional vector space, and therefore by Proposition \ref{proposition-elements-of-R(G)} the element $X \rt f$ is a representative function.

\medskip

The $\Fg_1$-linearity  of $\D$ follows from
\begin{align}
\begin{split}
& f\ps{2}(\psi_2)((\psi_2 \rt X) \rt f\ps{1})(\psi_1) +  f\ps{1}(\psi_1)(X \rt f\ps{2})(\psi_2)\\
 &= (X\ns{0}\rt f\ps{1})(\psi_1) X\ns{1}(\psi_2)f\ps{2}(\psi_2)+  f\ps{1}(\psi_1)(X \rt f\ps{2})(\psi_2).
\end{split}
\end{align}
\end{proof}

\begin{proposition}
The map $\Fg_1 \otimes R(G_2) \to R(G_2)$ defined by \eqref{aux-action-for-lie-group} is a left action.
\end{proposition}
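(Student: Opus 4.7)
The plan is to recognize $(\vp, f) \mapsto \vp \rt f$, with $(\vp \rt f)(\psi) = f(\psi \lt \vp)$, as a smooth left action of the Lie group $G_1$ on the vector space $R(G_2)$, and then to deduce the Lie algebra action axioms for $X \rt f = \dt \exp(tX) \rt f$ by standard Lie theory.

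First I would verify the group-level action axioms. The identity $1_{G_1} \rt f = f$ is immediate from $\psi \lt 1 = \psi$. Associativity $(\vp_1\vp_2) \rt f = \vp_1 \rt (\vp_2 \rt f)$ is obtained by evaluating both sides at $\psi \in G_2$ and invoking the matched pair axiom \eqref{aux-matched-pair-group-actions-II}, namely $\psi \lt (\vp_1\vp_2) = (\psi \lt \vp_1) \lt \vp_2$. Linearity of $f \mapsto \vp \rt f$ is obvious from the definition. The preceding lemma guarantees that the infinitesimal action preserves $R(G_2)$, so $X \mapsto (f \mapsto X \rt f)$ is a well-defined linear map $\Fg_1 \to \End(R(G_2))$, with linearity in $X$ following from linearity of the differential at the identity.

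To verify the Lie bracket compatibility
\[
[X,Y] \rt f = X \rt (Y \rt f) - Y \rt (X \rt f),
\]
I would fix $f \in R(G_2)$ and, by iterating the computation at the end of the preceding lemma, produce a finite-dimensional subspace $V_f \subseteq R(G_2)$ containing $f$ that is stable under both the $G_1$-action $\vp \rt$ and all iterated derivations $X_{i_1} \cdots X_{i_k} \rt$. The existence of such a $V_f$ follows from Proposition~\ref{proposition-elements-of-R(G)} together with the recursive formulas expressing $\psi \cdot (X \rt f)$ in terms of $\D(f)$ and the matrix coefficients of $\Db_{\rm Gr}$. Restricting to $V_f$ yields a smooth finite-dimensional representation $\rho_f : G_1 \to {\rm GL}(V_f)$, and its derivative $d\rho_f : \Fg_1 \to \End(V_f)$ is automatically a Lie algebra homomorphism; since $d\rho_f(X)(f) = X \rt f$, the claim follows. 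Equivalently, one can differentiate the group commutator $\exp(tX)\exp(sY)\exp(-tX)\exp(-sY) \rt f$ at $t = s = 0$ via Baker--Campbell--Hausdorff.

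The main technical obstacle is constructing the finite-dimensional invariant subspace $V_f$ cleanly enough to legitimize second-order differentiation. Once this is in place, no further analytic input is needed, and the Lie algebra action property is immediate from the Lie-theoretic correspondence between smooth finite-dimensional representations of $G_1$ and representations of $\Fg_1$.
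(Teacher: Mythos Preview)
Your main line of argument has a genuine gap: the finite-dimensional $G_1$-invariant subspace $V_f \subseteq R(G_2)$ need not exist, because the $G_1$-action on $R(G_2)$ via $(\vp \rt f)(\psi) = f(\psi \lt \vp)$ is not locally finite in general. Concretely, in the projective example worked out later in this chapter one has $X_l \rt \beta_k = \tfrac{1}{2}\beta_k\beta_l$, and iterating gives $X_l^n \rt \beta_k$ proportional to $\beta_k\beta_l^{\,n}$, so the span of $\{X_l^n \rt \beta_k : n \geq 0\}$ is infinite-dimensional; correspondingly the group orbit $\{\exp(tX_l) \rt \beta_k : t \in \Rb\}$ spans all powers of $\beta_l$. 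The recursive formula from the preceding lemma controls $G_2$-translations of $X \rt f$, not $G_1$-translations, so Proposition~\ref{proposition-elements-of-R(G)} does not supply what you need.

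The alternative you mention in passing --- differentiating the group commutator --- is exactly what the paper does, and it sidesteps the local finiteness problem entirely. Using $\Ad \circ \exp = \exp \circ \ad$, the paper computes
\[
[X,Y] \rt f = \dt \Ad_{\exp(tX)}(Y) \rt f = \dt\ds \exp(tX)\exp(sY)\exp(-tX) \rt f,
\]
and the product rule splits this into $X \rt (Y \rt f) - Y \rt (X \rt f)$. The only smoothness needed is pointwise: for fixed $\psi \in G_2$, the function $(t,s) \mapsto f\big(\psi \lt (\exp(tX)\exp(sY)\exp(-tX))\big)$ is smooth because the matched-pair actions are smooth, so the required second derivatives exist without any finite-dimensionality assumption. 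You should promote this from an afterthought to the actual argument.
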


\begin{proof}
Using $\Ad\; \circ\; {\rm exp}= {\rm exp}\; \circ\; \ad$, we prove
\begin{align}
\begin{split}
& [X,Y] \rt f = \dt  {\rm exp}([tX,Y]) \rt f  =\dt  \Ad_{{\rm exp}(tX)}(Y) \rt f  \\
 & =\dt \ds  {\rm exp}(tX){\rm exp}(sY){\rm exp}(-tX) \rt f  \\
 &= \dt \ds {\rm exp}(tX){\rm exp}(sY) \rt f - \dt \ds {\rm exp}(sY){\rm exp}(tX) \rt f  \\
 &= X \rt (Y \rt f) - Y \rt (X \rt f).
 \end{split}
\end{align}
\end{proof}

\begin{lemma}\label{lemma-Lie-group-Bianchi}
The coaction \eqref{aux-coaction-for-lie-group} satisfies the structure identity of $\Fg_1$.
\end{lemma}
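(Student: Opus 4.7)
The plan is to apply Lemma \ref{lemma-structure-identity}: the structure identity for $\Db_{\rm Gr}$ is equivalent to showing that $\Db_{\rm Gr}:\Fg_1 \to \Fg_1 \ot R(G_2)$ is a Lie algebra map, where the target carries the bracket \eqref{aux-bracket-on-g-ot-F}. First I would expand $[\Db_{\rm Gr}(X_i),\Db_{\rm Gr}(X_j)]$ using this bracket, noting that $\ve(f_i^r) = f_i^r(1_{G_2}) = \delta_i^r$, since $1_{G_2}\rt X_i = X_i$. The counit terms collapse the expression to
\[
\sum_{r,s}[X_r,X_s]\ot f_i^rf_j^s \;+\; \sum_s X_s\ot(X_i\rt f_j^s) \;-\; \sum_r X_r\ot(X_j\rt f_i^r),
\]
and equating coefficients of each $X_k$ against $\Db_{\rm Gr}([X_i,X_j]) = \sum_{k,l}C_{ij}^k X_l\ot f_k^l$ reproduces exactly the claimed structure identity (after using $C_{rs}^k = -C_{sr}^k$). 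Hence the task reduces to verifying the Lie-algebra-map property itself.

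To do this, I would check the equivalent identity in $R(G_2)$ pointwise, exploiting $R(G_2)\subseteq C^\infty(G_2)$. Evaluating at $\psi\in G_2$ and using $(X\rt f)(\psi) = \dt f(\psi\lt \exp(tX))$ from \eqref{aux-action-for-lie-group}, together with $\sum_k X_k f_j^k(\psi) = \psi\rt X_j$, the identity of coefficients becomes the single Lie-algebraic equation
\[
\psi\rt [X_i,X_j] - [\psi\rt X_i,\psi\rt X_j] \;=\; \dt (\psi\lt \exp(tX_i))\rt X_j \;-\; \dt (\psi\lt \exp(tX_j))\rt X_i
\]
in $\Fg_1$, to be verified for every $\psi\in G_2$.

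To establish this, I would differentiate the group matched pair identity $\psi\rt(\vp\vp') = (\psi\rt\vp)((\psi\lt\vp)\rt\vp')$ with $\vp=\exp(tX_i)$ and $\vp'=\exp(sX_j)$, apply $\partial^2/\partial t\,\partial s$ at $t=s=0$, and then take the antisymmetric part in $(i,j)$. Because the antisymmetrized mixed second derivative of the multiplication map in $G_1$ at the identity reproduces the Lie bracket, the antisymmetric part of the left side is $\psi\rt[X_i,X_j]$. On the right, the Leibniz rule applied to the $G_1$-product distributes the derivatives between the two factors: the piece in which $\psi\lt\vp$ is ``frozen'' to $\psi$ at $t=0$ contributes $[\psi\rt X_i,\psi\rt X_j]$ upon antisymmetrization, while the cross terms, in which the $t$-derivative falls on $\psi\lt\vp$ and the $s$-derivative then evaluates $(\psi\lt\exp(tX_i))\rt\exp(sX_j)$ at $s=0$ to give $(\psi\lt\exp(tX_i))\rt X_j$, produce precisely the two $\lt$-action correction terms.

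The main technical obstacle is the bookkeeping of this bilinear chain-rule computation: one must correctly separate the second-derivative contributions that combine into $[\psi\rt X_i,\psi\rt X_j]$ at $e\in G_1$ from those that produce the $\lt$-derivatives along the second factor, and recognize that antisymmetrization causes various symmetric first-derivative products in $G_1$ (of the form $(\psi\rt X_i)(\psi\rt X_j)$) to cancel in pairs, leaving exactly the Lie-theoretic content of the identity above.
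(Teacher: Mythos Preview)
Your proposal is correct and follows essentially the same route as the paper. Both arguments hinge on the identity
\[
\psi\rt[X_i,X_j]=[\psi\rt X_i,\psi\rt X_j]+\dt(\psi\lt\exp(tX_i))\rt X_j-\ds(\psi\lt\exp(sX_j))\rt X_i,
\]
obtained by differentiating the group matched-pair axiom $\psi\rt(\vp\vp')=(\psi\rt\vp)((\psi\lt\vp)\rt\vp')$; the only cosmetic difference is that you invoke Lemma~\ref{lemma-structure-identity} to recast the structure identity as the Lie-algebra-map property of $\Db_{\rm Gr}$, whereas the paper verifies the coefficient identity \eqref{aux-Bianchi} directly by reading off each term of the displayed equation in the $f_i^j$, $f_{i,j}^k$ notation.
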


\begin{proof}
We shall prove that \eqref{aux-Bianchi} holds. Realizing the elements of $\Fg_1$ as local derivations  on $C^{\infty}(G_1)$, we have
\begin{align}
\begin{split}
& (\psi \rt [X_i,X_j])(\hat{f}) = \dt  \hat{f}( {\rm exp}(t(\psi \rt [X_i,X_j])))  \\
 & =\dt  \hat{f}(\psi \rt {\rm exp}(\ad(tX_i)(X_j)) \\
 &  =[\psi \rt X_i, \psi \rt X_j](\hat{f}) + \dt  ((\psi \lt {\rm exp}(tX_i)) \rt X_j)(\hat{f})  \\
 &  -\ds  ((\psi \lt {\rm exp}(sX_j)) \rt X_i)(\hat{f}),
\end{split}
\end{align}
for any $\psi \in G_2$, $\hat{f} \in C^{\infty}(G_1)$ and $X_i,X_j \in \Fg_1$. Hence,  we conclude that
\begin{align}\label{aux-str-id-0}
\begin{split}
& \psi \rt [X_i,X_j] = [\psi \rt X_i, \psi \rt X_j] + \dt  ((\psi \lt {\rm exp}(tX_i)) \rt X_j) \\
& -\ds  ((\psi \lt {\rm exp}(sX_j)) \rt X_i).
\end{split}
\end{align}
By the definition of the coaction we have
\begin{equation}\label{aux-str-id-1}
\psi \rt [X_i,X_j] = X_kC_{ij}^lf_l^k(\psi).
\end{equation}
Similarly,
\begin{equation}\label{aux-str-id-2}
 [\psi \rt X_i, \psi \rt X_j] = [X_rf_i^r(\psi), X_sf_j^s(\psi)] =  C_{rs}^kX_kf_i^r(\psi)f_j^s(\psi) = X_kC_{rs}^k(f_j^sf_i^r)(\psi).
\end{equation}
Finally
\begin{align}\label{aux-str-id-3}
\begin{split}
& \dt  ((\psi \lt {\rm exp}(tX_i)) \rt X_j) = \dt  X_kf_j^k(\psi \lt {\rm exp}(tX_i))  \\
 &= X_k (X_i \rt f_j^k)(\psi) = X_kf_{j,i}^k(\psi),
 \end{split}
\end{align}
and in the same fashion
\begin{equation}\label{aux-str-id-4}
\ds ((\psi \lt {\rm exp}(sX_j)) \rt X_i) = X_kf_{i,j}^k(\psi).
\end{equation}
The result follows from \eqref{aux-str-id-1} to \eqref{aux-str-id-4}, in view of \eqref{aux-str-id-0}.
\end{proof}

We are now ready for the main result of this subsection.

\begin{theorem}\label{theorem-R(G_2)-Lie-Hopf}
Let $(G_1, G_2)$ be a matched pair of Lie groups. Then by the action \eqref{aux-action-for-lie-group} and the coaction \eqref{aux-coaction-for-lie-group}, the pair  $(R(G_2),U (\Fg_1))$ is a matched pair of Hopf algebras.
\end{theorem}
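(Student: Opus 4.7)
The plan is to reduce the problem to Theorem \ref{theorem-Lie-Hopf-matched-pair}, so that it suffices to show that $R(G_2)$ is a $\Fg_1$-Hopf algebra in the sense of Definition \ref{definition-Lie-Hopf}. In other words, I need to confirm four ingredients: (i) $\Fg_1$ acts on the commutative Hopf algebra $R(G_2)$ by derivations; (ii) $R(G_2)$ coacts on $\Fg_1$ and this coaction satisfies the structure identity of $\Fg_1$; (iii) $\D$ is $\Fg_1$-linear, that is $\D(X\rt f)=X\bullet\D(f)$; and (iv) $\ve$ is $\Fg_1$-linear, that is $\ve(X\rt f)=0$.

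Most of these ingredients have already been established in the preceding lemmas of the section. The action \eqref{aux-action-for-lie-group} was shown to land in $R(G_2)$ and to be a left action; its derivation property follows from the Leibniz rule applied to
\begin{equation}
(X\rt(fg))(\psi)=\dt (fg)(\psi\lt\exp(tX)),
\end{equation}
using that $\psi\lt\exp(0\cdot X)=\psi$. The coaction \eqref{aux-coaction-for-lie-group} satisfies the structure identity by Lemma \ref{lemma-Lie-group-Bianchi}, and the $\Fg_1$-linearity of $\D$ is precisely the content of \eqref{aux-Db-equivariancy}. The remaining counit identity is immediate from
\begin{equation}
\ve(X\rt f)=(X\rt f)(e)=\dt f(e\lt\exp(tX))=\dt f(e)=0,
\end{equation}
since $e\lt g=e$ for every $g\in G_1$ by the matched pair axiom \eqref{aux-matched-pair-group-actions-IV}.

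Once these four items are assembled, Definition \ref{definition-Lie-Hopf} is satisfied, so $R(G_2)$ is a $\Fg_1$-Hopf algebra. Invoking Theorem \ref{theorem-Lie-Hopf-matched-pair} then yields the bicrossed matched pair structure $(R(G_2),U(\Fg_1))$ with the coaction and action already exhibited, completing the proof. The only nontrivial computational ingredient was the verification of the structure identity in Lemma \ref{lemma-Lie-group-Bianchi}, which required differentiating the mixed associativity axiom \eqref{aux-matched-pair-group-actions-III}; everything else is a routine bookkeeping of the product rule together with the basic compatibility $\exp\circ\,d_e(\cdot)=(\cdot)\circ\exp$. Hence the main conceptual step—packaging the geometric identities into the coassociative coaction satisfying \eqref{aux-Bianchi}—has already been performed, and what remains is just the direct checks above.
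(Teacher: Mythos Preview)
Your proposal is correct and follows essentially the same approach as the paper: reduce to Theorem \ref{theorem-Lie-Hopf-matched-pair} by verifying that $R(G_2)$ is a $\Fg_1$-Hopf algebra, invoking Lemma \ref{lemma-Lie-group-Bianchi} for the structure identity, \eqref{aux-Db-equivariancy} for the $\Fg_1$-linearity of $\D$, and the direct computation $\ve(X\rt f)=\dt f(e\lt\exp(tX))=0$. The paper's proof is terser and does not explicitly spell out the derivation property, but the logic is identical.
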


\begin{proof}
In view of Theorem \ref{theorem-Lie-Hopf-matched-pair},  we need to prove that  the Hopf algebra $R(G_2)$ is a $\Fg_1$-Hopf algebra.

\medskip

Considering the Hopf algebra structure of $R(G_2)$, we see that
\begin{equation}
 \ve(X \rt f) = (X \rt f)(e) = \dt  f(e \lt {\rm exp}(tX)) =  \dt  f(e) = 0.
\end{equation}
By  Lemma \ref{lemma-Lie-group-Bianchi}, we know that $\Db_{\rm Gr}$  satisfies the structure identity of $\Fg_1$. On the other hand, the equation \eqref{aux-Db-equivariancy} proves that  $\Delta(X \rt f) = X \bullet \Delta(f)$.
\end{proof}

As a result, to any matched pair of Lie groups $(G_1, G_2)$, we associate a Hopf algebra
\begin{equation}
\Hc (G_1, G_2) := R(G_2) \acl U (\Fg_1).
\end{equation}

We proceed by providing a relation between the Hopf algebras $\mathcal{H} (G_1, G_2)$ and $\mathcal{H} (\Fg_1, \Fg_2)$. To this end, we first introduce a map
\begin{equation}\label{aux-map-R(G)to-R(g)}
\theta: R(G_2) \to R(\Fg_2), \qquad \pi \circ \rho \to \pi \circ d_e\rho,
\end{equation}
for any finite dimensional representation $\rho: G_2 \to {\rm GL}(V)$, and a linear functional $\pi: \End(V) \to \mathbb{C}$. Here we identify the representation $d_e\rho: \Fg_2 \to g\ell(V)$ of $\Fg_2$ and the unique algebra map $ d_e\rho:U(\Fg_2) \to g\ell(V)$ making the diagram
$$
\xymatrix {
\ar[d]_i \Fg_2 \ar[r]^{d_e\rho} & gl(V) \\
U(\Fg_2)\ar[ur]_{ d_e\rho}
}
$$
commutative.

\medskip

Let $G$ and $H$ be two Lie groups, where $G$ is  simply connected. Let also $\Fg$ and $\Fh$ be the corresponding Lie algebras, respectively. Then, a linear map $\sigma: \Fg \to \Fh$ is the differential of a map $\rho:G \to H$ of Lie groups if and only if it is a map of Lie algebras \cite{FultHarr-book}. Therefore, when $G_2$ is  simply connected, the map $\theta: R(G_2) \to R(\Fg_2)$ is bijective.

\medskip

We can express $\theta: R(G_2) \to R(\Fg_2)$ explicitly. The map $d_e\rho:U(\Fg_2) \to g\ell(V)$ sends $1 \in U(\Fg_2)$ to $\Id_V \in g\ell(V)$, hence for $f \in R(G_2)$
\begin{equation}
\theta(f)(1) = f(e).
\end{equation}
Since it is multiplicative, for any $\xi_1,\ldots,\xi_n \in \Fg_2$ we have
\begin{equation}\label{aux-map-theta}
\theta(f)(\xi^1\ldots\xi^n) = \dtone \ldots \dtn f({\rm exp}(t_1\xi_1)\ldots{\rm exp}(t_n\xi_n)).
\end{equation}

\begin{proposition}
The map
\begin{equation}\label{aux-map-Theta}
\Theta: \mathcal{H}(G_1, G_2) \to \mathcal{H}(\Fg_1, \Fg_2), \qquad \Theta(f \acl u)=\theta(f) \acl u
\end{equation}
is a map of Hopf algebras. Moreover, if  $G_2$ is simply connected, then $\mathcal{H}(G_1, G_2) \cong \mathcal{H}(\Fg_1, \Fg_2)$ as Hopf algebras.
\end{proposition}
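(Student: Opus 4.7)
The plan is to write $\Theta = \theta \acl \Id_{U(\Fg_1)}$ and reduce the statement to three facts about $\theta$ alone: (i) $\theta : R(G_2) \to R(\Fg_2)$ is a map of Hopf algebras; (ii) $\theta$ intertwines the $U(\Fg_1)$-actions, i.e.\ $\theta(X\rt f) = X \rt \theta(f)$ for all $X \in \Fg_1$ and $f \in R(G_2)$; and (iii) $(\Id_{\Fg_1} \ot \theta) \circ \Db_{\rm Gr} = \Db_{\rm Alg}$ as coactions on $\Fg_1$. Once these three compatibilities hold, the bicrossed product structures of both sides are transported by $\Theta$: the algebra structure is governed by the $U(\Fg_1)$-action on the first factor, the coalgebra structure by the first-factor coaction on $U(\Fg_1)$ (extended inductively from $\Fg_1$ via \eqref{aux-matched-pair-4}), and compatibility with the antipode then follows formally from \eqref{aux-antipode-bicrossed-product}.

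For (i), recall from \eqref{aux-map-theta} that $\theta$ sends a representative function $\pi \circ \rho$ to $\pi \circ d_e\rho$, viewed as an element of $R(\Fg_2)$ via the algebra extension $d_e\rho : U(\Fg_2) \to g\ell(V)$. Multiplicativity of $\theta$ follows from the standard fact that the differential of a tensor product of representations is the Leibniz sum $d_e\rho_1 \ot \Id + \Id \ot d_e\rho_2$; the remaining compatibilities (unit, counit, comultiplication) follow by translating the structure maps of $R(G_2)$ and $R(\Fg_2)$ into tensor products, duals, and trivial representations. For (iii), differentiating $\psi \rt X_i = \sum_j X_j f_i^j(\psi)$ iteratively at $\psi = e$ in the directions $\xi_1,\dots,\xi_n \in \Fg_2$ yields $(\xi_1\cdots\xi_n) \rt X_i = \sum_j X_j \theta(f_i^j)(\xi_1\cdots\xi_n)$; comparing with the defining relation $(\xi_1\cdots\xi_n) \rt X_i = \sum_j \tilde f_i^j(\xi_1\cdots\xi_n) X_j$ for the matrix coefficients $\tilde f_i^j$ of $\Db_{\rm Alg}$ forces $\theta(f_i^j) = \tilde f_i^j$ by PBW.

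The main obstacle is (ii). Unfolding both sides via \eqref{aux-map-theta} for $u' = \xi_1\cdots\xi_n \in U(\Fg_2)$, one must establish
\begin{equation}
\dt \dtone \cdots \dtn \, f({\rm exp}(t_1\xi_1)\cdots{\rm exp}(t_n\xi_n) \lt {\rm exp}(tX)) = \theta(f)(\xi_1\cdots\xi_n \lt X),
\end{equation}
where on the right the action $\lt : U(\Fg_2) \ot U(\Fg_1) \to U(\Fg_2)$ is the inductive extension defined by \eqref{aux-mutual-pair-2}. An induction on $n$ reduces this identity to the infinitesimal content of \eqref{aux-matched-pair-group-actions-II} combined with Leibniz expansion of mixed partial derivatives. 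Finally, for the isomorphism statement, the excerpt has already observed that $\theta$ is bijective when $G_2$ is simply connected (this is Lie's second theorem, matching finite-dimensional smooth representations of $G_2$ with those of $\Fg_2$), so combining that bijectivity with (i)--(iii) promotes $\Theta$ to a Hopf algebra isomorphism.
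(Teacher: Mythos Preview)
Your proposal is correct and follows essentially the same three-step reduction as the paper's proof: (i) $\theta$ is a Hopf algebra map, (ii) $\theta$ is $U(\Fg_1)$-linear, and (iii) the two coactions on $\Fg_1$ match through $\theta$; from these the algebra, coalgebra, and antipode compatibilities of $\Theta$ follow formally from the bicrossed product formulas. The only noteworthy difference is that for (ii) and (iii) the paper evaluates both sides on a single Lie algebra element $\xi\in\Fg_2$, whereas you correctly point out that equality in $R(\Fg_2)$ requires agreement on all of $U(\Fg_2)$ and propose the inductive verification on PBW monomials $\xi_1\cdots\xi_n$; your induction sketch via \eqref{aux-matched-pair-group-actions-IV} and the Leibniz rule is the right way to close that gap.
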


\begin{proof}
First we show that \eqref{aux-map-Theta} is an algebra map. To this end, we need to prove that \eqref{aux-map-theta} is a map of $U(\Fg_1)$-module algebras. It is straightforward to observe that $\theta: R(G_2) \to R(\Fg_2)$ is a map of Hopf algebras.

\medskip

Let us now prove that $\theta$ is a $U(\Fg_1)$-module map. Indeed for any $X \in \Fg_1$ and any $\xi \in \Fg_2$,
\begin{align}\label{aux-theta-is-g-linear}
\begin{split}
& \theta(X \rt f)(\xi) = \dt\ds f({\rm exp}(s\xi) \lt {\rm exp}(tX))  \\
 & =\dt f({\rm exp}(t(\xi \lt X))) = X \rt \theta(f)(\xi).
 \end{split}
\end{align}
Next we prove  that the diagram
\begin{equation}\label{aux-diagram-R(G)-R(g)}
\xymatrix{  \ar[rrd]_{\Db_{\rm Alg}}\Fg_1\ar[rr]^{\Db_{\rm Gr}}&& \Fg_1\ot R(G_2)\ar[d]^{\theta}\\
&&    \Fg_1\ot R(\Fg_2).  }
\end{equation}
is commutative. Indeed, by evaluating on $\xi \in \Fg_2$ we have
\begin{align}
\begin{split}
 &\sum_jX^j\theta(g^i_j)(\xi) =\sum_j \dt X^jg^i_j({\rm exp}(t\xi)) \\
 &= \dt({\rm exp}(t\xi) \rt X^i) =  \xi \rt X^i =  \sum_jX^jf^i_j(\xi).
\end{split}
\end{align}
As a result, \eqref{aux-map-Theta} is a map of coalgebras. That \eqref{aux-map-Theta} commutes with the antipodes follows from \eqref{aux-antipode-bicrossed-product} and the fact that \eqref{aux-map-theta} is a map of Hopf algebras.
\end{proof}

In order to illustrate the theory, let us now discuss the projective Hopf algebra defined in the previous subsection from the Lie group decomposition point of view.

\subsubsection{Projective Hopf algebra via Lie group decomposition}

Let $\xi^0, \cdots ,\xi^n$ be a homogeneous coordinate system on $\mathbb{R}P^n$ with $\xi^0 \neq 0$. Then, a coordinate system $x^1, \ldots , x^n$ of $\Rb P^n$ can be defined by $x^i := \xi^i / \xi^0$. If $[s^{\alpha}_{\beta}]_{\alpha,\beta = 0, \ldots, n} \in \rm{GL}(n+1)$, then the induced projective transformation is given by the linear fractional transformation
\begin{equation}
x^i \mapsto y^i := \frac{s^i_0 + s^i_jx^j}{s^0_0 + s^0_jx^j} \qquad i=1, \ldots, n.
\end{equation}
If $s^0_0 \neq 0$, then setting
\begin{equation}\label{aux-local-coordinates}
u^i := s^i_0/s^0_0, \quad u^i_j := s^i_j/s^0_0, \quad u_j := s^0_j/s^0_0 \quad i,j = 1, \ldots n
\end{equation}
we obtain
\begin{equation}
x^i \mapsto y^i := \frac{u^i + u^i_jx^j}{1 + u_jx^j} \qquad i=1, \ldots, n.
\end{equation}
We shall take $(u^i, u^i_j, u_j)$,$1 \leq i,j \leq n$, as a local coordinate system around the identity of $\rm{PGL}(n,\mathbb{R})$.

\medskip

Let $o \in \Rb P^n$ be the point with homogeneous coordinates $[1,0,\ldots,0]$, and $H$ be the isotropy subgroup of ${\rm PGL}(n,\Rb)$ so that $\Rb P^n = {\rm PGL}(n,\Rb)/H$. In terms of the local coordinates $(u^i, u^i_j, u_j)$, $H$ is defined by $u^i = 0$, $1 \leq i \leq n$.

\medskip

The group $H$ contains ${\rm GL}(n)$ by $u_j = 0$, $1 \leq j \leq n$, and
\begin{equation}
N' = \{\psi = (0, u^i_j, u_j) \in H\, |\, u^i_j = \delta^i_j\} \cong \Rb^n
\end{equation}
is a subgroup of $H$. As a result, the subgroup of ${\rm PGL}(n,\Rb)$ generated by $H$ and the translations
\begin{equation}
x^i\mapsto u^i+x^i
\end{equation}
admits the decomposition
\begin{equation}
{\rm GL}(n)^{\rm aff} \cdot N'.
\end{equation}
Explicitly, if $\phi$ is the transformation
\begin{equation}
\phi^i:x \mapsto \frac{u^i + u^i_ax^a}{1 + u_ax^a}
\end{equation}
then we write $\phi = \vp \circ \psi$ where $\vp \in \rm{GL}(n)^{aff}$ is given by
\begin{equation}
\vp^i:x \mapsto u^i + \hat{u}^i_ax^a,
\end{equation}
and $\psi := \vp^{-1} \circ \phi$, \ie
\begin{equation}
\psi^i:x \mapsto \frac{x^i}{1+u_ax^a}.
\end{equation}
We first form the coaction $\Db_{\rm Gr}:g\ell(n)^{\rm aff} \to g\ell(n)^{\rm aff} \ot R(N')$. To this end, we need to determine the left action of the group $N'$ on the Lie algebra $g\ell(n)^{\rm aff}$.

\medskip

For $\psi \in N'$ and $X \in g\ell(n)^{\rm aff}$,
\begin{equation}
\psi \rt X := \dt \psi \rt exp(tX)
\end{equation}
Hence for $X_k \in g\ell(n)^{\rm aff}$, let
\begin{equation}\label{aux2}
\vp_t := \exp(tX_k) = (u(t); \Id_n) \in {\rm GL}(n)^{\rm aff} \quad \ie \quad \vp_t^i:x \mapsto u^i(t) + x^i.
\end{equation}
Then using the definition \eqref{aux-matched-pair-groups-action} of the mutual actions, which is repeated here as
\begin{equation}
\psi \circ \vp_t = (\psi \rt \vp_t) \circ (\psi \lt \vp_t),
\end{equation}
for $\psi = (0,\Id_n,u_j) \in N'$ we have
\begin{equation}
\psi \rt \vp_t = (\psi(u(t)); \psi'(u(t))) \in \rm{GL}(n)^{aff}
\end{equation}
where
\begin{equation}
\psi^i(u(t)) = \frac{u^i(t)}{1 + u_au^a(t)}
\end{equation}
and
\begin{equation}
\psi'(u(t))^i_j = \part_j\psi^i(u(t)) = \frac{\delta^i_j(1 + u_au^a(t)) - u^i(t)u_j}{(1 + u_au^a(t))^2}.
\end{equation}
Therefore,
\begin{align}
\begin{split}
& \psi \rt X_k = \dt (\psi \rt \vp_t) = \dt \psi^i(u(t)) X_i + \dt \psi'(u(t))^i_j X_i^j \\
& = X_k + \beta^i_{jk}(\psi)X_i^j,
\end{split}
\end{align}
where
\begin{equation}\label{aux-beta}
\beta^i_{jk}(\psi) = \dt \psi'(u(t))^i_j = -\delta^i_ju_k-\delta^i_ku_j.
\end{equation}
We note that as a result of \eqref{aux-beta} we have the equality
\begin{equation}
\beta^i_{jk} = \frac{1}{2}(\delta^i_j\beta^k_{kk} + \delta^i_k\beta^j_{jj}).
\end{equation}
Next, we exponentiate $X_p^q \in gl(n)^{\rm aff}$ as
\begin{equation}
\vp_t := \exp(tX_p^q) = (0; v(t)) \in {\rm GL}(n)^{\rm aff} \quad \ie \quad \vp_t^i:x \mapsto v^i_a(t)x^a.
\end{equation}
As before,
\begin{equation}
\psi \rt \vp_t = (\psi(0); \psi'(0)v(t)) \in \rm{GL}(n)^{aff},
\end{equation}
where
\begin{equation}
\psi^i(0) = 0
\end{equation}
and
\begin{equation}
\psi'(0)v(t) = v(t).
\end{equation}
Therefore,
\begin{equation}
\psi \rt X_p^q = \dt (\psi \rt \vp_t) = \dt v^i_j(t) X_i^j = X_p^q.
\end{equation}
Accordingly we have the coaction
\begin{align}
\begin{split}
& \Db_{\rm Gr}:g\ell(n)^{\rm aff} \to g\ell(n)^{\rm aff} \ot R(N') \\
& X_k \mapsto X_k \ot 1 + X_i^j \ot \beta^i_{jk} \\
& X_p^q \mapsto X_p^q \ot 1.
\end{split}
\end{align}
Let us next deal with the left action of $g\ell(n)^{\rm aff}$ on $R(N')$.  Using the expression in \eqref{aux2} for $X_l$, we find for $\psi = (0,\Id_n,u_j) \in N'$ that
\begin{equation}
(\psi \lt \vp_t)^i:x \mapsto \frac{x^i}{1 + \frac{u_a}{1+u_bu^b(t)}x^a}.
\end{equation}
As a result, for $\beta^i_{jk} \in R(N')$ we have
\begin{align}
\begin{split}
& (X_l \rt \beta^i_{jk})(\psi) = \dt \beta^i_{jk}(\psi \lt \vp_t) = \dt (-\delta^i_j\frac{u_k}{1+u_bu^b(t)} - \delta^i_k\frac{u_j}{1+u_bu^b(t)}) \\
& = \delta^i_ju_ku_l + \delta^i_ku_ju_l.
\end{split}
\end{align}
Denoting $\beta^i_{jkl} := X_l \rt \beta^i_{jk}$, we can observe as before that
\begin{equation}
\beta^i_{jkl} + \frac{1}{2}(\beta^i_{la}\beta^a_{jk} - \beta^i_{ka}\beta^a_{lj} - \beta^i_{ja}\beta^a_{kl}) = 0.
\end{equation}
On the other hand, starting from the exponentiation of $X_p^q \in g\ell(n)^{\rm aff}$, we have
\begin{equation}
(\psi \lt \vp_t)^i:x \mapsto \frac{x^i}{1 + u_av^a_b(t)x^b}.
\end{equation}
Therefore
\begin{align}
\begin{split}
& (X_p^q \rt \beta^i_{jk})(\psi) = \dt \beta^i_{jk}(\psi \lt \vp_t) = \dt (-\delta^i_ju_av^a_k(t) - \delta^i_ku_av^a_j(t)) \\
& = -\delta^i_j\delta^q_ku_p - \delta^i_k\delta^q_ju_p \\
& = \delta^q_j(-\delta^i_pu_k - \delta^i_ku_p) + \delta^q_k(-\delta^i_ju_p - \delta^i_pu_j) - \delta^i_p(-\delta^q_ju_k - \delta^q_ku_j) \\
& = \delta^q_j\beta^i_{pk}(\psi) + \delta^q_k\beta^i_{jp}(\psi) - \delta^i_p\beta^q_{jk}(\psi) = (\delta^q_j\beta^i_{pk} + \delta^q_k\beta^i_{jp} - \delta^i_p\beta^q_{jk})(\psi).
\end{split}
\end{align}
As a result,
\begin{equation}
X_p^q \rt \beta^i_{jk} = \delta^q_j\beta^i_{pk} + \delta^q_k\beta^i_{jp} - \delta^i_p\beta^q_{jk}.
\end{equation}
Finally we define the projective Hopf algebra (via the group decomposition) by
\begin{equation}
\Hc_{n\rm Proj} := R(N') \acl U(g\ell(n)^{\rm aff}).
\end{equation}
Using the notation $\beta_k := \beta^k_{kk}$, the Hopf algebra $R(N')$ is the polynomial Hopf algebra generated by $\{\beta_k\, |\, 1 \leq k \leq n\}$. We then write the coaction as
\begin{align}\label{coactionGrproj}
\begin{split}
&\Db_{\rm Gr}: g\ell(n)^{\rm aff} \to g\ell(n)^{\rm aff} \ot R(N') \\
& X_k \mapsto X_k \ot 1 + \frac{1}{2}\sum_a X_a^a \ot \beta_k + \frac{1}{2}X_k^a \ot \beta_a \\
& X_p^q \mapsto X_p^q \ot 1,
\end{split}
\end{align}
and the action as
\begin{equation}
X_l \rt \beta_k = \frac{1}{2}\beta_k\beta_l, \qquad X_i^j \rt \beta_k = \delta_k^j\beta_i.
\end{equation}
Let us now check that the coaction \eqref{coactionGrproj} is a Lie algebra map. To this end, we first observe that
\begin{align}
\begin{split}
& [\Db_{\rm Gr}(X_k), \Db_{\rm Gr}(X_l)] = \\
& [X_k \ot 1 + \frac{1}{2}\sum_a X_a^a \ot \beta_k + \frac{1}{2}X_k^a \ot \beta_a, X_l \ot 1 + \frac{1}{2}\sum_b X_b^b \ot \beta_l + \frac{1}{2}X_l^b \ot \beta_b] \\
& = [X_k \ot 1, X_l \ot 1] + \frac{1}{2}\sum_b[X_k \ot 1, X_b^b \ot \beta_k] + \frac{1}{2}\sum_b[X_k \ot 1, X_l^b \ot \beta_b] + \\
& \frac{1}{2}\sum_a[X_a^a \ot \beta_k, X_l \ot 1] + \frac{1}{4}\sum_{a,b}[X_a^a \ot \beta_k, X_b^b \ot \beta_l] + \frac{1}{4}\sum_{a,b}[X_a^a \ot \beta_k, X_l^b \ot \beta_b] + \\
& \frac{1}{2}\sum_a[X_k^a \ot \beta_a, X_l \ot 1] + \frac{1}{4}\sum_{a,b}[X_k^a \ot \beta_a, X_b^b \ot \beta_l] + \frac{1}{4}\sum_{a,b}[X_k^a \ot \beta_a, X_l^b \ot \beta_b] \\
& = 0 + \frac{1}{2}(-X_k \ot \beta_l + \frac{1}{2}\sum_bX_b^b \ot \beta_l\beta_k) + \frac{1}{2}(-X_l \ot \beta_k + \frac{1}{2}X_l^b \ot \beta_k\beta_b) + \\
& \frac{1}{2}(X_l \ot \beta_k - \frac{1}{2}\sum_aX_a^a \ot \beta_k\beta_l) + 0 + 0 + \frac{1}{2}(X_k \ot \beta_l - \frac{1}{2}X_k^b \ot \beta_a\beta_l) + \\
& 0 + \frac{1}{4}(X_k^b \ot \beta_l\beta_b - X_l^a \ot \beta_a\beta_k) \\
& = 0 = \Db_{\rm Gr}([X_k, X_l]).
\end{split}
\end{align}
Similarly,
\begin{align}
\begin{split}
& [\Db_{\rm Gr}(X_i^j), \Db_{\rm Gr}(X_k)] = [X_i^j \ot 1, X_k \ot 1 + \frac{1}{2}\sum_a X_a^a \ot \beta_k + \frac{1}{2}X_k^a \ot \beta_a] \\
& = [X_i^j \ot 1, X_k \ot 1] + \frac{1}{2}\sum_a[X_i^j \ot 1, X_a^a \ot \beta_k] + \frac{1}{2}[X_i^j \ot 1, X_k^a \ot \beta_a] \\
& = \delta_k^jX_i \ot 1 + \frac{1}{2}\delta_k^j\sum_aX_a^a \ot f_i +\frac{1}{2}(\delta_k^jX_i^a - \delta_i^aX_k^j) \ot f_a + \frac{1}{2}X_k^j \ot f_i \\
& =  \delta_k^j\Db_{\rm Gr}(X_i) = \Db_{\rm Gr}([X_i^j, X_k]),
\end{split}
\end{align}
and finally
\begin{equation}
[\Db_{\rm Gr}(X_i^j), \Db_{\rm Gr}(X_p^q)] = [X_i^j \ot 1, X_p^q \ot 1] = [X_i^j, X_k] \ot 1 = \Db_{\rm Gr}([X_i^j, X_k]).
\end{equation}
As a result, we can say that the coaction \eqref{coactionGrproj} is a Lie algebra map. In the same way as the Lie algebra decomposition case, we can verify that the coalgebra structure maps of $R(N')$ are $g\ell(n)^{\rm aff}$-linear.

\subsection{Matched pair of  Hopf algebras associated to matched pair of affine algebraic groups}

In this subsection we associate a bicrossed product Hopf algebra to any matched pair of affine algebraic groups.

\medskip

Let $G_1$ and $ G_2$ be two affine algebraic groups with mutual actions. In this case the actions
\begin{equation}
\rt: G_2 \times G_1 \to G_1,\qquad\lt: G_2 \times G_1 \to G_2,
\end{equation}
are assumed to be the  maps of affine algebraic sets, which by \cite[Chap 22]{TauvYu-book} means the existence of the maps
\begin{equation}
\mathscr{P}(\rt):\mathscr{P}(G_1) \to \mathscr{P}(G_2 \times G_1) = \mathscr{P}(G_2) \ot \mathscr{P}(G_1), \quad f \mapsto f{\ns{-1}} \ot f{\ns{0}},
\end{equation}
such that $f{\ns{-1}}(\psi) f{\ns{0}}(\varphi) = f(\psi \rt \varphi)$ and
\begin{equation}
 \mathscr{P}(\lt):\mathscr{P}(G_2) \to \mathscr{P}(G_2 \times G_1) = \mathscr{P}(G_2) \ot \mathscr{P}(G_1), \quad f \mapsto  f{\ns{0}} \ot f{\ns{1}},
\end{equation}
such that $ f{\ns{0}}(\psi) f{\ns{1}}(\varphi) = f(\psi \lt \varphi)$.

\medskip

We call the pair $(G_1,G_2)$ a matched pair of affine algebraic groups if the mutual actions satisfy \eqref{aux-matched-pair-group-actions-I} to \eqref{aux-matched-pair-group-actions-IV}.

\medskip

Next we  define the representations
\begin{equation}
f \lt \psi :=  f{\ns{-1}}(\psi)f{\ns{0}}, \qquad\text{and }\qquad \varphi \rt f := f{\ns{0}} f{\ns{1}}(\varphi)
\end{equation}
of $G_2$ on $\mathscr{P}(G_1)$ and  $G_1$ on $\mathscr{P}(G_2)$ respectively. We denote the action of $G_1$ on $\mathscr{P}(G_2)$ by $\rho$.

\medskip

As in the Lie group case, we define the action of $\Fg_1$ as the derivative of the action of $G_1$. Denoting the derivative of $\rho$ by $\rho^{\circ}$, by \cite{Hoch-book71} we have
\begin{equation}\label{aux-polynomial-action}
X \rt f := \rho^{\circ}(X)(f) =  f{\ns{0}}X(f{\ns{1}}).
\end{equation}

The action of $G_2$ on $\Fg_1$ is as before defined by
\begin{equation}
\psi \rt X := (L_{\psi})^{\circ}(X),
\end{equation}
where
\begin{equation}
L_{\psi}: G_1 \to G_1, \qquad \varphi \mapsto \psi \rt \varphi.
\end{equation}

We now prove that $\mathscr{P}(G_2)$ coacts on $\Fg_1$. Using the left action of $G_2$ on $\Fg_1$ we introduce the functions $f_i^j:G_2 \to \mathbb{C}$ exactly as before:
\begin{equation}\label{aux-polynomial-matrix-coefficients}
f_i^j(\psi) := <\psi \rt X_i, \theta^j>,
\end{equation}
that is
\begin{equation}
\psi \rt X_i =\sum_j f_i^j(\psi)X_j.
\end{equation}

\begin{lemma}
The functions $f_i^j:G_2 \to \mathbb{C}$ defined in \eqref{aux-polynomial-matrix-coefficients} are polynomial functions.
\end{lemma}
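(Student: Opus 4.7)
The plan is to mirror the Lie-group argument of Lemma~\ref{Lemma f-i-j-in-R(G)}, but to replace the use of one-parameter subgroups of $G_2$ by the algebro-geometric structure map $\mathscr{P}(\rt):\mathscr{P}(G_1) \to \mathscr{P}(G_2) \ot \mathscr{P}(G_1)$. Conceptually, $f_i^j$ is a matrix coefficient of the rational representation of $G_2$ on $\Fg_1$ induced by the morphism $L_\psi:\vp \mapsto \psi \rt \vp$, so it ought to lie in $\mathscr{P}(G_2)$. To make this concrete, I would extract $f_i^j$ by pairing the vector $\psi \rt X_i$ with a family of polynomials on $G_1$ dual to $\{X_1,\ldots,X_N\}$.

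The main obstacle, and the first step, is producing polynomial functions $p_1,\ldots,p_N \in \mathscr{P}(G_1)$ with the property $X_i(p_j) = \delta_i^j$. Since in the algebraic category $\Fg_1 = T_eG_1 = (\Fm_e/\Fm_e^2)^\ast$, where $\Fm_e \subseteq \mathscr{P}(G_1)$ is the maximal ideal of the identity, the functionals $X_1,\ldots,X_N$ separate points in the finite-dimensional cotangent space $\Fm_e/\Fm_e^2$; a standard linear algebra argument then yields the desired dual polynomials $p_j$. This identification of the Zariski tangent space is the only point where the algebraic (as opposed to merely smooth) structure of $G_1$ is used in an essential way.

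Once the $p_j$ are in hand, the rest is a direct computation. Since $\psi \rt X_i = (L_\psi)^\circ(X_i)$ acts on $p \in \mathscr{P}(G_1)$ as the derivation $p \mapsto X_i(p \circ L_\psi)$, and since the morphism property of the action gives $p_j \circ L_\psi(\vp) = (p_j)\ns{-1}(\psi)(p_j)\ns{0}(\vp)$, I obtain
\begin{equation}
f_i^j(\psi) \;=\; (\psi \rt X_i)(p_j) \;=\; X_i(p_j \circ L_\psi) \;=\; X_i\big((p_j)\ns{0}\big)\,(p_j)\ns{-1}(\psi),
\end{equation}
with implicit summation. The coefficients $X_i\big((p_j)\ns{0}\big) \in \Cb$ are scalars while $(p_j)\ns{-1} \in \mathscr{P}(G_2)$, so $f_i^j$ is a finite linear combination of elements of $\mathscr{P}(G_2)$ and hence itself a polynomial function on $G_2$, as required.
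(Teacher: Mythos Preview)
Your proof is correct and follows essentially the same approach as the paper's: both produce polynomial functions on $G_1$ dual to the basis $\{X_i\}$ of $\Fg_1$, then evaluate the identity $\psi\rt X_i=\sum_j f_i^j(\psi)X_j$ on these to exhibit $f_i^j$ as a finite $\Cb$-linear combination of the $\mathscr{P}(G_2)$-components coming from $\mathscr{P}(\rt)$. The only cosmetic difference is that the paper obtains the dual polynomials by citing \cite[Lemma~1.1]{Hoch-book}, whereas you argue directly from the identification $\Fg_1\cong(\Fm_e/\Fm_e^2)^\ast$; the ensuing computations are identical.
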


\begin{proof}
In view of Lemma 1.1 of \cite{Hoch-book}, there exists a basis $\{X_1, \cdots, X_n\}$ of $\Fg_1$ and a corresponding subset $S = \{f^1, \ldots, f^n\} \subseteq \mathscr{P}(G_1)$ such that $X_j(f^i) = \delta_j^i$. Now on the one hand,
\begin{equation}
(\psi \rt X_i)(f) = X_i(f \lt \psi) =  X_i(f{\ns{-1}}(\psi)f{\ns{0}}) =  f{\ns{-1}}(\psi)X_i(f{\ns{0}}),
\end{equation}
while on the other hand
\begin{equation}
(\psi \rt X_i)(f) = \sum_j f_i^j(\psi)X_j(f).
\end{equation}
For $f = f^k \in S$ we have $f_i^k(\psi) = (f^k){\ns{-1}}(\psi)X_i((f^k){\ns{0}}),$
that is,  $f_i^k = X_i((f^k){\ns{0}})(f^k){\ns{-1}} \in \mathscr{P}(G_2).$
\end{proof}

Finally, we define the coaction
\begin{align}\label{aux-polynomial-coaction}
\begin{split}
& \Db_{\rm Pol}:\Fg_1 \to \Fg_1 \ot \mathscr{P}(G_2),\qquad
 \Db_{\rm Pol}(X_i)= \sum_j X_j \ot f_i^j,
\end{split}
\end{align}
as well as the second order matrix coefficients
\begin{equation}
X_k \rt f_i^j = f_{i,k}^j.
\end{equation}

\begin{proposition}\label{proposition-polynomial-structure-identity}
The coaction defined by \eqref{aux-polynomial-coaction} satisfies the structure identity of $\Fg_1$.
\end{proposition}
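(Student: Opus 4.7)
The argument will parallel Lemma \ref{lemma-Lie-group-Bianchi}, with the exponential map replaced by algebraic tangent vectors at the identity. Since $\mathscr{P}(G_2)$ separates points of $G_2$, it suffices to check \eqref{aux-Bianchi} pointwise at an arbitrary $\psi \in G_2$.

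The core step is to establish the infinitesimal matched-pair identity
\begin{equation}\label{aux-key-id-algebraic}
\psi \rt [X_i, X_j] = [\psi \rt X_i, \psi \rt X_j] + d_e L^\psi(X_i) \rt X_j - d_e L^\psi(X_j) \rt X_i,
\end{equation}
where $L^\psi : G_1 \to G_2$ denotes the polynomial map $\vp \mapsto \psi \lt \vp$, and the action of a tangent vector $Z \in T_\psi G_2$ on $X_j \in \Fg_1$ is the derivative at $\psi$, in direction $Z$, of the polynomial map $\psi' \mapsto \psi' \rt X_j$ (which lands in $\Fg_1$). I would obtain \eqref{aux-key-id-algebraic} by expressing $[X_i, X_j]$ through the adjoint representation $\Ad : G_1 \to \mathrm{GL}(\Fg_1)$ and differentiating the matched pair relation $\psi \rt (\vp_1 \vp_2) = (\psi \rt \vp_1)((\psi \lt \vp_1) \rt \vp_2)$ in both slots at $(e,e)$, exactly as in the derivation of \eqref{aux-str-id-0}.

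Next, I would expand every term of \eqref{aux-key-id-algebraic} in the basis $\{X_1,\ldots,X_N\}$. By the definition \eqref{aux-polynomial-matrix-coefficients}, the $X_k$-coefficient of $\psi \rt [X_i,X_j]$ is $\sum_l C^l_{ij} f^k_l(\psi)$, and that of $[\psi \rt X_i, \psi \rt X_j]$ is $\sum_{r,s} C^k_{rs} f^r_i(\psi) f^s_j(\psi)$. For the correction terms, since $\vp \mapsto f^k_j(\psi \lt \vp)$ has, by \eqref{aux-polynomial-action}, derivative at $e$ in direction $X_i$ equal to $(X_i \rt f^k_j)(\psi) = f^k_{j,i}(\psi)$, one obtains $d_e L^\psi(X_i) \rt X_j = \sum_k f^k_{j,i}(\psi) X_k$, and symmetrically with $X_i \leftrightarrow X_j$. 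Substituting and invoking the antisymmetry $C^k_{rs} = -C^k_{sr}$ reproduces precisely \eqref{aux-Bianchi} evaluated at $\psi$.

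The main obstacle is the rigorous justification of \eqref{aux-key-id-algebraic} without an exponential map. Should this prove cumbersome, a clean alternative is to pass to the induced matched pair of Lie algebras: introduce the Hopf algebra homomorphism $\theta : \mathscr{P}(G_2) \to R(\Fg_2)$ that sends a polynomial function to its Taylor distribution at $e$, in analogy with \eqref{aux-map-R(G)to-R(g)}. This $\theta$ is injective because $G_2$ is connected, hence irreducible, and a polynomial function vanishing to all orders at a smooth point must vanish. One verifies that $\theta(f^j_i)$ coincides with the matrix coefficient of the Lie algebra coaction \eqref{aux-g-coaction} associated with $(\Fg_1, \Fg_2)$, and that $\theta$ intertwines the $U(\Fg_1)$-actions; Lemma \ref{lemma-structure-identity-g-1} then yields \eqref{aux-Bianchi} in $R(\Fg_2)$, and injectivity of $\theta$ pulls it back to $\mathscr{P}(G_2)$.
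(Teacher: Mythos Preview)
Your first approach is essentially correct and aligns with the paper's strategy: both aim to establish the identity
\[
\psi \rt [X_i, X_j] = [\psi \rt X_i, \psi \rt X_j] + \sum_k X_k(X_i \rt f_j^k)(\psi) - \sum_k X_k(X_j \rt f_i^k)(\psi),
\]
and then read off \eqref{aux-Bianchi}. The difference lies in execution. Where you propose to differentiate the matched-pair relation $\psi \rt (\vp_1\vp_2) = (\psi \rt \vp_1)((\psi \lt \vp_1) \rt \vp_2)$ using algebraic tangent vectors and the adjoint representation, the paper sidesteps the ``obstacle'' you flag by working purely Hopf-algebraically: it regards $X_i$ as a derivation of $\mathscr{P}(G_1)$ at $e$, so that $[X_i,X_j](g) = X_i(g\ps{1})X_j(g\ps{2}) - X_j(g\ps{1})X_i(g\ps{2})$, applies this with $g = f \lt \psi$, and first derives a closed formula for $\Delta(f \lt \psi)$ in terms of $\Delta(f)$ and the coaction maps $\mathscr{P}(\rt)$, $\mathscr{P}(\lt)$. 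No exponential, no $\Ad$, no tangent-space machinery beyond the identification of $\Fg_1$ with point-derivations. This is exactly the algebraic substitute for the two-variable differentiation you describe, so your plan would work once unpacked; the paper's route is simply more intrinsic to the affine-algebraic setting.

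Your alternative via an injective Hopf algebra map $\theta : \mathscr{P}(G_2) \to R(\Fg_2)$ is a genuinely different and clean argument that does not appear in the paper. It trades the direct computation for a reduction to Lemma~\ref{lemma-structure-identity-g-1}, at the cost of verifying that $\theta$ exists, is injective (your connectedness/irreducibility argument is the right one), and intertwines both the $U(\Fg_1)$-actions and the coactions on $\Fg_1$. Those verifications are routine, so this route is arguably shorter; the paper's direct computation, on the other hand, is self-contained and does not rely on the earlier Lie-algebra case.
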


\begin{proof}
We have to show \eqref{aux-Bianchi}. We first observe
\begin{align}
\begin{split}
& (f \lt \psi)\ps{1}(\varphi)(f \lt \psi)\ps{2}(\varphi') = (f \lt \psi)(\varphi \varphi') = f(\psi \rt \varphi \varphi')  \\
& = f\ps{1}(\psi \rt \varphi)f\ps{2}((\psi \lt \varphi) \rt \varphi')  \\
& = (f\ps{2}){\ns{-1}\ns{0}}(\psi)((f\ps{1} \lt \psi) \cdot (f\ps{2}){\ns{-1}\ns{1}})(\varphi)(f\ps{2}){\ns{0}}(\varphi'),
\end{split}
\end{align}
which implies that
\begin{equation}\label{aux-proof-polynomial}
 (f \lt \psi)\ps{1} \ot (f \lt \psi)\ps{2} =  (f\ps{2}){\ns{-1}\ns{0}}(\psi)(f\ps{1} \lt \psi) \cdot (f\ps{2}){\ns{-1}\ns{1}} \ot (f\ps{2}){\ns{0}}.
\end{equation}
Using \eqref{aux-proof-polynomial} we have
\begin{align}
\begin{split}
& (\psi \rt [X_i, X_j])(f) = [X_i, X_j](f \lt \psi) = (X_i \cdot X_j - X_j \cdot X_i)(f \lt \psi)  \\
&=  X_i((f \lt \psi)\ps{1})X_j((f \lt \psi)\ps{2}) - X_j((f \lt \psi)\ps{1})X_i((f \lt \psi)\ps{2})\\
  &  = (f\ps{2}){\ns{-1}\ns{0}}(\psi)[X_i(f\ps{1} \lt \psi)(f\ps{2}){\ns{-1}\ns{1}}(e_1)  \\
 & +(f\ps{1} \lt \psi)(e_1)X_i((f\ps{2}){\ns{-1}\ns{1}})]X_j((f\ps{2}){\ns{0}})  \\
 & - (f\ps{2}){\ns{-1}\ns{0}}(\psi)[X_j(f\ps{1} \lt \psi)(f\ps{2}){\ns{-1}\ns{0}}(e_1)  \\
  & + (f\ps{1} \lt \psi)(e_1)X_j((f\ps{2}){\ns{-1}\ns{1}})]X_i((f\ps{2}){\ns{0}})  \\
 &= [\psi \rt X_i, \psi \rt X_j](f) +  f{\ns{-1}\ns{0}}(\psi)X_i(f{\ns{-1}\ns{1}})X_j(f{\ns{0}})  \\
 & - f{\ns{-1}\ns{0}}(\psi)X_j(f{\ns{-1}\ns{1}})X_i(f{\ns{0}}).
\end{split}
\end{align}
We finally notice that
\begin{align}
\begin{split}
&  (f{\ns{-1}}X_j(f{\ns{0}}))(\psi) = f{\ns{-1}}(\psi)X_j(f{\ns{0}}) = X_j(f \lt \psi) = (\psi \rt X_j)(f) \\
& =\sum_k(X_kf_j^k(\psi))(f) =\sum_k (X_k(f)f_j^k)(\psi).
\end{split}
\end{align}
Hence,
\begin{align}
\begin{split}
&  f{\ns{-1}\ns{0}}(\psi)X_i(f{\ns{-1}\ns{1}})X_j(f{\ns{0}}) = (X_i \rt f{\ns{-1}})(\psi)X_j(f{\ns{0}})  \\
&= (X_i \rt f{\ns{-1}}X_j(f{\ns{0}}))(\psi) = \sum_k (X_k(X_i \rt f_j^k)(\psi))(f),
\end{split}
\end{align}
and similarly
\begin{equation}
f{\ns{-1}\ns{0}}(\psi)X_j(f{\ns{-1}\ns{1}})X_i(f^{\ns{0}}) = \sum_k(X_k(X_j \rt f_i^k)(\psi))(f).
\end{equation}
So we have observed that
\begin{equation}
\psi \rt [X_i, X_j] = [\psi \rt X_i, \psi \rt X_j] +\sum_k X_k(X_i \rt f_j^k)(\psi) -\sum_k X_k(X_j \rt f_i^k)(\psi),
\end{equation}
which immediately implies the structure equality.
\end{proof}

We now express the main result of this subsection.

\begin{theorem}\label{theorem-affine-algebraic-Lie-Hopf}
Let $(G_1, G_2)$ be a matched pair of affine algebraic groups. Then via the action \eqref{aux-polynomial-action} and the coaction \eqref{aux-polynomial-coaction}, $(\mathscr{P}(G_2),U(\Fg_1))$ is a matched pair of Hopf algebras.
\end{theorem}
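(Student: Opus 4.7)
The plan is to apply Theorem \ref{theorem-Lie-Hopf-matched-pair}, which reduces the claim to verifying that $\mathscr{P}(G_2)$ is a $\Fg_1$-Hopf algebra in the sense of Definition \ref{definition-Lie-Hopf}. The action \eqref{aux-polynomial-action} is the differential of a representation of $G_1$ on $\mathscr{P}(G_2)$ by algebra automorphisms, so $\Fg_1$ acts on $\mathscr{P}(G_2)$ by derivations. The structure identity of $\Fg_1$ for the coaction $\Db_{\rm Pol}$ has been established in Proposition \ref{proposition-polynomial-structure-identity}. Hence only the two remaining axioms $\ve(X \rt f) = 0$ and $\D(X \rt f) = X \bullet \D(f)$ need to be verified.

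For the counit condition, the matched pair axiom $1 \lt \varphi = 1$ of \eqref{aux-matched-pair-group-actions-IV} translates into the identity
\begin{equation}
f\ns{0}(e_{G_2})\, f\ns{1}(\varphi) = f(e_{G_2} \lt \varphi) = f(e_{G_2}) = \ve(f)
\end{equation}
for every $\varphi \in G_1$, so that $f\ns{0}(e_{G_2})\, f\ns{1} = \ve(f)\cdot 1$ in $\mathscr{P}(G_1)$. Applying $X \in \Fg_1$, which annihilates constants, yields
\begin{equation}
\ve(X \rt f) = (X \rt f)(e_{G_2}) = f\ns{0}(e_{G_2})\, X(f\ns{1}) = X(\ve(f)\cdot 1) = 0.
\end{equation}
For the coproduct condition, the strategy mirrors the Lie group argument of Theorem \ref{theorem-R(G_2)-Lie-Hopf}. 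Evaluating at $(\psi_1, \psi_2) \in G_2 \times G_2$, I would consider the polynomial-valued function $\varphi \mapsto (\varphi \rt f)(\psi_1\psi_2) = f((\psi_1\psi_2) \lt \varphi)$ on $G_1$, expand it via \eqref{aux-matched-pair-group-actions-IV} and $\D(f) = f\ps{1} \ot f\ps{2}$ as
\begin{equation}
f\big((\psi_1 \lt (\psi_2 \rt \varphi))(\psi_2 \lt \varphi)\big) = \big((\psi_2 \rt \varphi) \rt f\ps{1}\big)(\psi_1)\, (\varphi \rt f\ps{2})(\psi_2),
\end{equation}
and then differentiate at $\varphi = e_{G_1}$ in the direction $X$. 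Using $\psi_2 \rt X = X\ns{0} X\ns{1}(\psi_2)$ from \eqref{aux-polynomial-coaction}, the Leibniz rule produces
\begin{equation}
(X\ns{0} \rt f\ps{1})(\psi_1)\, X\ns{1}(\psi_2)\, f\ps{2}(\psi_2) + f\ps{1}(\psi_1)\, (X \rt f\ps{2})(\psi_2),
\end{equation}
which is precisely the value of $X \bullet \D(f)$ at $(\psi_1, \psi_2)$.

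The main technical subtlety is that in the algebraic category there is no exponential map, so the Lie-group style differentiations cannot be copied verbatim. I would handle this by rewriting everything in terms of the comorphisms of the action maps $\rt$ and $\lt$: each quantity depending on $\varphi$ becomes a polynomial on $G_1$, and differentiation at $e_{G_1}$ is replaced by the $\Fg_1$-action on that polynomial via \eqref{aux-polynomial-action}. Equivalently, one can restrict to a finite-dimensional $G_1$-invariant subspace of $\mathscr{P}(G_2)$ and differentiate a rational representation. In either case the computation above becomes a purely algebraic identity in $\mathscr{P}(G_2) \ot \mathscr{P}(G_2)$, and the theorem then follows directly from Theorem \ref{theorem-Lie-Hopf-matched-pair}.
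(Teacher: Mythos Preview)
Your proposal is correct and follows essentially the same route as the paper: reduce to Theorem \ref{theorem-Lie-Hopf-matched-pair}, invoke Proposition \ref{proposition-polynomial-structure-identity} for the structure identity, prove $\ve(X\rt f)=0$ by observing $f\ns{0}(e_{G_2})f\ns{1}$ is constant, and prove $\D(X\rt f)=X\bullet\D(f)$ by expanding $f(\psi_1\psi_2\lt\varphi)$ via the matched pair axiom and differentiating. The paper does exactly the algebraic rewrite via comorphisms that you describe in your last paragraph, so your anticipated fix for the absence of an exponential map is precisely what is carried out there.
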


\begin{proof}
In view of Theorem \ref{theorem-Lie-Hopf-matched-pair},  it is enough to show that $\mathscr{P}(G_2)$ is a $\Fg_1$-Hopf algebra. The structure identity follows from Proposition \ref{proposition-polynomial-structure-identity}.  Hence, we need to prove
\begin{equation}
\Delta(X \rt f) = X \bullet \Delta(f) \quad \mbox{ and } \quad \ve(X \rt f) = 0.
\end{equation}
First we observe that
\begin{align}
\begin{split}
&  f{\ns{0}}\ps{1}(\psi_1)f{\ns{0}}\ps{2}(\psi_2)f{\ns{1}}(\varphi) =  f{\ns{0}}(\psi_1\psi_2)f{\ns{1}}(\varphi) = \\
& f(\psi_1\psi_2 \lt \varphi) =  f\ps{1}(\psi_1 \lt (\psi_2 \rt \varphi))f\ps{2}(\psi_2 \lt \varphi) = \\
&  (f\ps{1}){\ns{0}}(\psi_1)((f\ps{1}){\ns{1}\ns{-1}} \cdot (f\ps{2}){\ns{0}})(\psi_2)((f\ps{1}){\ns{1}\ns{0}} \cdot (f\ps{2}){\ns{1}})(\varphi),
\end{split}
\end{align}
which implies
\begin{align}
\begin{split}
& f{\ns{0}}\ps{1} \ot f{\ns{0}}\ps{2} \ot f{\ns{1}}  \\
&= (f\ps{1}){\ns{0}} \ot (f\ps{1}){\ns{1}\ns{-1}} \cdot (f\ps{2}){\ns{0}} \ot (f\ps{1}){\ns{1}\ns{0}} \cdot (f\ps{2}){\ns{1}}.
\end{split}
\end{align}
Therefore,
\begin{align}
\begin{split}
& \D(X\rt f)(\psi_1,\psi_2)=(X \rt f)(\psi_1\psi_2) =  f{\ns{0}}\ps{1}(\psi_1)f{\ns{0}}\ps{2}(\psi_2)X(f{\ns{1}})  \\
&=  (f\ps{1}){\ns{0}}(\psi_1) (\psi_2 \rt X)((f\ps{1}){\ns{1}})f\ps{2}(\psi_2) +  f\ps{1}(\psi_1 \lt e)(X \rt f\ps{2})(\psi_2)  \\
&=  ((\psi_2 \rt X) \rt f\ps{1})(\psi_1)f\ps{2}(\psi_2) +  f\ps{1}(\psi_1)(X \rt f\ps{2})(\psi_2)  \\
&=  (X\ns{0} \rt f\ps{1})(\psi_1)(X\ns{1} \cdot f\ps{2})(\psi_2) +  f\ps{1}(\psi_1)(X \rt f\ps{2})(\psi_2)=\\
&=(X\bullet \D(f))(\psi_1,\psi_2).
\end{split}
\end{align}
 Next, we want to prove that $\ve(X \rt f) = 0$. To this end we  notice
\begin{equation}
\ve(X \rt f) = (X \rt f)(e_2) =  f{\ns{0}}(e_2)X(f{\ns{1}}) = X(f{\ns{0}}(e_2)f{\ns{1}})=0,
\end{equation}
since $f{\ns{0}}(e_2)f{\ns{1}} \in \mathscr{P}(G_1)$ is constant.
\end{proof}

\chapter{Hopf-cyclic coefficients}\label{chapter-hopf-cyclic-coefficients}

In this chapter we study the SAYD modules over the Lie-Hopf algebras in terms of the (co)representations of the relevant Lie group, Lie algebra or the affine algebraic group that gives rise to the Lie-Hopf algebra under consideration. More explicitly, we canonically associate a stable anti-Yetter-Drinfeld (SAYD) module to any representation of the Lie algebra, Lie group or the affine algebraic group that gives rise to the relevant Lie-Hopf algebra via semidualization. We call these coefficients the induced SAYD modules. We then take our arguments a step further by introducing the concept of corepresentation of a Lie algebra. This way, we associate to any representation and corepresentation of  a matched pair object (a Lie group, Lie algebra or an affine algebraic group), a SAYD module over the corresponding Lie-Hopf algebra. The case of trivial corepresentation reduces to the induced SAYD modules. Finally, we also discuss the AYD modules over the Connes-Moscovici Hopf algebras $\Hc_n$.

\section{Induced Hopf-cyclic coefficients}

In this section we study the SAYD modules that are associated to the representations of the ambient matched pair object, a double crossed sum Lie algebra, a double crossed product Lie group or affine algebraic group. We first consider the problem in the abstract setting of Lie-Hopf algebras. We associate a  canonical modular pair in involution to any bicrossed product Hopf algebra $\Fc\acl U(\Fg)$ that corresponds to a $\Fg$-Hopf algebra. We then study the SAYD modules over such bicrossed product Hopf algebras.

\subsection{Induced Hopf-cyclic coefficients over Lie-Hopf algebras}\label{subsection-induced hopf-cyclic}

Let us first define
\begin{equation}\label{aux-trace-adjoint}
\d_\Fg:\Fg\ra\Cb, \quad \d_\Fg(X)=\Tr({\rm ad}_X).
\end{equation}
Since \eqref{aux-trace-adjoint} is a derivation of the Lie algebra $\Fg$, we can extend it to an algebra map on $U(\Fg)$ that we also denote by $\d_\Fg$. Finally we extend \eqref{aux-trace-adjoint} to an algebra map
\begin{equation}\label{aux-delta}
\d:=\ve\acl \d_\Fg:\Fc\acl U(\Fg)\ra \Cb, \quad \d(f\acl u)=\ve(f)\d_\Fg(u).
\end{equation}
Let us introduce a canonical  element  $\s_\Fc\in\Fc$ in terms of the first order matrix coefficients \eqref{aux-first-order-matrix-coefficients} by
\begin{equation}\label{aux-sigma}
\s_\Fc:= \det [f^i_j]=\sum_{\pi\in S_m}(-1)^\pi f_1^{\pi(1)}\dots f_m^{\pi(m)}.
\end{equation}

\begin{lemma}\label{lemma-group-like}
The element $\s_\Fc\in \Fc$ is group-like.
\end{lemma}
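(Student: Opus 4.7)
The plan is to verify the two defining properties of a group-like element, namely $\varepsilon(\sigma_\Fc) = 1$ and $\Delta(\sigma_\Fc) = \sigma_\Fc \otimes \sigma_\Fc$, using only the coassociativity/counitality of $\Db_\Fg$ (which gives the formulas \eqref{aux-comultiplication-matrix-coefficients}) together with the commutativity of $\Fc$ assumed in Definition \ref{definition-Lie-Hopf}.

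First I would handle the counit. Applying $(\Id \otimes \varepsilon)\circ \Db_\Fg = \Id$ to each basis vector $X_j$ gives $\varepsilon(f_j^i) = \delta_j^i$. Substituting this into the Leibniz formula for the determinant collapses the sum to the single term indexed by the identity permutation, yielding $\varepsilon(\sigma_\Fc) = 1$.

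The main step is the comultiplication. Using multiplicativity of $\Delta$, the formula \eqref{aux-comultiplication-matrix-coefficients} (which already follows by coassociativity from the definition of the $f_i^j$), and the fact that multiplication on $\Fc \otimes \Fc$ is componentwise, I would expand
\begin{equation}
\Delta(\sigma_\Fc) = \sum_{\pi \in S_m} (-1)^\pi \prod_{l=1}^m \Delta(f_l^{\pi(l)}) = \sum_{\pi} \sum_{k_1,\ldots,k_m} (-1)^\pi \, f_{k_1}^{\pi(1)} \cdots f_{k_m}^{\pi(m)} \otimes f_1^{k_1}\cdots f_m^{k_m}.
\end{equation}
Here is where commutativity of $\Fc$ enters crucially. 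For each multi-index $(k_1,\ldots,k_m)$, I would reorder the first tensor factor: if two of the $k_l$ coincide, the inner sum over $\pi$ vanishes by antisymmetry (swapping the two corresponding columns in $\pi$ negates each summand); if $(k_1,\ldots,k_m) = (\tau(1),\ldots,\tau(m))$ for some $\tau \in S_m$, then, using commutativity to rewrite $\prod_l f_{\tau(l)}^{\pi(l)} = \prod_i f_i^{\pi(\tau^{-1}(i))}$ and substituting $\rho = \pi\tau^{-1}$, the inner sum collapses to $(-1)^\tau \sigma_\Fc$.

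Combining these observations yields
\begin{equation}
\Delta(\sigma_\Fc) = \sum_{\tau \in S_m} (-1)^\tau \, \sigma_\Fc \otimes f_1^{\tau(1)}\cdots f_m^{\tau(m)} = \sigma_\Fc \otimes \sigma_\Fc,
\end{equation}
as required. I do not expect any serious obstacle: the only substantive point is making the reindexing of the Leibniz expansion precise, and this is a standard fact whose validity in $\Fc$ rests exactly on the commutativity hypothesis, which is part of the definition of a $\Fg$-Hopf algebra.
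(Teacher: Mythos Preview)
Your proposal is correct and follows essentially the same route as the paper: expand $\Delta(\sigma_\Fc)$ via \eqref{aux-comultiplication-matrix-coefficients}, discard the multi-indices with a repeated entry using commutativity of $\Fc$ and antisymmetry in $\pi$, and reindex the remaining sum over distinct indices by a permutation to factor out $\sigma_\Fc\otimes\sigma_\Fc$. The only difference is that you also verify $\varepsilon(\sigma_\Fc)=1$ explicitly, which the paper omits.
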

\begin{proof}
We know that  $\D(f_i^j)=\sum_k f_k^j\ot f_i^k$.  So,
\begin{align}
\begin{split}
&\D(\s_\Fc)= \sum_{\pi\in S_m}\sum_{i_1,\dots, i_m}(-1)^\pi f_{i_1}^{\pi(1)}\cdots f_{i_m}^{\pi(m)} \ot f_1^{i_1}\cdots f_m^{i_m}\\
&=\sum_{\pi\in
S_m}\sum_{\underset{\text{  \footnotesize are  distinct }}{i_1,\dots, i_m}}(-1)^\pi f_{i_1}^{\pi(1)}\cdots f_{i_m}^{\pi(m)} \ot f_1^{i_1}\cdots f_m^{i_m}\\
&+\sum_{\pi\in S_m}\sum_{\underset{\text{  \footnotesize are not  distinct }}{i_1,\dots, i_m}}(-1)^\pi f_{i_1}^{\pi(1)}\cdots f_{i_m}^{\pi(m)} \ot f_1^{i_1}\cdots
f_m^{i_m},
\end{split}
\end{align}
where the second sum is zero since $\Fc$ is commutative. In order to deal with the  first sum we associate a unique permutation $\mu\in S_m$ to each $m$-tuple $(i_1,\dots,i_m)$ with distinct entries by the rule $\mu(j)=i_j$.  So we have
\begin{align}
\begin{split}
&\D(\s_\Fc)= \sum_{\pi\in S_m} \sum_{\mu\in S_m}(-1)^\pi f_{\mu(1)}^{\pi(1)}\cdots f_{\mu(m)}^{\pi(m)} \ot f_1^{\mu(1)}\cdots f_m^{\mu(m)}\\
 &= \sum_{\pi\in S_m}
\sum_{\mu\in S_m}(-1)^\pi(-1)^{\mu}(-1)^{\mu^{-1}} f_{1}^{\pi(\mu^{-1}(1))}\cdots f_{m}^{\pi(\mu^{-1}(m))} \ot f_1^{\mu(1)}\cdots f_m^{\mu(m)}\\ &
=\sum_{\eta,\mu\in S_m} (-1)^\eta(-1)^{\mu} f_{1}^{\eta(1)}\cdots f_{m}^{\eta(m)} \ot f_1^{\mu(1)}\cdots f_m^{\mu(m)}=\s_\Fc\ot\s_\Fc.
\end{split}
\end{align}
\end{proof}

It is immediate to see that $\s:=\s_\Fc\acl 1$ is a group-like element in the Hopf algebra  $\Fc\acl U(\Fg)$.

\begin{theorem}\label{theorem-MPI}
For any $\Fg$-Hopf algebra  $\Fc$, the pair $(\delta, \sigma)$ is a modular  pair in involution  for the Hopf algebra $\Fc\acl U(\Fg)$.
\end{theorem}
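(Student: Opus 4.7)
The plan is to verify the two defining conditions of an MPI separately, and to reduce the second to a computation on algebra generators.

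The first condition, $\delta(\sigma)=1$, is immediate: since $\sigma_\Fc$ is group-like by Lemma \ref{lemma-group-like}, one has $\ve(\sigma_\Fc)=1$, and $\delta_\Fg$ is an algebra map with $\delta_\Fg(1)=1$, so $\delta(\sigma_\Fc\acl 1)=\ve(\sigma_\Fc)\,\delta_\Fg(1)=1$. For the second condition $S_\d^2=\Ad_\s$, I first observe that both sides are algebra endomorphisms of $\Fc\acl U(\Fg)$: $\Ad_\s$ obviously so, and $S_\d^2$ because whenever $\d$ is a character, $S_\d(hg)=\d(h\ps{1}g\ps{1})S(g\ps{2})S(h\ps{2})=S_\d(g)S_\d(h)$, making $S_\d$ an anti-algebra map and $S_\d^2$ an algebra map. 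Since $\Fc\acl U(\Fg)$ is generated as an algebra by the two subalgebras $\Fc\acl 1$ and $1\acl U(\Fg)$, and the latter is in turn generated by $\Fg$, it suffices to check $S_\d^2=\Ad_\s$ on $\Fc$ and on $\Fg$.

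On $\Fc$ the check is easy: computing $S_\d(f\acl 1)=\ve(f\ps{1})S(f\ps{2})\acl 1=S(f)\acl 1$ shows that $S_\d$ restricts to $S$ on $\Fc$, and because $\Fc$ is commutative $S^2\vert_\Fc=\Id$, giving $S_\d^2(f\acl 1)=f\acl 1$; simultaneously $\Ad_\s(f\acl 1)=\s_\Fc f\s_\Fc^{-1}\acl 1=f\acl 1$ by commutativity of $\Fc$. The substantive step is on $\Fg$. Using $\Delta(1\acl X)=(1\acl X\ns{0})\ot(X\ns{1}\acl 1)+(1\acl 1)\ot(1\acl X)$ and the bicrossed product antipode, together with the anti-multiplicativity of $S_\d$ and the fact that $X\ns{0}\in\Fg$ is primitive, one expands $S_\d(1\acl X)$ and $S_\d^2(1\acl X)$ in terms of the $\Fg$-coaction, the action of $\Fg$ on $\Fc$, and values of $\d_\Fg\circ\Db_\Fg$. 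Simultaneously I compute
\begin{equation}
\Ad_\s(1\acl X)=(\s_\Fc\acl 1)(1\acl X)(\s_\Fc^{-1}\acl 1)=1\acl X-\s_\Fc^{-1}(X\rt \s_\Fc)\acl 1,
\end{equation}
where $\s_\Fc^{-1}=S(\s_\Fc)$ since $\s_\Fc$ is group-like, and the Leibniz rule for the derivation $X$ collapses the second term.

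To make the two sides meet, the key lemma is a Jacobi-type formula for the determinant $\s_\Fc=\det[f^i_j]$: the Hopf identity $\sum_k f^i_k S(f^k_j)=\d^i_j$ identifies $[S(f^k_j)]$ as the matrix inverse of $[f^i_j]$ in $M_m(\Fc)$, and the Leibniz rule then yields
\begin{equation}
\s_\Fc^{-1}(X\rt \s_\Fc)=\sum_{i,k}S(f^k_i)\,(X\rt f^i_k).
\end{equation}
Inserting the structure identity \eqref{aux-Bianchi} in the diagonal form $i=j$ and summing, the antisymmetry of $C^k_{sr}$ in $s,r$ against the symmetric $f^r_i f^s_i$ kills the quadratic terms, and what remains is precisely the combination of $\Tr(\ad_{X\ns 0})$ and $X\ns 0\rt S(X\ns 1)$ that appears when one expands $S_\d^2(1\acl X)$. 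Matching term by term then closes the computation. The main obstacle will be the bookkeeping in this last step: one has to track several Sweedler indices (the $\Delta$ in $U(\Fg)$, the $\nabla$ in $\Fg\to\Fg\ot\Fc$, and the antipode formula for the bicrossed product) simultaneously, and the recognition that the emerging scalar is the logarithmic derivative $\s_\Fc^{-1}(X\rt\s_\Fc)$ rests essentially on the matched-pair compatibility of the action of $\Fg$ on $\Fc$ with the coaction of $\Fc$ on $\Fg$ encoded in the structure identity.
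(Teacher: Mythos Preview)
Your approach is essentially the paper's: both reduce to algebra generators via the anti-multiplicativity of $S_\delta$, dispose of $\Fc$ by commutativity, and for $X\in\Fg$ compute $S_\delta^2(1\acl X)$ and $\Ad_\sigma(1\acl X)$ separately, matching them through the Jacobi formula $\sigma_\Fc^{-1}(X\rt\sigma_\Fc)=\sum_{i,k}S(f^k_i)(X\rt f^i_k)$ together with the structure identity.

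The one place your outline misfires is the claim that you ``insert the structure identity \eqref{aux-Bianchi} in the diagonal form $i=j$'': setting $i=j$ in \eqref{aux-Bianchi} collapses both sides to zero (the left side is $f^k_{i,i}-f^k_{i,i}$, and on the right $C^k_{sr}f^r_if^s_i$ vanishes by the antisymmetry you mention while $C^l_{ii}=0$), so nothing is gained. What is actually needed --- and what the paper does --- is to contract the structure identity (with its three free indices) against $S(f^k_j)$ and sum over $j,k$. The antipode relations $\sum_k f^i_kS(f^k_j)=\delta^i_j=\sum_k S(f^i_k)f^k_j$ then reduce the quadratic term to $\sum_s\delta_\Fg(X_s)f^s_i$ and the linear term to $\delta_\Fg(X_i)$, which are exactly the extra scalar pieces that appear in $S_\delta^2(1\acl X_i)$ beyond $-\sum_{j,k}S(f^k_j)f^j_{k,i}\acl 1 + 1\acl X_i$. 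Your ingredients are correct; only the mechanism by which the structure identity enters needs to be fixed.
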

\begin{proof}
For an element $1 \acl X_i \in \Hc:=\Fc\acl U(\Fg)$,  the action of iterated comultiplication $\D^{(2)}$ is calculated by
\begin{align}
\begin{split}
&\D^{(2)}(1 \acl X_i) =  (1 \acl X_i)\ps{1} \ot (1 \acl X_i)\ps{2} \ot (1 \acl X_i)\ps{3} \\
 & =  1 \acl X_i\ns{0} \ot X_i\ns{1} \acl 1 \ot X_i\ns{2} \acl 1  \\
 &+1 \acl 1 \ot 1 \acl X_i\ns{0} \ot X_i\ns{1} \acl 1   + 1 \acl 1 \ot 1 \acl1 \ot 1 \acl X_i.
\end{split}
\end{align}
By definition of the antipode \eqref{aux-antipode-bicrossed-product}, we observe that
\begin{align}
\begin{split}
&S(1 \acl X_i)  =  (1 \acl S(X_i\ns{0})) (S(X_i\ns{1}) \acl 1) =  - (1 \acl X_j)  (S(f_i^j) \acl 1) \\
 & =  - X_j\ps{1} \rt S(f_i^j) \acl X_j\ps{2}  =  - X_j \rt S(f_i^j) \acl 1 - S(f_i^j) \acl  X_j,
\end{split}
\end{align}
and hence
\begin{align}
\begin{split}
&S^2(1 \acl  X_i)  = - S(X_j \rt S(f_i^j)) \acl  1 - (1 \acl  S(X_j\ns{0})) \cdot (S(S(f_i^j)X_j\ns{1}) \acl  1) \\
 & =  - S(X_j\ns{1})(X_j\ns{0} \rt f_i^j) \acl  1 + (1 \acl  X_k) \cdot (f_i^jS(f_j^k) \acl  1) \\
  &=  - S(f_j^k)(X_k \rt f_i^j) \acl  1 + 1 \acl  X_i   =  - S(f_j^k)f_{i,k}^j \acl  1 + 1 \acl  X_i.
\end{split}
\end{align}
Finally, for the twisted antipode $S_{\delta}:\cal H \to \cal H$ defined in \eqref{aux-twisted-antipode}, we  simplify   its square action by
\begin{equation}
  S_{\delta}^2(h) =  \delta(h\ps{1})\delta(S(h\ps{3}))S^2(h\ps{2}), \quad h\in \Hc.
\end{equation}
We aim to prove that
\begin{equation}\label{aux-MPI-equation}
S_{ \delta}^2 = \Ad_{\sigma}.
\end{equation}
Since the twisted antipode is an anti-algebra map, it is enough to  prove that \eqref{aux-MPI-equation} holds for the elements of the form $1 \acl X_i$ and $f\acl 1$. For the latter elements, we have  $S_\d(f\acl 1)=S(f)\acl 1$. Hence the claim is established since $S^2(f)=f=\s f\s^{-1}$.

\medskip

According to the above preliminary calculations, we have
\begin{equation}\label{aux-twisted-antipode-square-action}
S_{ \delta}^2(1 \acl X_i)  =   \delta_\Fg(X_j)(f_i^j \acl 1) + (1 \acl X_i) - S(f_j^k)f_{i,k}^j \acl 1 - \delta_\Fg(X_i)1 \acl 1.
\end{equation}
Multiplying both hand sides of  the structure  identity \eqref{aux-Bianchi} by $S(f_j^k)$, we have
\begin{align}\notag
&- S(f_j^k)f_{k,i}^j =  - S(f_j^k)f_{i,k}^j + \sum_{r,s}C_{sr}^jS(f_j^k)f_k^rf_i^s - \sum_lC_{ik}^lS(f_j^k)f_l^j \\\label{aux5}
 & =  - S(f_j^k)f_{i,k}^j + \sum_{s,j}C_{sj}^jf_i^s - \sum_lC_{il}^l  1_\Fc  =  - S(f_j^k)f_{i,k}^j + \delta_\Fg(X_s)f_i^s - \delta_\Fg(X_i) 1_\Fc.
\end{align}
Combining \eqref{aux5} and  \eqref{aux-twisted-antipode-square-action} we get
\begin{equation}\label{aux7}
S_{\delta}^2(1 \acl X_i) = - S(f_j^k)f_{k,i}^j \acl 1 + 1 \acl X_i.
\end{equation}
On the other hand, since $\Fg$ acts on $\Fc$ by derivations, we see that
\begin{align}\notag
&0  =  X_i \rt (S(f_j^k)f_k^j)  =  f_k^j (X_i \rt S(f_j^k)) + S(f_j^k)(X_i \rt f_k^j) \\\label{aux8}
& =  f_k^j(X_i \rt S(f_j^k)) + S(f_j^k)f_{k,i}^j.
\end{align}
From  \eqref{aux8} and \eqref{aux7}  we deduce  that
\begin{equation}\label{aux9}
S_{\delta}^2(1 \acl X_i) = f_k^j(X_i \rt S(f_j^k)) \acl 1 + 1 \acl X_i.
\end{equation}
Now we  consider the element
\begin{equation}
  \sigma^{-1} = \det[S(f_j^k)] = \sum_{\pi \in S_m}(-1)^{\pi}S(f_1^{\pi(1)})S(f_2^{\pi(2)}) \dots S(f_m^{\pi(m)}).
\end{equation}
Using once again the fact that   $\Fg$ acts on $\Fc$   by  derivation we observe that
\begin{align}\notag
&X_i \rt \sigma^{-1} = X\rt\det[S(f_j^k)] =\\\label{aux10}
  &\sum_{1\le j\le m,\;\pi \in S_m}(-1)^{\pi}S(f_1^{\pi(1)})S(f_2^{\pi(2)}) \cdots\; X_i \rt S(f_j^{\pi(j)})\;\cdots S(f_m^{\pi(m)}).
\end{align}
Since   $\s=\det[f_i^j]$ and $\Fc$ is commutative, we further observe that $\s=\det [f_i^j]^T$. Here  $[f_i^j]^T$ denotes the  transpose of the matrix $[f_i^j]$. We can then conclude that
\begin{equation}
\sigma\left(\sum_{\pi \in S_m}(-1)^{\pi}[X_i \rt S(f_1^{\pi(1)})]S(f_2^{\pi(2)}) \dots S(f_m^{\pi(m)})\right) = f_k^1(X_i \rt S(f_1^k)),
\end{equation}
which  implies
\begin{equation}\label{aux11}
\sigma(X_i \rt \sigma^{-1}) = f_k^j(X_i \rt S(f_j^k)).
\end{equation}
Finally,
\begin{equation}
{\rm Ad}_{\sigma}(1 \acl X_i) =  (\sigma \acl 1)(1 \acl X_i)(\sigma^{-1} \acl 1)= \sigma X_i \rt \sigma^{-1} \acl 1 + 1 \acl X_i,
\end{equation}
followed  by the substitution of \eqref{aux11} in \eqref{aux9}  finishes the proof.
\end{proof}

In order to obtain a SAYD  module over a bicrossed product Hopf algebra $\Hc:=\Fc\acl U(\Fg)$, we first find a YD module over $\Hc$, then we tensor it with the canonical modular pair in involution and finally we use \cite[Lemma 2.3]{HajaKhalRangSomm04-I}.

\begin{definition}
Let $V$ be a left   $\Fg$-module and a  right $\Fc$-comodule via $\Db_V:V\ra V\ot \Fc$. We say that $V$ is an induced $(\Fg,\Fc)$-module if
\begin{equation}\label{definition-induced-module}
  \Db_V(X\cdot v)= X\bullet \Db_V(v),
\end{equation}
where, $X\bullet (v\ot f)= X\ns{0}v\ot X\ns{1}f+ v\ot X\rt f$.
\end{definition}

\begin{lemma}
If $V$ is an induced $(\Fg,\Fc)$-module, then the coaction $\Db_V:V\ra \Fc\ot V$ is $U(\Fg)$-linear in the sense that, for any $u \in U(\Fg)$,
\begin{equation}\label{aux-coaction-U-linear}
\Db_V(u\cdot v)= u\bullet \Db_V(v):= u\ps{1}\ns{0}\cdot v\ns{0}\ot u\ps{1}\ns{1}(u\ps{2}\rt v\ns{1}).
\end{equation}
\end{lemma}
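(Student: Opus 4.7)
The plan is to proceed by induction on the PBW length of $u \in U(\Fg)$, exploiting the fact that $U(\Fg)$ is generated as an algebra by $1$ and $\Fg$, and that the extended coaction $\Db:U(\Fg)\to U(\Fg)\ot \Fc$ of Lemma \ref{lemma-U-coaction} was constructed precisely so that the matched pair rule \eqref{aux-matched-pair-4} holds on all of $U(\Fg)$. The base case $u=1$ is trivial since $\Db(1)=1\ot 1$. For $u=X\in\Fg$, $\D(X)=X\ot 1+1\ot X$ and $\Db(X)=X\ns{0}\ot X\ns{1}$, so the right hand side of \eqref{aux-coaction-U-linear} unfolds to $X\ns{0}\cdot v\ns{0}\ot X\ns{1}v\ns{1}+v\ns{0}\ot X\rt v\ns{1}=X\bullet\Db_V(v)$, and its equality with $\Db_V(X\cdot v)$ is exactly the defining identity \eqref{definition-induced-module} of an induced $(\Fg,\Fc)$-module.

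For the inductive step, assume \eqref{aux-coaction-U-linear} holds for $u\in U(\Fg)$ and prove it for $uX$ with $X\in\Fg$. Using the inductive hypothesis together with the base case, one has
\begin{equation}
\Db_V((uX)\cdot v)=\Db_V(u\cdot(X\cdot v))=u\bullet\Db_V(X\cdot v)=u\bullet(X\bullet\Db_V(v)).
\end{equation}
The claim thus reduces to the associativity identity
\begin{equation}
u\bullet(X\bullet w)=(uX)\bullet w, \qquad w=v\ot f\in V\ot\Fc,
\end{equation}
i.e.\ to the fact that $\bullet$ defines a genuine left $U(\Fg)$-action on $V\ot\Fc$.

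To establish this identity, expand the right hand side via $\D(uX)=u\ps{1}X\ot u\ps{2}+u\ps{1}\ot u\ps{2}X$ and then apply \eqref{aux-matched-pair-4} to $\Db(u\ps{1}X)$. This yields two summands
\begin{equation}
u\ps{1}\ns{0}X\ns{0}\cdot v\ot u\ps{1}\ns{1}(u\ps{2}\rt X\ns{1})(u\ps{3}\rt f)\;+\;u\ps{1}\ns{0}\cdot v\ot u\ps{1}\ns{1}(u\ps{2}X\rt f).
\end{equation}
Expanding the left hand side via $X\bullet(v\ot f)=X\ns{0}\cdot v\ot X\ns{1}f+v\ot X\rt f$ followed by the action of $u$, and using that $\Fc$ is a $U(\Fg)$-module algebra to split $u\ps{2}\rt(X\ns{1}f)=(u\ps{2}\ps{1}\rt X\ns{1})(u\ps{2}\ps{2}\rt f)$, reproduces the same two summands. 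Hence the two sides coincide, completing the induction.

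The main obstacle is purely notational: keeping correct track of Sweedler indices through the two expansions. No deep identity beyond \eqref{aux-matched-pair-4} and the module algebra compatibility is required; in particular, the commutativity of $\Fc$ plays no role in this verification.
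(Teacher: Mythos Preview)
Your proof is correct and follows essentially the same route as the paper: both argue by induction, reducing $\Db_V((u^1u^2)\cdot v)=u^1\bullet(u^2\bullet\Db_V(v))$ via the inductive hypothesis and then identifying this with $(u^1u^2)\bullet\Db_V(v)$ using the module algebra property and the rule \eqref{aux-matched-pair-4}. The only cosmetic difference is that the paper carries out the inductive step for a product of two arbitrary elements $u^1,u^2\in U(\Fg)$, whereas you specialize to $u^2=X\in\Fg$; the computations are otherwise identical.
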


\begin{proof}
For any $X\in \Fg$, the condition \eqref{aux-coaction-U-linear} is obviously satisfied. Let us assume that it is satisfied for $u^1, u^2\in U(\Fg)$ and $v\in V$, and show that it also holds for $u^1u^2$ and any $v\in V$. Using \eqref{aux-matched-pair-4} we observe that
\begin{align}
\begin{split}
&\Db_V(u^1u^2\cdot v)= u^1\ps{1}(u^2\cdot v)\ns{0}\ot u^1\ps{1}\ns{1}(u^1\ps{2}\rt (u^2\cdot v)\ns{1})\\
& = u^1\ps{1}\ns{0} u^2\ps{1}\ns{0}\cdot v\ns{0}\ot u^1\ps{1}\ns{1} (u^1\ps{2}\rt (u^2\ps{1}\ns{1}(u^2\ps{1}\rt v\ns{1})))\\
& =u^1\ps{1}\ns{0} u^2\ps{1}\ns{0} \cdot v\ns{0} \ot u^1\ps{1}\ns{1} u^1\ps{2}\rt u^2\ps{1}\ns{1}( u^1\ps{3}u^2\ps{1}\rt v\ns{1})\\
& =(u^1u^2)\ps{1}\ns{0}\cdot v\ns{0} \ot (u^1u^2)\ps{1}\ns{1} (u^1u^2)\ps{2}\rt v\ns{1}.
\end{split}
\end{align}
\end{proof}

Now we let $\Hc$ act on $M$ via
\begin{equation}\label{aux-left-action-H-on-M}
 \Hc\ot V \ra V, \quad (f\acl u)\cdot v=\ve(f)u\cdot v.
\end{equation}
On the other hand, since $\Fc$ is a Hopf subalgebra of $\Hc$ we can extend the right coaction of $\Fc$ on $V$ to a right coaction of $\Hc$ on $V$ by
\begin{equation}\label{aux-right-coaction-H-on-M}
\Db_V: V\ra  V\ot \Hc, \quad \Db_V(v)= v\ns{0}\ot v\ns{1}\acl 1.
\end{equation}

\begin{proposition}
Let $\Fc$ be a $\Fg$-Hopf algebra and  $V$ an induced $(\Fg, \Fc)$-module. Then via the action and coaction  defined in \eqref{aux-left-action-H-on-M} and \eqref{aux-right-coaction-H-on-M},  $V$ is a left-right YD-module over $\Fc\acl U(\Fg)$.
\end{proposition}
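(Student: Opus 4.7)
My plan is to verify the equivalent form \eqref{aux-left-right-YD-module} of the left-right Yetter--Drinfeld condition, namely
\begin{equation*}
(h\ps{2}\cdot v)\ns{0}\ot (h\ps{2}\cdot v)\ns{1}h\ps{1}= h\ps{1}\cdot v\ns{0}\ot h\ps{2}v\ns{1},
\end{equation*}
for all $h\in\Hc=\Fc\acl U(\Fg)$ and $v\in V$. Since the YD condition is multiplicative in $h$ (if it holds for $h_1$ and $h_2$ separately on the module-comodule $V$, it holds for $h_1h_2$ -- a standard fact about the braided category of YD modules) and since $\Hc$ is generated as an algebra by elements of the form $f\acl 1$ with $f\in\Fc$ and $1\acl X$ with $X\in\Fg$, it suffices to check the condition on these two types of generators.

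For $h=f\acl 1$, the comultiplication sits inside $\Fc\ot\Fc$ because $\Fc$ is a Hopf subalgebra, so $\D(f\acl 1)=(f\ps{1}\acl 1)\ot(f\ps{2}\acl 1)$. The left-hand side becomes $\ve(f\ps{2})v\ns{0}\ot(v\ns{1}f\ps{1}\acl 1)$ and the right-hand side becomes $\ve(f\ps{1})v\ns{0}\ot(f\ps{2}v\ns{1}\acl 1)$, and these agree by commutativity of $\Fc$ together with counitality.

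For $h=1\acl X$ with $X\in\Fg$, I would first expand the comultiplication in the bicrossed product using $\D(X)=X\ot 1+1\ot X$ to obtain
\begin{equation*}
\D(1\acl X)=(1\acl X\ns{0})\ot(X\ns{1}\acl 1)+(1\acl 1)\ot(1\acl X).
\end{equation*}
Then, on the left-hand side I apply the coaction to $(h\ps{2}\cdot v)$; the first summand contributes $\ve(X\ns{1})v\ns{0}\ot(v\ns{1}\acl X\ns{0})=v\ns{0}\ot(v\ns{1}\acl X)$ via counitality of the $\Fc$-coaction on $\Fg$, while the second summand, $(X\cdot v)\ns{0}\ot(X\cdot v)\ns{1}\acl 1$, is expanded by the defining identity \eqref{definition-induced-module} of an induced $(\Fg,\Fc)$-module, giving $X\ns{0}\cdot v\ns{0}\ot(X\ns{1}v\ns{1}\acl 1)+v\ns{0}\ot((X\rt v\ns{1})\acl 1)$.

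On the right-hand side, the only nontrivial piece to unwind is the product $(1\acl X)(v\ns{1}\acl 1)$ in the bicrossed product, which using the cross product multiplication and $\D(X)=X\ot 1+1\ot X$ equals $(X\rt v\ns{1})\acl 1+v\ns{1}\acl X$. Plugging this in, the two sides coincide term by term. The main obstacle is purely bookkeeping: keeping the three resulting summands on each side straight, and in particular recognizing that the extra summand $v\ns{0}\ot(v\ns{1}\acl X)$ on the left matches exactly the contribution produced by the cross product multiplication on the right. Once this matching is performed, the YD property follows on generators and hence, by multiplicativity, on all of $\Hc$.
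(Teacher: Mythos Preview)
Your proof is correct and follows essentially the same strategy as the paper: reduce via multiplicativity of the YD condition to generators, dismiss the case $f\acl 1$ as immediate, and verify the case coming from $U(\Fg)$.

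The only difference is a minor one of granularity. The paper checks the condition for $1\acl u$ with a general $u\in U(\Fg)$ in one stroke, using the comultiplication formula $\D(1\acl u)=1\acl u\ps{1}\ns{0}\ot u\ps{1}\ns{1}\acl u\ps{2}$, the $U(\Fg)$-linearity lemma \eqref{aux-coaction-U-linear}, and cocommutativity of $U(\Fg)$. You instead reduce one step further to $1\acl X$ with $X\in\Fg$ and verify directly from the defining identity \eqref{definition-induced-module}, letting multiplicativity carry the result up to all of $U(\Fg)$. Your route is slightly more hands-on but avoids invoking the preparatory lemma; the paper's route is more compact but leans on that lemma (whose proof is itself an induction amounting to your multiplicativity step). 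Neither approach buys anything the other does not.
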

\begin{proof}
Since the condition \eqref{aux-left-right-YD-module} is multiplicative, it suffices to check the condition for
$f\acl 1$ and $1\acl u$. As it is obviously satisfied for the elements of the form $f \acl 1$, we check it for $1 \acl u$.

\medskip

We see that $\D(1\acl u)= 1\acl u\ps{1}\ns{0}\ot u\ps{1}\ns{1}\acl u\ps{2}$. Using \eqref{aux-coaction-U-linear} and the fact that $U(\Fg)$ is cocommutative, we observe for $h= 1\acl u$ that
\begin{align}
\begin{split}
& (h\ps{2}\cdot v)\ns{0}\ot (h\ps{2}\cdot v)\ns{1}h\ps{1}\\
&= ((u\ps{1}\ns{1}\acl u\ps{2})\cdot v)\ns{0}\ot ((u\ps{1}\ns{1}\acl u\ps{2})\cdot v)\ns{1}1\acl u\ps{1}\ns{0}\\
&= u\ps{2}\ns{0}\cdot v\ns{0}\ot u\ps{2}\ns{1}(u\ps{3}\rt v\ns{1})\acl u\ps{1}\\
&= h\ps{1}\cdot v\ns{0}\ot h\ps{2}v\ns{1}.
\end{split}
\end{align}
\end{proof}

It is known that by composing $S$ with the action and  $S^{-1}$with the coaction, we turn a left-right YD module over $\Hc$ into a right-left YD module over $\Hc$. Since the coaction always lands in $\Fc$ and $S^{-1}(f\acl 1)= S^{-1}(f)\acl 1= S(f)\acl 1$,  we  conclude that
\begin{equation}\label{right-action-M}
 V \ot \Hc\ra V, \quad v \cdot (f\acl u)=\ve(f)S(u)\cdot v,
\end{equation}
and
\begin{equation}
\Db_V: V\ra \Hc\ot  V, \quad \Db_V(v)= S(v\ns{1})\acl 1\ot v\ns{0},
\end{equation}
define a right-left YD module over $\Hc$.

\medskip

Since any modular pair in involution (MPI) determines a one dimensional AYD module,  $^\s\Cb_\d\ot V$ is an AYD module over $\Fc\acl U(\Fg)$.  We denote $^\s\Cb_\d\ot V$ by $^\s{V}_\d$. Finally, we explicitly write the action
\begin{equation}\label{aux-action-SAYD-over-Lie-Hopf}
^\s{V}_\d\ot \Fc\acl U(\Fg)\ra\; ^\s{V}_\d,\quad v\lt_{(\d,\s)}(f\acl u):= \ve(f) \d(u\ps{1}) S(u\ps{2})\cdot v
 \end{equation}
and the coaction
\begin{align}\label{aux-coaction-SAYD-over-Lie-Hopf}
\begin{split}
&^\s\Db_\d: \; ^\s{V}_\d\ra  \Fc\acl U(\Fg)\ot\; ^\s{V_\d},\\
&^\s\Db_\d(v):=  \s S(v\ns{1})\acl 1\ot v\ns{0}
\end{split}
\end{align}
of $\Fc\acl U(\Fg)$. Moreover, we can immediately verify that $^\s{V}_\d$ is stable.

\medskip

The following theorem summarizes this subsection.

\begin{theorem}\label{theorem-SAYD-over-Lie-Hopf}
For any $\Fg$-Hopf algebra $\Fc$ and any induced $(\Fg, \Fc)$-module $V$, there is a  SAYD module structure on $^\s{V}_\d:= \; ^\s\Cb_\d\ot V$ over $\Fc\acl U(\Fg)$ defined by \eqref{aux-action-SAYD-over-Lie-Hopf} and \eqref{aux-coaction-SAYD-over-Lie-Hopf}. Here,  $(\d,\s)$ is the canonical modular pair in involution associated to $(\Fg,\Fc)$.
\end{theorem}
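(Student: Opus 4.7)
The plan is to assemble the SAYD structure from the three pieces already prepared in the preceding material. First, the proposition immediately before the theorem establishes that $V$ is a left-right YD module over $\Hc = \Fc \acl U(\Fg)$ with the extended action \eqref{aux-left-action-H-on-M} and coaction \eqref{aux-right-coaction-H-on-M}. I would then apply the standard antipode-twisting procedure (composing the action with $S$ and the coaction with $S^{-1}$) to convert this into a right-left YD module over $\Hc$. The key simplification noted in the text just before the theorem --- that the coaction lands in $\Fc \acl 1$ and that $S^{-1}|_\Fc = S|_\Fc$ because $\Fc$ is commutative --- yields the explicit formulas \eqref{right-action-M} and $\Db_V(v) = S(v\ns{1}) \acl 1 \ot v\ns{0}$ for the resulting right-left YD structure.

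Next, I would invoke Theorem \ref{theorem-MPI} to obtain the canonical MPI $(\d,\s)$, which by the lemma identifying MPIs with one-dimensional SAYD modules gives the right-left SAYD module $\,^\s\Cb_\d$. Since the tensor product of a right-left YD module with a one-dimensional AYD is again AYD (by \cite[Lemma 2.3]{HajaKhalRangSomm04-I}), the space $\,^\s V_\d = \,^\s\Cb_\d \ot V$ inherits an AYD structure over $\Hc$. A direct substitution of the tensor-product action and coaction formulas recovers precisely \eqref{aux-action-SAYD-over-Lie-Hopf} and \eqref{aux-coaction-SAYD-over-Lie-Hopf}: the twist by $\d$ produces the factor $\d(u\ps{1})S(u\ps{2})$, while the group-like $\s$ multiplies the coaction on the $\Hc$-side.

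It then remains to check stability. Using $\,^\s\Db_\d(v) = \s S(v\ns{1}) \acl 1 \ot v\ns{0}$ and evaluating the action \eqref{aux-action-SAYD-over-Lie-Hopf} on the element $\s S(v\ns{1}) \acl 1 \in \Hc$, we obtain
\begin{equation*}
v\ns{0} \lt_{(\d,\s)} \bigl(\s S(v\ns{1}) \acl 1\bigr) = \ve\bigl(\s S(v\ns{1})\bigr)\, \d(1)\, S(1)\cdot v\ns{0} = \ve(v\ns{1}) v\ns{0} = v,
\end{equation*}
where I use that $\ve(\s) = 1$ (since $\s$ is group-like by Lemma \ref{lemma-group-like}), $\ve\circ S = \ve$, and the counit axiom for the original right $\Fc$-coaction on $V$.

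The substantive content has all been done in the preparation: the MPI condition in Theorem \ref{theorem-MPI}, the structure identity ensuring $\Db_V$ extends well to $U(\Fg)$-colinearity, and the YD compatibility in the preceding proposition. The main potential obstacle is keeping the left/right conventions consistent when passing from the left-right YD module produced on $V$ to the right-left AYD module on $\,^\s V_\d$; this is why the explicit verification that the antipode twisting combined with the MPI tensor product reproduces exactly the prescribed formulas \eqref{aux-action-SAYD-over-Lie-Hopf} and \eqref{aux-coaction-SAYD-over-Lie-Hopf} is the only nontrivial bookkeeping step.
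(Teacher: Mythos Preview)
Your proposal is correct and follows essentially the same approach as the paper: the theorem is stated there as a summary of the preceding discussion, which establishes the left-right YD structure on $V$, twists it to a right-left YD structure, tensors with the one-dimensional AYD $\,^\s\Cb_\d$ via \cite[Lemma 2.3]{HajaKhalRangSomm04-I}, and then asserts stability. Your explicit stability verification is in fact more detailed than what the paper provides (which simply states ``we can immediately verify that $\,^\s V_\d$ is stable'').
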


\subsection{Induced Hopf-cyclic coefficients of geometric noncommutative Hopf algebras}\label{subsection-induced-SAYD}

In this subsection, we associate to any representation of a double crossed sum Lie algebra, double crossed product Lie group or a double crossed product affine algebraic group a SAYD module over a geometric noncommutative Hopf algebra that corresponds to the relevant matched pair object.

 \medskip

We start with the $\Fg_1$-Hopf algebra $R(\Fg_2)$, where $(\Fg_1,\Fg_2)$ is a matched pair of Lie algebras. A left module $V$ over the double crossed sum Lie algebra $\Fg_1\bi\Fg_2$ is naturally a left  $\Fg_1$-module as well as a left $\Fg_2$-module. In addition, it satisfies
\begin{equation}\label{Lie-module-compatibility}
\z\cdot(X\cdot v)- X\cdot (\z\cdot v)= (\x\rt X)\cdot v+ (\z\lt X)\cdot v.
\end{equation}
Conversely, if $V$ is a left module over $\Fg_1$ and $\Fg_2$ satisfying \eqref{Lie-module-compatibility}, then $V$ is a $\Fg_1\bi\Fg_2$-module with
\begin{equation}
(X\oplus \z)\cdot v:= X\cdot v+\z\cdot v.
\end{equation}

This is generalized in the following lemma.

\begin{lemma}\label{lemma-U-V-module}
Let $\Uc$ and $\Vc$ be a mutual pair of Hopf algebras and  $V$ be a left module over both Hopf algebras. Then $V$ is a left $\Uc \bi \Vc$-module via
 \begin{equation}
 (u \bi u')\cdot v:=u \cdot (u' \cdot v),
  \end{equation}
  if and only if
  \begin{equation}\label{Hopf-module-compatibility}
  u'\cdot(u \cdot v)= (u'\ps{1}\rt u\ps{1})\cdot ((u'\ps{2}\lt u\ps{2}) \cdot v),
   \end{equation}
    for any $u \in \Uc$, $u'\in \Vc$ and $v \in V$.
\end{lemma}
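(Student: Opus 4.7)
The plan is to exploit the explicit formula for multiplication in the double crossed product Hopf algebra. Recall that in $\Uc \dcp \Vc$ one has
\begin{equation*}
(u \dcp v)(u' \dcp v') = u(v\ps{1}\rt u'\ps{1}) \dcp (v\ps{2}\lt u'\ps{2})v',
\end{equation*}
so in particular, taking $u = 1$ and $v' = 1$, we obtain the key identity
\begin{equation*}
(1 \dcp u')(u \dcp 1) = (u'\ps{1}\rt u\ps{1}) \dcp (u'\ps{2}\lt u\ps{2}).
\end{equation*}
Using the inclusions $u \mapsto u \dcp 1$ and $u' \mapsto 1 \dcp u'$, any element $u \dcp u' \in \Uc \dcp \Vc$ factors as $(u \dcp 1)(1 \dcp u')$, and hence a prospective left action must satisfy $(u \dcp u') \cdot v = u \cdot (u' \cdot v)$ once the $\Uc$- and $\Vc$-actions are to be recovered.

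For the \emph{only if} direction, I would simply assume that $V$ is a $\Uc \dcp \Vc$-module. Apply the module axiom to the product $(1 \dcp u')(u \dcp 1)$ and evaluate both sides on $v \in V$: the left-hand side is $u' \cdot (u \cdot v)$, while the right-hand side, by the identity above, is $(u'\ps{1}\rt u\ps{1}) \cdot ((u'\ps{2}\lt u\ps{2})\cdot v)$. This is exactly the compatibility \eqref{Hopf-module-compatibility}.

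For the \emph{if} direction, define $(u \dcp u')\cdot v := u \cdot (u' \cdot v)$ and check associativity. Using the multiplication formula and the assumed compatibility applied to the pair $(w\ps{1},u'\ps{2})$ (read in the direction that straightens $u' \cdot (w \cdot -\,)$ into $(u'\ps{1} \rt w\ps{1})\cdot((u'\ps{2}\lt w\ps{2})\cdot -\,)$), one computes
\begin{align*}
\bigl((u \dcp u')(w \dcp w')\bigr)\cdot v
&= u(u'\ps{1}\rt w\ps{1}) \cdot \bigl((u'\ps{2}\lt w\ps{2})w' \cdot v\bigr) \\
&= u \cdot \Bigl((u'\ps{1}\rt w\ps{1}) \cdot \bigl((u'\ps{2}\lt w\ps{2}) \cdot (w' \cdot v)\bigr)\Bigr) \\
&= u \cdot \bigl(u' \cdot (w \cdot (w'\cdot v))\bigr) = (u \dcp u') \cdot \bigl((w \dcp w') \cdot v\bigr).
\end{align*}
The unit axiom $(1 \dcp 1) \cdot v = v$ is immediate from the unitality of each action separately. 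No step is genuinely difficult; the only mild bookkeeping point is to apply the mutual-pair compatibility in the correct direction so that the $\Uc$- and $\Vc$-actions line up after using the multiplication rule, but this is forced by the shapes of the Sweedler indices.
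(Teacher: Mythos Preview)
Your proposal is correct and takes essentially the same approach as the paper: both directions hinge on the identity $(1\dcp u')(u\dcp 1) = (u'\ps{1}\rt u\ps{1})\dcp(u'\ps{2}\lt u\ps{2})$ in $\Uc\dcp\Vc$. Your \emph{if} direction is in fact more complete than the paper's, which simply observes that the compatibility gives $(1\bi u')\cdot((u\bi 1)\cdot v) = ((1\bi u')(u\bi 1))\cdot v$ and declares ``hence the action is associative,'' whereas you carry out the full computation for general elements.
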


\begin{proof}
Let $V$ be a left $\Uc \bi \Vc$-module. Then by the associativity of the action, we have
\begin{align}
\begin{split}
& u' \cdot (u \cdot v) = (1 \bi u') \cdot ((u \bi 1) \cdot v) = ((1 \bi u') \cdot (u \bi 1)) \cdot v\\
& = ((u'\ps{1}\rt u\ps{1}) \bi (u'\ps{2}\lt u\ps{2})) \cdot v\\
& = (u'\ps{1}\rt u\ps{1})\cdot ((u'\ps{2}\lt u\ps{2}) \cdot v).
\end{split}
\end{align}
Conversely, if \eqref{Hopf-module-compatibility} is satisfied, then we conclude that
\begin{equation}
(1 \bi u') \cdot ((u \bi 1) \cdot v) = ((1 \bi u')\cdot (u \bi 1)) \cdot v,
\end{equation}
and hence the action is associative.
\end{proof}

Now let $V$ be a left $\Fg:=\Fg_1\bi\Fg_2$-module such that the action of $\Fg_2$ is  locally finite \cite{Hoch-book}.  Then $V$ is left module over $U(\Fg_1)$ and $U(\Fg_2)$ and the resulting actions satisfy the condition \eqref{Hopf-module-compatibility}.

\medskip

We know that the category of locally finite $\Fg_2$ modules is equivalent with the category of $R(\Fg_2)$-comodules \cite{Hoch-book} via the functor that assigns the right $R(\Fg_2)$-comodule $V$ with the coaction $\Db_V:V\ra V\ot R(\Fg_2)$ to a locally finite left $U(\Fg_2)$-module $V$ via
\begin{equation}\label{aux-U(g)-dual-coaction}
\Db(v)= v\ns{0}\ot v\ns{1}, \quad \text{if } \quad  u'\cdot v= v\ns{1}(u')v\ns{0}.
\end{equation}

Conversely, if $V$ is a right $R(\Fg_2)$-comodule via the coaction $\Db_V:V\ra V\ot R(\Fg_2)$, then we define the left action of $U(\Fg_2)$ on $V$ by
\begin{equation}\label{aux-action-coaction}
u'\cdot v:=  v\ns{1}(u')v\ns{0}.
\end{equation}

\begin{proposition}\label{proposition-module-induced-module-Lie-algebras}
Let $V$ be a left $\Fg:=\Fg_1\bi\Fg_2$-module such that the $\Fg_2$-action is locally finite. Then as a $\Fg_1$-module by restriction, and a $R(\Fg_2)$-comodule by the coaction defined in  \eqref{aux-U(g)-dual-coaction}, $V$ becomes an induced $(\Fg_1,R(\Fg_2))$-module.  Conversely, every induced $(\Fg_1, R(\Fg_2))$-module arises in this way.
\end{proposition}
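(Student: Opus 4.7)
The plan is to show that the induced $(\Fg_1,R(\Fg_2))$-module compatibility \eqref{definition-induced-module} is the exact dualization of the matched-pair compatibility of Lemma \ref{lemma-U-V-module} applied to a primitive element $X \in \Fg_1$. Throughout I will use Hochschild's equivalence between locally finite $U(\Fg_2)$-modules and right $R(\Fg_2)$-comodules, as summarized in \eqref{aux-U(g)-dual-coaction}--\eqref{aux-action-coaction}, together with the non-degeneracy of the pairing $\langle\,,\rangle\colon R(\Fg_2)\ot U(\Fg_2)\to\Cb$ guaranteed by the Harish-Chandra theorem and already used in constructing \eqref{aux-g-coaction}.

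First I would take a left $\Fg = \Fg_1\bi\Fg_2$-module $V$ whose $\Fg_2$-action is locally finite. Restriction gives a $\Fg_1$-action, and local finiteness produces the coaction $\Db_V\colon V\to V\ot R(\Fg_2)$ by \eqref{aux-U(g)-dual-coaction}. Since $V$ is in particular a module over the double crossed sum $\Fg_1\bi\Fg_2$, passing to universal enveloping algebras turns $V$ into a module over $U(\Fg_1)\bi U(\Fg_2)\cong U(\Fg)$ (Remark \ref{remark-matched-pair-enveloping-algebras}), so by Lemma \ref{lemma-U-V-module} the two actions satisfy \eqref{Hopf-module-compatibility}. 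Specialising \eqref{Hopf-module-compatibility} to $u=X\in\Fg_1$ (primitive) and using $\Delta(X) = X\ot 1 + 1\ot X$ and $u'\lt 1 = \ve(u')$, $u'\rt 1 = \ve(u')$, one reads off
\begin{equation}
u' \cdot (X\cdot v) \;=\; (u'\ps{1}\rt X)\cdot (u'\ps{2}\cdot v) \;+\; (u'\lt X)\cdot v,\qquad u'\in U(\Fg_2),\; X\in\Fg_1.
\end{equation}

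Next I would translate both sides of the desired identity $\Db_V(X\cdot v) = X\bullet \Db_V(v)$ into functionals on $U(\Fg_2)$ by pairing with an arbitrary $u'$. Using \eqref{aux-action-coaction}, the left-hand side pairs to $u'\cdot(X\cdot v)$. For the right-hand side, the first summand $X\ns{0}v\ns{0}\ot X\ns{1}v\ns{1}$ pairs to $X\ns{0}v\ns{0}\,X\ns{1}(u'\ps{1})\,v\ns{1}(u'\ps{2}) = (u'\ps{1}\rt X)\cdot(u'\ps{2}\cdot v)$ by the definition of the coaction $\Db_{\rm Alg}$ in \eqref{aux-g-coaction}, while the second summand $v\ns{0}\ot X\rt v\ns{1}$ pairs to $v\ns{1}(u'\lt X)\,v\ns{0} = (u'\lt X)\cdot v$ by the definition \eqref{aux-action-U-to-F} of the $U(\Fg_1)$-action on $R(\Fg_2)$. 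Thus the two sides coincide after pairing with every $u'\in U(\Fg_2)$, and non-degeneracy of the pairing yields $\Db_V(X\cdot v) = X\bullet \Db_V(v)$, establishing that $V$ is an induced $(\Fg_1,R(\Fg_2))$-module.

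For the converse I would run the same computation in reverse. Given an induced $(\Fg_1,R(\Fg_2))$-module $V$, define the $U(\Fg_2)$-action on $V$ by \eqref{aux-action-coaction}; this is automatically locally finite by construction. The identity \eqref{definition-induced-module}, paired against any $u'\in U(\Fg_2)$, delivers the special case $u=X\in\Fg_1$ of \eqref{Hopf-module-compatibility}, which is the primitive-element instance of the Hopf matched-pair compatibility. Since $\Fg_1$ generates $U(\Fg_1)$ as an algebra, an induction on the length of a monomial in $\Fg_1$ (exactly as in the proof of Lemma \ref{lemma-U-coaction} and the latter part of Theorem \ref{theorem-Lie-Hopf-matched-pair}) promotes this to all $u\in U(\Fg_1)$. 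By Lemma \ref{lemma-U-V-module} we then obtain a left $U(\Fg_1)\bi U(\Fg_2) = U(\Fg_1\bi\Fg_2)$-module structure on $V$, and restricting to primitive elements recovers the $\Fg_1\bi\Fg_2$-module structure with compatibility \eqref{Lie-module-compatibility}.

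The main obstacle I anticipate is purely bookkeeping: one must keep careful track of which factor of $\Delta(u')$ acts by the mutual action and which acts on $V$, and verify that the inductive extension from $\Fg_1$ to $U(\Fg_1)$ in the converse direction is consistent with the iterated use of cocommutativity of $U(\Fg_2)$ and commutativity of $R(\Fg_2)$. No genuinely new ingredient beyond the structure identity \eqref{aux-Bianchi} and the matched-pair axioms \eqref{aux-mutual-pair-1}--\eqref{aux-mutual-pair-3} should be required.
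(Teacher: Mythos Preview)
Your proposal is correct and follows the same line as the paper's proof: both specialize the Hopf-level compatibility \eqref{Hopf-module-compatibility} to a primitive $X\in\Fg_1$, then dualize via the pairing with $U(\Fg_2)$ to obtain $\Db_V(X\cdot v)=X\bullet\Db_V(v)$, and run the argument backwards for the converse.

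The only notable difference is in the converse direction. You propose to recover the full compatibility \eqref{Hopf-module-compatibility} for all $u\in U(\Fg_1)$ by induction on word length, then invoke Lemma \ref{lemma-U-V-module}. The paper takes a shorter route: it simply pairs the identity $\Db_V(X\cdot v)=X\bullet\Db_V(v)$ against a primitive $\zeta\in\Fg_2$ (rather than a general $u'\in U(\Fg_2)$), which immediately yields the Lie-algebra-level compatibility \eqref{Lie-module-compatibility}, and that alone suffices to make $V$ a $\Fg_1\bowtie\Fg_2$-module. Your induction is correct but unnecessary. Also, your closing remark that the structure identity \eqref{aux-Bianchi} is needed is off --- it plays no role in this argument.
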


\begin{proof}
Let $V$ satisfy  the criteria of the proposition. We prove that it is an induced $(\Fg_1, R(\Fg_2))$-module, \ie  $\Db_V(X\cdot v)=X\bullet \Db_V(v)$. Using the compatibility condition \eqref{Hopf-module-compatibility} for $u'\in U(\Fg_2)$ and $X \in \Fg_1$, we observe that
\begin{equation}\label{proof-compatibility}
(u'\ps{1}\rt X)\cdot (u'\ps{2} \cdot v)+ (u'\lt X)\cdot v= u'\cdot (X\cdot v).
\end{equation}
Translating \eqref{proof-compatibility} via \eqref{aux-U(g)-dual-coaction}, we observe that
\begin{align}\label{proof-label-2}
\begin{split}
& (X\bullet \Db_V(v))(u')= \\
&  (X\ns{0}\cdot v\ns{0})\ot (X\ns{1}v\ns{1})(u')+  v\ns{0}\ot (X\rt v\ns{1})(u') =\\
&   X\ns{1}(u'\ps{1})v\ns{1}(u'\ps{2}) X\ns{0}\cdot v\ns{0}+ (X\rt v\ns{1})(u') v\ns{0} =\\
&   v\ns{1}(u'\ps{2}) (u'\ps{1}\rt X)\cdot v\ns{0}+ (X\rt v\ns{1})(u') v\ns{0} =\\
&  (u'\ps{1}\rt X)\cdot(u'\ps{2}\cdot v)  + v\ns{0}v\ns{1}(u'\lt X) =\\
&  (u'\ps{1}\rt X)\cdot (u'\ps{2}\cdot v) + (u'\lt X)\cdot v = u'\rt (X\cdot v) =\\
& (X \cdot v)\ns{0} (X\cdot v)\ns{1}(u')= \Db_V(X\cdot v)(u').
\end{split}
\end{align}
Conversely, from  a comodule over $R(\Fg_2)$ one obtains  a locally finite module over $\Fg_2$ by  \eqref{aux-action-coaction}. One shows that the compatibility \eqref{Lie-module-compatibility} follows from $\Db_V(X\cdot v)=X\bullet \Db_V(v)$ via   \eqref{proof-label-2}.
\end{proof}

Next we investigate the same correspondence  for a matched pair of Lie groups.
Let $(G_1, G_2)$ be a matched pair of Lie groups. Then $(\mathbb{C}[G_1], \mathbb{C}[G_2])$ is a mutual pair of Hopf algebras, where $\Cb [G_1]$ and $\Cb[ G_2]$ are the group algebras of $G_1$ and $G_2$ respectively. In addition, it is straightforward to see that  for $G=G_1 \bi G_2$ we have $\mathbb{C}[G] = \mathbb{C}[G_1] \dcp \mathbb{C}[G_2]$. Therefore, as indicated by Lemma \ref{lemma-U-V-module}, a  module $V$ over  $G_1$ and $G_2$  is a module over $G$ with
\begin{equation}\label{aux-G1G2module}
(\varphi \cdot \psi) \cdot v = \varphi \cdot (\psi \cdot v), \qquad \varphi \in G_1, \psi \in G_2
\end{equation}
if and only if
\begin{equation}\label{aux-compatibility-for-groups}
(\psi \rt \varphi) \cdot ((\psi \lt \varphi) \cdot v) = \psi \cdot (\varphi \cdot v).
\end{equation}
Now let $\Fg_1$ and $\Fg_2$ be the corresponding Lie algebras of the Lie groups $G_1$ and $G_2$ respectively. We define the $\Fg_1$-module structure of $V$ by
\begin{equation}\label{aux-g1action}
X \cdot v = \dt \exp(tX) \cdot v
\end{equation}
for any $X \in \Fg_1$ and $v \in V$.

\medskip

Assuming $V$ to be locally finite as a left $G_2$-module, we define the right coaction $\nb:V \to V \ot R(G_2)$ by
\begin{equation}\label{aux-RG2coaction}
\nb(v) = v\ns{0} \ot v\ns{1} \qquad \mbox{ if and only if } \quad \psi \cdot v = v\ns{1}(\psi)v\ns{0}.
\end{equation}
In this case, we can express an infinitesimal version of the compatibility condition (\ref{aux-compatibility-for-groups}) as
\begin{equation}\label{auxinfinitesimal-compatibility}
(\psi \rt X) \cdot (\psi \cdot v) + \psi \cdot v\ns{0}(X \rt v\ns{1})(\psi) = \psi \cdot (X \cdot v),
\end{equation}
for any $\psi \in G_2$, $X \in \Fg_1$ and $v \in V$.

\medskip

Now we state the following  proposition whose proof is similar to that of Proposition \ref{proposition-module-induced-module-Lie-algebras}.

\begin{proposition}\label{proposition-module-induced-module-Lie groups}
For a matched pair of Lie groups $(G_1,G_2)$, let $V$ be a left $G = G_1 \bi G_2$-module under \eqref{aux-G1G2module} such that $G_2$-action is locally finite. Then with the $\Fg_1$-action \eqref{aux-g1action} and the $R(G_2)$-coaction \eqref{aux-RG2coaction}, $V$ becomes an induced $(\Fg_1, R(G_2))$-module. Conversely, every induced $(\Fg_1, R(G_2))$-module arises in this way.
\end{proposition}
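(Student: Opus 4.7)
The plan is to mimic the proof of Proposition \ref{proposition-module-induced-module-Lie-algebras}, where the key step was translating the module compatibility across the correspondence between locally finite $U(\Fg_2)$-modules and right $R(\Fg_2)$-comodules via \eqref{aux-U(g)-dual-coaction}. Here the analogous correspondence is between locally finite $G_2$-modules and right $R(G_2)$-comodules via \eqref{aux-RG2coaction}, and the Hopf algebra compatibility \eqref{Hopf-module-compatibility} is replaced by the group compatibility \eqref{aux-compatibility-for-groups}. So the first thing I would do is justify the infinitesimal compatibility \eqref{auxinfinitesimal-compatibility}: differentiate \eqref{aux-compatibility-for-groups} at $\varphi = \exp(tX)$ and set $t=0$. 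Applying the Leibniz rule to the left-hand side produces one term in which the derivative falls on $\psi \rt \exp(tX)$, giving $(\psi \rt X)\cdot(\psi\cdot v)$ after using $\psi \lt 1 = \psi$, and a second term in which the derivative falls on $\psi \lt \exp(tX)$, which by \eqref{aux-RG2coaction} and the definition of the $\Fg_1$-action \eqref{aux-action-for-lie-group} reads $\psi \cdot v\ns{0}(X \rt v\ns{1})(\psi)$; the right-hand side directly yields $\psi \cdot (X\cdot v)$.

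Next I would verify the forward direction by evaluating both sides of $\Db_V(X\cdot v) = X \bullet \Db_V(v)$ at an arbitrary $\psi \in G_2$. Using \eqref{aux-RG2coaction} the left-hand side becomes $\psi \cdot (X\cdot v)$. For the right-hand side, by the definition $X \bullet (v\ot f) = X\ns{0} v \ot X\ns{1}f + v \ot X \rt f$ together with $\Db_{\rm Gr}(X) = X_j \ot f^j$ (where $f^j = f_i^j$ encode the action $\psi \rt X$), one gets
\begin{equation}
(X\bullet \Db_V(v))(\psi) = v\ns{1}(\psi)(\psi \rt X)\cdot v\ns{0} + v\ns{0}(X \rt v\ns{1})(\psi),
\end{equation}
which equals $(\psi \rt X)\cdot(\psi\cdot v) + \psi \cdot v\ns{0}(X \rt v\ns{1})(\psi)$. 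Thus the identity $\Db_V(X\cdot v) = X\bullet \Db_V(v)$ is exactly the infinitesimal compatibility \eqref{auxinfinitesimal-compatibility}, establishing that $V$ is an induced $(\Fg_1, R(G_2))$-module.

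For the converse I would start with an induced $(\Fg_1, R(G_2))$-module $V$, reconstruct a locally finite left $G_2$-module structure via $\psi \cdot v := v\ns{1}(\psi)v\ns{0}$ (well-defined as $R(G_2)$-comodules correspond to locally finite $G_2$-representations by Proposition \ref{proposition-elements-of-R(G)}), and note that the $\Fg_1$-action integrates (on each finite-dimensional $G_2$-subrepresentation, hence on $V$ using local finiteness) to a $G_1$-action. The induced compatibility $\Db_V(X\cdot v) = X \bullet \Db_V(v)$ then yields \eqref{auxinfinitesimal-compatibility} by the same evaluation argument read backwards, and exponentiating $X$ gives \eqref{aux-compatibility-for-groups}; combined with the individual $G_1$- and $G_2$-module structures this produces a $G = G_1 \bi G_2$-module structure via \eqref{aux-G1G2module}.

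The main obstacle will be the differentiation step: one must confirm that the maps $t \mapsto \psi \rt \exp(tX)$ and $t \mapsto \psi \lt \exp(tX)$ are smooth with the expected tangent vectors, so that the Leibniz rule produces precisely the two terms above. The second concern is the converse integration: passing from a $\Fg_1$-action to a $G_1$-action requires either connectedness/simple-connectedness of $G_1$ or working with a germinal version, and one must check that the reconstructed actions indeed satisfy \eqref{aux-G1G2module}, not merely the infinitesimal form. Both points are standard once the translation through \eqref{aux-RG2coaction} is made precise, and no new ingredients beyond those already used in the Lie algebra case are required.
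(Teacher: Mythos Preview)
Your proposal is correct and takes essentially the same approach as the paper, which in fact gives no detailed proof and simply states that it is similar to that of Proposition \ref{proposition-module-induced-module-Lie-algebras}. Your write-up is a faithful expansion of that remark: the paper has already set up the infinitesimal compatibility \eqref{auxinfinitesimal-compatibility} just before the statement, and your evaluation argument at $\psi\in G_2$ is precisely the Lie group analogue of the computation \eqref{proof-label-2}. The integration concern you flag for the converse is real and is likewise glossed over in the paper's treatment.
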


We conclude with the case of affine algebraic groups. Let $(G_1, G_2)$ be a matched pair of affine algebraic groups and $V$ a locally finite polynomial representation of $G_1$ and $G_2$. Let us write the dual comodule structures as
\begin{equation}
\bD_1:V \to V \ot \mathscr{P}(G_1), \qquad v \mapsto v\ps{0} \ot v\ps{1},
\end{equation}
and
\begin{equation}\label{aux-affine-PG2-coaction}
\bD_2:V \to V \ot \mathscr{P}(G_2), \qquad v \mapsto v\ns{0} \ot v\ns{1}.
\end{equation}
On the other hand, if we call the $G_1$-module structure
\begin{equation}
\rho:G_1 \to {\rm GL}(V),
\end{equation}
then we have the $\Fg_1$-module structure
\begin{equation}\label{aux-affineg1action}
\rho^{\circ}:\Fg_1 \to g\ell(V).
\end{equation}
The compatibility condition is the same as the Lie group case and we now present the main result.

\begin{theorem}
For a matched pair $(G_1,G_2)$ of affine algebraic groups, let $V$ be a locally finite polynomial $G = G_1 \bi G_2$-module. Then via the action \eqref{aux-affineg1action} and the coaction \eqref{aux-affine-PG2-coaction}, $V$ becomes an induced $(\Fg_1, \mathscr{P}(G_2))$-module. Conversely, any induced $(\Fg_1, \mathscr{P}(G_2))$-module arises in this way.
\end{theorem}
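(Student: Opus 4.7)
The plan is to mimic the Lie group argument in Proposition \ref{proposition-module-induced-module-Lie groups}, replacing smooth representative functions with polynomial functions and smooth equivalences with the standard equivalence of categories between locally finite rational $G_i$-modules and $\mathscr{P}(G_i)$-comodules (see \cite{Hoch-book}). Concretely, one starts by noting that $(\Cb[G_1],\Cb[G_2])$ is a mutual pair of Hopf algebras with $\Cb[G]\cong \Cb[G_1]\dcp\Cb[G_2]$, so that by Lemma \ref{lemma-U-V-module} a locally finite polynomial left module $V$ over both $G_1$ and $G_2$ is a left $G$-module via $(\vp\dcp\psi)\cdot v=\vp\cdot(\psi\cdot v)$ if and only if the group-level compatibility
\begin{equation}\label{aux-proposal-compat}
(\psi\rt\vp)\cdot((\psi\lt\vp)\cdot v)=\psi\cdot(\vp\cdot v), \qquad \vp\in G_1,\ \psi\in G_2,
\end{equation}
holds. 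The action $\rho^{\circ}:\Fg_1\to g\ell(V)$ of \eqref{aux-affineg1action} is then obtained as the differential of $\rho:G_1\to{\rm GL}(V)$, while the coaction $\bD_2:V\to V\ot\mathscr{P}(G_2)$ of \eqref{aux-affine-PG2-coaction} encodes the $G_2$-action through the equivalence $\psi\cdot v = v\ns{1}(\psi)v\ns{0}$.

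Next I would translate \eqref{aux-proposal-compat} to its infinitesimal form in the $G_1$-direction. Fixing $\psi\in G_2$ and differentiating the identity at $\vp=\exp(tX)$, $X\in\Fg_1$, one obtains
\begin{equation}\label{aux-proposal-infinitesimal}
(\psi\rt X)\cdot(\psi\cdot v) + \psi\cdot v\ns{0}\,(X\rt v\ns{1})(\psi) = \psi\cdot(X\cdot v),
\end{equation}
using the definition \eqref{aux-polynomial-action} of the $\Fg_1$-action on $\mathscr{P}(G_2)$ coming from $\psi\lt\exp(tX)$, together with $\frac{d}{dt}|_{0}\psi\lt\exp(tX)=\psi\lt X$ on the second factor. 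Then, exactly as in the computation \eqref{proof-label-2}, one rewrites both sides of \eqref{aux-proposal-infinitesimal} in terms of the coaction $\bD_2$, obtaining the identity $\bD_2(X\cdot v) = X\bullet\bD_2(v)$, which is the defining condition \eqref{definition-induced-module} for an induced $(\Fg_1,\mathscr{P}(G_2))$-module.

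For the converse, start with an induced $(\Fg_1,\mathscr{P}(G_2))$-module $V$. The $\mathscr{P}(G_2)$-coaction gives a locally finite polynomial $G_2$-module structure via $\psi\cdot v=v\ns{1}(\psi)v\ns{0}$. The assumption that $V$ is locally finite rational as a $\Fg_1$-module, extended from $\rho^{\circ}$, integrates to a rational representation of $G_1$ (here one uses that $G_1$ is connected, together with the standard equivalence between locally finite $\Fg_1$-modules and rational $G_1$-modules when $G_1$ is simply connected, or more generally that $\rho^{\circ}$ integrates on the image of $\mathscr{P}(G_1)^{\ast}$). Reversing the translation between \eqref{aux-proposal-infinitesimal} and $\bD_2(X\cdot v)=X\bullet\bD_2(v)$ and then exponentiating in the $G_1$-direction yields \eqref{aux-proposal-compat}, so by Lemma \ref{lemma-U-V-module} we recover a $G$-module structure.

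The main obstacle I expect is the integration step in the converse direction: while the equivalence between locally finite rational $G_2$-modules and $\mathscr{P}(G_2)$-comodules is standard, one must carefully check that the infinitesimal identity $\bD_2(X\cdot v)=X\bullet\bD_2(v)$, which is a statement about an action of $\Fg_1$, actually integrates to the group identity \eqref{aux-proposal-compat} in a way that is consistent with the polynomial (rather than merely analytic) framework. Once the integration is justified by connectedness of $G_1$ and the rationality of the $G_1$-action, the remaining verifications are purely computational rearrangements analogous to \eqref{proof-label-2}.
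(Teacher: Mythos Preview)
Your forward direction is correct but follows a genuinely different route from the paper. You import the Lie-group argument of Proposition~\ref{proposition-module-induced-module-Lie groups} wholesale: fix $\psi$, differentiate the group compatibility \eqref{aux-proposal-compat} along the analytic curve $\vp=\exp(tX)$, and read off \eqref{aux-proposal-infinitesimal}. This works because complex affine algebraic groups are in particular complex Lie groups, so the exponential is available and polynomial maps are smooth. The paper, by contrast, never leaves the algebraic category: it uses the $\mathscr{P}(G_1)$-coaction $\bD_1(v)=v\ps{0}\ot v\ps{1}$ to write $X\cdot v = v\ps{0}\,X(v\ps{1})$ (with $X$ acting as a point-derivation at $e_1$), evaluates $\bD_2(X\cdot v)(\psi)$ directly, and then derives an identity between the iterated coactions $\bD_1,\bD_2$ from the group compatibility applied at a generic pair $(\psi,\vp)$. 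Plugging that identity back in yields $\bD_2(X\cdot v)=X\bullet\bD_2(v)$ with no exponentials or limits anywhere. What your approach buys is economy --- you simply quote the Lie-group computation. What the paper's approach buys is that it stays entirely within the polynomial framework and would survive in settings where $\exp$ is unavailable; it also makes transparent that the induced-module condition is really the infinitesimal shadow of a coaction intertwining relation, via $\bD_1$, that your argument never writes down.

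For the converse, note that the paper does not spell it out either; the integration concern you raise is real but orthogonal to the comparison above.
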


\begin{proof}
For any $\psi \in G_2$, $X \in \Fg_1$ and $v \in V$
\begin{align}\label{aux-nabla2-X-m-psi}
\begin{split}
\bD_2(X \cdot v)(\psi) = \bD_2(v\ps{0})(\psi)X(v\ps{1}) = v\ps{0}\ns{0}v\ps{0}\ns{1}(\psi)X(v\ps{1}).
\end{split}
\end{align}
On the other hand, for any $\vp \in G_1$,
\begin{align}
\begin{split}
& v\ps{0}\ns{0}v\ps{0}\ns{1}(\psi)v\ps{1}(\vp) = (\vp \cdot v)\ns{0}(\vp \cdot v)\ns{1}(\psi) = \psi \cdot (\vp \cdot v) \\
& =(\psi \rt \vp) \cdot ((\psi \lt \vp) \cdot v) = ((\psi \rt \vp) \cdot v\ns{0}) v\ns{1}(\psi \lt \vp) \\
& = v\ns{0}\ps{0}(v\ns{0}\ps{1} \lt \psi)(\vp)v\ns{1}^{\ns{0}}(\psi)v\ns{1}^{\ns{1}}(\vp)  \\
& = v\ns{0}\ps{0}v\ns{1}^{\ns{0}}(\psi)((v\ns{0}\ps{1} \lt \psi) \cdot v\ns{1}^{\ns{1}})(\vp).
\end{split}
\end{align}
Therefore,
\begin{equation}
v\ps{0}\ns{0} \ot v\ps{0}\ns{1} \ot v\ps{1} = v\ns{0}\ps{0} \ot v\ns{1}^{\ns{0}} \ot (v\ns{0}\ps{1} \lt \psi) \cdot v\ns{1}^{\ns{1}}.
\end{equation}
Plugging this result into \eqref{aux-nabla2-X-m-psi},
\begin{align}
\begin{split}
& \bD_2(X \cdot v)(\psi) = v\ns{0}\ps{0}v\ns{1}^{\ns{0}}(\psi)X((v\ns{0}\ps{1} \lt \psi) \cdot v\ns{1}^{\ns{1}})  \\
& = v\ns{0}\ps{0}v\ns{1}^{\ns{0}}(\psi)X(v\ns{0}\ps{1} \lt \psi)v\ns{1}^{\ns{1}}(e_1)  \\
& + v\ns{0}\ps{0}v\ns{1}^{\ns{0}}(\psi)(v\ns{0}\ps{1} \lt \psi)(e_1)X(v\ns{1}^{\ns{1}}) \\
&=  v\ns{0}\ps{0}v\ns{1}(\psi)(\psi \rt X)(v\ns{0}\ps{1}) + v\ns{0}(X \rt v\ns{1})(\psi)  \\
&=  v\ns{1}(\psi)(\psi \rt X) \cdot v\ns{0} + v\ns{0}(X \rt v\ns{1})(\psi)  \\
& =(\psi \rt X) \cdot (\psi \cdot v) + v\ns{0}(X \rt v\ns{1})(\psi).
\end{split}
\end{align}
Also as before, we evidently have
\begin{equation}
(X \bullet \bD_2(v))(\psi) = (\psi \rt X) \cdot (\psi \cdot v) + v\ns{0}(X \rt v\ns{1})(\psi).
\end{equation}
So for any $v\in V$ and $X\in\Fg_1$ the equality
\begin{equation}
\bD_2(X \cdot v) = X \bullet \bD_2(v)
\end{equation}
 is proved.
\end{proof}

\section{Hopf-cyclic coefficients - the general case}

In this section we study the SAYD modules over $R(\Fg_2) \acl U(\Fg_1)$ in terms of the corepresentations as well as the representations of $\Fg_1 \oplus \Fg_2$.

\subsection{Lie algebra coaction and SAYD modules over Lie algebras}

In this subsection we focus on the stable-anti-Yetter-Drinfeld modules over the universal enveloping algebra of a Lie algebra. To this end, we introduce the notion of comodule over a Lie algebra.

\begin{definition}
A vector space $V$ is a left comodule over the Lie algebra $\Fg$ if there is a linear map
\begin{equation}
\Db_{\Fg}: V \ra \Fg \ot V,\qquad \Db_{\Fg}(v)=v\nsb{-1}\ot v\nsb{0}
\end{equation}
such that
\begin{equation}\label{aux-g-comod}
v\nsb{-2}\wg v\nsb{-1}\ot v\nsb{0}=0,
\end{equation}
where
\begin{equation}
v\nsb{-2}\ot v\nsb{-1}\ot v\nsb{0}= v\nsb{-1}\ot (v\nsb{0})\nsb{-1}\ot (v\nsb{0})\nsb{0}.
\end{equation}
\end{definition}

\begin{proposition}\label{proposition-mod-comod}
Let $\Fg$ be a  Lie algebra and $V$ be  a vector space. Then $V$ is a right $S(\Fg^*)$-module if and only if it is a left $\Fg$-comodule.
\end{proposition}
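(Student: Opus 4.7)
The plan is to exploit the finite-dimensionality of $\Fg$ to set up a direct bijection, transporting the commutativity relations of $S(\Fg^\ast)$ across the natural pairing $\Fg^\ast \ot \Fg \to \Cb$ into the symmetry condition \eqref{aux-g-comod} on the iterated coaction.

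First I would describe the two assignments. Given a left $\Fg$-coaction $\Db_\Fg(v) = v\nsb{-1} \ot v\nsb{0}$, one defines a right action of $\Fg^\ast$ on $V$ by $v \cdot \alpha := \alpha(v\nsb{-1})\, v\nsb{0}$. Conversely, given a bilinear pairing $V \ot \Fg^\ast \to V$, the canonical isomorphism $V \ot (\Fg^\ast)^\ast \cong V \ot \Fg$ produces $\Db_\Fg(v) := \sum_i X_i \ot (v \cdot \theta^i)$, which is independent of the chosen basis. A direct check against these formulas shows the two assignments are mutually inverse.

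Next I would translate the defining constraints on each side. Since $S(\Fg^\ast)$ is the free commutative algebra on $\Fg^\ast$, a linear pairing $V \ot \Fg^\ast \to V$ extends uniquely to a right $S(\Fg^\ast)$-module structure if and only if the operators $v \mapsto v \cdot \alpha$ pairwise commute, i.e.\ $(v \cdot \alpha) \cdot \beta = (v \cdot \beta) \cdot \alpha$ for all $\alpha, \beta \in \Fg^\ast$. Using the convention $v\nsb{-2} \ot v\nsb{-1} \ot v\nsb{0} = v\nsb{-1} \ot (v\nsb{0})\nsb{-1} \ot (v\nsb{0})\nsb{0}$ of the paper, unwinding the double application of the action gives
\begin{equation*}
(v \cdot \alpha) \cdot \beta - (v \cdot \beta) \cdot \alpha \;=\; \bigl[\alpha(v\nsb{-2})\,\beta(v\nsb{-1}) - \beta(v\nsb{-2})\,\alpha(v\nsb{-1})\bigr]\, v\nsb{0},
\end{equation*}
which is precisely the evaluation of $\alpha \wg \beta \in \wg^2 \Fg^\ast$ on $v\nsb{-2} \wg v\nsb{-1} \ot v\nsb{0}$. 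Because the induced pairing $\wg^2 \Fg^\ast \ot \wg^2 \Fg \to \Cb$ is non-degenerate, the vanishing of the expression above for all $\alpha, \beta$ is equivalent to \eqref{aux-g-comod}.

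The main obstacle is essentially notational bookkeeping: aligning the iterated-coaction indexing of the paper with the two-fold application of the $\Fg^\ast$-action, and being careful that the right-action convention yields commutativity (rather than anti-commutativity) of the operators $R_\alpha$, so that the universal property of $S(\Fg^\ast)$ as a commutative algebra really applies. Once this is handled correctly, the two conditions match term by term and the equivalence follows.
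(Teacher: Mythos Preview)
Your proposal is correct and follows essentially the same approach as the paper: both set up the bijection via $v\cdot\alpha = \alpha(v\nsb{-1})v\nsb{0}$ and then show that commutativity of the $\Fg^\ast$-operators is equivalent to $v\nsb{-2}\wg v\nsb{-1}\ot v\nsb{0}=0$. The only cosmetic difference is that the paper concludes by invoking injectivity of the antisymmetrization map $\wg^2\Fg \hookrightarrow U(\Fg)^{\ot 2}$, whereas you use non-degeneracy of the pairing $\wg^2\Fg^\ast \ot \wg^2\Fg \to \Cb$; these are equivalent devices for the same step.
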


\begin{proof}
Assume  that $V$ is a right module over the symmetric algebra $S(\Fg^*)$. Then for any $v \in V$ there exists an element $v\nsb{-1} \ot v\nsb{0} \in \Fg^{**} \ot V \cong \Fg \ot V$ such that for any $\theta \in \Fg^*$
\begin{equation}
v \lt \theta = v\nsb{-1}(\theta)v\nsb{0} = \theta(v\nsb{-1})v\nsb{0}.
\end{equation}
Hence we define the linear map $\Db_{\Fg}: V \ra \Fg \ot V$ by
\begin{equation}
 v \mapsto v\nsb{-1}\ot v\nsb{0}.
\end{equation}
Thus, the compatibility needed for $V$ to be a right module over $S(\Fg^*)$, which is  $(v \lt \theta) \lt \eta - (v \lt \eta) \lt \theta = 0$, translates directly into
\begin{equation}
\alpha(v\nsb{-2} \wedge v\nsb{-1}) \ot v\nsb{0} = (v\nsb{-2} \ot v\nsb{-1} - v\nsb{-1} \ot v\nsb{-2}) \ot v\nsb{0} = 0,
\end{equation}
where $\alpha:\wedge^2\Fg \to U(\Fg)^{\ot\, 2}$ is the anti-symmetrization map. Since the anti-symmetrization is injective, we have
\begin{equation}
v\nsb{-2} \wedge v\nsb{-1} \ot v\nsb{0} = 0.
\end{equation}
As a result, $V$ is a left $\Fg$-comodule.

\medskip

Conversely, assume that $V$ is a left $\Fg$-comodule via the map $\Db_{\Fg}:V \to \Fg \ot V$ defined by $  v \mapsto v\nsb{-1} \ot v\nsb{0}$. We define the right action
\begin{equation}
V \ot S(\Fg^*) \to V, \quad v \ot \theta \mapsto v \lt \theta := \theta(v\nsb{-1})v\nsb{0},
\end{equation}
for any $\theta \in \Fg^*$ and any $v \in V$. Thus,
\begin{equation}
(v \lt \theta) \lt \eta - (v \lt \eta) \lt \theta = (v\nsb{-2} \ot v\nsb{-1} - v\nsb{-1} \ot v\nsb{-2})(\theta \ot \eta) \ot v\nsb{0} = 0,
\end{equation}
proving that $V$ is a right module over $S(\Fg^*)$.
\end{proof}

We now investigate the relation between the left $\Fg$-coaction and left $U(\Fg)$-coaction.

\medskip

Let $\Db:V \to U(\Fg) \ot V$ be a left $U(\Fg)$-comodule structure on a linear space $V$. Then composing via the canonical projection $\pi:U(\Fg) \to \Fg$,
\begin{equation}
\xymatrix {
\ar[dr]_{\Db_{\Fg}} V \ar[r]^{\Db\hspace{.5cm}} & U(\Fg) \ot V \ar[d]^{\pi \ot \Id} \\
& \Fg \ot V
}
\end{equation}
we get a linear map $\Db_{\Fg}:V \to \Fg \ot V$.

\begin{lemma}
If $\Db:V \to U(\Fg) \ot V$ is a coaction, then so is $\Db_{\Fg}:V \to \Fg \ot V$.
\end{lemma}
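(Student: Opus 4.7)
The plan is to verify the defining Lie coalgebra compatibility
\begin{equation}
v\nsb{-2}\wg v\nsb{-1}\ot v\nsb{0}=0
\end{equation}
for $\Db_\Fg=(\pi\ot\Id)\Db$ by unwinding it in terms of $\Db$ and then exploiting the cocommutativity of $U(\Fg)$. Writing the original $U(\Fg)$-coaction as $\Db(v)=v\ns{-1}\ot v\ns{0}$, the iterated coaction (as defined in the excerpt) is
\begin{equation}
v\nsb{-2}\ot v\nsb{-1}\ot v\nsb{0}=(\Id\ot\Db_\Fg)\Db_\Fg(v)=\pi(v\ns{-1})\ot\pi(v\ns{0}\ns{-1})\ot v\ns{0}\ns{0}.
\end{equation}

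Next I would apply coassociativity of the $U(\Fg)$-coaction, $(\Id\ot\Db)\Db=(\D\ot\Id)\Db$, to rewrite the right-hand side as
\begin{equation}
v\nsb{-2}\ot v\nsb{-1}\ot v\nsb{0}=\pi(v\ns{-1}\ps{1})\ot\pi(v\ns{-1}\ps{2})\ot v\ns{0}.
\end{equation}
This reduces the problem to showing that for every $u\in U(\Fg)$ the element $\pi(u\ps{1})\wg\pi(u\ps{2})\in\wg^2\Fg$ vanishes.

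The final, decisive step is that $U(\Fg)$ is cocommutative: the flip $\tau$ fixes $u\ps{1}\ot u\ps{2}$, and since $\pi\ot\pi$ is a linear map applied slot-wise, the image $\pi(u\ps{1})\ot\pi(u\ps{2})\in\Fg\ot\Fg$ is also $\tau$-invariant, hence symmetric. Its antisymmetrization is therefore zero, and the desired identity follows. There is no real obstacle here; the only point requiring minor care is book-keeping with the iterated-coaction convention $v\nsb{-2}\ot v\nsb{-1}\ot v\nsb{0}:=v\nsb{-1}\ot(v\nsb{0})\nsb{-1}\ot(v\nsb{0})\nsb{0}$ fixed in the excerpt, and noting that nothing about the projection $\pi$ is used beyond its linearity.
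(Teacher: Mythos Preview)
Your proof is correct and follows essentially the same route as the paper's: express the iterated $\Fg$-coaction via $\pi$ applied to the iterated $U(\Fg)$-coaction, use coassociativity to rewrite this in terms of the comultiplication $\Delta$, and then invoke cocommutativity of $U(\Fg)$ to conclude that $\pi(u\ps{1})\wedge\pi(u\ps{2})=0$. The paper compresses these steps into a single displayed chain of equalities, but the logic is identical.
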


\begin{proof}
If we write $\Db(v) = v\snsb{-1} \ot v\snsb{0}$, then by the cocommutativity of $U(\Fg)$
\begin{align}
\begin{split}
& v\nsb{-2} \wedge v\nsb{-1} \ot v\nsb{0} = \pi(v\snsb{-2}) \wedge \pi(v\snsb{-1}) \ot v\snsb{0} = \\
& \pi(v\snsb{-1}\ps{1}) \wedge \pi(v\snsb{-1}\ps{2}) \ot v\snsb{0} = 0.
\end{split}
\end{align}
\end{proof}

In order to obtain a $U(\Fg)$-comodule out of  a $\Fg$-comodule, we will need the following concept.

\begin{definition}\label{definition-locally-nilpotent-comodule}
Let $V$ be a $\Fg$-comodule via $\Db_{\Fg}:V \to \Fg \ot V$. Then we call the coaction locally conilpotent if for any $v \in V$ there exists $n \in \mathbb{N}$ such that $\Db^n_{\Fg}(v) = 0$.
\end{definition}

\begin{example}\rm{
If $V$ is an SAYD module on $U(\Fg)$, then by \cite[Lemma 6.2]{JaraStef} we have the filtration $V = \cup_{p \in \Zb}F_pV$ defined inductively by $F_0V = V^{\rm co\,U(\Fg)}$ and
\begin{equation}
F_{p+1}V/F_pV = (V/F_pV)^{\rm co\,U(\Fg)},
\end{equation}
where $V^{\rm co\,U(\Fg)}$ is the subspace of coaction invariants. As a result, the induced $\Fg$-comodule $V$ is locally conilpotent.
}\end{example}

\begin{example}\rm{
Let $\Fg$ be a Lie algebra and $S(\Fg^\ast)$ be the symmetric algebra on $\Fg^\ast$. For $V = S(\Fg^\ast)$, consider the coaction
\begin{equation}\label{aux-Koszul-coaction}
S(\Fg^\ast) \to \Fg \ot S(\Fg^\ast),\quad \a \mapsto X_i \ot \a\t^i.
\end{equation}
This is called the Koszul coaction. The corresponding $S(\Fg^*)$-action on $V$ coincides with the multiplication of $S(\Fg^\ast)$. Therefore, the Koszul coaction on $S(\Fg^\ast)$ is not locally conilpotent.

\medskip

On the other hand we note that the Koszul coaction is locally conilpotent on any truncation of the symmetric algebra.
}\end{example}

Let  $\{U_k(\Fg)\}_{k \geq 0}$ be the canonical filtration of $U(\Fg)$, \ie
\begin{equation}
U_0(\Fg) = \Cb \cdot 1, \quad U_1(\Fg) = \Cb \cdot 1 \oplus \Fg, \quad U_p(\Fg) \cdot U_q(\Fg) \subseteq U_{p+q}(\Fg)
\end{equation}

Let us say an element in $U(\Fg)$ is symmetric homogeneous of degree $k$ if it is the canonical image of a symmetric homogeneous tensor of degree $k$ over $\Fg$. Let  $U^k(\Fg)$  be the set of all symmetric elements of degree $k$ in $U(\Fg)$.

\medskip

We recall from  \cite[ Proposition 2.4.4]{Dixm-book} that
\begin{equation}\label{aux-1}
U_k(\Fg) = U_{k-1}(\Fg) \oplus U^k(\Fg).
\end{equation}
In other words, there is a (canonical) projection
\begin{align}\label{aux-the-map-theta-k}
\begin{split}
& \theta_k:U_k(\Fg) \to U^k(\Fg) \cong U_k(\Fg)/U_{k-1}(\Fg) \\
& X_1 \cdots X_k \mapsto \sum_{\sigma \in S_k}X_{\sigma(1)} \cdots X_{\sigma(k)}.
\end{split}
\end{align}
So, fixing an ordered basis of the Lie algebra $\Fg$, we can say that the map \eqref{aux-the-map-theta-k} is bijective on the PBW-basis elements.

\medskip

Let us  consider the unique derivation of $U(\Fg)$ extending the adjoint action of the Lie algebra $\Fg$ on itself,  and call it  $\ad(X):U(\Fg) \to U(\Fg)$ for any $X \in \Fg$.

\medskip

By \cite[Proposition 2.4.9]{Dixm-book},  $\ad(X)(U^k(\Fg)) \subseteq U^k(\Fg)$ and $\ad(X)(U_k(\Fg)) \subseteq U_k(\Fg)$. So by applying  $\ad(X)$ to both sides of \eqref{aux-1},  we observe that  the preimage of $\ad(Y)(\sum_{\sigma \in S_k}X_{\sigma(1)} \cdots X_{\sigma(k)})$  is  $\ad(Y)(X_1 \cdots X_k)$.

\begin{proposition}\label{proposition-construct-U(g)-coaction}
For a locally conilpotent $\Fg$-comodule $V$, the linear map
\begin{align}
\begin{split}
& \Db:V \to U(\Fg) \ot V \\
& v \mapsto 1 \ot v + \sum_{k \geq 1}\theta_k^{-1}(v\nsb{-k} \cdots v\nsb{-1}) \ot v\nsb{0}
\end{split}
\end{align}
defines a $U(\Fg)$-comodule structure.
\end{proposition}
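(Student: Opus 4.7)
The plan is to check the three comodule axioms in turn. First, well-definedness follows immediately from local conilpotency: for each $v\in V$ there is $N$ with $\Db_{\Fg}^{k}(v)=0$ for $k\ge N$, so the defining sum is finite. Counitality $(\ve\ot\Id)\circ\Db=\Id$ is then visible by inspection, since the $k=0$ term returns $v$ while for $k\ge 1$ the element $\theta_k^{-1}(v\nsb{-k}\cdots v\nsb{-1})\in U_k(\Fg)$ lies in the augmentation ideal and is killed by $\ve$.

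The substantial step will be coassociativity, $(\D\ot\Id)\circ\Db=(\Id\ot\Db)\circ\Db$. Here the key observation I would exploit is that condition \eqref{aux-g-comod} characterizing $\Fg$-comodules is precisely the statement that the iterated coaction $\Db_{\Fg}^{k}(v)=v\nsb{-k}\ot\cdots\ot v\nsb{-1}\ot v\nsb{0}$ is symmetric in its first $k$ tensor factors. Consequently each product $v\nsb{-k}\cdots v\nsb{-1}\in U_k(\Fg)$ is symmetric modulo lower order terms, and $\theta_k^{-1}$ picks out its genuine symmetric representative in $U^k(\Fg)\subseteq U(\Fg)$. My plan is then to factor $\Db$ through the symmetric algebra,
\begin{equation}
V\longra S(\Fg)\ot V\longra U(\Fg)\ot V,
\end{equation}
where the first map sends $v\mapsto \sum_{k\ge 0}\frac{1}{k!}\, v\nsb{-1}\cdots v\nsb{-k}\ot v\nsb{0}$ with product taken in $S(\Fg)$, and the second is the Poincar\'e--Birkhoff--Witt symmetrization, which is an isomorphism of coalgebras when $\Fg$ is regarded as primitive in both. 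Condition \eqref{aux-g-comod} is exactly what is needed for the first map to be a coaction with respect to the standard cocommutative coalgebra structure on $S(\Fg)$; since symmetrization is a coalgebra map, coassociativity of $\Db$ then follows formally.

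Equivalently, a direct verification would reduce, after using symmetry of the iterated $\Fg$-coaction to merge the two nested applications on the right-hand side of the coassociativity equation, to the identity
\begin{equation}
\D\bigl(\theta_n^{-1}(v\nsb{-n}\cdots v\nsb{-1})\bigr)=\sum_{k+l=n}\theta_k^{-1}(v\nsb{-n}\cdots v\nsb{-(l+1)})\ot \theta_l^{-1}(v\nsb{-l}\cdots v\nsb{-1}),
\end{equation}
which is the standard binomial-type expansion for the comultiplication of a symmetric product of primitive elements in $U(\Fg)$, obtained from $\D(X_{i_1}\cdots X_{i_n})=\sum_{I\sqcup J=\{1,\dots,n\}}X_I\ot X_J$ by symmetrizing both sides. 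The main obstacle will be the combinatorial bookkeeping around the $\theta_k^{-1}$ and the various symmetrizations; routing the argument through the PBW coalgebra isomorphism $S(\Fg)\cong U(\Fg)$ sidesteps this almost entirely, reducing the problem to the evident coassociativity of the $S(\Fg)$-coaction in which $\Fg$ is primitive.
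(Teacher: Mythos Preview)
Your approach is correct and takes a more structural route than the paper. The paper proceeds entirely in coordinates: it fixes bases of $V$ and $\Fg$, writes the iterated $\Fg$-coaction as $\sum_{j_1,\ldots,j_k}\alpha^{ij_1\cdots j_k}_{l}\,X_{j_1}\ot\cdots\ot X_{j_k}\ot v^{l}$, extracts from \eqref{aux-g-comod} the full permutation symmetry $\alpha^{ij_1\cdots j_k}_{l}=\alpha^{ij_{\sigma(1)}\cdots j_{\sigma(k)}}_{l}$, and then verifies $(\Id\ot\Db)\circ\Db=(\D\ot\Id)\circ\Db$ by expanding both sides in the PBW basis and matching coefficients term by term; the symmetry of the $\alpha$'s is precisely what makes the reindexing of the double sum go through.

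Your factorization through $S(\Fg)$ packages the same content without coordinates: the $\Fg$-comodule condition is exactly what collapses the sum over all splittings $I\sqcup J=\{1,\ldots,n\}$ in $\D_{S(\Fg)}$ to $\binom{n}{k}$ copies of the single consecutive split, yielding coassociativity of the $S(\Fg)$-coaction immediately, and the PBW coalgebra isomorphism then transports it to $U(\Fg)$. This is cleaner and makes the role of the symmetry hypothesis transparent; it also explains conceptually why characteristic zero (implicit in the paper via $\Cb$) is needed. The one loose end you should tie off explicitly is that your composite, with the $\tfrac{1}{k!}$ weight on the $S(\Fg)$ side followed by PBW symmetrization, really lands on the map $\Db$ of the statement with its $\theta_k^{-1}$; this is a normalization check (unwinding what $\theta_k$ does on products of repeated basis vectors) rather than a conceptual issue, but as written you have asserted rather than verified it.
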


\begin{proof}
For an arbitrary basis element $v^i \in V$, let us write
\begin{equation}\label{aux-54}
v^i\nsb{-1} \ot v^i\nsb{0} = \alpha^{ij}_k X_j \ot v^k
\end{equation}
where $\alpha^{ij}_k \in \mathbb{C}$. Then, by the coaction compatibility $v\nsb{-2} \wedge v\nsb{-1} \ot v\nsb{0} = 0$ we have
\begin{equation}
v^i\nsb{-2} \ot v^i\nsb{-1} \ot v^i\nsb{0} = \sum_{j_1 , j_2} \alpha^{ij_1j_2}_{l_2} X_{j_1} \ot X_{j_2} \ot v^{l_2},
\end{equation}
such that $\alpha^{ij_1j_2}_{l_2} := \alpha^{ij_1}_{l_1}\alpha^{l_1j_2}_{l_2}$ and $\alpha^{ij_1j_2}_{l_2} = \alpha^{ij_2j_1}_{l_2}$.

\medskip

We have
\begin{equation}
\Db(v^i) = 1 \ot v^i + \sum_{k \geq 1}\sum_{j_1 \leq \cdots \leq j_k} \alpha^{ij_1 \cdots j_k}_{l_k} X_{j_1} \cdots X_{j_k} \ot v^{l_k},
\end{equation}
because  for $k \geq 1$
\begin{equation}
v^i\nsb{-k} \ot \cdots \ot v^i\nsb{-1} \ot v^i\nsb{0} = \sum_{j_1 , \cdots , j_k} \alpha^{ij_1 \cdots j_k}_{l_k} X_{j_1} \ot \cdots \ot X_{j_k} \ot v^{l_k},
\end{equation}
where $\alpha^{ij_1 \cdots j_k}_{l_k} := \alpha^{ij_1}_{l_1} \cdots \alpha^{l_{k-1}j_k}_{l_k}$. Moreover, for any $\sigma \in S_k$ we have
\begin{equation}\label{aux-55}
\alpha^{ij_1 \cdots j_k}_{l_k} = \alpha^{ij_{\sigma(1)} \cdots j_{\sigma(k)}}_{l_k}.
\end{equation}
At this point, the counitality
\begin{equation}
(\ve \ot \Id) \circ \Db (v^i) = v^i
\end{equation}
is immediate. On the other hand, to prove the coassociativity we first observe that
\begin{align}
\begin{split}
& (\Id \ot \Db) \circ \Db(v^i) = 1 \ot \Db(v^i) + \sum_{k \geq 1}\sum_{j_1 \leq \cdots \leq j_k} \alpha^{ij_1 \cdots j_k}_{l_k} X_{j_1} \cdots X_{j_k} \ot \Db(v^{l_k}) \\
& = 1 \ot 1 \ot v^i + \sum_{k \geq 1}\sum_{j_1 \leq \cdots \leq j_k} \alpha^{ij_1 \cdots j_k}_{l_k} 1 \ot X_{j_1} \cdots X_{j_k} \ot v^{l_k} + \\
& \sum_{k \geq 1}\sum_{j_1 \leq \cdots \leq j_k} \alpha^{ij_1 \cdots j_k}_{l_k} X_{j_1} \cdots X_{j_k} \ot 1 \ot v^{l_k} + \\
& \sum_{k \geq 1}\sum_{j_1 \leq \cdots \leq j_k} \alpha^{ij_1 \cdots j_k}_{l_k} X_{j_1} \cdots X_{j_k} \ot (\sum_{t \geq 1}\sum_{r_1 \leq \cdots \leq r_t} \alpha^{l_kr_1 \cdots r_t}_{s_t} X_{r_1} \cdots X_{r_t} \ot v^{s_t}),
\end{split}
\end{align}
where $\alpha^{l_kr_1 \cdots r_t}_{s_t} := \alpha^{l_kr_1}_{s_1} \cdots \alpha^{s_{t-1}r_t}_{s_t}$. Then we notice that
\begin{align}
\begin{split}
& \Delta(\sum_{k \geq 1}\sum_{j_1 \leq \cdots \leq j_k} \alpha^{ij_1 \cdots j_k}_{l_k} X_{j_1} \cdots X_{j_k}) \ot v^{l_k} = \sum_{k \geq 1}\sum_{j_1 \leq \cdots \leq j_k} \alpha^{ij_1 \cdots j_k}_{l_k} 1 \ot X_{j_1} \cdots X_{j_k} \ot v^{l_k} \\
& + \sum_{k \geq 1}\sum_{j_1 \leq \cdots \leq j_k} \alpha^{ij_1 \cdots j_k}_{l_k} X_{j_1} \cdots X_{j_k} \ot 1 \ot v^{l_k} \\
& + \sum_{k \geq 2}\sum_{j_1 \leq \cdots \leq r_1 \leq \cdots \leq r_p \leq \cdots \leq j_k} \alpha^{ij_1 \cdots j_k}_{l_k} X_{r_1} \cdots X_{r_p} \ot X_{j_1} \cdots \widehat{X}_{r_1} \cdots \widehat{X}_{r_p} \cdots X_{j_k} \ot v^{l_k} = \\
& \sum_{k \geq 1}\sum_{j_1 \leq \cdots \leq j_k} \alpha^{ij_1 \cdots j_k}_{l_k} 1 \ot X_{j_1} \cdots X_{j_k} \ot v^{l_k} + \\
& \sum_{k \geq 1}\sum_{j_1 \leq \cdots \leq j_k} \alpha^{ij_1 \cdots j_k}_{l_k} X_{j_1} \cdots X_{j_k} \ot 1 \ot v^{l_k} + \\
& \sum_{p \geq 1}\sum_{k-p \geq 1}\sum_{q_1 \leq \cdots \leq q_{k-p}}\sum_{r_1 \leq \cdots \leq r_p} \alpha^{ir_1 \cdots r_p}_{l_p}\alpha^{l_pq_1 \cdots q_{k-p}}_{l_k} X_{r_1} \cdots X_{r_p} \ot X_{q_1} \cdots X_{q_{k-p}} \ot v^{l_k},
\end{split}
\end{align}
where for the last equality we write the complement of $r_1 \leq \cdots \leq r_p$ in $j_1 \leq \cdots \leq j_k$ as $q_1 \leq \cdots \leq q_{k-p}$. Then \eqref{aux-55} implies that
\begin{equation}
\alpha^{ij_1 \cdots j_k}_{l_k} = \alpha^{ir_1 \cdots r_p q_1 \cdots q_{k-p}}_{l_k} = \alpha^{ir_1 \cdots r_p}_{l_p}\alpha^{l_pq_1 \cdots q_{k-p}}_{l_k}.
\end{equation}
As a result,
\begin{equation}
(\Id \ot \Db) \circ \Db(v^i) = (\Delta \ot \Id) \circ \Db(v^i).
\end{equation}
which is coassociativity.
\end{proof}

Let us denote by $\rm  ^{\Fg}conil\Mc$ the subcategory of locally conilpotent left $\Fg$-comodules in the category of left $\Fg$-comodules $\, ^{\Fg}\Mc$ with colinear maps.

\medskip

Assigning a $\Fg$-comodule $\Db_{\Fg}:V \to \Fg \ot V$ to a $U(\Fg)$-comodule $\Db:V \to U(\Fg) \ot V$ determines a functor
\begin{equation}
\xymatrix{
^{U(\Fg)}\Mc  \ar@<.1 ex>[r]^{P} &  \, \rm  ^{\Fg}conil\Mc.
}
\end{equation}
Similarly, constructing a $U(\Fg)$-comodule from a $\Fg$-comodule determines a functor
\begin{equation}
\xymatrix{
\rm  ^{\Fg}conil\Mc \ar@<.1 ex>[r]^{E} &  \, ^{U(\Fg)}\Mc.
}
\end{equation}

As a result, we have the following proposition.

\begin{proposition}\label{proposition-categories-equivalent}
The categories $\, \rm  ^{U(\Fg)}\Mc$ and $\, \rm  ^{\Fg}conil\Mc$ are isomorphic.
\end{proposition}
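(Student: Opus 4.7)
The strategy is to verify that the functors $P$ and $E$ are mutually inverse and that each preserves colinear maps. This decomposes into four checks: (a) $P$ in fact takes values in $\,\rm ^{\Fg}conil\Mc$; (b) $P\circ E=\Id$; (c) $E\circ P=\Id$; (d) a linear map $f:V\to W$ is $\Fg$-colinear for the induced $\Fg$-coactions if and only if it is $U(\Fg)$-colinear for the given $U(\Fg)$-coactions.

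For (a), given a $U(\Fg)$-comodule $(V,\Db)$ with the Jara--Stefan filtration $V=\bigcup_p F_pV$ recalled in the example preceding Proposition \ref{proposition-categories-equivalent}, I would show by induction on $p$ that $\Db_\Fg^{p+1}(v)=0$ for $v\in F_pV$, where $\Db_\Fg=(\pi\ot\Id)\circ\Db$. The base case is immediate since $\Db(v)=1\ot v$ on $F_0V=V^{\mathrm{co}\,U(\Fg)}$ gives $\Db_\Fg(v)=\pi(1)\ot v=0$. The inductive step uses that the filtration satisfies $\Db(v)-1\ot v\in U(\Fg)\ot F_{p-1}V$ for $v\in F_pV$, so after one application of $\pi\ot\Id$ we are in the range of the inductive hypothesis and may iterate.

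Item (b) is a direct computation: applying $\pi\ot\Id$ term-by-term to
\begin{equation*}
E(\Db_\Fg)(v)=1\ot v+\sum_{k\geq 1}\theta_k^{-1}(v\nsb{-k}\cdots v\nsb{-1})\ot v\nsb{0},
\end{equation*}
the constant term dies since $\pi(1)=0$; for $k=1$ the map $\theta_1^{-1}$ is the identity on $\Fg$, yielding $v\nsb{-1}\ot v\nsb{0}=\Db_\Fg(v)$; and for $k\geq 2$ the element $\theta_k^{-1}(v\nsb{-k}\cdots v\nsb{-1})$ lies in $U^k(\Fg)$, which is contained in the kernel of $\pi$ under the PBW splitting $U(\Fg)=\Cb\oplus\Fg\oplus\bigoplus_{k\geq 2}U^k(\Fg)$.

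The heart of the argument is (c). Starting from $(V,\Db)$ with induced coaction $\Db_\Fg=(\pi\ot\Id)\circ\Db$, decompose $\Db(v)=\sum_{m\geq 0}\Db_m(v)$ with $\Db_m(v)\in U^m(\Fg)\ot V$ according to the PBW splitting; counitality pins down $\Db_0(v)=1\ot v$. For each $k\geq 1$, apply $\pi^{\ot k}\ot\Id$ to the iterated coaction: by coassociativity this equals $\Db_\Fg^{(k)}(v)=v\nsb{-k}\ot\cdots\ot v\nsb{-1}\ot v\nsb{0}$. On the other hand, for each $m$ the map $\pi^{\ot k}\circ\D^{(k-1)}:U^m(\Fg)\to\Fg^{\ot k}$ preserves total degree, hence vanishes for $m\neq k$ and, for $m=k$, agrees (after the natural identification of $U^k(\Fg)$ with symmetric tensors in $\Fg^{\ot k}$) with the symmetrization $\theta_k$. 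Inverting therefore gives $\Db_k(v)=\theta_k^{-1}(v\nsb{-k}\cdots v\nsb{-1})\ot v\nsb{0}$, and summing over $k$ recovers $\Db(v)=E(P(\Db))(v)$.

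Finally (d) is immediate: a $U(\Fg)$-colinear map is $\Fg$-colinear by postcomposition with $\pi\ot\Id$; conversely, a $\Fg$-colinear map between locally conilpotent $\Fg$-comodules intertwines every iterate $\Db_\Fg^{(k)}$, and since the reconstruction formula for $E$ depends only on these iterates, $f$ is $U(\Fg)$-colinear for the induced structures. The principal obstacle is the combinatorial statement invoked in (c): the assertion that $\pi^{\ot k}\circ\D^{(k-1)}$ vanishes on $U^m(\Fg)$ for $m\neq k$ and restricts to the symmetrization $\theta_k$ on $U^k(\Fg)$. This is the coalgebra counterpart of PBW combined with cocommutativity, and the normalization factors introduced by $\theta_k$ must be tracked carefully so that inversion is consistent across all symmetry strata (including the ``diagonal'' monomials $X_j^k$).
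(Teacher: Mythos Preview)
Your proposal is correct and the core mechanism for (c)---applying $\pi^{\ot k}\circ\Delta^{(k-1)}$ to $\Db$ and recognizing this as $\Db_\Fg^{(k)}$ by coassociativity---is precisely what the paper does, though the paper phrases it in coordinates: it expands $\Db(v^i)$ in a PBW basis with coefficients $\gamma^{ij_1\cdots j_k}_{l_k}$, writes the coassociativity identity $(\Delta^{k-1}\ot\Id)\circ\Db=\Db^k$, and ``compares coefficients of $X_{j_1}\ot\cdots\ot X_{j_k}$'' to obtain the factorization $\gamma^{ij_1\cdots j_k}_{l_k}=\gamma^{ij_1}_{l_1}\cdots\gamma^{l_{k-1}j_k}_{l_k}$, which is exactly your reconstruction $\Db_k(v)=\theta_k^{-1}(v\nsb{-k}\cdots v\nsb{-1})\ot v\nsb{0}$ read off in a basis. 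Your invariant formulation via the PBW splitting $U(\Fg)=\bigoplus_m U^m(\Fg)$ and the degree-counting argument for $\pi^{\ot k}\circ\Delta^{(k-1)}|_{U^m}$ is cleaner and sidesteps some of the index bookkeeping.

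There are two genuine differences in organization. For (a), the paper does not invoke the Jara--\c{S}tefan filtration; instead it observes that the coefficient identity just established forces the iterated $\Fg$-coaction to terminate (since the original $U(\Fg)$-coaction, being a comodule map, has image in a finite filtration piece for each $v$). Your filtration argument is independent and arguably more transparent. For (d), the paper says nothing about morphisms and establishes only a bijection on objects; your observation that colinearity transfers in both directions via the reconstruction formula is a necessary addendum if one wants an honest isomorphism of categories.
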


\begin{proof}
We show that the functors
$$
\begin{xy}
\xymatrix{
^{U(\Fg)}\Mc  \ar@<.6 ex>[r]^{P} & \ar@<.6 ex>[l]^{E}  \, \rm  ^{\Fg}conil\Mc
}
\end{xy}
$$
are inverses to each other.

\medskip

If $\Db_{\Fg}:V \to \Fg \ot V$ is a locally conilpotent $\Fg$-comodule and $\Db:V \to U(\Fg) \ot V$ the corresponding $U(\Fg)$-comodule, then by the very definition  the $\Fg$-comodule corresponding to $\Db:V \to U(\Fg) \ot V$ is exactly $\Db_{\Fg}:V \to \Fg \ot V$. This proves that
\begin{equation}
P \circ E\; =\; \Id_{\rm  ^{\Fg}conil\Mc}.
\end{equation}
Conversely, let us start with a $U(\Fg)$-comodule $\Db:V \to U(\Fg) \ot V$ and write the coaction by using the PBW-basis of $U(\Fg)$ as follows:
\begin{equation}
v^i\ps{-1} \ot v^i\ps{0} = 1 \ot v^i + \sum_{k \geq 1}\sum_{j_1 \leq \cdots \leq j_k}\gamma^{ij_1 \cdots j_k}_{l_k} X_{j_1} \cdots X_{j_k} \ot v^{l_k}.
\end{equation}
So, the corresponding $\Fg$-comodule $\Db_{\Fg}:V \to \Fg \ot V$ is given by
\begin{equation}
v^i\nsb{-1} \ot v^i\nsb{0} = \pi(v^i\ps{-1}) \ot v^i\ps{0} = \sum_{j,k} \gamma^{ij}_k X_j \ot v^k.
\end{equation}
Finally, the $U(\Fg)$-coaction corresponding to this $\Fg$-coaction is defined on $v^i \in V$ by
\begin{equation}
v^i \mapsto 1 \ot v + \sum_{k \geq 1}\sum_{j_1 \leq \cdots \leq j_k}\gamma^{ij_1}_{l_1}\gamma^{l_1j_2}_{l_2} \cdots \gamma^{l_{k-1}j_k}_{l_k} X_{j_1} \cdots X_{j_k} \ot v^{l_k}.
\end{equation}
Therefore, we recover the $U(\Fg)$-coaction we started with if and only if
\begin{equation}\label{aux-40}
\gamma^{ij_1 \cdots j_k}_{l_k} = \gamma^{ij_1}_{l_1}\gamma^{l_1j_2}_{l_2} \cdots \gamma^{l_{k-1}j_k}_{l_k}, \quad \forall k \geq 1.
\end{equation}
The equation \eqref{aux-40} is a consequence of the coassociativity of $\Db$. Indeed,  applying the coassociativity as
\begin{equation}
(\Delta^{k-1} \ot \Id) \circ \Db = \Db^k
\end{equation}
and comparing the coefficients of $X_{j_1} \ot \cdots \ot X_{j_k}$ we  deduce \eqref{aux-40} for any $k \geq 1$. Hence,  we have proved
\begin{equation}
E \circ P = \Id_{^{U(\Fg)}\Mc}.
\end{equation}
The equation \eqref{aux-40} implies that if $\Db:V \to U(\Fg) \ot V$ is a left coaction, then its associated  $\Fg$-coaction  $\Db_{\Fg}:V \to \Fg \ot V$ is locally conilpotent.
\end{proof}

For a $\Fg$-coaction
\begin{equation}
v \mapsto v\nsb{-1} \ot v\nsb{0},
\end{equation}
we denote the associated  $U(\Fg)$-coaction by
\begin{equation}
v \mapsto v\snsb{-1} \ot v\snsb{0}.
\end{equation}

\begin{definition}
Let $V$ be a right module and left comodule over a Lie algebra $\Fg$. We call $V$ a right-left AYD module over $\Fg$ if
\begin{equation}
\Db_{\Fg}(v \cdot X) = v\nsb{-1} \ot v\nsb{0} \cdot X + [v\nsb{-1}, X] \ot v\nsb{0}.
\end{equation}
Moreover, $V$ is called stable if
\begin{equation}
v\nsb{0} \cdot v\nsb{-1} = 0.
\end{equation}
\end{definition}

\begin{proposition}\label{proposition-g-AYD-U(g)-AYD}
Let $\Db_{\Fg}:V \to \Fg \ot V$ be a locally conilpotent $\Fg$-comodule and $\Db:V \to U(\Fg) \ot V$ the corresponding $U(\Fg)$-comodule structure. Then $V$ is a right-left AYD module over $\Fg$ if and only if it is a right-left AYD module over $U(\Fg)$.
\end{proposition}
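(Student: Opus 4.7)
The plan is to leverage the explicit correspondence between locally conilpotent $\Fg$-coactions and $U(\Fg)$-coactions established in Proposition \ref{proposition-categories-equivalent}, combined with a multiplicativity argument for the right-left AYD condition. First I would observe that the $U(\Fg)$-AYD condition is multiplicative in the Hopf algebra variable: if $\Db(v\cdot h) = S(h\ps{3})v\snsb{-1}h\ps{1}\ot v\snsb{0}\cdot h\ps{2}$ holds for $h$ and $h'$, a direct substitution of the identity for $h'$ into that for $h$, using coassociativity and anti-multiplicativity of $S$, yields the identity for $hh'$. Since $U(\Fg)$ is generated as an algebra by $\Fg$, the AYD condition therefore reduces to the case of primitive $X\in\Fg$, where, using $\D(X)= X\ot 1 + 1\ot X$ and $S(X) = -X$, it simplifies to
\begin{equation}\label{aux-primitive-AYD}
\Db(v\cdot X) = v\snsb{-1}\ot v\snsb{0}\cdot X + [v\snsb{-1}, X]\ot v\snsb{0}.
\end{equation}
Thus the proposition reduces to the equivalence of the $\Fg$-AYD condition with \eqref{aux-primitive-AYD}.

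For the direction $U(\Fg)$-AYD $\Ra$ $\Fg$-AYD, I would apply the canonical projection $\pi\ot\Id: U(\Fg)\ot V \ra \Fg\ot V$ induced by the vector space decomposition $U(\Fg) = \bigoplus_{k\geq 0}U^k(\Fg)$ projecting onto $U^1(\Fg)=\Fg$, to the identity \eqref{aux-primitive-AYD}. The essential input is the $\ad$-invariance $\ad(X)(U^k(\Fg))\subseteq U^k(\Fg)$ recalled from \cite[Proposition 2.4.9]{Dixm-book}, which forces $\pi([u, X]) = [\pi(u), X]$ for all $u \in U(\Fg)$ and $X\in\Fg$. Combined with the defining identity $(\pi\ot\Id)\circ\Db = \Db_\Fg$, read off directly from the construction in Proposition \ref{proposition-construct-U(g)-coaction}, the projection of \eqref{aux-primitive-AYD} becomes precisely the $\Fg$-AYD identity.

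For the reverse direction, I would first establish by induction on $k$, applying the $\Fg$-AYD condition at each step to the last tensor factor, the iterated formula
\begin{equation}\label{aux-iterated-AYD}
\Db_\Fg^{(k)}(v\cdot X) = v\nsb{-k}\odots v\nsb{-1}\ot v\nsb{0}\cdot X + \sum_{i=1}^k v\nsb{-k}\odots[v\nsb{-i}, X]\odots v\nsb{-1}\ot v\nsb{0},
\end{equation}
in which the $i$-th tensor factor of the $i$-th summand is the bracket $[v\nsb{-i}, X]$. Substituting \eqref{aux-iterated-AYD} into the explicit expansion $\Db(w) = \sum_{k\geq 0}\theta_k^{-1}(w\nsb{-k}\cdots w\nsb{-1})\ot w\nsb{0}$ of Proposition \ref{proposition-construct-U(g)-coaction} applied at $w = v\cdot X$ and regrouping yields
\begin{equation}
\Db(v\cdot X) = v\snsb{-1}\ot v\snsb{0}\cdot X + \sum_{k\geq 1}\sum_{i=1}^k\theta_k^{-1}(v\nsb{-k}\cdots[v\nsb{-i}, X]\cdots v\nsb{-1})\ot v\nsb{0}.
\end{equation}
Matching this against \eqref{aux-primitive-AYD} reduces the claim to the purely combinatorial identity
\begin{equation}
[\theta_k^{-1}(Y_1\cdots Y_k), X] = \sum_{i=1}^k \theta_k^{-1}(Y_1\cdots [Y_i, X]\cdots Y_k),\quad Y_1,\dots,Y_k,X\in\Fg,
\end{equation}
which in turn follows from the $\Fg$-equivariance of the symmetrization isomorphism $S^k(\Fg)\cong U^k(\Fg)$: both $\ad(X)$ on $U^k(\Fg)$ and the diagonal $\ad(X)$-action on $\Fg^{\ot k}$ (descended to $S^k(\Fg)$) are intertwined by this isomorphism.

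The main technical delicacy lies in pinning down the interpretation of $\theta_k^{-1}$ as the inverse of the symmetrization isomorphism and justifying its $\Fg$-equivariance; the local conilpotency hypothesis is used throughout, as it makes all sums over $k$ finite on any given $v\in V$ and thereby legitimizes the term-by-term manipulations above, including the induction that establishes \eqref{aux-iterated-AYD}.
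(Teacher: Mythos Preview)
Your proposal is correct and follows essentially the same route as the paper. Both arguments reduce the $U(\Fg)$-AYD condition to primitives via multiplicativity, use the $\ad$-equivariance of the projection $\pi:U(\Fg)\to\Fg$ for the $U(\Fg)\Rightarrow\Fg$ direction, and for the converse establish the iterated formula \eqref{aux-iterated-AYD} and then invoke the commutation of $\theta_k$ (equivalently $\theta_k^{-1}$) with $\ad(X)$; the paper phrases this last step as ``$\ad$ commutes with $\theta_k$'' after first multiplying the tensor factors in $U(\Fg)$, while you keep the factors separate and state the resulting identity explicitly, but the content is the same.
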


\begin{proof}
Let us first assume $V$ to be a right-left AYD  module over $\Fg$. For  $X \in \Fg$ and an element $v \in V$, AYD compatibility implies that
\begin{align}
\begin{split}
& (v \cdot X)\nsb{-k} \ot \cdots \ot (v \cdot X)\nsb{-1} \ot (v \cdot X)\nsb{0} = v\nsb{-k} \ot \cdots \ot v\nsb{-1} \ot v\nsb{0} \cdot X \\
& + [v\nsb{-k}, X] \ot \cdots \ot v\nsb{-1} \ot v\nsb{0} + v\nsb{-k} \ot \cdots \ot [v\nsb{-1}, X] \ot v\nsb{0}.
\end{split}
\end{align}
Multiplying in $U(\Fg)$, we get
\begin{align}
\begin{split}
& (v \cdot X)\nsb{-k} \cdots (v \cdot X)\nsb{-1} \ot (v \cdot X)\nsb{0}  = \\
& v\nsb{-k} \cdots v\nsb{-1} \ot v\nsb{0} \cdot X - \ad(X)(v\nsb{-k} \cdots v\nsb{-1}) \ot v\nsb{0}.
\end{split}
\end{align}
So, for the extension $\Db:V \to U(\Fg) \ot V$ we have
\begin{align}
\begin{split}
& (v \cdot X)\snsb{-1} \ot (v \cdot X)\snsb{0} = 1 \ot v \cdot X + \sum_{k \geq 1}\theta_k^{-1}((v \cdot X)\nsb{-k} \cdots (v \cdot X)\nsb{-1}) \ot (v \cdot X)\nsb{0} \\
& = 1 \ot v \cdot X + \sum_{k \geq 1}\theta_k^{-1}(v\nsb{-k} \cdots v\nsb{-1}) \ot v\nsb{0} \cdot X - \sum_{k \geq 1}\theta_k^{-1}(\ad(X)(v\nsb{-k} \cdots v\nsb{-1})) \ot v\nsb{0} \\
& = v\snsb{-1} \ot v\snsb{0} \cdot X - \sum_{k \geq 1}\ad(X)(\theta_k^{-1}(v\nsb{-k} \cdots v\nsb{-1})) \ot v\nsb{0} \\
& = v\snsb{-1} \ot v\snsb{0} \cdot X - \ad(X)(v\snsb{-1}) \ot v\snsb{0} = S(X\ps{3})v\snsb{-1}X\ps{1} \ot v\snsb{0} \cdot X\ps{2}.
\end{split}
\end{align}
In the third equality we used the fact that the operator $\ad:U(\Fg) \to U(\Fg)$  commutes with $\theta_k$, and in the fourth equality we used
\begin{align}
\begin{split}
& \sum_{k \geq 1}\ad(X)(\theta_k^{-1}(v\nsb{-k} \cdots v\nsb{-1})) \ot v\nsb{0} = \\
& \sum_{k \geq 1}\ad(X)(\theta_k^{-1}(v\nsb{-k} \cdots v\nsb{-1})) \ot v\nsb{0} + \ad(X)(1) \ot v = \ad(X)(v\snsb{-1}) \ot v\snsb{0}.
\end{split}
\end{align}
By  using the fact that  AYD condition is multiplicative,  we conclude that $\Db:V \to U(\Fg) \ot V$ satisfies the AYD condition on $U(\Fg)$.

\medskip

Conversely, assume that $V$ is a right-left AYD over $U(\Fg)$. We first observe that
\begin{equation}
(\Delta \ot \Id) \circ \Delta (X) = X \ot 1 \ot 1 + 1 \ot X \ot 1 + 1 \ot 1 \ot X
\end{equation}
Accordingly,
\begin{align}
\begin{split}
& \Db(v \cdot X) = v\snsb{-1}X \ot v\snsb{0} + v\snsb{-1} \ot v\snsb{0} \cdot X - Xv\snsb{-1} \ot v\snsb{0} \\
& = -\ad(X)(v\snsb{-1}) \ot v\snsb{0} + v\snsb{-1} \ot v\snsb{0} \cdot X.
\end{split}
\end{align}
It is known that  the projection map $\pi:U(\Fg) \to \Fg$ commutes with the adjoint representation. So,
\begin{align}
\begin{split}
& \Db_{\Fg}(v \cdot X) = -\pi(\ad(X)(v\snsb{-1})) \ot v\snsb{0} + \pi(v\snsb{-1}) \ot v\snsb{0} \cdot X \\
& = -\ad(X)\pi(v\snsb{-1}) \ot v\snsb{0} + \pi(v\snsb{-1}) \ot v\snsb{0} \cdot X \\
& = [v\nsb{-1},X] \ot v\nsb{0} + v\nsb{-1} \ot v\nsb{0} \cdot X.
\end{split}
\end{align}
As a result, $V$ is a right-left AYD over $\Fg$.
\end{proof}

\begin{lemma}
Let $\Db_{\Fg}:V \to \Fg \ot V$ be a locally conilpotent $\Fg$-comodule and $\Db:V \to U(\Fg) \ot V$ be the corresponding $U(\Fg)$-comodule structure. If $V$ is stable over $\Fg$, then it is stable over $U(\Fg)$.
\end{lemma}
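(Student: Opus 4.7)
My plan is to compute $v\snsb{0}\cdot v\snsb{-1}$ directly from the explicit formula
$$\Db(v)=1\ot v+\sum_{k\geq 1}\theta_k^{-1}(v\nsb{-k}\cdots v\nsb{-1})\ot v\nsb{0}$$
established in Proposition \ref{proposition-construct-U(g)-coaction}. The iterated Lie comodule axiom $v\nsb{-2}\wg v\nsb{-1}\ot v\nsb{0}=0$ forces the tensor $v\nsb{-k}\ot\cdots\ot v\nsb{-1}\ot v\nsb{0}$ to be fully symmetric in its first $k$ slots, so after multiplying these factors in $U(\Fg)$ the resulting element $v\nsb{-k}\cdots v\nsb{-1}\ot v\nsb{0}$ lies in $U^k(\Fg)\ot V$. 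Since $\theta_k$ restricted to $U^k(\Fg)$ is multiplication by $k!$, this identifies $\theta_k^{-1}(v\nsb{-k}\cdots v\nsb{-1})\ot v\nsb{0}=\tfrac{1}{k!}\,v\nsb{-k}\cdots v\nsb{-1}\ot v\nsb{0}$, and therefore
$$v\snsb{0}\cdot v\snsb{-1}=v+\sum_{k\geq 1}\tfrac{1}{k!}\,v\nsb{0}\cdot(v\nsb{-k}\cdots v\nsb{-1}).$$
The claim thus reduces to showing that $v\nsb{0}\cdot(v\nsb{-k}\cdots v\nsb{-1})=0$ for every $k\geq 1$.

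The case $k=1$ is precisely the $\Fg$-stability hypothesis. For $k\geq 2$ I exploit the symmetry just noted: because $v\nsb{-k}\ot\cdots\ot v\nsb{-1}\ot v\nsb{0}$ is invariant under arbitrary permutations of its first $k$ entries, linearity of the product map $U(\Fg)^{\ot k}\to U(\Fg)$ yields the identity
$$v\nsb{-k}\cdots v\nsb{-1}\ot v\nsb{0}=v\nsb{-1}\,v\nsb{-k}\,v\nsb{-(k-1)}\cdots v\nsb{-2}\ot v\nsb{0}$$
in $U(\Fg)\ot V$, even though the individual products in $U(\Fg)$ are noncommutative. Associativity of the right $U(\Fg)$-action then gives
$$v\nsb{0}\cdot(v\nsb{-k}\cdots v\nsb{-1})=(v\nsb{0}\cdot v\nsb{-1})\cdot(v\nsb{-k}\cdots v\nsb{-2}).$$
Splitting $\Db_{\Fg}^{k}=\Db_{\Fg}\circ\Db_{\Fg}^{k-1}$, the leading pair $v\nsb{-1}\ot v\nsb{0}$ is precisely $\Db_{\Fg}$ applied to the intermediate vector $w=v^{(k-1)}\nsb{0}\in V$ produced by $\Db_{\Fg}^{k-1}(v)$; for each fixed value of the outer Sweedler indices, the inner sum $w\nsb{0}\cdot w\nsb{-1}$ vanishes by $\Fg$-stability applied to $w$, and linearity of the outer action $(\cdot)\cdot(v\nsb{-k}\cdots v\nsb{-2})$ then forces the entire expression to vanish.

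The main technical obstacle is to keep the Sweedler bookkeeping honest while separating the $k$-fold coaction into an outer $(k-1)$-fold part and the final application of $\Db_{\Fg}$, and to justify that the symmetry-based rearrangement inside the noncommutative product $v\nsb{-k}\cdots v\nsb{-1}$ is legitimate. The crucial observation is that although individual products in $U(\Fg)$ are not commutative, the Sweedler-summed tensor $v\nsb{-k}\ot\cdots\ot v\nsb{-1}\ot v\nsb{0}$ is genuinely symmetric, so applying the linear multiplication map produces an element of $U(\Fg)\ot V$ that is invariant under any rearrangement of the first $k$ coaction factors; this is what allows the $\Fg$-stability hypothesis to be brought to bear on the innermost pair and then zero out the full sum.
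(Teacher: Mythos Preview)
Your proof is correct and follows essentially the same approach as the paper's own argument. The paper carries out the identical computation in explicit index notation—writing the $\Fg$-coaction as $v^i\nsb{-1}\ot v^i\nsb{0}=\alpha^{ij}_k X_j\ot v^k$, using the symmetry $\alpha^{ij_1\cdots j_k}_{l_k}=\alpha^{ij_{\sigma(1)}\cdots j_{\sigma(k)}}_{l_k}$ to reorder indices, and then isolating the innermost sum $\sum_{j_1}\alpha^{l_{k-1}j_1}_{l_k}v^{l_k}\cdot X_{j_1}$, which vanishes by $\Fg$-stability—whereas you do the same thing basis-free via Sweedler notation and the observation that $\theta_k^{-1}$ acts as $\tfrac{1}{k!}$ on the symmetric tensors produced by the iterated coaction.
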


\begin{proof}
Writing the $\Fg$-coaction in terms of the basis elements as in \eqref{aux-54}, the stability condition becomes
\begin{equation}
v^i\nsb{0}v^i\nsb{-1} = \alpha^{ij}_kv^k \cdot X_j = 0, \quad \forall i.
\end{equation}
Therefore, for the corresponding $U(\Fg)$-coaction we have
\begin{align}
\begin{split}
& \sum_{j_1 \leq \cdots \leq j_k} \alpha^{ij_1}_{l_1} \cdots \alpha^{l_{k-1}j_k}_{l_k} v^{l_k} \cdot  (X_{j_1} \cdots X_{j_k}) = \\
& \sum_{j_1 \leq \cdots \leq j_{k-1}} \alpha^{ij_1}_{l_1} \cdots \alpha^{l_{k-2}j_{k-1}}_{l_{k-1}} (\sum_{j_k} \alpha^{l_{k-1}j_k}_{l_k} v^{l_k} \cdot X_{j_1}) \cdot (X_{j_2} \cdots X_{j_k}) = \\
& \sum_{j_2, \cdots , j_k} \alpha^{ij_k}_{l_1} \cdots \alpha^{l_{k-2}j_{k-1}}_{l_{k-1}} (\sum_{j_1} \alpha^{l_{k-1}j_1}_{l_k} v^{l_k} \cdot X_{j_1}) \cdot (X_{j_2} \cdots X_{j_k}),
\end{split}
\end{align}
where in the second equality we used \eqref{aux-55}. This  immediately implies that
\begin{equation}
v^i\snsb{0} \cdot v^i\snsb{-1} = v^i,
\end{equation}
which is stability over $U(\Fg)$.
\end{proof}

However, the converse is not true.

\begin{example}{\rm
It is known that $U(\Fg)$, as a left $U(\Fg)$-comodule via $\Delta:U(\Fg) \to U(\Fg) \ot U(\Fg)$ and a right $\Fg$-module via $\ad:U(\Fg) \ot \Fg \to U(\Fg)$ is stable. However, the associated  $\Fg$-comodule is  no longer  stable. Indeed, for $u = X_1X_2X_3 \in U(\Fg)$, we have
\begin{equation}
u\nsb{-1} \ot u\nsb{0} = X_1 \ot X_2X_3 + X_2 \ot X_1X_3 + X_3 \ot X_1X_2
\end{equation}
Then,
\begin{equation}
u\nsb{0} \cdot u\nsb{-1} = [[X_1,X_2],X_3] + [[X_2,X_1],X_3] + [[X_1,X_3],X_2] = [[X_1,X_3],X_2],
\end{equation}
which is not necessarily zero.
}\end{example}

An alternative interpretation of SAYD modules over Lie algebras is given by modules over the Lie algebra $\widetilde\Fg\; :=\; \Fg^\ast \rtimes \Fg$.

\medskip

Considering the dual $\Fg^\ast$ of the Lie algebra $\Fg$ as a commutative Lie algebra, we define the Lie  bracket  on $\widetilde\Fg\; :=\; \Fg^\ast \rtimes \Fg$ by
\begin{equation}\label{tdg}
\big[\a \oplus X\;,\; \b \oplus Y\big] \;:= \;\big(\Lc_X(\b) - \Lc_Y(\a)\big) \oplus \big[X\;,\;Y\big].
\end{equation}

\begin{definition}
Let $V$ be a module over the Lie algebra $\widetilde\Fg$. Then
\begin{align}\label{aux-unimodular-stable}
\begin{split}
&\text{$V$ is called unimodular stable if }\qquad \sum_k (v \cdot X_k) \cdot \theta^k = 0 ; \\
&\text{$V$ is called  stable if }\qquad \sum_k (v \cdot \theta^k) \cdot X_k = 0.
\end{split}
\end{align}
\end{definition}

The following proposition connects these two interpretations.

\begin{proposition}\label{23}
Let $V$ be a vector space, and $\Fg$ be a Lie algebra. Then $V$ is a stable right $\widetilde\Fg$-module if and only if it is a right-left SAYD module over the Lie algebra $\Fg$.
\end{proposition}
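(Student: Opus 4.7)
The strategy is to decompose the action of $\widetilde\Fg=\Fg^\ast\rtimes\Fg$ into its two summands, interpret the $\Fg^\ast$-part as a left $\Fg$-coaction via Proposition \ref{proposition-mod-comod}, and then read off the AYD compatibility from the single nontrivial mixed bracket of $\widetilde\Fg$.

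First I would record the bracket that governs the mixed part. Taking $\a\in\Fg^\ast$ and $X\in\Fg$ in the defining bracket \eqref{tdg} of $\widetilde\Fg$ gives $[\a,X]=-\Lc_X(\a)\in\Fg^\ast$, because $\Fg^\ast$ is abelian. Hence a right $\widetilde\Fg$-module $V$ is the same data as a right $\Fg$-module, plus a right module structure over the abelian Lie algebra $\Fg^\ast$ (equivalently, over $S(\Fg^\ast)=U(\Fg^\ast)$), subject to the single mixed compatibility
\begin{equation}\label{aux-mixed}
(v\cdot X)\cdot \a - (v\cdot \a)\cdot X \;=\; v\cdot\Lc_X(\a), \qquad \a\in\Fg^\ast,\; X\in\Fg.
\end{equation}
Since the Lie bracket of $\Fg^\ast$ vanishes, the commutativity of the $\Fg^\ast$-action is automatic, so Proposition \ref{proposition-mod-comod} applies and translates the right $\Fg^\ast$-action into a left $\Fg$-coaction $\Db_\Fg:V\to\Fg\ot V$, $\Db_\Fg(v)=v\nsb{-1}\ot v\nsb{0}$, via $v\cdot\a=\a(v\nsb{-1})v\nsb{0}$.

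Next I would transcribe \eqref{aux-mixed} in terms of this coaction. Expanding both sides yields
\begin{equation*}
\a\bigl((v\cdot X)\nsb{-1}\bigr)(v\cdot X)\nsb{0}-\a(v\nsb{-1})(v\nsb{0}\cdot X)\;=\;-\a\bigl([X,v\nsb{-1}]\bigr)v\nsb{0},
\end{equation*}
using $\Lc_X(\a)(Y)=-\a([X,Y])$. Since this must hold for every $\a\in\Fg^\ast$, it is equivalent to
\begin{equation*}
\Db_\Fg(v\cdot X)\;=\;v\nsb{-1}\ot v\nsb{0}\cdot X \;+\; [v\nsb{-1},X]\ot v\nsb{0},
\end{equation*}
which is exactly the right-left AYD condition over the Lie algebra $\Fg$. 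Under the dual basis $\{X_k\},\{\t^k\}$, the $\widetilde\Fg$-stability $\sum_k(v\cdot\t^k)\cdot X_k=0$ unravels as
\begin{equation*}
\sum_k\t^k(v\nsb{-1})v\nsb{0}\cdot X_k \;=\; v\nsb{0}\cdot v\nsb{-1}\;=\;0,
\end{equation*}
which is the SAYD stability.

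Finally, for the converse direction I would simply reverse the construction: starting from a right-left SAYD module $V$ over $\Fg$, use Proposition \ref{proposition-mod-comod} to convert $\Db_\Fg$ into a right $\Fg^\ast$-action, combine it with the given $\Fg$-action, and define $v\cdot(\a\oplus X):=v\cdot\a+v\cdot X$. The AYD relation ensures \eqref{aux-mixed}, hence the right Lie algebra module axiom for $\widetilde\Fg$, and the SAYD stability gives $\widetilde\Fg$-stability by the same computation read in reverse. I do not anticipate any genuine obstacle: the only subtle point is keeping the sign in $[\a,X]=-\Lc_X(\a)$ consistent with the convention $\Lc_X(\a)(Y)=-\a([X,Y])$ so that the commutator of $X$ and $v\nsb{-1}$ appears with the correct sign in the AYD identity.
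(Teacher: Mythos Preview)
Your proposal is correct and follows essentially the same approach as the paper's own proof: both invoke Proposition \ref{proposition-mod-comod} to trade the $\Fg^\ast$-action for a $\Fg$-coaction, and then identify the single mixed bracket relation $[\a,X]=-\Lc_X(\a)$ with the AYD compatibility, and stability with stability. The only difference is presentational: you argue coordinate-free with a generic $\a\in\Fg^\ast$, $X\in\Fg$, whereas the paper carries out the identical computation in a fixed dual basis $\{X_i\},\{\t^j\}$ using structure constants.
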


\begin{proof}
Let us first assume that $V$ is a stable right $\widetilde\Fg$-module. Since  $V$ is a right $S(\Fg^*)$-module, it is  a left $\Fg$-comodule by Proposition \ref{proposition-mod-comod}. Accordingly,
\begin{align}
\begin{split}
& [v\nsb{-1}, X_j] \ot v\nsb{0} + v\nsb{-1} \ot v\nsb{0} \cdot X_j = \\
& [X_l,X_j]\theta^l(v\nsb{-1}) \ot v\nsb{0} + X_t\theta^t(v\nsb{-1}) \ot v\nsb{0} \cdot X_j = \\
& X_tC^t_{lj}\theta^l(v\nsb{-1}) \ot v\nsb{0} + X_t\theta^t(v\nsb{-1}) \ot v\nsb{0} \cdot X_j = \\
& X_t \ot [v \lt (X_j \rt \theta^t) + (v \lt \theta^t) \cdot X_j] = \\
& X_t \ot (v \cdot X_j) \lt \theta^t = X_t\theta^t((v \cdot X_j)\nsb{-1}) \ot (v \cdot X_j)\nsb{0} = \\
& (v \cdot X_j)\nsb{-1} \ot (v \cdot X_j)\nsb{0}.
\end{split}
\end{align}
This proves that $V$ is a right-left AYD module over $\Fg$. On the other hand, for any $v \in V$
\begin{equation}
v\nsb{0} \cdot v\nsb{-1} = \sum_i v\nsb{0} \cdot X_i\theta^i(v\nsb{-1}) = \sum_i (v \lt \theta^i) \cdot X_i = 0.
\end{equation}
Hence, $V$ is stable too. As a result, $V$ is a SAYD module over $\Fg$.

\medskip

Conversely, assume that  $V$ is a  right-left SAYD module over $\Fg$. So, $V$  is  a right module over $S(\Fg^*)$ and a right module over $\Fg$. In addition, we see that
\begin{align}
\begin{split}
& v \lt (X_j \rt \theta^i) + (v \lt \theta^i) \cdot X_j = C^i_{kj}v \lt \theta^k + (v \lt \theta^i) \cdot X_j = \\
& C^i_{kj}\theta^k(v\nsb{-1})v\nsb{0} + \theta^i(v\nsb{-1})v\nsb{0} \cdot X_j = \\
& \theta^i([v\nsb{-1},X_j])v\nsb{0} + \theta^i(v\nsb{-1})v\nsb{0} \cdot X_j = \\
& (\theta^i \ot id)([v\nsb{-1}, X_j] \ot v\nsb{0} + v\nsb{-1} \ot v\nsb{0} \cdot X_j) = \\
& \theta^t((v \cdot X_j)\nsb{-1}) (v \cdot X_j)\nsb{0} = (v \cdot X_j) \lt \theta^i.
\end{split}
\end{align}
Thus, $V$ is a right $\widetilde\Fg$-module. Finally, we prove stability using
\begin{equation}
\sum_i (v \lt \theta^i) \cdot X_i = \sum_i v\nsb{0} \cdot X_i\theta^i(v\nsb{-1}) = v\nsb{0} \cdot v\nsb{-1} = 0.
\end{equation}
\end{proof}

\begin{corollary}
Any right module over the Weyl algebra $D(\Fg)$ is a right-left SAYD module over the Lie algebra $\Fg$.
\end{corollary}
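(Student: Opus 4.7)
The plan is to bootstrap the corollary directly from Proposition \ref{23}, which identifies right-left SAYD modules over $\Fg$ with stable right modules over the Lie algebra $\widetilde\Fg = \Fg^\ast \rtimes \Fg$. The task therefore reduces to equipping any right $D(\Fg)$-module with the structure of a stable right $\widetilde\Fg$-module.

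First, I would unpack the definition of the Weyl algebra $D(\Fg)$: it is the associative algebra generated by $\Fg$ and $\Fg^\ast$ subject to the defining relations of $U(\widetilde\Fg)$ together with the further relation $\sum_i \theta^i X_i = 0$, which encodes the stability axiom \eqref{aux-unimodular-stable} at the algebra level. Equivalently, $D(\Fg)$ is the quotient of $U(\widetilde\Fg)$ by the two-sided ideal generated by $\Omega := \sum_i \theta^i X_i$. Before using this presentation I would verify that $\Omega$ in fact generates a two-sided ideal; it suffices to check that $\Omega$ is invariant under the commutator action of the generators $X_j$ and $\theta^j$ of $\widetilde\Fg$, which amounts to the antisymmetry $C^k_{ij} = -C^k_{ji}$ of the structure constants of $\Fg$ together with the commutativity of $\Fg^\ast$ inside $\widetilde\Fg$.

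Second, given a right $D(\Fg)$-module $V$, the canonical surjection $U(\widetilde\Fg) \twoheadrightarrow D(\Fg)$ turns $V$ into a right $U(\widetilde\Fg)$-module, and consequently into a right $\widetilde\Fg$-module. Stability is then automatic: for every $v \in V$,
\begin{equation}
\sum_i (v \cdot \theta^i) \cdot X_i \;=\; v \cdot \Bigl(\sum_i \theta^i X_i\Bigr) \;=\; v \cdot 0 \;=\; 0,
\end{equation}
which is precisely the stability condition of \eqref{aux-unimodular-stable}. Applying Proposition \ref{23} then promotes $V$ to a right-left SAYD module over $\Fg$, with the $\Fg^\ast$-module structure giving the left $\Fg$-coaction via Proposition \ref{proposition-mod-comod} and the $\Fg$-module structure supplying the right $\Fg$-action.

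The only delicate point is the verification that $\Omega = \sum_i \theta^i X_i$ is a normal (ad-invariant modulo itself) element of $U(\widetilde\Fg)$, so that the quotient $D(\Fg)$ is well-defined as an associative algebra; this is a short computation but is the one place where the Lie bracket on $\widetilde\Fg$ in \eqref{tdg} enters nontrivially. Once this is in hand, the corollary is immediate from Proposition \ref{23}.
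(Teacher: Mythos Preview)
Your approach is correct and is exactly the route the paper intends: the corollary is stated without proof, as an immediate consequence of Proposition~\ref{23}, and you have supplied the missing details faithfully. One small remark: your concern that $\Omega=\sum_i\theta^iX_i$ ``generates a two-sided ideal'' is moot, since any element generates a two-sided ideal; what your computation with the bracket \eqref{tdg} actually shows is the stronger fact that $\Omega$ is \emph{central} in $U(\widetilde\Fg)$, so the quotient $D(\Fg)=U(\widetilde\Fg)/\langle\Omega\rangle$ is well-behaved and the pullback along $U(\widetilde\Fg)\twoheadrightarrow D(\Fg)$ gives a stable right $\widetilde\Fg$-module, to which Proposition~\ref{23} applies.
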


Finally, we state an analogue of \cite[Lemma 2.3]{HajaKhalRangSomm04-I} to show that the category of right-left AYD modules over a Lie algebra $\Fg$ is monoidal.

\begin{proposition}
Let $V$ and $W$ be two right-left AYD modules over $\Fg$. Then $V \ot W$ is also a right-left AYD module over $\Fg$ via the coaction
\begin{equation}
\Db_{\Fg}:V \ot W \to \Fg \ot V \ot W, \quad v \ot w \mapsto v\nsb{-1} \ot v\nsb{0} \ot w + w\nsb{-1} \ot w \ot w\nsb{0}
\end{equation}
and the action
\begin{equation}
V \ot W \ot \Fg \to V \ot W, \quad (v \ot w) \cdot X = v \cdot X \ot w + v \ot w \cdot X.
\end{equation}
\end{proposition}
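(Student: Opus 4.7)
The plan is to establish the claim in three stages: first verify that the given map $\Db_\Fg : V\ot W \to \Fg \ot V \ot W$ is a genuine Lie algebra coaction, then check that the formula $(v\ot w)\cdot X = v\cdot X\ot w + v\ot w\cdot X$ defines a right $\Fg$-module structure, and finally verify the AYD compatibility.

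For the coaction property, I would compute $\Db_\Fg^2(v\ot w)$ by applying $\Id\ot \Db_\Fg$ to the defining formula. Expanding, this produces four terms: $v\nsb{-2}\ot v\nsb{-1}\ot v\nsb{0}\ot w$, $v\nsb{-1}\ot w\nsb{-1}\ot v\nsb{0}\ot w\nsb{0}$, $w\nsb{-1}\ot v\nsb{-1}\ot v\nsb{0}\ot w\nsb{0}$, and $w\nsb{-2}\ot w\nsb{-1}\ot v\ot w\nsb{0}$. Antisymmetrizing in the first two slots, the outer terms vanish by the coaction compatibilities \eqref{aux-g-comod} for $V$ and $W$, while the middle two annihilate each other since $v\nsb{-1}\wedge w\nsb{-1} + w\nsb{-1}\wedge v\nsb{-1} = 0$. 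Verifying that the action is associative in the Lie bracket sense is a direct computation: both $((v\ot w)\cdot X)\cdot Y$ and $((v\ot w)\cdot Y)\cdot X$ expand into four tensor terms, and the two ``mixed'' terms $v\cdot X\ot w\cdot Y$ and $v\cdot Y\ot w\cdot X$ appear in both expansions with the same sign, so subtracting leaves precisely $v\cdot[X,Y]\ot w + v\ot w\cdot[X,Y] = (v\ot w)\cdot[X,Y]$.

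For the AYD compatibility, I would expand $\Db_\Fg((v\ot w)\cdot X) = \Db_\Fg(v\cdot X\ot w) + \Db_\Fg(v\ot w\cdot X)$ using the coaction formula, then substitute the individual AYD identities for $V$ and $W$:
\begin{equation}
(v\cdot X)\nsb{-1}\ot (v\cdot X)\nsb{0} = v\nsb{-1}\ot v\nsb{0}\cdot X + [v\nsb{-1}, X]\ot v\nsb{0},
\end{equation}
and similarly for $w\cdot X$. This yields six summands. On the other side, expanding $(v\ot w)\nsb{-1}\ot (v\ot w)\nsb{0}\cdot X + [(v\ot w)\nsb{-1}, X]\ot (v\ot w)\nsb{0}$ and distributing the action over the tensor product also yields six summands, which pair off with those from the left-hand side term by term.

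The computations are entirely routine; the only conceptually nontrivial point is the cancellation in step one, where one must recognize that the ``cross'' $\Fg\wedge \Fg$ contribution from mixing the two coactions vanishes by antisymmetry rather than by any module-theoretic identity. I expect no real obstacle, although care must be taken with the placement of the distinguished tensor factors (the $v$ sitting inert in the $w$-contribution to $\Db_\Fg$ and vice versa) when matching terms in the AYD verification.
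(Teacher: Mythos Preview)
Your proposal is correct and essentially matches the paper's approach: the paper's proof is simply the direct AYD verification you describe in your third stage, expanding both sides and matching the six terms using the individual AYD identities for $V$ and $W$. You are more thorough than the paper in that you also check the comodule and module axioms (your first two stages), which the paper leaves implicit; your antisymmetry argument for the cross terms in the coaction verification is the right observation and is not spelled out in the paper at all.
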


\begin{proof}
We directly verify that
\begin{align}
\begin{split}
& [(v \ot w)\nsb{-1},X] \ot (v \ot w)\nsb{0} + (v \ot w)\nsb{-1} \ot (v \ot w)\nsb{0} \cdot X = \\
& [v\nsb{-1},X] \ot v\nsb{0} \ot w + [w\nsb{-1},X] \ot v \ot w\nsb{0} + \\
& v\nsb{-1} \ot (v\nsb{0} \ot w) \cdot X + w\nsb{-1} \ot (v \ot w\nsb{0}) \cdot X = \\
& (v \cdot X)\nsb{-1} \ot (v \cdot X)\nsb{0} \ot w + w\nsb{-1} \ot v \cdot X \ot w\nsb{0} + \\
& v\nsb{-1} \ot v\nsb{0} \ot w\cdot X + (w\cdot X)\nsb{-1} \ot v \ot (w \cdot X)\nsb{0} = \\
& \Db_{\Fg}(v \cdot X \ot w + v \ot w \cdot X) = \Db_{\Fg}((v \ot w) \cdot X).
\end{split}
\end{align}
\end{proof}

The rest of this subsection is devoted to examples which illustrate the notion of SAYD module over a Lie algebra. We consider the representations and corepresentations of a Lie algebra $\Fg$ on a finite dimensional vector space $V$ in terms of matrices. We then investigate the SAYD conditions as relations between these matrices and the Lie algebra structure of $\Fg$.

\medskip

Let $V$ be a $n$ dimensional $\Fg$-module with a vector space basis $\{v^1, \cdots, v^n\}$. We  express the module structure by
\begin{equation}
v^i \cdot X_j = \beta^i_{jk}v^k, \quad \beta^i_{jk} \in \Cb.
\end{equation}
This way, for any basis element $X_j \in \Fg$ we obtain a matrix $B_j \in M_n(\mathbb{C})$ such that
\begin{equation}
(B_j)^i_k := \beta^i_{jk}.
\end{equation}
Let $\Db_{\Fg}:V \to \Fg \ot V$ be a coaction. We write  the coaction as
\begin{equation}
\Db_{\Fg}(v^i) = \alpha^{ij}_k X_j \ot v^k, \quad \alpha^{ij}_k\in \Cb.
\end{equation}
Hence we get a matrix $A^j \in M_n(\mathbb{C})$ for any basis element $X_j \in \Fg$ such that
\begin{equation}
(A^j)^i_k := \alpha^{ij}_k.
\end{equation}

\begin{lemma}
A linear map $\Db_{\Fg}:V \to \Fg \ot V$ forms a right $\Fg$-comodule if and only if for any $1 \leq j_1,j_2 \leq N$
\begin{equation}
A^{j_1} \cdot A^{j_2} = A^{j_2} \cdot A^{j_1}.
\end{equation}
\end{lemma}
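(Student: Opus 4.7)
The plan is to unwind both sides of the $\Fg$-coaction axiom $v\nsb{-2}\wedge v\nsb{-1}\ot v\nsb{0}=0$ in terms of the matrices $A^j$ and then recognize the resulting identity as the commutation relation of the $A^j$'s. I will work in the fixed basis $\{v^1,\ldots,v^n\}$ of $V$ and the basis $\{X_1,\ldots,X_N\}$ of $\Fg$, so that the data of $\Db_{\Fg}$ is encoded precisely by the scalars $\alpha^{ij}_k=(A^j)^i_k$.

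First I would compute the iterated coaction. By definition
\begin{equation}
v^i\nsb{-2}\ot v^i\nsb{-1}\ot v^i\nsb{0} = v^i\nsb{-1}\ot (v^i\nsb{0})\nsb{-1}\ot (v^i\nsb{0})\nsb{0}.
\end{equation}
Applying $\Db_{\Fg}$ to $v^i$ produces $\alpha^{ij_1}_{k_1} X_{j_1}\ot v^{k_1}$, and a second application of $\Db_{\Fg}$ to $v^{k_1}$ yields $\alpha^{k_1 j_2}_l X_{j_2}\ot v^l$. Hence
\begin{equation}
v^i\nsb{-2}\ot v^i\nsb{-1}\ot v^i\nsb{0}=\sum_{j_1,j_2,k_1,l}\alpha^{ij_1}_{k_1}\alpha^{k_1 j_2}_l\, X_{j_1}\ot X_{j_2}\ot v^l.
\end{equation}

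Next I would translate the coaction axiom. Using the fact that the anti-symmetrization map $\wedge^2\Fg \hookrightarrow \Fg\ot\Fg$ is injective, the condition $v\nsb{-2}\wedge v\nsb{-1}\ot v\nsb{0}=0$ is equivalent to the symmetry, in the first two tensor factors, of $v\nsb{-2}\ot v\nsb{-1}\ot v\nsb{0}$. Since $\{X_{j_1}\ot X_{j_2}\ot v^l\}$ is a basis of $\Fg\ot\Fg\ot V$, this symmetry is equivalent to the scalar identity
\begin{equation}
\sum_{k_1}\alpha^{ij_1}_{k_1}\alpha^{k_1 j_2}_l=\sum_{k_1}\alpha^{ij_2}_{k_1}\alpha^{k_1 j_1}_l,\qquad\forall\, i,j_1,j_2,l.
\end{equation}

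Finally, interpreting the two sides as matrix entries gives $(A^{j_1}A^{j_2})^i_l=(A^{j_2}A^{j_1})^i_l$ for all $i,l$, i.e.\ $A^{j_1}A^{j_2}=A^{j_2}A^{j_1}$. Reversing each step establishes the converse. There is no real obstacle here: the only subtlety is being careful about the index convention $(A^j)^i_k=\alpha^{ij}_k$ so that the composition $k_1\mapsto j_1,j_2$ in the iterated coaction matches the order of ordinary matrix multiplication, and using the injectivity of antisymmetrization to convert the wedge condition into a symmetry condition.
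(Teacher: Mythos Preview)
Your proof is correct and follows exactly the approach the paper indicates: the paper's proof is a single sentence saying it is ``just the translation of the coaction compatibility \ldots\ in terms of the matrices $A^j$,'' and you have carried out that translation in full detail, including the index bookkeeping and the use of injectivity of antisymmetrization to pass between the wedge condition and symmetry in $\Fg\ot\Fg$.
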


\begin{proof}
It is just the translation of the coaction compatibility \eqref{aux-g-comod} for $v^i \in V$ in terms of the matrices $A^j$.
\end{proof}

\begin{lemma}
A right $\Fg$-module left $\Fg$-comodule $V$ is stable if and only if
\begin{equation}\label{aux-4}
\sum_j A^j \cdot B_j = 0.
\end{equation}
\end{lemma}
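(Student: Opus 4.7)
The plan is to translate the stability condition $v\ns{0}\cdot v\ns{-1}=0$ into matrix language, directly evaluating both sides on a basis vector $v^i$ and reading off the resulting identity.

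First I would fix the basis $\{v^1,\dots,v^n\}$ of $V$ and compute
\begin{equation}
v^i\nsb{0}\cdot v^i\nsb{-1}=(\alpha^{ij}_k\, X_j\ot v^k)\;\mapsto\;\alpha^{ij}_k\, (v^k\cdot X_j)=\alpha^{ij}_k\,\beta^k_{jl}\,v^l,
\end{equation}
where the right action on the comodule factor is applied after the coaction. Using the definitions $(A^j)^i_k=\alpha^{ij}_k$ and $(B_j)^k_l=\beta^k_{jl}$, the coefficient of $v^l$ in the displayed expression is $\sum_j (A^j)^i_k (B_j)^k_l=\sum_j(A^j\cdot B_j)^i_l$.

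Hence the stability condition $v^i\nsb{0}\cdot v^i\nsb{-1}=0$ for every basis vector $v^i$ is equivalent to $(\sum_j A^j\cdot B_j)^i_l=0$ for all $i,l$, that is, to the matrix identity $\sum_j A^j\cdot B_j=0$. Since stability is linear in $v$, checking it on the basis suffices, and the equivalence follows. There is no real obstacle here; the only care needed is to keep track of index placement so that the matrix product in \eqref{aux-4} is written in the correct order (coaction matrix first, then action matrix), which is precisely what the computation above dictates.
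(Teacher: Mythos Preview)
Your proposal is correct and follows essentially the same approach as the paper: both compute $v^i\nsb{0}\cdot v^i\nsb{-1}=\alpha^{ij}_k\beta^k_{jl}v^l$ and identify the coefficient $\alpha^{ij}_k\beta^k_{jl}$ with the $(i,l)$-entry of $\sum_j A^jB_j$. Your added remark on linearity in $v$ makes the equivalence slightly more explicit, but the argument is the same.
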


\begin{proof}
By the definition of the stability we have
\begin{equation}
v^i\nsb{0} \cdot v^i\nsb{-1} = \alpha^{ij}_{k} v^k \cdot X_j = \alpha^{ij}_{k}\beta^k_{jl} v^l = 0.
\end{equation}
Therefore,
\begin{equation}
\alpha^{ij}_{k}\beta^k_{jl} = (A^j)^i_k (B_j)^k_l = (A^j \cdot B_j)^i_l = 0.
\end{equation}
\end{proof}

We proceed to express the AYD condition.

\begin{lemma}
A right $\Fg$-module left $\Fg$-comodule $V$ is a right-left AYD module  if and only if
\begin{equation}\label{aux-2}
[B_q, A^j] = \sum_s A^sC^j_{sq}.
\end{equation}
\end{lemma}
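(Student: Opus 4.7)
The plan is to directly translate the AYD compatibility
\[
\Db_\Fg(v\cdot X)\;=\;v\nsb{-1}\ot v\nsb{0}\cdot X+[v\nsb{-1},X]\ot v\nsb{0}
\]
into a matrix identity by evaluating both sides on the basis vector $v^i$ with $X=X_q$ and comparing coefficients of $X_j\ot v^l$. The entire argument is a matter of unpacking definitions; each step is reversible, so the two implications of the ``if and only if'' are handled simultaneously.

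First, I would compute the left hand side. Using $v^i\cdot X_q=(B_q)^i_k v^k$ and linearity of $\Db_\Fg$, we obtain
\[
\Db_\Fg(v^i\cdot X_q)=(B_q)^i_k\,\alpha^{kj}_l\,X_j\ot v^l=(B_q\,A^j)^i_l\,X_j\ot v^l.
\]
Next I would expand the two terms on the right hand side separately. The first term gives
\[
v^i\nsb{-1}\ot v^i\nsb{0}\cdot X_q=(A^j)^i_k(B_q)^k_l\,X_j\ot v^l=(A^j\,B_q)^i_l\,X_j\ot v^l.
\]
For the second term, substituting $[X_j,X_q]=C^s_{jq}X_s$ and then relabeling the dummy summation indices $s\leftrightarrow j$ yields
\[
[v^i\nsb{-1},X_q]\ot v^i\nsb{0}=\alpha^{ij}_k\,C^s_{jq}\,X_s\ot v^k=\Bigl(\sum_s (A^s)^i_l\,C^j_{sq}\Bigr)X_j\ot v^l.
\]

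Equating coefficients of $X_j\ot v^l$ on both sides then produces the matrix equation
\[
B_q\,A^j-A^j\,B_q\;=\;\sum_s A^s\,C^j_{sq},
\]
which is precisely the claimed identity $[B_q,A^j]=\sum_s A^sC^j_{sq}$. Conversely, if this matrix identity is assumed, then running the same calculation backwards shows that the AYD condition holds on each basis element $v^i$, and hence on all of $V$ by linearity.

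There is no real obstacle here: the only care required is bookkeeping of upper/lower indices and a consistent relabeling of the summation index in the commutator term, which is why the claim is formulated as a clean matrix criterion parallel to the stability criterion \eqref{aux-4} and the comodule criterion of the preceding lemma.
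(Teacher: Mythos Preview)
Your proof is correct and follows essentially the same approach as the paper: both compute $\Db_\Fg(v^i\cdot X_q)$ and the right hand side of the AYD condition in terms of the matrices $A^j$, $B_q$ and the structure constants, then compare coefficients of $X_j\ot v^l$. The paper does not explicitly spell out the reversibility for the converse direction, but your remark on that point is appropriate.
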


\begin{proof}
We first observe that
\begin{align}
\begin{split}
& \Db_{\Fg}(v^p \cdot X_q) = \Db_{\Fg}(\beta^p_{qk}v^k) = \beta^p_{qk}\alpha^{kj}_l X_j \ot v^l \\
& = (B_q)^p_k(A^j)^k_l X_j \ot v^l = (B_q \cdot A^j)^p_l X_j \ot v^l.
\end{split}
\end{align}
On the other hand, writing $\Db_{\Fg}(v^p) = \alpha^{pj}_{l} X_j \ot v^l$,
\begin{align}
\begin{split}
& [v^p\nsb{-1},X_q] \ot v^p\nsb{0} + v^p\nsb{-1} \ot v^p\nsb{0} \cdot X_q = \alpha^{ps}_{l}[X_s,X_q] \ot v^l + \alpha^{pj}_{t} X_j \ot v^t \cdot X_q \\
& = \alpha^{ps}_{l}C^j_{sq}X_j \ot v^l + \alpha^{pj}_{t}\beta^t_{ql} X_j \ot v^l = (\alpha^{ps}_{l}C^j_{sq} + (A^j \cdot B_q)^p_l)X_j \ot v^l.
\end{split}
\end{align}
\end{proof}

Since we have expressed the AYD and the stability conditions after choosing bases, we find it convenient to record the following remark.

\begin{remark}\rm{
The stability and the AYD conditions are independent of the choice of basis. Let $\{Y_j\}$ be another basis with
\begin{equation}
Y_j = \gamma^l_jX_l, \qquad X_j = (\gamma^{-1})^l_jY_l.
\end{equation}
Hence, the action and coaction matrices are
\begin{equation}
\widetilde{B}_q = \gamma^l_qB_l, \quad \widetilde{A}^j = A^l(\gamma^{-1})^j_l,
\end{equation}
respectively. Then first of all,
\begin{equation}
\sum_j \widetilde{A}^j \cdot \widetilde{B}_j = \sum_{j,l,s} A^l(\gamma^{-1})^j_l\gamma^s_jB_s = \sum_{l,s} A^lB_s\delta^l_s = \sum_j A^j \cdot B_j = 0
\end{equation}
proves that the stability is independent of the choice of basis. Secondly, we have
\begin{equation}
[\widetilde{B}_q,\widetilde{A}^j] = \gamma^s_q(\gamma^{-1})^j_r[B_s,A^r] = \gamma^s_q(\gamma^{-1})^j_rA^lC^r_{ls}.
\end{equation}
If we write $[Y_p,Y_q] = \widetilde{C}^r_{pq}Y_r$, then it is immediate that
\begin{equation}
\gamma^s_qC^r_{ls}(\gamma^{-1})^j_r = (\gamma^{-1})^s_l\widetilde{C}^j_{sq}.
\end{equation}
Therefore,
\begin{equation}
[\widetilde{B}_q,\widetilde{A}^j] = A^l\gamma^s_qC^r_{ls}(\gamma^{-1})^j_r = A^l(\gamma^{-1})^s_l\widetilde{C}^j_{sq} = \widetilde{A}^s\widetilde{C}^j_{sq}.
\end{equation}
This observation proves that the AYD condition is independent of the choice of basis.
}\end{remark}

Next, considering the Lie algebra $s\ell(2)$, we determine the SAYD modules over simple $s\ell(2)$-modules. First of all, we fix a basis of $s\ell(2)$ as follows.
\begin{equation}
e = \left(
        \begin{array}{cc}
          0 & 1 \\
          0 & 0 \\
        \end{array}
      \right), \qquad f = \left(
        \begin{array}{cc}
          0 & 0 \\
          1 & 0 \\
        \end{array}
      \right), \qquad h = \left(
        \begin{array}{cc}
          1 & 0 \\
          0 & -1 \\
        \end{array}
      \right).
\end{equation}

\begin{example}\rm{
Let $V = <\{v^1,v^2\}>$ be a two dimensional simple $s\ell(2)$-module. Then by  \cite{Kass-book}, the representation
\begin{equation}
\rho:s\ell(2) \to g\ell(V)
\end{equation}
is the inclusion $\rho:s\ell(2) \hookrightarrow g\ell(2)$. Therefore, we have
\begin{equation}
B_1 = \left(
        \begin{array}{cc}
          0 & 0 \\
          1 & 0 \\
        \end{array}
      \right), \quad B_2 = \left(
        \begin{array}{cc}
          0 & 1 \\
          0 & 0 \\
        \end{array}
      \right), \quad B_3 = \left(
        \begin{array}{cc}
          1 & 0 \\
          0 & -1 \\
        \end{array}
      \right).
\end{equation}

We want to find
\begin{align}
A^1 = \left(
        \begin{array}{cc}
          x^1_1 & x^1_2 \\
          x^2_1 & x^2_2 \\
        \end{array}
      \right), \qquad A^2 = \left(
        \begin{array}{cc}
          y^1_1 & y^1_2 \\
          y^2_1 & y^2_2 \\
        \end{array}
      \right), \qquad A^3 = \left(
        \begin{array}{cc}
          z^1_1 & z^1_2 \\
          z^2_1 & z^2_2 \\
        \end{array}
      \right),
\end{align}
such that together with the $\Fg$-coaction $\Db_{s\ell(2)}:V \to s\ell(2) \ot V$, defined by $v^i \mapsto (A^j)^i_k X_j \ot v^k$, $V$ becomes a right-left SAYD over $s\ell(2)$. We first express the stability condition. To this end,
\begin{align}
A^1 \cdot B_1 = \left(
        \begin{array}{cc}
          x^1_2 & 0 \\
          x^2_2 & 0 \\
        \end{array}
      \right), \qquad A^2 \cdot B_2 = \left(
        \begin{array}{cc}
          0 & y^1_1 \\
          0 & y^2_1 \\
        \end{array}
      \right), \qquad A^3 \cdot B_3 = \left(
        \begin{array}{cc}
          z^1_1 & -z^1_2 \\
          z^2_1 & -z^2_2 \\
        \end{array}
      \right),
\end{align}
and hence, the stability condition is
\begin{align}
\sum_j A^j \cdot B_j = \left(
                         \begin{array}{cc}
                           x^1_2 + z^1_1 & y^1_1 - z^1_2 \\
                           x^2_2 + z^2_1 & y^2_1 - z^2_2 \\
                         \end{array}
                       \right) = 0.
\end{align}
Next, we consider the AYD condition
\begin{equation}
[B_q, A^j] = \sum_s A^sC^j_{sq}.
\end{equation}
For $j = 1 = q$,
\begin{equation}
A^1 = \left(
        \begin{array}{cc}
          x^1_1 & 0 \\
          x^2_1 & x^2_2 \\
        \end{array}
      \right), \qquad A^2 = \left(
        \begin{array}{cc}
          0 & y^1_2 \\
          0 & y^2_2 \\
        \end{array}
      \right), \qquad A^3 = \left(
        \begin{array}{cc}
          0 & 0 \\
          z^2_1 & 0 \\
        \end{array}
      \right).
\end{equation}
Similarly, for $q = 2$ and $j = 1$, we arrive at
\begin{equation}
A^1 = \left(
        \begin{array}{cc}
          0 & 0 \\
          0 & 0 \\
        \end{array}
      \right), \qquad A^2 = \left(
        \begin{array}{cc}
          0 & y^1_2 \\
          0 & y^2_2 \\
        \end{array}
      \right), \qquad A^3 = \left(
        \begin{array}{cc}
          0 & 0 \\
          0 & 0 \\
        \end{array}
      \right).
\end{equation}
Finally, for $j = 1$ and $q=2$ we conclude that
\begin{equation}
A^1 = \left(
        \begin{array}{cc}
          0 & 0 \\
          0 & 0 \\
        \end{array}
      \right), \qquad A^2 = \left(
        \begin{array}{cc}
          0 & 0 \\
          0 & 0 \\
        \end{array}
      \right), \qquad A^3 = \left(
        \begin{array}{cc}
          0 & 0 \\
          0 & 0 \\
        \end{array}
      \right).
\end{equation}
Thus, the only $s\ell(2)$-comodule structure that makes a 2-dimensional simple $s\ell(2)$-module $V$ into a right-left SAYD module over $s\ell(2)$ is the trivial comodule structure.
}\end{example}

\begin{example}\label{aux-62}{\rm
We investigate all possible coactions that make the truncated symmetric algebra $S(s\ell(2)^*)_{[1]}$ into an SAYD module over $s\ell(2)$.

\medskip

A vector space basis  of $S(s\ell(2)^*)_{[1]}$ is $\{\one, \theta^e, \theta^f, \theta^h\}$ and the Koszul coaction \eqref{aux-Koszul-coaction} is given by
\begin{align}
\begin{split}
& S(s\ell(2)^*)_{[1]} \to s\ell(2) \ot S(s\ell(2)^*)_{[1]} \\
& \one \mapsto e \ot \theta^e + f \ot \theta^f + h \ot \theta^h \\
& \theta^i \mapsto 0 ,\qquad i \in \{e,f,h\}.
\end{split}
\end{align}
We first consider the right $s\ell(2,\Cb)$-action to find the matrices $B_1, B_2, B_3$.  By the definition of the coadjoint action of $s\ell(2)$ on $s\ell(2)^\ast$, we have
\begin{align}
B_1 = \left(
                    \begin{array}{cccc}
                      0 & 0 & 0 & 0 \\
                      0 & 0 & 0 & -2 \\
                      0 & 0 & 0 & 0 \\
                      0 & 0 & 1 & 0 \\
                    \end{array}
                  \right), \quad B_2 = \left(
                    \begin{array}{cccc}
                      0 & 0 & 0 & 0 \\
                      0 & 0 & 0 & 0 \\
                      0 & 0 & 0 & 2 \\
                      0 & -1 & 0 & 0 \\
                    \end{array}
                  \right), \quad B_3 = \left(
                    \begin{array}{cccc}
                      0 & 0 & 0 & 0 \\
                      0 & 2 & 0 & 0 \\
                      0 & 0 & -2 & 0 \\
                      0 & 0 & 0 & 0 \\
                    \end{array}
                  \right).
\end{align}
Let  $A^1 = (x^i_k), A^2 = (y^i_k), A^3 = (z^i_k)$ represent the $\Fg$-coaction on $V$. Using $B_1, B_2, B_3$ above, the stability condition becomes
\begin{align}
\sum_j A^j \cdot B_j = \left(
                    \begin{array}{cccc}
                      0 & y^0_3 + 2z^0_1 & x^0_3 - 2z^0_2 & -2x^0_1 + 2y^0_2 \\
                      0 & y^1_3 + 2z^1_1 & x^1_3 - 2z^1_2 & -2x^1_1 + 2y^1_2 \\
                      0 & y^2_3 + 2z^2_1 & x^2_3 - 2z^2_2 & -2x^2_1 + 2y^2_2 \\
                      0 & y^3_3 + 2z^3_1 & x^3_3 - 2z^3_2 & -2x^3_1 + 2y^3_2 \\
                    \end{array}
                  \right) = 0.
\end{align}
As before, we make the following observations. First,
\begin{align}
[B_1, A^1] = \left(
                    \begin{array}{cccc}
                      0 & 0 & -x^0_3 & 2x^0_1 \\
                      -2x^3_0 & -2x^3_1 & -2x^3_2 -x^1_3 & -2x^3_3 + 2x^1_1 \\
                      0 & 0 & -x^2_3 & 2x^2_1 \\
                      x^2_0 & x^2_1 & x^2_2 - x^3_3 & x^2_3 - 2x^3_1 \\
                    \end{array}
                  \right) = 2A^3.
\end{align}
Next we have
\begin{align}
[B_2, A^1] = \left(
                    \begin{array}{cccc}
                      0 & x^0_3 & 0 & -2x^0_2 \\
                      0 & x^1_3 & 0 & -2x^1_2 \\
                      2x^3_0 & 2x^3_1 + x^2_3 & 2x^3_2 & 2x^3_3 - 2x^2_2 \\
                      -x^1_0 & -x^1_1 + x^3_3 & -x^1_2 & -x^1_3 - 2x^3_2 \\
                    \end{array}
                  \right) = 0,
\end{align}
 and finally
\begin{align}
[B_3, A^1] = \left(
                    \begin{array}{cccc}
                      0 & -2x^0_1 & 0 & 0 \\
                      0 & 0 & 0 & 0 \\
                      -2x^2_0 & -4x^2_1 & 0 & -2x^2_3 \\
                      0 & -2x^3_1 & 0 & 0 \\
                    \end{array}
                  \right) = -2A^1.
\end{align}
Hence, together with the stability, we get
\begin{align}
A^1 = \left(
                    \begin{array}{cccc}
                      0 & x^0_1 & 0 & 0 \\
                      0 & 0 & 0 & 0 \\
                      x^2_0 & 0 & 0 & 0 \\
                      0 & 0 & 0 & 0 \\
                    \end{array}
                  \right)
\end{align}
and
\begin{align}
[B_1, A^1] = \left(
                    \begin{array}{cccc}
                      0 & 0 & 0 & 2x^0_1 \\
                      0 & 0 & 0 & 0 \\
                      0 & 0 & 0 & 0 \\
                      x^2_0 & 0 & 0 & 0 \\
                    \end{array}
                  \right) = 2A^3.
\end{align}
Similarly we compute
\begin{align}
[B_1, A^2] = \left(
                    \begin{array}{cccc}
                      0 & 0 & 0 & 2y^0_1 \\
                      -2y^3_0 & -2y^3_1 & 0 & 2y^1_1 \\
                      0 & 0 & 0 & 2y^2_1 \\
                      y^2_0 & y^2_1 & 0 & 2y^3_1 \\
                    \end{array}
                  \right) = 0,
\end{align}
as well as
\begin{align}
[B_2, A^2] = \left(
                    \begin{array}{cccc}
                      0 & 0 & 0 & -2y^0_2 \\
                      0 & 0 & 0 & 0 \\
                      0 & 0 & 0 & 0 \\
                      -y^1_0 &  & 0 & 0 \\
                    \end{array}
                  \right) = -2A^3,
\end{align}
and $[B_3, A^2] = 2A^2$. As a result
\begin{align}
A^1 = \left(
                    \begin{array}{cccc}
                      0 & c & 0 & 0 \\
                      0 & 0 & 0 & 0 \\
                      d & 0 & 0 & 0 \\
                      0 & 0 & 0 & 0 \\
                    \end{array}
                  \right), \mbox{  } A^2 = \left(
                    \begin{array}{cccc}
                      0 & 0 & c & 0 \\
                      d & 0 & 0 & 0 \\
                      0 & 0 & 0 & 0 \\
                      0 & 0 & 0 & 0 \\
                    \end{array}
                  \right), \mbox{  } A^3 = \left(
                    \begin{array}{cccc}
                      0 & 0 & 0 & c \\
                      0 & 0 & 0 & 0 \\
                      0 & 0 & 0 & 0 \\
                      \frac{1}{2} d & 0 & 0 & 0 \\
                    \end{array}
                  \right).
\end{align}
We note that for $c = 1, d = 0$ we recover  the Koszul coaction.
}\end{example}

\subsection{SAYD modules over double crossed sum Lie algebras}

In order to investigate SAYD modules over $R(\Fg_2) \acl U(\Fg_1)$ in terms of the representations and the corepresentations of the Lie algebra $\Fg_1 \bowtie \Fg_2$, we develop in this subsection the required compatibility conditions.

\medskip

Let $(\Fg_1,\Fg_2)$ be a matched pair of Lie algebras and  $\Fa := \Fg_1 \bowtie \Fg_2$ be their  double crossed sum Lie algebra. In Subsection \ref{subsection-induced-SAYD} we have seen the compatibility condition \eqref{Lie-module-compatibility} for a module over the double crossed sum $\Fg_1 \bowtie \Fg_2$.

\medskip

Let us consider the compatibility condition for a comodule over $\Fg_1 \oplus \Fg_2$. If $V$ is a comodule over $\Fg_1$ and $\Fg_2$  via
\begin{equation}
v \mapsto v\nsb{-1} \ot v\nsb{0} \in \Fg_1 \ot V \quad \text{ and } \quad v \mapsto v\ns{-1} \ot v\ns{0} \in \Fg_2 \ot V,
\end{equation}
 then we define the linear map
\begin{equation}\label{aux-comodule-on-double-crossed-sum}
v \mapsto v\nsb{-1} \ot v\nsb{0} + v\ns{-1} \ot v\ns{0} \in \Fa \ot V.
\end{equation}
Conversely, if $V$ is a $\Fa$-comodule via  $\Db_\Fa:V\ra \Fa\ot V$,  then we define the  linear maps
\begin{equation}\label{auxy2}
\xymatrix{
\ar[r]^{\Db_{\Fa}} V \ar[dr]_{\Db_{\Fg_1}} & \Fa \ot V \ar[d]^{p_1 \ot \Id} \\
& \Fg_1 \ot V}
\qquad \text{and }\qquad \xymatrix{
\ar[r]^{\Db_{\Fa}} V \ar[dr]_{\Db_{\Fg_2}} & \Fa \ot V \ar[d]^{p_2 \ot \Id} \\
& \Fg_2 \ot V},
\end{equation}
where
\begin{equation}
p_i:\Fa \to \Fg_i, \quad i = 1,2
\end{equation}
are natural projections.

\begin{proposition}\label{proposition-comodule-doublecrossed-sum}
Let $(\Fg_1,\Fg_2)$ be a matched pair of Lie algebras and $\Fa = \Fg_1 \bowtie \Fg_2$ be the double crossed sum. A vector space $V$ is an $\Fa$-comodule if and only if it is a $\Fg_1$-comodule and $\Fg_2$-comodule such that
\begin{equation}\label{aux-18}
v\nsb{-1} \ot v\nsb{0}\ns{-1} \ot v\nsb{0}\ns{0} = v\ns{0}\nsb{-1} \ot v\ns{-1} \ot v\ns{0}\nsb{0}.
\end{equation}
\end{proposition}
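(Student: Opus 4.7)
The plan is to exploit the vector space decomposition $\Fa = \Fg_1 \oplus \Fg_2$ to split the Lie coaction condition for $\Fa$ into independent components, one for each of $\wg^2\Fg_1$, $\Fg_1 \wg \Fg_2$ and $\wg^2\Fg_2$. More precisely, this decomposition induces
\begin{equation}
\wg^2 \Fa \;\cong\; \wg^2 \Fg_1 \,\oplus\, (\Fg_1 \wg \Fg_2) \,\oplus\, \wg^2 \Fg_2,
\end{equation}
and both directions of the equivalence will follow by testing the coaction compatibility \eqref{aux-g-comod} for $\Fa$ against this direct sum.

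For the forward direction, suppose $\Db_\Fa : V \to \Fa \ot V$ is a coaction. Using the projections $p_i : \Fa \to \Fg_i$ I first define $\Db_{\Fg_i} := (p_i \ot \Id) \circ \Db_\Fa$ as in \eqref{auxy2}, so that $\Db_\Fa(v) = v\nsb{-1} \ot v\nsb{0} + v\ns{-1} \ot v\ns{0}$. Then I expand $(\Id \ot \Db_\Fa)\Db_\Fa(v)$ into the four summands indexed by whether the first and second coaction land in $\Fg_1$ or $\Fg_2$, and apply antisymmetrization in the first two tensor slots. The $\wg^2\Fg_1$-component of the resulting identity reads $v\nsb{-1} \wg v\nsb{0}\nsb{-1} \ot v\nsb{0}\nsb{0} = 0$, which is exactly the Lie-comodule condition for $\Fg_1$; the $\wg^2\Fg_2$-component gives the analogous statement for $\Fg_2$; and the mixed component in $(\Fg_1\wg\Fg_2)\ot V$ reads
\begin{equation}
v\nsb{-1} \ot v\nsb{0}\ns{-1} \ot v\nsb{0}\ns{0} \;-\; v\ns{0}\nsb{-1} \ot v\ns{-1} \ot v\ns{0}\nsb{0} \;=\; 0,
\end{equation}
which is precisely \eqref{aux-18}.

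For the reverse direction, given $\Fg_1$- and $\Fg_2$-coactions on $V$ satisfying \eqref{aux-18}, I assemble $\Db_\Fa$ via the formula \eqref{aux-comodule-on-double-crossed-sum} and read the previous computation backwards: each of the three components of the antisymmetrized iterated coaction vanishes by hypothesis, so the sum vanishes in $\wg^2 \Fa \ot V$, establishing that $\Db_\Fa$ is a Lie coaction. The two constructions are inverse to each other by construction of the projections versus inclusion.

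The routine obstacle is just the bookkeeping: one must verify that in the expansion of $(\Id\ot \Db_\Fa)\Db_\Fa(v)$, only the two cross terms $v\nsb{-1} \ot v\nsb{0}\ns{-1} \ot v\nsb{0}\ns{0}$ and $v\ns{-1} \ot v\ns{0}\nsb{-1} \ot v\ns{0}\nsb{0}$ project into $\Fg_1\ot\Fg_2\ot V$ and $\Fg_2\ot\Fg_1\ot V$ respectively, so their antisymmetrization reduces exactly to the displayed identity \eqref{aux-18}. Since the three summands of $\wg^2\Fa \ot V$ are independent, no genuine interaction between the three conditions arises, which is what makes the equivalence clean.
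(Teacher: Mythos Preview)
Your proposal is correct and follows essentially the same approach as the paper: both arguments expand the iterated $\Fa$-coaction $(\Id\ot\Db_\Fa)\Db_\Fa$ using $\Db_\Fa = \Db_{\Fg_1} + \Db_{\Fg_2}$, antisymmetrize, and then read off the three components under the decomposition $\wg^2\Fa \cong \wg^2\Fg_1 \oplus (\Fg_1\wg\Fg_2) \oplus \wg^2\Fg_2$. The only cosmetic difference is that the paper extracts the mixed component by first embedding via the antisymmetrization map $\alpha:\Fa\wg\Fa \to U(\Fa)\ot U(\Fa) \cong (U(\Fg_1)\ot U(\Fg_2))^{\ot 2}$ and then applying the counit projection $\Id\ot\ve_{U(\Fg_1)}\ot\ve_{U(\Fg_2)}\ot\Id$, whereas you project directly in $\wg^2\Fa$; your route is slightly more transparent but the content is the same.
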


\begin{proof}
Assume first that $V$ is an $\Fa$-comodule. By the $\Fa$-coaction compatibility, we have
\begin{align}\label{aux-20}
\begin{split}
& v\nsb{-2} \wg v\nsb{-1} \ot v\nsb{0} + v\nsb{-1} \wg v\nsb{0}\ns{-1} \ot v\nsb{0}\ns{0} \\
& + v\ns{-1} \wg v\ns{0}\nsb{-1} \ot v\ns{0}\nsb{0} + v\ns{-2} \wg v\ns{-1} \ot v\ns{0} = 0.
\end{split}
\end{align}
Applying the antisymmetrization map $\alpha:\Fa \wg \Fa \to U(\Fa) \ot U(\Fa)$, we get
\begin{align}\label{aux-19}
\begin{split}
& (v\nsb{-1} \ot 1) \ot (1 \ot v\nsb{0}\ns{-1}) \ot v\nsb{0}\ns{0} - (1 \ot v\nsb{0}\ns{-1}) \ot (v\nsb{-1} \ot 1) \ot v\nsb{0}\ns{0} \\
& + (1 \ot v\ns{-1}) \ot (v\ns{0}\nsb{-1} \ot 1) \ot v\ns{0}\nsb{0} +\\
& - (v\ns{0}\nsb{-1} \ot 1) \ot (1 \ot v\ns{-1}) \ot v\ns{0}\nsb{0} = 0.
\end{split}
\end{align}
Finally, applying $\Id \ot \ve_{U(\Fg_1)} \ot \ve_{U(\Fg_2)} \ot \Id$ to both sides of the above equation we get \eqref{aux-18}.

\medskip

Let $v \mapsto v\pr{-1} \ot v\pr{0} \in \Fa \ot V$ denote the $\Fa$-coaction on $V$. It can be written as
\begin{equation}
v\pr{-1} \ot v\pr{0} = p_1(v\pr{-1}) \ot v\pr{0} + p_2(v\pr{-1}) \ot v\pr{0}.
\end{equation}
Next, we  shall  prove that
\begin{equation}
v \mapsto p_1(v\pr{-1}) \ot v\pr{0} \in \Fg_1 \ot V \quad \text{ and } \quad v \mapsto p_2(v\pr{-1}) \ot v\pr{0} \in \Fg_2 \ot V
\end{equation}
are coactions. To this end we observe that
\begin{equation}
\alpha(p_1(v\pr{-2}) \wg p_1(v\pr{-1})) \ot v\pr{0} = (p_1 \ot p_1)(\alpha(v\pr{-2} \wg v\pr{-1})) \ot v\pr{0} = 0.
\end{equation}

\medskip

Since the antisymmetrization map $\alpha:\Fg_1 \wg \Fg_1 \to U(\Fg_1) \ot U(\Fg_1)$ is injective, we have
\begin{equation}
p_1(v\pr{-2}) \wg p_1(v\pr{-1}) \ot v\pr{0} = 0,
\end{equation}
proving that $v \mapsto p_1(v\pr{-1}) \ot v\pr{0}$ is a $\Fg_1$-coaction. Similarly $v \mapsto p_2(v\pr{-1}) \ot v\pr{0}$ is a $\Fg_2$-coaction on $V$.

\medskip

Conversely, assume that $V$ is a $\Fg_1$-comodule and $\Fg_2$-comodule such that the compatibility \eqref{aux-18} is satisfied. Then obviously \eqref{aux-20} is true, which is the $\Fa$-comodule compatibility for the coaction \eqref{aux-comodule-on-double-crossed-sum}.
\end{proof}

We proceed to  investigate the relations between AYD modules over the Lie algebras $\Fg_1$ and $\Fg_2$, and AYD modules over the double crossed sum Lie algebra $\Fa = \Fg_1 \bowtie \Fg_2$.

\begin{proposition}\label{auxx-2}
Let $(\Fg_1,\Fg_2)$ be a matched pair of Lie algebras, $\Fa = \Fg_1 \bowtie \Fg_2$,  and $V \in \, ^{\Fa}\rm{conil}\Mc_{\Fa}$. Then $V$ is an AYD module over $\Fa$ if and  only if $V$ is an AYD module over $\Fg_1$ and $\Fg_2$, and the conditions
\begin{align}\label{prop-ax2-1}
&(v \cdot X)\ns{-1} \ot (v \cdot X)\ns{0} = v\ns{-1} \lt X \ot v\ns{0} + v\ns{-1} \ot v\ns{0} \cdot X, \\[.1cm]\label{prop-ax2-2}
& v\ns{-1} \rt X \ot v\ns{0} = 0, \\[.1cm]\label{prop-ax2-3}
& (v \cdot Y)\nsb{-1} \ot (v \cdot Y)\nsb{0} = - Y \rt v\nsb{-1} \ot v\nsb{0} + v\nsb{-1} \ot v\nsb{0} \cdot Y, \\[.1cm]\label{prop-ax2-4}
& Y \lt v\nsb{-1} \ot v\nsb{0} = 0,
\end{align}
are satisfied for any $X \in \Fg_1$, $Y \in \Fg_2$ and any $v \in V$.
\end{proposition}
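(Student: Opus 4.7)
The plan is to rewrite the $\Fa$-AYD identity by decomposing the coaction and the brackets along the direct sum $\Fa = \Fg_1 \oplus \Fg_2$, and then to read off the four conditions and the two $\Fg_i$-AYD identities as the $\Fg_1$- and $\Fg_2$-projections of a single equality.

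First I would invoke Proposition \ref{proposition-comodule-doublecrossed-sum} to write
\[
\Db_\Fa(v) = v\nsb{-1} \ot v\nsb{0} + v\ns{-1} \ot v\ns{0} \in (\Fg_1 \oplus \Fg_2) \ot V,
\]
and restrict the right $\Fa$-action to obtain the individual $\Fg_1$- and $\Fg_2$-module structures. The essential computational input is then to evaluate, for $X \in \Fg_1$ and $Y \in \Fg_2$ viewed as elements of $\Fa$, the four brackets
\begin{align*}
[v\nsb{-1}, X] &= [v\nsb{-1}, X]_{\Fg_1}, & [v\ns{-1}, X] &= v\ns{-1} \rt X + v\ns{-1} \lt X,\\
[v\nsb{-1}, Y] &= -Y \rt v\nsb{-1} - Y \lt v\nsb{-1}, & [v\ns{-1}, Y] &= [v\ns{-1}, Y]_{\Fg_2},
\end{align*}
directly from the double crossed sum formula $[X \oplus \zeta, Z \oplus \xi] = ([X,Z] + \zeta \rt Z - \xi \rt X) \oplus ([\zeta, \xi] + \zeta \lt Z - \xi \lt X)$, keeping careful track of which summand of $\Fa$ each term belongs to.

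Next, I would substitute these brackets into the $\Fa$-AYD identity first for $Z = X \in \Fg_1$, and project both sides onto $\Fg_1 \ot V$ and $\Fg_2 \ot V$. The $\Fg_2$-component yields exactly \eqref{prop-ax2-1}, while the $\Fg_1$-component reads
\[
(v \cdot X)\nsb{-1} \ot (v \cdot X)\nsb{0} = [v\nsb{-1}, X] \ot v\nsb{0} + v\nsb{-1} \ot v\nsb{0} \cdot X + v\ns{-1} \rt X \ot v\ns{0},
\]
which, compared with the $\Fg_1$-AYD identity, is equivalent to \eqref{prop-ax2-2}. Repeating the same projection for $Z = Y \in \Fg_2$ yields \eqref{prop-ax2-3} as the $\Fg_1$-component and the $\Fg_2$-AYD identity together with \eqref{prop-ax2-4} as the $\Fg_2$-component.

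For the converse, I would observe that every $Z \in \Fa$ decomposes uniquely as $Z = X + Y$ with $X \in \Fg_1$ and $Y \in \Fg_2$, and that both sides of the $\Fa$-AYD identity are linear in $Z$, so it suffices to verify it on each summand; summing the $\Fg_1$- and $\Fg_2$-projected identities above then reassembles the full $\Fa$-AYD identity. The main obstacle is purely organizational: one must keep careful track, throughout the decomposition, of the signs in the mutual actions $\rt$ and $\lt$ and of the summand of $\Fa$ into which each piece of the bracket lands; once the four bracket formulas above are established, the argument reduces to a direct projection onto $\Fg_1$ and $\Fg_2$.
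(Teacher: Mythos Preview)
Your decomposition of the bracket and the projection strategy are the same as the paper's, and the converse direction is fine. The forward direction, however, has a genuine gap: from the $\Fg_1$-projection of the $\Fa$-AYD identity applied to $X\in\Fg_1$ you correctly obtain the single equation
\[
(v\cdot X)\nsb{-1}\ot(v\cdot X)\nsb{0} = [v\nsb{-1},X]\ot v\nsb{0} + v\nsb{-1}\ot v\nsb{0}\cdot X + v\ns{-1}\rt X\ot v\ns{0},
\]
but this is one relation, not two. You cannot conclude from it both the $\Fg_1$-AYD identity \emph{and} \eqref{prop-ax2-2} separately; you only get their sum. The same problem occurs with the $\Fg_2$-projection for $Y\in\Fg_2$, where the $\Fg_2$-AYD identity and \eqref{prop-ax2-4} are entangled. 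Since the projection $p_1:\Fa\to\Fg_1$ is not a Lie algebra map when the mutual actions are nontrivial, there is no purely Lie-algebraic projection argument that separates them; indeed Example \ref{example-sl2} exhibits an AYD module over $\Fa$ (with non-conilpotent coaction) that fails to be AYD over $\Fg_1$.

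The missing step is precisely where the hypothesis $V\in{}^{\Fa}\mathrm{conil}\Mc_{\Fa}$ enters. The paper first invokes Proposition \ref{proposition-g-AYD-U(g)-AYD} to lift $V$ to an AYD module over $U(\Fa)=U(\Fg_1)\bowtie U(\Fg_2)$; since $U(\Fg_1)$ and $U(\Fg_2)$ are Hopf subalgebras and the projections $\pi_i:U(\Fa)\to U(\Fg_i)$ are coalgebra maps, the $U(\Fg_i)$-AYD conditions (and hence the $\Fg_i$-AYD conditions) follow immediately. Only after the $\Fg_i$-AYD identities are established independently does the paper carry out your projection computation to isolate \eqref{prop-ax2-1}--\eqref{prop-ax2-4}. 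You should insert this lifting argument before the projection step.
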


\begin{proof}
For $V \in \, ^{\Fa}\rm{conil}\Mc_{\Fa}$, we first assume that $V$ is an AYD module over the double crossed sum Lie algebra $\Fa$ via the coaction
\begin{equation}
v \mapsto v\pr{-1} \ot v\pr{0} = v\nsb{-1} \ot v\nsb{0} + v\ns{-1} \ot v\ns{0}.
\end{equation}
As the $\Fa$-coaction is locally conilpotent, by Proposition \ref{proposition-g-AYD-U(g)-AYD}, $V$ is an AYD module over $U(\Fa)$. Then since the projections
\begin{equation}
\pi_1:U(\Fa) = U(\Fg_1) \bowtie U(\Fg_2) \to U(\Fg_1), \quad \pi_2:U(\Fa) = U(\Fg_1) \bowtie U(\Fg_2) \to U(\Fg_2)
\end{equation}
are coalgebra maps, we conclude that $V$ is a comodule over $U(\Fg_1)$ and $U(\Fg_2)$. Finally, since $U(\Fg_1)$ and $U(\Fg_2)$ are Hopf subalgebras of $U(\Fa)$, the AYD conditions on $U(\Fg_1)$ and $U(\Fg_2)$ are immediate, and thus $V$ is an AYD module over $\Fg_1$ and $\Fg_2$.

\medskip

We now prove the compatibility conditions \eqref{prop-ax2-1} to \eqref{prop-ax2-4}. To this end, we will make use of the AYD condition for an arbitrary $X \oplus Y \in \Fa$ and $v \in V$. On the one hand we have
\begin{align}\label{aux-21}
\begin{split}
& [v\pr{-1}, X \oplus Y] \ot v\pr{0} + v\pr{-1} \ot v\pr{0} \cdot (X \oplus Y) = \\
& [v\nsb{-1} \oplus 0, X \oplus Y] \ot v\nsb{0} + [0 \oplus v\ns{-1}, X \oplus Y] \ot v\ns{0} \\
& + v\nsb{-1} \ot v\nsb{0} \cdot (X \oplus Y) + v\ns{-1} \ot v\ns{0} \cdot (X \oplus Y) \\
& = ([v\nsb{-1},X] - Y \rt v\nsb{-1} \oplus -Y \lt v\nsb{-1}) \ot v\nsb{0} \\
& + (v\ns{-1} \rt X \oplus [v\ns{-1},Y] + v\ns{-1} \lt X) \ot v\ns{0} \\
& + (v\nsb{-1} \oplus 0) \ot v\nsb{0} \cdot (X \oplus Y) + (0 \oplus v\ns{-1}) \ot v\ns{0} \cdot (X \oplus Y) \\
& = ((v \cdot X)\nsb{-1} \oplus 0) \ot (v \cdot X)\nsb{0} + (0 \oplus (v \cdot Y)\ns{-1}) \ot (v \cdot Y)\ns{0} \\
& + (- Y \rt v\nsb{-1} \oplus -Y \lt v\nsb{-1}) \ot v\nsb{0} + (v\ns{-1} \rt X \oplus v\ns{-1} \lt X) \ot v\ns{0} \\
& + (v\nsb{-1} \oplus 0) \ot v\nsb{0} \cdot Y + (0 \oplus v\ns{-1}) \ot v\ns{0} \cdot X.
\end{split}
\end{align}
On the other hand, we have
\begin{align}\label{aux-22}
\begin{split}
& (v \cdot (X \oplus Y))\pr{-1} \ot (v \cdot (X \oplus Y))\pr{0} = ((v \cdot X)\nsb{-1} \oplus 0) \ot (v \cdot X)\nsb{0} + \\
& ((v \cdot Y)\nsb{-1} \oplus 0) \ot (v \cdot Y)\nsb{0} + (0 \oplus (v \cdot X)\ns{-1}) \ot (v \cdot X)\ns{0} \\
& + (0 \oplus (v \cdot Y)\ns{-1}) \ot (v \cdot Y)\ns{0}.
\end{split}
\end{align}
Since  $V$ is an AYD module over $\Fg_1$ and $\Fg_2$, the AYD compatibility over $\Fg_1 \oplus \Fg_2$ translates into
\begin{align}\label{aux-23}
\begin{split}
& ((v \cdot Y)\nsb{-1} \oplus 0) \ot (v \cdot Y)\nsb{0} + (0 \oplus (v \cdot X)\ns{-1}) \ot (v \cdot X)\ns{0} = \\
& (- Y \rt v\nsb{-1} \oplus -Y \lt v\nsb{-1}) \ot v\nsb{0} + (v\ns{-1} \rt X \oplus v\ns{-1} \lt X) \ot v\ns{0} \\
&  + (v\nsb{-1} \oplus 0) \ot v\nsb{0} \cdot Y + (0 \oplus v\ns{-1}) \ot v\ns{0} \cdot X.
\end{split}
\end{align}
Finally, we set $Y = 0$ to get \eqref{prop-ax2-1} and \eqref{prop-ax2-2}. The equations \eqref{prop-ax2-3} and \eqref{prop-ax2-4} are similarly obtained by setting $X=0$.

\medskip

The converse argument is clear.
\end{proof}

In general, if $V$ is an AYD module over the double crossed sum Lie algebra $\Fa = \Fg_1 \oplus \Fg_2$, then $V$ is not necessarily an AYD module over the Lie algebras $\Fg_1$ and $\Fg_2$.

\begin{example}\label{example-sl2}\rm{
Consider the Lie algebra $s\ell(2) = \langle X,Y,Z \rangle$,
\begin{equation}
[Y,X] = X, \quad [Z,X] = Y, \quad [Z,Y] = Z.
\end{equation}
Then $s\ell(2) = \Fg_1 \bowtie \Fg_2$ for $\Fg_1 = \langle X,Y \rangle$ and $\Fg_2 = \langle Z \rangle$.

\medskip

The symmetric algebra $V = S({s\ell(2)}^\ast)$ is a right-left AYD module over $s\ell(2)$. The module structure  is defined by the coadjoint action, and the comodule structure is given by  the Koszul coaction (which is not locally conilpotent).

\medskip

We now show that $V$ is not an AYD module over $\Fg_1$.  Let $\{\t^X,\t^Y,\t^Z\}$ be a dual basis for $s\ell(2)$. The linear map
\begin{equation}
\Db_{\Fg_1}: V \to \Fg_1 \ot V, \quad v \mapsto X \ot v\t^X + Y \ot v\t^Y,
\end{equation}
which is the projection onto the Lie algebra $\Fg_1$, endows $V$ with a left $\Fg_1$-comodule structure. However,  the AYD compatibility on $\Fg_1$ is not satisfied. Indeed, on the one hand we have
\begin{equation}
\Db_{\Fg_1}(v \cdot X) = X \ot (v \cdot X)\t^X + Y \ot (v \cdot X)\t^Y,
\end{equation}
and on the other hand we get
\begin{align}
\begin{split}
& [X,X] \ot v\t^X + [Y,X] \ot v\t^Y + X \ot (v\t^X) \cdot X + Y \ot (v\t^Y) \cdot X \\
& =X \ot (v \cdot X)\t^X + Y \ot (v \cdot X)\t^Y - Y \ot v\t^Z.
\end{split}
\end{align}
}\end{example}

\begin{remark}{\rm
Assume that the mutual actions of $\Fg_1$ and $\Fg_2$ are trivial. In this case, if $V$ is an AYD module over $\Fg_1 \bowtie \Fg_2$, then it is an AYD module over $\Fg_1$ and $\Fg_2$.

\medskip

To see this, let us apply $p_1\ot \Id_V$ to the both hand sides of the AYD condition \eqref{aux-SAYD-condition}  for  $X \oplus 0 \in \Fa$, where  $p_1:\Fa \to \Fg_1$ is the obvious projection. We get
\begin{align}\label{auxy1}
\begin{split}
& p_1([v\pr{-1}, X \oplus 0]) \ot v\pr{0} + p_1(v\pr{-1}) \ot v\pr{0} \cdot (X \oplus 0) \\
& = p_1((v \cdot (X \oplus 0))\pr{-1}) \ot (v \cdot (X \oplus 0))\pr{0}.
\end{split}
\end{align}
Since in this case the projection $p_1:\Fa \to \Fg_1$ is a map of Lie algebras, the equation \eqref{auxy1} reads
\begin{equation}
[p_1(v\pr{-1}), X] \ot v\pr{0} + p_1(v\pr{-1}) \ot v\pr{0} \cdot X = p_1((v \cdot X)\pr{-1}) \ot (v \cdot X)\pr{0},
\end{equation}
which is the AYD compatibility for the $\Fg_1$-coaction.

\medskip

Similarly we prove  that $V$ is an  AYD module over the Lie algebra $\Fg_2$.
}\end{remark}

Let $\Fa = \Fg_1 \bowtie \Fg_2$ be a double crossed sum  Lie algebra and $V$ be an SAYD module over $\Fa$.  In the next example we show that $V$ is not necessarily stable over  $\Fg_1$ and $\Fg_2$.

\begin{example}{\rm
Consider the Lie algebra $\Fa = g\ell(2) = \langle Y^1_1, Y^1_2, Y^2_1, Y^2_2 \rangle$ with a dual basis $\{\t^1_1, \t^2_1, \t^1_2, \t^2_2\}$.

\medskip

We have a decomposition $g\ell(2) = \Fg_1 \bowtie \Fg_2$, where $\Fg_1 = \langle Y^1_1, Y^1_2 \rangle$ and $\Fg_2 = \langle Y^2_1, Y^2_2 \rangle$. Let  $V := S({g\ell(2)}^\ast)$ be the symmetric algebra as an SAYD module over $g\ell(2)$ with the coadjoint action and the Koszul coaction.  Then the $\Fg_1$-coaction on $V$ becomes
\begin{equation}
v \mapsto v\ns{-1} \ot v\ns{0} = Y^1_1 \ot v\t^1_1 + Y^1_2 \ot v\t^2_1.
\end{equation}
Accordingly we have
\begin{equation}
{\t^2_1}\ns{0} \cdot {\t^2_1}\ns{-1} = - \Lc_{Y^1_1}\t^1_1 - \Lc_{Y^1_2}\t^2_1 = - \t^2_1\t^1_1 \neq 0.
\end{equation}
}\end{example}

We know that if a comodule   over a  Lie algebra $\Fg$  is locally conilpotent then it can be  lifted to a comodule  over  $U(\Fg)$. In the rest of this subsection, we will be interested  in translating  Proposition \ref{auxx-2} to AYD modules over universal enveloping algebras.

\begin{proposition}\label{auxx-1}
Let $\Fa = \Fg_1 \bowtie \Fg_2$ be a double crossed sum Lie algebra and $V$ be a left comodule over $\Fa$. Then the $\Fa$-coaction  is locally conilpotent if and only if the corresponding $\Fg_1$-coaction and $\Fg_2$-coaction are locally conilpotent.
\end{proposition}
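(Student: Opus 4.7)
The statement is a biconditional, so I treat the two implications separately.

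First, if $\Db_\Fa$ is locally conilpotent, the projections $p_1 : \Fa \to \Fg_1$ and $p_2 : \Fa \to \Fg_2$ from \eqref{auxy2} satisfy $(p_i \ot \Id_V) \circ \Db_\Fa = \Db_{\Fg_i}$ by the very construction of the factor coactions. A straightforward induction on $n$ then yields
\begin{equation*}
(p_i^{\ot n} \ot \Id_V) \circ \Db_\Fa^n = \Db_{\Fg_i}^n,
\end{equation*}
so that $\Db_\Fa^n(v) = 0$ forces $\Db_{\Fg_1}^n(v) = \Db_{\Fg_2}^n(v) = 0$. This settles one direction.

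For the converse, the compatibility \eqref{aux-18} will be essential. Given $v \in V$, local conilpotency of the two factor coactions supplies $n_1, n_2$ with $\Db_{\Fg_1}^{n_1}(v) = 0 = \Db_{\Fg_2}^{n_2}(v)$, and iterating these identities gives $\Db_{\Fg_i}^m(v) = 0$ for all $m \geq n_i$. Writing $A := \Db_{\Fg_1}$, $B := \Db_{\Fg_2}$ and $\Db_\Fa = \iota_1 \circ A + \iota_2 \circ B$ for the inclusions $\iota_i : \Fg_i \ot V \hookrightarrow \Fa \ot V$, I would expand
\begin{equation*}
\Db_\Fa^N(v) = \sum_{w \in \{1,2\}^N} C_w(v),
\end{equation*}
where $C_w(v) \in \Fg_{w_1} \ot \cdots \ot \Fg_{w_N} \ot V$ is the component obtained by applying $A$ or $B$ to the $V$-factor at each step according to $w$. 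The goal is to show that $C_w(v) = 0$ for every $w$, once $N$ is large enough.

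The pivotal step is a permutation lemma: by iterating \eqref{aux-18}, any component $C_w(v)$ with $k$ ones and $N-k$ twos equals, modulo a permutation of the first $N$ tensor factors of $\Fa^{\ot N} \ot V$, either the sorted form $(\Id^{\ot k} \ot B^{N-k}) \circ A^k(v)$ or the reverse-sorted form $(\Id^{\ot (N-k)} \ot A^k) \circ B^{N-k}(v)$. Choosing $N := n_1 + n_2 - 1$, every word $w \in \{1,2\}^N$ must satisfy $k \geq n_1$ or $N-k \geq n_2$; otherwise $N \leq (n_1-1) + (n_2-1) = N - 1$, a contradiction. In the first case $A^k(v) = 0$ and in the second $B^{N-k}(v) = 0$, so $C_w(v) = 0$ either way and hence $\Db_\Fa^N(v) = 0$.

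The main obstacle is the permutation lemma itself: although a single adjacent swap is exactly \eqref{aux-18}, organizing the iterated swaps needed to convert an arbitrary word $w$ into the sorted or reverse-sorted form, and tracking the induced symmetric-group action on the first $N$ tensor factors of $\Fa^{\ot N} \ot V$, requires some careful bookkeeping. Once that is set up, each step of the induction reduces to a single application of \eqref{aux-18} together with the inductive extension $\Db_{\Fg_i}^{m+1}(v) = (\Id^{\ot m} \ot \Db_{\Fg_i}) \circ \Db_{\Fg_i}^m(v)$, which guarantees that $A^m(v)$ and $B^m(v)$ vanish for all sufficiently large $m$.
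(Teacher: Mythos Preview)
Your proposal is correct and rests on the same key observation as the paper — the compatibility \eqref{aux-18} lets one commute the two factor coactions past each other. The paper, however, organizes the argument more efficiently: rather than expanding $\Db_\Fa^N(v)$ over all words $w \in \{1,2\}^N$ and then invoking a separate permutation lemma to sort each word, it proves directly by induction on $k$ the closed formula
\[
\Db_{\Fa}^{k}(v) \;=\; \Db_{\Fg_1}^k(v) \;+\; \Db_{\Fg_2}^k(v) \;+\; \sum_{p+q=k} v\nsb{-p}\ot\cdots\ot v\nsb{-1}\ot v\nsb{0}\ns{-q}\ot\cdots\ot v\nsb{0}\ns{-1}\ot v\nsb{0}\ns{0},
\]
which is already in sorted form (all $\Fg_1$-factors first, then all $\Fg_2$-factors). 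The inductive step is a single application of \eqref{aux-18}, so the bookkeeping you flag as the ``main obstacle'' never arises. Both directions of the biconditional are then read off from this formula at once: the pure $\Fg_i^{\ot k}\ot V$ components of $\Db_\Fa^k(v)$ are exactly $\Db_{\Fg_i}^k(v)$, giving the forward direction without the projection argument, and your pigeonhole bound $N = n_1 + n_2 - 1$ handles the converse. Your route would work, but the paper's induction absorbs the sorting step into the induction hypothesis itself.
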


\begin{proof}
By \eqref{auxy2} we know that $\Db_{\Fa} = \Db_{\Fg_1} + \Db_{\Fg_2}$. Therefore,
\begin{equation}
\Db_{\Fa}^2(v) = v\nsb{-2} \ot v\nsb{-1} \ot v\nsb{0} + v\ns{-2} \ot v\ns{-1} \ot v\ns{0} + v\nsb{-1} \ot v\nsb{0}\ns{-1} \ot v\nsb{0}\ns{0}.
\end{equation}
By induction we can assume that
\begin{align}
\begin{split}
& \Db_{\Fa}^k(v) = v\nsb{-k} \ot \ldots \ot v\nsb{-1} \ot v\nsb{0} + v\ns{-k} \ot \ldots \ot v\ns{-1} \ot v\ns{0} \\
& + \sum_{p + q = k} v\nsb{-p} \ot \ldots \ot v\nsb{-1} \ot v\nsb{0}\ns{-q} \ot \ldots \ot v\nsb{0}\ns{-1} \ot v\nsb{0}\ns{0},
\end{split}
\end{align}
and we apply the coaction one more time to get
\begin{align}
\begin{split}
& \Db_{\Fa}^{k+1}(v) = v\nsb{-k-1} \ot \ldots \ot v\nsb{-1} \ot v\nsb{0} + v\ns{-k-1} \ot \ldots \ot v\ns{-1} \ot v\ns{0} \\
& + \sum_{p + q = k} v\nsb{-p} \ot \ldots \ot v\nsb{-1} \ot v\nsb{0}\ns{-q} \ot \ldots \ot v\nsb{0}\ns{-1} \ot v\nsb{0}\ns{0}\nsb{-1} \ot v\nsb{0}\ns{0}\nsb{0} \\
& + \sum_{p + q = k} v\nsb{-p} \ot \ldots \ot v\nsb{-1} \ot v\nsb{0}\ns{-q-1} \ot \ldots \ot v\nsb{0}\ns{-1} \ot v\nsb{0}\ns{0} \\
& = v\nsb{-k-1} \ot \ldots \ot v\nsb{-1} \ot v\nsb{0} + v\ns{-k-1} \ot \ldots \ot v\ns{-1} \ot v\ns{0} \\
& + \sum_{p + q = k} v\nsb{-p-1} \ot \ldots \ot v\nsb{-1} \ot v\nsb{0}\ns{-q} \ot \ldots \ot v\nsb{0}\ns{-1} \ot v\nsb{0}\ns{0} \ot v\nsb{0}\ns{0} \\
& + \sum_{p + q = k} v\nsb{-p} \ot \ldots \ot v\nsb{-1} \ot v\nsb{0}\ns{-q-1} \ot \ldots \ot v\nsb{0}\ns{-1} \ot v\nsb{0}\ns{0}.
\end{split}
\end{align}
In the second equality we used \eqref{aux-18}. This result immediately implies the claim.
\end{proof}

Let $V$ be a locally conilpotent comodule over $\Fg_1$ and $\Fg_2$. We denote by
\begin{equation}
V \to U(\Fg_1) \ot V, \quad  v \mapsto v\snsb{-1} \ot v\snsb{0}
\end{equation}
the lift  of the $\Fg_1$-coaction and similarly by
\begin{equation}
V \to U(\Fg_2) \ot V, \quad  v \mapsto v\sns{-1} \ot v\sns{0}
\end{equation}
the lift of the $\Fg_2$-coaction.

\begin{corollary}\label{corollary-58}
Let $\Fa = \Fg_1 \bowtie \Fg_2$ be a double crossed sum Lie algebra and $V \in \, ^{\Fa}\rm{conil}\Mc_{\Fa}$. Then the $\Fa$-coaction lifts  to the $U(\Fa)$-coaction
\begin{equation}\label{aux-corollary58-coaction}
v \mapsto v\snsb{-1} \ot v\snsb{0}\sns{-1} \ot v\snsb{0}\sns{0} \in U(\Fg_1) \bowtie U(\Fg_2) \ot V.
\end{equation}
\end{corollary}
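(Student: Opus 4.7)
The plan is to apply Proposition \ref{proposition-categories-equivalent} to lift the locally conilpotent $\Fa$-coaction to $U(\Fa)$, and then identify the resulting $U(\Fa)$-coaction with the explicit formula stated in the corollary.

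First I would invoke Proposition \ref{auxx-1} to conclude that the underlying $\Fg_1$- and $\Fg_2$-coactions on $V$ are themselves locally conilpotent. Hence by Proposition \ref{proposition-categories-equivalent} they lift uniquely to $U(\Fg_1)$- and $U(\Fg_2)$-comodule structures, which I denote by $v\mapsto v\snsb{-1}\ot v\snsb{0}$ and $v\mapsto v\sns{-1}\ot v\sns{0}$ respectively. Using the vector-space identification $U(\Fa)\cong U(\Fg_1)\ot U(\Fg_2)$ from Remark \ref{remark-matched-pair-enveloping-algebras}, I would then define the candidate map
\begin{equation*}
\tilde\Db:V\longrightarrow U(\Fa)\ot V,\qquad v\mapsto v\snsb{-1}\ot v\snsb{0}\sns{-1}\ot v\snsb{0}\sns{0}.
\end{equation*}

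Next I would check that $\tilde\Db$ is a left $U(\Fa)$-coaction. Counitality is immediate from the counitality of the two lifted coactions. For coassociativity, I would use that $U(\Fa)=U(\Fg_1)\bi U(\Fg_2)$ carries the tensor-product coalgebra structure, reducing the verification to: (i) coassociativity of each $U(\Fg_i)$-coaction, which is automatic, and (ii) an auxiliary commutation identity
\begin{equation*}
(\Id\ot\Db_{U(\Fg_2)})\,\Db_{U(\Fg_1)}=\tau_{12}\circ(\Id\ot\Db_{U(\Fg_1)})\,\Db_{U(\Fg_2)},
\end{equation*}
where $\tau_{12}$ swaps the first two tensor factors. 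This identity is the $U$-level analogue of the compatibility \eqref{aux-18} between $\Db_{\Fg_1}$ and $\Db_{\Fg_2}$ established in Proposition \ref{proposition-comodule-doublecrossed-sum}.

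Once coaction is established, I would show that the canonical projection $\pi:U(\Fa)\to\Fa$ applied to the first tensor factor of $\tilde\Db(v)$ recovers the original $\Fa$-coaction. Indeed, counitality of each lift together with $\pi(u)=0$ for $u\in U_0(\Fa)$ and $\pi|_{U_1(\Fa)/U_0(\Fa)}=\Id_\Fa$ gives
\begin{equation*}
(\pi\ot\Id)\tilde\Db(v)=p_1(v\snsb{-1})\ot v\snsb{0}+p_2(v\sns{-1})\ot v\sns{0}=v\nsb{-1}\ot v\nsb{0}+v\ns{-1}\ot v\ns{0}=\Db_\Fa(v),
\end{equation*}
after using the construction of the lifts in Proposition \ref{proposition-construct-U(g)-coaction} and the identifications \eqref{auxy2}. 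By the uniqueness in Proposition \ref{proposition-categories-equivalent}, $\tilde\Db$ must coincide with the unique $U(\Fa)$-lift of the $\Fa$-coaction.

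The main obstacle is promoting the commutation \eqref{aux-18} from $\Fg_i$-level to $U(\Fg_i)$-level. I expect to handle this by an induction on the length of the iterated coactions, combined with the explicit symmetrization formula $\theta_k^{-1}$ of Proposition \ref{proposition-construct-U(g)-coaction} and the cocommutativity of $U(\Fg_i)$; the key observation is that $\theta_k^{-1}$ commutes with the swap of adjacent coaction entries that \eqref{aux-18} provides, so iterating \eqref{aux-18} and symmetrizing yields the required identity at arbitrary order.
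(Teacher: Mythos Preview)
Your approach is correct, and in fact more detailed than the paper's: the corollary is stated without proof. However, you have chosen the harder direction, and the ``main obstacle'' you flag can be avoided entirely.

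Rather than defining $\tilde\Db$ first and then verifying it is a coaction (which forces you to establish the $U$-level commutation identity independently), you can run the argument in reverse. Since $V$ is a locally conilpotent $\Fa$-comodule, Proposition~\ref{proposition-categories-equivalent} already hands you a $U(\Fa)$-coaction $\Db$. Now $U(\Fa)=U(\Fg_1)\bi U(\Fg_2)$ carries the \emph{tensor product} coalgebra structure, and it is a general fact that any left $C_1\ot C_2$-comodule structure on $V$ factors as $v\mapsto v^{(1)}_{\langle-1\rangle}\ot (v^{(1)}_{\langle 0\rangle})^{(2)}_{\langle-1\rangle}\ot (v^{(1)}_{\langle 0\rangle})^{(2)}_{\langle 0\rangle}$, where the $C_i$-coactions are obtained by applying $\Id\ot\ve$ and $\ve\ot\Id$. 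The projections $\pi_i:U(\Fa)\to U(\Fg_i)$ are coalgebra maps (as used in the proof of Proposition~\ref{auxx-2}), so the induced $U(\Fg_i)$-coactions on $V$ are genuine comodule structures; composing further with $U(\Fg_i)\to\Fg_i$ recovers the original $\Fg_i$-coactions, and uniqueness in Proposition~\ref{proposition-categories-equivalent} identifies them with your lifts $v\mapsto v\snsb{-1}\ot v\snsb{0}$ and $v\mapsto v\sns{-1}\ot v\sns{0}$. This gives the stated formula directly.

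The payoff is that the commutation identity you sought to prove by induction becomes a \emph{consequence} of the coassociativity of $\Db$ rather than a prerequisite---and this is exactly how the paper uses it later, in equation~\eqref{aux-38}. Your inductive sketch via $\theta_k^{-1}$ and iterated applications of \eqref{aux-18} would work, but it is unnecessary.
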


\begin{proposition}\label{proposition-24}
Let $(\Fg_1,\Fg_2)$ be a matched pair of Lie algebras, $\Fa = \Fg_1 \bowtie \Fg_2$ be their double crossed sum and $V \in \, ^{\Fa}\rm{conil}\Mc_{\Fa}$. Then $V$ is an  AYD module  over $\Fa$ if and only if $V$ is an AYD module over $\Fg_1$ and $\Fg_2$, and
\begin{align}\label{auxy3}
&(v \cdot u)\sns{-1} \ot (v \cdot u)\sns{0} = v\sns{-1} \lt u\ps{1} \ot v\sns{0} \cdot u\ps{2}, \\\label{auxy4}
&v\sns{-1} \rt u \ot v\sns{0} = u \ot v, \\\label{auxy5}
&(v \cdot u')\snsb{-1} \ot (v \cdot u')\snsb{0} = S(u'\ps{2}) \rt v\snsb{-1} \ot v\snsb{0} \cdot v\ps{1}, \\\label{auxy6}
&u' \lt v\snsb{-1} \ot v\snsb{0} = u' \ot v.
\end{align}
are satisfied for any $v \in V$, any $u \in U(\Fg_1)$ and $u' \in U(\Fg_2)$.
\end{proposition}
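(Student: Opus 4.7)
The strategy is to transfer the problem to the universal enveloping algebras. By Proposition \ref{auxx-1} the hypothesis $V \in \,^{\Fa}{\rm conil}\Mc_{\Fa}$ makes $V$ a locally conilpotent $\Fg_i$-comodule for each $i$, and Proposition \ref{proposition-categories-equivalent} lifts these uniquely to $U(\Fg_i)$-comodule structures. Corollary \ref{corollary-58} then writes the induced $U(\Fa)$-coaction in the factorized form $v \mapsto v\snsb{-1} \bowtie v\snsb{0}\sns{-1} \ot v\snsb{0}\sns{0}$ under the identification $U(\Fa) \cong U(\Fg_1) \bowtie U(\Fg_2)$ of Remark \ref{remark-matched-pair-enveloping-algebras}, and Proposition \ref{proposition-g-AYD-U(g)-AYD} converts each of the three Lie-algebra AYD conditions appearing in the statement into the corresponding Hopf-algebra AYD condition. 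Consequently the proposition reduces to the Hopf-algebra equivalence: $V$ is right-left AYD over $U(\Fg_1) \bowtie U(\Fg_2)$ if and only if it is AYD over each of the Hopf subalgebras $U(\Fg_1)$ and $U(\Fg_2)$ and the four compatibilities \eqref{auxy3}-\eqref{auxy6} hold.

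For the forward direction, the plan is to apply the $U(\Fa)$-AYD identity to the two families of elements $h = u \bowtie 1$ (with $u \in U(\Fg_1)$) and $h = 1 \bowtie u'$ (with $u' \in U(\Fg_2)$), and to expand both sides using the bicrossed product multiplication together with the antipode formulae $S(u \bowtie 1) = S(u) \bowtie 1$ and $S(1 \bowtie u') = 1 \bowtie S(u')$. Projecting the $u \bowtie 1$ identity by $\ve_{U(\Fg_1)} \ot \Id \ot \Id$ and simplifying via the fact that $\ve$ is an algebra map together with counitality directly produces \eqref{auxy3}; the symmetric projection on the $1 \bowtie u'$ identity produces \eqref{auxy5}. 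Projecting instead by $\Id \ot \ve \ot \Id$ yields an equation whose left-hand side is the $U(\Fg_1)$-coaction on $v \cdot u$ and whose right-hand side matches the $U(\Fg_1)$-AYD formula except for a factor of the form $v\snsb{0}\sns{-1} \rt u\ps{1}$ inserted next to $u\ps{1}$. Testing this insertion on primitive $u \in \Fg_1$ and propagating by induction on the PBW filtration (enabled by local conilpotence) decouples the equation into the $U(\Fg_1)$-AYD condition and \eqref{auxy4}; the dual analysis on $1 \bowtie u'$ yields the $U(\Fg_2)$-AYD condition and \eqref{auxy6}.

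For the converse, one exploits the multiplicativity of the AYD axiom in its Hopf algebra argument: if the identity $\Db(v \cdot h) = S(h\ps{3}) v\pr{-1} h\ps{1} \ot v\pr{0} \cdot h\ps{2}$ holds for $h_1$ and $h_2$, a direct calculation using associativity of the action and the anti-algebra property of the antipode shows it holds for $h_1 h_2$. Since $U(\Fa)$ is generated multiplicatively by $U(\Fg_1) \bowtie 1$ and $1 \bowtie U(\Fg_2)$, it suffices to verify the $U(\Fa)$-AYD identity on these two families. For $h = u \bowtie 1$ we substitute the coaction from Corollary \ref{corollary-58}, expand the right-hand side using the bicrossed product multiplication and the antipode $S(u \bowtie 1) = S(u) \bowtie 1$, and collapse the resulting expression by invoking the hypothesized $U(\Fg_1)$-AYD identity together with \eqref{auxy3}-\eqref{auxy4}; the case $h = 1 \bowtie u'$ is treated dually using the $U(\Fg_2)$-AYD identity and \eqref{auxy5}-\eqref{auxy6}.

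The principal technical obstacle is the Sweedler bookkeeping in $U(\Fg_1) \bowtie U(\Fg_2)$, where the iterated coproducts of $u$ and $u'$ interlace with the mutual actions $\rt$ and $\lt$ under the bicrossed product multiplication: reducing mixed expressions coming from $(v\snsb{-1} \bowtie v\snsb{0}\sns{-1})(u\ps{1} \bowtie 1)$ to the clean forms of \eqref{auxy3}-\eqref{auxy6} requires repeated applications of counitality together with the cross-compatibility \eqref{aux-18} between the two successive coactions. The delicate point is the decoupling of \eqref{auxy4} (respectively \eqref{auxy6}) from the residual $U(\Fa)$-AYD equation; once the $U(\Fg_i)$-AYD contribution has been isolated via evaluation on primitive generators, the remaining content is supported on the complementary coaction tag through the action $\rt$ (respectively $\lt$), and propagates to all of $U(\Fg_1)$ (respectively $U(\Fg_2)$) by induction on the PBW filtration.
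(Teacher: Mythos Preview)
Your overall architecture matches the paper's: lift to $U(\Fa)$ via Proposition \ref{proposition-g-AYD-U(g)-AYD}, write the $U(\Fa)$-AYD identity for $u\bowtie 1$ and $1\bowtie u'$, project to isolate \eqref{auxy3} and \eqref{auxy5}, and handle the converse by multiplicativity of the AYD condition on the two Hopf subalgebras. The converse is fine and essentially identical to the paper.

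The difference lies in how you extract \eqref{auxy4} (and dually \eqref{auxy6}) together with the $U(\Fg_1)$-AYD condition in the forward direction. After the projection $\Id\ot\ve\ot\Id$ you obtain
\[
(v\cdot u)\snsb{-1}\ot (v\cdot u)\snsb{0}
   = S(u\ps{3})\,v\snsb{-1}\bigl(v\snsb{0}\sns{-1}\rt u\ps{1}\bigr)\ot v\snsb{0}\sns{0}\cdot u\ps{2},
\]
and you propose to ``decouple'' this single identity into the $U(\Fg_1)$-AYD condition \emph{and} \eqref{auxy4} by testing on primitives and inducting on the PBW filtration. That step is not justified: the displayed identity only asserts that the $U(\Fg_1)$-AYD condition and \eqref{auxy4} are \emph{equivalent} to each other (each implies the other in its presence), not that either holds. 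One equation in $U(\Fg_1)\ot V$ cannot yield two logically independent families of constraints, and your filtration argument does not supply the missing input.

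The paper's route is cleaner and avoids this issue. Since $V$ is AYD over $\Fa$, Proposition \ref{auxx-2} already gives $V$ AYD over $\Fg_1$ and $\Fg_2$, hence over $U(\Fg_1)$ and $U(\Fg_2)$ by Proposition \ref{proposition-g-AYD-U(g)-AYD}. With the $U(\Fg_1)$-AYD identity in hand, the left-hand side above rewrites as $S(u\ps{3})v\snsb{-1}u\ps{1}\ot v\snsb{0}\cdot u\ps{2}$; then a purely algebraic cancellation (left-multiply the first tensor factor by $u\ps{4}$, act on the second by $S(u\ps{3})$, and finally strip the $v\snsb{-1}$ via the antipode of the $U(\Fg_1)$-coaction) yields $u\ot v = v\sns{-1}\rt u\ot v\sns{0}$, which is \eqref{auxy4}. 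No filtration induction is needed. You should replace your decoupling step by this: first invoke Proposition \ref{auxx-2} to secure the $U(\Fg_i)$-AYD conditions, and then derive \eqref{auxy4} and \eqref{auxy6} by the convolution-cancellation above.
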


\begin{proof}
Let $V$ be an AYD module over $\Fa$. Since the coaction is conilpotent, it lifts to an  AYD module over $U(\Fa)$ by Proposition \ref{proposition-g-AYD-U(g)-AYD}. We write the AYD condition for $u \bowtie 1 \in U(\Fa)$ as
\begin{align}\label{auxy7}
\begin{split}
& (v \cdot u)\snsb{-1} \ot (v \cdot u)\snsb{0}\sns{-1} \ot (v \cdot u)\snsb{0}\sns{0}  \\
&= (S(u\ps{3}) \ot 1)(v\snsb{-1} \ot v\snsb{0}\sns{-1})(u\ps{1} \ot 1) \ot v\snsb{0}\sns{0} \cdot (u\ps{2} \ot 1)  \\
& =S(u\ps{4})v\snsb{-1}(v\snsb{0}\sns{-2} \rt u\ps{1}) \ot v\snsb{0}\sns{-1} \lt u\ps{2} \ot v\snsb{0}\sns{0} \cdot u\ps{3}.
\end{split}
\end{align}
Applying $\ve \ot \Id \ot \Id$ to both sides of \eqref{auxy7},  we get \eqref{auxy3}. Similarly we get
\begin{equation}
(v \cdot u)\snsb{-1} \ot (v \cdot u)\snsb{0} = S(u\ps{3})v\snsb{-1}(v\snsb{0}\sns{-1} \rt u\ps{1}) \ot v\snsb{0}\sns{0} \cdot u\ps{2},
\end{equation}
which yields, using AYD condition on the left hand side,
 \begin{equation}
S(u\ps{3})v\snsb{-1}u\ps{1} \ot v\snsb{0}u\ps{2} = S(u\ps{3})v\snsb{-1}(v\snsb{0}\sns{-1} \rt u\ps{1}) \ot v\snsb{0}\sns{0} \cdot u\ps{2}.
\end{equation}
This  immediately implies \eqref{auxy4}. Switching to the Lie algebra $\Fg_2$ and writing the AYD condition for $1 \bowtie u' \in U(\Fa)$, we obtain \eqref{auxy5} and  \eqref{auxy6}.

\medskip

Conversely, for $V \in \, ^{\Fa}\rm{conil}\Mc_{\Fa}$ which is also an AYD module over  $\Fg_1$ and $\Fg_2$, assume that \eqref{auxy3} to \eqref{auxy6} are satisfied. Then  $V$ is an AYD module over $U(\Fg_1)$ and $U(\Fg_2)$. We  show that \eqref{auxy3} and \eqref{auxy4} together imply the AYD condition for the elements of the form $u \bowtie 1 \in U(\Fg_1) \bowtie U(\Fg_2)$. Indeed,
\begin{align}
\begin{split}
& (v \cdot u)\snsb{-1} \ot (v \cdot u)\snsb{0}\sns{-1} \ot (v \cdot u)\snsb{0}\sns{0} \\
& =S(u\ps{3})v\snsb{-1}u\ps{1} \ot (v\snsb{0} \cdot u\ps{2})\sns{-1} \ot (v\snsb{0} \cdot u\ps{2})\sns{0}  \\
& =S(u\ps{4})v\snsb{-1}u\ps{1} \ot v\snsb{0}\sns{-1} \lt u\ps{2} \ot v\snsb{0}\sns{0} \cdot u\ps{3}  \\
& =S(u\ps{4})v\snsb{-1}(v\snsb{0}\sns{-2} \rt u\ps{1}) \ot v\snsb{0}\sns{-1} \lt u\ps{2} \ot v\snsb{0}\sns{0} \cdot u\ps{3},
\end{split}
\end{align}
where the first equality follows from the AYD condition on $U(\Fg_1)$, the second equality follows from \eqref{auxy3}, and the last equality is obtained by using \eqref{auxy4}. Similarly, using \eqref{auxy5} and \eqref{auxy6} we prove the AYD condition for  the elements of the form $1 \bowtie u' \in U(\Fg_1) \bowtie U(\Fg_2)$. The proof is then complete, since the AYD condition is multiplicative.
\end{proof}

The following generalization of Proposition \ref{proposition-24} is now straightforward.

\begin{corollary}\label{auxx-24}
Let $(\Uc,\Vc)$ be a mutual pair of Hopf algebras and $V$ a linear space. Then $V$ is an AYD module over $\Uc\bi \Vc$ if and only if $V$ is an AYD module over $\Uc$ and $\Vc$, and  the conditions
\begin{align}\label{auxy13}
&(v \cdot u)\sns{-1} \ot (v \cdot u)\sns{0} = v\sns{-1} \lt u\ps{1} \ot v\sns{0} \cdot u\ps{2}, \\\label{auxy14}
&v\sns{-1} \rt u \ot v\sns{0} = u \ot v, \\\label{auxy15}
& (v \cdot u')\snsb{-1} \ot (v \cdot u')\snsb{0} = S(u'\ps{2}) \rt v\snsb{-1} \ot v\snsb{0} \cdot u'\ps{1}, \\\label{auxy16}
&v \lt v\snsb{-1} \ot v\snsb{0} = u' \ot v
\end{align}
are satisfied for any $v \in V$, any $u \in \Uc$ and $u' \in \Vc$.
\end{corollary}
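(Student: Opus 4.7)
The proof runs in exact parallel with Proposition \ref{proposition-24}, but since we now work at the Hopf algebra level directly, the passage through local conilpotency and the lift from $\Fg$-comodules to $U(\Fg)$-comodules (Corollary \ref{corollary-58}) is no longer needed. We use only that the coalgebra underlying $\Uc \bi \Vc$ is the tensor product coalgebra $\Uc \ot \Vc$, and that $\Uc$ and $\Vc$ embed as Hopf subalgebras via $u \mapsto u \bi 1$ and $u' \mapsto 1 \bi u'$.

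For the forward direction, assume $V$ is AYD over $\Uc \bi \Vc$. The coalgebra projections $\Id \ot \ve$ and $\ve \ot \Id$ induce a $\Uc$-coaction $v \mapsto v\snsb{-1} \ot v\snsb{0}$ and a $\Vc$-coaction $v \mapsto v\sns{-1} \ot v\sns{0}$, and coassociativity of the original coaction gives the factorization $\Db(v) = v\snsb{-1} \bi v\snsb{0}\sns{-1} \ot v\snsb{0}\sns{0}$. Restricting the AYD identity $\Db(v \cdot h) = S(h\ps{3}) v\pr{-1} h\ps{1} \ot v\pr{0} \cdot h\ps{2}$ to $h = u \bi 1$ (resp.\ $h = 1 \bi u'$) and projecting the $\Uc \bi \Vc$-tensor factor onto $\Vc$ (resp.\ $\Uc$) gives \eqref{auxy13} (resp.\ \eqref{auxy15}), while projecting onto $\Uc$ (resp.\ $\Vc$) and comparing with the AYD identity already established over $\Uc$ (resp.\ $\Vc$) forces the ``discrepancy'' identities \eqref{auxy14} and \eqref{auxy16}, just as in the corresponding step of Proposition \ref{proposition-24}.

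For the converse, multiplicativity of the AYD condition together with the factorization $u \bi u' = (u \bi 1)(1 \bi u')$ reduces verification of the full AYD identity over $\Uc \bi \Vc$ to verification on the two factor types $u \bi 1$ and $1 \bi u'$. For $h = u \bi 1$ one starts from the $\Uc$-AYD identity, uses \eqref{auxy13} to rewrite the $\Vc$-component of $\Db(v \cdot u)$ in the twisted form $v\snsb{0}\sns{-1} \lt u\ps{2}$ appearing on the right-hand side, and then invokes \eqref{auxy14} to reinsert the factor $v\snsb{0}\sns{-2} \rt u\ps{1}$ without altering the value; this is the computation performed at the end of the proof of Proposition \ref{proposition-24}. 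The case $h = 1 \bi u'$ is handled symmetrically using \eqref{auxy15} and \eqref{auxy16}.

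The only substantive bookkeeping lies in unwinding the multiplication rule of $\Uc \bi \Vc$ and the antipode formula $S(u \bi u') = S(u'\ps{1}) \rt S(u\ps{1}) \bi S(u'\ps{2}) \lt S(u\ps{2})$ when computing $S(h\ps{3}) v\pr{-1} h\ps{1}$ for the two special forms of $h$; these expressions collapse dramatically because one tensor factor of $h$ is the unit, and the mutual pair axioms $1 \rt u = u$, $u' \lt 1 = u'$, $u' \rt 1 = \ve(u')$, $1 \lt u = \ve(u)$ kill the cross terms. This is the main (purely mechanical) obstacle; no conceptual ingredient beyond Proposition \ref{proposition-24} is required.
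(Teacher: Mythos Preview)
Your proposal is correct and matches the paper's approach: the paper presents this corollary as a ``straightforward'' generalization of Proposition \ref{proposition-24} with no further proof, and you have accurately spelled out that the argument runs in exact parallel, with the only simplification being that the lift via local conilpotency (Corollary \ref{corollary-58}) is no longer needed since one works directly with Hopf algebra comodules.
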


\subsection{SAYD modules over Lie-Hopf algebras}

In this subsection we investigate SAYD modules over the Lie-Hopf algebras $R(\Fg_2)\acl U(\Fg_1)$ associated to the matched pair of Lie algebras $(\Fg_1,\Fg_2)$ in terms of representations and corepresentations of the double crossed sum $\Fa = \Fg_1 \bowtie \Fg_2$.

\medskip

If $V$ is a module over a bicrossed product Hopf algebra $\mathcal{F} \acl \mathcal{U}$, then since $\mathcal{F}$ and $\mathcal{U}$ are subalgebras of $\mathcal{F} \acl \mathcal{U}$, $V$ is a module over $\mathcal{F}$ and $\mathcal{U}$. More explicitly, we have the following lemma.

\begin{lemma}\label{module on bicrossed product}
Let $(\mathcal{F}, \mathcal{U})$ be a matched pair of Hopf algebras and $V$ a linear space. Then $V$ is a right module over the bicrossed product Hopf algebra $\mathcal{F} \acl \mathcal{U}$ if and only if $V$ is a right module over $\mathcal{F}$ and  a right module over $\mathcal{U}$, such that
\begin{equation}\label{aux-25}
(v \cdot u) \cdot f = (v \cdot (u\ps{1} \rt f)) \cdot u\ps{2}.
\end{equation}
\end{lemma}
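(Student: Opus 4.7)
The proof will follow the standard pattern for such factorization results: use the inclusions $\mathcal{F} \hookrightarrow \mathcal{F} \acl \mathcal{U}$ via $f \mapsto f \acl 1$ and $\mathcal{U} \hookrightarrow \mathcal{F} \acl \mathcal{U}$ via $u \mapsto 1 \acl u$ to pass between the two pictures, and extract the compatibility \eqref{aux-25} from the cross product multiplication
\begin{equation*}
(1 \acl u)(f \acl 1) = (u\ps{1} \rt f) \acl u\ps{2}.
\end{equation*}

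For the forward direction, I assume $V$ is a right $\mathcal{F} \acl \mathcal{U}$-module and define $v \cdot f := v \cdot (f \acl 1)$ and $v \cdot u := v \cdot (1 \acl u)$. Since $f \mapsto f \acl 1$ and $u \mapsto 1 \acl u$ are algebra inclusions, these give $V$ the structure of a right $\mathcal{F}$-module and a right $\mathcal{U}$-module respectively. The compatibility then follows from associativity of the $\mathcal{F} \acl \mathcal{U}$-action combined with the above identity: computing
\begin{equation*}
(v \cdot u) \cdot f = v \cdot \bigl((1 \acl u)(f \acl 1)\bigr) = v \cdot \bigl((u\ps{1} \rt f) \acl u\ps{2}\bigr) = (v \cdot (u\ps{1} \rt f)) \cdot u\ps{2},
\end{equation*}
where the last equality uses that $(u\ps{1} \rt f) \acl u\ps{2} = ((u\ps{1} \rt f) \acl 1)(1 \acl u\ps{2})$.

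For the converse, I assume that $V$ carries right $\mathcal{F}$- and $\mathcal{U}$-actions satisfying \eqref{aux-25}, and define the candidate action $v \cdot (f \acl u) := (v \cdot f) \cdot u$. Unitality is clear from $v \cdot (1 \acl 1) = (v \cdot 1) \cdot 1 = v$. To verify associativity I compute, for arbitrary $f \acl u, f' \acl u' \in \mathcal{F} \acl \mathcal{U}$,
\begin{equation*}
(v \cdot (f \acl u)) \cdot (f' \acl u') = \bigl(((v \cdot f) \cdot u) \cdot f'\bigr) \cdot u',
\end{equation*}
then apply \eqref{aux-25} to the pair $(v \cdot f, f')$ to rewrite $((v \cdot f) \cdot u) \cdot f' = ((v \cdot f) \cdot (u\ps{1} \rt f')) \cdot u\ps{2}$, and finally use the $\mathcal{F}$- and $\mathcal{U}$-module properties to collapse this to $v \cdot \bigl(f(u\ps{1} \rt f') \acl u\ps{2}u'\bigr)$, which equals $v \cdot \bigl((f \acl u)(f' \acl u')\bigr)$ by the cross product multiplication formula.

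There is no real obstacle here; the entire content is bookkeeping around the formula $(1 \acl u)(f \acl 1) = (u\ps{1} \rt f) \acl u\ps{2}$, which is exactly what the compatibility \eqref{aux-25} transcribes at the level of the action. The only mild care point is ensuring that one uses $f \mapsto f \acl 1$ and $u \mapsto 1 \acl u$ as \emph{algebra} maps (both are immediate from the cross product formula with one factor equal to $1$), so that the restricted actions are genuinely module structures over $\mathcal{F}$ and $\mathcal{U}$.
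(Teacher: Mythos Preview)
Your proof is correct and follows the same approach as the paper: both directions hinge on the identity $(1 \acl u)(f \acl 1) = (u\ps{1} \rt f) \acl u\ps{2}$, with the forward direction being essentially identical to the paper's. Your converse is actually more complete than the paper's, which merely observes that \eqref{aux-25} yields $(v \cdot (1 \acl u)) \cdot (f \acl 1) = v \cdot ((1 \acl u)(f \acl 1))$ and declares the module property to follow; you spell out the full associativity check on generic elements $f \acl u$, $f' \acl u'$, which is the cleaner way to write it down.
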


\begin{proof}
Let $V$ be a right module over $\Fc \acl \Uc$. Then for any $v \in V$, $f \in \Fc$ and $u \in \Uc$,
\begin{align}
\begin{split}
& (v \cdot u) \cdot f = (v \cdot (1 \acl u)) \cdot (f \acl 1) = v \cdot ((1 \acl u) \cdot (f \acl 1))\\
&= v \cdot (u\ps{1} \rt f \acl u\ps{2}) = (v \cdot (u\ps{1} \rt f)) \cdot u\ps{2}.
\end{split}
\end{align}
Conversely, if \eqref{aux-25} holds, then
\begin{equation}
(v \cdot (1 \acl u)) \cdot (f \acl 1) = v \cdot ((1 \acl u) \cdot (f \acl 1))
\end{equation}
from which we conclude that $V$ is a right module over $\Fc \acl \Uc$.
\end{proof}

Let $(\Fg_1,\Fg_2)$ be a matched pair of Lie algebras and $V$ be a  module over the double crossed sum $\Fg_1 \bowtie \Fg_2$ such that $\Fg_1 \bowtie \Fg_2$-coaction is locally conilpotent.

\medskip

Being a right $\Fg_1$-module, $M$ has a right $U(\Fg_1)$-module structure. Similarly, since it is a locally conilpotent left $\Fg_2$-comodule, $M$ is a right $R(\Fg_2)$-module. Then we define
\begin{align}\label{aux-17}
\begin{split}
& V \ot R(\Fg_2) \acl U(\Fg_1) \to V \\
& v \ot (f \acl u) \mapsto (v \cdot f) \cdot u = f(v\sns{-1})v\sns{0} \cdot u.
\end{split}
\end{align}

\begin{corollary}
Let $(\Fg_1,\Fg_2)$ be a matched pair of Lie algebras and $V$ be an AYD module over the double crossed sum $\Fg_1 \bowtie \Fg_2$ such that $\Fg_1 \bowtie \Fg_2$-coaction is locally conilpotent. Then $V$ has a right $R(\Fg_2) \acl U(\Fg_1)$-module structure via \eqref{aux-17}.
\end{corollary}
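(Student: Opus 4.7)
The plan is to apply Lemma \ref{module on bicrossed product} directly. Since $V$ is a right $\Fg_1$-module, it is a right $U(\Fg_1)$-module in the canonical way. Since the $\Fg_2$-coaction on $V$ is locally conilpotent, by Proposition \ref{proposition-categories-equivalent} it lifts to a $U(\Fg_2)$-comodule structure, and then by Proposition \ref{proposition-mod-comod} (or more directly by dualizing the coaction against elements of $R(\Fg_2)$) it induces a right $R(\Fg_2)$-module structure given by $v \cdot f = f(v\sns{-1})v\sns{0}$. Thus the two factor actions are already in place, and the only thing to verify is the crossed-product compatibility
\begin{equation*}
(v \cdot u) \cdot f = (v \cdot (u\ps{1} \rt f)) \cdot u\ps{2}, \qquad v \in V,\; u \in U(\Fg_1),\; f \in R(\Fg_2).
\end{equation*}

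For this I would use the AYD compatibility established in Proposition \ref{proposition-24}. Since $V$ is an AYD module over $\Fg_1 \bowtie \Fg_2$ with locally conilpotent coaction, Corollary \ref{corollary-58} lifts it to an AYD module over $U(\Fg_1 \bowtie \Fg_2) = U(\Fg_1) \bowtie U(\Fg_2)$, and Proposition \ref{proposition-24} then delivers relation \eqref{auxy3}, namely
\begin{equation*}
(v \cdot u)\sns{-1} \ot (v \cdot u)\sns{0} = v\sns{-1} \lt u\ps{1} \ot v\sns{0} \cdot u\ps{2}.
\end{equation*}

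Using this, the verification is a short chain of identities. Unwinding the definition of the $R(\Fg_2)$-action and then applying \eqref{auxy3}, followed by the definition \eqref{aux-action-U-to-F} of the $U(\Fg_1)$-action on $R(\Fg_2)$ via the pairing \eqref{aux-F-V-pairing}, yields
\begin{equation*}
(v \cdot u) \cdot f = f\bigl((v \cdot u)\sns{-1}\bigr)(v \cdot u)\sns{0} = f\bigl(v\sns{-1} \lt u\ps{1}\bigr) v\sns{0} \cdot u\ps{2} = (u\ps{1} \rt f)(v\sns{-1})\, v\sns{0} \cdot u\ps{2} = \bigl(v \cdot (u\ps{1} \rt f)\bigr) \cdot u\ps{2},
\end{equation*}
which is precisely the required identity. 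Hence by Lemma \ref{module on bicrossed product} the formula \eqref{aux-17} defines a right $R(\Fg_2) \acl U(\Fg_1)$-module structure on $V$.

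The computation itself is routine once the right ingredients are assembled; the only conceptual step is recognising that the crossed-product compatibility \eqref{aux-25} is exactly the translation of the AYD compatibility \eqref{auxy3}, which in turn is nothing but the restriction of the full $U(\Fg_1) \bowtie U(\Fg_2)$-AYD condition to the Hopf subalgebra $U(\Fg_1)$. In this sense the corollary is essentially a repackaging of Proposition \ref{proposition-24}, and no additional stability hypothesis on $V$ is needed for the module statement.
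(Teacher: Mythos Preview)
Your proof is correct and follows the same approach as the paper: verify the compatibility \eqref{aux-25} of Lemma \ref{module on bicrossed product} by unwinding the $R(\Fg_2)$-action, invoking relation \eqref{auxy3} from Proposition \ref{proposition-24}, and then using the definition \eqref{aux-action-U-to-F} of $u \rt f$ via the pairing. The paper's argument is the same chain of four equalities you wrote, only with less surrounding commentary about how the individual factor actions arise.
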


\begin{proof}
For $f \in R(\Fg_2)$, $u \in U(\Fg_1)$ and $v \in V$, we have
\begin{align}
\begin{split}
& (v \cdot u) \cdot f = f((v \cdot u)\sns{-1})(v \cdot u)\sns{0} = f(v\sns{-1} \lt u\ps{1})v\sns{0} \cdot u\ps{2} \\
& = (u\ps{1} \rt f)(v\sns{-1})v\sns{0} \cdot u\ps{2} = (v \cdot (u\ps{1} \rt f)) \cdot u\ps{2},
\end{split}
\end{align}
where in the second equality we used Proposition \ref{proposition-24}. So the proof is complete by Lemma \ref{module on bicrossed product}.
\end{proof}

Let us assume that $V$ is a left comodule over the bicrossed product $\mathcal{F} \acl \mathcal{U}$. Since the projections $\pi_1 := \Id_{\mathcal{F}} \ot \ve_{\mathcal{U}}:\mathcal{F} \acl \mathcal{U} \to \mathcal{F}$ and $\pi_2 := \ve_{\mathcal{F}} \ot \Id_{\mathcal{U}}:\mathcal{F} \acl \mathcal{U} \to \mathcal{U}$ are coalegbra maps, $V$ becomes  a left $\mathcal{F}$-comodule as well as a left $\mathcal{U}$-comodule via $\pi_1$ and $\pi_2$. Denoting these comodule structures by
\begin{equation}
v \mapsto v^{\sns{-1}} \ot v^{\sns{0}} \in \mathcal{F} \ot V \quad \text{and} \quad v \mapsto v\snsb{-1} \ot v\snsb{0} \in \mathcal{U} \ot V,
\end{equation}
we consider the linear map
\begin{equation}\label{auxy12}
v \mapsto v^{\sns{-1}} \ot v^{\sns{0}}\snsb{-1} \ot v^{\sns{0}}\snsb{0} \in \mathcal{F} \acl \mathcal{U} \ot V.
\end{equation}

\begin{lemma}\label{comodule on bicrossed product}
Let $(\mathcal{F}, \mathcal{U})$ be a matched pair of Hopf algebras and $M$ a linear space. Then $V$ is a left comodule over the bicrossed product Hopf algebra $\mathcal{F} \acl \mathcal{U}$ via \eqref{auxy12} if and only if it is a left comodule over $\mathcal{F}$ and  a left comodule over $\mathcal{U}$, such that  for any $v \in V$
\begin{equation}\label{aux-27}
(v^{\sns{0}}\snsb{-1})^{\pr{0}} \ot v^{\sns{-1}} \cdot (v^{\sns{0}}\snsb{-1})^{\pr{1}} \ot v^{\sns{0}}\snsb{0} = v\snsb{-1} \ot (v\snsb{0})^{\sns{-1}} \ot (v\snsb{0})^{\sns{0}},
\end{equation}
where $u \mapsto u^{\pr{0}} \ot u^{\pr{1}} \in \mathcal{U} \ot \mathcal{F}$ is the right $\mathcal{F}$-coaction on $\mathcal{U}$.
\end{lemma}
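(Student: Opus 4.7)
The plan is to translate the coassociativity of a putative $\mathcal{F} \acl \mathcal{U}$-coaction $\Db_V$ into conditions on its two ``components'' using the explicit comultiplication
\[
\Delta(f \acl u) = (f\ps{1} \acl u\ps{1}\pr{0}) \ot (f\ps{2} \cdot u\ps{1}\pr{1} \acl u\ps{2})
\]
of the bicrossed product coalgebra $\mathcal{F} \cl \mathcal{U}$, where $u \mapsto u\pr{0} \ot u\pr{1}$ is the right $\mathcal{F}$-coaction on $\mathcal{U}$.

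For the forward direction, I would first observe that the projections $\pi_1 = \Id_{\mathcal{F}} \ot \ve_{\mathcal{U}}$ and $\pi_2 = \ve_{\mathcal{F}} \ot \Id_{\mathcal{U}}$ are coalgebra maps, and thus their push-forward of $\Db_V$ yields well-defined left coactions $v \mapsto v^{\sns{-1}} \ot v^{\sns{0}} \in \mathcal{F} \ot V$ and $v \mapsto v\snsb{-1} \ot v\snsb{0} \in \mathcal{U} \ot V$, respectively, which agree with the input of \eqref{auxy12}. To derive \eqref{aux-27}, I would compute both sides of coassociativity $(\Delta \ot \Id) \Db_V(v) = (\Id \ot \Db_V) \Db_V(v)$ explicitly, with $a := v^{\sns{-1}}$ and $b := v^{\sns{0}}\snsb{-1}$ serving as abbreviations, and then apply the partial counit $\Id_{\mathcal{F}} \ot \ve_{\mathcal{U}} \ot \ve_{\mathcal{F}} \ot \Id_{\mathcal{U}} \ot \Id_V$ on the resulting five-fold tensor. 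On the $(\Delta \ot \Id) \Db_V$ side, this operation isolates precisely the expression $b\ps{1}\pr{0} \ot a \cdot b\ps{1}\pr{1} \ot b\ps{2} \ot w$, i.e.\ $(v^{\sns{0}}\snsb{-1})^{\pr{0}} \ot v^{\sns{-1}} \cdot (v^{\sns{0}}\snsb{-1})^{\pr{1}} \ot v^{\sns{0}}\snsb{0}$, after using $\ve$-counitality of the iterated $\mathcal{F}$- and $\mathcal{U}$-coactions. On the $(\Id \ot \Db_V)\Db_V$ side, the same partial counit returns $v\snsb{-1} \ot (v\snsb{0})^{\sns{-1}} \ot (v\snsb{0})^{\sns{0}}$, giving \eqref{aux-27}.

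For the backward direction, assume $V$ is a left $\mathcal{F}$-comodule and a left $\mathcal{U}$-comodule satisfying \eqref{aux-27}, and define $\Db_V$ by \eqref{auxy12}. Counitality of $\Db_V$ is immediate from the counitalities of the two individual coactions together with $\ve_{\mathcal{F} \acl \mathcal{U}}(f \acl u) = \ve(f)\ve(u)$. For coassociativity, I would expand $(\Delta \ot \Id)\Db_V(v)$ using the comultiplication of $\mathcal{F} \acl \mathcal{U}$ recalled above and simplify the result by coassociativity of the $\mathcal{F}$- and the $\mathcal{U}$-coactions separately. The only ``cross'' term that remains to match between $(\Delta \ot \Id)\Db_V(v)$ and $(\Id \ot \Db_V)\Db_V(v)$ is precisely the one in which the middle tensor factor of $\mathcal{U}$ in $(\Id \ot \Db_V)\Db_V(v)$ is fed into the $\mathcal{F}$-coaction, and this match is exactly the content of \eqref{aux-27}.

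The main obstacle is the bookkeeping with Sweedler indices involving simultaneous use of two comultiplications, two coactions, and the $\mathcal{F}$-coaction $u \mapsto u\pr{0} \ot u\pr{1}$ on $\mathcal{U}$. The right way to avoid mistakes is to introduce temporary names (like $a := v^{\sns{-1}}$, $b := v^{\sns{0}}\snsb{-1}$, $w := v^{\sns{0}}\snsb{0}$) so that the comultiplication formula for $\mathcal{F} \acl \mathcal{U}$ can be applied once and the comparison with $(\Id \ot \Db_V)\Db_V(v)$ becomes a clean identification term by term; once formulated in this way, \eqref{aux-27} is recognized as exactly the one nontrivial relation needed, and the rest is routine application of the matched-pair axioms \eqref{aux-matched-pair-1}--\eqref{aux-matched-pair-5}.
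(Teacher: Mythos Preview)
Your overall strategy coincides with the paper's: write out the coassociativity equation $(\Delta\ot\Id)\Db_V(v)=(\Id\ot\Db_V)\Db_V(v)$ for the map \eqref{auxy12}, then hit both sides with a suitable partial counit to extract \eqref{aux-27}; conversely, show that \eqref{aux-27} is exactly the missing ``cross term'' needed to close coassociativity once the separate $\Fc$- and $\Uc$-coassociativities are in hand.

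There is, however, a concrete slip in your forward direction. With $a=v^{\sns{-1}}$, $b=v^{\sns{0}}\snsb{-1}$, $w=v^{\sns{0}}\snsb{0}$, one has
\[
(\Delta\ot\Id)\Db_V(v)=a\ps{1}\ot b\ps{1}{}^{\pr{0}}\ot a\ps{2}\cdot b\ps{1}{}^{\pr{1}}\ot b\ps{2}\ot w
\quad\in\ \Fc\ot\Uc\ot\Fc\ot\Uc\ot V.
\]
The partial counit you wrote, $\Id_{\Fc}\ot\ve_{\Uc}\ot\ve_{\Fc}\ot\Id_{\Uc}\ot\Id_V$, sends this to $a\ot b\ot w$ (using $\ve(b\ps{1}{}^{\pr{0}})b\ps{1}{}^{\pr{1}}=\ve(b\ps{1})1$), and sends the right–hand side $(\Id\ot\Db_V)\Db_V(v)=a\ot b\ot w^{\sns{-1}}\ot w^{\sns{0}}\snsb{-1}\ot w^{\sns{0}}\snsb{0}$ to the same $a\ot b\ot w$. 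So your chosen projection yields a tautology, not \eqref{aux-27}. The correct projection is the complementary one,
\[
\ve_{\Fc}\ot\Id_{\Uc}\ot\Id_{\Fc}\ot\ve_{\Uc}\ot\Id_V,
\]
which kills the first $\Fc$-slot and the last $\Uc$-slot; applied to the displayed expression it gives $b^{\pr{0}}\ot a\cdot b^{\pr{1}}\ot w$, i.e.\ the left side of \eqref{aux-27}, and on $(\Id\ot\Db_V)\Db_V(v)$ it gives $v\snsb{-1}\ot (v\snsb{0})^{\sns{-1}}\ot (v\snsb{0})^{\sns{0}}$. This is precisely what the paper does. (Your intermediate expression ``$b\ps{1}{}^{\pr{0}}\ot a\cdot b\ps{1}{}^{\pr{1}}\ot b\ps{2}\ot w$'' also has one tensor factor too many; after the correct counit there is no surviving $b\ps{2}$.) With this fix, your argument matches the paper's proof.
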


\begin{proof}
Let us assume that $M$ is a comodule over the bicrossed product Hopf algebra $\mathcal{F} \acl \mathcal{U}$. Then by the coassociativity of the coaction, we have
\begin{align}\label{aux-28}
\begin{split}
& v^{\sns{-2}} \acl (v^{\sns{0}}\snsb{-2})^{\pr{0}} \ot v^{\sns{-1}} \cdot (v^{\sns{0}}\snsb{-2})^{\pr{1}} \acl v^{\sns{0}}\snsb{-1} \ot v^{\sns{0}}\snsb{0} \\
& = v^{\sns{-1}} \acl v^{\sns{0}}\snsb{-1} \ot (v^{\sns{0}}\snsb{0})^{\sns{-1}} \acl (v^{\sns{0}}\snsb{0})^{\sns{0}}\snsb{-1} \ot (v^{\sns{0}}\snsb{0})^{\sns{0}}\snsb{0}.
\end{split}
\end{align}
By applying  $\ve_{\mathcal{F}} \ot \Id_{\mathcal{U}} \ot \Id_{\mathcal{F}} \ot \ve_{\mathcal{U}} \ot \Id_{M}$ on both hand sides of \eqref{aux-28}, we get
\begin{equation}
(v^{\sns{0}}\snsb{-1})^{\pr{0}} \ot v^{\sns{-1}} \cdot (v^{\sns{0}}\snsb{-1})^{\pr{1}} \ot v^{\sns{0}}\snsb{0} = v\snsb{-1} \ot (v\snsb{0})^{\sns{-1}} \ot (v\snsb{0})^{\sns{0}}.
\end{equation}
Conversely, assume that \eqref{aux-27} holds for any $v \in V$. This results in
\begin{align}
\begin{split}
& v^{\sns{-2}} \ot (v^{\sns{0}}\snsb{-1})^{\pr{0}} \ot v^{\sns{-1}} \cdot (v^{\sns{0}}\snsb{-1})^{\pr{1}} \ot v^{\sns{0}}\snsb{0} \\
& = v^{\sns{-1}} \ot v^{\sns{0}}\snsb{-1} \ot (v^{\sns{0}}\snsb{0})^{\sns{-1}} \ot (v^{\sns{0}}\snsb{0})^{\sns{0}},
\end{split}
\end{align}
which implies  \eqref{aux-28} \ie the coassociativity of the $\mathcal{F} \acl \mathcal{U}$-coaction.
\end{proof}

\begin{corollary}
Let $(\Fg_1,\Fg_2)$ be a matched pair of Lie algebras and $V$ be an AYD module over the double crossed sum $\Fg_1 \bowtie \Fg_2$ with locally finite action and locally conilpotent coaction. Then $V$ has a left $R(\Fg_2) \acl U(\Fg_1)$-comodule structure by \eqref{auxy12}.
\end{corollary}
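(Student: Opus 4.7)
The plan is to invoke Lemma \ref{comodule on bicrossed product}, which reduces the claim to constructing a left $R(\Fg_2)$-comodule structure and a left $U(\Fg_1)$-comodule structure on $V$, and then verifying the compatibility \eqref{aux-27}. By Proposition \ref{auxx-1}, the local conilpotence of the $\Fa$-coaction restricts to local conilpotence of the $\Fg_1$- and $\Fg_2$-coactions, so Proposition \ref{proposition-construct-U(g)-coaction} lifts them to $U(\Fg_1)$- and $U(\Fg_2)$-coactions. In particular $v \mapsto v\snsb{-1} \ot v\snsb{0}$ is the required $U(\Fg_1)$-coaction, and Proposition \ref{proposition-g-AYD-U(g)-AYD} then promotes $V$ to a right-left AYD module over $U(\Fa) = U(\Fg_1) \bowtie U(\Fg_2)$, so the AYD identities recorded in Proposition \ref{proposition-24} are at our disposal. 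Separately, the local finiteness of the $\Fa$-action restricts to a locally finite right $U(\Fg_2)$-module structure on $V$, and dualizing as in Proposition \ref{proposition-module-induced-module-Lie-algebras} produces a left $R(\Fg_2)$-comodule $v \mapsto v^{\sns{-1}} \ot v^{\sns{0}}$, characterized by $v \cdot u' = v^{\sns{-1}}(u')\, v^{\sns{0}}$ for all $u' \in U(\Fg_2)$.

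The decisive step is the verification of \eqref{aux-27}, which I would carry out by pairing the middle $R(\Fg_2)$-factor of each side against an arbitrary $u' \in U(\Fg_2)$; since $R(\Fg_2)$ separates the points of $U(\Fg_2)$, this suffices. The right-hand side reduces immediately to $v\snsb{-1} \ot v\snsb{0} \cdot u'$. For the left-hand side, the matched pair relation $u^{\pr{0}} u^{\pr{1}}(u'\ps{2}) = u'\ps{2} \rt u$ on $U(\Fg_1)$ together with $v^{\sns{-1}}(u'\ps{1}) v^{\sns{0}} = v \cdot u'\ps{1}$ yields
\begin{equation*}
\bigl(u'\ps{2} \rt (v \cdot u'\ps{1})\snsb{-1}\bigr) \ot (v \cdot u'\ps{1})\snsb{0}.
\end{equation*}
Applying the AYD relation \eqref{auxy5} to $(v \cdot u'\ps{1})\snsb{-1} \ot (v \cdot u'\ps{1})\snsb{0}$ and writing $\Delta^{(2)}(u') = u'\ps{1} \ot u'\ps{2} \ot u'\ps{3}$, this transforms into $(u'\ps{3} S(u'\ps{2})) \rt v\snsb{-1} \ot v\snsb{0} \cdot u'\ps{1}$. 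Cocommutativity of $U(\Fg_2)$ allows us to swap $u'\ps{2}$ and $u'\ps{3}$, and then the antipode axiom collapses $u'\ps{2} S(u'\ps{3}) \ot u'\ps{1} \in U(\Fg_2) \ot U(\Fg_2)$ to $1 \ot u'$, so the paired left-hand side becomes $v\snsb{-1} \ot v\snsb{0} \cdot u'$, agreeing with the paired right-hand side.

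The main difficulty is combinatorial rather than conceptual: managing the three-fold coproduct indices together with the AYD swap requires careful bookkeeping, but the cancellation is driven precisely by the AYD compatibility over $U(\Fg_1) \bowtie U(\Fg_2)$ provided by Proposition \ref{proposition-24}. Once \eqref{aux-27} is established, Lemma \ref{comodule on bicrossed product} delivers the advertised left $R(\Fg_2) \acl U(\Fg_1)$-comodule structure on $V$.
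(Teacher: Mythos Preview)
Your proposal is correct and follows essentially the same route as the paper's proof: both establish the $U(\Fg_1)$-coaction from local conilpotence and the $R(\Fg_2)$-coaction from local finiteness, and then verify the compatibility \eqref{aux-27} by pairing against an arbitrary $u' \in U(\Fg_2)$ and invoking the AYD identity \eqref{auxy5} from Proposition \ref{proposition-24}. The paper records the equivalent reformulation $u'\ps{2} \rt (v \cdot u'\ps{1})\snsb{-1} \ot (v \cdot u'\ps{1})\snsb{0} = v\snsb{-1} \ot v\snsb{0} \cdot u'$ directly and then translates it via the pairing, whereas you pair first and then carry out the cocommutativity--antipode cancellation explicitly; these are the same computation read in opposite directions.
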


\begin{proof}
Since $V$ is a locally conilpotent left $\Fg_1$-comodule, it has a left $U(\Fg_1)$-comodule structure. On the other hand, being a locally finite right $\Fg_2$-module, $V$ is a left $R(\Fg_2)$-comodule \cite{Hoch74}.
By Proposition \ref{proposition-24} we have
\begin{equation}
(v \cdot u')\snsb{-1} \ot (v \cdot u')\snsb{0} = S(u'\ps{2}) \rt v\snsb{-1} \ot v\snsb{0} \cdot u'\ps{1},
\end{equation}
or equivalently
\begin{equation}
u'\ps{2} \rt (v \cdot u'\ps{1})\snsb{-1} \ot (v \cdot u'\ps{1})\snsb{0} = v\snsb{-1} \ot v\snsb{0} \cdot u'.
\end{equation}
Using the $R(\Fg_2)$-coaction on $V$ and $R(\Fg_2)$-coaction on $U(\Fg_1)$, we can translate the latter into
\begin{equation}
(v^{\sns{-1}} \cdot (v^{\sns{0}}\snsb{-1})^{\pr{1}})(u') (v^{\sns{0}}\snsb{-1})^{\pr{0}} \ot v^{\sns{0}}\snsb{0} = v\snsb{-1} \ot (v\snsb{0})^{\sns{0}}((v\snsb{0})^{\sns{-1}})(u').
\end{equation}
Finally, by the non-degenerate pairing between $U(\Fg_2)$ and $R(\Fg_2)$ we get
\begin{equation}
(v^{\sns{0}}\snsb{-1})^{\pr{0}} \ot v^{\sns{-1}} \cdot (v^{\sns{0}}\snsb{-1})^{\pr{1}} \ot v^{\sns{0}}\snsb{0} = v\snsb{-1} \ot (v\snsb{0})^{\sns{-1}} \ot (v\snsb{0})^{\sns{0}},
\end{equation}
\ie the $R(\Fg_2) \acl U(\Fg_1)$-coaction compatibility.
\end{proof}

Having understood the module and the comodule compatibilities over Lie-Hopf algebras, we now proceed to the YD-module compatibility conditions.

\begin{proposition}\label{aux-41}
Let $(\mathcal{F}, \mathcal{U})$ be a matched pair of Hopf algebras and $V$ be a right module and left comodule over $\mathcal{F} \acl \mathcal{U}$ such that via the corresponding module and comodule structures M becomes a YD-module over $\Uc$. Then $V$ is a YD-module over $\Fc\acl\Uc$ if and only if $V$ is a YD-module over  $\Fc$ via the corresponding module and comodule structures, and the conditions
\begin{align}\label{auxy8}
&(v \cdot f)\snsb{-1} \ot (v \cdot f)\snsb{0} = v\snsb{-1} \ot v\snsb{0} \cdot f, \\\label{auxy9}
&v^{\sns{-1}}f\ps{1} \ot v^{\sns{0}}f\ps{2} = v^{\sns{-1}}(v^{\sns{0}}\snsb{-1} \rt f\ps{1}) \ot v^{\sns{0}}\snsb{0} \cdot f\ps{2}, \\\label{auxy10}
&v^{\sns{-1}} \ot v^{\sns{0}} \cdot u = (u\ps{1})^{\pr{1}}(u\ps{2} \rt (v \cdot (u\ps{1})^{\pr{0}})^{\sns{-1}}) \ot (v \cdot (u\ps{1})^{\pr{0}})^{\sns{0}}, \\\label{auxy11}
&v\snsb{-1}u^{\pr{0}} \ot v\snsb{0} \cdot u^{\pr{1}} = v\snsb{-1}u \ot v\snsb{0}
\end{align}
are satisfied.
\end{proposition}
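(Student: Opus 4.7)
The plan rests on three standard observations about the Yetter--Drinfeld compatibility \eqref{aux-right-left-YD-module} for the bicrossed product $\Fc \acl \Uc$. First, the YD condition is multiplicative: if it holds on $g,h \in \Fc \acl \Uc$, then it holds on $gh$. Second, the Hopf algebra $\Fc \acl \Uc$ is generated as an algebra by the two Hopf subalgebras $\Fc \cong \Fc \acl 1$ and $\Uc \cong 1 \acl \Uc$, so by multiplicativity it suffices to verify the YD condition on elements of the form $f \acl 1$ and $1 \acl u$. Third, according to \eqref{auxy12} the coaction of $\Fc \acl \Uc$ on $V$ decomposes as $v \mapsto v^{\sns{-1}} \acl v^{\sns{0}}\snsb{-1} \ot v^{\sns{0}}\snsb{0}$, so the compatibility equation splits into an $\Fc$-slot and a $\Uc$-slot by applying $\ve_\Uc$ or $\ve_\Fc$ to the first tensor factor.

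For the forward direction, I would test \eqref{aux-right-left-YD-module} on the generator $h = f \acl 1$ using $\Delta(f \acl 1) = (f\ps{1} \acl 1) \ot (f\ps{2} \acl 1)$. Applying $\Id_\Fc \ot \ve_\Uc \ot \Id_V$ to the first tensor slot isolates the $\Fc$-component of the resulting identity, recovering the YD condition for $\Fc$ together with \eqref{auxy9}, while applying $\ve_\Fc \ot \Id_\Uc \ot \Id_V$ isolates the $\Uc$-component and yields \eqref{auxy8}. Next I would repeat the procedure for $h = 1 \acl u$, whose comultiplication reads $\Delta(1 \acl u) = (1 \acl (u\ps{1})^{\pr{0}}) \ot ((u\ps{1})^{\pr{1}} \acl u\ps{2})$; because $V$ is already assumed to be a YD module over $\Uc$, the $\Uc$-slot collapses to a tautology and the two new identities \eqref{auxy10} and \eqref{auxy11} emerge from the $\Fc$-slot and the $\Uc$-slot respectively.

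For the backward direction I would verify the YD compatibility on each generator and then invoke multiplicativity. The case $h = f \acl 1$ uses the YD condition on $\Fc$ together with \eqref{auxy8}--\eqref{auxy9}; the case $h = 1 \acl u$ uses the assumed YD condition on $\Uc$ together with \eqref{auxy10}--\eqref{auxy11}; and since an arbitrary generator of $\Fc \acl \Uc$ satisfies $f \acl u = (f \acl 1)(1 \acl u)$, the multiplicativity lemma completes the proof.

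The main obstacle is the combinatorial bookkeeping: expanding the coaction \eqref{auxy12} iteratively against the rather intricate comultiplication $\Delta(f \acl u) = f\ps{1} \acl (u\ps{1})^{\pr{0}} \ot f\ps{2}(u\ps{1})^{\pr{1}} \acl u\ps{2}$ produces long expressions, and the matched pair axioms \eqref{aux-matched-pair-1}--\eqref{aux-matched-pair-5} (in particular \eqref{aux-matched-pair-5}, which mixes the action $\rt$ with the coaction $\nb$) must be invoked in a carefully chosen order to reorganize the terms so that applying the appropriate counit isolates each of the four conditions \eqref{auxy8}--\eqref{auxy11}. Everything else is routine Sweedler-calculus, but this splitting step is where the real content of the proposition lies.
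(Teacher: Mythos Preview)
Your proposal is correct and follows essentially the same approach as the paper: split the YD compatibility on $\Fc\acl\Uc$ into the generators $f\acl 1$ and $1\acl u$, apply $\Id_\Fc\ot\ve_\Uc$ and $\ve_\Fc\ot\Id_\Uc$ to the first tensor slot to isolate the four conditions (using the assumed YD compatibility over $\Uc$ and the deduced one over $\Fc$ as simplifying inputs), and for the converse invoke multiplicativity of the YD condition. One small correction: the paper obtains the YD condition over $\Fc$ directly from the fact that $\Fc$ is a Hopf subalgebra of $\Fc\acl\Uc$, rather than by a counit projection; and the matched-pair axiom \eqref{aux-matched-pair-5} is not actually needed in the computation, so the bookkeeping is lighter than you anticipate.
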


\begin{proof}
First we assume that $M$ is a YD module over $\mathcal{F} \acl \mathcal{U}$. Since $\Fc$ is a Hopf subalgebra of $\Fc \acl \Uc$, $M$ is a YD module over $\Fc$.

\medskip

Next, we prove the compatibilities \eqref{auxy8} to \eqref{auxy11}. Writing \eqref{aux-right-left-YD-module} for an arbitrary $f \acl 1 \in \mathcal{F} \acl \mathcal{U}$, we get
\begin{align}
\begin{split}
& (f\ps{2} \acl 1) \cdot ((v \cdot f\ps{1})^{\sns{-1}} \acl (v \cdot f\ps{1})^{\sns{0}}\snsb{-1}) \ot (v \cdot f\ps{1})^{\sns{0}}\snsb{0} \\
& = (v^{\sns{-1}} \acl v^{\sns{0}}\snsb{-1}) \cdot (f\ps{1} \acl 1) \ot v^{\sns{0}}\snsb{0} \cdot f\ps{2}.
\end{split}
\end{align}
Using the YD condition for $\mathcal{F}$ on the left hand side
\begin{align}\label{aux-33}
\begin{split}
& f\ps{2}(v \cdot f\ps{1})^{\sns{-1}} \acl (v \cdot f\ps{1})^{\sns{0}}\snsb{-1} \ot (v \cdot f\ps{1})^{\sns{0}}\snsb{0} \\
& = v^{\sns{-1}}f\ps{1} \acl (v^{\sns{0}}f\ps{2})\snsb{-1} \ot (v^{\sns{0}}f\ps{2})\snsb{0} \\
& = v^{\sns{-1}}(v^{\sns{0}}\snsb{-2} \rt f\ps{1}) \acl v^{\sns{0}}\snsb{-1} \ot v^{\sns{0}}\snsb{0} \cdot f\ps{2}.
\end{split}
\end{align}
Finally applying $\ve_{\mathcal{F}} \ot \Id_{\mathcal{U}} \ot \Id_V$ on both hand sides of \eqref{aux-33} we get \eqref{auxy8}. Similarly we apply $\Id_{\mathcal{F}} \ot \ve_{\mathcal{U}} \ot \Id_V$ to get \eqref{auxy9}.

\medskip

By the same argument,  the YD compatibility for an element of the form $1 \acl u \in \Fc\acl \Uc$,  followed by the YD compatibility on $\Uc$ yields \eqref{auxy10} and \eqref{auxy11}.

\medskip

Conversely, let us assume that $V$ is a right-left YD-module over $\Fc$ and the conditions \eqref{auxy8},\ldots,\eqref{auxy11} are satisfied. We will prove that the YD condition over $\mathcal{F} \acl \mathcal{U}$ holds for  the elements of the form $f \acl 1 \in \mathcal{F} \acl \mathcal{U}$ and  $1 \acl u \in \mathcal{F} \acl \mathcal{U}$. By \eqref{auxy9}, we have
\begin{align}
\begin{split}
& v^{\sns{-1}}f\ps{1} \acl (v^{\sns{0}}f\ps{2})\snsb{-1} \ot (v^{\sns{0}}f\ps{2})\snsb{0} \\
& = v^{\sns{-1}}(v^{\sns{0}}\snsb{-1} \rt f\ps{1}) \acl (v^{\sns{0}}\snsb{0} \cdot f\ps{2})\snsb{-1} \ot (v^{\sns{0}}\snsb{0} \cdot f\ps{2})\snsb{0},
\end{split}
\end{align}
which, using  \eqref{auxy8}, implies the YD compatibility for the elements of the form $f \acl 1 \in \mathcal{F} \acl \mathcal{U}$.

\medskip
 Next, by \eqref{auxy10} we have
\begin{align}
\begin{split}
& (u\ps{1})^{\pr{1}}(u\ps{2} \rt (v \cdot (u\ps{1})^{\pr{0}})^{\sns{-1}}) \acl \\
& \hspace{4cm} u\ps{3}(v \cdot (u\ps{1})^{\pr{0}})^{\sns{0}}\snsb{-1} \ot (v \cdot (u\ps{1})^{\pr{0}})^{\sns{0}}\snsb{0} \\
& = v^{\sns{-1}} \acl u\ps{2}(v^{\sns{0}} \cdot u\ps{1})\snsb{-1} \ot (v^{\sns{0}} \cdot u\ps{1})\snsb{0}, \end{split}
\end{align}
which amounts to the  YD compatibility condition for the elements of the form $1 \acl u \in \mathcal{F} \acl \mathcal{U}$ by using YD compatibility over $\mathcal{U}$ and \eqref{auxy11}.

\medskip
 Since YD condition is multiplicative, it is then satisfied for any $f \acl u \in \mathcal{F} \acl \mathcal{U}$, and hence we have proved that $V$ is YD module over $\mathcal{F} \acl \mathcal{U}$.
\end{proof}

\begin{proposition}\label{aux-46}
Let $(\Fg_1,\Fg_2)$ be a matched pair of finite dimensional Lie algebras, $V$ an AYD module over the double crossed sum $\Fg_1 \bowtie \Fg_2$ with locally finite action and locally conilpotent coaction. Then by the right action \eqref{aux-17} and the left coaction \eqref{auxy12}, $V$ becomes a right-left YD module over $R(\Fg_2) \acl U(\Fg_1)$.
\end{proposition}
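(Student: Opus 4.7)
The plan is to apply Proposition~\ref{aux-41} to the matched pair $(\Fc,\Uc)=(R(\Fg_2),U(\Fg_1))$. The two corollaries immediately preceding the proposition already supply the right $R(\Fg_2)\acl U(\Fg_1)$-module structure \eqref{aux-17} and the left comodule structure \eqref{auxy12}, so the work is to check the three remaining hypotheses of Proposition~\ref{aux-41}: that $V$ is a right-left YD module over $U(\Fg_1)$, that $V$ is a right-left YD module over $R(\Fg_2)$, and that the four compatibilities \eqref{auxy8}--\eqref{auxy11} hold.

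For the YD condition over $U(\Fg_1)$, by Proposition~\ref{auxx-2} the data $V$ is an AYD module over the Lie algebra $\Fg_1$; local conilpotence of the $\Fg_1$-coaction together with Proposition~\ref{proposition-g-AYD-U(g)-AYD} lifts this to an AYD module structure over the Hopf algebra $U(\Fg_1)$. Since $U(\Fg_1)$ is cocommutative its antipode is involutive, so the AYD and YD conditions coincide, giving the required $U(\Fg_1)$-YD structure at no extra cost. For the YD condition over $R(\Fg_2)$, the right $R(\Fg_2)$-action on $V$ dualizes the lifted $U(\Fg_2)$-coaction produced by Proposition~\ref{proposition-g-AYD-U(g)-AYD} applied to $\Fg_2$, while the left $R(\Fg_2)$-coaction dualizes the locally finite $U(\Fg_2)$-action on $V$. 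Commutativity of $R(\Fg_2)$ again forces its antipode to be involutive, so it suffices to verify the AYD compatibility, and that is obtained from the AYD condition over $U(\Fg_2)$ (given by Proposition~\ref{auxx-2}) by evaluating the non-degenerate Harish-Chandra pairing $\langle\,,\,\rangle:R(\Fg_2)\ot U(\Fg_2)\to\Cb$ of \eqref{aux-F-V-pairing}.

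For \eqref{auxy8}--\eqref{auxy11}, each identity is the translation to the Hopf-algebra level of one of the mixed compatibilities \eqref{auxy3}--\eqref{auxy6} from Proposition~\ref{proposition-24}. Concretely, \eqref{auxy8}, which asserts that the $U(\Fg_1)$-coaction on $V$ commutes with the $R(\Fg_2)$-action, is obtained by pairing \eqref{auxy5} against an element $f\in R(\Fg_2)$ and using the definition of the $R(\Fg_2)$-action as $v\cdot f=f(v\sns{-1})v\sns{0}$; the term $S(u'\ps{2})\rt v\snsb{-1}$ drops out after pairing because of \eqref{auxy6}. Identity \eqref{auxy9} is the image of \eqref{auxy3}--\eqref{auxy4} under the dualization sending the $U(\Fg_2)$-action on $V$ to the $R(\Fg_2)$-coaction, combined with the fact that the $U(\Fg_1)$-action on $R(\Fg_2)$ used to form $u\ps{1}\rt f$ is, by definition \eqref{aux-action-U-to-F}, the transpose of the $U(\Fg_2)$-action $u'\lt u$ on $U(\Fg_1)$. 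Identities \eqref{auxy10} and \eqref{auxy11} express in Hopf-algebraic terms the way the $R(\Fg_2)$-coaction on $V$ interacts with the $U(\Fg_1)$-action, together with the structure coaction $\Db_{\rm Alg}$ of \eqref{aux-g-coaction} defining the bicrossed product; they are deduced from \eqref{auxy3}, \eqref{auxy4} by pairing in the second tensor factor and invoking the defining relation $f_i^j(u')=\langle u'\rt X_i,\theta^j\rangle$.

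The main obstacle is the bookkeeping in the last paragraph: the four compatibilities at the Lie-algebra level are phrased via the mutual actions of $U(\Fg_1)$ and $U(\Fg_2)$ on each other, whereas the four compatibilities in Proposition~\ref{aux-41} are phrased via the $U(\Fg_1)$-action on $R(\Fg_2)$ and the $R(\Fg_2)$-coaction on $U(\Fg_1)$ built into the bicrossed product. Translating between the two viewpoints requires repeatedly applying the non-degenerate pairing $\langle\,,\,\rangle$, using that $\Db_{\rm Alg}$ is the dualization of the left $U(\Fg_2)$-action on $\Fg_1$, and using \eqref{aux-action-U-to-F}. Each step is a direct calculation, but there is no single conceptual shortcut that bypasses unwinding these dualities term by term; once the translations are in place, \eqref{auxy8}--\eqref{auxy11} follow formally, completing the hypotheses of Proposition~\ref{aux-41} and thus the proof.
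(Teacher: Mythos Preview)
Your overall strategy is exactly the paper's: invoke Proposition~\ref{aux-41}, so that the task reduces to the YD conditions over $U(\Fg_1)$ and $R(\Fg_2)$ separately plus the four mixed compatibilities \eqref{auxy8}--\eqref{auxy11}. Your handling of the two YD conditions is fine.

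The gap is in your attributions for \eqref{auxy8}--\eqref{auxy11}: you have the dualities crossed. The right $R(\Fg_2)$-action $v\cdot f=f(v\sns{-1})v\sns{0}$ is the pairing dual of the left $U(\Fg_2)$-\emph{coaction} on $V$, not of the $U(\Fg_2)$-action; conversely the left $R(\Fg_2)$-coaction dualizes the right $U(\Fg_2)$-\emph{action}. So \eqref{auxy8}, which concerns the interaction of the $U(\Fg_1)$-coaction with the $R(\Fg_2)$-action, cannot come from \eqref{auxy5}, which concerns the $U(\Fg_1)$-coaction and the $U(\Fg_2)$-\emph{action}; pairing \eqref{auxy5} against $f$ produces instead the comodule compatibility \eqref{aux-27}, as in the corollary preceding this proposition. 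The correct source for \eqref{auxy8} is the coassociativity relation \eqref{aux-38} of the $U(\Fg_1)\bowtie U(\Fg_2)$-coaction from Corollary~\ref{corollary-58}: apply $\Id_{U(\Fg_1)}\ot f\ot\Id_V$ to both sides. Identity \eqref{auxy9} is obtained from \eqref{auxy6} together with \eqref{aux-38}, then evaluated against $f$ via the non-degenerate pairing; the identities \eqref{auxy3}--\eqref{auxy4} are not what is used here. Identity \eqref{auxy10} is the module compatibility $(v\cdot u')\cdot u=(v\cdot(u'\ps{1}\rt u\ps{1}))\cdot(u'\ps{2}\lt u\ps{2})$ over $U(\Fg_1)\bowtie U(\Fg_2)$ rewritten via the pairing, not a consequence of \eqref{auxy3}. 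Only for \eqref{auxy11} is your attribution on target: it follows from \eqref{auxy4} combined with \eqref{aux-38}. In short, the ``bookkeeping'' you flag as the obstacle is not merely tedious; your sketched correspondences are off by one dualization and would not yield the identities if followed literally.
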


\begin{proof}
We prove the claim  by verifying the conditions of Proposition \ref{aux-41}.
Since $V$ is an AYD module over $\Fg_1 \bowtie \Fg_2$ with a locally conilpotent coaction, it is an AYD module over $U(\Fg_1) \bowtie U(\Fg_2)$. In particular by Corollary \ref{corollary-58}, it is a left comodule over $U(\Fg_1) \bowtie U(\Fg_2)$ via the  coaction  \eqref{aux-corollary58-coaction}. By the coassociativity of this coaction, we have
\begin{equation}\label{aux-38}
v\sns{0}\snsb{-1} \ot v\sns{-1} \ot v\sns{0}\snsb{0} = v\snsb{-1} \ot v\snsb{0}\sns{-1} \ot v\snsb{0}\sns{0}.
\end{equation}
Thus, the application of $\Id_{U(\Fg_1)} \ot f \ot \Id_M$  on both sides of \eqref{aux-38} produces \eqref{auxy8}.

\medskip

Using \eqref{auxy6} and \eqref{aux-38} we get
\begin{align}
\begin{split}
& u'\ps{2}(v \cdot u'\ps{1})\sns{-1} \ot (v \cdot u'\ps{1})\sns{0} = \\
& (u'\ps{2} \lt (v \cdot u'\ps{1})\sns{0}\snsb{-1})(v \cdot u'\ps{1})\sns{-1} \ot (v \cdot u'\ps{1})\sns{0}\snsb{0} = \\
& (u'\ps{2} \lt (v \cdot u'\ps{1})\snsb{-1})(v \cdot u'\ps{1})\snsb{0}\sns{-1} \ot (v \cdot u'\ps{1})\snsb{0}\sns{0}.
\end{split}
\end{align}
Then applying $f \ot \Id_V$ to both sides and using the non-degenerate pairing between $R(\Fg_2)$ and $U(\Fg_2)$, we conclude \eqref{auxy9}.

\medskip

To verify  \eqref{auxy10}, we use the $U(\Fg_1) \bowtie U(\Fg_2)$-module compatibility on $V$, \ie for any $u \in U(\Fg_1)$, $u' \in U(\Fg_2)$ and $v \in V$,
\begin{equation}
(v \cdot u') \cdot u = (v \cdot (u'\ps{1} \rt u\ps{1})) \cdot (u'\ps{2} \lt u\ps{2}).
\end{equation}
Using the non-degenerate pairing between $R(\Fg_2)$ and $U(\Fg_2)$, we rewrite this equality as
\begin{align}
\begin{split}
& v^{\sns{-1}}(u')v^{\sns{0}} \cdot u = (v \cdot (u'\ps{1} \rt u\ps{1}))^{\sns{-1}}(u'\ps{2} \lt u\ps{2})(v \cdot (u'\ps{1} \rt u\ps{1}))^{\sns{0}} \\
& = u\ps{2} \rt (v \cdot (u'\ps{1} \rt u\ps{1}))^{\sns{-1}}(u'\ps{2})(v \cdot (u'\ps{1} \rt u\ps{1}))^{\sns{-1}} \\
& = (u\ps{1})^{\pr{1}}(u'\ps{1})(u\ps{2} \rt (v \cdot (u\ps{1})^{\pr{0}})^{\sns{-1}})(u'\ps{2})(v \cdot (u\ps{1})^{\pr{0}})^{\sns{0}} \\
& = [(u\ps{1})^{\pr{1}}(u\ps{2} \rt (v \cdot (u\ps{1})^{\pr{0}})^{\sns{-1}})](u')(v \cdot (u\ps{1})^{\pr{0}})^{\sns{0}},
\end{split}
\end{align}
which gives \eqref{auxy10}.

\medskip

Using the $U(\Fg_1) \bowtie U(\Fg_2)$-coaction compatibility \eqref{aux-38} together with \eqref{auxy4},  we have
\begin{align}
\begin{split}
& v\snsb{-1}u^{\pr{0}} \ot v\snsb{0} \cdot u^{\pr{1}} = v\snsb{-1}u^{\pr{0}}u^{\pr{1}}(v\snsb{0}\sns{-1}) \ot v\snsb{0}\sns{0} \\
& = v\snsb{-1}(v\snsb{0}\sns{-1} \rt u) \ot v\snsb{0}\sns{0} = v\sns{0}\snsb{-1}(v\sns{-1} \rt u) \ot v\sns{0}\snsb{0} \\
& = v\snsb{-1}u \ot v\sns{0},
\end{split}
\end{align}
which is \eqref{auxy11}.
\end{proof}

We are now ready to express the main result of this subsection.

\begin{theorem}
Let $(\mathcal{F}, \mathcal{U})$ be a matched pair of Hopf algebras such that $\Fc$ is commutative, $\Uc$ is cocommutative, and $\Big\langle\;, \Big\rangle:\mathcal{F} \times \mathcal{V} \to \mathbb{C}$ is a non-degenerate Hopf pairing. Then $V$ is an AYD module over $\Uc\bi\Vc$ if and only if $V$ is a YD module over $\Fc\acl \Uc$ such that it is a YD module over $\Uc$ by the corresponding module and comodule structures.
\end{theorem}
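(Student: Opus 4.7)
The plan is to reduce both sides of the equivalence to a common list of conditions via the structural decompositions given by Corollary \ref{auxx-24} (for AYD modules over $\Uc\bi\Vc$) and Proposition \ref{aux-41} (for YD modules over $\Fc\acl\Uc$), and then match the two lists through the non-degenerate Hopf pairing $\langle\,,\,\rangle:\Fc\times\Vc\to\Cb$. The pairing converts right $\Vc$-modules into left $\Fc$-comodules and left $\Vc$-comodules into right $\Fc$-modules in the usual way, exactly as in the Lie-algebra prototype treated in Proposition \ref{aux-46}; this allows us to identify the underlying data on both sides.

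First, starting from an AYD module $V$ over $\Uc\bi\Vc$, I would record that by Corollary \ref{auxx-24}, $V$ is an AYD module over $\Uc$ and an AYD module over $\Vc$ satisfying the four compatibility conditions \eqref{auxy13}--\eqref{auxy16}. I then transport the $\Vc$-action and $\Vc$-coaction to an $\Fc$-coaction $v\mapsto v^{\sns{-1}}\ot v^{\sns{0}}$ and an $\Fc$-action $v\cdot f$ via the non-degenerate pairing, giving $V$ simultaneously the structure of a right $\Fc\acl\Uc$-module (by Lemma \ref{module on bicrossed product}, which requires checking \eqref{aux-25}) and a left $\Fc\acl\Uc$-comodule (by Lemma \ref{comodule on bicrossed product}, which requires \eqref{aux-27}). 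The compatibility \eqref{aux-25} will follow from \eqref{auxy13}--\eqref{auxy14} after dualization, and \eqref{aux-27} will follow from \eqref{auxy15}--\eqref{auxy16}; the commutativity of $\Fc$ and cocommutativity of $\Uc$ enter precisely here to make the matched-pair action/coaction fit together.

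Next, to obtain the YD condition over $\Fc\acl\Uc$, I would invoke Proposition \ref{aux-41} and verify its four hypotheses \eqref{auxy8}--\eqref{auxy11}. Conditions \eqref{auxy8} and \eqref{auxy11} follow from coassociativity of the combined $\Uc\bi\Vc$-coaction together with \eqref{auxy14} and \eqref{auxy16}, read through the pairing; conditions \eqref{auxy9} and \eqref{auxy10} are the dualized forms of \eqref{auxy13} and \eqref{auxy15} respectively. The hypothesis that $V$ is already YD over $\Uc$ is available by assumption in the reverse direction and is a genuine AYD-over-$\Uc$ statement in the forward direction (the twisting by $S$ vanishes on $\Uc$ exactly because we pass through the AYD compatibility and use cocommutativity of $\Uc$); finally one checks that the induced $\Fc$-module/$\Fc$-comodule structure on $V$ satisfies the YD condition over $\Fc$, which is essentially automatic since $\Fc$ is commutative and the $\Vc$-AYD condition \eqref{auxy15} is symmetric enough to yield YD (not merely AYD) after dualization.

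For the converse, I would reverse each step: given a YD module over $\Fc\acl\Uc$ which restricts to a YD module over $\Uc$, apply Proposition \ref{aux-41} to extract the four compatibilities \eqref{auxy8}--\eqref{auxy11}, then dualize the $\Fc$-structure back to a $\Vc$-structure, and verify the four AYD conditions \eqref{auxy13}--\eqref{auxy16} together with AYD over $\Uc$ and $\Vc$; Corollary \ref{auxx-24} then assembles this into an AYD module over $\Uc\bi\Vc$. The main obstacle I anticipate is keeping the bookkeeping straight when translating the coaction/action conversions through the pairing, especially the mixed terms involving $u^{\pr{0}}\ot u^{\pr{1}}$ (the right $\Fc$-coaction on $\Uc$ dual to the left $\Vc$-action on $\Uc$) in \eqref{aux-27} and \eqref{auxy10}--\eqref{auxy11}; these must be matched to the right-$\Vc$-action parts of \eqref{auxy13}--\eqref{auxy14} exactly, and that matching is where the non-degeneracy of the pairing and the cocommutativity of $\Uc$ play their decisive role.
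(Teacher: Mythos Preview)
Your strategy is exactly the paper's: reduce each side via Corollary \ref{auxx-24} and Proposition \ref{aux-41} to a list of four compatibility conditions plus YD/AYD over the factors, and match the lists through the non-degenerate pairing. So the plan is correct and follows the same route.

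One bookkeeping correction: your dictionary between the two lists is partly swapped. In the paper's matching, \eqref{auxy9} corresponds to \eqref{auxy16} (not \eqref{auxy13}), \eqref{auxy11} corresponds to \eqref{auxy14}, while \eqref{auxy8} comes from the $\Uc\bi\Vc$-comodule compatibility and \eqref{auxy10} from the $\Uc\bi\Vc$-module compatibility; conversely \eqref{auxy13} dualizes to the module compatibility \eqref{aux-25} and \eqref{auxy15} to the comodule compatibility \eqref{aux-27}. If you trace the variables (\eqref{auxy9} mixes the $\Fc$-coaction $v^{\sns{-1}}$ with the $\Uc$-coaction $v\snsb{-1}$, hence under the pairing it relates the $\Vc$-action to the $\Uc$-coaction, which is exactly \eqref{auxy16}), the correct pairings fall out. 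This does not affect the validity of the approach, but would cause friction when you try to execute the computations. Also note that the needed ``YD over $\Fc$'' and ``YD over $\Uc$'' hypotheses of Proposition \ref{aux-41} are supplied because $\Fc$ commutative and $\Uc$ cocommutative force $S=S^{-1}$ on each factor, so AYD and YD coincide there; your remark that YD over $\Fc$ is ``essentially automatic'' is correct for this reason, not because of any extra symmetry in \eqref{auxy15}.
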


\begin{proof}
Let $V$ be a right-left YD-module over the bicrossed product Hopf algebra $\mathcal{F} \acl \mathcal{U}$ as well as a right-left YD module over $\Uc$. We first prove that $V$ is a right $\mathcal{U} \bowtie \mathcal{V}$-module.

\medskip

By Proposition \ref{aux-41}, we have \eqref{auxy10}.  Evaluating the both sides of this equality on an arbitrary $u' \in \mathcal{V}$ we get
\begin{equation}
(v \cdot u') \cdot u = (v \cdot (u'\ps{1} \rt u\ps{1})) \cdot (u'\ps{2} \lt u\ps{2}).
\end{equation}
This proves that $V$ is a right module over the double crossed product $\mathcal{U} \bowtie \mathcal{V}$.

\medskip

Next, we show that $V$ is a left $\mathcal{U} \bowtie \mathcal{V}$-comodule. This time using \eqref{auxy8}
 and the duality between right $\mathcal{F}$-action and left $\mathcal{V}$-coaction we get
\begin{equation}
f(v\sns{-1})v\sns{0}\snsb{-1} \ot v\sns{0}\snsb{0} = f(v\snsb{0}\sns{-1})v\snsb{-1} \ot v\snsb{0}\sns{0}.
\end{equation}
Since the pairing is non-degenerate, we conclude that $V$ is a left comodule over $\Uc\bi\Vc$.

\medskip

Finally using Corollary \ref{auxx-24}, we prove that AYD condition over $\mathcal{U} \bowtie \mathcal{V}$ is satisfied, \ie  we show that \eqref{auxy13} to \eqref{auxy16} are satisfied.

\medskip

Considering the Hopf  duality between $\mathcal{F}$ and  $\mathcal{V}$, the right $\mathcal{F} \acl \mathcal{U}$-module compatibility reads
\begin{equation}
f((v \cdot u)\sns{-1})(v \cdot u)\sns{0} = f(v\sns{-1} \lt u\ps{1})v\sns{0} \cdot u\ps{2}.
\end{equation}
Hence \eqref{auxy13} holds.

\medskip

By  \eqref{auxy11} and the Hopf duality between  $\Fc$ and  $\Vc$, we get
\begin{equation}
v\snsb{-1}u^{\pr{0}}u^{\pr{1}}(v\snsb{0}\sns{-1}) \ot v\snsb{0}\sns{0} = v\snsb{-1}(v\snsb{0}\sns{-1} \rt u) \ot v\snsb{0}\sns{0} = v\snsb{-1}u \ot v\snsb{0},
\end{equation}
which immediately imply \eqref{auxy14}.

\medskip

Evaluating the left $\mathcal{F} \acl \mathcal{U}$-coaction compatibility \eqref{aux-27} on an arbitrary $u' \in \mathcal{V}$, we get
\begin{equation}
u'\ps{2} \rt (v \cdot u'\ps{1})\snsb{-1} \ot (v \cdot u'\ps{1})\snsb{0} = v\snsb{-1} \ot v\snsb{0} \cdot u',
\end{equation}
which  immediately  implies \eqref{auxy15}.

\medskip

Finally,  evaluating the left hand side of \eqref{auxy9} on an arbitrary $u' \in \mathcal{V}$, we get
\begin{align}
\begin{split}
& LHS = f\ps{1}(u'\ps{2})(v \cdot u'\ps{1}) \cdot f\ps{2} = f\ps{1}(u'\ps{2})f\ps{2}((v \cdot u'\ps{1})\sns{-1}) (v \cdot u'\ps{1})\sns{0} \\
& = f(u'\ps{2}(v \cdot u'\ps{1})\sns{-1}) (v \cdot u'\ps{1})\sns{0} = f(v\sns{-1} \cdot u'\ps{1}) v\sns{0} \cdot u'\ps{2},
\end{split}
\end{align}
and
\begin{align}
\begin{split}
& RHS = (v \cdot u'\ps{1})\snsb{-1} \rt f\ps{1}(u'\ps{2})f\ps{2}((v \cdot u'\ps{1})\snsb{0}\sns{-1})(v \cdot u'\ps{1})\snsb{0}\sns{0} \\
& = f\ps{1}(u'\ps{2} \lt (v \cdot u'\ps{1})\snsb{-1})f\ps{2}((v \cdot u'\ps{1})\snsb{0}\sns{-1})(v \cdot u'\ps{1})\snsb{0}\sns{0} \\
& = f\ps{1}(u'\ps{4} \lt (v\sns{0} \cdot u'\ps{2})\snsb{-1})f\ps{2}(S(u'\ps{3})v\sns{-1}u'\ps{1})(v\sns{0} \cdot u'\ps{2})\snsb{0} \\
& = f([u'\ps{4} \lt (v\sns{0} \cdot u'\ps{2})\snsb{-1}]S(u'\ps{3})v\sns{-1}u'\ps{1})(v\sns{0} \cdot u'\ps{2})\snsb{0},
\end{split}
\end{align}
where on the third equality we use \eqref{auxy8}. As a result,
\begin{align}
\begin{split}
& v\sns{-1} \cdot u'\ps{1} \ot v\sns{0} \cdot u'\ps{2}  \\
&= [u'\ps{4} \lt (v\sns{0} \cdot u'\ps{2})\snsb{-1}]S(u'\ps{3})v\sns{-1}u'\ps{1} \ot (v\sns{0} \cdot u'\ps{2})\snsb{0}.
\end{split}
\end{align}
Using the cocommutativity of $\Vc$, we conclude \eqref{auxy16}.

\medskip

Conversely, let $V$ be an AYD-module over $\mathcal{U} \bowtie \mathcal{V}$. Then  $V$ is a  left comodule over $\Fc\acl \Uc$ by \eqref{auxy15} and a right module over $\Fc\acl \Uc$ by  \eqref{auxy13}.  So, by  Proposition \ref{aux-41} it suffices to verify \eqref{auxy8} to \eqref{auxy11}.

\medskip

Indeed, \eqref{auxy8} follows from the coaction compatibility over $\Uc \bowtie \Vc$. The condition \eqref{auxy9} is the consequence of  \eqref{auxy16}. The equation \eqref{auxy10} is obtained from   the module compatibility over $\Uc \bowtie \Vc$. Finally \eqref{auxy11} follows from \eqref{auxy14}.
\end{proof}

We conclude this subsection with a brief remark on the stability.

\begin{proposition}\label{aux-59}
Let $(\Fg_1,\Fg_2)$ be a matched pair of Lie algebras and $V$ be an AYD module over the double crossed sum $\Fg_1 \bowtie \Fg_2$ with locally finite action and locally conilpotent coaction. Assume also that $V$ is stable over $R(\Fg_2)$ and $U(\Fg_1)$. Then  $V$ is stable over $R(\Fg_2) \acl U(\Fg_1)$.
\end{proposition}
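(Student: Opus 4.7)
The plan is to verify directly the SAYD stability condition $v\ns{0}\cdot v\ns{-1}=v$ for the right-left structure of $V$ over $\Hc:=R(\Fg_2)\acl U(\Fg_1)$. Unfolding the $\Hc$-coaction \eqref{auxy12} and the $\Hc$-action \eqref{aux-17}, and remembering that a product $(f\acl 1)(1\acl u)=f\acl u$ acts on $v$ as the $R(\Fg_2)$-action followed by the $U(\Fg_1)$-action, the condition to be checked reduces to
\[
v^{\sns{0}}\snsb{0}\cdot\big(v^{\sns{-1}}\acl v^{\sns{0}}\snsb{-1}\big) \;=\; \big(v^{\sns{0}}\snsb{0}\cdot v^{\sns{-1}}\big)\cdot v^{\sns{0}}\snsb{-1} \;=\; v.
\]

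The key ingredient will be the Yetter-Drinfeld compatibility \eqref{auxy8}, which holds because by Proposition \ref{aux-46} $V$ is a right-left YD module over $\Hc$. Applied with $w=v^{\sns{0}}$ and $f=v^{\sns{-1}}$, it yields the identity
\[
(v^{\sns{0}})\snsb{-1}\ot (v^{\sns{0}})\snsb{0}\cdot v^{\sns{-1}} \;=\; (v^{\sns{0}}\cdot v^{\sns{-1}})\snsb{-1}\ot (v^{\sns{0}}\cdot v^{\sns{-1}})\snsb{0}
\]
in $U(\Fg_1)\ot V$, which precisely commutes the $R(\Fg_2)$-action past the $U(\Fg_1)$-coaction. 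Invoking stability of $V$ over $R(\Fg_2)$, namely $v^{\sns{0}}\cdot v^{\sns{-1}}=v$, collapses the right-hand side to $v\snsb{-1}\ot v\snsb{0}$, so that
\[
(v^{\sns{0}})\snsb{-1}\ot (v^{\sns{0}})\snsb{0}\cdot v^{\sns{-1}} \;=\; v\snsb{-1}\ot v\snsb{0}.
\]

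Finally, I would feed both sides through the action map $V\ot U(\Fg_1)\to V$, $w\ot u\mapsto w\cdot u$, so that the left-hand side becomes $\big(v^{\sns{0}}\snsb{0}\cdot v^{\sns{-1}}\big)\cdot v^{\sns{0}}\snsb{-1}$ and the right-hand side becomes $v\snsb{0}\cdot v\snsb{-1}$. Stability of $V$ over $U(\Fg_1)$ then gives $v\snsb{0}\cdot v\snsb{-1}=v$, which is exactly the $\Hc$-stability identity we set out to prove. No significant obstacle is expected: the entire argument is a three-step manipulation, and the only delicate part is keeping track of the overlapping Sweedler conventions $\sns{\cdot}$ for the $R(\Fg_2)$-coaction and $\snsb{\cdot}$ for the $U(\Fg_1)$-coaction while exchanging their order via the YD compatibility \eqref{auxy8}.
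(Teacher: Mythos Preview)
Your proof is correct and follows essentially the same route as the paper's: unwind the $\Hc$-action, commute the $R(\Fg_2)$-action past the $U(\Fg_1)$-coaction, then apply $R(\Fg_2)$-stability and $U(\Fg_1)$-stability in turn. The only cosmetic difference is that the paper cites the comodule compatibility \eqref{aux-38} for the commutation step, whereas you cite its consequence \eqref{auxy8} via Proposition~\ref{aux-46}; since \eqref{auxy8} is obtained from \eqref{aux-38} by pairing with $f$ (as shown in the proof of Proposition~\ref{aux-46}), the two justifications are equivalent.
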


\begin{proof}
For an $v \in V$, using the $U(\Fg_1 \bowtie \Fg_2)$-comodule compatibility \eqref{aux-38}, we get
\begin{align}
\begin{split}
& (v^{\sns{0}})\snsb{0} \cdot (v^{\sns{-1}} \acl (v^{\sns{0}})\snsb{-1}) = ((v^{\sns{0}})\snsb{0} \cdot v^{\sns{-1}}) \cdot (v^{\sns{0}})\snsb{-1} \\
& = (v^{\sns{0}} \cdot v^{\sns{-1}})\snsb{0} \cdot (v^{\sns{0}} \cdot v^{\sns{-1}})\snsb{-1} = v\snsb{0} \cdot v\snsb{-1} = v.
\end{split}
\end{align}
\end{proof}

\subsection{A 4-dimensional SAYD module over the Schwarzian Hopf algebra}

In this subsection, we illustrate the theory of coefficients we have studied in this chapter in a nontrivial 4-dimensional  SAYD module over $\Hc_{1\rm S}\cop\cong R(\Cb)\acl U({g\ell(1)}^{\rm aff})$, the Schwarzian Hopf algebra introduced in \cite{ConnMosc98}.

\medskip

The non-triviality of this example is the non-trivially  of the $R(\Cb)$-action and the $U({g\ell(1)}^{\rm aff})$-coaction.

\medskip

We denote $R(\Cb)$, $U({g\ell(1)}^{\rm aff})$ and $\Hc_{1\rm S}\cop$ by $\Fc$, $\Uc$ and $\Hc$ respectively.

\medskip

Let us first recall the Lie algebra $s\ell(2)$ as a double crossed sum Lie algebra. We have $s\ell(2) = \Fg_1 \bowtie \Fg_2$, $\Fg_1 = \Cb\langle X,Y \rangle$, $\Fg_2 = \Cb\langle Z \rangle$, with Lie bracket
\begin{equation}
[Y,X] = X, \quad [Z,X] = Y, \quad [Z,Y] = Z.
\end{equation}

Let us take $V = S(s\ell(2)^\ast)\nsb{1}$. By Example \ref{aux-62}, $V$ is an SAYD module over $s\ell(2)$ via the coadjoint action and the Koszul coaction.

\medskip

Writing ${\Fg_2}^\ast = \Cb\langle \d_1 \rangle$, we have $\Fc = R(\Fg_2) = \mathbb{C}[\delta_1]$, $\Uc = U(\Fg_1)$ and finally $\Fc \acl \Uc \cong \Hc_{1 \rm S}\cop$. See \cite{ConnMosc98,MoscRang07} for more details.

\medskip

Next, we construct the $\Fc \acl \Uc$-(co)action explicitly and we verify that $\, ^{\sigma}V_{\delta}$ is an SAYD module over $\Fc \acl \Uc$. Here, $(\sigma,\delta)$ is the canonical modular pair in involution associated to the bicrossed product $\Fc \acl \Uc$. Let us compute $\sigma \in \Fc$ from the right $\Fc$-coaction on $\Uc$.

\medskip

In view of \eqref{aux-actions-matched-pair-Lie-algebras}, the action $\Fg_2 \lt \Fg_1$ is given by
\begin{equation}
Z \lt X = 0, \quad Z \lt Y = Z.
\end{equation}
Similarly, the action $\Fg_2 \rt \Fg_1$ is
\begin{equation}
Z \rt X = Y, \quad Z \rt Y = 0.
\end{equation}

Dualizing the left action $\Fg_2 \rt \Fg_1$, we have the $\Fc$-coaction on $\Uc$ as
\begin{align}
\begin{split}
& \Uc \to \Uc \ot \Fc, \quad u \mapsto u^{\pr{0}} \ot u^{\pr{1}} \\
& X \mapsto X \ot 1 + Y \ot \delta_1 \\
& Y \mapsto Y \ot 1.
\end{split}
\end{align}
Therefore,
\begin{equation}
\sigma = \det \left(
               \begin{array}{cc}
                 1 & \delta_1 \\
                 0 & 1 \\
               \end{array}
             \right) = 1.
\end{equation}

 On the other hand, by the Lie algebra structure of $\Fg_1 \cong {g\ell(1)}^{\rm aff}$, we have
\begin{equation}
\delta(X) = 0, \qquad \delta(Y) = 1.
\end{equation}

Next,  we express the $\Fc \acl \Uc$-coaction on $V = S({s\ell(2)}^\ast)\nsb{1}$ explicitly. A vector space basis of $V$ is given by $\{1_V, R^X, R^Y, R^Z\}$ and the $\Fg_1$-coaction (which is the Koszul coaction) is
\begin{equation}
V \to \Fg_1 \ot V, \quad 1_V \mapsto X \ot R^X + Y \ot R^Y, \quad R^i \mapsto 0, \quad i \in \{X,Y,Z\}.
\end{equation}
Note that the application of this coaction twice is zero, thus it is locally conilpotent. Then the corresponding $\Uc$ coaction is
\begin{align}
\begin{split}
& V \to \Uc \ot V, \quad v \mapsto v\snsb{-1} \ot v\snsb{0} \\
& 1_V \mapsto 1 \ot 1_V + X \ot R^X + Y \ot R^Y \\
& R^i \mapsto 1 \ot R^i, \quad i \in \{X,Y,Z\}.
\end{split}
\end{align}

To determine the left $\Fc$-coaction, we need to dualize the right $\Fg_2$-action. We have
\begin{equation}
1_V \lt Z = 0, \quad R^X \lt Z = 0, \quad R^Y \lt Z = R^X, \quad R^Z \lt Z = R^Y,
\end{equation}
implying
\begin{align}
\begin{split}
& V \to \Fc \ot V, \quad v \mapsto v^{\sns{-1}} \ot v^{\sns{0}} \\
& 1_V \mapsto 1 \ot 1_V \\
& R^X \mapsto 1 \ot R^X \\
& R^Y \mapsto 1 \ot R^Y + \delta_1 \ot R^X \\
& R^Z \mapsto 1 \ot R^Z + \delta_1 \ot R^Y + \frac{1}{2}\delta_1^2 \ot R^X.
\end{split}
\end{align}
As a result, $\Fc \acl \Uc$-coaction becomes
\begin{align}
\begin{split}
& V \to \Fc \acl \Uc \ot V, \quad v \mapsto v^{\sns{-1}} \acl v^{\sns{0}}\snsb{-1} \ot v^{\sns{0}}\snsb{0} \\
& 1_V \mapsto 1 \ot 1_V + X \ot R^X + Y \ot R^Y \\
& R^X \mapsto 1 \ot R^X \\
& R^Y \mapsto 1 \ot R^Y + \delta_1 \ot R^X \\
& R^Z \mapsto 1 \ot R^Z + \delta_1 \ot R^Y + \frac{1}{2}\delta_1^2 \ot R^X.
\end{split}
\end{align}

 Let us next determine the right $\Fc \acl \Uc$-action. It is enough to determine the $\Uc$-action and $\Fc$-action separately. The action of $\Uc$ is directly given by
\begin{align}
\begin{split}
& 1_V \lt X = 0, \quad 1_V \lt Y = 0 \\
& R^X \lt X = -R^Y, \quad R^X \lt Y = R^X \\
& R^Y \lt X = -R^Z, \quad R^Y \lt Y = 0 \\
& R^Z \lt X = 0, \quad R^Z \lt Y = -R^Z.
\end{split}
\end{align}

To be able to see the $\Fc$-action, we determine the $\Fg_2$-coaction. This follows from the Kozsul coaction on $V$, \ie
\begin{align}
\begin{split}
& V \to U(\Fg_2) \ot V, \quad v \mapsto v\sns{-1} \ot v\sns{0} \\
& 1_V \mapsto 1 \ot 1_V + Z \ot R^Z \\
& R^i \mapsto 1 \ot R^i, \quad i \in \{X,Y,Z\}.
\end{split}
\end{align}
Hence, the $\Fc$-action is given by
\begin{equation}
1_V \lt \delta_1 = R^Z, \quad R^i \lt \delta_1 = 0, \quad i \in \{X,Y,Z\}.
\end{equation}

We will now check  that $V$ is a YD module over the bicrossed product Hopf algebra $\Fc \acl \Uc$. It is straightforward to check that $V$ satisfies the conditions introduced in Lemma \ref{module on bicrossed product} and Lemma \ref{comodule on bicrossed product}; that is,  $V$ is a module and comodule on $\Fc \acl \Uc$, respectively. We proceed to the verification of the YD condition on the bicrossed product Hopf algebra $\Fc \acl \Uc$.

\medskip

By the multiplicative property of the YD condition, it is enough to check that the condition holds  for the elements $X, Y, \delta_1 \in \Fc \acl \Uc$.

\medskip

For simplicity of notation, we write the $\Fc \acl \Uc$-coaction as $v \mapsto v\pr{-1} \ot v\pr{0}$.

\medskip

We begin with $1_V \in V$ and $X \in \Fc \acl \Uc$. We have
\begin{align}
\begin{split}
& X\ps{2} \cdot (1_V \lt X\ps{1})\pr{-1} \ot (1_V \lt X\ps{1})\pr{0}  \\
&= (1_V \lt X)\pr{-1} \ot (1_V \lt X)\pr{0} + X 1_V\pr{-1} \ot 1_V\pr{0}  \\
&+ \delta_1(1_V \lt Y)\pr{-1} \ot (1_V \lt Y)\pr{0}  \\
&= X \ot 1_V + X^2 \ot R^X + XY \ot R^Y\\
& =X \ot 1_V + X^2 \ot R^X + YX \ot R^Y + X \ot R^X \lt X  \\
& +Y \ot R^Y \lt X + Y \ot 1_V \lt \delta_1\\
&= 1_V\pr{-1} X \ot 1_V\pr{0} + 1_V\pr{-1} \ot 1_V\pr{0} \lt X + 1_V\pr{-1} Y \ot 1_V\pr{0} \lt \delta_1 \\
& =1_V\pr{-1} X\ps{1} \ot 1_V\pr{0} \lt X\ps{2}.
\end{split}
\end{align}

We proceed to check the condition for  $1_V \in V$ and $Y \in \Fc \acl \Uc$. We have
\begin{align}
\begin{split}
& Y\ps{2} \cdot (1_V \lt Y\ps{1})\pr{-1} \ot (1_V \lt Y\ps{1})\pr{0}  \\
&= (1_V \lt Y)\pr{-1} \ot (1_V \lt Y)\pr{0} + Y 1_V\pr{-1} \ot 1_V\pr{0} \\
& =Y \ot 1_V + YX \ot R^X + Y^2 \ot R^Y\\
& =Y \ot 1_V + XY \ot R^X + Y^2 \ot R^Y + X \ot R^X \lt Y\\
& =1_V\pr{-1} Y \ot 1_V\pr{0} + 1_V\pr{-1} \ot 1_V\pr{0} \lt Y  \\
&= 1_V\pr{-1} Y\ps{1} \ot 1_V\pr{0} \lt Y\ps{2}.
\end{split}
\end{align}

For $1_V \in V$ and $\delta_1 \in \Fc \acl \Uc$, we get
\begin{align}
\begin{split}
& \delta_1\ps{2} (1_V \lt \delta_1\ps{1})\pr{-1} \ot (1_V \lt \delta_1\ps{1})\pr{0} \\
& =(1_V \lt \delta_1)\pr{-1} \ot (1_V \lt \delta_1)\pr{0} + \delta_1 1_V\pr{-1} \ot 1_V\pr{0}  \\
& =1 \ot R^Z + \delta_1 \ot R^Y + \frac{1}{2}\delta_1^2  \ot R^X + \delta_1 \ot 1_V + \delta_1X \ot R^X + \delta_1Y \ot R^Y\\
& =\delta_1 \ot 1_V + X\delta_1 \ot R^X + Y\delta_1 \ot R^Y + 1 \ot 1_V \lt \delta_1\\
&= 1_V\pr{-1} \delta_1 \ot 1_V\pr{0} + 1_V\pr{-1} \ot 1_V\pr{0} \lt \delta_1  \\
& =1_V\pr{-1} \delta_1\ps{1} \ot 1_V\pr{0} \lt \delta_1\ps{2}.
\end{split}
\end{align}

Next, we consider $R^X \in V$ and $X \in \Fc \acl \Uc$. In this case we have
\begin{align}
\begin{split}
& X\ps{2} (R^X \lt X\ps{1})\pr{-1} \ot (R^X \lt X\ps{1})\pr{0} = \\
& =(R^X \lt X)\pr{-1} \ot (R^X \lt X)\pr{0} + X R^X\pr{-1} \ot R^X\pr{0} \\
& +\delta_1 (R^X \lt Y)\pr{-1} \ot (R^X \lt Y)\pr{0}  \\
& =-1 \ot R^Y - \delta_1 \ot R^X + X \ot R^X + \delta_1 \ot R^X  \\
& =-1 \ot R^Y + X \ot R^X= X \ot R^X + 1 \ot R^X \lt X\\
& =R^X\pr{-1} X \ot R^X\pr{0} + R^X\pr{-1} \ot R^X\pr{0} \lt X + R^X\pr{-1} Y \ot R^X\pr{0} \lt \delta_1  \\
& =R^X\pr{-1} X\ps{1} \ot R^X\pr{0} \lt X\ps{2}.
\end{split}
\end{align}

For $R^X \in V$ and $Y \in \Fc \acl \Uc$, we obtain
\begin{align}
\begin{split}
& Y\ps{2} (R^X \lt Y\ps{1})\pr{-1} \ot (R^X \lt Y\ps{1})\pr{0} \\
& =(R^X \lt Y)\pr{-1} \ot (R^X \lt Y)\pr{0} + Y R^X\pr{-1} \ot R^X\pr{0}  \\
&= 1 \ot R^X + Y \ot R^X = Y \ot R^X + 1 \ot R^X \lt Y \\
& =R^X\pr{-1}  Y \ot R^X\pr{0} + R^X \ot R^X\pr{0} \lt Y  \\
& =R^X\pr{-1}  Y\ps{1} \ot R^X\pr{0} \lt Y\ps{2}.
\end{split}
\end{align}

Next, for $R^X \in V$ and $\delta_1 \in \Fc \acl \Uc$ we have
\begin{align}
\begin{split}
& \delta_1\ps{2} (R^X \lt \delta_1\ps{1})\pr{-1} \ot (R^X \lt \delta_1\ps{1})\pr{0}  \\
& =(R^X \lt \delta_1)\pr{-1} \ot (R^X \lt \delta_1)\pr{0} + \delta_1 R^X\pr{-1} \ot R^X\pr{0}  \\
& =\delta_1 \ot R^X=\delta_1 \ot R^X + 1 \ot R^X \lt \delta_1\\
& =R^X\pr{-1} \delta_1 \ot R^X\pr{0} + R^X\pr{-1} \ot R^X\pr{0} \lt \delta_1  \\
& =R^X\pr{-1} \delta_1\ps{1} \ot R^X\pr{0} \lt \delta_1\ps{2}.
\end{split}
\end{align}

We proceed to verify the YD compatibility condition for $R^Y \in V$. For $R^Y \in V$ and $X \in \Fc \acl \Uc$, we have
\begin{align}
\begin{split}
& X\ps{2} (R^Y \lt X\ps{1})\pr{-1} \ot (R^Y \lt X\ps{1})\pr{0}  \\
& =(R^Y \lt X)\pr{-1} \ot (R^Y \lt X)\pr{0} + X R^Y\pr{-1} \ot R^Y\pr{0}  \\
& +\delta_1 (R^Y \lt Y)\pr{-1} \ot (R^Y \lt Y)\pr{0}  \\
&= -1 \ot R^Z - \delta_1 \ot R^Y - \frac{1}{2}\delta_1^2 \ot R^X + X \ot R^Y + X\delta_1 \ot R^X\\
&= X \ot R^Y + \delta_1X \ot R^X + 1 \ot R^Y \lt X + \delta_1 \ot R^X \lt X\\
&= R^Y\pr{-1} X \ot R^Y\pr{0} + R^Y\pr{-1} \ot R^Y\pr{0} \lt X + R^Y\pr{-1} Y \ot R^Y\pr{0} \lt \delta_1  \\
& =R^Y\pr{-1} X\ps{1} \ot R^Y\pr{0} \lt X\ps{2}.
\end{split}
\end{align}

Similarly for $R^Y \in V$ and $Y \in \Fc \acl \Uc$, we have
\begin{align}
\begin{split}
& Y\ps{2} (R^Y \lt Y\ps{1})\pr{-1} \ot (R^Y \lt Y\ps{1})\pr{0}  \\
& =(R^Y \lt Y)\pr{-1} \ot (R^Y \lt Y)\pr{0} + Y R^Y\pr{-1} \ot R^Y\pr{0}  \\
& =Y \ot R^Y + Y\delta_1 \ot R^X\\
& =Y \ot R^Y + \delta_1Y \ot R^X + \delta_1 \ot R^X \lt Y\\
& =R^Y\pr{-1} Y \ot R^Y\pr{0} + R^Y\pr{-1} \ot R^Y\pr{0} \lt Y  \\
& =R^Y\pr{-1} Y\ps{1} \ot R^Y\pr{0} \lt Y\ps{2}.
\end{split}
\end{align}

Finally, for $R^Y \in V$ and $\delta_1 \in \Fc \acl \Uc$,
\begin{align}
\begin{split}
& \delta_1\ps{2} (R^Y \lt \delta_1\ps{1})\pr{-1} \ot (R^Y \lt \delta_1\ps{1})\pr{0}  \\
& =(R^Y \lt \delta_1)\pr{-1} \ot (R^Y \lt \delta_1)\pr{0} + \delta_1 R^Y\pr{-1} \ot R^Y\pr{0}  \\
& =\delta_1 \ot R^Y + \delta_1^2 \ot R^X\\
& =R^Y\pr{-1} \delta_1 \ot R^Y\pr{0} + R^Y\pr{-1} \ot R^Y\pr{0} \lt \delta_1  \\
& =R^Y\pr{-1} \delta_1\ps{1} \ot R^Y\pr{0} \lt \delta_1\ps{2}.
\end{split}
\end{align}

Now we check the YD condition for $R^Z \in V$. For $R^Z \in V$ and $X \in \Fc \acl \Uc$,
\begin{align}
\begin{split}
& X\ps{2} (R^Z \lt X\ps{1})\pr{-1} \ot (R^Z \lt X\ps{1})\pr{0}  \\
& =(R^Z \lt X)\pr{-1} \ot (R^Z \lt X)\pr{0} + X R^Z\pr{-1} \ot R^Z\pr{0} \\
& +\delta_1 (R^Z \lt Y)\pr{-1} \ot (R^Z \lt Y)\pr{0}  \\
& =X \ot R^Z + X\delta_1 \ot R^Y + \frac{1}{2}X\delta_1^2 \ot R^X - \delta_1 \ot R^Z - \delta_1^2 \ot R^Y - \frac{1}{2}\delta_1^3 \ot R^X\\
&= X \ot R^Z + \delta_1X \ot R^Y + \frac{1}{2}\delta_1^2X \ot R^X + \delta_1 \ot R^Y \lt X + \frac{1}{2}\delta_1^2 \ot R^X \lt X\\
&= R^Z\pr{-1} X \ot R^Z\pr{0} + R^Z\pr{-1} \ot R^Z\pr{0} \lt X + R^Z\pr{-1} Y \ot R^Z\pr{0} \lt \delta_1  \\
&= R^Z\pr{-1} X\ps{1} \ot R^Z\pr{0} \lt X\ps{2}.
\end{split}
\end{align}

Next, we consider $R^Z \in V$ and $Y \in \Fc \acl \Uc$. In this case,
\begin{align}
\begin{split}
& Y\ps{2} (R^Z \lt Y\ps{1})\pr{-1} \ot (R^Z \lt Y\ps{1})\pr{0}  \\
& =(R^Z \lt Y)\pr{-1} \ot (R^Z \lt Y)\pr{0} + Y R^Z\pr{-1} \ot R^Z\pr{0}  \\
& =-1 \ot R^Z - \delta_1 \ot R^Y - \frac{1}{2}\delta_1^2 \ot R^X + Y \ot R^Z + Y\delta_1 \ot R^Y + \frac{1}{2}Y\delta_1^2 \ot R^X\\
& =Y \ot R^Z + \delta_1Y \ot R^Y + \frac{1}{2}\delta_1^2Y \ot R^X + 1 \ot R^Z \lt Y + \frac{1}{2}\delta_1^2 \ot R^X \lt Y \\
& =R^Z\pr{-1} Y \ot R^Z\pr{0} + R^Z\pr{-1} \ot R^Z\pr{0} \lt Y  \\
&= R^Z\pr{-1} Y\ps{1} \ot R^Z\pr{0} \lt Y\ps{2}.
\end{split}
\end{align}

Finally we check the YD compatibility for $R^Z \in V$ and $\delta_1 \in \Fc \acl \Uc$. We have
\begin{align}
\begin{split}
& \delta_1\ps{2} (R^Z \lt \delta_1\ps{1})\pr{-1} \ot (R^Z \lt \delta_1\ps{1})\pr{0} \\
& =(R^Z \lt \delta_1)\pr{-1} \ot (R^Z \lt \delta_1)\pr{0} + \delta_1 R^Z\pr{-1} \ot R^Z\pr{0}  \\
& =\delta_1 \ot R^Z + \delta_1^2 \ot R^Y + \frac{1}{2}\delta_1^3 \ot R^X\\
&=R^Z\pr{-1} \delta_1 \ot R^Z\pr{0} + R^Z\pr{-1} \ot R^Z\pr{0} \lt \delta_1  \\
&= R^Z\pr{-1} \delta_1\ps{1} \ot R^Z\pr{0} \lt \delta_1\ps{2}.
\end{split}
\end{align}

We have proved  that $V$ is a YD module over the bicrossed product Hopf algebra $\Fc \acl \Uc = {\Hc_{1 \rm S}}^{\rm cop}$.

\medskip

Let us now check the stability condition. Since in this case $\sigma = 1$, $\,^{\sigma}V_{\delta}$ has the same coaction as $V$. For an element $v \ot 1_\Cb \in \,^\s V_\d$, let $v \ot 1_\Cb \mapsto (v \ot 1_{\Cb})\pr{-1} \ot (v \ot 1_{\Cb})\pr{0} \in \Fc \acl \Uc \ot \,^{\sigma}V_{\delta}$ denote the coaction. We have
\begin{align*}\notag
& (1_V \ot 1_{\mathbb{C}})\pr{0} \cdot (1_V \ot 1_{\mathbb{C}})\pr{-1} = (1_V \ot 1_{\mathbb{C}}) \cdot 1 + (R^X \ot 1_{\mathbb{C}}) \cdot X + (R^Y \ot 1_{\mathbb{C}}) \cdot Y \\\notag
& = 1_V \ot 1_{\mathbb{C}} + R^X \cdot X\ps{2}\delta(X\ps{1}) \ot 1_{\mathbb{C}} + R^Y \cdot Y\ps{2}\delta(Y\ps{1}) \ot 1_{\mathbb{C}} \\\notag
& = 1_V \ot 1_{\mathbb{C}} + R^X \cdot X \ot 1_{\mathbb{C}} + R^X \delta(X) \ot 1_{\mathbb{C}} + R^Y \cdot Y \ot 1_{\mathbb{C}} + R^Y \delta(Y) \ot 1_{\mathbb{C}} \\
& = 1_V \ot 1_{\mathbb{C}}.\\[.5cm]
& (R^X \ot 1_{\mathbb{C}})\pr{0} \cdot (R^X \ot 1_{\mathbb{C}})\pr{-1} = (R^X \ot 1_{\mathbb{C}}) \cdot 1
= R^X \ot 1_{\mathbb{C}}.\\[.5cm]
& (R^Y \ot 1_{\mathbb{C}})\pr{0} \cdot (R^Y \ot 1_{\mathbb{C}})\pr{-1} = (R^Y \ot 1_{\mathbb{C}}) \cdot 1 + (R^X \ot 1_{\mathbb{C}}) \cdot \delta_1
 = R^Y \ot 1_{\mathbb{C}}.\\[.5cm]
& (R^Z \ot 1_{\mathbb{C}})\pr{0} \cdot (R^Z \ot 1_{\mathbb{C}})\pr{-1} =\\
&\hspace{3cm} (R^Z \ot 1_{\mathbb{C}}) \cdot 1 + (R^Y \ot 1_{\mathbb{C}}) \cdot \delta_1 + (R^X \ot 1_{\mathbb{C}}) \cdot \frac{1}{2}\delta_1^2  = R^Z \ot 1_{\mathbb{C}}.
\end{align*}
Hence the stability is satisfied.

\medskip

We record this subsection in the following proposition.

\begin{proposition}
The four dimensional module-comodule  $$V_\d:=S(s\ell(2)^\ast)\nsb{1} \ot \Cb_{\delta}= \Cb\Big\langle 1_V, R^X,R^Y,R^Z\Big\rangle \ot \Cb_\d$$ is an SAYD module over the the Schwarzian Hopf algebra $\Hc_{\rm 1S}\cop$, via the action and coaction
$$
\left.
  \begin{array}{c|ccc}
    \lt & X & Y & \d_1 \\[.2cm]
    \hline
    &&&\\[-.2cm]
    \one & 0 & 0 & \bfR^Z \\[.1cm]
    \bfR^X & -\bfR^Y & 2\bfR^X & 0 \\[.1cm]
    \bfR^Y & -\bfR^Z & \bfR^Y & 0 \\[.1cm]
    \bfR^Z & 0 & 0 & 0 \\
  \end{array}
\right.\qquad
  \begin{array}{rl}
  &\Db: V_\d \longrightarrow \Hc_{\rm 1S}\cop \ot V_\d \\[.2cm]
\hline
  &\\[-.2cm]
& \one \mapsto 1 \ot \one + X \ot \bfR^X + Y \ot \bfR^Y \\[.1cm]
& \bfR^X \mapsto 1 \ot \bfR^X \\[.1cm]
& \bfR^Y \mapsto 1 \ot \bfR^Y + \delta_1 \ot \bfR^X \\[.1cm]
& \bfR^Z \mapsto 1 \ot \bfR^Z + \delta_1 \ot \bfR^Y + \frac{1}{2}\delta_1^2 \ot \bfR^X.
  \end{array}
$$

 Here, $\one := 1_V \ot \Cb_\d, \; \bfR^X := R^X \ot \Cb_\d, \; \bfR^Y := R^Y \ot \Cb_\d, \; \bfR^Z := R^Z \ot \Cb_\d$.
\end{proposition}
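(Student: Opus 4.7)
The plan is to deduce this proposition from the general theory established earlier, rather than by direct bare-hands verification of the YD condition and stability. The main organizing observation is that $s\ell(2)$ decomposes as a matched pair $s\ell(2)=\Fg_1\bowtie \Fg_2$ with $\Fg_1=\Cb\langle X,Y\rangle\cong g\ell(1)^{\rm aff}$ and $\Fg_2=\Cb\langle Z\rangle$, so that by construction $R(\Fg_2)\acl U(\Fg_1)\cong \Hc_{\rm 1S}\cop$. Thus a SAYD module over $\Hc_{\rm 1S}\cop$ should be produced from a SAYD module over $s\ell(2)$ together with the canonical MPI of the Lie--Hopf algebra.

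First I would invoke Example \ref{aux-62}, which shows that $V:=S(s\ell(2)^\ast)_{[1]}$ is a right--left SAYD module over the Lie algebra $s\ell(2)$ with the coadjoint action and the Koszul coaction (this corresponds to $c=1$, $d=0$ in the notation of that example). Next I would translate the $s\ell(2)$--SAYD data into $\Fg_1$-- and $\Fg_2$--data as dictated by Propositions \ref{proposition-comodule-doublecrossed-sum} and \ref{auxx-2}: the Koszul coaction splits as a $\Fg_1$--coaction $\mathbf{1}\mapsto X\otimes\bfR^X+Y\otimes\bfR^Y$ (with $\bfR^i\mapsto 0$) and a $\Fg_2$--coaction $\mathbf{1}\mapsto Z\otimes\bfR^Z$ (with $\bfR^i\mapsto 0$), each of which is manifestly locally conilpotent since $\Db_{\Fg_i}^2=0$. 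Likewise the coadjoint $s\ell(2)$--action restricts to the two indicated $\Fg_i$--actions, and the $\Fg_2$--action on the four dimensional space $V$ is automatically locally finite.

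Having verified the hypotheses, I would then apply Proposition \ref{aux-46} to conclude that, via the action \eqref{aux-17} and the coaction \eqref{auxy12}, $V$ is a right--left YD module over $R(\Fg_2)\acl U(\Fg_1)\cong \Hc_{\rm 1S}\cop$. The explicit tables in the proposition are then obtained by dualizing the $\Fg_2$--coaction to an $R(\Fg_2)$--action (yielding $\mathbf{1}\lt\d_1=\bfR^Z$ and $\bfR^i\lt\d_1=0$) and by lifting the $\Fg_1$--coaction to a $U(\Fg_1)$--coaction via Proposition \ref{proposition-construct-U(g)-coaction}. For stability, Example \ref{aux-62} gives stability of $V$ over $s\ell(2)$, which by the constructions of the preceding subsections transfers to stability over both $R(\Fg_2)$ and $U(\Fg_1)$; Proposition \ref{aux-59} then upgrades this to stability over $R(\Fg_2)\acl U(\Fg_1)$.

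Finally, I would twist by the canonical MPI $(\d,\s)$ of Theorem \ref{theorem-MPI} to pass from $V$ to $V_\d=\,^\s V_\d$. A direct computation of $\s=\det[f^i_j]$ from the matrix coefficients of the $\Fc$--coaction on $\Uc$ (namely $X\mapsto X\otimes 1+Y\otimes\d_1$, $Y\mapsto Y\otimes 1$) gives $\s=1$, so the coaction on $V_\d$ coincides with that on $V$, while $\d$ is the character of $U(g\ell(1)^{\rm aff})$ with $\d(X)=0$, $\d(Y)=1$. This produces exactly the formulas displayed in the statement. The only delicate point, and thus the main obstacle, is to ensure that the identification $R(\Fg_2)\acl U(\Fg_1)\cong \Hc_{\rm 1S}\cop$ is compatible with the chosen generator $\d_1\in R(\Fg_2)$ and with the Lie algebra isomorphism $\Fg_1\cong g\ell(1)^{\rm aff}$ in such a way that the action and coaction tables match those in the statement on the nose; once this bookkeeping is fixed (using the conventions of \cite{ConnMosc98,MoscRang07} as indicated in the text), the proposition follows.
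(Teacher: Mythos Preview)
Your approach genuinely differs from the paper's. The paper does not invoke Proposition~\ref{aux-46} or Proposition~\ref{aux-59} at all; instead it carries out a brute-force verification of the YD condition \eqref{aux-right-left-YD-module} for every pair (basis vector of $V$, generator $X,Y,\d_1$ of $\Hc$), and then checks stability of $V_\d$ directly on each basis vector. Your route through the general machinery is more conceptual and, for the YD/AYD part, entirely valid: Example~\ref{aux-62} gives the SAYD structure over $s\ell(2)$, local conilpotence and local finiteness are obvious on a four-dimensional truncation, so Proposition~\ref{aux-46} applies and produces the displayed action and coaction after the dualizations you describe. The MPI computation $\s=1$, $\d(X)=0$, $\d(Y)=1$ is also exactly what the paper records.

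There is, however, a genuine gap in your stability argument. You write that stability of $V$ over $s\ell(2)$ ``transfers to stability over both $R(\Fg_2)$ and $U(\Fg_1)$'', and then appeal to Proposition~\ref{aux-59}. This transfer is neither proved in the paper nor true here. With the untwisted $\Fg_1$-action ($R^X\cdot X=-R^Y$, $R^Y\cdot Y=0$) and the $U(\Fg_1)$-coaction $1_V\mapsto 1\ot 1_V+X\ot R^X+Y\ot R^Y$, one finds
\[
(1_V)\snsb{0}\cdot (1_V)\snsb{-1}=1_V+R^X\cdot X+R^Y\cdot Y=1_V-R^Y\neq 1_V,
\]
so $V$ is \emph{not} stable over $U(\Fg_1)$ and the hypothesis of Proposition~\ref{aux-59} fails. (The paper in fact contains examples, right after Proposition~\ref{auxx-2}, warning that stability over the double crossed sum does not descend to the factors.) Stability only holds after the $\d$-twist, because $\d(Y)=1$ supplies the missing $+R^Y$ term; this is precisely why the paper checks stability of $^\s V_\d$ directly rather than inferring it from $V$. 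To complete your argument you must either verify stability of $V_\d$ by hand (four short lines, as in the paper) or formulate and prove a twisted analogue of Proposition~\ref{aux-59} incorporating $(\d,\s)$.
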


\subsection{AYD modules over the Connes-Moscovici Hopf algebras}

In this subsection we investigate  the finite dimensional SAYD modules over the Connes-Moscovici Hopf algebras $\Hc_n$. We first recall from \cite{MoscRang09} the bicrossed product decomposition of the Connes-Moscovici Hopf algebras.

\medskip

Let $\Diff(\Rb^n)$ denote the group of diffeomorphisms on $\Rb^n$. Via the splitting $\Diff(\Rb^n) = G \cdot N$, where $G$ is the group of affine transformation on $\Rb^n$ and
\begin{equation}
N = \Big\{\psi \in \Diff(\Rb^n) \; \Big| \; \psi(0) = 0, \; \psi'(0) = \Id\Big\},
\end{equation}
we have $\Hc_n = \Fc(N) \acl U(\Fg)$. Elements of the Hopf algebra $\Fc:=\Fc(N)$ are called the regular functions. They are the coefficients of the Taylor expansions at $0 \in \Rb^n$ of the elements of the group $N$. Here, $\Fg$ is the Lie algebra of the group $G$ and $\Uc:=U(\Fg)$ is the universal enveloping algebra of $\Fg$.

\medskip

On the other hand for the Lie algebra $\Fa_n$ of formal vector fields on $\Rb^n$,  following \cite{Fuks}, let us denote the subalgebra of the vector fields
\begin{equation}
\sum_{i=1}^n f_i\frac{\p}{\p x^i},
\end{equation}
such that $f_1, \ldots, f_n$ belongs to the $(k+1)$st power of the maximal ideal of the ring of formal power series by $\Fl_k$, $k \geq -1$. Then
\begin{equation}
\Fa_n = \Fl_{-1} \supseteq \Fl_{0} \supseteq \Fl_{1} \supseteq \ldots
\end{equation}
such that
\begin{equation}
[\Fl_p,\Fl_q] \subseteq \Fl_{p+q}.
\end{equation}
Furthermore,
\begin{equation}
g\ell(n) = \Fl_0/\Fl_1, \quad \Fl_{-1}/\Fl_0 \cong \mathbb{R}^n, \quad\text{and} \quad \Fl_{-1}/\Fl_0 \oplus \Fl_0/\Fl_1 \cong g\ell(n)^{\rm aff}.
\end{equation}

As a result, setting $\Fg := g\ell(n)^{\rm aff}$ and $\Fn := \Fl_1$, the Lie algebra  $\Fa$ admits the decomposition $\Fa = \Fg \oplus \Fn$, and hence we have a matched pair of Lie algebras $(\Fg,\Fn)$. The Hopf algebra  $\Fc(N)$ is isomorphic with $R(\Fn)$ via the non-degenerate pairing found in \cite{ConnMosc98}:
\begin{equation}
\Big\langle  \a^i_{ j_1,\ldots,j_p}, Z^{k_1, \ldots, k_q}_l\Big\rangle= \d^p_q\d^i_l\d_{j_1,\ldots,j_p}^{k_1, \ldots, k_q}.
\end{equation}
Here
\begin{equation}
\a^i_{ j_1,\ldots,j_p}(\psi)= {\left.\frac{\p^p}{\p x^{j_p}\ldots \p x^{j_1}}\right|}_{x=0}\psi^i(x),
\end{equation}
and
\begin{equation}
Z^{k_1, \ldots, k_q}_l= x^{k_1}\ldots x^{k_q}\frac{\p}{\p x^l}.
\end{equation}

\medskip

Let $\d$ be the trace of the adjoint representation of $\Fg$ on itself. Then by \cite{ConnMosc98}, $\Cb_\d$ is a SAYD module over the Hopf algebra $\Hc_n$.

\begin{lemma}\label{lemma-(co)action-trivial}
For any YD module over $\Hc_n$, the action of $\Uc$ and the coaction of $\Fc$ are trivial.
\end{lemma}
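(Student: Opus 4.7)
The plan is to apply Proposition \ref{aux-41}, which translates the Yetter--Drinfeld condition over a bicrossed product $\Fc \acl \Uc$ into the four compatibility conditions \eqref{auxy8}--\eqref{auxy11} between the $\Fc$-action, $\Fc$-coaction, $\Uc$-action and $\Uc$-coaction on $V$, and to exploit the specific structure of the matched pair $(\Fg,\Fn) = (g\ell(n)^{\rm aff},\Fl_1)$ underlying $\Hc_n = R(\Fn) \acl U(\Fg)$.

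First I would compute the $\Fc$-coaction $\Db : \Uc \to \Uc \ot \Fc$ on the generators of $U(\Fg)$. Using the grading relations $[\Fl_p,\Fl_q] \subseteq \Fl_{p+q}$, the action of $\Fn$ on the translation generators $X_k$ lands in the linear part $g\ell(n) \subset \Fg$, while its action on the linear generators $X^i_j$ lands in $\Fn$ (hence is zero modulo $\Fg$). This forces
\[
\Db(X_k) = X_k \ot 1 + \sum_{a,b} X^a_b \ot f^{a,b}_k,
\qquad \Db(X^i_j) = X^i_j \ot 1,
\]
where the $f^{a,b}_k \in \Fc$ are the canonical degree-one matrix coefficients (the Connes--Moscovici generators $\d^a_{bk}$). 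Iterating via the compatibility \eqref{aux-matched-pair-4} produces, on higher PBW monomials in $\Uc$, all the higher-order matrix coefficients $\d^a_{b,k_1,\ldots,k_p}$, which generate $\Fc$ as an algebra.

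Next I would specialize \eqref{auxy11} to $u = X_k$ to obtain the mixed identity
\[
\sum_{a,b} v\snsb{-1} X^a_b \ot v\snsb{0} \cdot f^{a,b}_k = 0
\]
in $\Uc \ot V$, and iterate with higher-degree PBW monomials $u \in U(\Fg)$. A double induction, on PBW-filtration degree in $\Uc$ and on polynomial degree in the $\d$-generators of $\Fc$, combined with linear independence in the PBW basis of $\Uc$, would yield $v \cdot f = 0$ for every $v \in V$ and every $f \in \Fc^+$, hence triviality of the $\Fc$-action on $V$. With that in hand, \eqref{auxy9} reduces to $v^{\sns{-1}} f\ps{1} \ot v^{\sns{0}} = v^{\sns{-1}}(v^{\sns{0}}\snsb{-1} \rt f\ps{1}) \ot v^{\sns{0}}\snsb{0}$; varying $f$ over the generators of $\Fc$ and using the non-degenerate Hopf pairing between $\Fc$ and $U(\Fn)$ then forces $v^{\sns{-1}} \ot v^{\sns{0}} = 1 \ot v$, i.e.\ triviality of the $\Fc$-coaction. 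Finally, with both the $\Fc$-action and the $\Fc$-coaction trivial, \eqref{auxy10} collapses to a relation of the form $1 \ot v \cdot u = u^{\pr{1}} \ot v \cdot u^{\pr{0}}$ whose right-hand side, when expanded on PBW monomials using the explicit $\Db$ above, forces $v \cdot u = \ve(u) v$ for all $u \in \Uc$.

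The main obstacle is extracting pointwise vanishing from the mixed identity in \eqref{auxy11}, since the $\Uc$-coaction on $V$, the $\Fc$-coaction on $\Uc$, and the $\Fc$-action on $V$ are all coupled together on the left-hand side. I would handle this by filtering $\Uc$ by its PBW-filtration and $\Fc$ by polynomial degree in the $\d$-generators (inherited from the canonical filtration of $U(\Fa_n) = U(\Fg \bowtie \Fn)$), then stripping off contributions degree by degree via projection onto a PBW basis of $\Uc$; the infinite-dimensionality of $\Fn$ ensures that at each step one has sufficiently many linearly independent $\d$-generators to separate the terms. Once this filtration argument is in place, the subsequent deductions for the $\Fc$-coaction and the $\Uc$-action are straightforward applications of the remaining compatibility conditions.
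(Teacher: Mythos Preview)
Your approach diverges substantially from the paper's, and it also has a structural gap.

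The paper's proof is a one-liner: the $\Uc$-action together with the $\Fc$-coaction on $V$ (the latter converted via the Hopf pairing into a $U(\Fn)$-action) assembles into a representation of the double crossed sum Lie algebra $\Fa_n = \Fg \bowtie \Fn$, the Lie algebra of formal vector fields on $\Rb^n$. A classical result of Fuks says $\Fa_n$ has no nontrivial \emph{finite-dimensional} representation, so this representation is trivial, and hence both the $\Uc$-action and the $\Fc$-coaction are trivial. Finite-dimensionality of $V$ (implicit from the surrounding subsection and explicit in the paper's proof) is the entire point; you never invoke it.

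Your route through Proposition~\ref{aux-41} has a genuine gap. That proposition carries the standing hypothesis that $V$ is already a YD module over $\Uc$, and the derivation of \eqref{auxy10} and \eqref{auxy11} in its proof explicitly uses the YD condition over $\Uc$. You assume these identities from the outset, but $\Uc$ is only a subalgebra and a quotient coalgebra of $\Hc_n$, not a Hopf subalgebra (the coproduct of $1\acl u$ involves $u\ps{1}\ns{1}\in\Fc$), so YD over $\Uc$ does not follow automatically from YD over $\Hc_n$. Without it you only have the raw YD identity for $1\acl u$, and turning that into \eqref{auxy11} is precisely where the module compatibility over $\Fa_n$ is hiding --- which is exactly what the paper extracts and then kills with Fuks' theorem.

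Even granting \eqref{auxy11}, the ``double induction'' you sketch to get $v\cdot f=0$ for every $f\in\Fc^+$ is doing real work that you have not supplied: the identity $\sum_{a,b} v\snsb{-1}X^a_b \ot v\snsb{0}\cdot\d^a_{bk}=0$ entangles the unknown $\Uc$-coaction with the unknown $\Fc$-action, and ``stripping off degree by degree'' requires knowing how both filtrations interact on $V$. By contrast, once you know the $\Fa_n$-representation is trivial, all of this disappears for free.
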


\begin{proof}
Let $V$ be a finite dimensional YD module over $\mathcal{H}_n = \Fc \acl \Uc$. By the same argument in Proposition \ref{aux-41}, $V$ is a module over $\Fa$. However, we know that $\Fa$ has no nontrivial finite dimensional representation  by  \cite{Fuks}. As a result, the $\Uc$ action and the $\Fc$-coaction on $V$ are trivial.
\end{proof}

Let us introduce the isotropy subalgebra $\Fg_0\subset \Fg$ by
\begin{equation}
\Fg_0 = \Big\{X \in \Fg\,\Big|\, Y \rt X = 0, \forall Y \in \Fn\Big\} \subseteq \Fg.
\end{equation}
By the construction of $\Fa$ it is obvious that $\Fg_0$ is generated by $Z^i_j$, and hence $\Fg_0\cong g\ell(n)$. By the definition of the  coaction $\Db_\Uc:\Uc\ra \Uc\ot \Fc$, we see that $U(\Fg_0)=\Uc^{co\Fc}$.

\begin{lemma}\label{lemma-coaction-lands}
For any finite dimensional YD module $V$ over  $\Hc_n$, the coaction $$\Db:V\ra \Hc_n\ot V$$ lands in $U(\Fg_0)\ot V.$
\end{lemma}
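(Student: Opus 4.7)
The plan is to combine Lemma \ref{lemma-(co)action-trivial} with the coaction compatibility of Lemma \ref{comodule on bicrossed product}. By Lemma \ref{lemma-(co)action-trivial}, the $\Uc$-action on $V$ is trivial and the $\Fc$-coaction on $V$ is trivial, \ie $v^{\sns{-1}} \ot v^{\sns{0}} = 1 \ot v$ for every $v \in V$. Writing the $\Hc_n$-coaction via \eqref{auxy12} as
$$\Db(v) = v^{\sns{-1}} \acl v^{\sns{0}}\snsb{-1} \ot v^{\sns{0}}\snsb{0},$$
this triviality immediately forces $\Db(v) = 1 \acl v\snsb{-1} \ot v\snsb{0}$, so the problem reduces to showing that $v\snsb{-1} \in U(\Fg_0) = \Uc^{\rm co\Fc}$.

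First I would substitute the trivial $\Fc$-coactions (on $v$ and on $v\snsb{0}$) into the compatibility equation \eqref{aux-27} of Lemma \ref{comodule on bicrossed product}. This collapses both sides and produces
$$(v\snsb{-1})^{\pr{0}} \ot (v\snsb{-1})^{\pr{1}} \ot v\snsb{0} = v\snsb{-1} \ot 1 \ot v\snsb{0}$$
in $\Uc \ot \Fc \ot V$.

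Next I would write $v\snsb{-1} \ot v\snsb{0} = \sum_i u_i \ot v_i$ with the $v_i$ linearly independent in $V$. The identity above then yields
$$\sum_i (u_i)^{\pr{0}} \ot (u_i)^{\pr{1}} \ot v_i = \sum_i u_i \ot 1 \ot v_i,$$
and linear independence of the $v_i$'s forces $(u_i)^{\pr{0}} \ot (u_i)^{\pr{1}} = u_i \ot 1$ for each $i$. Since by construction $U(\Fg_0)$ coincides with $\Uc^{\rm co\Fc}$, each $u_i$ belongs to $U(\Fg_0)$, whence $\Db(V) \subseteq U(\Fg_0) \ot V$ as desired.

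The main obstacle here is not computational but conceptual: recognizing that, once the $\Fc$-coaction on $V$ degenerates, the bicrossed-product coaction compatibility \eqref{aux-27} collapses precisely to the coinvariance condition defining $U(\Fg_0) \subseteq \Uc$. The rest is an elementary application of Lemma \ref{lemma-(co)action-trivial} and of the characterization $U(\Fg_0) = \Uc^{\rm co\Fc}$ established just before the statement.
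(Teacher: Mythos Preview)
Your proof is correct and follows essentially the same approach as the paper: both first invoke Lemma \ref{lemma-(co)action-trivial} to reduce the $\Hc_n$-coaction to $v\mapsto 1\acl v\snsb{-1}\ot v\snsb{0}$, and then extract the coinvariance condition $(v\snsb{-1})^{\pr{0}}\ot (v\snsb{-1})^{\pr{1}}\ot v\snsb{0}=v\snsb{-1}\ot 1\ot v\snsb{0}$. The only cosmetic difference is that the paper derives this identity by writing out the coassociativity of the $\Hc_n$-coaction explicitly, whereas you cite the pre-packaged compatibility \eqref{aux-27} from Lemma \ref{comodule on bicrossed product}; since \eqref{aux-27} is itself obtained from coassociativity, the two arguments are the same.
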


\begin{proof}
By Lemma \ref{lemma-(co)action-trivial} we know that $\Uc$-action and $\Fc$-coaction on $V$ are trivial. Hence, the left coaction $V \to \Fc\acl \Uc \ot V$ becomes $v \mapsto 1 \acl v\snsb{-1} \ot v\snsb{0}$. The coassociativity of the coaction
\begin{equation}
1 \acl v\snsb{-2} \ot 1 \acl v\snsb{-1} \ot v\snsb{0} = 1 \acl (v\snsb{-2})^{\pr{0}} \ot (v\snsb{-2})^{\pr{1}} \acl v\snsb{-1} \ot v\snsb{0}
\end{equation}
implies that
\begin{equation}
v \mapsto v\snsb{-1} \ot v\snsb{0} \in \Uc^{co\Fc} \ot V =U(\Fg_0) \ot V.
\end{equation}
\end{proof}

\begin{lemma}\label{lemma-coaction-trivial}
Let $V$ be a finite dimensional YD module over the Hopf algebra $\Hc_n$. Then the coaction of $\Hc_n$ on $V$ is trivial.
\end{lemma}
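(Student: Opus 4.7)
The plan is to combine Lemma \ref{lemma-coaction-lands} with the YD compatibility applied to a translation generator $X_k \in \Fg = g\ell(n)^{\rm aff}$, exploiting that $X_k$ lies outside the isotropy subalgebra $\Fg_0 \cong g\ell(n)$. This will force every matrix coefficient of the coaction to commute with every translation inside $\Uc$, and a Poincar\'e--Birkhoff--Witt argument will then collapse these coefficients to scalars.

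First I would fix a translation $X_k \in \Fg$ and write $\Db_\Uc(X_k) = X_k \ot 1 + X_k^{\natural} \ot \delta_k^{\natural}$ with $X_k^{\natural} \in \Fg_0$ and $\delta_k^{\natural} \in \ker\ve \subset \Fc$ (summation on $\natural$ implicit); this decomposition follows from the matched pair relations, since $Y \rt X_k = [Y, X_k] \in \Fg_0$ for $Y \in \Fn$ and vanishes on higher-order elements of $U(\Fn)$. Substituting the induced coproduct $\Delta(1 \acl X_k)$ into the YD condition \eqref{aux-right-left-YD-module} with $h = 1 \acl X_k$, using the triviality of the $\Uc$-action from Lemma \ref{lemma-(co)action-trivial}, and writing $v\ns{-1} = 1 \acl u_0$ with $u_0 \in U(\Fg_0)$ as supplied by Lemma \ref{lemma-coaction-lands}, the identity collapses to
\begin{equation*}
(1 \acl [X_k, u_0]) \ot v\ns{0} \;=\; (1 \acl u_0 X_k^{\natural}) \ot v\ns{0} \cdot \delta_k^{\natural}
\end{equation*}
in $\Hc_n \ot V$.

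Next I would invoke the PBW decomposition $\Uc \cong S(\Fn^-) \ot U(\Fg_0)$, where $\Fn^- = \Cb\langle X_1,\ldots,X_n\rangle$ denotes the translation subalgebra. Since $\ad_{X_k}$ is a derivation of $\Uc$ sending each $Z_p^q \in \Fg_0$ to $-\delta_k^q X_p \in \Fn^-$ (using the commutators \eqref{commutatorrelationsforglnaffine}), an easy induction via Leibniz shows $[X_k, U(\Fg_0)] \subseteq \Fn^- \cdot \Uc$, which meets $U(\Fg_0)$ trivially. Hence the left-hand side of the displayed identity lies in $\Fn^- \cdot \Uc \ot V$, while the right-hand side lies in $U(\Fg_0) \ot V$, so both sides vanish separately. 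In particular $[X_k, u_0] = 0$ in $\Uc$ for every $k$ and every matrix coefficient $u_0$ of $\Db$.

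It remains to show that any $u_0 \in U(\Fg_0)$ commuting with all translations is a scalar, after which counitality of $\Db$ immediately gives $\Db(v) = 1 \ot v$. Passing to the associated graded algebra $S(\Fg) = S(\Fg_0) \ot S(\Fn^-)$, the induced Poisson bracket acts on $S(\Fg_0)$ as the derivation $\{X_k, f\} = -\sum_p X_p\,\partial f/\partial Z_p^k$, with values in $\Fn^- \cdot S(\Fg_0)$. If the leading symbol of $u_0$ is annihilated by $\{X_k, -\}$ for every $k$, then linear independence of the $X_p$'s in $\Fn^-$ forces every partial derivative of the symbol to vanish, so the symbol must be zero unless $u_0 \in \Cb$; an induction on PBW degree then concludes. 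I expect the main technical nuisance to be the bookkeeping of the PBW filtration in this last step, together with checking that the correction term $X_k^{\natural} \ot \delta_k^{\natural}$ in $\Db_\Uc(X_k)$ really lies in $\Fg_0 \ot \ker\ve$; the latter hinges on the explicit structure of the bicrossed decomposition $\Hc_n = \Fc(N) \acl U(\Fg)$ developed in \cite{MoscRang09}.
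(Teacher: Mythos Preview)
Your argument is correct, but it follows a genuinely different route from the paper's. The paper does not use the translations $X_k$ at all for $n\geq 2$: instead, it restricts $V$ to a YD module over the Hopf subalgebra $U(\Fg_0)\hookrightarrow\Hc_n$, passes to the induced $\Fg_0$-coaction $v^i\mapsto \alpha^{ip}_{kq}Z^q_p\ot v^k$, and then applies the YD compatibility with $Z^{p_0}_{q_0}\in\Fg_0$. With the $\Uc$-action already trivial, this yields the pure structure-constant identity $\alpha^{ip_0}_{kq}Z^q_{q_0}-\alpha^{ip}_{kq_0}Z^{p_0}_p=0$, and for $n\geq 2$ a direct linear-algebra reading of the coefficients forces all $\alpha^{ip}_{kq}=0$. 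Only the case $n=1$, where $g\ell(1)$ is abelian and this argument collapses, is handled separately via the translation $X$ together with the $\delta_1$-term in its coproduct.

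By contrast, you work uniformly with $h=X_k$ for all $n$, decompose $U(\Fg)=\Fn^- U(\Fg)\oplus U(\Fg_0)$ via PBW to force both sides of the YD identity to vanish, and then run a symbol/filtration argument to show the matrix coefficients are scalar. What you gain is a single proof valid for every $n$; what the paper's approach buys is that, for $n\geq 2$, one never leaves the finite-dimensional space $\Fg_0$ and needs no PBW or associated-graded machinery.
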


\begin{proof}
By  Lemma \ref{lemma-coaction-lands} we know that the coaction of $\Hc_n$ on $V$ lands in $U(\Fg_0)\ot V$. Since $U(\Fg_0)$ is a Hopf subalgebra of $\Hc_n$, it is obvious that $V$ is an AYD module over $U(\Fg_0)$. Since $\Fg_0$ is finite dimensional,  $V$ becomes an AYD module over $\Fg_0$.

\medskip

Let us express  the $\Fg_0$-coaction for an arbitrary basis element $v^i \in V$ as
\begin{equation}
v^i \mapsto v^i\nsb{-1} \ot v^i\nsb{0} = \alpha^{ip}_{kq}Z^q_p \ot v^k \in \Fg_0 \ot V.
\end{equation}
Then AYD condition over $\Fg_0$ becomes
\begin{equation}
\alpha^{ip}_{kq}[Z^q_p,Z] \ot v^k = 0.
\end{equation}
Choosing an arbitrary $Z = Z^{p_0}_{q_0} \in g\ell(n) = \Fg_0$, we get
\begin{equation}
\alpha^{ip_0}_{kq}Z^q_{q_0} \ot v^k - \alpha^{ip}_{kq_0}Z^{p_0}_p \ot v^k = 0,
\end{equation}
or equivalently
\begin{equation}
\alpha^{ip_0}_{kq_0}(Z^{q_0}_{q_0} - Z^{p_0}_{p_0}) + \sum_{q \neq q_0}\alpha^{ip_0}_{kq}Z^q_{q_0} - \sum_{p \neq p_0}\alpha^{ip}_{kq_0}Z^{p_0}_p = 0.
\end{equation}
Thus for $n \geq 2$ we have proved that the $\Fg_0$-coaction is trivial. Hence its lift  to a  $U(\Fg_0)$-coaction is trivial. On the other hand, it is guaranteed by the Proposition \ref{proposition-categories-equivalent} that this is precisely the $U(\Fg_0)$-coaction we have started with. This proves that the $\Uc$ coaction, and hence the $\Hc_n$ coaction on $V$ is trivial.

\medskip

On the other hand,  for $n = 1$, the YD condition for $X \in \Hc_1$ reads, in view of the triviality of the action of $g\ell(1)^{\rm aff}$,
\begin{equation}\label{aux-coaction-n=1}
Xv\pr{-1} \ot v\pr{0} + Z(v \cdot \d_1)\pr{-1} \ot (v \cdot \d_1)\pr{0} = v\pr{-1}X \ot v\pr{0}.
\end{equation}
By Lemma \ref{lemma-coaction-lands} we know that the coaction lands in $U(g\ell(1))$. Together with the relation $[Z,X] = X$, \eqref{aux-coaction-n=1} forces the $\Hc_1$-coaction (and also the action) to be trivial.
\end{proof}

\begin{lemma}\label{lemma-action-trivial}
Let $V$ be a finite dimensional YD module over the Hopf algebra $\Hc_n$. Then the action of $\Hc_n$ on $V$ is trivial.
\end{lemma}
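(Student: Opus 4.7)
The plan is to combine the previously established triviality results with the module compatibility condition to reduce the $\Fc$-action to the counit. By Lemma \ref{lemma-(co)action-trivial} the $\Uc$-action on $V$ is trivial and by Lemma \ref{lemma-coaction-trivial} the full $\Hc_n$-coaction is trivial, so it only remains to show that the $\Fc$-action on $V$ is trivial.

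First I would specialize Lemma \ref{module on bicrossed product}: from the compatibility $(v\cdot u)\cdot f=(v\cdot(u\ps{1}\rt f))\cdot u\ps{2}$, using the triviality of the $\Uc$-action together with $\ve(u\ps{2})u\ps{1}=u$, one extracts the functional equation
$$\ve(u)\,(v\cdot f)\;=\;v\cdot(u\rt f),\qquad \forall\,u\in\Uc,\ f\in\Fc,\ v\in V.$$
Consequently, the element $u\rt f-\ve(u)f\in\Fc$ annihilates $V$ for every $u\in\Uc$ and $f\in\Fc$. Denoting by $I\subseteq\Fc$ the annihilator of $V$ as an $\Fc$-module, one therefore has $u\rt f-\ve(u)f\in I$ for all $u,f$.

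The central step is to verify that $I$ contains the augmentation ideal $\ker\ve$ of $\Fc=R(\Fn)$. Given this inclusion, the $\Fc$-action on $V$ will factor through $\Fc/\ker\ve\cong\Cb$, forcing $v\cdot f=\ve(f)\,v$, and combined with the trivial $\Uc$-action this yields the triviality of the $\Hc_n$-action. Recall from \cite{ConnMosc98,MoscRang09} that $\Fc$ is the commutative polynomial Hopf algebra on the generators $\d^i_{j_1\cdots j_k}$, $k\geq 2$, whose linear span together with $1$ is all of $\Fc$ and the span of which alone is precisely $\ker\ve$. The action of $\Fg=g\ell(n)^{\rm aff}$ obeys $X_l\rt\d^i_{j_1\cdots j_k}=\d^i_{j_1\cdots j_k l}$ for the translations, whereas the $Y^a_b\in g\ell(n)$ act by order-preserving operators. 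For the base case one selects the diagonal combination $Y:=\sum_a Y^a_a$ of $g\ell(n)$: then $\ve(Y)=0$ and a direct computation (generalizing the codimension-one relation $[Y,\d_n]=n\,\d_n$ of \cite{ConnMosc98}) yields $Y\rt\d^i_{j_1\cdots j_k}$ equal to a nonzero scalar multiple of $\d^i_{j_1\cdots j_k}$. Hence every $\d^i_{jk}$ lies in $I$, and an induction on $k$ using $\d^i_{j_1\cdots j_k l}=X_l\rt\d^i_{j_1\cdots j_k}\in I$ then places all generators of $\ker\ve$ inside $I$.

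The hard part is the base case of this induction: the translations $X_l$ only \emph{raise} the order of the $\d^i_{j_1\cdots j_k}$, so by themselves they cannot produce any of the lowest-order generators; the argument depends crucially on exhibiting an element of $\ker\ve\cap\Fg$ whose action is order-preserving and nonzero on each $\d^i_{jk}$, which is where the rescaling provided by the $Y^a_b\in g\ell(n)$ plays the decisive role.
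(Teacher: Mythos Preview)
Your argument is correct, but it reaches the base case by a different route than the paper. Both proofs begin identically (trivial $\Uc$-action and trivial $\Hc_n$-coaction from the earlier lemmas, then the bicrossed-product module compatibility), and both finish with the same induction via $X_l\rt\delta^i_{j_1\cdots j_k}=\delta^i_{j_1\cdots j_kl}$. The divergence is in how the lowest-order generators $\delta^p_{qk}$ are shown to annihilate $V$. The paper plugs $h=1\acl X_k$ directly into the right--left YD condition \eqref{aux-right-left-YD-module-equivalent-condition}, expands $\Delta^{(2)}(1\acl X_k)$, and uses the triviality of the coaction together with $v\cdot Y^q_p=0$ to read off $v\cdot\delta^p_{qk}=0$ from the $1\acl Y^q_p$-component. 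You instead stay entirely within the module-compatibility identity $\ve(u)\,v\cdot f=v\cdot(u\rt f)$ and exploit the weight operator $Y=\sum_a Y^a_a\in g\ell(n)$: since $Y\rt\delta^i_{jk}=\delta^i_{jk}$ and $\ve(Y)=0$, each $\delta^i_{jk}$ lands in the annihilator ideal. Your approach is more elementary in that it never unpacks the YD condition beyond what the prior lemmas already supplied, at the cost of invoking the explicit $g\ell(n)$-eigenvalue computation; the paper's approach avoids that computation but requires handling the full $\Delta^{(2)}$ expansion.

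One expository slip: the linear span of $\{1\}\cup\{\delta^i_{j_1\cdots j_k}\}$ is \emph{not} all of $\Fc$, nor is the span of the $\delta$'s alone equal to $\ker\ve$; $\Fc$ is the \emph{polynomial} algebra on these generators, and $\ker\ve$ is the \emph{ideal} they generate. This does not damage your proof, since the annihilator $I$ is an ideal and so containing the generators is enough, but you should phrase it accordingly.
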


\begin{proof}
By Lemma \ref{lemma-(co)action-trivial} we know that the action of $\Hc_n$ on $V$ is concentrated on the action of $\Fc$ on $V$. So it suffices to prove that this action is trivial.

\medskip

For an arbitrary $v \in V$ and $1 \acl X_k \in \mathcal{H}_n$, we write the YD compatibility. First we calculate
\begin{align}
\begin{split}
& \Delta^2(1 \acl X_k) = \\
& (1 \acl 1) \ot (1 \acl 1) \ot (1 \acl X_k) + (1 \acl 1) \ot (1 \acl X_k) \ot (1 \acl 1) \\
& + (1 \acl X_k) \ot (1 \acl 1) \ot (1 \acl 1) + (\delta^p_{qk} \acl 1) \ot (1 \acl Y_p^q) \ot (1 \acl 1) \\
& + (1 \acl 1) \ot (\delta^p_{qk} \acl 1) \ot (1 \acl Y_p^q) + (\delta^p_{qk} \acl 1) \ot (1 \acl 1) \ot (1 \acl Y_p^q).
\end{split}
\end{align}
Since, by  Lemma \ref{lemma-coaction-trivial}, the coaction of $\Hc_n$ on $V$ is trivial, the AYD condition  can be written as
\begin{align}
\begin{split}
& (1 \acl 1) \ot v \cdot X_k = S(1 \acl X_k) \ot v + 1 \acl 1 \ot v \cdot X_k + 1 \acl X_k \ot v  \\
&+ \delta^p_{qk} \acl 1 \ot v \cdot Y_p^q - 1 \acl Y_p^q \ot v \cdot \delta^p_{qk} - \delta^p_{qk} \acl Y_p^q \ot v  \\
& =\delta^p_{qk} \acl 1 \ot v \cdot Y_p^q + Y_i^j \rt \delta^i_{jk} \acl 1 \ot v - 1 \acl Y_p^q \ot v \cdot \delta^p_{qk}.
\end{split}
\end{align}
Therefore,
\begin{equation}
v \cdot \delta^p_{qk} = 0.
\end{equation}
Finally, by the module compatibility on a bicrossed product $\Fc \acl \Uc$, we get
\begin{equation}
(v \cdot X_l) \cdot \delta^p_{qk} = v \cdot (X_l \rt \delta^p_{qk}) + (v \cdot \delta^p_{qk}) \cdot X_l,
\end{equation}
which in turn, by using one more time the triviality of the $U(\Fg_1)$-action on $V$, implies
\begin{equation}
v \cdot \delta^p_{qkl} = 0.
\end{equation}
Similarly we have
\begin{equation}
v \cdot \delta^p_{qkl_1 \ldots l_s} = 0, \quad \forall s.
\end{equation}
This proves that the   $\Fc$-action and hence the $\Hc_n$ action on $V$ is trivial.
\end{proof}

Now we prove the main result of this section.

\begin{theorem}
The only finite dimensional AYD module over the Connes-Moscovici Hopf algebra $\mathcal{H}_n$  is $\Cb_\d$.
\end{theorem}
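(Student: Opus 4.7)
The plan is to reduce the classification of finite dimensional AYD modules over $\Hc_n$ to that of finite dimensional YD modules over $\Hc_n$ by means of the canonical modular pair in involution, and then to apply the three triviality lemmas just established to kill the YD side. By \cite[Lemma 2.3]{HajaKhalRangSomm04-I}, for any Hopf algebra $H$ equipped with an MPI $(\d,\s)$, the functor $V\mapsto{}^\s\Cb_\d\ot V$ realizes an equivalence between the category of YD modules over $H$ and the category of AYD modules over $H$. For $\Hc_n$, the canonical MPI is $(\d,1)$: this is the MPI of \cite{ConnMosc98}, which coincides with the one produced by Theorem~\ref{theorem-MPI} applied to the matched pair $(\Fg,\Fn)$ with $\Fg=g\ell(n)^{\rm aff}$ and $\Fn=\Fl_1$, since the matrix coefficients $f_i^j$ of~\eqref{aux-first-order-matrix-coefficients} governing the $\Fn$-action on $\Fg$ produce a unipotent matrix, so that $\s=\det[f_i^j]=1$ in~\eqref{aux-sigma}. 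Consequently, every finite dimensional AYD module $W$ over $\Hc_n$ is of the form $W\cong\Cb_\d\ot V$ for some finite dimensional YD module $V$ over $\Hc_n$.

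Next, I would assemble Lemma~\ref{lemma-(co)action-trivial}, Lemma~\ref{lemma-coaction-trivial}, and Lemma~\ref{lemma-action-trivial} to conclude that any finite dimensional right-left YD module $V$ over $\Hc_n$ carries the trivial $\Hc_n$-action together with the trivial $\Hc_n$-coaction. As a YD module, $V$ is therefore isomorphic to a direct sum of $\dim V$ copies of the one-dimensional trivial YD module $\Cb$.

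Combining the two reductions, any finite dimensional AYD module $W$ over $\Hc_n$ is isomorphic to $\Cb_\d\ot\Cb^{\dim V}\cong(\Cb_\d)^{\oplus\dim V}$, so up to isomorphism and direct sums, $\Cb_\d$ is the unique finite dimensional AYD module over $\Hc_n$; stability is automatic since the coaction on $\Cb_\d$ is trivial, so $\Cb_\d$ is in fact SAYD. The substantive work has already been carried out in the three preceding lemmas, where the obstacles were genuine: the nonexistence of nontrivial finite dimensional representations of $\Fa_n$ from~\cite{Fuks} for Lemma~\ref{lemma-(co)action-trivial}, the rigidity of $g\ell(n)$-AYD structures on a finite dimensional space for Lemma~\ref{lemma-coaction-trivial}, and the YD compatibility applied to the $\d^i_{jk}$ generators for Lemma~\ref{lemma-action-trivial}. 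Given these, the present theorem is a short bookkeeping argument; the only remaining check, the identification $\s=1$, is immediate from the explicit form of $\Db_\Fg$.
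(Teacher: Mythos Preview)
Your proof is correct and follows essentially the same approach as the paper: reduce AYD to YD via the MPI equivalence, then invoke the three triviality lemmas to conclude the only finite dimensional YD module is trivial. The only notable difference is that the paper cites \cite{Stai} rather than \cite{HajaKhalRangSomm04-I} for the categorical equivalence $V\mapsto{}^\s V_\d$ between YD and AYD modules, but the mathematical content is identical; your added justification that $\s=1$ (via unipotence of the matrix $[f_i^j]$) and your remark that the conclusion should be read ``up to direct sums'' are both welcome clarifications that the paper leaves implicit.
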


\begin{proof}
By Lemma \ref{lemma-action-trivial} and Lemma \ref{lemma-coaction-trivial} we know that the only finite dimensional YD module on $\Hc_n$ is the trivial one. On the other hand, by \cite{Stai} we know that the category of AYD modules and the category of YD modules over a Hopf algebra $H$ are equivalent provided $H$ has a modular pair in involution $(\d,\s)$. In fact, the equivalence functor between these  two categories is simply given by
\begin{equation}
^H\mathcal{YD}_H \to \; ^H\mathcal{AYD}_H, \quad  V \mapsto \,{^\s}V_\d:=V\ot\;^\s\Cb_\d.
\end{equation}
Since by the result of Connes-Moscovici  in \cite{ConnMosc98} the Hopf algebra $\Hc_n$ admits a modular pair in involution $(\d,1)$,  we conclude that the only finite dimensional AYD module on $\Hc_n$ is $\Cb_\d$.
\end{proof}

\chapter{Hopf-cyclic cohomology}\label{chapter-hopf-cyclic-cohomology}


This chapter is devoted to the study of the Hopf-cyclic cohomology of Lie-Hopf algebras with coefficients in SAYD modules. To this end we first focus on the commutative representative Hopf algebras. Then we deal with the cohomology of the noncommutative geometric Hopf algebras with induced coefficients. Finally we identify the Hopf-cyclic cohomology of the Lie-Hopf algebra $R(\Fg_2)\acl U(\Fg_1)$ with coefficients in full generality with the Lie algebra cohomology of $\Fg_1\bowtie \Fg_2$.

\medskip

More explicitly, we first establish a van Est type isomorphism (extending the results in \cite{ConnMosc98,ConnMosc01,MoscRang09}) between the Hopf-cyclic cohomology of a Lie-Hopf algebra with coefficients in an induced SAYD module and the Lie algebra cohomology of the corresponding Lie algebra relative to a Levi subalgebra with coefficients in the original module.

\medskip

We then extend our van Est type isomorphism in the presence of the nontrivial Lie algebra coaction. In  contrast to the induced case, the isomorphism is  proved at the first level of natural spectral sequences,  rather than at the level of complexes.


\section{ Hopf cyclic cohomology of representative Hopf algebras}

In this section, we compute the Hopf-cyclic cohomology of the commutative Hopf algebras $R(G)$, $R(\Fg)$ and $\mathscr{P}(G)$ with coefficients.

\medskip

Let a coalgebra $C$ and an algebra $A$  be in duality, \ie there exists a pairing
\begin{equation}\label{aux-pairing-algebra-coalgebra}
\langle\; ,\rangle: C\ot A\ra \Cb
\end{equation}
compatible with product and coproduct in the sense that
\begin{equation}\label{pairing-property}
\langle c,ab\rangle=  \langle c\ps{1},a\rangle\langle c\ps{2},b\rangle, \quad \langle c,1\rangle=\ve(c).
\end{equation}
Using this duality, we can turn a bicomodule $V$ over $C$ into a bimodule over $A$ via
\begin{equation}
a \cdot v:= \langle v\ns{1},a\rangle v\ns{0}, \quad v\cdot a:= \langle v\ns{-1},a\rangle v\ns{0}.
\end{equation}

Now we define the map
\begin{align}\label{map-theta-Alg}
 \begin{split}
&\t_{(C,A)}: V\ot C^{\ot q}\ra \Hom(A^{\ot q},V),\\
&\t_{(C,A)}(v\ot c^1\odots c^q)(a_1 \odots a_q)= \langle c^1, a_1\rangle \cdots \langle c^q, a_q\rangle v.
\end{split}
 \end{align}

\begin{lemma}
For any algebra $A$  and  coalgebra $C$ with a pairing \eqref{aux-pairing-algebra-coalgebra} and any $C$-bicomodule $V$, the map $\t_{(C,A)}$ defined in \eqref{map-theta-Alg} is a map of complexes between the Hochschild complex of the coalgebra $C$ with coefficients in the bicomodule $V$ and the Hochschild complex of the algebra $A$ with coefficients in the $A$-bimodule induced by $V$.
\end{lemma}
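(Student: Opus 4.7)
The plan is to verify directly that $\t_{(C,A)}$ commutes with the two Hochschild coboundaries by matching terms on the nose, using the compatibility \eqref{pairing-property} of the pairing and the definitions of the induced bimodule structure on $V$.

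First I would write out the coalgebra Hochschild coboundary $b_C: V \ot C^{\ot q} \to V \ot C^{\ot q+1}$ for the $C$-bicomodule $V$ as
\begin{equation*}
b_C(v \ot c^1 \odots c^q) = v\ns{0} \ot v\ns{1} \ot c^1 \odots c^q + \sum_{i=1}^{q} (-1)^i v \ot c^1 \odots c^i\ps{1} \ot c^i\ps{2} \odots c^q + (-1)^{q+1} v\ns{0} \ot c^1 \odots c^q \ot v\ns{-1},
\end{equation*}
and the algebra Hochschild coboundary $b_A$ on $\Hom(A^{\ot q},V)$ with values in the induced bimodule $V$ as
\begin{equation*}
b_A(\phi)(a_0, \dots, a_q) = a_0 \cdot \phi(a_1, \dots, a_q) + \sum_{i=1}^{q} (-1)^i \phi(a_0, \dots, a_{i-1}a_i, \dots, a_q) + (-1)^{q+1} \phi(a_0, \dots, a_{q-1}) \cdot a_q.
\end{equation*}

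Next I would evaluate $\t_{(C,A)}(b_C(v \ot c^1 \odots c^q))$ on a tuple $(a_0, \dots, a_q)$ term by term. The interior terms yield $\langle c^1, a_0\rangle \cdots \langle c^i\ps{1}, a_{i-1}\rangle\langle c^i\ps{2}, a_i\rangle \cdots \langle c^q, a_q\rangle\, v$, which by \eqref{pairing-property} collapses to $\langle c^1, a_0\rangle \cdots \langle c^i, a_{i-1}a_i\rangle \cdots \langle c^q, a_q\rangle\, v$ — exactly the interior term of $b_A(\t_{(C,A)}(v \ot c^1 \odots c^q))$. For the outer terms I would invoke the definitions $a \cdot v = \langle v\ns{1}, a\rangle v\ns{0}$ and $v \cdot a = \langle v\ns{-1}, a\rangle v\ns{0}$ of the induced bimodule action to identify the first and the $(q{+}1)$-st summands above with $a_0 \cdot \t_{(C,A)}(v \ot c^1 \odots c^q)(a_1, \dots, a_q)$ and $(-1)^{q+1}\t_{(C,A)}(v \ot c^1 \odots c^q)(a_0, \dots, a_{q-1}) \cdot a_q$ respectively.

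The verification is thus an entirely formal bookkeeping exercise; the only genuine input is the pairing compatibility \eqref{pairing-property}, which is what converts a single split coproduct $\Delta(c^i)$ into a single algebra product $a_{i-1}a_i$, together with the definitions of the induced actions, which convert the two coaction-flavoured boundary faces into the two multiplication-flavoured boundary faces. The main (minor) obstacle will be keeping the conventions for left versus right coaction, left versus right induced action, and the placement of the new tensor factor at position $0$ versus position $q{+}1$ all consistent; once the conventions are pinned down the intertwining $\t_{(C,A)} \circ b_C = b_A \circ \t_{(C,A)}$ is immediate and $\t_{(C,A)}$ is a morphism of cochain complexes.
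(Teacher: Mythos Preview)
Your proposal is correct and takes essentially the same approach as the paper: both rely solely on the pairing compatibility \eqref{pairing-property} and the definitions of the induced bimodule structure. The paper's own proof is the single sentence ``The claim follows directly from the pairing property \eqref{pairing-property},'' whereas you have spelled out the term-by-term verification explicitly; the content is identical.
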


\begin{proof}
The claim follows directly from the pairing property \eqref{pairing-property}.
\end{proof}

Let a commutative Hopf algebra $\Fc$ and the universal enveloping algebra $U(\Fg)$ of a Lie algebra $\Fg$ be in Hopf duality via
\begin{equation}
\langle,\rangle:\Fc\ot U(\Fg)\ra \Cb.
\end{equation}
In addition, let us assume that $\Fg=\Fh\ltimes \Fl$,  where the Lie subalgebra  $\Fh$ is reductive,  every finite dimensional representation of $\Fh$ is semisimple and $\Fl$ is an ideal.

\medskip

For a $\Fg$-module $V$ we observe that the inclusion $\Fl\hookrightarrow \Fg$ induces a map
   \begin{equation}
   \pi_\Fl: \Hom(U(\Fg)^{\ot q},V)\ra \Hom(U(\Fl)^{\ot l}, V)
   \end{equation}
of Hochschild complexes. Moreover, it is known that the antisymmetrization map
   \begin{align}\label{map-antisymmetrization}
   &\a: \Hom(U(\Fl)^{\ot q},V)\ra W^q(\Fl,V):= V\ot \wdg^{q}\Fl^\ast,\\\notag
    &\a(\om)(X_1,\ldots,X_q)=\sum_{\s\in {S_q}}(-1)^\s \om(X_{\s(1)},\ldots,X_{\s(q)})
   \end{align}
is a map of complexes between Hochschild cohomology of $U(\Fl)$ with coefficients in $V$ and the Lie algebra cohomology of $\Fl$ with coefficients in $V$.

\medskip

We then use the fact that $\Fh$ acts  semisimply  to decompose the Lie algebra cohomology complex $W(\Fl,V)$ into the weight spaces
\begin{equation}
W^\bullet(\Fl,V)= \bigoplus _{\mu\in \Fh^\ast}W^\bullet_\mu(\Fl,V).
\end{equation}
Since  $\Fh$ acts on  $\Fl$ by derivations, we observe that each $W_\mu^\bullet(\Fl,V)$ is  a complex on its own and the projection $\pi_\mu: W^{\bullet}(\Fl,V)\ra W^{\bullet}_\mu(\Fl,V)$ is a map of complexes. Composing $\t_{(\Fc,U(\Fg))}$, $\pi_\Fl$ and $\pi_\mu$ we get a map of complexes
   \begin{equation}
   \t_{\Fc,U(\Fg),\Fl,\mu}:=\pi_\mu\circ\pi_\Fl\circ\t_{(\Fc,U(\Fg))}: C^\bullet_{\rm coalg}(\Fc,V)\ra W^\bullet_\mu(\Fl,V).
   \end{equation}

\begin{definition}
Let a commutative Hopf algebra $\Fc$ be in Hopf duality with $U(\Fg)$. A decomposition $\Fg=\Fh\ltimes \Fl$ of Lie algebras is called a $\Fc$-Levi decomposition if the map $\t_{\Fc,U(\Fg),\Fl,0}$ is a quasi isomorphism for any $\Fc$-comodule $V$.
\end{definition}

\begin{theorem}\label{Theorem-F-Levi}
Let a commutative Hopf algebra $\Fc$ be in duality with $U(\Fg)$ and let $\Fg=\Fh\ltimes \Fl$ be an $\Fc$-Levi decomposition. Then the map  $\t_{\Fc,U(\Fg),\Fl,0}$ induces an isomorphism between Hopf cyclic cohomology of $\Fc$ with coefficients in $V$ and the relative Lie algebra cohomology of $\Fh\subseteq \Fg$ with coefficients in $V$. In other words,
\begin{equation}
HP^\bullet(\Fc,V)\cong \bigoplus_{\bullet=\,i\;{\rm mod\, 2}}H^i(\Fg,\Fh,V).
\end{equation}
\end{theorem}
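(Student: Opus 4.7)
The plan is to upgrade the complex map $\t := \t_{\Fc, U(\Fg), \Fl, 0}$ to a morphism of mixed complexes and then conclude via spectral-sequence comparison. First I would view the target $W^\bullet_0(\Fl, V)$ as a mixed complex with trivial Hochschild differential and the Chevalley--Eilenberg differential $d_{\rm CE}$ playing the role of Connes's $B$. The claim to establish is that $\t$ extends to a morphism of mixed complexes
$$\t \colon (C^\bullet_{\rm coalg}(\Fc, V), b, B) \;\longrightarrow\; (W^\bullet_0(\Fl, V), 0, d_{\rm CE}).$$
Compatibility with $b$ is already built in: $\t_{(\Fc, U(\Fg))}$ is a map of Hochschild complexes and the antisymmetrization $\a$ intertwines the Hochschild coboundary with $d_{\rm CE}$ while annihilating non-alternating pieces, which is a coalgebra analogue of the Hochschild--Kostant--Rosenberg phenomenon (available because $\Fc$ is commutative and $U(\Fg)$ is cocommutative).

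The main obstacle is the compatibility with $B$. I would compute $\t\bigl(B(v \ot c^1 \odots c^q)\bigr)$ by expanding the definition of $B$ (a cyclic average followed by a codegeneracy composed with another cyclic operator) and match it with $d_{\rm CE}\,\t(v \ot c^1 \odots c^q)$ term by term. The computation proceeds by applying the Hopf duality \eqref{pairing-property} and using commutativity of $\Fc$ together with cocommutativity of $U(\Fg)$ to collapse the cyclic average into a single alternating sum. The contribution from the cyclic operator $\tau$, which encodes the SAYD/MPI structure, matches exactly the adjoint $\Fh$-action on $\Fl$ on the relative Lie algebra side; this is why the weight-zero projection $\pi_0$ is essential and why the target naturally lands in $(\wedge^\bullet \Fl^\ast \ot V)^{\Fh}$.

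Granted this morphism of mixed complexes, the $\Fc$-Levi hypothesis is exactly the assertion that $\t$ is a quasi-isomorphism on the $E_1$-page of the Hodge-to-cyclic spectral sequence associated to each mixed complex. A standard comparison argument for morphisms of mixed complexes that are quasi-isomorphisms of Hochschild cohomologies then promotes $\t$ to an isomorphism on periodic cyclic cohomology:
$$HP^\bullet(\Fc, V) \;\cong\; HP^\bullet\bigl(W^\bullet_0(\Fl, V), 0, d_{\rm CE}\bigr).$$
The right-hand side collapses immediately: since the Hochschild-type differential is zero, the $(b+B)$-total complex is the $\Zb/2$-folded Chevalley--Eilenberg complex, whence the right-hand side equals $\bigoplus_{\bullet \equiv i \,\mathrm{mod}\, 2} H^i(W^\bullet_0(\Fl, V), d_{\rm CE})$.

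Finally I identify $H^\bullet(W^\bullet_0(\Fl, V), d_{\rm CE})$ with $H^\bullet(\Fg, \Fh, V)$. Since $\Fg = \Fh \ltimes \Fl$, the quotient $\Fg/\Fh$ is isomorphic to $\Fl$ as an $\Fh$-module via the adjoint action, so $W^\bullet(\Fg, \Fh, V) = (\wedge^\bullet (\Fg/\Fh)^\ast \ot V)^{\Fh} = (\wedge^\bullet \Fl^\ast \ot V)^{\Fh}$. Because $\Fh$ is reductive and acts semisimply on all finite-dimensional representations, this $\Fh$-invariant part is precisely the weight-zero component $W^\bullet_0(\Fl, V)$, and $d_{\rm CE}$ restricts to the relative differential. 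Combining these steps yields the desired isomorphism. The principal technical difficulty is the $B$-versus-$d_{\rm CE}$ compatibility in the second step; the rest is a formal consequence of the $\Fc$-Levi hypothesis and classical facts about mixed complexes and relative Lie algebra cohomology.
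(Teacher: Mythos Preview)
Your proposal contains a genuine confusion about which differential on the target corresponds to $b$ and which to $B$. By construction, $\t = \t_{\Fc,U(\Fg),\Fl,0}$ is built as the composite $\pi_0 \circ \a \circ \pi_\Fl \circ \t_{(\Fc,U(\Fg))}$, and each factor is a map of \emph{Hochschild} complexes: $\t_{(\Fc,U(\Fg))}$ intertwines the coalgebra Hochschild $b$ with the algebra Hochschild coboundary, and the antisymmetrization $\a$ intertwines the latter with $d_{\rm CE}$. Hence $\t b = d_{\rm CE}\,\t$, so $d_{\rm CE}$ is the $b$-type differential on the target, not the $B$-type. Your target mixed complex $(W^\bullet_0(\Fl,V),\,0,\,d_{\rm CE})$ therefore cannot receive $\t$ as a morphism of mixed complexes: the required identity $\t b = 0$ fails. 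Your proposed ``main obstacle'' computation $\t B = d_{\rm CE}\,\t$ is chasing the wrong identity; that equation is already occupied by $b$, and there is no reason the cyclic operator in $B$ should reproduce $d_{\rm CE}$ a second time.

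The paper's argument sidesteps the mixed-complex formalism entirely by invoking a structural fact about commutative Hopf algebras: for commutative $\Fc$, the Connes boundary $B$ vanishes at the level of Hochschild cohomology (this is \cite[Theorem~3.22]{KhalRang03}, extended to arbitrary comodule coefficients). Consequently $HP^\bullet(\Fc,V) \cong \bigoplus_{\bullet\equiv i}\,H^i_{\rm coalg}(\Fc,V)$, and the $\Fc$-Levi hypothesis---which by definition asserts that $\t$ is a Hochschild quasi-isomorphism onto $W^\bullet_0(\Fl,V)\cong W^\bullet(\Fg,\Fh,V)$---finishes the proof in one line. If you want to recast this as a mixed-complex comparison, the correct target is $(W^\bullet_0(\Fl,V),\,d_{\rm CE},\,0)$, and the nontrivial step becomes showing $\t B \sim 0$, which is exactly the vanishing-of-$B$ input from \cite{KhalRang03} rather than a direct matching with $d_{\rm CE}$.
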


\begin{proof}
We first recall from \cite{KhalRang03} that for any commutative Hopf algebra $\Fc$ and trivial comodule, the Connes boundary $B$ vanishes in the level of Hochschild cohomology. The same proof works for any comodule and hence we have
\begin{equation}
HP^\bullet(\Fc,V)\cong \bigoplus_{\bullet= \,i\;{\rm mod\, 2}} H^{i}_{\rm coalg}(\Fc,V).
\end{equation}
Since $\Fg=\Fh\ltimes \Fl$ is assumed to be a $\Fc$-Levi decomposition, the  map of complexes  $\t_{\Fc,U(\Fg),\Fl,0}$ induces the desired isomorphism in the level of cohomologies.
\end{proof}

\subsection{Hopf-cyclic cohomology of $R(G)$}

In this subsection, we compute the Hopf cyclic cohomology of the commutative Hopf algebra $\Fc:=R(G)$, the Hopf algebra of all representative functions on a Lie group $G$,  with coefficients in a right comodule $V$ over $\Fc$.

\medskip

Let $V$ be a right comodule over $\Fc$, or equivalently a representative left $G$-module. Let us recall from \cite{HochMost62} that a representative $G$-module is a locally finite $G$-module such that for any finite-dimensional $G$-submodule $W$ of $V$, the induced representation of $G$ on $W$ is continuous.

\medskip

The representative $G$-module  $V$ is called representatively injective if for every exact sequence
\begin{equation}
\xymatrix{ 0\ar[r]& A\ar[r]^\rho \ar[d]_{\a}& B\ar[r]\ar@{.>}[dl]^{\b}& C\ar[r]&0\\
&V&&&}
\end{equation}
of $G$-module homomorphisms between representative $G$-modules $A$, $B$, and $C$, and for every $G$-module homomorphism $\a: A\ra V$, there is a $G$-module homomorphism  $\b:B\ra V$ such that $\b\circ\rho=\a$.

\medskip

A representatively injective resolution of the representative  $G$-module $V$ is an exact sequence of $G$-module homomorphisms
\begin{equation}
\xymatrix{ 0\ar[r]& V\ar[r]& X_0\ar[r] & X_1\ar[r]& \cdots}\;,
\end{equation}
where each $X_i$ is a representatively injective  $G$-module.

\medskip

Finally, the representative cohomology group of $G$ with value in the representative $G$-module $V$ is defined to be the cohomology of
\begin{equation}
\xymatrix{ X_0^G\ar[r] & X_1^G\ar[r]& \cdots}\;,
\end{equation}
where $X_i^G$ are the elements of $X_i$ which are fixed by $G$.  We denote this cohomology by $H_{\rm rep}(G,V)$.

\begin{proposition}\label{Propositio-rep-coalgebra}
For any Lie group $G$ and any representative $G$-module $V$, the representative cohomology groups of $G$ with value in $V$ coincide with the coalgebra cohomology groups of the coalgebra $R(G)$ with coefficients in the induced comodule by $V$.
\end{proposition}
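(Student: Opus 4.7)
The strategy is to identify both cohomology theories as right derived functors of the same functor, namely the $G$-invariants functor. First I would invoke the classical Hochschild-Mostow equivalence between the category of representative $G$-modules and the category of locally finite right $R(G)$-comodules: a representative action translates into the coaction $\nb(v) = v\ns{0} \ot v\ns{1}$ via $\vp \cdot v = v\ns{1}(\vp) v\ns{0}$. Under this equivalence, the functor $V \mapsto V^G$ of $G$-invariants corresponds to the functor of coinvariants $V \mapsto \{v \in V \mid \nb(v) = v \ot 1\}$, and representatively injective modules correspond to injective comodules. Consequently $H^{\bullet}_{\rm rep}(G,V)$ is the right derived functor of the $G$-invariants functor applied to $V$, which is nothing but $\cotor^\bullet_{R(G)}(\Cb, V)$.

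Next I would exhibit the standard cobar resolution of $V$ as a resolution by representatively injective modules. Setting $C^n(V) := V \ot R(G)^{\ot (n+1)}$ with the right $R(G)$-coaction placed on the last tensor factor, each $C^n(V)$ is a cofree (hence injective) $R(G)$-comodule, and through the equivalence a representatively injective $G$-module. The coface operators --- applying $\nb$ on $V$, applying $\D$ on successive copies of $R(G)$, and a unit insertion at the end --- assemble into an exact cosimplicial resolution of $V$. Applying the $G$-invariants functor strips off the cofree factor,
\begin{equation}
\big( V \ot R(G)^{\ot (n+1)} \big)^G \;\cong\; V \ot R(G)^{\ot n},
\end{equation}
and a straightforward bookkeeping shows that the induced differential coincides, term by term, with the Hochschild coboundary $b$ of the coalgebra $R(G)$ with coefficients in the bicomodule $V$ (with trivial right coaction, as in the excerpt).

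It follows that both $H^{\bullet}_{\rm rep}(G,V)$ and $H^{\bullet}_{\rm coalg}(R(G),V)$ are computed by the single complex $(V \ot R(G)^{\ot \bullet}, b)$, whence the claimed isomorphism.

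The main technical obstacle is justifying that the cofree comodules $V \ot R(G)^{\ot (n+1)}$ do yield representatively injective $G$-modules. This reduces to two points: first, that $R(G)$ itself is representatively injective under the right regular action (a Frobenius-reciprocity type statement essentially due to Hochschild-Mostow), and second, that representative injectivity is preserved under tensoring with an arbitrary representative module. Once these two facts are in place, matching the resolution differential with $b$ uses only the defining relation $\D(f)(\vp,\psi) = f(\vp\psi)$ together with the coaction $\nb$ transported from the $G$-action.
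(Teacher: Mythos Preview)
Your argument is correct and follows essentially the same route as the paper: both use the standard cobar-type resolution $V \ot R(G)^{\otimes(\bullet+1)}$ as a representatively injective resolution (invoking Hochschild--Mostow), take $G$-invariants to obtain $V \ot R(G)^{\otimes \bullet}$, and identify the induced differential with the coalgebra Hochschild coboundary. The only difference is presentational: you package the argument via the equivalence of categories and $\cotor$, whereas the paper writes out the explicit identification $(V \ot \Fc^{\otimes q})^G \cong V \ot \Fc^{\otimes q-1}$ and the resulting coboundary $\d_i$ by hand.
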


\begin{proof}
 In \cite{HochMost61} it is shown  that
\begin{equation}
\xymatrix{ V\ar[r]^{d_{-1}~~} &V\ot \Fc\ar[r]^{d_0~~}&V\ot \Fc^{\ot 2}\ar[r]^{~~d_1}&\cdots }\;,
\end{equation}
is a representatively injective resolution for the representative $G$-module $V$.
Here $G$ acts on $V\ot \Fc^{\ot n}$ by
\begin{equation}
\g\cdot(v\ot f^1\odots f^q)= \g v\ot f^1\cdot \g^{-1}\odots f^q\cdot \g^{-1},
\end{equation}
where $G$ acts on $\Fc$ by right translation. We view $V\ot \Fc^{\ot q}$ as a group cochain with value in the $G$-module $V$ by embedding $V\ot \Fc^{\ot q}$ into the space ${\rm Map}_{\rm cont}(G^{\times q},V)$ of all continuous maps from $G^{\times\, q}$ to $V$,  via
 \begin{equation}
 (v\ot f^1\odots f^q)(\g_1,\ldots, \g_q)=f^1(\g_1)\cdots f^q(\g_q)v.
 \end{equation}
The coboundaries $d_i$ are defined by
\begin{align}
\begin{split}
&d_{-1}(v)(\g)=v,\\
&d_i(\phi)(\g_1,\ldots,\g_{q+1})= \sum_{i=0}^{q+1} \phi(\g_0,\ldots, \widehat{\g}_i,\ldots, \g_{q}).
\end{split}
\end{align}
We then identify $(V\ot \Fc^{\ot q})^G$ with $V\ot \Fc^{\ot q-1}$ via
\begin{equation}
v\ot f^1\odots f^q\mapsto \ve(f^1)v\ot f^2 f^3\ps{1}\cdots f^q\ps{1}\ot f^3\ps{2}\cdots f^q\ps{2}\odots f^{q-1}\ps{q}f^q\ps{q}\ot f^q\ps{q+1}.
\end{equation}
The complex of the $G$-fixed part of the resolution is
 \begin{equation}
\xymatrix{  V\ar[r]^{\d_0~~} &V\ot \Fc\ar[r]^{~~\d_1}&\cdots}\;,
\end{equation}
where the coboundaries are defined by
\begin{align}
\begin{split}
&\d_0: V\ra V\ot \Fc,\quad  \quad \d_0(v)=  v\ns{0}\ot v\ns{1}-v\ot 1,\\
&\d_i: V\ot\Fc^{\ot \,q}\ra V\ot \Fc^{\ot \,q+1}, \quad i \geq 1,\\
&\d_i(v\ot f^1\odots f^q)=  v\ns{0}\ot v\ns{1}\ot f^1\odots f^q+\\
&\sum_{i=1}^q (-1)^i v\ot f^1\odots f^i\ps{1}\ot f^i\ps{2}\odots f^q + (-1)^{q+1}v\ot f^1\odots f^q\ot 1.
\end{split}
\end{align}
This is the complex that computes the coalgebra cohomology of $\Fc$ with coefficients in the $\Fc$-comodule $V$.
\end{proof}

Let us recall from \cite{HochMost61} that a nucleus of a Lie group $G$ is  a simply connected solvable closed normal Lie subgroup $L$ of $G$ such that $G/L$ is reductive. Then $G/L$ has  a faithful representation and every finite dimensional analytic representation of $G/L$ is semisimple.    In this case $G=S\ltimes L$, where $S:=G/L$ is reductive.  Let, in addition,  $\Fs\subseteq\Fg$ be the Lie algebras of  $S$ and $G$ respectively.

\medskip

For a representative $G$-module $V$ let us define the map
\begin{align}\label{aux-map-D-Lie-group}
\begin{split}
&\Dc_{\rm Gr}: V\ot \Fc^{\ot q}\ra C^q(\Fg,\Fs,V),\\
&\Dc_{\rm Gr}(v\ot f^{1}\odots f^q)(X_1,\ldots,X_q)\\
 &~~~~~~~~~~=\sum_{\mu\in S_q}(-1)^\mu\mdt{1}\cdots\;\mdt{q}f^1(\exp(t_1X_{\mu(1)}))\cdots f^q(\exp(t_qX_{\mu(q)}))v.
\end{split}
\end{align}

\begin{theorem} \label{theorem-R(G)-cohomology}
Let $G$ be a  Lie group, $V$ be a representative $G$-module, $L$ be a  nucleus of $G$ and $\Fs\subset \Fg$ be the Lie algebras of $S:=G/L$ and $G$ respectively. Then the map \eqref{aux-map-D-Lie-group} induces an isomorphism of vector spaces between the periodic Hopf cyclic cohomology of $R(G)$ and the relative Lie algebra cohomology of $\Fs\subseteq \Fg$ with coefficients in  $\Fg$-module induced by $V$. In other words,
\begin{equation}
HP^\bullet(R(G),V)\cong \bigoplus_{\bullet=i\;\;{\rm mod \; 2}}H^i(\Fg,\Fs,V).
\end{equation}
\end{theorem}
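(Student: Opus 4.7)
The plan is to derive this theorem from Theorem \ref{Theorem-F-Levi} applied to the Hopf duality between $\Fc = R(G)$ and $U(\Fg)$. Accordingly, I would first record that the pairing
\begin{equation}
\langle f, X_1 \cdots X_q\rangle := \mdt{1}\cdots \mdt{q} f(\exp(t_1X_1)\cdots \exp(t_qX_q))
\end{equation}
is a nondegenerate Hopf pairing (this is classical, and is essentially the content of the map $\theta$ in \eqref{aux-map-theta}). Combining this pairing with the antisymmetrization map $\a$ in \eqref{map-antisymmetrization} and chasing the definitions, one checks that the composition $\a\circ \pi_\Fl \circ \t_{(\Fc,U(\Fg))}$, followed by projection onto the weight-zero component for the reductive action of $\Fs$, is literally the map $\Dc_{\rm Gr}$ written in \eqref{aux-map-D-Lie-group}. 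Hence $\Dc_{\rm Gr} = \t_{\Fc, U(\Fg), \Fl, 0}$.

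The core step is then to prove that $\Fg = \Fs \ltimes \Fl$ is an $R(G)$-Levi decomposition, \ie that $\t_{R(G), U(\Fg), \Fl, 0}$ is a quasi-isomorphism for every representative $G$-module $V$. To establish this, I would proceed in two stages: first apply Proposition \ref{Propositio-rep-coalgebra} to identify the coalgebra cohomology with the representative group cohomology,
\begin{equation}
H^\bullet_{\rm coalg}(R(G),V) \cong H^\bullet_{\rm rep}(G,V);
\end{equation}
then invoke the Hochschild--Mostow van Est theorem \cite{HochMost62}, which under the hypotheses that $L$ is a nucleus with reductive quotient $S$ gives
\begin{equation}
H^\bullet_{\rm rep}(G,V) \cong H^\bullet(\Fg, \Fs, V).
\end{equation}
Crucially, the chain-level map realizing the Hochschild--Mostow isomorphism is (up to sign) the iterated differentiation at the identity along elements of $\Fl$, antisymmetrized and restricted to $\Fs$-invariants — this is exactly $\Dc_{\rm Gr}$. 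I would verify this identification carefully by unwinding the representative injective resolution $V \to V \ot \Fc^{\ot \bu}$ used in the proof of Proposition \ref{Propositio-rep-coalgebra} and comparing it against the Koszul-type complex computing $H^\bullet(\Fg, \Fs, V)$.

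Having shown that $\Fg = \Fs \ltimes \Fl$ is an $R(G)$-Levi decomposition with the quasi-isomorphism implemented by $\Dc_{\rm Gr}$, Theorem \ref{Theorem-F-Levi} yields
\begin{equation}
HP^\bullet(R(G), V) \cong \bigoplus_{\bullet = i\;{\rm mod\,2}} H^i(\Fg, \Fs, V),
\end{equation}
completing the argument.

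The principal obstacle is the verification that the Hochschild--Mostow van Est isomorphism is realized at the chain level precisely by $\Dc_{\rm Gr}$ rather than by some merely chain-homotopic variant. The verification is combinatorial — one has to track the behavior of the explicit shuffle/antisymmetrization occurring in $\a$ against the representative cochains $(v\ot f^1\odots f^q)(\g_1,\dots,\g_q) = f^1(\g_1)\cdots f^q(\g_q)v$ appearing in the resolution — but no conceptual difficulty arises beyond bookkeeping of signs and the fact that the weight-zero projection with respect to $\Fs$ corresponds exactly to the passage to $G$-fixed (equivalently, $\Fs$-invariant) cochains.
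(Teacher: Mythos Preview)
Your proposal is correct and follows essentially the same route as the paper: establish the Hopf pairing between $R(G)$ and $U(\Fg)$, identify $\Dc_{\rm Gr}$ with $\t_{R(G),U(\Fg),\Fl,0}$, use Proposition~\ref{Propositio-rep-coalgebra} together with the Hochschild--Mostow van Est theorem \cite[Theorem 10.2]{HochMost62} to verify the $R(G)$-Levi decomposition condition, and then apply Theorem~\ref{Theorem-F-Levi}. The paper's proof is terser---it simply cites \cite[Theorem 10.2]{HochMost62} for the quasi-isomorphism rather than unwinding the chain-level comparison you describe---but the logical structure is identical.
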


\begin{proof}
We know that $R(G)$ and $U(\Fg)$ are in Hopf duality via
   \begin{equation}
   \langle f,X\rangle =\dt f(\exp(tX)), \quad X\in\Fg, f\in R(G).
   \end{equation}
As a result, we see that $\Dc_{\rm Gr} = \t_{R(G),U(\Fg),\Fl,0}$  and hence by Proposition \ref{Propositio-rep-coalgebra} we have a map of complexes between the complex of representative group cochains of $G$ with value in $V$ and the relative Chevalley-Eilenberg complex of the Lie algebras $\Fs\subseteq\Fg$ with  coefficients in the $\Fg$-module induced by $V$.

\medskip

Moreover, by \cite[Theorem 10.2]{HochMost62}, $\Dc_{\rm Gr}$ induces a quasi-isomorphism. So $\Fg=\Fs\ltimes\Fl$ is a $R(G)$-Levi decomposition and hence the result follows from Theorem \ref{Theorem-F-Levi}.
\end{proof}

\subsection{Hopf-cyclic cohomology of $R(\Fg)$}

In this subsection we compute the Hopf cyclic cohomology of the commutative Hopf algebra $R(\Fg)$ of representative functions on $U(\Fg)$ with coefficients in $V$. To this end,  we will need a cohomology theory similar to the representative cohomology of Lie groups.

\medskip

Let $\Fg$ be a  Lie algebra and $V$  be  a locally finite $\Fg$-module, equivalently an $R(\Fg)$-comodule. The $\Fg$-module  $V$ is called injective if for every exact sequence
\begin{equation}
\xymatrix{ 0\ar[r]& A\ar[r]^\rho \ar[d]_{\a}& B\ar[r]\ar@{.>}[dl]^{\b}& C\ar[r]&0\\
&V&&&}
\end{equation}
of $\Fg$-module homomorphisms between $\Fg$-modules $A$, $B$, and $C$, and for every $\Fg$-module homomorphism $\a: A\ra V$, there is a $\Fg$-module homomorphism  $\b:B\ra V$ such that $\b\circ\rho=\a$.

\medskip

An  injective resolution of the $\Fg$-module $V$ is an exact sequence of $\Fg$-module homomorphisms
\begin{equation}
\xymatrix{ 0\ar[r]& V\ar[r]& X_0\ar[r] & X_1\ar[r]& \cdots}\;,
\end{equation}
where each $X_i$ is an injective  $\Fg$-module.

\medskip

The representative cohomology groups of $G$ with value in the $\Fg$-module $V$ are then defined to be the cohomology groups of
\begin{equation}
\xymatrix{ X_0^\Fg\ar[r] & X_1^\Fg\ar[r]& \cdots}\;,
\end{equation}
where
\begin{equation}
 X_i^\Fg=\{\xi\in X_i\mid X\xi=0\quad \forall\, X\in \Fg \}.
 \end{equation}
We denote this cohomology by $H_{\rm rep}(\Fg,V)$.

\medskip

Since the category of locally finite $\Fg$-modules and the category of $R(\Fg)$-comodules are equivalent,  $H^\bullet_{\rm rep}(\Fg,V)$ is the same as ${\rm Cotor}^\bullet_{R(\Fg)}(V,\Cb)$, which  is nothing but    $H^\bullet_{\rm coalg}(R(\Fg),V)$.

\medskip

Let $\Fl$ be the radical of $\Fg$, \ie the unique maximal solvable ideal of $\Fg$. Levi decomposition implies that  $\Fg=\Fs\ltimes \Fl$. Here $\Fs$ is a  semisimple subalgebra of $\Fg$, and it is called a Levi subalgebra .

\medskip

We now consider the map
\begin{align}\label{aux-map-D-Lie-alg}
\begin{split}
&\Dc_{\rm Alg}: V\ot R(\Fg)^{\ot q}\ra (V\ot\wedge^{q} \Fl^\ast)^\Fs,\\
&\Dc_{\rm Alg}(v\ot f^1\odots f^q)(X_1,\ldots,X_q)= \sum_{\s\in S_q}(-1)^\s f^1(X_{\s(1)})\dots f^q(X_{\s(q)})v.
\end{split}
\end{align}

\begin{proposition}\label{Proposition-R(g)-cohomology}
Let $\Fg$ be a Lie algebra and $\Fg=\Fs\ltimes \Fl$ be a   Levi decomposition.  Then  for any finite dimensional $\Fg$-module $V$, the map \eqref{aux-map-D-Lie-alg} induces an isomorphism between the representative cohomology $H_{\rm rep}(\Fg,V)$ and the relative  Lie algebra cohomology $H(\Fg,\Fs,V)$.
\end{proposition}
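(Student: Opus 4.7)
The plan is to parallel the proof of Theorem \ref{theorem-R(G)-cohomology} at the coalgebra level, and then invoke Theorem \ref{Theorem-F-Levi}. First, since the category of locally finite $\Fg$-modules is equivalent to the category of $R(\Fg)$-comodules, the representative cohomology $H_{\rm rep}(\Fg,V)$ coincides with the coalgebra cohomology $H^\bullet_{\rm coalg}(R(\Fg),V)$, exactly as in the group case (Proposition \ref{Propositio-rep-coalgebra}). The Harish-Chandra result already used in Subsection 3.2.1 implies that the evaluation pairing
\begin{equation}
\langle \cdot\,,\cdot\rangle : R(\Fg)\otimes U(\Fg)\longrightarrow \Cb,\qquad \langle f,u\rangle := f(u)
\end{equation}
is a non-degenerate Hopf pairing, so the general map of complexes $\tau_{(R(\Fg),U(\Fg))}$ from \eqref{map-theta-Alg} is available.

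Next I would verify that the explicit formula \eqref{aux-map-D-Lie-alg} coincides with the composition
\begin{equation}
\Dc_{\rm Alg} \;=\; \pi_0\circ\alpha\circ\pi_{\Fl}\circ\tau_{(R(\Fg),U(\Fg))},
\end{equation}
where $\pi_\Fl$ restricts a Hochschild cochain on $U(\Fg)$ to one on $U(\Fl)$, $\alpha$ is the antisymmetrization map \eqref{map-antisymmetrization} landing in the Chevalley-Eilenberg complex $W^\bullet(\Fl,V)$, and $\pi_0$ projects onto the $\Fs$-invariant (weight zero) summand under the semisimple $\Fs$-action. This is a routine unravelling: antisymmetrizing $\tau(v\otimes f^1\odots f^q)(X_{\sigma(1)},\ldots,X_{\sigma(q)}) = f^1(X_{\sigma(1)})\cdots f^q(X_{\sigma(q)})v$ reproduces \eqref{aux-map-D-Lie-alg}, and since $\Fs$ acts on $V$ and on $\Fl$ by the adjoint action, the weight zero part is precisely $(V\otimes \wedge^q\Fl^\ast)^\Fs$, which by standard reductive-algebra identification equals $W^q(\Fg,\Fs,V)$.

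The essential content of the proposition is therefore that $\Fg=\Fs\ltimes \Fl$ is an $R(\Fg)$-Levi decomposition, that is, $\pi_0\circ\alpha\circ\pi_{\Fl}\circ\tau_{(R(\Fg),U(\Fg))}$ is a quasi-isomorphism for every finite-dimensional $\Fg$-module $V$. For this I would follow the Hochschild-Mostow strategy: filter the Hochschild complex of $R(\Fg)$ by the canonical $\Fs$-action and use that $\Fs$ is semisimple to split off the weight-zero part up to acyclic summands (here the assumption that $V$ is finite-dimensional, and hence locally finite and $\Fs$-semisimple, is decisive). One then invokes the Hochschild-Serre type argument for $\Fl\lhd \Fg$ which identifies the cohomology of the weight-zero subcomplex with $H^\bullet(\Fg,\Fs,V)$; since $\Fl$ is solvable and acts through the locally finite $\Fg$-action on $V$, no further obstructions arise.

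The main obstacle is this last quasi-isomorphism step; once it is established, the proposition follows immediately by composition. I expect the most delicate point to be handling the action of the solvable radical $\Fl$ on $V$ carefully so that the Hochschild-Serre spectral sequence collapses onto the $\Fs$-invariants, and in particular to justify passage to the weight zero part without losing cohomological information. Everything else is a direct translation of the Lie group argument used in Theorem \ref{theorem-R(G)-cohomology}, with the role of the nucleus $L$ played by the radical $\Fl$ and the role of representative group cochains played by the Hochschild cochains of $R(\Fg)$.
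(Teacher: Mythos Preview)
Your identification of $\Dc_{\rm Alg}$ with $\theta_{R(\Fg),U(\Fg),\Fl,0}$ and your reduction of the problem to showing that $\Fg=\Fs\ltimes\Fl$ is an $R(\Fg)$-Levi decomposition are exactly right and match the paper. The divergence is in how you propose to establish the quasi-isomorphism. You outline a direct Hochschild--Serre style argument: filter by the $\Fs$-action, split off the weight-zero part by semisimplicity, and then analyse the $\Fl$-contribution. You correctly flag this as the ``main obstacle'' and leave it largely programmatic.

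The paper bypasses this entirely by a short bootstrapping trick. It takes the simply connected Lie group $G$ with Lie algebra $\Fg$; the Levi decomposition $\Fg=\Fs\ltimes\Fl$ then induces a nucleus decomposition $G=S\ltimes L$. Simple connectedness forces the representation theories of $G$ and $\Fg$ to coincide, so every injective resolution of $\Fg$-modules comes from a representatively injective resolution of $G$-modules, and the comparison map $H_{\rm rep}(G,V)\to H_{\rm rep}(\Fg,V)$ is surjective. Now $\Dc_{\rm Gr}$ factors as $\Dc_{\rm Alg}$ composed with this surjection; since $\Dc_{\rm Gr}$ is already known to be an isomorphism by the Hochschild--Mostow theorem for groups (Theorem \ref{theorem-R(G)-cohomology}), elementary set theory forces both the surjection and $\Dc_{\rm Alg}$ to be isomorphisms.

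So the paper does not re-prove anything at the algebra level; it inherits the result from the group case in two lines. Your route would in effect amount to redoing the Hochschild--Mostow argument in the $R(\Fg)$ setting, which is feasible but considerably more work than what the paper actually needs.
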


\begin{proof}
We know that $R(\Fg)$ and $U(\Fg)$ are in (nondegenerate) Hopf duality via
   \begin{equation}
   \langle f,u\rangle = f(u), \quad u\in U(\Fg), f\in R(\Fg).
   \end{equation}
As a result, $\Dc_{\rm Alg} = \t_{R(\Fg),U(\Fg),\Fl,0}$ and hence \eqref{aux-map-D-Lie-alg} is a map of complexes.

\medskip

Now let $G$ be the  simply connected Lie group of the Lie algebra $\Fg$. The Levi decomposition  $\Fg=\Fs\ltimes \Fl$ induces a nucleus decomposition  $G=S\ltimes L$. Since $G$ is simply connected the representations of $\Fg$ and $G$ coincide, and any injective resolution of $\Fg$ is induced by an representatively injective  resolution of $G$. Therefore, the obvious map $H_{\rm rep}(G,V)\ra H_{\rm rep}(\Fg,V)$ is surjective.

\medskip

Since $\Dc_{\rm Gr}: H_{\rm rep}(G,V)\ra H(\Fg,\Fs,V)$ factors through $\Dc_{\rm Alg}: H_{\rm rep}(\Fg,V)\ra H(\Fg,\Fs,V)$, when $V$ is finite dimensional the latter map is an isomorphism.
\end{proof}

Now we state the main result of this subsection.

\begin{theorem}\label{theorem-cohomology-R(g)}
Let $\Fg$ be a finite dimensional Lie algebra and $\Fg=\Fs\ltimes \Fl$ be a   Levi decomposition. Then for any   finite dimensional $\Fg$-module $V$, the map \eqref{aux-map-D-Lie-alg} induces an isomorphism of vector spaces between the periodic Hopf cyclic cohomology of $R(\Fg)$ with coefficients in the comodule induced by $V$, and the relative Lie algebra cohomology of $\Fs\subseteq \Fg$ with coefficients in $V$. In other words,
\begin{equation}
HP^\bullet(R(\Fg),V)\cong \bigoplus_{\bullet=\,i\;{\rm mod \; 2}}H^i(\Fg,\Fs,V).
\end{equation}
\end{theorem}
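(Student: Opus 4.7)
The plan is to recognize this theorem as an instance of Theorem~\ref{Theorem-F-Levi} applied to the dual pairing between $R(\Fg)$ and $U(\Fg)$, with the quasi-isomorphism property of $\tau_{R(\Fg),U(\Fg),\Fl,0}$ being exactly what Proposition~\ref{Proposition-R(g)-cohomology} supplies. So the argument consists of packaging the pieces already established rather than constructing anything new.

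First I would observe that under the Hopf pairing $\langle f,u\rangle = f(u)$, the map $\Dc_{\rm Alg}$ in \eqref{aux-map-D-Lie-alg} factors as the composition
\begin{equation}
V\ot R(\Fg)^{\ot q}\;\xrightarrow{\;\t_{(R(\Fg),U(\Fg))}\;}\;\Hom(U(\Fg)^{\ot q},V)\;\xrightarrow{\;\pi_\Fl\;}\;\Hom(U(\Fl)^{\ot q},V)\;\xrightarrow{\;\a\;}\;W^q(\Fl,V)\;\xrightarrow{\;\pi_0\;}\;W^q(\Fl,V)^{\Fs},
\end{equation}
which is precisely $\t_{R(\Fg),U(\Fg),\Fl,0}$ in the terminology preceding Theorem~\ref{Theorem-F-Levi}. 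This identifies $\Dc_{\rm Alg}$ as the morphism of complexes whose quasi-isomorphism status governs whether $\Fg=\Fs\ltimes\Fl$ is an $R(\Fg)$-Levi decomposition.

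Next I would invoke Proposition~\ref{Proposition-R(g)-cohomology}, which says exactly that $\Dc_{\rm Alg}$ induces an isomorphism on cohomology for any finite dimensional $\Fg$-module $V$, between $H_{\rm rep}^{\bullet}(\Fg,V)=H_{\rm coalg}^{\bullet}(R(\Fg),V)$ and $H^{\bullet}(\Fg,\Fs,V)$. Consequently, the Levi decomposition $\Fg=\Fs\ltimes\Fl$ is an $R(\Fg)$-Levi decomposition in the sense of the definition preceding Theorem~\ref{Theorem-F-Levi}.

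Finally, Theorem~\ref{Theorem-F-Levi} applies verbatim: it uses only commutativity of the ambient Hopf algebra (to kill Connes' $B$-operator on Hochschild cohomology, so that periodic cyclic cohomology collapses to the direct sum $\bigoplus_{\bullet=i\,\mathrm{mod}\,2}H^{i}_{\rm coalg}(R(\Fg),V)$) together with the $R(\Fg)$-Levi property just verified. Combining these two inputs yields
\begin{equation}
HP^{\bullet}(R(\Fg),V)\;\cong\;\bigoplus_{\bullet=i\,\mathrm{mod}\,2}H^{i}(\Fg,\Fs,V),
\end{equation}
with the isomorphism implemented by $\Dc_{\rm Alg}$. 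The only genuinely nontrivial ingredient is the quasi-isomorphism of $\Dc_{\rm Alg}$, which was the content of Proposition~\ref{Proposition-R(g)-cohomology}; everything else in this theorem is formal. The potential subtlety to double-check is the compatibility of the local-finiteness hypothesis on $V$ with the duality used to pass from a $\Fg$-module to an $R(\Fg)$-comodule, but this is automatic in finite dimensions.
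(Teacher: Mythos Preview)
Your proof is correct and follows exactly the same route as the paper: establish that $\Dc_{\rm Alg}=\t_{R(\Fg),U(\Fg),\Fl,0}$ is a quasi-isomorphism via Proposition~\ref{Proposition-R(g)-cohomology} (so that $\Fg=\Fs\ltimes\Fl$ is an $R(\Fg)$-Levi decomposition), then invoke Theorem~\ref{Theorem-F-Levi}. The paper's proof is the two-line version of what you wrote; your added unpacking of the factorization of $\Dc_{\rm Alg}$ is already contained in the proof of Proposition~\ref{Proposition-R(g)-cohomology}.
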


\begin{proof}
By Proposition \ref{Proposition-R(g)-cohomology},  $\Fg=\Fs\ltimes\Fl$ is a $R(\Fg)$-Levi decomposition. Hence the proof follows from Theorem \ref{Theorem-F-Levi}.
\end{proof}

\subsection{Hopf-cyclic cohomology of $\mathscr{P}(G)$}

In this subsection, we compute the Hopf cyclic cohomology of $\mathscr{P}(G)$, the Hopf algebra of polynomial functions on an affine algebraic group $G$.

\medskip

Let $V$ be a finite dimensional polynomial right $G$-module. The polynomial  group cohomology of $G$ with coefficients in $V$ is defined to be the cohomology of the complex
\begin{equation}\label{aux-polynomial-complex}
\xymatrix{ C^0_{\rm pol}(G,V)\ar[r]^\d& C^1_{\rm pol}(G,V)\ar[r]^{~~~\d} &\cdots }
\end{equation}
where $C^0_{\rm pol}(G,V)=V$, and
\begin{equation}
C^q_{\rm pol}(G,V)=\left\{ \phi:G^{\times\,q}\ra V\mid \phi\;\; \text{is polynomial}\right\}, \quad q \geq 1.
\end{equation}
The coboundary $\d$ is the group cohomology coboundary
 \begin{align}
 \begin{split}
 &\d:V\ra C^1_{\rm pol}(G,V), \quad \d(v)(\g)=v- v\cdot \g,\\
 &\d: C^q_{\rm pol}(G,V)\ra C^{q+1}_{\rm pol}(G,V), \quad q\geq 1, \\
& \d(\phi)(\g_1,\ldots,\g_{q+1})=\d(\phi)(\g_2,\ldots,\g_{q+1})+\\
 &\sum_{i=1}^q (-1)^i \phi(\g_1, \ldots, \g_i\g_{i+1}, \ldots, \g_{q+1})+ (-1)^{q+1}\phi(\g_1,\ldots, \g_q)\cdot\g_{q+1}
 \end{split}
 \end{align}

We can identify $C^q_{\rm pol}(G,V)$ with $V\ot \mathscr{P}(G)^{\ot q}$ via
 \begin{equation}
 v\ot f^1\odots f^q(\g_1, \ldots,\g_q):= f^1(\g_1)\cdots f^q(\g_q)v
 \end{equation}
to observe that the coboundary $\d$ is identified with the Hochschild coboundary of the coalgebra $\mathscr{P}(G)$ with values in the bicomodule $V$, whose right comodule structure is trivial and left comodule structure is induced by the right $G$-module structure.

\medskip

The  cohomology of the complex \eqref{aux-polynomial-complex} is denoted by $H_{\rm pol}(G,V)$. The cohomology $H_{\rm }(G,V)$ can also be calculated by means of polynomially injective resolutions \cite{Hoch61}.

\medskip

A  polynomial representation $V$ of an affine algebraic group $G$ is called polynomially  injective if for any exact sequence
\begin{equation}
\xymatrix{ 0\ar[r]& A\ar[r]^\rho \ar[d]_{\a}& B\ar[r]\ar@{.>}[dl]^{\b}& C\ar[r]&0\\
&V&&&}
\end{equation}
of polynomial modules over $G$, there is a polynomial $G$-module homomorphism  $\b:B\ra V$ such that $\b\circ\rho=\a$.

\medskip

A polynomially injective  resolution for a polynomial $G$-module $V$ is an exact sequence
    \begin{equation}
\xymatrix{ 0\ar[r]&V\ar[r]& X_0\ar[r] & X_1\ar[r]& \cdots}
\end{equation}
of polynomially injective modules over $G$.

\medskip

It is shown in \cite{Hoch61} that $H_{\rm pol}(G,V)$ is computed by the cohomology of the $G$-fixed part of any  polynomially injective resolution \ie the by the complex
   \begin{equation}
 \xymatrix{  X_0^G\ar[r] & X_1^G\ar[r]& \cdots}\;.
\end{equation}
A natural  polynomially injective  resolution of a polynomial $G$-module $V$ is the resolution of the polynomial differential forms with values in $V$. This is $V\ot \wdg^\bullet\Fg^\ast\ot \mathscr{P}(G)$, where $G$ acts by  $\g\cdot (u\ot f)= u\ot (f\cdot \g^{-1})$, and $G$ acts on $\mathscr{P}(G)$ by right translations.

\medskip

This yields the map of complexes
\begin{align}\label{aux-map-D-polynomial}
\begin{split}
&\Dc_{\rm Pol}: C^q_{\rm pol}(G,V)\ra C^q(\Fg,\Fg^{\rm red},V),\\
&\Dc_{\rm Pol}(v\ot f^1\odots f^q)(X_1,\ldots, X_q)\\
&~~~~~~~~~~~~~~~~~~~~~~~~~~=\sum_{\s\in S_q} (-1)^\s (X_{\s(1)}\cdot f^1)(e)\cdots (X_{\s(q)}\cdot f^q)(e)\;v,
\end{split}
\end{align}
where we identify the Lie algebra $\Fg$ of $G$ with the  left $G$-invariant derivations of $\mathscr{P}(G)$, and $G= G^{\rm red}\ltimes G^{\rm u}$ is a Levi decomposition of an affine algebraic group $G$, \ie $G^{\rm u}$ is the unipotent radical of $G$ and $G^{\rm red}$ is the maximal reductive subgroup of $G$, and finally  $\Fg^{\rm red}$ and $\Fg^{\rm u}$ denote the Lie algebras of $G^{\rm red}$ and $G^{\rm u}$ respectively.

\begin{theorem}\label{theorem-cohomology-P(G)}
Let $G$ be an affine algebraic group and $V$ be a finite dimensional polynomial $G$-module. Let $G= G^{\rm red}\ltimes G^{\rm u}$ be a Levi decomposition of $G$ and $\Fg^{\rm red}\subseteq \Fg$ be the Lie algebras of $G^{\rm red}$ and $G$ respectively. Then the map \eqref{aux-map-D-polynomial} induces an isomorphism between the periodic Hopf-cyclic cohomology of $\mathscr{P}(G)$ with coefficients in the comodule induced by $V$, and the  Lie algebra cohomology of $\Fg$ relative to $\Fg^{\rm red}$ with coefficients in the $\Fg$-module $V$. In other words,
\begin{equation}
HP^\bullet(\mathscr{P}(G),V)\cong \bigoplus_{\bullet\,=\,i \;{\rm mod}\, 2} H^i(\Fg,\Fg^{\rm red},V)
\end{equation}
\end{theorem}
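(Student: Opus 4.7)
The plan is to mimic the strategy used for Theorem \ref{theorem-R(G)-cohomology} and Theorem \ref{theorem-cohomology-R(g)}: realize $\mathcal{D}_{\rm Pol}$ as an instance of the general map $\tau_{(\mathcal{C},A),\mathfrak{l},\mu}$, verify that $\mathfrak{g}=\mathfrak{g}^{\rm red}\ltimes\mathfrak{g}^{\rm u}$ is a $\mathscr{P}(G)$-Levi decomposition in the sense of the definition preceding Theorem \ref{Theorem-F-Levi}, and then invoke that theorem.

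First I would establish the Hopf duality between $\mathscr{P}(G)$ and $U(\mathfrak{g})$ given by $\langle f, X\rangle=(X\cdot f)(e)$ for $X\in\mathfrak{g}$, extended multiplicatively to $U(\mathfrak{g})$ via \eqref{pairing-property}; here $\mathfrak{g}$ is viewed as left $G$-invariant derivations of $\mathscr{P}(G)$. Under this pairing the right $\mathscr{P}(G)$-comodule induced by the polynomial $G$-module $V$ corresponds to the restricted $\mathfrak{g}$-module structure on $V$. With this identification the map \eqref{aux-map-D-polynomial} becomes exactly $\tau_{\mathscr{P}(G),U(\mathfrak{g}),\mathfrak{g}^{\rm u},0}$: the projection onto $U(\mathfrak{g}^{\rm u})$ corresponds to plugging in elements of $\mathfrak{g}^{\rm u}\subseteq \mathfrak{g}$, the antisymmetrization is built into the sum over $S_q$, and the restriction to the weight-zero subspace relative to $\mathfrak{g}^{\rm red}$ records exactly the $\mathfrak{g}^{\rm red}$-equivariance captured by the relative complex $C^\bullet(\mathfrak{g},\mathfrak{g}^{\rm red},V)$, since $\mathfrak{g}^{\rm red}$ is reductive and acts semisimply on $\wedge^\bullet(\mathfrak{g}^{\rm u})^\ast\otimes V$.

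Second, I would prove that $\mathfrak{g}=\mathfrak{g}^{\rm red}\ltimes \mathfrak{g}^{\rm u}$ is a $\mathscr{P}(G)$-Levi decomposition, i.e. that $\tau_{\mathscr{P}(G),U(\mathfrak{g}),\mathfrak{g}^{\rm u},0}$ is a quasi-isomorphism on Hochschild cohomology for every polynomial comodule $V$. This is the main obstacle and where I would lean on Hochschild--Mostow \cite{Hoch61}. Using the polynomially injective resolution $V\otimes\wedge^\bullet\mathfrak{g}^\ast\otimes\mathscr{P}(G)$ mentioned in the excerpt, the $G$-fixed part computes $H^\bullet_{\rm pol}(G,V)$, and taking the $G$-fixed part of the $\mathscr{P}(G)$-cobar resolution computes $H^\bullet_{\rm coalg}(\mathscr{P}(G),V)$; the two agree since both resolutions are polynomially injective. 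Next, Hochschild's van Est type theorem for affine algebraic groups (\cite{Hoch61}, analogous to \cite[Theorem 10.2]{HochMost62}) identifies $H^\bullet_{\rm pol}(G,V)$ with $H^\bullet(\mathfrak{g},\mathfrak{g}^{\rm red},V)$, via the derivation map at the identity; this is precisely the map $\mathcal{D}_{\rm Pol}$ at the cohomology level. Combining these identifications shows that $\mathcal{D}_{\rm Pol}=\tau_{\mathscr{P}(G),U(\mathfrak{g}),\mathfrak{g}^{\rm u},0}$ is a quasi-isomorphism for every finite dimensional polynomial $G$-module $V$, hence the decomposition is indeed $\mathscr{P}(G)$-Levi.

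Finally, applying Theorem \ref{Theorem-F-Levi} to this $\mathscr{P}(G)$-Levi decomposition yields
\begin{equation*}
HP^\bullet(\mathscr{P}(G),V)\cong \bigoplus_{\bullet=i\bmod 2} H^i(\mathfrak{g},\mathfrak{g}^{\rm red},V),
\end{equation*}
with the isomorphism induced by $\mathcal{D}_{\rm Pol}$. The step I expect to be the most delicate is the verification that the polynomial group cohomology computes the relative Lie algebra cohomology for all finite dimensional polynomial $V$ — this requires unipotence of $G^{\rm u}$ (so that $\mathfrak{g}^{\rm u}$-cohomology is handled by the Koszul resolution) together with semisimplicity of $G^{\rm red}$-representations to pass from $\mathfrak{g}^{\rm u}$-cohomology to the weight-zero subcomplex, exactly as in the arguments of \cite{Hoch61,HochMost62}.
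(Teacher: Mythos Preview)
Your proposal is correct and follows essentially the same route as the paper: identify $\mathcal{D}_{\rm Pol}$ with $\theta_{\mathscr{P}(G),U(\mathfrak{g}),\mathfrak{g}^{\rm u},0}$ via the pairing $\langle u,f\rangle=(u\cdot f)(e)$, show that $\mathfrak{g}=\mathfrak{g}^{\rm red}\ltimes\mathfrak{g}^{\rm u}$ is a $\mathscr{P}(G)$-Levi decomposition by citing the van~Est isomorphism $H^\bullet_{\rm pol}(G,V)\cong H^\bullet(\mathfrak{g},\mathfrak{g}^{\rm red},V)$, and then apply Theorem~\ref{Theorem-F-Levi}. The only cosmetic difference is that the paper invokes \cite[Theorem~2.2]{KumaNeeb06} for that isomorphism rather than \cite{Hoch61} directly.
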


\begin{proof}
It is shown in \cite{Hoch61} that $V\ot \wdg^\bullet\Fg^\ast\ot \mathscr{P}(G)$ is a polynomially injective resolution for $V$. The comparison between this resolution and the standard resolution $V\ot \mathscr{P}(G)^{\ot\,\bullet+1}$ results in the map \eqref{aux-map-D-polynomial}.  The proof of \cite[Theorem 2.2]{KumaNeeb06} shows that  the map \eqref{aux-map-D-polynomial} is an isomorphism between  $H^\bullet_{\rm pol}(G,V)$ and $H^\bullet(\Fg,\Fg_{\rm red},V)$. On the other hand, in view of the natural pairing
\begin{equation}
\langle u\,,\, f\rangle = (u\cdot f)(e)
\end{equation}
between $\mathscr{P}(G)$ and $U(\Fg)$, we have $\Dc_{\rm Pol} = \t_{\mathscr{P}(G),U(\Fg),\Fg^{\rm u},0}$. This shows that $\Fg=\Fg^{\rm red}\ltimes \Fg^{\rm u}$ is a $\mathscr{P}(G)$-Levi decomposition. We then apply Theorem \ref{Theorem-F-Levi} to finish the proof.
\end{proof}

\section{Hopf-cyclic cohomology of Lie-Hopf algebras with induced coefficients}

In this section we extend the machinery for computing the Hopf cyclic cohomology of bicrossed product Hopf algebras developed in \cite{MoscRang09,MoscRang11} to compute the Hopf cyclic cohomology of  the geometric bicrossed product Hopf algebras $R(G_2)\acl U(\Fg_1)$, $R(\Fg_2)\acl U(\Fg_1)$ and $\mathscr{P}(G_2)\acl U(\Fg_1)$ with induced coefficients.

\subsection{Bicocyclic module associated to Lie-Hopf algebras}

Let $\Fg$ be a Lie algebra and  $\Fc$ a commutative $\Fg$-Hopf algebra. We denote the bicrossed product Hopf algebra $\Fc\acl U(\Fg)$ by $\Hc$.

\medskip

Let the character $\d$ and the group-like $\s$ be the canonical modular pair in involution defined by \eqref{aux-delta} and \eqref{aux-sigma}. In addition, let $V$ be an induced $(\Fg,\Fc)$-module and $^\s{V}_\d$ be the associated  SAYD module over $\Hc$ defined by \eqref{aux-action-SAYD-over-Lie-Hopf} and \eqref{aux-coaction-SAYD-over-Lie-Hopf}.

\medskip

The Hopf algebra $\Uc:=U(\Fg)$ acts on $\;^\s{V}_\d\ot \Fc^{\ot q}$ by
\begin{equation}
(v\ot \td f)\cdot u= \d_\Fg(u\ps{1}) S(u\ps{2})\cdot v\ot S(u\ps{3})\bullet \td f,
\end{equation}
where $\td f:= f^1\odots f^q$ and the left action of $\Uc$ on $\Fc^{\ot q}$ is defined by
\begin{align} \label{u-bullet}
\begin{split}
& u\bullet( f^1\odots f^n):=\\
&  u\ps{1}\ns{0}\rt f^1\ot u\ps{1}\ns{1}u\ps{2}\ns{0}\rt f^2\odots
u\ps{1}\ns{n-1}\dots u\ps{n-1}\ns{1} u\ps{n}\rt  f^n.
\end{split}
 \end{align}

Following \cite{MoscRang09,MoscRang11}, we then define a bicocyclic module
\begin{equation}
C(\Uc,\Fc,\,^\s{V}_\d) = \bigoplus_{p,q\geq 0}C^{p,q}(\Uc,\Fc,\,^\s{V}_\d),
\end{equation}
where
\begin{equation}
C^{p,q}(\Uc,\Fc,\,^\s{V}_\d):=\,^\s{V}_\d\ot \Uc^{\ot p}\ot \Fc^{\ot q}, \qquad p,q\ge 0,
\end{equation}
whose horizontal morphisms are given by
\begin{align}
\begin{split}
&\hd_0(v\ot \td{u}\ot \td{f})= v\ot 1\ot u^1\ot\dots\ot u^p\ot  \td{f}\\
&\hd_j(v\ot \td{u}\ot \td{f})= v\ot u^1\ot\dots\ot
\Delta(u^i)\ot\dots \ot u^p\ot  \td{f}\\
&\hd_{p+1}(v\ot \td{u}\ot \td{f})=v\ot u^1\ot \dots \ot
u^p\ot 1\ot \td{f}\\
 &\hs_j(v\ot \td u\ot \td f)=v\ot u^1\ot \dots \ot \epsilon(u^{j+1})\ot\dots\ot u^p\ot
 \td{f}\\
&\hta(v\ot \td{u}\ot  \td{f})=\\
&\d_\Fg(u\ps{1})S(u\ps{2})\cdot v\ot S(u^1\ps{4})\cdot(u^2\ot\dots\ot u^p\ot 1)\ot S(u^1\ps{3})\bullet \td{f},
 \end{split}
\end{align}
while the vertical morphisms are
\begin{align}
\begin{split}
&\vd_0(v\ot \td{u}\ot \td{f})= v\ot \td u\ot  1\ot \td{f},\\
 &\vd_j(v\ot \td{u}\ot \td{f})= v \ot \td u \ot    f^1\odots\Delta(f^j)\odots f^q,\\
&\vd_{q+1}(v\ot \td{u}\ot \td{f})= v\ns{0} \ot \td u\ns{0}\ot \td{f}\ot S(\td u\ns{1}v\ns{1})\s,\\
 &\vs_j(v\ot \td u\ot  \td f)=v\ot \td u\ot  f^1\odots \epsilon(f^{j+1})\odots f^n, \\
 &\vta(v\ot \td{u}\ot  \td{f}) =\\
 & v\ns{0}\ot \td u\ns{0} \ot  S(f^1)\cdot(f^2\odots f^n\ot S(\td u\ns{1}v\ns{1})\s) .
  \end{split}
\end{align}

By definition,  a bicocyclic module is  a bigraded module whose  rows and columns form  cocyclic modules on their own Hochschild coboundary and Connes boundary maps. These boundaries and coboundaries are denoted by $\hB$, $\vB$, $\hb$, and $\vb$, as demonstrated in the diagram
\begin{align}\label{UF}
\begin{xy} \xymatrix{  \vdots\ar@<.6 ex>[d]^{\uparrow B} & \vdots\ar@<.6 ex>[d]^{\uparrow B}
 &\vdots \ar@<.6 ex>[d]^{\uparrow B} & &\\
^\s{V}_\d \ot \Uc^{\ot 2} \ar@<.6 ex>[r]^{\hb}\ar@<.6
ex>[u]^{  \uparrow b  } \ar@<.6 ex>[d]^{\uparrow B}&
  ^\s{V}_\d\ot \Uc^{\ot 2}\ot \Fc   \ar@<.6 ex>[r]^{\hb}\ar@<.6 ex>[l]^{\hB}\ar@<.6 ex>[u]^{  \uparrow b  }
   \ar@<.6 ex>[d]^{\uparrow B}&^\s{V}_\d\ot \Uc^{\ot 2}\ot\Fc^{\ot 2}
   \ar@<.6 ex>[r]^{~~\hb}\ar@<.6 ex>[l]^{\hB}\ar@<.6 ex>[u]^{  \uparrow b  }
   \ar@<.6 ex>[d]^{\uparrow B}&\ar@<.6 ex>[l]^{~~\hB} \hdots&\\
^\s{V}_\d \ot \Uc \ar@<.6 ex>[r]^{\hb}\ar@<.6 ex>[u]^{  \uparrow b  }
 \ar@<.6 ex>[d]^{\uparrow B}&  ^\s{V}_\d \ot \Uc \ot\Fc \ar@<.6 ex>[r]^{\hb}
 \ar@<.6 ex>[l]^{\hB}\ar@<.6 ex>[u]^{  \uparrow b  } \ar@<.6 ex>[d]^{\uparrow B}
 &^\s{V}_\d\ot \Uc \ot \Fc^{\ot 2}  \ar@<.6 ex>[r]^{~~\hb}\ar@<.6 ex>[l]^{\hB}\ar@<.6 ex>[u]^{  \uparrow b  }
  \ar@<.6 ex>[d]^{\uparrow B}&\ar@<.6 ex>[l]^{~~\hB} \hdots&\\
^\s{V}_\d  \ar@<.6 ex>[r]^{\hb}\ar@<.6 ex>[u]^{  \uparrow b  }&
^\s{V}_\d\ot\Fc \ar@<.6 ex>[r]^{\hb}\ar[l]^{\hB}\ar@<.6
ex>[u]^{  \uparrow b  }&^\s{V}_\d\ot\Fc^{\ot 2}  \ar@<.6
ex>[r]^{~~\hb}\ar@<.6 ex>[l]^{\hB}\ar@<1 ex >[u]^{  \uparrow b  }
&\ar@<.6 ex>[l]^{~~\hB} \hdots& .}
\end{xy}
\end{align}

We  note that the standard Hopf-cyclic cohomology complex $C^\bullet(\Hc,
\;^\s{V}_\d)$ can be identified with the diagonal subcomplex  $\FZ^{\bullet,\bullet}(\Hc,\Fc,\,^\s{V}_\d)$ of $C^{\bullet,\bullet}(\Uc,\Fc,\,^\s{V}_\d)$ via
\begin{align}\label{aux-PSI-bicrossed}
 \begin{split}
 & \, \Psi_{\acl}: \FZ^{\bullet,\bullet}(\Hc,\Fc,\,^\s{V}_\d) \ra  C^\bullet(\Hc, \;^\s{V}_\d) \\
&\Psi_{\acl}(v\ot u^1\odots u^n\ot f^1\odots f^n) =\\
& v\ot f^1\acl u^1\ns{0}\ot f^2u^1\ns{1}\acl u^2\ns{0}\odots f^n u^1\ns{n-1} \dots u^{n-1}\ns{1}\acl u^n ,
 \end{split}
\end{align}
and its inverse
\begin{align}\label{aux-PSI-bicrossed-inverse}
 \begin{split}
 & \Psi^{-1}_{\acl}:C^\bullet(\Hc,\;^\s{V}_\d) \ra \FZ^{\bullet,\bullet}(\Hc,\Fc,\,^\s{V}_\d) \\
&\Psi^{-1}_{\acl}(v\ot   f^1\acl u^1\ot \dots\ot f^n\acl u^n) =\\
& v\ot u^1\ns{0}\odots u^{n-1}\ns{0}\ot u^n\ot f^1\ot\\
&\ot f^2S(u^1\ns{n-1})\ot f^3S(u^1\ns{n-2}u^2\ns{n-2}) \odots
f^nS(u^1\ns{1} \dots u^{n-1}\ns{1})  .
 \end{split}
\end{align}

On the other hand, the bicocyclic module \eqref{UF} can be further reduced to
\begin{equation}
C^{\bullet, \bullet}(\Fg,\Fc,\, ^\s{V}_\d):=\,^\s{V}_\d\ot \wedge^{\bullet}\Fg\ot \Fc^{\ot \bullet} ,
\end{equation}
by replacing the tensor algebra of $\Uc (\Fg)$ with the exterior algebra of  $\Fg$.  In order to see this we first note that the action \eqref{u-bullet}, when restricted to $\Fg$, reads
 \begin{align} \label{bullet}
 \begin{split}
&X\bullet (f^1 \ot \cdots \ot f^q) = \\
& X\ps{1}\ns{0}\rt f^1\ot X\ps{1}\ns{1}(X\ps{2}\ns{0}\rt f^2)\odots X\ps{1}\ns{q-1}\dots X\ps{q-1}\ns{1}(X\ps{q}\rt f^q).
\end{split}
\end{align}
Next, let
\begin{equation}
\,  \p_{\Fg} : \;^\s{V}_\d\ot \wg^p\Fg\ot\Fc^{\ot q} \ra \;^\s{V}_\d\ot \wg^{p-1}\Fg\ot\Fc^{\ot q}
\end{equation}
be the Lie algebra homology boundary with coefficients in the $\Fg$ module $\;^\s{V}_\d\ot\Fc^{\ot q}$,
\begin{equation} \label{Liebdact}
(v\ot\td f)\lt X= v\d_\Fg(X)\ot \td{f}\; - \; X\cdot v\ot \td{f}\;-\;v\ot X\bullet \td f.
\end{equation}
From the antisymmetrization map
\begin{align} \label{antsym1}
&\td \a_{\Fg}: \;^\s{V}_\d\ot
\wg^p\Fg \ot \Fc^{\ot q}\ot\ra \;^\s{V}_\d\ot \Uc^{\ot p} \ot \Fc^{\ot q} ,
\qquad \td\a_{\Fg}= \a\ot  \Id, \\ \notag
&\a(v \ot X^1\wdots X^p)= \frac{1}{p!} \sum_{\s\in S_p}(-1)^\s  v \ot X^{\s(1)}\odots X^{\s(p)} ,
\end{align}
we have \cite[Proposition 7]{ConnMosc98}
\begin{equation}
\vb \circ \td \a_{\Fg} = 0, \qquad \vB \circ \td \a_{\Fg} = \td \a_{\Fg} \circ \p_{\Fg}.
\end{equation}
On the other hand, since $\Fc$ is commutative, the coaction $\Db:\Fg\ra \Fg\ot \Fc$ extends to a unique coaction $\Db_{\Fg}:\wg^p\Fg\ra \wg^p\Fg\ot \Fc$ for any $p \geq 1$. Tensoring this with the right coaction on $\,^\s{V}_\d$, we obtain
\begin{align}
\begin{split} \label{wgcoact}
& \Db_{^\s{V}_\d\ot\wdg\Fg}: \,^\s{V}_\d \ot \wdg^p\Fg \ra \,^\s{M}_\d \ot \wdg^p\Fg \ot \Fc , \qquad \forall p \geq 1,\\
&\Db_{^\s{V}_\d\ot\wdg\Fg}(v\ot X^1\wdots X^p) =\\
&\,v\ns{0}\ot X^1\ns{0}\wdots X^p\ns{0}\ot
\s^{-1}v\ns{1}X^1\ns{1}\dots X^p\ns{1} .
\end{split}
\end{align}
\medskip
This leads to
\begin{align} \label{b-cobd}
\begin{split}
& b_\Fc: \,^\s{V}_\d \ot \wdg^p\Fg \ot \Fc^{\ot\,q} \ra \,^\s{V}_\d \ot \wdg^p\Fg \ot \Fc^{\ot\,q+1} \\
&b_\Fc(v\ot\a\ot   f^1\odots f^q )=v \ot\a\ot  1\ot f^1\odots f^q\\
&+\sum_{i = 1}^q (-1)^i v \ot\a\ot  f^1\odots \D(f^i)\odots f^q\\
&+(-1)^{q+1} v\ns{0}\ot\a\ns{0}\ot   f^1\odots f^q\ot  S(\a\ns{1})S(v\ns{1})\s  ,
\end{split}
\end{align}
which satisfies
\begin{equation}
\hb \circ \td \a_{\Fg} = \td \a_{\Fg} \circ b_\Fc.
\end{equation}
Similarly, the horizontal boundary
\begin{equation}\label{B-bd}
B_\Fc= \left(\sum_{i=0}^{q-1}(-1)^{(q-1)i}\tau_\Fc^{i}\right) \s \tau_\Fc,
\end{equation}
where
\begin{align}
\begin{split}
&\tau_\Fc(v\ot \a\ot f^1\odots f^q) = \\
& v\ns{0}\ot\a\ns{0}\ot S(f^1)\cdot(f^2\odots f^q\ot S(\a\ns{1})S(v\ns{1})\s)
\end{split}
\end{align}
and
\begin{equation}
\s(v\ot\a\ot f^1\odots f^q)= \ve(f^q)v\ot\a\ot f^1\odots f^{q-1}.
\end{equation}
We then have
\begin{equation}
\hB \circ \td \a_{\Fg} = \td \a_{\Fg} \circ B_\Fc.
\end{equation}
However, since $\Fc$ is commutative and acts on $\;^\s{V}_\d\ot \wg^q\Fg$ trivially, by \cite[Theorem 3.22]{KhalRang03},  $B_\Fc$ vanishes in Hochschild cohomology and therefore can be omitted.

\medskip

As a result, we arrive at the bicomplex
\begin{align}\label{UF+}
\begin{xy} \xymatrix{  \vdots\ar[d]^{\p_{\Fg}} & \vdots\ar[d]^{\p_{\Fg}}
 &\vdots \ar[d]^{\p_{\Fg}} & &\\
\;^\s{V}_\d \ot \wg^2\Fg \ar[r]^{b_{\Fc}} \ar[d]^{\p_{\Fg}}& \;^\s{V}_\d\ot \wg^2\Fg\ot\Fc  \ar[r]^{b_{\Fc}} \ar[d]^{\p_{\Fg}}
 &\;^\s{V}_\d\ot \wg^2\Fg\ot\Fc^{\ot 2} \ar[r]^{~~~~~~b_{\Fc}}   \ar[d]^{\p_{\Fg}}& \hdots&\\
\;^\s{V}_\d \ot \Fg \ar[r]^{b_{\Fc}} \ar[d]^{\p_{\Fg}}&  \;^\s{V}_\d\ot \Fg\ot\Fc \ar[r]^{b_{\Fc}} \ar[d]^{\p_{\Fg}}&\;^\s{V}_\d\ot \Fg\ot\Fc^{\ot 2}
   \ar[d]^{\p_{\Fg}} \ar[r]^{~~~~~~~b_{\Fc}}& \hdots&\\
\;^\s{V}_\d \ar[r]^{b_{\Fc}}&  \;^\s{V}_\d\ot\Fc \ar[r]^{b_{\Fc}}&\;^\s{V}_\d\ot\Fc^{\ot 2} \ar[r]^{~~~~~b_{\Fc}} & \hdots&  . }
\end{xy}
\end{align}

Our discussion can be summarized by the following analogue of \cite[Proposition 3.21]{MoscRang09}.

\begin{proposition} \label{mixCE}
The map \eqref{antsym1} induces a quasi-isomorphism between the total complexes $\Tot C^{\bullet, \bullet}(\Fg, \Fc, \,^\s{V}_\d )$ and $\Tot C^{\bullet, \bullet} (\Uc, \Fc, \;^\s{V}_\d)$.
\end{proposition}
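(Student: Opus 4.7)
The plan is to apply a spectral sequence comparison argument. First, I would verify that $\td\a_{\Fg}$ is a chain map between the two total complexes: the identities $\vb \circ \td\a_{\Fg} = 0$ and $\vB \circ \td\a_{\Fg} = \td\a_{\Fg} \circ \p_{\Fg}$, already recorded above, handle the vertical direction. For the horizontal direction, $\hb \circ \td\a_{\Fg} = \td\a_{\Fg} \circ b_{\Fc}$ follows from compatibility with the coproduct, while $\hB$ may be dropped because $\Fc$ is commutative and $\,^\s V_\d \ot \wg^p \Fg$ carries only the trivial right $\Fc$-comodule structure in the horizontal complex; consequently $B_\Fc$ is null-homotopic on Hochschild cohomology by \cite[Theorem 3.22]{KhalRang03}, which is exactly why the reduced bicomplex \eqref{UF+} needs no horizontal $B$-operator.

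Next, I would filter both total complexes by horizontal degree $q$ and compare the resulting first-quadrant spectral sequences. For fixed $q$, the vertical $E_1$-term of the reduced bicomplex is the Lie algebra homology $H_\bullet(\Fg, \,^\s V_\d \ot \Fc^{\ot q})$ with coefficients acted upon via \eqref{Liebdact}. The corresponding vertical $E_1$-term of the original bicomplex is the Hopf-cyclic cohomology of the cocommutative Hopf algebra $\Uc = U(\Fg)$ with coefficients in the same SAYD module $\,^\s V_\d \ot \Fc^{\ot q}$, where the module-comodule structures are those induced by the $(b,B)$-structure in column $q$. The classical Connes-Moscovici identification of the Hopf-cyclic complex of $U(\Fg)$ with the Chevalley-Eilenberg complex via antisymmetrization (\cite[Proposition 7]{ConnMosc98}, extended from one-dimensional MPI coefficients to arbitrary SAYD coefficients in \cite{HajaKhalRangSomm04-II}) then guarantees that $\td\a_{\Fg}$ induces an isomorphism on the $E_1$-page. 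The spectral sequence comparison theorem, applicable thanks to first-quadrant boundedness in each total degree, then yields the desired quasi-isomorphism of total complexes.

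The main obstacle is the $E_1$-level verification that the Connes-Moscovici antisymmetrization isomorphism applies \emph{with} the twisted coefficient module $\,^\s V_\d \ot \Fc^{\ot q}$ appearing column-wise. Concretely, one must confirm that the $\Fg$-action on this module encoded in \eqref{Liebdact}, including the modular twist by $\d_\Fg$ and the action \eqref{bullet} on $\Fc^{\ot q}$, matches the module structure dictated by the cyclic structure maps of the column $\,^\s V_\d \ot \Uc^{\ot \bullet} \ot \Fc^{\ot q}$, and that the coaction \eqref{wgcoact} assembles consistently under antisymmetrization. Once these compatibilities are laid out, the argument runs in close parallel to \cite[Proposition 3.21]{MoscRang09}, which is the particular case $V = \Cb$ of the present statement.
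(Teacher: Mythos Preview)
Your proposal is correct and follows essentially the same approach as the paper. The paper presents the proposition as a summary of the preceding discussion: it records the identities $\vb\circ\td\a_{\Fg}=0$, $\vB\circ\td\a_{\Fg}=\td\a_{\Fg}\circ\p_{\Fg}$, $\hb\circ\td\a_{\Fg}=\td\a_{\Fg}\circ b_{\Fc}$, $\hB\circ\td\a_{\Fg}=\td\a_{\Fg}\circ B_{\Fc}$, notes that $B_{\Fc}$ may be dropped by \cite[Theorem 3.22]{KhalRang03}, and then simply cites the statement as the analogue of \cite[Proposition 3.21]{MoscRang09}; your spectral sequence comparison filtered by horizontal degree, with the columnwise invocation of \cite[Proposition 7]{ConnMosc98}, is precisely the argument implicit in that reference.
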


\medskip

We next convert the Lie algebra homology into Lie algebra cohomology by the Poincar\'e isomorphism
\begin{align}\label{theta}
\begin{split}
&\FD_{\Fg} = \Id\ot \Fd_{\Fg} \ot \Id : V\ot\wedge^q\Fg^\ast\ot \Fc^{\ot p} \ra
  \;V\ot  \wedge^{N}\Fg^\ast \ot\wedge^{N-q}\Fg  \ot \Fc^{\ot p} \\
 &\Fd_{\Fg}(\eta) \, = \,   \varpi^\ast\ot \iota(\eta) \varpi,
 \end{split}
\end{align}
where $\varpi$ is a covolume element and $\varpi^\ast $ is the dual volume element.

\medskip

For any $\lambda \in \Fg^\ast $, the contraction operator
$\iota(\lambda) : \wedge^{\bullet}\Fg \ra \wedge^{\bullet-1}\Fg$ is
the unique derivation of degree $-1$ on the exterior algebra of $\Fg$,
\begin{equation}
\iota(\lambda) (X) \, = \, \langle \lambda, X \rangle , \qquad \fl \, X \in \Fg ,
\end{equation}
while for $\eta = \lambda_1 \wdots \lambda_q \in \wedge^{q}\Fg^\ast$,
 $\iota(\eta):\wedge^{\bullet}\Fg \ra \wedge^{\bullet-q}\Fg$ is given by
\begin{equation}
\iota (\lambda_1 \wdots \lambda_q) := \iota (\lambda_q) \circ \ldots
 \circ \iota(\lambda_1) , \qquad \fl \, \lambda_1, \ldots , \lambda_q \in \Fg^\ast .
\end{equation}

The coadjoint action of $\Fg$ on $\Fg^\ast$ induces the action
\begin{align}
\ad^\ast (X) \varpi^\ast = \d_\Fg(X) \varpi^\ast , \qquad \fl \, X \in \Fg,
\end{align}
on $\wg^{N}\Fg^\ast$. Hence we identify $\wg^N\Fg^\ast$ with $\;\Cb_\d$ as $\Fg$-modules.

\medskip

We now set
\begin{equation}\label{aux-left-F-coaction-g-dual}
\Db_\Fg^\ast:\Fg^\ast\ra \Fc \ot \Fg^\ast, \quad \Db_\Fg^\ast(\t^i)= \sum_{j =1}^N f^i_j\ot \t^j,
\end{equation}
which can be seen as the transpose of the  original right coaction \eqref{aux-first-order-matrix-coefficients}. Let us check that it is a coaction. We know from \eqref{aux-comultiplication-matrix-coefficients} that $\D(f_j^i)= f^i_k\ot f^k_j$. Hence,
\begin{equation}
((\Id\ot \Db_{\Fg}^\ast)\circ\Db_\Fg^\ast)(\t^i)= \sum_{j,k}f^i_k\ot f^k_j\ot \t^j=
((\D\ot\Id)\circ\Db_\Fg^\ast)(\t^i).
\end{equation}

We extend this coaction on $\wdg ^\bullet \Fg^\ast$ diagonally via
\begin{align}\label{coaction-g-ast}
\begin{split}
\a\ns{-1}\ot \a\ns{0}= \sum_{1\le l_j\le m} f_{l_1}^{i_1}\cdots f_{l_k}^{i_k}\ot \t^{l_1}\wdots \t^{l_k},\quad \a= \t^{i_1}\wdots \t^{i_k},
\end{split}
\end{align}
which is a coaction as a result of the commutativity of $\Fc$. In particular, we have $\Db_\Fg^\ast(\varpi^\ast)=\s\ot \varpi^\ast$ and hence
\begin{equation}
\wdg^N\Fg^\ast= \;^\s{\Cb}_\d
\end{equation}
as a right module and left comodule.

\medskip

Using  the antipode of $\Fc$ we turn \eqref{aux-left-F-coaction-g-dual} into a right coaction $\Db_{\Fg^\ast}:\Fg^\ast\ra \Fg^\ast \ot \Fc $. We then apply this right coaction to endow $V\ot \wdg^p\Fg^\ast$ with a right coaction
\begin{align}
\begin{split}
&\Db_{V\ot\wdg\Fg^\ast}: V\ot \wdg^p\Fg^\ast\ra V\ot \wdg^p\Fg^\ast\ot \Fc,\\
&\Db_{V\ot\wdg\Fg^\ast}(v\ot \a)= v\ns{0}\ot \a\ns{0}\ot v\ns{1}S(\a\ns{-1}).
\end{split}
\end{align}

\begin{lemma}
The Poincar\'e isomorphism connects $\Db_{^\s{V}_\d\ot\wdg\Fg}$ and $\Db_{V\ot\wdg\Fg^\ast}$ as follows:
\begin{equation}\label{F-coaction-g*}
\Db_{V\ot\wdg\Fg^\ast}(v\ot\om)=\FD_{\Fg}^{-1}\circ\Db_{^\s{V}_\d\ot\wdg\Fg}\circ\FD_{\Fg}(v\ot\om).
\end{equation}
\end{lemma}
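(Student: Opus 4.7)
The plan is to reduce the statement to a single intertwining identity between Poincaré duality and the $\Fc$-coaction, and then to verify this identity directly from the definitions. Since the $\Fc^{\ot p}$ factors are inert under $\FD_{\Fg}$ and under both coactions, it suffices to prove the corresponding equality on $V\ot\wdg^q\Fg^\ast$, where under the identification $V\ot\wdg^N\Fg^\ast \;=\; {}^\s V_\d$ the Poincar\'e map becomes
\[
\FD_{\Fg}(v\ot\om) \;=\; v\ot \varpi^\ast\ot \iota(\om)\varpi \;\in\; {}^\s V_\d \ot\wdg^{N-q}\Fg.
\]

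The central step is the following intertwining identity: for every $\om\in\wdg^q\Fg^\ast$,
\begin{equation}\label{aux-key-intertwining}
\Db_{\Fg}\bigl(\iota(\om)\varpi\bigr) \;=\; \iota(\om\ns{0})\varpi \,\ot\, \s\cdot S(\om\ns{-1}).
\end{equation}
To obtain \eqref{aux-key-intertwining}, I would first recall that $\Db_{\Fg}(\varpi)=\varpi\ot \s$ by the very definition of $\s$ as $\det[f_i^j]$, and that by \eqref{aux-left-F-coaction-g-dual} the left coaction $\Db_{\Fg}^\ast$ on $\Fg^\ast$ is built from the same matrix coefficients $f^i_j$. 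Using commutativity of $\Fc$, the comultiplicativity of $\iota$ in its first argument, and the derivation property of $\iota(\lambda)$, one expands both sides on basis monomials $\om=\t^{i_1}\wdots\t^{i_q}$ and $\varpi=X_1\wdots X_N$; the matching is then a bookkeeping computation that amounts to expressing the cofactor expansion of $\s$ compatibly with contraction against a $q$-form. The input $\Db_{\Fg}(\varpi)=\varpi\ot\s$ is what produces the $\s$-twist on the right-hand side, while the appearance of $S(\om\ns{-1})$ reflects the passage from the left $\Fc$-coaction on $\Fg^\ast$ to the right $\Fc$-coaction on $\Fg$ via the antipode.

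Granting \eqref{aux-key-intertwining}, the remainder of the proof is formal. Applying $\Db_{{}^\s V_\d\ot\wdg\Fg}$ to $\FD_{\Fg}(v\ot\om)=v\ot\varpi^\ast\ot\iota(\om)\varpi$ and using \eqref{wgcoact} together with \eqref{aux-key-intertwining} gives
\begin{align*}
& \Db_{{}^\s V_\d\ot\wdg\Fg}\bigl(\FD_{\Fg}(v\ot\om)\bigr) \\
&\quad =\; v\ns{0}\ot \varpi^\ast\ot \iota(\om\ns{0})\varpi \,\ot\, \s^{-1} v\ns{1}\cdot\s\cdot S(\om\ns{-1}) \\
&\quad =\; v\ns{0}\ot \varpi^\ast\ot \iota(\om\ns{0})\varpi \,\ot\, v\ns{1}S(\om\ns{-1}),
\end{align*}
where in the last step we used the commutativity of $\Fc$ to cancel $\s^{-1}\s$. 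Finally, the inverse Poincar\'e map $\FD_{\Fg}^{-1}$ sends $\varpi^\ast\ot\iota(\om\ns{0})\varpi$ back to $\om\ns{0}$, producing exactly
\[
v\ns{0}\ot\om\ns{0}\ot v\ns{1}S(\om\ns{-1})\;=\;\Db_{V\ot\wdg\Fg^\ast}(v\ot\om).
\]

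The main obstacle is the verification of \eqref{aux-key-intertwining}. All the other steps are a straightforward unwinding of definitions, so essentially the entire content of the lemma is the compatibility of contraction $\iota(\,\cdot\,)\varpi$ with the comodule structures on $\Fg$ and $\Fg^\ast$. A clean way to handle it is to check the identity first for $q=1$ (where it is a direct consequence of $\Db_{\Fg}(\varpi)=\varpi\ot\s$ together with \eqref{aux-left-F-coaction-g-dual}) and then extend to arbitrary $q$ by induction on $q$, using the derivation property $\iota(\lambda\wg\mu)=\iota(\mu)\circ\iota(\lambda)$ and the fact that $\Db_{\Fg}^\ast$ was extended to $\wdg^\bullet\Fg^\ast$ diagonally, which is compatible with the diagonal extension of $\Db_{\Fg}$ on $\wdg^\bullet\Fg$ because $\Fc$ is commutative.
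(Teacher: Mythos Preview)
Your approach is correct and more conceptual than the paper's. The paper fixes a basis element $\om=\t^{p+1}\wdots\t^N$ and computes both sides of the equation by brute force, manipulating sums over permutations and using the cofactor expansion of $\s=\det[f^i_j]$ implicitly. You instead isolate the content of the lemma as a single intertwining identity $\Db_{\Fg}(\iota(\om)\varpi)=\iota(\om\ns{0})\varpi\ot\s\,S(\om\ns{-1})$ between Poincar\'e duality and the $\Fc$-comodule structures, and then finish formally. This makes the role of the group-like element $\s$ transparent: it enters precisely because $\Db_\Fg(\varpi)=\varpi\ot\s$, and the $\s^{-1}\s$ cancellation in your last display is exactly what the paper's index manipulation achieves.

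One refinement is needed in your induction. The step from degree $q$ to $q+1$ writes $\iota(\om\wg\lambda)\varpi=\iota(\lambda)\bigl(\iota(\om)\varpi\bigr)$, so you must know how $\Db_\Fg$ interacts with $\iota(\lambda)$ applied to an \emph{arbitrary} element $\xi\in\wg^\bullet\Fg$, not just to $\varpi$. The right auxiliary statement is
\[
\Db_\Fg\bigl(\iota(\lambda)\xi\bigr)\;=\;\iota(\lambda\ns{0})\xi\ns{0}\ot \xi\ns{1}\,S(\lambda\ns{-1}),\qquad \lambda\in\Fg^\ast,\ \xi\in\wg^\bullet\Fg,
\]
which is a one-line check on basis elements using $S(f^i_{l})f^{l}_{j}=\d^i_j$. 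With this in hand, your induction goes through and specializing $\xi=\varpi$ recovers your key identity. As stated, taking only the $q=1$ case of \eqref{aux-key-intertwining} as the base does not suffice to close the induction.
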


\begin{proof}
Without loss of generality let $\om:=\t^{p+1}\wdots\t^N$. We observe that
\begin{equation}
\FD_{\Fg}(v\ot\om)= v\ot\varpi^\ast\ot X_1\wdots X_p.
\end{equation}
Applying $\Db_{^\s{V}_\d\ot\wdg\Fg}$ we get,
\begin{align}\label{proof-FD-Db}
\begin{split}
&\Db_{^\s{V}_\d\ot\wdg\Fg}(\FD_{\Fg}(v\ot\om)) =\\
&\underset{1\le l_t,i_s\le N}{\sum v\ns{0}\ot} \t^{i_1}\wdots \t^{i_N}\ot X_{l_1}\wdots X_{l_p}\ot v\ns{1}S(f^1_{i_1})\cdots S(f_{i_N}^N)f^{l_{1}}_1\cdots f^{l_p}_p =\\
&\sum_{1\le l_t,i_s\le N}v\ns{0}\ot \t^{l_1}\wdots\t^{l_p}\wg\t^{i_{p+1}}\wdots \t^{i_N}\ot\\
&~~~~~~~~~~~~~~~~~~~~~~~~~~~~~~~~~~~~~~~ X_{l_1}\wdots X_{l_p}\ot v\ns{1}S(f^{p+1}_{i_{p+1}})\cdots S(f_{i_N}^N) =\\
&\sum_{\mu\in S_N}(-1)^\mu v\ns{0}\ot\varpi^\ast \ot X_{\mu(1)}\wdots X_{\mu(p)}\ot m\ns{1}S(f^{p+1}_{\mu(p+1)})\cdots S(f^{N}_{\mu(N)}) =\\
&\underset{\underset{\;\mu\in S_{N-p}}{1\le l_1< \ldots< l_p\le N} }{\sum}(-1)^\mu v\ns{0}\ot\varpi^\ast \ot X_{l_1}\wdots X_{l_p}\ot v\ns{1}S(f^{p+1}_{j_{\mu(1)}})\cdots S(f^{N}_{j_{\mu(N-p)}}),\\
\end{split}
\end{align}
where in the last equality $\{j_1<j_2<\ldots <j_{N-p}\}$ denotes the complement of $\{l_1<l_2<\ldots< l_p\}$ in $\{1, \ldots, N\}$.

\medskip

On the other hand,
\begin{align}
\begin{split}
&\FD_\Fg(\Db_{V\ot\wdg\Fg^\ast}(v\ot\om)) =\\
&\underset{1\le l_t\le N}{\sum}\FD_\Fg( v\ns{0}\ot \t^{l_{1}}\wdots\t^{l_{N-p}}\ot v\ns{1}S(f^{p+1}_{l_1}\cdots f^{N}_{l_{N-p}})) =\\
&\underset{\underset{\;\mu\in S_{N-p}}{1\le j_1<\ldots <j_{N-p}\le N,}}{\sum}(-1)^\mu\FD_\Fg( v\ns{0}\ot \t^{j_1}\wdots\t^{j_{N-p}}\ot m\ns{1}S(f^{p+1}_{j_{\mu(1)}}\cdots f^{N}_{j_{\mu(N-p)}}) =\\
&\underset{\underset{\;\mu\in S_{N-p}}{1\le l_1< \ldots< l_p\le N} }{\sum}(-1)^\mu v\ns{0}\ot\varpi^\ast \ot X_{l_1}\wdots X_{l_p}\ot v\ns{1}S(f^{p+1}_{j_{\mu(1)}})\cdots S(f^{N}_{j_{\mu(N-p)}}).
\end{split}
\end{align}
\end{proof}

Finally, by the transfer of structure we pass from  the bicomplex  \eqref{UF+} to
 \begin{align}\label{UF+*}
\begin{xy} \xymatrix{  \vdots & \vdots
 &\vdots &&\\
 V\ot \wdg^2\Fg^\ast  \ar[u]^{\p_{\Fg^\ast}}\ar[r]^{b^\ast_\Fc~~~~~~~}&  V\ot \wdg^2\Fg^\ast\ot\Fc \ar[u]^{\p_{\Fg^\ast}} \ar[r]^{b^\ast_\Fc}& V\ot \wdg^2\Fg^\ast\ot\Fc^{\ot 2} \ar[u]^{\p_{\Fg^\ast}} \ar[r]^{~~~~~~~~~b^\ast_\Fc} & \hdots&  \\
 V\ot \Fg^\ast  \ar[u]^{\p_{\Fg^\ast}}\ar[r]^{b^\ast_\Fc~~~~~}& V\ot \Fg^\ast\ot\Fc \ar[u]^{\p_{\Fg^\ast}} \ar[r]^{b^\ast_\Fc}& V\ot  \Fg^\ast\ot \Fc^{\ot 2} \ar[u]^{\p_{\Fg^\ast}} \ar[r]^{~~~~~b^\ast_\Fc }& \hdots&  \\
   V\ar[u]^{\p_{\Fg^\ast}}\ar[r]^{b^\ast_\Fc~~~~~~~}& V\ot \Fc \ar[u]^{\p_{\Fg^\ast}}\ar[r]^{b^\ast_\Fc}& V\ot \Fc^{\ot 2} \ar[u]^{\p_{\Fg^\ast}} \ar[r]^{~~~~~b^\ast_\Fc} & \hdots&. }
\end{xy}
\end{align}
The vertical coboundary
\begin{equation}
\p_{\Fg^\ast}:C^{p,q}(\Fg^\ast,  \Fc, V)\ra C^{p+1,q}(\Fg^\ast,  \Fc, V)
\end{equation}
is the Lie algebra cohomology coboundary of the Lie algebra $\Fg$ with coefficients in the right $\Fg$-module $V\ot\Fc^{\ot p}$,
\begin{align}
\begin{split}
& ( v\ot f^1\odots f^p) \blacktriangleleft X=\\
&- X\cdot v\ot f^1\odots f^p\;-\; v\ot X\bullet(f^1\odots f^p).
\end{split}
\end{align}
The horizontal coboundary is
\begin{align}
\begin{split}
& b^*_\Fc:C^{p,q}(\Fg^\ast,  \Fc, V)\ra C^{p,q+1}(\Fg^\ast,  \Fc, V) \\
&b^\ast_\Fc(v\ot\a\ot f^1\odots f^q)=\\
&v\ot\a\ot 1\ot f^1\odots f^q +\sum_{i=1}^q(-1)^i v\ot\a\ot f^1\odots \D(f^i)\odots f^q\\
&+(-1)^{q+1}v\ns{0}\ot \a\ns{1}\ot f^1\odots f^q\ot S(v\ns{1})\a\ns{-1}.
\end{split}
\end{align}

We summarize our discussion in the following proposition.

\begin{proposition} \label{mixCE*}
Composition of the maps \eqref{antsym1} and \eqref{theta} induces a quasi-isomorphism between the total complexes  $\Tot C^{\bullet, \bullet} (\Uc, \Fc, \;^\s{V}_\d) $ and $\Tot C^{\bullet, \bullet} (\Fg^\ast,  \Fc,V) $.
\end{proposition}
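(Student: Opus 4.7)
The plan is to deduce this proposition directly from Proposition \ref{mixCE} by showing that the Poincar\'e map $\FD_{\Fg}$ of \eqref{theta} is in fact an \emph{isomorphism} of bicomplexes $C^{\bullet,\bullet}(\Fg,\Fc,\,^\s{V}_\d) \cong C^{\bullet,\bullet}(\Fg^\ast,\Fc,V)$, not merely a vector space isomorphism. Since a tensor product of identity maps with a fixed vector space isomorphism (with inverse given by the obvious contraction in the opposite direction) is evidently bijective at every bidegree, the only substantive content is the verification that $\FD_{\Fg}$ intertwines the horizontal and vertical differentials. Composing with the antisymmetrization quasi-isomorphism of Proposition \ref{mixCE} then yields the claim.

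For the vertical direction, one has to check that $\FD_{\Fg}$ converts the Lie algebra homology boundary $\p_{\Fg}$ with values in the $\Fg$-module $\,^\s{V}_\d \ot \Fc^{\ot q}$ (with action \eqref{Liebdact}) into the Lie algebra cohomology coboundary $\p_{\Fg^\ast}$ with values in the $\Fg$-module $V \ot \Fc^{\ot q}$ (with the action described after \eqref{UF+*}). This is classical Poincar\'e duality for Lie algebra (co)homology, applied on each slice $\,^\s{V}_\d \ot \wg^\bullet\Fg \ot \Fc^{\ot q}$. The key point to verify is that the twist by $\wg^N\Fg^\ast \cong \,^\s\Cb_\d$, implemented by the factor $\varpi^\ast$ in \eqref{theta}, absorbs the modular character $\d_\Fg$ appearing in the first term of \eqref{Liebdact}, so that the resulting action on $V\ot\Fc^{\ot q}$ loses the $\d_\Fg$-contribution and reduces to $-X\cdot v \ot\td f - v\ot X\bullet\td f$ as required.

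For the horizontal direction, the first $q+1$ terms of $b_\Fc$ in \eqref{b-cobd} involve only structure maps of $\Fc$ and $V$ (not of $\wg^\bullet\Fg$), so they trivially commute with $\FD_{\Fg} = \Id\ot\Fd_\Fg\ot\Id$. The only nontrivial term is the last one, which invokes the coaction $\Db_{\,^\s V_\d \ot \wdg\Fg}$ of \eqref{wgcoact}. Here the already established lemma \eqref{F-coaction-g*} does all the work: it says exactly that conjugation by $\FD_{\Fg}$ transports $\Db_{\,^\s V_\d \ot \wdg\Fg}$ into $\Db_{V\ot\wdg\Fg^\ast}$, and substituting this identification into the last term of $b_\Fc$ produces the last term of $b^\ast_\Fc$ as written after \eqref{UF+*}, including the correct appearance of $S(v\ns{1})\a\ns{-1}$ in place of $S(\a\ns{1})S(v\ns{1})\s$ (the swap between $\s$ and the action of the antipode is precisely what is encoded in \eqref{F-coaction-g*} via the right coaction \eqref{coaction-g-ast} on $\wg^N\Fg^\ast$).

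The main obstacle will be purely bookkeeping: ensuring that the sign conventions in $\Fd_\Fg$, the grading shift by $N = \dim\Fg$, the orientation choice $\varpi\wg\varpi^\ast$, and the twist identifying $\wg^N\Fg^\ast$ with $\,^\s\Cb_\d$ as a module-comodule all line up consistently with the modular pair $(\d,\s)$ appearing in the definition of $\,^\s V_\d$. Once these conventions are fixed and the two differential compatibilities above are recorded, $\FD_{\Fg}$ is a bijective chain map between the bicomplexes \eqref{UF+} and \eqref{UF+*}, hence induces an isomorphism on total complexes. Combining with Proposition \ref{mixCE} gives the quasi-isomorphism $\Tot C^{\bullet,\bullet}(\Uc,\Fc,\,^\s V_\d) \simeq \Tot C^{\bullet,\bullet}(\Fg^\ast,\Fc,V)$.
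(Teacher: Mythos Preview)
Your proposal is correct and follows essentially the same approach as the paper: the paper does not give a separate proof but presents Proposition \ref{mixCE*} as a summary of the preceding discussion, which consists precisely of invoking Proposition \ref{mixCE} for the antisymmetrization step and then using the Poincar\'e isomorphism $\FD_{\Fg}$ together with the identification $\wg^N\Fg^\ast \cong \,^\s\Cb_\d$ and the lemma \eqref{F-coaction-g*} to transfer the bicomplex structure from \eqref{UF+} to \eqref{UF+*}. Your write-up makes explicit exactly the two compatibilities (vertical via classical Poincar\'e duality absorbing $\d_\Fg$, horizontal via \eqref{F-coaction-g*}) that the paper's discussion establishes before stating the proposition.
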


\subsection{ Hopf-cyclic cohomology of $R(G_2)\acl U(\Fg_1)$, \\ $R(\Fg_2)\acl U(\Fg_1)$ and $\mathscr{P}(G_2)\acl U(\Fg_1)$}

Using the machinary developed in the previous subsection, we now compute the Hopf-cyclic cohomology of the noncommutative geometric Hopf algebras $R(G_2)\acl U(\Fg_1)$, $R(\Fg_2)\acl U(\Fg_1)$ and $\mathscr{P}(G_2)\acl U(\Fg_1)$ with induced coefficients.

\medskip

Let $(\Fg_1,\Fg_2)$ be a matched pair of Lie algebras, $\Fa = \Fg_1 \bowtie \Fg_2$ be their double crossed sum and $\Fh\subseteq \Fg_2$ be a $\Fg_1$-invariant subalgebra. Then we can see that $\Fa/\Fh\cong \Fg_1\oplus \Fg_2/\Fh$. In addition, let $\Fh$ act on $\Fa/\Fh$ by the induced adjoint action \ie
\begin{equation}\label{adjoint}
\ad_\z( Z\oplus \bar\x)= \overline{[0\oplus \z, Z\oplus \x]}= \z\rt Z\oplus\overline{ [\z,\x]}.
\end{equation}
Finally let us denote $\Fg_2/\Fh$ by $\Fl$.

\medskip

In order for the Chevalley-Eilenberg coboundary of $\Fg_1$ with coefficients in $V\ot \wdg^q\Fl^\bullet$ to be $\Fh$-linear, we assume that the action of  $\Fh$ on $\Fg_1$  is given by derivations. Then we introduce the bicomplex
\begin{equation}\label{g-1-g-2-bicomplex}
\xymatrix{\vdots&\vdots&\vdots&\\
(V\ot \wdg^2\Fg_1^\ast)^\Fh \ar[r]^{\hP}\ar[u]^{\vP}&(V\ot \wdg^2\Fg_1^\ast\ot \Fl^\ast)^\Fh\ar[r]^{\hP} \ar[u]^{\vP}& (V\ot \wdg^2\Fg_1^\ast\ot \wdg^2\Fl^\ast)^\Fh\ar[r]^{~~~~~~~~~~~\hP}\ar[u]^{\vP}&\cdots\\
(V\ot \Fg_1^\ast)^\Fh \ar[r]^{\hP}\ar[u]^{\vP}&(V\ot \Fg_1^\ast\ot \Fl^\ast)^\Fh\ar[r]^{\hP}\ar[u]^{\vP} & (V\ot \Fg_1^\ast\ot \wdg^2\Fl^\ast)^\Fh\ar[r]^{~~~~~~~~~~~\hP}\ar[u]^{\vP}&\cdots\\
V^\Fh \ar[r]^{\hP}\ar[u]^{\vP}&(V\ot \Fl^\ast)^\Fh\ar[r]^{\hP} \ar[u]^{\vP}& (V\ot \wdg^2\Fl^\ast)^\Fh\ar[r]^{~~~~~~~~~~\hP}\ar[u]^{\vP}&\cdots}
\end{equation}
The horizontal coboundary $\hP$ is the relative Lie algebra cohomoloy coboundary of  $\Fh\subseteq \Fg_2$ with coefficients in $V\ot\wdg^p\Fg_1^\ast$. The vertical coboundary $\vP$ is the Lie algebra cohomology coboundary of $\Fg_1$ with coefficients on $V\ot\wdg^q\Fl^\ast$. Since the action of $\Fh$ on $\Fg_1$ is given by derivations, the vertical coboundary is well-defined.

\medskip

We can identify the total complex
\begin{equation}
{\Tot}^\bullet(\Fg_1^\ast,(\Fg_2/\Fh)^\ast,V) = \underset{p+q=\bullet}{\bigoplus} \left(V\ot \wdg^p\Fg_1^\ast\ot \wdg^q \Fl^\ast\right)^\Fh
\end{equation}
of the bicomplex \eqref{g-1-g-2-bicomplex} with the relative Lie algebra cohomology complex $W(\Fa, \Fh, V)$ via the map
\begin{align}
\begin{split}
&\natural:  C^n(\Fg_1\bowtie\Fg_2, \Fh, V)\ra \bigoplus_{p+q=n} (V\ot \wdg^p\Fg^\ast\ot \wdg^q\Fl^\ast)^\Fh \\
& \natural(\omega)(Z_1,\ldots, Z_p\mid \z_1, \ldots, \z_q)=\omega(Z_1\oplus 0,\ldots, Z_p\oplus 0, 0\oplus\z_1, \ldots, 0\oplus \z_q),
\end{split}
\end{align}
and its inverse
\begin{align}
\begin{split}
&\natural^{-1}(\mu\ot\nu)(Z_1\oplus\z_1, \dots,Z_{p+q}\oplus\z_{p+q}) \\
&=\sum_{\s\in Sh(p,q)}(-1)^{\s}\mu(Z_{\s(1)}, \dots,Z_{\s(p)})\nu(\z_{\s(p+1)}, \dots, \z_{\s(p+q)}),
\end{split}
\end{align}
where $Sh(p,q)$ denotes the set of all $(p,q)$-shuffles, \cite{Maclane-book}.

\begin{lemma}\label{lemma-natural}
The map $\nr$ is an isomorphism of complexes.
\end{lemma}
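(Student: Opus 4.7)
The plan is to first establish the bijectivity of $\natural$ purely at the level of underlying graded vector spaces, then to verify that it intertwines the total differential of the bicomplex \eqref{g-1-g-2-bicomplex} with the (relative) Chevalley--Eilenberg coboundary on $\Fa=\Fg_1\bowtie\Fg_2$.

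For the bijection, I would exploit the fact that as a vector space $\Fa=\Fg_1\oplus\Fg_2$ and that $\Fh\subseteq\Fg_2$, so $\Fa/\Fh\cong\Fg_1\oplus\Fl$. The standard shuffle decomposition of the exterior algebra of a direct sum gives
\[
\wdg^n(\Fa/\Fh)^\ast \;\cong\; \bigoplus_{p+q=n}\wdg^p\Fg_1^\ast\otimes\wdg^q\Fl^\ast,
\]
and tensoring with $V$ on the left yields the underlying isomorphism of graded spaces. The formula defining $\natural^{-1}$ via $(p,q)$-shuffles is precisely the inverse of the natural projection onto the $\Fg_1$- and $\Fl$-slots, so well-definedness and bijectivity (before $\Fh$-invariants) are automatic. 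To pass to the $\Fh$-relative subspaces, I would check that $\Fh\subseteq\Fg_2$ preserves both $\Fg_1$ (via $\xi\rt Z$) and $\Fl$ (via the induced adjoint action \eqref{adjoint}) separately, so the diagonal $\Fh$-action on $\wdg^p\Fg_1^\ast\otimes\wdg^q\Fl^\ast$ agrees with the adjoint $\Fh$-action on $\wdg^n(\Fa/\Fh)^\ast$. The contraction conditions $\iota(\zeta)\omega=0$ for $\zeta\in\Fh$ also pass through the decomposition because $\Fh$ only contracts on $\Fl$-slots by definition.

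For compatibility with differentials, the key observation is that the Chevalley--Eilenberg coboundary on $\Fa$ splits into three types of contributions according to the type of bracket that appears, and the matched-pair bracket
\[
[Z\oplus\xi,\, Z'\oplus\xi']\;=\;\bigl([Z,Z']+\xi\rt Z'-\xi'\rt Z\bigr)\oplus\bigl([\xi,\xi']+\xi\lt Z'-\xi'\lt Z\bigr)
\]
lets us track each piece. Given a cochain $\omega$ of bidegree $(p,q)$, evaluating $d_{\mathrm{CE}}\omega$ on a $(p{+}1,q)$-input forces $\omega$ to be fed $(p,q)$-many arguments; counting the $Z$- and $\xi$-slots removed versus added shows that only the $\Fg_1$-part of $[Z_i,Z_j]$-brackets, the $\Fg_2$-part $-\xi_j\lt Z_i$ of mixed brackets, and the actions by $Z_i\in\Fg_1$ on the $V$-value survive. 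Inspecting the resulting expression one recognizes exactly the Chevalley--Eilenberg coboundary $\vP$ of $\Fg_1$ with coefficients in the $\Fg_1$-module $V\otimes\wdg^q\Fl^\ast$, where the $\Fg_1$-action on $\Fl^\ast$ is the coadjoint of $\xi\lt Z$. Symmetrically, on a $(p,q{+}1)$-input only $[\xi_i,\xi_j]$-brackets, the $\Fg_1$-part $-\xi_j\rt Z_i$ of mixed brackets, and actions by $\xi_i\in\Fg_2$ contribute, giving precisely the relative coboundary $\hP$ of $\Fh\subseteq\Fg_2$ with coefficients in $V\otimes\wdg^p\Fg_1^\ast$, with the action on $\Fg_1^\ast$ being the coadjoint of $\xi\rt Z$. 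Bidegrees $(p',q')$ with $p'+q'=n+1$ other than $(p{+}1,q)$ and $(p,q{+}1)$ do not occur by the same slot-counting argument.

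The main obstacle is bookkeeping: one has to check that the signs coming from the Koszul sign in the shuffle decomposition combine with the Chevalley--Eilenberg signs $(-1)^{i+j}$ and $(-1)^{i+1}$ to produce the total differential $\vP+(-1)^p\hP$ of the bicomplex \eqref{g-1-g-2-bicomplex}, rather than some other sign combination. I would handle this by first verifying the matching of differentials on pure tensors $\mu\otimes\nu$ with $\mu\in\wdg^p\Fg_1^\ast$ and $\nu\in\wdg^q\Fl^\ast$, where the shuffle signs reduce to a single factor $(-1)^p$ arising when the $\xi$-derivation jumps past the $\Fg_1^\ast$-factor, and then extending by linearity. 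Once the signs are pinned down, the computation is purely formal and concludes that $\natural$ is a morphism (and hence, by bijectivity, an isomorphism) of cochain complexes.
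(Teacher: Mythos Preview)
Your proposal is correct and follows essentially the same approach as the paper. The paper's proof is a one-sentence sketch referring to \cite[Lemma 2.7]{MoscRang11}: the vertical and horizontal coboundaries of the bicomplex are simply the restrictions of the Chevalley--Eilenberg coboundary of $\Fa$ (with coefficients) to the appropriate bigraded pieces, so $\natural$ automatically intertwines the differentials; your slot-counting and bracket-splitting argument is exactly the unpacking of that observation.
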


\begin{proof}
The proof is analogue of \cite[Lemma 2.7]{MoscRang11} and follows from the observation that, in view of \eqref{adjoint}, the vertical and horizontal coboundaries of \eqref{g-1-g-2-bicomplex} are the restrictions of the Chevalley-Eilenberg coboundary of $\Fa$ with coefficients.
\end{proof}

\begin{definition}
Let $(\Fg_1,\Fg_2)$ be a matched pair of Lie algebras and a $\Fg_1$-Hopf algebra $\Fc$ be in Hopf duality with $U(\Fg_2)$. Then we say  $\Fc$ is $(\Fg_1, \Fg_2)$-related  if
 \begin{enumerate}
   \item  The pairing is $U(\Fg_1)$-balanced, \ie
\begin{equation}\label{g-balenced}
\langle u', u\rt f \rangle= \langle u' \lt u, f \rangle, \qquad f\in\Fc, \;u'\in U(\Fg_2),\; u\in U(\Fg_1).
\end{equation}
\item The action of $U(\Fg_2)$ on $U(\Fg_1)$ is compatible with the coaction of $\Fc$ on $U(\Fg_1)$ via the pairing, \ie
\begin{equation}\label{g-cobalenced}
 u\ns{0}\langle u', u\ns{1}\rangle= u'\rt u,\qquad u\in U(\Fg_1), \;\; u'\in U(\Fg_2).
\end{equation}
 \end{enumerate}
\end{definition}

\begin{lemma}
For a matched pair $(\Fg_1,\Fg_2)$ of Lie algebras and for $q\geq 1$,
\begin{align}\label{*-action}
\begin{split}
&(u'^1\odots u'^q)\ast u=\\
&u'^1\lt (u'^2\ps{1}\cdots u'^q\ps{1}\rt u\ps{1})\odots u'^{q-1}\ps{q-1}\lt (u'^q\ps{q-1}\rt u\ps{q-1})\ot u'^q\ps{q}\lt u\ps{q}
\end{split}
\end{align}
defines a right action of $U(\Fg_1)$ on $U(\Fg_2)^{\ot q}$.
\end{lemma}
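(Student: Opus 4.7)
The plan is to proceed by induction on $q$, after first recasting the closed formula for the $\ast$-action in a recursive form that is amenable to induction.

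First I would establish the recursion
\begin{equation*}
(u'^1\odots u'^q)\ast u \,=\, \big((u'^1\odots u'^{q-1})\ast (u'^q\ps{1}\rt u\ps{1})\big)\ot u'^q\ps{2}\lt u\ps{2}, \qquad q\ge 2,
\end{equation*}
by expanding the right-hand side using the closed formula at level $q-1$ and simplifying. This uses three standard ingredients: the module-coalgebra compatibility $\D(v\rt u) = (v\ps{1}\rt u\ps{1})\ot(v\ps{2}\rt u\ps{2})$ built into the definition of a mutual pair (via $\Uc$ being a left $\Vc$-module coalgebra), coassociativity of $\D$, and the fact that $\rt$ is an algebra action so $(vv')\rt u = v\rt(v'\rt u)$.

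The base case $q=1$ reduces to $u'^1\ast u = u'^1\lt u$, and the assertion that this defines a right action is exactly the statement that $\Vc = U(\Fg_2)$ is a right $\Uc = U(\Fg_1)$-module via $\lt$, which is part of the mutual pair data. For the inductive step, both action axioms are verified via the recursion. The unit axiom $w\ast 1 = w$ is immediate: the factor $u'^q\ps{1}\rt 1 = \ve(u'^q\ps{1})\,1$ collapses the first summand to $(u'^1\odots u'^{q-1})\ast 1$, which equals $u'^1\odots u'^{q-1}$ by induction, while $u'^q\ps{2}\lt 1 = u'^q\ps{2}$ reassembles with it via counitality. For associativity $(w\ast u)\ast u' = w\ast(uu')$, I would apply the recursion once on the right and once on the left and compare. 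On the right, axiom \eqref{aux-mutual-pair-1} rewrites $u'^q\ps{1}\rt(u\ps{1}u'\ps{1})$ as $(u'^q\ps{1}\rt u\ps{1})\cdot\big((u'^q\ps{2}\lt u\ps{2})\rt u'\ps{1}\big)$ after coassociative reindexing of $u'^q$ and $u$, while the right $\Uc$-module property yields $u'^q\ps{2}\lt(u\ps{2}u'\ps{2})=(u'^q\ps{2}\lt u\ps{2})\lt u'\ps{2}$. On the left, expanding the comultiplication of the last slot of $w\ast u$ via the right-module-coalgebra identity $\D(v\lt u)=(v\ps{1}\lt u\ps{1})\ot(v\ps{2}\lt u\ps{2})$ produces precisely the Sweedler factors needed to apply the recursion a second time, after which the inductive hypothesis at level $q-1$ collapses the two nested $\ast$'s into a single one with the same argument as on the right.

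The principal obstacle will be the Sweedler-index bookkeeping: once all comultiplications are applied, each of $u$, $u'$, and $u'^q$ has been split into several tensor factors, and matching the two sides of the associativity equation requires a careful reindexing. Here the cocommutativity of $U(\Fg_1)$ and $U(\Fg_2)$, which renders axiom \eqref{aux-mutual-pair-3} automatic, plays the essential simplifying role, allowing free permutation of Sweedler components so that the terms produced on the two sides align without further twist.
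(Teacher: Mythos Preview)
Your inductive approach via the recursion is correct and in fact more complete than the paper's own argument. The paper verifies only the case $q=2$ by a direct computation using the right $U(\Fg_1)$-module-coalgebra property of $U(\Fg_2)$ together with axiom \eqref{aux-mutual-pair-1}—precisely the same two ingredients that drive your inductive step—and leaves the general $q$ implicit. Your recursion
\[
(u'^1\odots u'^q)\ast u \,=\, \big((u'^1\odots u'^{q-1})\ast (u'^q\ps{1}\rt u\ps{1})\big)\ot u'^q\ps{2}\lt u\ps{2}
\]
is exactly the $l=1$ instance of the identity stated (without proof) in the Remark immediately following the lemma, and packaging the argument this way makes the induction go through cleanly for arbitrary $q$.

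One minor correction to your closing remark: cocommutativity is not actually needed. Coassociativity alone handles all the Sweedler reindexing, and axiom \eqref{aux-mutual-pair-3} never enters the computation. When you expand both sides, the Sweedler factors of $u'^q$, $u$, and $u'$ appear in the same linear order on each side, so no permutation is required to match them.
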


\begin{proof}
We need to prove that for $\td u':= u'^1\odots u'^q\in U(\Fg_2)^{\ot q}$, and $u^1, u^2 \in U(\Fg_1)$, we have $(\td u'\ast u^1)\ast u^2= \td u' \ast(u^1u^2)$.

\medskip

Indeed, using the fact that $U(\Fg_2)$ is $U(\Fg_1)$-module coalgebra and \eqref{aux-mutual-pair-1} we observe that
\begin{align}\label{proof-*-action}
\begin{split}
&((u'^1\ot u'^2)\ast u^1)\ast u^2=  (u'^1 \lt(u'^2\ps{1}\rt u^1\ps{1})\ot u'^2\ps{2} \lt u^1\ps{2})\ast u^2\\
& = (u'^1 \lt(u'^2\ps{1}\rt u^1\ps{1})\lt ((u'^2\ps{2} \lt u^1\ps{2})\ps{1}\rt u^2\ps{1})\ot (u'^2\ps{2} \lt u^1\ps{2})\ps{2}\lt u^2\ps{2})\\
&= u'^1 \lt((u'^2\ps{1}\rt u^1\ps{1})((u'^2\ps{2} \lt u^1\ps{2})\rt u^2\ps{1}))\ot (u'^2\ps{3} \lt u^1\ps{3}u^2\ps{2})\\
&= u'^1\lt ( u'^2\ps{1}\rt (u^1\ps{1}u^2\ps{2}))\ot u'^2\ps{2}\lt (u^1\ps{2}u^2\ps{2})= (u'^1\ot u'^2)\ast u^1u^2.
\end{split}
\end{align}
\end{proof}

\begin{remark}{\rm
Furthermore, for $\td u'\in U(\Fg_2)^{\ot m}$ and $\td u''= u''^1\odots u''^l\in U(\Fg_2)^{\ot l}$, we have
\begin{equation}
(\td u'\ot \td u'')\ast u= \td u'\ast(u''^1\ps{1}\dots u''^l\ps{1}\rt u\ps{1})\ot (u''^1\ps{2}\odots u''^l\ps{2})\ast u\ps{2}.
\end{equation}
}\end{remark}

\begin{proposition}\label{Proposition-theta-linear}
Let $(\Fg_1, \Fg_2)$ be a matched pair of Lie algebras and $\Fc$ be a $(\Fg_1, \Fg_2)$-related Hopf algebra. Then the map $\t_{\Fc, U(\Fg_2)}$ defined in \eqref{map-theta-Alg} is $U(\Fg_1)$-linear provided $\Fg_1$ acts on $\Fc^{\ot q}$ by $\bullet$ defined in \eqref{bullet}, and on $U(\Fg_2)^{\ot q}$ by $\ast$ defined in \eqref{*-action}.
\end{proposition}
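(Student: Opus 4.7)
The plan is to reformulate the $U(\Fg_1)$-linearity as a single pairing identity and then verify it by induction on $q$. More precisely, unwinding the definition of $\t_{\Fc,U(\Fg_2)}$, the claim amounts to the equality
\begin{equation}\label{aux-plan-identity}
\langle u\bullet (f^1\odots f^q)\,,\, u'^1\odots u'^q\rangle \;=\; \langle f^1\odots f^q\,,\, (u'^1\odots u'^q)\ast u\rangle
\end{equation}
for all $u\in U(\Fg_1)$, $f^i\in\Fc$, $u'^j\in U(\Fg_2)$, where on each side $\langle\,,\,\rangle$ denotes the product of the component pairings between $\Fc$ and $U(\Fg_2)$. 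Granted this, the linearity on $V\ot\Fc^{\ot q}$ (with $V$ carrying the trivial $U(\Fg_1)$-structure on the pairing) is immediate.

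First I would dispose of the case $q=1$: then $u\bullet f=u\rt f$ and $u'\ast u=u'\lt u$, so \eqref{aux-plan-identity} reduces to the $U(\Fg_1)$-balancedness \eqref{g-balenced}. For the inductive step I would split the pairing on the left using the Hopf duality identity $\langle ab,u'\rangle=\langle a,u'\ps{1}\rangle\langle b,u'\ps{2}\rangle$. Concretely, writing out the last factor of $u\bullet(f^1\odots f^q)$ as $u\ps{1}\ns{q-1}\cdots u\ps{q-1}\ns{1}(u\ps{q}\rt f^q)$, the pairing with $u'^q$ factorises into $q$ scalars: one of the form $\langle u\ps{q}\rt f^q,u'^q\ps{q}\rangle$ which becomes $\langle f^q,u'^q\ps{q}\lt u\ps{q}\rangle$ by \eqref{g-balenced}, and $q-1$ scalars of the form $\langle u\ps{k}\ns{q-k},u'^q\ps{k}\rangle$ for $k=1,\ldots,q-1$. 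Doing the same for the $(q-1)$-th slot produces scalars $\langle u\ps{k}\ns{q-k-1},u'^{q-1}\ps{k}\rangle$, and so on down the telescope.

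The decisive step is to absorb these mixing scalars into the remaining $U(\Fg_1)$-factors by invoking the cocompatibility \eqref{g-cobalenced} in the form $\langle u\ns{1},u'\rangle\,u\ns{0}=u'\rt u$. Applied to the innermost $U(\Fg_1)$-slot, this converts every block $\langle u\ps{k}\ns{j},u'^{k+j}\ps{k}\rangle u\ps{k}\ns{\bullet}$ into a right $\rt$-action of $u'^{k+j}\ps{k}$ on the $U(\Fg_1)$-element $u\ps{k}$; iterating inward reconstitutes exactly the nested expression $u'^k\lt(u'^{k+1}\ps{k}\cdots u'^q\ps{k}\rt u\ps{k})$ that appears in the $k$-th slot of $(u'^1\odots u'^q)\ast u$ as defined in \eqref{*-action}. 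After all slots are processed, the product of scalars collapses precisely to $\langle f^1\odots f^q\,,\,(u'^1\odots u'^q)\ast u\rangle$.

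The main obstacle is purely combinatorial: one must keep track of the correct labelling of the iterated coproducts of $u\in U(\Fg_1)$ and of each $u'^j\in U(\Fg_2)$, together with the nested coactions $\ns{\bullet}$ appearing in the definition \eqref{bullet} of $\bullet$, and verify that the resulting pattern matches \eqref{*-action} term-by-term. This is most cleanly done by induction on $q$: one isolates the last tensor factor, applies the $q=1$ case together with \eqref{g-cobalenced} to rewrite the mixing coaction labels $u\ps{1}\ns{q-1}$, and recognises the remaining expression as $\t_{\Fc,U(\Fg_2)}$ applied at length $q-1$ to a rewritten argument. The fact that $\ast$ does define an action (proved in \eqref{proof-*-action} using \eqref{aux-mutual-pair-1}) is exactly what guarantees the compatibility of the induction step with the iterated structure.
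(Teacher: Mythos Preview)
Your proposal is correct and follows essentially the same approach as the paper: reduce to the pairing identity \eqref{aux-plan-identity}, expand $u\bullet(f^1\odots f^q)$, use the Hopf duality to split each $\langle -,u'^j\rangle$ into a product of scalars, then apply \eqref{g-cobalenced} to convert the coaction factors $\langle u\ps{k}\ns{j},u'^{k+j}\ps{k}\rangle$ into $\rt$-actions and \eqref{g-balenced} to turn $u\rt f$ into $u'\lt u$, recovering the $\ast$-action of \eqref{*-action}. The only cosmetic difference is that the paper carries out this computation directly for arbitrary $q$ in one displayed chain of equalities, rather than organizing it as an induction on $q$; the underlying manipulations are identical.
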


\begin{proof}
Without loss of generality we assume that $V=\Cb$. We use the pairing properties  \eqref{g-balenced} and \eqref{g-cobalenced} to observe that
\begin{align*}
&\t_{(\Fc,U(\Fg_2))}(u\bullet (f^1\odots f^q))(u'^1\odots u'^q)\\
&= \langle u'^1,u\ps{1}\ns{0}\rt f^1\rangle \langle u'^2,u\ps{1}\ns{1}(u\ps{2}\ns{0}\rt f^1)\rangle\cdots\\
&~~~~~~~~~~~~~~~~~~~~~~~~~~~~~\cdots \langle u'^q,u\ps{1}\ns{q-1}\cdots u\ps{q-1}\ns{1} (u\ps{q+1}\rt f^q)\rangle\\
&= \langle u'^1,  u\ps{1}\ns{0}\rt f^1\rangle \langle u'^2\ps{1},u\ps{1}\ns{1}\rangle\langle u'^2\ps{2},u\ps{2}\ns{0}\rt f^2\rangle\cdots\\
&\cdots \langle u'^q\ps{1}\,,\, u\ps{1}\ns{q-1}\rangle\langle u'^q\ps{2}\,,\, u\ps{2}\ns{q-2}\rangle\cdots \langle u'^q\ns{q-1}\,,\, u\ps{q-1}\ns{1}\rangle\langle u'^q\ps{q}\,,\, u\ps{q+1}\rt f^q\rangle\\
&= \langle u'^1,  u\ps{1}\ns{0}\rt f^1\rangle \langle u'^2\ps{1}\cdots u'^q\ps{1},u\ps{1}\ns{1}\rangle \langle u'^2\ps{2},u\ps{2}\ns{0}\rt f^2\rangle\\
&\langle u'^3\ps{2}\cdots u'^q\ps{2}\,,\, u\ps{2}\ns{1} \rangle\cdots\\
& \cdots\langle u'^{q-1}\ps{q-1},u\ps{q-1}\ns{0}\rt f^{q-1}\rangle\langle u'^q\ps{q-1}\,,\, u\ps{q-1}\ns{1} \rangle\langle u'^q\ps{q}, u\ps{q}\rt f^q\rangle
\end{align*}
\begin{align*}
&= \langle u'^1,  (  u'^2\ps{1}\cdots u'^q\ps{1}\rt  u\ps{1}  )\rt f^1\rangle \langle u'^2\ps{2},
( u'^3\ps{2}\cdots u'^q\ps{2}\rt u\ps{2})\rt f^2\rangle\cdots\\
& \cdots\langle u'^{q-1}\ps{q-1}, ( u'^q\ps{q-1}\rt u\ps{q-1})\rt f^{q-1}\rangle\langle u'^q\ps{q}, u\ps{q}\rt f^q\rangle\\
&= \langle u'^1\lt (  u'^2\ps{1}\cdots u'^q\ps{1} \rt u\ps{1})\,,\, f^1\rangle \langle u'^2\ps{2}\lt ( u'^3\ps{2}\cdots u'^q\ps{2}\rt u\ps{2})\,,\, f^2\rangle\cdots\\
& \cdots\langle u'^{q-1}\ps{q-1}\lt( u'^q\ps{q-1}\rt u\ps{q-1})\,,\, f^{q-1}\rangle\langle u'^q\ps{q}\lt u\ps{q}\,,\, f^q\rangle\\
&= \t_{(\Fc,U(\Fg_2))}(f^1\odots f^q)((u'^1\odots u'^q)\ast u),
\end{align*}
for any $u \in U(\Fg_1),\, u'^1,\ldots, u'^q \in U(\Fg_2)$ and $f^1,\ldots, f^q \in \Fc$.
\end{proof}

\begin{proposition}\label{proposition-antisymmetriza-linear}
Let  $(\Fg_1,\Fg_2)$ be a matched pair of Lie algebras. Assume that $\Fg_1$ acts on $U(\Fg_2)^{\ot q}$ by $\ast$ defined in \eqref{*-action}, and on $\wg^q\Fg_2$ by the intrinsic right action of $\Fg_1$ on $\Fg_2$. Then the antisymmetrization map \eqref{map-antisymmetrization} is a  $\Fg_1$-linear map  of complexes between the normalized Hochschild complex of $U(\Fg_2)$ and the Chevalley-Eilenberg complex of $\Fg_2$ with coefficients.
\end{proposition}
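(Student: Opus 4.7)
The plan is to reduce the $\Fg_1$-equivariance of the antisymmetrization $\a$ to a single normalization-based identity. The map-of-complexes property of $\a$ (independent of any $\Fg_1$-action) is the classical passage from Hochschild to Chevalley-Eilenberg recalled in the paragraph preceding Proposition \ref{mixCE}, so the entire work lies in $\Fg_1$-equivariance.

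First I would establish the following key identity: for primitive elements $Y^1,\ldots,Y^q \in \Fg_2$ and primitive $X \in \Fg_1$, modulo the degenerate subcomplex one has
$$
(Y^1 \ot \cdots \ot Y^q) \ast X \;\equiv\; \sum_{k=1}^q Y^1 \ot \cdots \ot (Y^k \lt X) \ot \cdots \ot Y^q,
$$
so that on primitives the convoluted $\ast$-action of \eqref{*-action} collapses to the ordinary derivation action of $\Fg_1$ on tensor powers of $\Fg_2$. To verify it I would expand \eqref{*-action} using $\Delta(Y^i) = Y^i \ot 1 + 1 \ot Y^i$ and $\Delta^{(q-1)}(X) = \sum_{k=1}^q 1^{\ot(k-1)} \ot X \ot 1^{\ot(q-k)}$, then discard the vanishing contributions using the three simplifications $Y \rt 1 = \ve(Y) = 0$, $1 \lt X = \ve(X) = 0$ (primitivity), together with the normalization rule (a tensor with $1$ in any slot is zero). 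A slot-by-slot analysis shows that the only surviving configurations place $X$ in a single slot $k$ while all other coproduct choices are forced to the trivial contribution at that slot, yielding exactly the $q$ clean $Y^k \lt X$ terms on the right.

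Given this identity, $\Fg_1$-linearity of $\a$ follows by a direct reindexing. For $\omega = Y^1 \wg \cdots \wg Y^q$ one writes $\a(\omega) = \tfrac{1}{q!}\sum_{\s \in S_q}(-1)^\s Y^{\s(1)} \ot \cdots \ot Y^{\s(q)}$, applies the simplified $\ast X$ to each summand, and compares with $\a(\omega \lt X)$, obtained by antisymmetrizing the derivation formula $\omega \lt X = \sum_{i=1}^q Y^1 \wg \cdots \wg (Y^i \lt X) \wg \cdots \wg Y^q$; the change of summation variable $i = \s(k)$ matches the two expressions term by term (and the presence of a coefficient $V$ is carried along trivially). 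The hard part is the slot-by-slot analysis of the previous step: the index structure of \eqref{*-action} is intricate (each $Y^i$ carries up to $i$ Sweedler legs, $X$ carries $q$), and one must carefully argue that everything but the $q$ derivation terms is killed by normalization. I would first verify the identity directly for $q = 2, 3$ to fix notation and then treat the general $q$ either by induction or by a careful direct enumeration exploiting the layered triangular shape of \eqref{*-action}.
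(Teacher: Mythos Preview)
Your proposal is correct and matches the paper's proof essentially line for line: the paper also isolates the single computation that, for primitive $\xi^1,\ldots,\xi^q\in\Fg_2$ and a normalized cochain $\phi$, one has $\phi((\xi^1\odots\xi^q)\ast X)=\sum_i\phi(\xi^1\odots\xi^i\lt X\odots\xi^q)$, obtained by expanding \eqref{*-action} with the primitive coproducts and killing every term containing a $1$ in some tensor slot. The only cosmetic difference is that you phrase the identity ``modulo the degenerate subcomplex'' on the chain side and then antisymmetrize, whereas the paper phrases it directly through evaluation against a normalized cochain; these are dual formulations of the same vanishing.
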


\begin{proof}
We already know that the map \eqref{map-antisymmetrization} is a  map of complexes. So we need to prove that it is $\Fg_1$-linear.

\medskip

Then for any $\xi^1, \ldots, \xi^q \in \Fg_2$, any $X\in \Fg_1$ and any normalized Hochschild cochain $\phi$ the claim follows from
\begin{align}
\begin{split}
&\phi((\xi^1\odots \z^q)\ast X)\\
&=\phi(\xi^1\lt (\xi^2\ps{1}\cdots \xi^q\ps{1}\rt X)\ot \xi^2\ps{2}\lt (\xi^3\ps{2}\cdots \xi^q\ps{2}\rt 1)\ot\cdots\\
&~~~\cdots\ot \xi^{q-1}\ps{q-1}\lt (\xi^q\ps{q-1}\rt 1)\ot \xi^q\ps{q}+\\
&~~~~~~~\xi^1\lt (\xi^2\ps{1}\cdots \xi^q\ps{1}\rt 1)\ot \xi^2\ps{2}\lt (\xi^3\ps{2}\cdots \xi^q\ps{2}\rt X)\ot\cdots\\
&~~~~~~~~~~~~~\cdots\ot \xi^{q-1}\ps{q-1}\lt (\xi^q\ps{q-1}\rt 1)\ot \xi^q\ps{q}+\cdots\\
&~~~~~~~~~~~~~~~~~~~\cdots + \xi^1\lt (\xi^2\ps{1}\cdots \xi^q\ps{1}\rt 1)\ot \xi^2\ps{2}\lt (\xi^3\ps{2}\cdots \xi^q\ps{2}\rt 1)\ot\cdots\\
&~~~~~~~~~~~~~~~~~~~~~~~~~~~~~~~~\cdots\ot \xi^{q-1}\ps{q-1}\lt (\xi^q\ps{q-1}\rt 1)\ot \xi^q\ps{q}\lt X)\\
&=\sum _{i=1}^q  \phi(\xi^1\ot \xi^2\odots \xi^i\lt (\xi^{i+1}\ps{1}\cdots \xi^q\ps{1}\rt X)\ot \xi^{i+1}\ps{2}\odots \xi^q\ps{2})\\
&=\underset{\xi^{i+1}\ps{1}=\cdots=\xi^{q}\ps{1}=1}{\sum _{i=1}^q  \phi(\xi^1\ot \xi^2\odots} \xi^i\lt (\xi^{i+1}\ps{1}\cdots \xi^q\ps{1}\rt X)\ot \xi^{i+1}\ps{2}\odots \xi^q\ps{2})\\
&+\underset{\xi^{j}\ps{2}=1~~\text{for some }\; i+1\le j\le q}{\sum _{i=1}^q \phi( \xi^1\ot \xi^2\odots} \xi^i\lt (\xi^{i+1}\ps{1}\cdots \xi^q\ps{1}\rt X)\ot \xi^{i+1}\ps{2}\odots \xi^q\ps{2})\\
&=\sum_{i=1}^q \phi(\xi^1\ot \xi^2\odots \xi^i\lt  X\ot \xi^{i+1}\odots \xi^q )+0.
\end{split}
\end{align}
\end{proof}

Now let $(\Fg_1, \Fg_2)$ be a  matched pair of Lie algebras, $\Fc$  be a $(\Fg_1,\Fg_2)$-related Hopf algebra and let $\Fg_2= \Fh\ltimes \Fl$, where $\Fh$ is reductive and every $\Fh$-module is semisimple. We then have the map
\begin{align}\label{VE}
\begin{split}
& \Vc: V\ot \wdg^q\Fg_1^\ast\ot \Fc^{\ot q}\ra (V\ot\wdg^p \Fg_1\ot \wdg^q\Fl^\ast)^\Fh \\
&\Vc(v\ot\om\ot f^1\odots f^q)( X^1,\ldots,X^p\mid \xi^1,\ldots, \xi^q)\\
&=\om(X^1,\ldots X^p)\sum_{\s\in S_q}(-1)^\s \langle \xi^{\s(1)}\,,\, f^1\rangle \cdots
\langle \xi^{\s(q)}\,,\, f^q\rangle v.
\end{split}
\end{align}
between the bicomplexes \eqref{UF+*} and  \eqref{g-1-g-2-bicomplex}.

\begin{theorem}\label{Theorem-main}
Let $(\Fg_1,\Fg_2)$ be a matched pair of Lie algebras, $\Fc$ be a $(\Fg_1,\Fg_2)$-related Hopf algebra, $\Fg_2=\Fh\ltimes \Fl$ be a $\Fc$-Levi decomposition such that $\Fh$ is $\Fg_1$-invariant and the natural action of $\Fh$ on $\Fg_1$ is given by derivations. Then for any $\Fc$-comodule and $\Fg_1$-module $V$, the map \eqref{VE} is a map of bicomplexes and induces an isomorphism between Hopf-cyclic cohomology of $\Fc\acl U(\Fg_1)$ with coefficients in $^\s{V}_\d$ and the Lie algebra cohomology of $\Fg_1\bowtie \Fg_2$ relative to $\Fh$ with coefficients in the $\Fg_1\bowtie \Fg_2$-module induced by $V$. In other words,
\begin{equation}
HP^\bullet(\Fc\acl U(\Fg_1),\, ^\s{V}_\d)\cong \bigoplus_{i=\,\bullet\;{\rm mod 2}} H^i(\Fg_1\bowtie \Fg_2, \Fh,\;V).
\end{equation}
\end{theorem}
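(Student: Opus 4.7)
\textbf{Proof plan for Theorem \ref{Theorem-main}.}

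The strategy is to chain together the quasi-isomorphisms already assembled in the paper: the standard Hopf-cyclic complex for $\Hc := \Fc\acl U(\Fg_1)$ with coefficients in $\,^\s V_\d$ is identified with the diagonal of the bicocyclic module \eqref{UF}, which by an Eilenberg-Zilber-type argument has the same total cohomology as $\Tot C^{\bullet,\bullet}(\Uc,\Fc,\,^\s V_\d)$. By Proposition \ref{mixCE*} this is quasi-isomorphic to $\Tot C^{\bullet,\bullet}(\Fg_1^\ast,\Fc,V)$, namely the bicomplex \eqref{UF+*}. On the geometric side, Lemma \ref{lemma-natural} identifies the total complex of \eqref{g-1-g-2-bicomplex} with the relative Chevalley-Eilenberg complex $W(\Fa,\Fh,V)$. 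Thus it suffices to prove that $\Vc$ is a morphism of bicomplexes between \eqref{UF+*} and \eqref{g-1-g-2-bicomplex} inducing a quasi-isomorphism on total complexes.

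First I would verify the well-definedness of $\Vc$. The image of $\Vc$ must land in the $\Fh$-invariant subspace $(V\ot\wedge^p\Fg_1^\ast\ot\wedge^q\Fl^\ast)^\Fh$. This amounts to the following: the coefficient part $\t_{(\Fc,U(\Fg_2))}$ paired against $\Fl$ annihilates $\Fh$-directions (a consequence of restricting to the $0$-weight space in the decomposition along $\Fh$), and the $\Fg_1^\ast$ factor is $\Fh$-invariant since $\Fh$ acts on $\Fg_1$ by derivations and by the $(\Fg_1,\Fg_2)$-related pairing property \eqref{g-cobalenced} these two actions match up consistently. For the compatibility with coboundaries: horizontally, $\Vc$ factors through $\t_{\Fc,U(\Fg_2),\Fl,0}$ tensored with the identity on $V\ot\wedge^p\Fg_1^\ast$, hence intertwines $b^\ast_\Fc$ with $\hP$ by the proofs of Theorems \ref{theorem-R(G)-cohomology}, \ref{theorem-cohomology-R(g)}, \ref{theorem-cohomology-P(G)}; vertically, Proposition \ref{Proposition-theta-linear} guarantees that $\t_{(\Fc,U(\Fg_2))}$ is $U(\Fg_1)$-linear, which, combined with the duality between the $\Fg_1$-action on $\wedge^\bullet\Fg_1$ and the coboundary on $\wedge^\bullet\Fg_1^\ast$, yields the intertwining of $\p_{\Fg^\ast}$ with $\vP$.

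Next, to prove that $\Vc$ is a quasi-isomorphism of total complexes, I would filter both bicomplexes by columns and compare the associated spectral sequences. For fixed $p$, the $p$-th column of \eqref{UF+*} is $V\ot \wedge^p\Fg_1^\ast\ot \Fc^{\ot\bullet}$ with the coalgebra Hochschild differential $b^\ast_\Fc$, while the $p$-th column of \eqref{g-1-g-2-bicomplex} is $(V\ot\wedge^p\Fg_1^\ast\ot\wedge^\bullet\Fl^\ast)^\Fh$ with the relative Chevalley-Eilenberg differential. The restriction of $\Vc$ to the $p$-th column is precisely $\t_{\Fc,U(\Fg_2),\Fl,0}$ on the $\Fc$-tensors, tensored with the identity on $V\ot\wedge^p\Fg_1^\ast$, so the $\Fc$-Levi hypothesis on $\Fg_2=\Fh\ltimes\Fl$ delivers column-wise quasi-isomorphisms. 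Hence $\Vc$ induces an isomorphism on the $E_1$-page of both spectral sequences; since the bicomplexes are first-quadrant with vertical direction bounded by $\dim\Fg_1$, the spectral sequences converge strongly and the induced map on total cohomology is an isomorphism.

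Finally, I would translate the resulting isomorphism of total cohomologies into the stated $HP$-statement. The periodic Hopf-cyclic cohomology of $\Hc$ with coefficients in $\,^\s V_\d$ is, via the commutativity of $\Fc$ and the vanishing of the horizontal Connes boundary on Hochschild cohomology (as already exploited in Theorem \ref{Theorem-F-Levi}), the 2-periodic sum of the Hochschild cohomologies computed above; paired with the identification of the total of \eqref{g-1-g-2-bicomplex} with $W(\Fa,\Fh,V)$ from Lemma \ref{lemma-natural}, this yields
\begin{equation}
HP^\bullet(\Fc\acl U(\Fg_1),\,^\s V_\d)\;\cong\;\bigoplus_{i\equiv\bullet\,(2)}H^i(\Fg_1\bowtie\Fg_2,\Fh,V).
\end{equation}
The main obstacle will be the bookkeeping required in step two: ensuring that the $\Fh$-invariance landing condition, the $U(\Fg_1)$-linearity of the antisymmetrization map (Proposition \ref{proposition-antisymmetriza-linear}) combined with Proposition \ref{Proposition-theta-linear}, and the signs arising from the Poincar\'e isomorphism \eqref{theta} all conspire to make $\Vc$ commute with both coboundaries exactly on the nose rather than merely up to a chain homotopy.
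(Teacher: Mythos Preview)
Your proposal is correct and follows essentially the same route as the paper: reduce to the bicomplex \eqref{UF+*} via Proposition \ref{mixCE*}, show $\Vc$ is a map of bicomplexes by using Propositions \ref{Proposition-theta-linear} and \ref{proposition-antisymmetriza-linear} for the vertical direction and the $\t_{\Fc,U(\Fg_2),\Fl,0}$ factorization for the horizontal one, then invoke the $\Fc$-Levi hypothesis to obtain an isomorphism on the $E_1$-page and conclude. The paper's proof is terser---it omits your well-definedness discussion and the convergence remark---but the logical skeleton is identical.
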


\begin{proof}
First we shall prove that $\Vc$ commutes with both horizontal and vertical coboundaries of the bicomplexes \eqref{UF+*} and  \eqref{g-1-g-2-bicomplex}.

\medskip

The commutation of $\Vc$ with horizontal coboundaries is  guaranteed by the fact that $\t_{\Fc,U(\Fg_2),\Fl, \mu}$ is a map of complexes for $V\ot \wdg^p\Fg_1^\ast$.

\medskip

By the definition of the Chevalley-Eilenberg coboundary, to prove that $\Vc$ commutes with the vertical coboundaries amounts  to showing that  $\Vc$ is $\Fg_1$-linear, as $\Vc$ does  not have any effect on  $V\ot\wdg^p\Fg_1^\ast$. Since $\Vc$ is the composition of the antisymmetrization map $\a$ and $\t_{(\Fc, U(\Fg_2))}$, this in turn is guaranteed by Proposition \ref{Proposition-theta-linear} and Proposition \ref{proposition-antisymmetriza-linear}.

\medskip

Since $\Fg_2=\Fh\ltimes \Fl$ is a  $\Fc$-Levi decomposition, $\Vc$ induces an isomorphism  in the level of the $E_1$-terms of the spectral sequences of the bicomplexes \eqref{UF+*} and  \eqref{g-1-g-2-bicomplex}. Hence we conclude that $\Vc$ induces an  isomorphism in the level of  cohomology of  total complexes. This observation finishes the proof, since  the total complex of \eqref{UF+*} computes the Hopf-cyclic cohomology of $\Fc \acl U(\Fg_1)$ by Proposition \ref{mixCE*} and the total complex of \eqref{g-1-g-2-bicomplex} computes the relative Lie algebra cohomology.
\end{proof}

\begin{corollary}
Let $(G_1,G_2)$ be a matched pair of Lie groups, $L$ be a nucleus of $G_2$ and let $\Fh$, $ \Fg_1$, $\Fg_2$ and $\Fl$ denote the Lie algebras of $H:=G/L$, $G_1$, $G_2$ and $L$ respectively. Let us also assume that $\Fh$ is $\Fg_1$-invariant and the natural action of $\Fh$ on $\Fg_1$ is given by derivations. Then for any representative $G_1\bowtie G_2$-module $V$, we have
\begin{equation}
HP^\bullet(R(G_2)\acl U(\Fg_1),\; ^\s{V}_\d)\;\;\cong \bigoplus_{i=\bullet \, {\rm mod\,2}} H^i(\Fg_1\bowtie\Fg_2,\Fh,V).
\end{equation}
\end{corollary}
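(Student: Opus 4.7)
The plan is to reduce the corollary to Theorem \ref{Theorem-main} by taking $\Fc := R(G_2)$. Three things must be verified: (i) $R(G_2)$ is a $(\Fg_1,\Fg_2)$-related Hopf algebra; (ii) the decomposition $\Fg_2 = \Fh \ltimes \Fl$ is an $R(G_2)$-Levi decomposition; (iii) $\Fh$ is $\Fg_1$-invariant and acts on $\Fg_1$ by derivations. Item (iii) is given in the hypotheses, so the main content of the proof is items (i) and (ii). Once these are in place the conclusion is immediate from Theorem \ref{Theorem-main}.

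For (i), I would use the standard Hopf duality between $R(G_2)$ and $U(\Fg_2)$ given on generators by $\langle f, \xi\rangle = \tfrac{d}{dt}|_{t=0} f(\exp(t\xi))$ and extended multiplicatively through the PBW basis. The $U(\Fg_1)$-balanced condition \eqref{g-balenced} is first verified on Lie algebra generators $X \in \Fg_1$ and $\xi \in \Fg_2$ by a direct differentiation argument: using the smooth actions of the matched pair $(G_1,G_2)$ and the definition \eqref{aux-action-for-lie-group}, one computes
\begin{equation*}
\langle \xi, X\rt f\rangle \;=\; \tfrac{d}{dt}\tfrac{d}{ds}\big|_{t=s=0} f\big(\exp(t\xi)\lt\exp(sX)\big) \;=\; \langle \xi\lt X, f\rangle.
\end{equation*}
An induction on the PBW degree then extends this to arbitrary $u \in U(\Fg_1)$ and $u' \in U(\Fg_2)$, using the coproduct rules for $\Delta$ together with the matched pair identities \eqref{aux-mutual-pair-1}--\eqref{aux-mutual-pair-3}. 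The cobalanced condition \eqref{g-cobalenced} follows on generators directly from the definition of the coaction \eqref{aux-coaction-for-lie-group}: since $\psi \rt X_i = \sum_j X_j\, f^j_i(\psi)$ for $\psi \in G_2$, differentiating at $\psi = e$ along $\xi \in \Fg_2$ yields $\xi\rt X_i = \sum_j X_j\langle \xi, f^j_i\rangle$, which is \eqref{g-cobalenced} on generators; a second PBW induction, using that $\nb:U(\Fg_1)\to U(\Fg_1)\ot R(G_2)$ is an algebra map and that the pairing is compatible with multiplication in both variables, extends it to all of $U(\Fg_1)$ and $U(\Fg_2)$.

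For (ii), I would invoke the representative cohomology machinery already developed in the paper. By Proposition \ref{Propositio-rep-coalgebra}, for any representative $G_2$-module $V$ one has $H^\bullet_{\rm coalg}(R(G_2),V) \cong H_{\rm rep}(G_2,V)$. The van Est theorem of Hochschild--Mostow used in the proof of Theorem \ref{theorem-R(G)-cohomology} identifies $H_{\rm rep}(G_2,V)$ with the relative Lie algebra cohomology $H^\bullet(\Fg_2,\Fh,V)$ precisely through the map $\Dc_{\rm Gr}$, which is nothing but $\tau_{R(G_2),U(\Fg_2),\Fl,0}$. Hence $\tau_{R(G_2),U(\Fg_2),\Fl,0}$ is a quasi-isomorphism for every representative $G_2$-module $V$, which is exactly the definition of an $R(G_2)$-Levi decomposition. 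Any representative $G_1\bowtie G_2$-module $V$ is in particular a representative $G_2$-module by restriction, so this is enough.

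With (i), (ii), and (iii) established, Theorem \ref{Theorem-main} applies and produces the desired isomorphism. The main obstacle is the verification of the $(\Fg_1,\Fg_2)$-related conditions in (i): although each step is essentially a differentiation of a group-level identity, the PBW induction must carefully interleave the coproducts of $U(\Fg_1)$ and $U(\Fg_2)$ with the matched pair compatibilities, and the bookkeeping is where all the real work lies. By contrast, once (i) is granted, (ii) follows from work of Hochschild--Mostow already imported into the paper, and (iii) is assumed.
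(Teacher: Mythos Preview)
Your proposal is correct and follows essentially the same route as the paper's proof: verify the hypotheses of Theorem \ref{Theorem-main} for $\Fc = R(G_2)$ by checking that $R(G_2)$ is $(\Fg_1,\Fg_2)$-related via the evaluation pairing, and invoke the Hochschild--Mostow van Est theorem (as in Theorem \ref{theorem-R(G)-cohomology}) to conclude that $\Fg_2=\Fh\ltimes\Fl$ is an $R(G_2)$-Levi decomposition. The paper dispatches item (i) in one sentence (``naturally $U(\Fg_1)$-balanced and compatible''), whereas you supply the differentiation on generators explicitly; this is fine and arguably more honest.

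One technical slip in your induction for \eqref{g-cobalenced}: the coaction $\nb:U(\Fg_1)\to U(\Fg_1)\ot R(G_2)$ is \emph{not} an algebra map; it obeys the twisted rule \eqref{aux-matched-pair-4}, namely $\nb(uv)=u\ps{1}\ns{0}v\ns{0}\ot u\ps{1}\ns{1}(u\ps{2}\rt v\ns{1})$. Your PBW induction therefore needs to match this against \eqref{aux-mutual-pair-1} on the $U(\Fg_2)$-action side rather than use straight multiplicativity of $\nb$. With that correction the induction goes through and the argument is complete.
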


\begin{proof}
We prove that the criteria of Theorem \ref{Theorem-main} are satisfied for $\Fc:=R(G_2)$. By Theorem \ref{theorem-R(G_2)-Lie-Hopf}, $R(G_2)$ is a $\Fg_1$-Hopf algebra.

\medskip

The Hopf pairing between $R(G_2)$ and $U(\Fg_2)$ is given by evaluation, $\langle f\,,\, \xi\rangle :=\dt f(\exp(t\xi))$ and hence it is naturally $U(\Fg_1)$-balanced and compatible with the $R(G_2)$-coaction on $U(\Fg_1)$. Thus, $R(G_2)$ is $(\Fg_1,\Fg_2)$-related.

\medskip

Finally, it follows from \cite[Theorem 10.2]{HochMost57} that the map $\t_{R(G_2),U(\Fg_2),\Fl,0}:H^\bullet_{\rm rep}(G_2,V) \to H^\bullet(\Fg_2,\Fh,V)$ is an isomorphism. Hence, $\Fg_2=\Fh\ltimes\Fl$ is a $R(G_2)$-Levi decomposition.
\end{proof}

\begin{corollary}\label{corollary-matched-pair-Lie-algebra-cohomology}
Let $(\Fg_1,\Fg_2)$ be a matched pair of Lie algebras and  $\Fg_2=\Fh\ltimes \Fl$ be a Levi decomposition such that $\Fh$ is $\Fg_1$-invariant and the action of $\Fh$ on $\Fg_1$ is given by derivations. Then for any finite dimensional $\Fg_1\bowtie \Fg_2$-module $V$ we have
\begin{equation}
HP^\bullet(R(\Fg_2)\acl U(\Fg_1),\; ^\s{V}_\d)\;\;\cong \bigoplus_{i=\bullet \,{\rm mod \,2}} H^i(\Fg_1\bowtie\Fg_2,\Fh,V).
\end{equation}
\end{corollary}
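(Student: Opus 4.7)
The plan is to deduce this corollary as a direct application of Theorem \ref{Theorem-main} with $\Fc := R(\Fg_2)$. Accordingly, I will verify the three hypotheses of that theorem: (i) $R(\Fg_2)$ is a $\Fg_1$-Hopf algebra; (ii) $R(\Fg_2)$ is $(\Fg_1,\Fg_2)$-related; and (iii) $\Fg_2 = \Fh \ltimes \Fl$ is a $R(\Fg_2)$-Levi decomposition. Given that all three hypotheses are already essentially recorded earlier in the paper, this will be a short bookkeeping proof rather than a computational one.

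First, hypothesis (i) is exactly the content of Proposition \ref{Proposition-matched-Lie-Hopf-Lie}: for any matched pair of Lie algebras $(\Fg_1,\Fg_2)$, the Hopf algebra $R(\Fg_2)$ is canonically a $\Fg_1$-Hopf algebra via the action \eqref{aux-action-U-to-F} of $U(\Fg_1)$ on $R(\Fg_2)$ and the coaction \eqref{aux-g-coaction} of $R(\Fg_2)$ on $\Fg_1$. In particular, the assumption on the action of $\Fh\subseteq \Fg_2$ on $\Fg_1$ being by derivations is built into the matched pair structure, so the action of $\Fh$ on $\Fg_1$ descends from that of $U(\Fg_2)$.

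Second, to verify hypothesis (ii), I would show that the evaluation pairing $\langle f, u'\rangle := f(u')$ between $R(\Fg_2)$ and $U(\Fg_2)$ satisfies \eqref{g-balenced} and \eqref{g-cobalenced}. The balancing condition \eqref{g-balenced}, namely $\langle u', u \rt f\rangle = \langle u' \lt u, f\rangle$, is immediate from the very definition \eqref{aux-action-U-to-F} of the $U(\Fg_1)$-action on $R(\Fg_2)$. For the compatibility condition \eqref{g-cobalenced}, $u\ns{0}\langle u', u\ns{1}\rangle = u' \rt u$, it is enough to check it on Lie algebra generators: by the definition \eqref{aux-g-coaction} of the coaction on $\Fg_1$ in terms of the matrix coefficients $f_i^j$, one has $X_i\ns{0}\langle u', X_i\ns{1}\rangle = X_j f_i^j(u') = u' \rt X_i$, and a multiplicative induction using \eqref{aux-matched-pair-4} extends the identity to all of $U(\Fg_1)$.

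Third, for hypothesis (iii), Proposition \ref{Proposition-R(g)-cohomology} already establishes that whenever $\Fg_2 = \Fh \ltimes \Fl$ is a Levi decomposition, the comparison map $\Dc_{\rm Alg} = \t_{R(\Fg_2),U(\Fg_2),\Fl,0}$ induces an isomorphism on representative cohomology, which is precisely the statement that $\Fg_2=\Fh\ltimes\Fl$ is a $R(\Fg_2)$-Levi decomposition in the sense of the paragraph preceding Theorem \ref{Theorem-F-Levi} (applied now with coefficients in an arbitrary finite dimensional $\Fg_1\bowtie\Fg_2$-module $V$).

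With the three hypotheses verified, Theorem \ref{Theorem-main} yields
\begin{equation}
HP^\bullet(R(\Fg_2)\acl U(\Fg_1),\;{}^\s V_\d) \;\cong\; \bigoplus_{i = \bullet\;\mathrm{mod}\,2} H^i(\Fg_1\bowtie\Fg_2,\Fh,V),
\end{equation}
which is the claim. The only conceivable subtlety, and what I would treat as the main obstacle, is that Proposition \ref{Proposition-R(g)-cohomology} is stated for finite dimensional modules over $\Fg_2$; one must therefore ensure that the restriction of $V$ to $\Fg_2 \subset \Fg_1\bowtie\Fg_2$ is locally finite (so as to correspond to an $R(\Fg_2)$-comodule in the sense of Proposition \ref{proposition-module-induced-module-Lie-algebras}), which is automatic for finite dimensional $V$ as assumed in the statement.
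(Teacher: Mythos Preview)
Your proposal is correct and follows essentially the same route as the paper: both verify the three hypotheses of Theorem \ref{Theorem-main} with $\Fc = R(\Fg_2)$, using Proposition \ref{Proposition-matched-Lie-Hopf-Lie} for the $\Fg_1$-Hopf structure, the evaluation pairing for $(\Fg_1,\Fg_2)$-relatedness, and the Levi decomposition result for $R(\Fg_2)$. The only cosmetic difference is that you cite Proposition \ref{Proposition-R(g)-cohomology} directly for hypothesis (iii), whereas the paper restates its proof inline (passing to the simply connected Lie group $G_2$ of $\Fg_2$ and using that $\t_{R(G_2),U(\Fg_2),\Fl,0}$ factors through $\t_{R(\Fg_2),U(\Fg_2),\Fl,0}$); your citation is arguably cleaner.

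One small inaccuracy to fix: your remark that ``the assumption on the action of $\Fh\subseteq \Fg_2$ on $\Fg_1$ being by derivations is built into the matched pair structure'' is not correct. In a matched pair the action of $\Fg_2$ on $\Fg_1$ is generally \emph{not} by derivations (cf.\ the identity \eqref{aux-action-V-on-bracket}); that $\Fh$ acts by derivations is a genuine extra hypothesis, explicitly assumed in both the corollary and Theorem \ref{Theorem-main}. This does not affect your argument, since the hypothesis is part of the statement you are proving, but the parenthetical should be dropped.
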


\begin{proof}
We prove that the criteria of Theorem \ref{Theorem-main} is satisfied for $\Fc:=R(\Fg_2)$, which is a $\Fg_1$-Hopf algebra by Proposition \ref{Proposition-matched-Lie-Hopf-Lie}.

\medskip

We know that $\Fc$ and $U(\Fg_2)$ are in a Hopf pairing via evaluation. This pairing is by definition $U(\Fg_1)$-balanced and it is evident that the coaction of $R(\Fg_2)$ on $U(\Fg_1)$ is compatible with the pairing. So $R(\Fg_2)$ is $(\Fg_1,\Fg_2)$-related.

\medskip

Letting $G_2$ to be the simply connected Lie group of $\Fg_2$, the Levi decomposition $\Fg_2=\Fh\ltimes \Fl$ induces a nucleus decomposition $G_2 = H \ltimes L$. Since $G_2$ is simply connected, the representations of $\Fg_2$ and $G_2$ coincide and any injective resolution of $\Fg_2$ is induced by a representatively injective resolution of $G_2$, \ie the obvious map $H^\bullet_{\rm rep}(G_2,V) \to H^\bullet(\Fg_2,V)$ is surjective.

\medskip

Finally, since the isomorphism $\t_{R(G_2),U(\Fg_2),\Fl,0}:H^\bullet_{\rm rep}(G_2,V) \to H^\bullet(\Fg_2,\Fh,V)$ factors through $\t_{R(\Fg_2),U(\Fg_2),\Fl,0}:H^\bullet_{\rm rep}(\Fg_2,V) \to H^\bullet(\Fg_2,\Fh,V)$, the latter is an isomorphism. Therefore, $\Fg_2=\Fh\ltimes \Fl$ is a $R(\Fg_2)$-Levi decomposition.
\end{proof}

\begin{corollary}
Let $(G_1,G_2)$ be a matched pair of affine algebraic groups, $G_2= G_2^{\rm red}\rtimes G_2^{\rm u}$ be a Levi decomposition of $G_2$ and let $\Fg_1$, $\Fg_2$, $\Fg_2^{\rm red}$ and $\Fg_2^{\rm u}$ be the Lie algebras of $G_1$, $G_2$, $G_2^{\rm red}$ and $G_2^{\rm u}$ respectively. Let us also assume that $\Fg_2^{\rm red}$ is $\Fg_1$ invariant and the natural action of $\Fg_2^{\rm red}$ on $\Fg_1$ is given by derivations. Then for any finite dimensional polynomial representation $V$ of $G_1\bowtie G_2$, we have
\begin{equation}
HP^\bullet(\mathscr{P}(G_2)\acl U(\Fg_1),\; ^\s{V}_\d)\;\;\cong \bigoplus_{i=\bullet\,{\rm mod\,2}} H^i(\Fg_1\bowtie\Fg_2,\Fg_2^{\rm red},V).
\end{equation}
\end{corollary}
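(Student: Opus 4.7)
The plan is to verify that $\Fc := \mathscr{P}(G_2)$ satisfies the three hypotheses of Theorem \ref{Theorem-main} with respect to the decomposition $\Fg_2 = \Fg_2^{\rm red} \ltimes \Fg_2^{\rm u}$, and then invoke that theorem directly. Once the hypotheses are in place, the conclusion will be immediate, so the work lies in assembling the inputs from the preceding results.

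First, I would note that $\mathscr{P}(G_2)$ is a $\Fg_1$-Hopf algebra: this is precisely the content of Theorem \ref{theorem-affine-algebraic-Lie-Hopf}, which equips $\mathscr{P}(G_2)$ with the $U(\Fg_1)$-action derived from the matched pair action $\lt: G_2 \times G_1 \to G_2$ and with the coaction $\Db_{\rm Pol}$ of \eqref{aux-polynomial-coaction}, and establishes that $(\mathscr{P}(G_2), U(\Fg_1))$ is a matched pair of Hopf algebras.

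Second, I would check that $\mathscr{P}(G_2)$ is $(\Fg_1, \Fg_2)$-related. The natural Hopf pairing $\langle f, u'\rangle := (u' \cdot f)(e_2)$ between $\mathscr{P}(G_2)$ and $U(\Fg_2)$ used in Theorem \ref{theorem-cohomology-P(G)} is non-degenerate since $\mathscr{P}(G_2)$ separates points of $G_2$ and $U(\Fg_2)$ acts faithfully on $\mathscr{P}(G_2)$ by left-invariant derivations. The $U(\Fg_1)$-balance condition \eqref{g-balenced} amounts to verifying that $\langle u \rt f, u'\rangle = \langle f, u' \lt u\rangle$; on the level of Lie algebra generators this is exactly the infinitesimal form of the matched pair relation $\psi \rt \vp = (\psi \rt \vp)(\psi \lt \vp)$, and then extends to $U(\Fg_2)$ by induction using \eqref{aux-mutual-pair-1}. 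The compatibility \eqref{g-cobalenced} between the $U(\Fg_2)$-action on $U(\Fg_1)$ and the $\mathscr{P}(G_2)$-coaction on $U(\Fg_1)$ follows directly from the definition \eqref{aux-polynomial-matrix-coefficients} of the matrix coefficients $f_i^j \in \mathscr{P}(G_2)$, since by construction $\langle f_i^j, u'\rangle$ encodes the coefficient of $X_j$ in $u' \rt X_i$.

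Third, I would verify that $\Fg_2 = \Fg_2^{\rm red} \ltimes \Fg_2^{\rm u}$ is a $\mathscr{P}(G_2)$-Levi decomposition. This is essentially Theorem \ref{theorem-cohomology-P(G)}: the map $\Dc_{\rm Pol}$ of \eqref{aux-map-D-polynomial} coincides with $\t_{\mathscr{P}(G_2), U(\Fg_2), \Fg_2^{\rm u}, 0}$ under the above pairing, and is shown there to be a quasi-isomorphism between the Hochschild complex of the coalgebra $\mathscr{P}(G_2)$ with coefficients in any polynomial $\mathscr{P}(G_2)$-comodule and the relative Lie algebra cochain complex of $\Fg_2^{\rm red} \subseteq \Fg_2$. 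This is exactly the defining property of a $\mathscr{P}(G_2)$-Levi decomposition.

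With these three ingredients secured, and the standing hypotheses of the corollary guaranteeing that $\Fg_2^{\rm red}$ is $\Fg_1$-invariant and acts on $\Fg_1$ by derivations, Theorem \ref{Theorem-main} applies verbatim to give
\begin{equation}
HP^\bullet(\mathscr{P}(G_2) \acl U(\Fg_1),\,{}^\s V_\d) \cong \bigoplus_{i = \bullet \;{\rm mod}\, 2} H^i(\Fg_1 \bowtie \Fg_2, \Fg_2^{\rm red}, V).
\end{equation}
The main obstacle I foresee is the verification of the two pairing compatibilities \eqref{g-balenced} and \eqref{g-cobalenced}; although conceptually routine, they require passing carefully between the group-level matched pair structure and its infinitesimal dualization on $\mathscr{P}(G_2)$, and care must be taken with the fact that in the affine algebraic setting one works with $\rho^{\circ}$ as in \eqref{aux-polynomial-action} rather than a bare exponential, so the inductive extension of balance from $\Fg_2$ to $U(\Fg_2)$ must be justified using the comodule algebra structure of $\mathscr{P}(G_2)$ rather than a one-parameter subgroup argument.
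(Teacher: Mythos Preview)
Your proposal is correct and follows essentially the same approach as the paper: verify that $\mathscr{P}(G_2)$ is a $\Fg_1$-Hopf algebra (via Theorem~\ref{theorem-affine-algebraic-Lie-Hopf}), that it is $(\Fg_1,\Fg_2)$-related through the pairing $\langle f, u'\rangle = (u'\cdot f)(e)$, and that $\Fg_2 = \Fg_2^{\rm red}\ltimes \Fg_2^{\rm u}$ is a $\mathscr{P}(G_2)$-Levi decomposition, then invoke Theorem~\ref{Theorem-main}. The paper's argument is terser---it declares the balance and compatibility conditions ``obvious'' and ``by the very definition''---while you spell out how \eqref{g-balenced} and \eqref{g-cobalenced} trace back to the matched pair structure and the definition of the matrix coefficients; your caution about extending balance from $\Fg_2$ to $U(\Fg_2)$ is well-placed but not ultimately an obstruction.
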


\begin{proof}
Firstly, $\mathscr{P}(G_2)$ is a $\Fg_1$-Hopf algebra by Theorem \ref{theorem-affine-algebraic-Lie-Hopf}.

\medskip

Secondly, the Hopf pairing $\langle f\,,\, u'\rangle =(u' \cdot f)(e)$ between $\mathscr{P}(G_2)$ and $U(\Fg_2)$ is obviously $U(\Fg_1)$-balanced and compatible with the $\mathscr{P}(G_2)$-coaction on $U(\Fg_1)$ by the very definition of this coaction. Hence $\mathscr{P}(G_2)$ is  $(\Fg_1,\Fg_2)$-related.

\medskip

Finally, the map $\t_{\mathscr{P}(G_2),U(\Fg_2),\Fg_2^{\rm u},0}:H^\bullet_{\rm pol}(G_2,V) \to H^\bullet(\Fg_2,\Fg_2^{\rm red},V)$ is an isomorphism, \cite[Theorem 2.2]{KumaNeeb06}. Hence $\Fg_2=\Fg_2^{\rm red}\ltimes \Fg_2^{\rm u}$ is a $\mathscr{P}(G_2)$-Levi decomposition.
\end{proof}

\section{Hopf-cyclic cohomology with coefficients}

In this section our aim is to compute the Hopf-cyclic cohomology of the bicrossed product Hopf algebra $R(\Fg_2)\acl U(\Fg_1)$ with coefficients in an SAYD module associated to both a representation and a corepresentation of the Lie algebra $\Fg_1 \bowtie \Fg_2$.

\medskip

We first compute the periodic Hopf-cyclic cohomology of the universal enveloping algebra $U(\Fg)$ with coefficients in a SAYD module over the algebra $\Fg$. This is a generalization of \cite[Proposition 7]{ConnMosc98}. In the second step, using the Poincar\'e dual complex we compute the periodic Hopf-cyclic cohomology of the Hopf algebra $R(\Fg)$ of the representative functions with coefficients in a unimodular SAYD module over the Lie algebra $\Fg$. This result generalizes Theorem \ref{theorem-cohomology-R(g)}. Finally we generalize Theorem \ref{Theorem-main} and compute the periodic Hopf-cyclic cohomology of $R(\Fg_2)\acl U(\Fg_1)$ with coefficients in a unimodular SAYD module over the Lie algebra $\Fg_1 \bowtie \Fg_2$.

\medskip

In this general case, our van Est isomorphism is no longer on the level of complexes. Instead, it is on the level of $E_1$-terms of the spectral sequences associated to certain filtrations of the corresponding bicomplexes.

\subsection{Cyclic cohomology of Lie algebras}

In this subsection we show that for $V$, a SAYD module over a Lie algebra $\Fg$, the periodic cyclic cohomology of $\Fg$ with coefficients in $V$ and the periodic cyclic cohomology of the universal enveloping algebra $U(\Fg)$ with coefficients in the corresponding SAYD over $U(\Fg)$ are isomorphic.

\medskip

If $V$ is a SAYD module over a Lie algebra $\Fg$, then we have the differential complex $(C(\Fg,V),\p_{\rm CE}+\p_{\rm K})$ where
\begin{equation}\label{aux-complex-cyclic-Lie-algebra}
C(\Fg,V)= \bigoplus_{n \geq 0}C_n(\Fg,V), \quad C_n(\Fg,V):= \wedge^n \Fg\ot V
\end{equation}
together with the Chevalley-Eilenberg boundary
\begin{equation}
\p_{\rm CE}:C_{n+1}(\Fg,V) \to C_n(\Fg,V),
\end{equation}
and the Koszul coboundary
\begin{equation}
\partial_{\rm K}:C_n(\Fg,V) \to C_{n+1}(\Fg,V), \quad Y_1 \wdots Y_n \ot v \mapsto  v\nsb{-1} \wg Y_1 \wdots Y_n \ot v\nsb{0}.
\end{equation}

\begin{proposition}\label{7}
The space $(C_{\bullet}(\Fg,V), \p_{\rm CE}+\p_{\rm K})$ is a differential complex if and only if $V$ is stable  right $\widetilde{\Fg}$-module.
\end{proposition}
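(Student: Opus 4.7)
The operator $d:=\p_{\rm CE}+\p_{\rm K}$ is inhomogeneous of the form (degree $-1$)$+$(degree $+1$), so $d^{\,2}=\p_{\rm CE}^{\,2}+(\p_{\rm CE}\p_{\rm K}+\p_{\rm K}\p_{\rm CE})+\p_{\rm K}^{\,2}$, and the three summands are of degrees $-2$, $0$, $+2$ respectively; they vanish independently. The plan is to handle each piece separately. First, $\p_{\rm CE}^{\,2}=0$ follows from the standing hypothesis that $V$ is a right $\Fg$-module, since $\p_{\rm CE}$ is the Chevalley--Eilenberg boundary. Second, using the defining identity $v\nsb{-2}\wg v\nsb{-1}\ot v\nsb{0}=0$ of a left $\Fg$-comodule, one verifies
\begin{equation*}
\p_{\rm K}^{\,2}(Y_1\wdots Y_n\ot v)=v\nsb{-2}\wg v\nsb{-1}\wg Y_1\wdots Y_n\ot v\nsb{0}=0
\end{equation*}
after swapping the two coaction factors and picking up a sign. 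Thus the content of the proposition lives entirely in the cross-term $\p_{\rm CE}\p_{\rm K}+\p_{\rm K}\p_{\rm CE}=0$.

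The main computational step is therefore to evaluate $(\p_{\rm CE}\p_{\rm K}+\p_{\rm K}\p_{\rm CE})(Y_1\wdots Y_n\ot v)$ and to reorganize the resulting terms. When $\p_{\rm CE}$ is applied to $v\nsb{-1}\wg Y_1\wdots Y_n\ot v\nsb{0}$, the summand in which $v\nsb{-1}$ is contracted via the action yields $\sum_i(-1)^{i+1}Y_1\wdots\hat Y_i\wdots Y_n\ot(v\nsb{0}\cdot v\nsb{-1})\cdot(\cdots)$ type terms, whereas the summand in which $v\nsb{-1}$ is bracketed with some $Y_i$ produces a commutator. Comparing these with $\p_{\rm K}\p_{\rm CE}(Y_1\wdots Y_n\ot v)$, the output bundles into two kinds of terms: those proportional to $v\nsb{0}\cdot v\nsb{-1}$ (obstructions to \emph{stability}), and those of the form $(v\cdot Y_i)\nsb{-1}\ot(v\cdot Y_i)\nsb{0}-v\nsb{-1}\ot v\nsb{0}\cdot Y_i-[v\nsb{-1},Y_i]\ot v\nsb{0}$ (obstructions to the \emph{AYD condition}). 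So the proof reduces to carefully booking these two families.

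For the forward implication, assume $V$ is a stable right-left AYD module over $\Fg$ (equivalently, by Proposition~\ref{23}, a stable right $\widetilde{\Fg}$-module). Then each occurrence of $v\nsb{0}\cdot v\nsb{-1}$ vanishes by stability and each AYD bracket expression vanishes by the AYD compatibility, whence $d^{\,2}=0$. For the converse, I would test $d^{\,2}=0$ against two families of elements: evaluating on $v\in V=C_0(\Fg,V)$ gives directly $v\nsb{0}\cdot v\nsb{-1}=0$, which is stability; evaluating on $Y\ot v\in C_1(\Fg,V)$ and using the stability already obtained isolates the AYD identity $(v\cdot Y)\nsb{-1}\ot(v\cdot Y)\nsb{0}=v\nsb{-1}\ot v\nsb{0}\cdot Y+[v\nsb{-1},Y]\ot v\nsb{0}$. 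Applying Proposition~\ref{23} finishes the equivalence.

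The main obstacle is the bookkeeping in the middle step: the identities $\p_{\rm CE}\p_{\rm K}+\p_{\rm K}\p_{\rm CE}=0$ mixes signs coming from anti-symmetrization of the Koszul factor with signs coming from the Chevalley--Eilenberg formula, and one must check that the ``action'' contributions regroup cleanly into an AYD expression, leaving only the stability terms as a separate summand. Once the signs are aligned (by pulling $v\nsb{-1}$ to the front position and tracking the $(-1)^i$ permutation cost), the identification with the conditions of Proposition~\ref{23} is automatic.
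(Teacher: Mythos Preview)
Your proposal is correct and follows essentially the same approach as the paper's proof: both compute the anticommutator $\p_{\rm CE}\p_{\rm K}+\p_{\rm K}\p_{\rm CE}$ on a general chain, then test the resulting obstruction on degree-$0$ elements $v$ to extract stability and on degree-$1$ elements $Y\ot v$ to extract the remaining compatibility. The only cosmetic difference is that you package the compatibility in AYD/coaction language and then invoke Proposition~\ref{23}, whereas the paper writes the obstruction directly in the dual $S(\Fg^\ast)$-module notation $(v\lt\theta^i)\cdot Y_j-(v\cdot Y_j)\lt\theta^i$ together with the bracket term, which is already the $\widetilde{\Fg}$-module condition; also, the paper leaves the vanishing of $\p_{\rm CE}^{\,2}$ and $\p_{\rm K}^{\,2}$ implicit, while you spell out the degree decomposition.
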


\begin{proof}
On one hand we have
\begin{align}
\begin{split}
& \p_{\rm CE}(\p_{\rm K}(Y_0 \wdots Y_n \ot v)) = \sum_i \p_{\rm CE}(X_i \wg Y_0 \wdots Y_n \ot v \lt \theta^i)  \\
& =\sum_i Y_0 \wdots Y_n \ot (v \lt \theta^i) \cdot X_i  \\
& +\sum_{i,j} (-1)^{j+1} X_i \wg Y_0 \wdots \widehat{Y}_j \wdots Y_n \ot (v \lt \theta^i) \cdot Y_j  \\
& +\sum_{i,j} (-1)^{j+1} [X_i,Y_j] \wg Y_0 \wdots \widehat{Y}_j \wdots Y_n \ot v \lt \theta^i  \\
& +\sum_{i,j} (-1)^{j+k} [Y_j,Y_k]\wg X_i \wg Y_0 \wdots \widehat{Y}_j \wdots \widehat{Y}_k \wdots Y_n \ot v \lt \theta^i,
\end{split}
\end{align}
and on the other hand
\begin{align}
\begin{split}
& \p_{\rm K}(\p_{\rm CE}(Y_0 \wdots Y_n \ot v)) = \sum_{j} (-1)^j \p_{\rm K}(Y_0 \wdots \widehat{Y}_j \wdots Y_n \ot v \cdot Y_j)  \\
& +\sum_{j,k} (-1)^{j+k} \p_{\rm K}([Y_j,Y_k] \wg Y_0 \wdots \widehat{Y}_j \wdots \widehat{Y}_k \wdots Y_n \ot v)  \\
& =\sum_{i,j} (-1)^j X_i \wg Y_0 \wdots \widehat{Y}_j \wdots Y_n \ot (v \wg Y_j) \lt \theta^i  \\
& +\sum_{i,j,k} (-1)^{j+k+1} [Y_j,Y_k] \wg X_i \wg Y_0 \wdots \widehat{Y}_j \wdots \widehat{Y}_k \wdots Y_n \ot v \lt \theta^i.
\end{split}
\end{align}
Therefore, $C(\Fg,V)$ is a differential complex if and only if
\begin{align}\label{8}
\begin{split}
& (\p_{\rm CE} \circ \p_{\rm K} + \p_{\rm K} \circ \p_{\rm CE})(Y_0 \wdots Y_n \ot v)  \\
& =\sum_i Y_0 \wdots Y_n \ot (v \lt \theta^i) \cdot X_i  \\
& +\sum_{i,j} (-1)^{j+1} X_i \wg Y_0 \wdots \widehat{Y}_j \wdots Y_n \ot [(v \lt \theta^i) \cdot Y_j - (v \cdot Y_j) \lt \theta^i]  \\
& +\sum_{i,j} (-1)^{j+1} [X_i,Y_j] \wg Y_0 \wdots \widehat{Y}_j \wdots Y_n \ot v \lt \theta^i = 0.
\end{split}
\end{align}
Now, if we assume that  $(C^{\bullet}(\Fg,V),\p_{\rm CE}+\p_{\rm K} )$ is a differential complex, then we obtain the stability condition \eqref{aux-unimodular-stable} evaluating   \eqref{8} on $1 \ot v$. Similarly we observe that $V$ is a $\widetilde{\Fg}$-module by evaluating \eqref{8} on $Y \ot v$.

\medskip

The reverse argument is straightforward.
\end{proof}

\begin{definition}
Let $\Fg$ be a Lie algebra and $V$ be a right-left SAYD module over $\Fg$. We call the cohomology of the  complex $(C(\Fg,V), \p_{\rm CE} + \p_{\rm K})$ the  cyclic   cohomology of the Lie algebra $\Fg$ with coefficients in the SAYD module $V$, and denote it by  $HC^{\bullet}(\Fg,V)$. Similarly we denote its periodic cyclic cohomology by $HP^{\bullet}(\Fg,V)$.
\end{definition}

The following analogue of \cite[Proposition 7]{ConnMosc98} is the main result in this subsection.

\begin{theorem}\label{g-U(g) spectral sequence}
Let $\Fg$ be a Lie algebra and $V$ be a SAYD module over the Lie algebra $\Fg$ with locally conilpotent coaction. Then the periodic cyclic cohomology of $\Fg$ with coefficients in  $V$ is the same as the periodic cyclic cohomology of $U(\Fg)$ with coefficients in the corresponding SAYD module $V$ over $U(\Fg)$. In short,
\begin{equation}
HP^\bullet(\Fg, V) \cong HP^\bullet(U(\Fg), V).
\end{equation}
\end{theorem}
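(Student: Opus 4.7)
The plan is to establish a chain-level comparison between the mixed complex computing $HP^\bullet(U(\Fg),V)$ and the mixed complex $(C_\bullet(\Fg,V),\partial_{\rm CE}+\partial_{\rm K})$ via an antisymmetrization map, and then pass to periodic cyclic cohomology. First, I would introduce the antisymmetrization
\begin{equation*}
\alpha_n: \wedge^n\Fg\ot V\longrightarrow V\ot U(\Fg)^{\ot n},\qquad \alpha_n(X_1\wdots X_n\ot v)=\tfrac{1}{n!}\sum_{\mu\in S_n}(-1)^\mu\, v\ot X_{\mu(1)}\odots X_{\mu(n)},
\end{equation*}
which is the natural analogue of \eqref{antsym1}, and check directly that $b\circ\alpha_n=\alpha_{n+1}\circ\partial_{\rm CE}$: the $\p_0$- and $\p_{n+1}$-face contributions surviving antisymmetrization produce the right-action term of $\partial_{\rm CE}$ (using that $V$ is a right $U(\Fg)$-module through the induced SAYD structure from Proposition \ref{proposition-g-AYD-U(g)-AYD}), while the interior $\p_i$-faces, after antisymmetrization over the primitive elements $X_i\in\Fg$, collapse to the bracket contributions of $\partial_{\rm CE}$.

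Second, I would verify the compatibility on the cyclic side, namely $B\circ\alpha_n=\alpha_{n+1}\circ\partial_{\rm K}$. Writing the $U(\Fg)$-coaction produced by Proposition \ref{proposition-construct-U(g)-coaction} as $v\mapsto v\snsb{-1}\ot v\snsb{0}$, the only term in $B(v\ot X_{\sigma(1)}\odots X_{\sigma(n)})$ that does not vanish after antisymmetrization is the one coming from placing $v\nsb{-1}\in\Fg$ (which is $\pi(v\snsb{-1})$) in front of the $X_i$'s via $\tau\circ\sigma_n$; all the other cyclic rotations bring in elements of $U_{\ge 2}(\Fg)$ that vanish under antisymmetrization by the symmetry of $\alpha_{n+1}$. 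This is the main obstacle of the argument: one must unwind carefully the interplay between the cyclic operator, the antipode, and the lifted coaction, and use the SAYD compatibility as well as the stability of $V$ (which translates between $\Fg$-level and $U(\Fg)$-level by Proposition \ref{proposition-g-AYD-U(g)-AYD}) to match the signs and recover precisely $v\nsb{-1}\wg X_1\wdots X_n\ot v\nsb{0}$.

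Third, to upgrade $\alpha_\bullet$ to a quasi-isomorphism I would use the PBW filtration $F_pU(\Fg)=U_p(\Fg)$ and the induced decreasing filtration of $V\ot U(\Fg)^{\ot n}$ by $\sum_{k_1+\dots+k_n\le N-n}V\ot F_{k_1}U(\Fg)\odots F_{k_n}U(\Fg)$; on the associated graded one recovers $V\ot S(\Fg)^{\ot n}$ with a Hochschild-type coboundary whose cohomology, by a standard Hochschild-Kostant-Rosenberg argument, is concentrated in antisymmetric degree, i.e. equals $\wedge^n\Fg\ot V$. The induced differential on $E_1$ is then identified with $\partial_{\rm CE}$ by the computation in the first step, so $\alpha_\bullet$ is a quasi-isomorphism of Hochschild complexes. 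Passing to mixed complexes, the compatibility $B\alpha=\alpha\partial_{\rm K}$ shows that $\alpha_\bullet$ is a map of mixed complexes, and by Goodwillie's rigidity of $HP$ under quasi-isomorphisms of mixed complexes it induces the desired isomorphism
\begin{equation*}
HP^\bullet(\Fg,V)\;\cong\;HP^\bullet(U(\Fg),V),
\end{equation*}
the local conilpotency of the $\Fg$-coaction being used to guarantee that the lift of the coaction from $\Fg$ to $U(\Fg)$ provided by Proposition \ref{proposition-construct-U(g)-coaction} is well defined and that Proposition \ref{7} applies so that the left-hand side is in fact a cohomology.
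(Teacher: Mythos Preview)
Your proposal has the differentials paired the wrong way, and even after correcting this, the chain-level identities you claim do not hold.

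First, the degree mismatch: $b$ raises degree and $B$ lowers degree, while $\partial_{\rm CE}$ lowers degree and $\partial_{\rm K}$ raises degree. So the identities $b\circ\alpha_n=\alpha_{n+1}\circ\partial_{\rm CE}$ and $B\circ\alpha_n=\alpha_{n+1}\circ\partial_{\rm K}$ cannot possibly hold; neither side has the same source and target. The correct pairing, as in \cite[Proposition~7]{ConnMosc98} and the paper's equation following \eqref{antsym1}, is $b\circ\alpha=0$ and $B\circ\alpha=\alpha\circ\partial_{\rm CE}$ in the \emph{trivial-coaction} case. Your description of how the faces $\p_0,\p_{n+1}$ produce the action term of $\partial_{\rm CE}$, and how $B$ places $v\nsb{-1}$ in front of the $X_i$'s, reflects this confusion: those are not what $b$ and $B$ do.

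Second, and more fundamentally, once the coaction on $V$ is nontrivial, no exact chain-level identity of the form $b\circ\alpha=\alpha\circ\partial_{\rm K}$ is available. The last coface $\p_{n+1}(v\ot\widetilde{X})=v\snsb{0}\ot\widetilde{X}\ot v\snsb{-1}$ uses the full $U(\Fg)$-coaction of Proposition~\ref{proposition-construct-U(g)-coaction}, whose value $v\snsb{-1}$ has components in every $U^k(\Fg)$, not just in $\Fg$; hence $b\circ\alpha_n$ does not land in the image of $\alpha_{n+1}$. Likewise the cyclic operator $\tau$ entering $B$ involves this full coaction. So $\alpha$ is simply not a map of mixed complexes in the general SAYD case, and your Step~3 (invoking Goodwillie rigidity for mixed-complex quasi-isomorphisms) has no input to act on.

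The paper's proof avoids this obstacle altogether. It does not attempt to compare the two complexes directly. Instead it uses the increasing filtration $F_pV$ of \cite[Lemma~6.2]{JaraStef} (available precisely because the $\Fg$-coaction is locally conilpotent) and shows that all four operators $\partial_{\rm CE},\partial_{\rm K},b,B$ respect it. On the quotients $F_jV/F_{j-1}V$ the coaction is trivial by construction, so $\partial_{\rm K}$ vanishes on one side and the $U(\Fg)$-coaction reduces to $v\mapsto 1\ot v$ on the other; one is then exactly in the setting of \cite[Proposition~7]{ConnMosc98}, and the classical antisymmetrization gives a quasi-isomorphism on the $E_1$-pages. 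The isomorphism on $HP$ follows by comparison of spectral sequences. Your PBW-filtration idea plays no role; the relevant filtration is on the coefficient module $V$, not on $U(\Fg)$.
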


\begin{proof}
The total coboundary of $C(\Fg, V)$ is $\p_{\rm CE} + \p_{\rm K}$ while the total coboundary of the complex $C(U(\Fg), V)$ computing the  cyclic cohomology of $U(\Fg)$ is $B + b$.

\medskip

The filtration on $V$ via \cite[Lemma 6.2]{JaraStef} induces a filtration on each of these bicomplexes. Hence, there are spectral sequences associated to these filtrations and we compare the $E_1$-terms of these spectral sequences. To this end, we first show that the coboundaries respect this filtration.

\medskip

By \cite[Lemma 6.2]{JaraStef}, each $F_pV$ is a submodule of $V$. Thus, the Lie algebra homology boundary $\p_{\rm CE}$ respects the filtration. As for $\p_{\rm K}$ we notice that
\begin{equation}
\p_{\rm K}(X_1 \wdots X_n \ot v) = v\nsb{-1} \wg X_1 \wdots X_n \ot v\nsb{0}
\end{equation}
for any $v \in F_pV$. Since
\begin{equation}
\Db(v) = v\snsb{-1} \ot v\snsb{0} = 1 \ot v + v\nsb{-1} \ot v\nsb{0} + \sum_{k \geq 2}\theta_k^{-1}(v\nsb{-k} \cdots v\nsb{-1}) \ot v\nsb{0}
\end{equation}
we observe  that $v\nsb{-1} \wg X_1 \wdots X_n \ot v\nsb{0} \in \wedge^{n+1}\Fg \ot F_{p-1}V$.  Because  $F_{p-1}V \subseteq F_pV$, we conclude that  $\p_{\rm K}$ respects the filtration.

\medskip

Since the Hochschild coboundary $b:C^n(U(\Fg),V)\ra C^{n+1}(U(\Fg),V)$ is the alternating sum of the coface operators, it suffices to check that each coface operator preserves the filtration. This  is evident for all cofaces except the last one. In order to deal with the last coface operator we take  $v \in F_pV$ and  write
\begin{equation}
v\snsb{-1} \ot v\snsb{0} = 1 \ot v + v\ns{-1} \ot v\ns{0}, \qquad v\ns{-1} \ot v\ns{0} \in \Fg \ot F_{p-1}V.
\end{equation}
We then have
\begin{equation}
\p_{n+1}(v \ot u^1 \ot \cdots \ot u^n) = v\snsb{0} \ot u^1 \ot \cdots \ot u^n \ot v\snsb{-1} \in F_pV \ot U(\Fg)^{\ot\, n+1}.
\end{equation}
Hence, we can say that $b$ respects the filtration.

\medskip

For the cyclic operator, the result follows once again from the fact that $F_p$ is a $\Fg$-module. Indeed, for $v \in F_pV$ we have
\begin{align}
\begin{split}
& \tau(v \ot u^1 \ot \cdots \ot u^n) = v\snsb{0} \cdot u^1\ps{1} \ot S(u^1\ps{2}) \cdot (u^2 \ot \cdots \ot u^n \ot v\snsb{-1}) \\
& \hspace{4cm} \in F_pV \ot U(\Fg)^{\ot\, n}.
\end{split}
\end{align}

Finally the extra degeneracy operator
\begin{equation}
\sigma_{-1}(v \ot u^1 \ot \cdots \ot u^n) = v \cdot u^1\ps{1} \ot S(u^1\ps{2}) \cdot (u^2 \ot \cdots \ot u^n) \in F_pV \ot U(\Fg)^{\ot\, n}
\end{equation}
preserves the filtration as $F_p$ is a $\Fg$-module and the coaction preserves the filtration. As a result, we can say that the Connes' boundary $B:C^n(U(\Fg),V) \to C^{n-1}(U(\Fg),V)$ respects the filtration.

\medskip

Now, the $E_1$-term of the spectral sequence associated to the filtration $(F_pV)_{p \geq 0}$ computing the periodic cyclic cohomology of the Lie algebra $\Fg$ is known to be of the form
\begin{equation}
E_1^{j,\,i}(\Fg) = H^{i+j}(F_{j+1}C(\Fg, V)/F_jC(\Fg, V), [\p_{\rm CE} + \p_{\rm K}])
\end{equation}
where, $[\p_{\rm CE} + \p_{\rm K}]$ is the induced coboundary operator on the quotient complex. By the obvious identification
\begin{equation}
F_{j+1}C(\Fg,V)/F_jC(\Fg,V) \cong C(\Fg,F_{j+1}V / F_jV) = C(\Fg,(V / F_jV)^{\rm co\,\Fg}),
\end{equation}
we see that
\begin{equation}
E_1^{j,\,i}(\Fg) = H^{i+j}(C(\Fg,(V / F_jV)^{\rm co\,U(\Fg)}), [0]+[\p_{\rm CE}]),
\end{equation}
as $\p_{\rm K}(F_{j+1}C(\Fg,V)) \subseteq F_jC(\Fg,V)$.

\medskip

 Similarly,
\begin{equation}
E_1^{j,\,i}(U(\Fg)) = H^{i+j}(C(U(\Fg),(V / F_jV)^{\rm co\,U(\Fg)}), [b + B]).
\end{equation}
Finally, by \cite[Proposition 7]{ConnMosc98} a quasi-isomorphism $[\alpha]:E_1^{j,\,i}(\Fg) \to E_1^{j,\,i}(U(\Fg))$, $\forall i,j$ is induced by the anti-symmetrization map
\begin{equation}
\alpha:C(\Fg,(V / F_jV)^{\rm co\,\Fg}) \to C(U(\Fg),(V / F_jV)^{\rm co\,U(\Fg)}).
\end{equation}
\end{proof}

\begin{remark}{\rm
In case the $\Fg$-module $V$ has a trivial $\Fg$-comodule structure, the coboundary $\p_{\rm K} = 0$ and
\begin{equation}
HP^{\bullet}(\Fg, V) = \bigoplus_{n = \,\bullet \; {\rm mod\,2}} H_n(\Fg, V).
\end{equation}
In this case, the above Theorem becomes \cite[Proposition 7]{ConnMosc98}.
}\end{remark}

\subsection{Hopf-cyclic cohomology of $R(\Fg)$}

In this subsection, we compute the Hopf-cyclic cohomology of the Hopf algebra $R(\Fg)$ with coefficients in a SAYD module over $\Fg$ in terms of the relative Lie algebra cohomology.

\medskip

Let $V$ be a locally finite $\Fg$-module and locally conilpotent $\Fg$-comodule.  Then similar to \ref{aux-U(g)-dual-coaction}, $V$ has a left $R(\Fg)$-comodule structure $\Db: V \to R(\Fg) \ot V$ defined by
\begin{equation}
v \mapsto v^{\sns{-1}} \ot v^{\sns{0}}, \quad \text{if}\quad v^{\sns{-1}}(u)v^{\sns{0}} = v \cdot u
\end{equation}
for any $v \in V$ and any $u \in U(\Fg)$.

\medskip

On the other hand, the left $U(\Fg)$-comodule structure on $V$ induces a  right $R(\Fg)$-module structure
\begin{align}
\begin{split}
& V \ot R(\Fg) \to V \\
& v \ot f \mapsto v \cdot f := f(v\snsb{-1})v\snsb{0}
\end{split}
\end{align}
just as in \eqref{aux-17}.

\begin{proposition}\label{proposition-AYD-R(g)}
Let $V$ be locally finite as a $\Fg$-module and locally conilpotent as a  $\Fg$-comodule. If $V$ is an AYD module over $\Fg$, then $V$ is an AYD module over $R(\Fg)$.
\end{proposition}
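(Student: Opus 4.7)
The strategy is to first lift the given Lie algebra AYD structure to a $U(\Fg)$-AYD structure, then transport the AYD compatibility across the non-degenerate Hopf pairing between $U(\Fg)$ and $R(\Fg)$.

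First, since the $\Fg$-coaction on $V$ is locally conilpotent, by Proposition \ref{proposition-g-AYD-U(g)-AYD} the right-left $\Fg$-AYD structure lifts to a right-left $U(\Fg)$-AYD structure satisfying
\[
(v\cdot u)\snsb{-1}\ot(v\cdot u)\snsb{0}=S(u\ps{3})v\snsb{-1}u\ps{1}\ot v\snsb{0}\cdot u\ps{2}.
\]
Because the pairing $\langle\;,\;\rangle:R(\Fg)\ot U(\Fg)\to\Cb$ is non-degenerate (by Harish-Chandra, as recalled in the proof of Theorem \ref{theorem-SAYD-over-Lie-Hopf}-style arguments in the text), the right-left AYD condition over $R(\Fg)$, namely
\[
(v\cdot f)^{\sns{-1}}\ot(v\cdot f)^{\sns{0}}=S(f\ps{3})v^{\sns{-1}}f\ps{1}\ot v^{\sns{0}}\cdot f\ps{2},
\]
will be established by showing that the two sides agree after pairing the $R(\Fg)$-component with an arbitrary $u\in U(\Fg)$.

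For the left-hand side, the defining formulas $v^{\sns{-1}}(u)v^{\sns{0}}=v\cdot u$ and $v\cdot f=f(v\snsb{-1})v\snsb{0}$ yield
\[
\langle(v\cdot f)^{\sns{-1}},u\rangle (v\cdot f)^{\sns{0}}=(v\cdot f)\cdot u=f(v\snsb{-1})(v\snsb{0}\cdot u).
\]
For the right-hand side, the Hopf pairing axioms give
\[
\langle S(f\ps{3})v^{\sns{-1}}f\ps{1},u\rangle v^{\sns{0}}\cdot f\ps{2}
=f\ps{1}(u\ps{1})f\ps{3}(S(u\ps{3}))(v\cdot u\ps{2})\cdot f\ps{2},
\]
and then applying the definition of the $R(\Fg)$-action once more produces
\[
f\ps{1}(u\ps{1})f\ps{3}(S(u\ps{3}))\,f\ps{2}\bigl((v\cdot u\ps{2})\snsb{-1}\bigr)(v\cdot u\ps{2})\snsb{0}.
\]
The plan is to substitute the $U(\Fg)$-AYD identity for $(v\cdot u\ps{2})\snsb{-1}\ot(v\cdot u\ps{2})\snsb{0}$ into this expression, expand $f\ps{2}(S(u\ps{?})v\snsb{-1}u\ps{?})$ via the pairing axiom for products in $U(\Fg)$, and then collapse the resulting five-fold convolution on $u$ using $\mu\circ(S\ot\Id)\circ\D=\eta\circ\ve=\mu\circ(\Id\ot S)\circ\D$ together with the cocommutativity of $U(\Fg)$.

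The expected main obstacle is purely combinatorial bookkeeping: after the substitution one obtains a term of the form $\sum f_{(1)}(u_{(1)})f_{(2)}(S(u_{(4)}))f_{(3)}(v\snsb{-1})f_{(4)}(u_{(2)})f_{(5)}(S(u_{(5)}))\,v\snsb{0}\cdot u\ps{3}$ (using $\D^{4}$ on both $f$ and $u$), and one must reassemble the $f_{(i)}$'s into $f(v\snsb{-1})$ while the pairings on the $u$-side must telescope to $\ve(u\ps{1})\ve(u\ps{4})$, leaving only $v\snsb{0}\cdot u$. Cocommutativity of $U(\Fg)$ is exactly what allows the antipode cancellations to match the remaining $u$-factors, and the commutativity of $R(\Fg)$ ensures the reassembly of $f$ is unambiguous. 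Once this identity is established the two sides coincide, and non-degeneracy of the pairing finishes the proof.
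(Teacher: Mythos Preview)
Your proposal is correct and follows essentially the same approach as the paper: lift to a $U(\Fg)$-AYD module via Proposition \ref{proposition-g-AYD-U(g)-AYD}, then verify the $R(\Fg)$-AYD identity by pairing both sides against an arbitrary $u\in U(\Fg)$, substituting the $U(\Fg)$-AYD relation for $(v\cdot u\ps{2})\snsb{-1}\ot(v\cdot u\ps{2})\snsb{0}$, and collapsing the resulting five-fold convolution via the antipode axiom and cocommutativity of $U(\Fg)$. The paper carries out exactly this computation (with a slightly different but equivalent ordering of the $u\ps{i}$'s, harmless by cocommutativity), arriving at $f(v\snsb{-1})\,v\snsb{0}\cdot u$ on both sides.
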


\begin{proof}
Since $V$ is an AYD module over the Lie algebra $\Fg$ with a locally conilpotent $\Fg$-coaction, it is an AYD over $U(\Fg)$.

\medskip

Let $v \in V$, $f \in R(\Fg)$ and $u \in U(\Fg)$. On one side of the AYD condition we  have
\begin{equation}
\Db(v \cdot f)(u) = (v \cdot f) \cdot u = f(v\snsb{-1})v\snsb{0} \cdot u,
\end{equation}
and on the other side,
\begin{align}
\begin{split}
& (S(f\ps{3})v^{\sns{-1}}f\ps{1})(u)v^{\sns{0}} \cdot f\ps{2} = \\
& S(f\ps{3})(u\ps{1})v^{\sns{-1}}(u\ps{2})f\ps{1}(u\ps{3})f\ps{2}((v^{\sns{0}})\snsb{-1})(v^{\sns{0}})\snsb{0} = \\
& f\ps{2}(S(u\ps{1}))v^{\sns{-1}}(u\ps{2})f\ps{1}(u\ps{3}(v^{\sns{0}})\snsb{-1})(v^{\sns{0}})\snsb{0} = \\
& v^{\sns{-1}}(u\ps{2})f(u\ps{3}(v^{\sns{0}})\snsb{-1}S(u\ps{1}))(v^{\sns{0}})\snsb{0} = \\
& f(u\ps{3}(v^{\sns{-1}}(u\ps{2})v^{\sns{0}})\snsb{-1}S(u\ps{1}))(v^{\sns{-1}}(u\ps{2})v^{\sns{0}})\snsb{0} = \\
& f(u\ps{3}(v \cdot u\ps{2})\snsb{-1}S(u\ps{1}))(v \cdot u\ps{2})\snsb{0} = \\
& f(u\ps{3}S(u\ps{2}\ps{3})v\snsb{-1}u\ps{2}\ps{1}S(u\ps{1}))v\snsb{0} \cdot u\ps{2}\ps{2} = \\
& f(u\ps{5}S(u\ps{4})v\snsb{-1}u\ps{2}S(u\ps{1}))v\snsb{0} \cdot u\ps{3} = \\
& f(v\snsb{-1})v\snsb{0} \cdot u,
\end{split}
\end{align}
where we used the AYD condition on $U(\Fg)$ in the sixth equality. This proves that $V$ is an AYD module over $R(\Fg)$.
\end{proof}

If $V$ is a unimodular SAYD module over a Lie algebra $\Fg$, then we have a differential complex $(W(\Fg,V),d_{\rm CE}+d_{\rm K})$, called the perturbed Koszul complex \cite{KumaVerg}. Here
\begin{equation}
W(\Fg,V) = \bigoplus_{n \geq 0}W^n(\Fg,V), \quad W^n(\Fg,V) := \wedge^n \Fg^* \ot V,
\end{equation}
together with the Chevalley-Eilenberg coboundary
\begin{equation}
d_{\rm CE}:W^n(\Fg,V) \to W^{n+1}(\Fg,V),
\end{equation}
and the Koszul boundary
\begin{equation}
d_{\rm K}:W^n(\Fg,V) \to W^{n-1}(\Fg,V), \quad \alpha \ot v \mapsto \iota({v\nsb{-1}})(\alpha) \ot v\nsb{0}.
\end{equation}

This complex is in fact the Poincar\'e dual of the complex \eqref{aux-complex-cyclic-Lie-algebra}. In order to see this, we first note that the coefficients will be appropriate under the Poincar\'e isomorphism.

\begin{proposition}\label{12}
A vector space $V$ is a unimodular stable  right  $\widetilde\Fg$-module if and only if  $V \ot \mathbb{C}_{\delta}$ is a  stable right $\widetilde\Fg$-module.
\end{proposition}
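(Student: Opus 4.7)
The plan is to make explicit the $\widetilde\Fg$-module structure on $V\ot\Cb_\d$, and then to reduce both stability conditions to a single identity modulo a correction term that vanishes precisely by the definition of $\d$.

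First I would define the action of $\widetilde\Fg=\Fg^\ast\rtimes\Fg$ on $V\ot\Cb_\d$ by $(v\ot 1)\cdot X=(v\cdot X)\ot 1+\d(X)(v\ot 1)$ for $X\in\Fg$, and $(v\ot 1)\cdot\t=(v\cdot\t)\ot 1$ for $\t\in\Fg^\ast$. The verification that this defines a right $\widetilde\Fg$-module is routine: on the $\Fg$ factor one uses that $V$ is a right $\Fg$-module and that $\d$ is a character, on $\Fg^\ast$ there is nothing to check since the action is abelian, and the compatibility $[\t,X]=-\Lc_X(\t)\in\Fg^\ast$ reduces to $(v\cdot\t)\cdot X-(v\cdot X)\cdot\t=v\cdot[\t,X]$ in $V$ after the $\d(X)$-terms cancel.

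Next I would compute $\sum_k ((v\ot 1)\cdot\t^k)\cdot X_k$ directly. By the action formula this equals
\begin{equation*}
\sum_k (v\cdot\t^k)\cdot X_k\ot 1\;+\;\sum_k \d(X_k)(v\cdot\t^k)\ot 1.
\end{equation*}
Now I would use the $\widetilde\Fg$-module compatibility in $V$ to write $(v\cdot\t^k)\cdot X_k=(v\cdot X_k)\cdot\t^k+v\cdot[\t^k,X_k]$, and then compute $[\t^k,X_k]=-\Lc_{X_k}(\t^k)=\sum_j C^k_{kj}\t^j$ from the bracket~\eqref{tdg}. Summing over $k$ and using the structural identity $\sum_k C^k_{kj}=-\sum_k C^k_{jk}=-\d(X_j)$, the mixed $\d$-terms cancel exactly and one obtains
\begin{equation*}
\sum_k ((v\ot 1)\cdot\t^k)\cdot X_k=\sum_k (v\cdot X_k)\cdot\t^k\ot 1.
\end{equation*}
Thus the stability $\sum_k ((v\ot 1)\cdot\t^k)\cdot X_k=0$ of $V\ot\Cb_\d$ is equivalent to the unimodular stability $\sum_k (v\cdot X_k)\cdot\t^k=0$ of $V$, which proves both directions at once.

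No single step is really an obstacle; the only delicate point is tracking the sign and index conventions for $\d(X_j)=\Tr(\ad_{X_j})=\sum_k C^k_{jk}$ versus $\sum_k C^k_{kj}$, so that the $\d(X_k)(v\cdot\t^k)$ term produced by tensoring with $\Cb_\d$ cancels against the commutator term $v\cdot[\t^k,X_k]$ arising from the $\widetilde\Fg$-compatibility. This cancellation is precisely what makes $\Cb_\d$ (rather than some other one-dimensional twist) the natural partner exchanging unimodular stability and stability.
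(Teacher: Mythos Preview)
Your proof is correct and follows essentially the same route as the paper's: expand the stability sum for $V\ot\Cb_\d$, use the $\widetilde\Fg$-module relation $(v\cdot\t^k)\cdot X_k=(v\cdot X_k)\cdot\t^k+v\cdot[\t^k,X_k]$, and observe that the $\d$-term from tensoring with $\Cb_\d$ cancels the commutator term because $\sum_k[\t^k,X_k]=-\sum_j\d(X_j)\t^j$. Your version is in fact a bit cleaner than the paper's, since you establish the single identity $\sum_k((v\ot 1)\cdot\t^k)\cdot X_k=\sum_k(v\cdot X_k)\cdot\t^k\ot 1$ directly, which gives both implications at once; the paper states the forward direction and then says the converse is similar.
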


\begin{proof}
Indeed, if $V$ is unimodular stable right  $\widetilde\Fg$-module, that is $\sum_i (v \lt X_i) \cdot \theta^i = 0$,  for any $v\in V$, then
\begin{align}
\begin{split}
& \sum_i ((v \ot 1_{\mathbb{C}}) \cdot \theta^i) \lt X^i = \sum_i (v \cdot \theta^i) \cdot X_i \ot 1_{\mathbb{C}} + v\delta(X_i) \ot 1_{\mathbb{C}}  \\
& =\sum_i (v \cdot X_i) \lt \theta^i \ot 1_{\mathbb{C}} = 0,
\end{split}
\end{align}
which proves that $V\ot \Cb_{\d}$ is stable. Similarly we observe that for $1 \leq i,j \leq N$,
\begin{align}
\begin{split}
& ((v \ot 1_{\mathbb{C}}) \cdot X_j) \lt \theta^i = (v \cdot X_j \ot 1_{\mathbb{C}} + v\delta(X_j) \ot 1_{\mathbb{C}}) \lt \theta^i = \\
& ((v \cdot X_j) \lt \theta^i + v\delta(X_j) \lt \theta^i) \ot 1_{\mathbb{C}} = \\
& (v \lt (X_j \rt \theta^i) + (v \lt \theta^i) \cdot X_j + v\delta(X_j) \lt \theta^i) \ot 1_{\mathbb{C}} = \\
& (v \ot 1_{\mathbb{C}}) \lt (X_j \rt \theta^i) + ((v \ot 1_{\mathbb{C}}) \lt \theta^i) \cdot X_j
\end{split}
\end{align}
\ie $V \ot \mathbb{C}_{\delta}$ is a right  $\widetilde\Fg$-module.

\medskip

The converse argument is similar.
\end{proof}

Let us now briefly recall the Poincar\'e isomorphism
\begin{equation}
 \FD_P:\wedge^k \Fg^* \to \wedge^{N-k}\Fg, \qquad  \eta \mapsto \iota(\eta)\varpi,
\end{equation}
where $\varpi = X_1 \wedge \cdots \wedge X_N$ is the covolume element of $\Fg$.  By definition
  $\iota(\theta^i): \wedge^{\bullet} \Fg \to \wedge^{\bullet-1}\Fg$ is given  by
\begin{equation}
\langle \iota(\theta^i)\xi, \theta^{j_1} \wedge \cdots \wedge \theta^{j_{r-1}} \rangle := \langle \xi, \theta^i \wedge \theta^{j_1} \wedge \cdots \wedge \theta^{j_{r-1}} \rangle, \quad  \xi \in \wedge^r\Fg.
\end{equation}
Finally, for $\eta = \theta^{i_1} \wedge \cdots \wedge \theta^{i_k}$, the interior multiplication $\iota(\eta): \wedge^\bullet\Fg \to \wedge^{\bullet-k}\Fg $ is defined by
\begin{equation}
\iota(\eta) := \iota(\theta^{i_k}) \circ \cdots \circ \iota(\theta^{i_1}).
\end{equation}

\begin{proposition}
Let $V$ be a stable right  $\widetilde\Fg$-module. Then the Poincar\'e isomorphism induces a map of complexes between the complex $W(\Fg,V \ot \mathbb{C}_{-\delta})$ and the complex $C(\Fg,V)$.
\end{proposition}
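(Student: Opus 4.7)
The plan is to extend the Poincar\'e isomorphism $\FD_P:\wedge^k\Fg^\ast\to\wedge^{N-k}\Fg$ to the linear map
\begin{equation*}
\FD_P\ot\Id_V:W^k(\Fg,V\ot\Cb_{-\d})=\wedge^k\Fg^\ast\ot V\longrightarrow \wedge^{N-k}\Fg\ot V=C_{N-k}(\Fg,V),
\end{equation*}
where we silently identify the underlying vector space of $V\ot\Cb_{-\d}$ with $V$, and then to check that this map intertwines the two summands of the total differential on each side. By Proposition \ref{12}, $V\ot\Cb_{-\d}$ is unimodular stable so that $(W(\Fg,V\ot\Cb_{-\d}),d_{\rm CE}+d_{\rm K})$ is a genuine differential complex, while $(C(\Fg,V),\p_{\rm CE}+\p_{\rm K})$ is one by Proposition \ref{7}; thus the statement "map of complexes" makes sense once the two intertwining identities below are established.

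For the Chevalley--Eilenberg piece, one needs the classical Poincar\'e duality statement at the chain level: under $\FD_P\ot\Id_V$ the coboundary $d_{\rm CE}$ on $W^\bullet(\Fg,V\ot\Cb_{-\d})$ matches the boundary $\p_{\rm CE}$ on $C_\bullet(\Fg,V)$. The proof uses the structural identity $\Lc_X\varpi=\d(X)\varpi$ on $\wedge^N\Fg$, which is precisely what the twist $\Cb_{-\d}$ is designed to absorb: writing $\FD_P(\alpha)=\iota(\alpha)\varpi$ and expanding $\p_{\rm CE}(\iota(\alpha)\varpi\ot v)$ via the Leibniz rule for the $\Fg$-action on $\wedge^{N-k}\Fg\ot V$, the adjoint contribution $\d(X_i)$ cancels the twist on $V$ and the remaining terms reassemble into $\FD_P(d_{\rm CE}(\alpha\ot v))$. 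This computation is standard and reduces to the identities already recorded in the discussion of the map \eqref{theta}.

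For the Koszul piece, one needs the analogous identity
\begin{equation*}
\iota\bigl(\iota(X)\alpha\bigr)\varpi\;=\;(-1)^{|\alpha|-1}\,X\wg \iota(\alpha)\varpi,\qquad X\in\Fg,\ \alpha\in\wedge^\bullet\Fg^\ast,
\end{equation*}
which is an elementary consequence of the graded derivation property of $\iota(X):\wedge^\bullet\Fg^\ast\to\wedge^{\bullet-1}\Fg^\ast$ and is proved by induction on $|\alpha|$. Specialising this identity to $X=v\nsb{-1}$ and multiplying through by $v\nsb{0}$ yields exactly the equality
\begin{equation*}
(\FD_P\ot\Id_V)\bigl(d_{\rm K}(\alpha\ot v)\bigr)\;=\;\pm\,\p_{\rm K}\bigl((\FD_P\ot\Id_V)(\alpha\ot v)\bigr),
\end{equation*}
since the coaction on $V\ot\Cb_{-\d}$ is, by construction, the coaction on $V$ (the comodule structure is untouched by the twist).

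The main obstacle will be the sign bookkeeping: combining the sign $(-1)^{|\alpha|-1}$ from the Koszul identity with the sign picked up when passing from $d_{\rm CE}$ to $\p_{\rm CE}$ under $\FD_P$, and checking that these are compatible with the conventions used to add $d_{\rm CE}+d_{\rm K}$ and $\p_{\rm CE}+\p_{\rm K}$ into total differentials squaring to zero. Once the signs are aligned, possibly after modifying $\FD_P\ot\Id_V$ by a degree-dependent sign $(-1)^{k(k-1)/2}$ or similar, both intertwining relations hold and the proposition follows.
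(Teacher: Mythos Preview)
Your proposal is correct and follows essentially the same approach as the paper: both separate the verification into the Chevalley--Eilenberg piece (handled by classical Poincar\'e duality, with the twist $\Cb_{-\d}$ absorbing the $\d(X)$ term coming from $\Lc_X\varpi$) and the Koszul piece (handled by the interior-product identity relating $\iota(\iota(X)\alpha)\varpi$ with $X\wedge\iota(\alpha)\varpi$). The paper works with $\FD_P^{-1}$ and phrases the Koszul computation via the dual pairing $\langle\eta X_i\xi,\omega^\ast\rangle=(-1)^{N-p-1}\langle X_i\eta\xi,\omega^\ast\rangle$, but this is the same identity you state; your caveat about a residual degree-dependent sign is accurate and matches the $(-1)^{N-p-1}$ the paper records without further comment.
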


\begin{proof}
Let us first introduce the notation $\widetilde{V}:=V \ot \mathbb{C}_{-\delta}$. We can identify $\widetilde{V}$ with $V$ as a vector space, but with the right $\Fg$-module structure deformed as in $v \lt X := v \cdot X - v\delta(X)$.

\medskip

We prove the commutativity of the (co)boundaries via the (inverse) Poincar\'e isomorphism, \ie
\begin{align}
\begin{split}
& \FD_P^{-1}: \wedge^p \Fg \ot V \to \wedge^{N-p}\Fg^* \ot \widetilde{V} \\
& \xi \ot v \mapsto \FD_P^{-1}(\xi \ot v),
\end{split}
\end{align}
where for an arbitrary $\eta \in \wg^{N-p}\Fg$
\begin{equation}
\langle \eta, \FD_P^{-1}(\xi \ot v) \rangle := \langle \eta\xi, \omega^* \rangle v.
\end{equation}
Here, $\omega^* \in \wg^N\Fg^*$ is the volume form.

\medskip

The commutativity of the diagram
\begin{equation}
\xymatrix {
 \ar[d]_{\FD_P^{-1}} \wedge^p\Fg \ot V  \ar[r]^{\p_{\rm CE}} &  \wedge^{p-1}\Fg \ot V   \ar[d]^{\FD_P^{-1}} \\
 \wedge^{N-p}\Fg^* \ot \widetilde{V}   \ar[r]_{d_{\rm CE}} &  \wedge^{N-p+1}\Fg^* \ot \widetilde{V}
}
\end{equation}
follows from the Poincar\'e duality in Lie algebra homology - cohomology,
 \cite[Section VI.3]{Knap}. For the commutativity of the diagram
\begin{equation}
\xymatrix {
 \ar[d]_{\FD_P^{-1}} \wedge^p\Fg \ot V  \ar[r]^{\p_{\rm K}} &  \wedge^{p+1}\Fg \ot V   \ar[d]^{\FD_P^{-1}} \\
 \wedge^{N-p}\Fg^* \ot \widetilde{V}   \ar[r]_{d_{\rm K}} &  \wedge^{N-p-1}\Fg^* \ot \widetilde{V}
}
\end{equation}
we take an arbitrary $\xi \in \wg^p\Fg$, $\eta \in \wg^{N-p-1}\Fg$ and $v \in V$. Then
\begin{align}
\begin{split}
& \FD_P^{-1}(\p_{\rm K}(\xi \ot v))(\eta) = \langle \eta X_i \xi, \omega^* \rangle v \lt \theta^i = \\
& (-1)^{N-p-1} \langle X_i \eta \xi, \omega^* \rangle v \lt \theta^i = (-1)^{N-p-1}d_{\rm K}(\FD_P^{-1}(\xi \ot v))(\eta).
\end{split}
\end{align}
\end{proof}

As an example, let us compute $\widetilde{HP}(s\ell(2),V)$ for $V = S(s\ell(2)^\ast)\nsb{1}$ by demonstrating  explicit representatives of the cohomology classes.

\medskip

Being an SAYD module over $U(s\ell(2))$, the space $V$ admits the filtration $(F_pV)_{p \in \Zb}$ from \cite{JaraStef}. Explicitly,
\begin{equation}\label{aux-filtration-on-V}
F_0V = \{R^X,R^Y,R^Z\}, \quad F_pV = \left\{\begin{array}{cc}
                                              V & p \geq 1 \\
                                              0 & p<0.
                                            \end{array}
\right.
\end{equation}
The induced filtration on the complex is
\begin{equation}
F_j(W(s\ell(2),V)) := W(s\ell(2),F_jV),
\end{equation}
and the $E_1$-term of the associated spectral sequence is
\begin{align}
\begin{split}
& E_1^{j,i}(s\ell(2),V) = H^{i+j}(W(s\ell(2),F_jV)/W(s\ell(2),F_{j-1}V)) \\
& \hspace{2.2cm} \cong H^{i+j}(W(s\ell(2),F_jV/F_{j-1}V)).
\end{split}
\end{align}

Since $F_jV/F_{j-1}V$ has trivial $s\ell(2)$-coaction, the boundary $d_{\rm K}$ vanishes on the quotient complex $W(s\ell(2),F_jV/F_{j-1}V)$ and hence
\begin{equation}
E_1^{j,i}(s\ell(2),V) = \bigoplus_{i+j\, \cong\, \bullet \; {\rm mod\,2}}H^{\bullet}(s\ell(2),F_jV/F_{j-1}V).
\end{equation}
In particular,
\begin{equation}
E_1^{0,i}(s\ell(2),V) = H^i(W(s\ell(2),F_0V)) \cong \bigoplus_{i\, \cong \,\bullet \; {\rm mod\,2}}H^{\bullet}(s\ell(2,\Cb),F_0V) = 0.
\end{equation}
The last equality follows from Whitehead's theorem, as $F_0V$ is an irreducible $s\ell(2)$-module of dimension greater than 1. For $j = 1$ we have $V/F_0V \cong \Cb$ and hence
\begin{equation}
E_1^{1,i}(s\ell(2),V) = \bigoplus_{i+1\, \cong \,\bullet \; {\rm mod\,2}}H^{\bullet}(s\ell(2)),
\end{equation}
which gives two cohomology classes as a result of Whitehead's 1st and 2nd lemmas, \cite{Knap}. Finally, by $F_pV = V$ for $p \ge 1$, we have $E_1^{j,i}(s\ell(2),V) = 0$ for $j \geq 2$.

\medskip

Let us now decompose the complex as
\begin{equation}
W(s\ell(2),V) = V^{\rm even}(s\ell(2),V) \oplus W^{\rm odd}(s\ell(2),V),
\end{equation}
where
\begin{align}
\begin{split}
& W^{\rm even}(s\ell(2),V) = V \oplus (\wedge^2 {s\ell(2)}^* \ot V), \\
& W^{\rm odd}(s\ell(2),V) = ({s\ell(2)}^* \ot V) \oplus (\wedge^3 {s\ell(2)}^* \ot V).
\end{split}
\end{align}

Let us take $1_V \in W^{\rm even}(s\ell(2),V)$. We can immediately observe that $d_{\rm CE}(1_V) = 0$ as well as $d_{\rm K}(1_V) = 0$. On the other hand, on the level of spectral sequence the class $1_V \in W^{\rm even}(s\ell(2),V)$ descends to the nontrivial class in $H^0(s\ell(2))$. Hence, it is a representative of the  even cohomology class.

\medskip

Secondly, we consider
\begin{equation}\label{aux-odd-class-sl(2)-V}
(2\theta^X \ot R^Z - \theta^Y \ot R^Y \; , \; \theta^X \wg \theta^Y \wg \theta^Z \ot 1_V) \in W^{\rm odd}(s\ell(2,\Cb),V),
\end{equation}
where $\{\theta^X,\theta^Y,\theta^Z\}$ is the dual basis corresponding to the basis $\{X,Y,Z\}$ of $s\ell(2)$. Let us show that \eqref{aux-odd-class-sl(2)-V} is a $d_{\rm CE} + d_{\rm K}$-cocycle. It is immediate that
\begin{equation}
d_{\rm CE}(\theta^X \wg \theta^Y \wg \theta^Z \ot 1_V) = 0.
\end{equation}
As for the Koszul differential,
\begin{align}
\begin{split}
& d_{\rm K}(\theta^X \wg \theta^Y \wg \theta^Z \ot 1_V) = \\
& \iota_X(\theta^X \wg \theta^Y \wg \theta^Z) \ot R^X + \iota_Y(\theta^X \wg \theta^Y \wg \theta^Z) \ot R^Y + \iota_Z(\theta^X \wg \theta^Y \wg \theta^Z) \ot R^Z = \\
& \theta^Y \wg \theta^Z \ot R^X - \theta^X \wg \theta^Z \ot R^Y + \theta^X \wg \theta^Y \ot R^Z.
\end{split}
\end{align}

 On the other hand, we have
\begin{align}
\begin{split}
& d_{\rm CE}(\theta^X \ot R^Z) = \theta^X \wg \theta^Y \ot R^Z - \theta^Y \wg \theta^X \ot R^Z \cdot Y - \theta^Z \wg \theta^X \ot R^Z \cdot Z \\
& = \theta^X \wg \theta^Z \ot R^Y,
\end{split}
\end{align}
and
\begin{align}
\begin{split}
& d_{\rm CE}(\theta^Y \ot R^Y) = \theta^X \wg \theta^Z \ot R^Y - \theta^X \wg \theta^Y \ot R^Y \cdot X - \theta^Z \wg \theta^Y \ot R^Z \cdot Z \\
& = \theta^X \wg \theta^Y \ot R^Z + \theta^X \wg \theta^Z \ot R^Y + \theta^Y \wg \theta^Z \ot R^X.
\end{split}
\end{align}
Therefore,
\begin{equation}
d_{\rm CE}(2\theta^X \ot R^Z - \theta^Y \ot R^Y) = - \theta^X \wg \theta^Y \ot R^Z + \theta^X \wg \theta^Z \ot R^Y - \theta^Y \wg \theta^Z \ot R^X.
\end{equation}
We also have
\begin{equation}
d_{\rm K}(2\theta^X \ot R^Z - \theta^Y \ot R^Y) = 2R^ZR^X - R^YR^Y = 0 - 0 = 0.
\end{equation}
Therefore,
\begin{equation}
(d_{\rm CE} + d_{\rm K})((2\theta^X \ot R^Z - \theta^Y \ot R^Y\;,\;\theta^X \wg \theta^Y \wg \theta^Z \ot 1_V)) = 0.
\end{equation}
Finally we note that $(2\theta^X \ot R^Z - \theta^Y \ot R^Y \; , \; \theta^X \wg \theta^Y \wg \theta^Z \ot 1_M)$ descends, in the $E_1$-level of the spectral sequence,  to the cohomology class represented by the nontrivial 3-cocycle $$\theta^X \wg \theta^Y \wg \theta^Z.$$ Hence, it represents the odd cohomology class.

\medskip

Let us record our discussion in the following proposition.

\begin{proposition}\label{proposition-periodic-cyclic-sl(2)-V}
The periodic cyclic cohomology of the Lie algebra $s\ell(2)$ with coefficients in SAYD module  $V:=S({s\ell(2)}^\ast)\nsb{1}$  is represented by
\begin{align}
&\widetilde{HP}^{\rm even}(s\ell(2),V)=\Cb\Big\langle  1_V\Big\rangle, \\
&\widetilde{HP}^{\rm odd}(s\ell(2),V)=\Cb\Big\langle (2\theta^X \ot R^Z - \theta^Y \ot R^Y\;,\;\theta^X\wedge \theta^Y\wedge \theta^Z \ot 1_V) \Big\rangle.
\end{align}
\end{proposition}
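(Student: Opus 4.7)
The plan is to assemble the explicit computation that has already been laid out in the paragraphs preceding the statement into a clean two-step argument: first establish the dimension of $\widetilde{HP}^{\bullet}(s\ell(2),V)$ via the spectral sequence, then exhibit representatives.

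For the dimension count, I would use the filtration $(F_pV)_{p\in\Zb}$ of $V$ inherited from \cite{JaraStef}, which in this case reduces to \eqref{aux-filtration-on-V} with only three nontrivial steps $F_{-1}V=0$, $F_0V=\Cb\langle R^X,R^Y,R^Z\rangle$, and $F_1V=V$. This induces a filtration of the total complex $(W(s\ell(2),V),d_{\rm CE}+d_{\rm K})$ whose $E_1$-page is the Lie algebra cohomology of $s\ell(2)$ with coefficients in the associated graded, since $d_{\rm K}$ lowers the filtration degree and therefore vanishes on the quotients $F_jV/F_{j-1}V$. The quotient $F_0V$ is the irreducible $4$-dimensional $s\ell(2)$-module $S({s\ell(2)}^\ast)_{[1]}/\Cb$, so Whitehead's theorem kills $E_1^{0,i}$; the quotient $F_1V/F_0V\cong\Cb$ is trivial, so $E_1^{1,i}\cong H^{\bullet}(s\ell(2),\Cb)$ which is one-dimensional in even and in odd total degree by Whitehead's first and second lemmas; and $E_1^{j,i}=0$ for $j\ge 2$ or $j<0$. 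Therefore $\widetilde{HP}^{\rm even}(s\ell(2),V)$ and $\widetilde{HP}^{\rm odd}(s\ell(2),V)$ are each at most one-dimensional.

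For the representatives, I would exhibit $1_V\in W^0(s\ell(2),V)$ and the pair
\[
\omega := \bigl(2\theta^X\otimes R^Z - \theta^Y\otimes R^Y,\; \theta^X\wedge\theta^Y\wedge\theta^Z\otimes 1_V\bigr)\in W^{\rm odd}(s\ell(2),V),
\]
and check they are $(d_{\rm CE}+d_{\rm K})$-cocycles by the direct computations already performed in the text: $d_{\rm CE}(1_V)=0$ and $d_{\rm K}(1_V)=0$ are immediate from the coadjoint action and the Koszul coaction; for $\omega$ the Koszul term $d_{\rm K}(2\theta^X\otimes R^Z-\theta^Y\otimes R^Y)=2R^ZR^X-(R^Y)^2=0$ in $S({s\ell(2)}^\ast)_{[1]}$, while $d_{\rm CE}(2\theta^X\otimes R^Z-\theta^Y\otimes R^Y)$ cancels $d_{\rm K}(\theta^X\wedge\theta^Y\wedge\theta^Z\otimes 1_V)$ term by term using the bracket relations $[Y,X]=X$, $[Z,X]=Y$, $[Z,Y]=Z$.

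Finally, to see these two cocycles are nontrivial, I would trace them through the spectral sequence: the class $[1_V]$ lives in filtration degree $0$ and projects to the generator of $H^0(s\ell(2),\Cb)\cong\Cb\subset E_1^{0,0}\oplus E_1^{1,-1}$ coming from $F_1V/F_0V=\Cb$, while the top component $\theta^X\wedge\theta^Y\wedge\theta^Z\otimes 1_V$ of $\omega$ lies in filtration degree $1$ and projects to the generator of $H^3(s\ell(2),\Cb)$ in $E_1^{1,2}$. Since both projections are nonzero on the $E_1$-page and the spectral sequence degenerates here (there is only one nonzero column, $j=1$), both classes survive to $E_\infty$ and therefore represent nonzero periodic cyclic cohomology classes. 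The main subtlety — and essentially the only non-routine point — is verifying that $d_{\rm K}$ really does lower filtration so that the spectral sequence splits as claimed; this follows from the observation that $\Db_{\mathfrak g}(R^i)=0$ and $\Db_{\mathfrak g}(1_V)=X\otimes R^X+Y\otimes R^Y$, so $d_{\rm K}$ sends $F_1V$-valued cochains into $F_0V$-valued ones.
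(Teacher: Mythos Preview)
Your approach is essentially identical to the paper's: the same Jara--\c{S}tefan filtration, the same $E_1$-computation via Whitehead's theorems, and the same explicit cocycle checks. A few minor slips to clean up: $F_0V=\Cb\langle R^X,R^Y,R^Z\rangle$ is the $3$-dimensional (co)adjoint representation, not a $4$-dimensional quotient; the Koszul coaction on $1_V$ is $X\otimes R^X + Y\otimes R^Y + Z\otimes R^Z$ (you dropped the last term); and $1_V$ lives in filtration degree $1$, not $0$, since $1_V\in F_1V\setminus F_0V$ --- but none of these affect the argument.
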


We next introduce  the relative perturbed Koszul complex as
\begin{equation}\label{aux-relative-Koszul}
W(\Fg, \Fh, V) = \Big\{f \in W(\Fg, V)\, \Big|\, \iota(Y)f = 0, \iota(Y)(d_{\rm CE}f) = 0 , \forall\, Y \in \Fh\Big\}.
\end{equation}

It follows from the following lemma that \eqref{aux-relative-Koszul} is a subcomplex of the perturbed Koszul complex.

\begin{lemma}\label{lemma-relative-Koszul}
Let $\Fg$ be a Lie algebra, $\Fh \subseteq \Fg$ be a subalgebra and $V$ be a unimodular SAYD module over $\Fg$. Then we have
\begin{equation}
d_{\rm K}(W(\Fg, \Fh, V)) \subseteq W(\Fg, \Fh, V).
\end{equation}
\end{lemma}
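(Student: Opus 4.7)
The plan is to verify directly the two defining conditions of the relative complex \eqref{aux-relative-Koszul} for $d_{\rm K} f$ whenever $f \in W(\Fg, \Fh, V)$. That is, for every $Y \in \Fh$ I need to check that $\iota(Y)(d_{\rm K} f) = 0$ and that $\iota(Y)(d_{\rm CE} d_{\rm K} f) = 0$. The two key structural facts I will exploit are the graded anticommutation of contractions on the exterior algebra, and the identity $d_{\rm CE} d_{\rm K} + d_{\rm K} d_{\rm CE} = 0$.

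First I would establish the operator identity $\iota(Y)\, d_{\rm K} = - d_{\rm K}\, \iota(Y)$ on $W(\Fg, V)$ for every $Y \in \Fg$. On a monomial $\alpha \ot v$ with $\Db_\Fg(v) = v\nsb{-1} \ot v\nsb{0} \in \Fg \ot V$, we have
\begin{equation*}
\iota(Y)\, d_{\rm K}(\alpha \ot v) = \iota(Y)\,\iota(v\nsb{-1})(\alpha) \ot v\nsb{0} = -\iota(v\nsb{-1})\,\iota(Y)(\alpha) \ot v\nsb{0} = -d_{\rm K}\bigl(\iota(Y)(\alpha) \ot v\bigr),
\end{equation*}
the middle equality being the standard sign rule for two contractions on $\wedge^\bullet \Fg^\ast$. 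The crucial point is that $v\nsb{-1}$ lands in $\Fg$, so the same anticommutation rule applies. From this identity, condition (1) is immediate: if $\iota(Y) f = 0$ then $\iota(Y)\, d_{\rm K} f = -d_{\rm K}\,\iota(Y) f = 0$.

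For condition (2), I would invoke the fact that $(W(\Fg,V), d_{\rm CE} + d_{\rm K})$ is a differential complex (the Poincar\'e dual of Proposition \ref{7}, applicable since $V$ is unimodular stable so that $V \ot \Cb_{-\d}$ is stable by Proposition \ref{12}). Combined with $d_{\rm CE}^2 = 0$ and $d_{\rm K}^2 = 0$, this forces the graded-commutator identity
\begin{equation*}
d_{\rm CE}\, d_{\rm K} + d_{\rm K}\, d_{\rm CE} = 0.
\end{equation*}
Therefore, for $f \in W(\Fg,\Fh,V)$ and $Y \in \Fh$,
\begin{equation*}
\iota(Y)\, d_{\rm CE}\, d_{\rm K} f = -\iota(Y)\, d_{\rm K}\, d_{\rm CE} f = d_{\rm K}\,\iota(Y)\, d_{\rm CE} f = 0,
\end{equation*}
where the last equality uses the second defining condition $\iota(Y)\, d_{\rm CE} f = 0$ of the relative complex.

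The proof is essentially a formal manipulation, so I do not anticipate a serious obstacle; the only care needed is to confirm that $d_{\rm K}^2 = 0$ independently (it reduces to the Lie coaction compatibility $v\nsb{-2} \wg v\nsb{-1} \ot v\nsb{0} = 0$) so that the mixed relation $d_{\rm CE} d_{\rm K} + d_{\rm K} d_{\rm CE} = 0$ does follow from $(d_{\rm CE} + d_{\rm K})^2 = 0$ rather than being a separate hypothesis.
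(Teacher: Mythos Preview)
Your proof is correct and follows essentially the same route as the paper: both arguments establish the anticommutation $\iota(Y)\,d_{\rm K} = -\,d_{\rm K}\,\iota(Y)$ from the sign rule for contractions to handle the first condition, and then invoke $d_{\rm CE}\,d_{\rm K} + d_{\rm K}\,d_{\rm CE} = 0$ together with that anticommutation for the second. Your version is in fact slightly more explicit in justifying the mixed relation (via $(d_{\rm CE}+d_{\rm K})^2=0$ and $d_{\rm K}^2=0$), which the paper simply asserts.
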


\begin{proof}
For any $\a \ot v \in W^{n+1}(\Fg, \Fh, V)$ and any $Y \in \Fh$,
\begin{align}
\begin{split}
& \iota(Y)(d_{\rm K}(\a \ot v)) = \iota(Y)((-1)^n\iota(v\nsb{-1})\a \ot v\nsb{0}) = (-1)^n\iota(Y)\iota(v\nsb{-1})\a \ot v\nsb{0} \\
& = (-1)^{n-1}\iota(v\nsb{-1})\iota(Y)\a \ot v\nsb{0} = d_{\rm K}(\iota(Y)\a \ot v) = 0
\end{split}
\end{align}
Similarly, using $d_{\rm CE} \circ d_{\rm K} + d_{\rm K} \circ d_{\rm CE} = 0$,
\begin{align}
\begin{split}
& \iota(Y)(d_{\rm CE}(d_{\rm K}(\a \ot v))) = - \iota(Y)(d_{\rm K} \circ d_{\rm CE}(\a \ot v)) \\
& = - d_{\rm K}(\iota(Y)d_{\rm CE}(\a \ot v)) = 0.
\end{split}
\end{align}
\end{proof}

\begin{definition}
Let $\Fg$ be a Lie algebra, $\Fh \subseteq \Fg$ be a Lie subalgebra and $V$ be a unimodular SAYD module over $\Fg$. We call the homology of the complex $(W(\Fg, \Fh, V), d_{\rm CE} + d_{\rm K})$ the relative cyclic cohomology of the Lie algebra $\Fg$ relative to the Lie subalgebra $\Fh$ with coefficients in $V$. We use the notation $\widetilde{HC}^\bullet(\Fg,\Fh,V)$. Similarly, we use the notation $\widetilde{HP}^\bullet(\Fg,\Fh,V)$ to denote the periodic cyclic cohomology.
\end{definition}

In the case of the trivial Lie algebra coaction, the above cohomology becomes the relative Lie algebra cohomology.

\medskip

We next prove the following generalization of Theorem \ref{theorem-cohomology-R(g)}.

\begin{theorem}\label{theorem-g-R(g)spectral sequence}
Let $\Fg$ be a Lie algebra and $\Fg = \Fh \ltimes \Fl$ be a Levi decomposition. Let $V$ be a unimodular SAYD module over $\Fg$ as a locally finite $\Fg$-module and a locally conilpotent $\Fg$-comodule. Assume also that $V$ is stable over $R(\Fg)$.  Then the cyclic cohomology of $\Fg$ relative to the subalgebra $\Fh \subseteq \Fg$ with coefficients in $V$ is the same as the periodic cyclic cohomology of $R(\Fg)$ with coefficients in $V$. In short,
\begin{equation}
HP^\bullet(R(\Fg), V) \cong \widetilde{HP}^\bullet(\Fg, \Fh, V)
\end{equation}
\end{theorem}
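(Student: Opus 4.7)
The plan is to mirror the strategy of Theorem \ref{g-U(g) spectral sequence} and Theorem \ref{theorem-cohomology-R(g)}: use the canonical SAYD filtration of \cite{JaraStef} on the coefficient module $V$ to induce compatible filtrations on both sides, then identify the $E_1$-pages of the associated spectral sequences via the van Est map $\Dc_{\rm Alg}$ of \eqref{aux-map-D-Lie-alg}.

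First I would verify that $V$ is indeed an SAYD module over the Hopf algebra $R(\Fg)$ (Proposition \ref{proposition-AYD-R(g)}, together with the hypothesis of $R(\Fg)$-stability), so that both $HP^\bullet(R(\Fg),V)$ and $\widetilde{HP}^\bullet(\Fg,\Fh,V)$ are defined. Next, let $V=\bigcup_{p\ge 0}F_pV$ be the canonical filtration of \cite[Lemma 6.2]{JaraStef}, with $F_0V=V^{\mathrm{co}\,U(\Fg)}$ and $F_{p+1}V/F_pV=(V/F_pV)^{\mathrm{co}\,U(\Fg)}$; each $F_pV$ is a $\Fg$-submodule and, by local conilpotency, the coaction sends $F_pV$ into $U(\Fg)\ot F_{p-1}V$ in the sense that $v\snsb{-1}\ot v\snsb{0}-1\ot v\in U(\Fg)\ot F_{p-1}V$. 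I would then define
\begin{equation}
F_p C^\bullet(R(\Fg),V):=V\ot R(\Fg)^{\ot\bullet}\quad\text{replacing }V\text{ by }F_pV,
\end{equation}
and $F_pW^\bullet(\Fg,\Fh,V):=W^\bullet(\Fg,\Fh,F_pV)$, and check that the Hochschild coboundary $b$ and Connes operator $B$ on the Hopf-cyclic side, and the Chevalley--Eilenberg coboundary $d_{\rm CE}$ and Koszul boundary $d_{\rm K}$ on the relative perturbed Koszul side, all respect these filtrations. The arguments are parallel to those in the proof of Theorem \ref{g-U(g) spectral sequence}: $b$ and $d_{\rm CE}$ preserve the filtration because $F_pV$ is a submodule, while $B$ and $d_{\rm K}$ preserve it because the coaction preserves it.

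The crucial observation is that on the associated graded $F_{j+1}V/F_jV=(V/F_jV)^{\mathrm{co}\,U(\Fg)}$ both the right $R(\Fg)$-action and the left $R(\Fg)$-coaction become trivial (the coaction by construction of the filtration, the action because it is defined through the coaction via $v\cdot f=f(v\snsb{-1})v\snsb{0}$). Hence on the associated graded $d_{\rm K}$ vanishes and the Hopf-cyclic SAYD structure degenerates to a trivial coefficient system. Therefore the $E_1$-terms read
\begin{equation}
E_1^{j,i}\bigl(R(\Fg),V\bigr)=HP^{i+j}\bigl(R(\Fg),F_{j+1}V/F_jV\bigr)
\end{equation}
and
\begin{equation}
\widetilde E_1^{j,i}\bigl(\Fg,\Fh,V\bigr)=\bigoplus_{i+j\equiv k\,(\mathrm{mod}\,2)}H^k\bigl(\Fg,\Fh,F_{j+1}V/F_jV\bigr).
\end{equation}

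Now I would invoke Theorem \ref{theorem-cohomology-R(g)}, applied to the finite-dimensional $\Fg$-module $F_{j+1}V/F_jV$ with the trivial comodule structure: the van Est map $\Dc_{\rm Alg}$ induces an isomorphism $HP^\bullet(R(\Fg),F_{j+1}V/F_jV)\cong\bigoplus_{\bullet\equiv k\,(\mathrm{mod}\,2)}H^k(\Fg,\Fh,F_{j+1}V/F_jV)$. Hence $\Dc_{\rm Alg}$ descends to an isomorphism $E_1^{j,i}(R(\Fg),V)\cong\widetilde E_1^{j,i}(\Fg,\Fh,V)$ for all $i,j$, and by a standard spectral-sequence comparison argument one concludes the isomorphism of the abutments
\begin{equation}
HP^\bullet(R(\Fg),V)\cong\widetilde{HP}^\bullet(\Fg,\Fh,V).
\end{equation}
The main obstacle I foresee is the verification that $B$ and $d_{\rm K}$ genuinely preserve the filtration and that on the associated graded the coaction is trivialized in the sense required; this involves unwinding the definition of the induced $R(\Fg)$-action from the $U(\Fg)$-coaction and carefully checking the Poincaré dualization of the argument in Theorem \ref{g-U(g) spectral sequence}, in particular confirming that the induced operator $[d_{\rm K}]$ on the quotient complex $F_{j+1}V/F_jV$ vanishes, exactly as $[\p_{\rm K}]$ did in the enveloping-algebra case. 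The finite dimensionality of the associated graded quotients (needed to apply Theorem \ref{theorem-cohomology-R(g)}) follows from local finiteness of the $\Fg$-action on $V$.
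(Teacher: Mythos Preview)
Your proposal is correct and follows essentially the same route as the paper's proof: both use the Jara--Stefan filtration on $V$, check that all four (co)boundaries respect it, observe that the associated graded pieces have trivial $\Fg$-coaction (hence trivial $R(\Fg)$-action and $d_{\rm K}=0$), and then invoke the van Est isomorphism of Theorem~\ref{theorem-cohomology-R(g)} on the $E_1$-pages.

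One point you gloss over that the paper handles explicitly: to show that $b$ and $B$ respect the filtration you need not only that the $R(\Fg)$-\emph{action} preserves $F_pV$ (which, as you note, follows from $v\cdot f=\ve(f)v+f(v\ns{-1})v\ns{0}$ and the $\Fg$-coaction property), but also that the $R(\Fg)$-\emph{coaction} does. The paper proves the latter by writing $v\mapsto\sum_i f^i\ot v_i$ and invoking \cite[Lemma 1.1]{Hoch-book} to produce $u_j\in U(\Fg)$ with $f^i(u_j)=\d^i_j$, whence $v_{i_0}=\sum_i f^i(u_{i_0})v_i=v\cdot u_{i_0}\in F_pV$ since $F_pV$ is a $\Fg$-submodule. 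This small argument is the only ingredient missing from your outline.
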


\begin{proof}
Since $V$ is a unimodular stable AYD module over $\Fg$, by Lemma \ref{lemma-relative-Koszul} the relative perturbed Koszul complex $(W(\Fg, \Fh, V), d_{\rm CE} + d_{\rm K})$ is well defined. On the other hand, since $V$ is locally finite as a $\Fg$-module and locally conilpotent as a $\Fg$-comodule, it is an AYD module over $R(\Fg)$ by Proposition \ref{proposition-AYD-R(g)}. Together with the assumption that $V$ is stable over $R(\Fg)$, the Hopf-cyclic complex $(C(R(\Fg),V), b + B)$ is well defined.

\medskip

Since $V$ is a unimodular SAYD module over  $\Fg$, $V_{\d}:= V\ot\Cb_{\d}$ is an SAYD module over $\Fg$, where $\d$ is the trace of the adjoint representation of the Lie algebra $\Fg$ on itself. Therefore, by \cite[Lemma 6.2]{JaraStef} we have the filtration $V = \cup_{p \in \Zb}F_pV$ defined as $F_0V = V^{\rm co\,U(\Fg)}$ and inductively
\begin{equation}
F_{p+1}V/F_pV = (V/F_pV)^{\rm co\,U(\Fg)}.
\end{equation}
This filtration naturally induces an analogous filtration on the complexes by
\begin{equation}
F_jW(\Fg, \Fh, V) = W(\Fg, \Fh, F_jV), \quad \text{and} \quad F_jC(R(\Fg),V) = C(R(\Fg),F_jV).
\end{equation}

We now show that the (co)boundary maps $d_{\rm CE},d_{\rm K},b,B$ respect this filtration. To do so for $d_{\rm K}$ and $d_{\rm CE}$, it  suffices to show that the $\Fg$-action and $\Fg$-coaction on $V$ respect the filtration, which is done in  the proof of Theorem \ref{g-U(g) spectral sequence}. Similarly, to show that the Hochschild coboundary $b$ and the Connes boundary map $B$ respect the filtration we need to show that $R(\Fg)$-action and $R(\Fg)$-coaction respects the filtration.

\medskip

Indeed, for an element $v \in F_pV$, writing the $U(\Fg)$-coaction as
\begin{equation}
v\snsb{-1} \ot v\snsb{0} = 1 \ot v + v\ns{-1} \ot v\ns{0}, \qquad v\ns{-1} \ot v\ns{0} \in U(\Fg) \ot F_{p-1}V,
\end{equation}
we get
\begin{equation}
v \cdot f = \ve(f)v + f(v\ns{-1})v\ns{0} \in F_pV
\end{equation}
for any $f \in R(\Fg)$. This proves that the $R(\Fg)$-action respects the filtration. To prove that $R(\Fg)$-coaction respects the filtration, we first write the coaction on $v \in F_pV$ as
\begin{equation}
v \mapsto \sum_i f^i \ot v_i \in R(\Fg) \ot V.
\end{equation}
By \cite[Lemma 1.1]{Hoch-book} there are elements $u_j \in U(\Fg)$ such that $f^i(u_j) = \d^i_j$. Hence, for any $v_{i_0}$ we have
\begin{equation}
v_{i_0} = \sum_i f^i(u_{i_0}) v_i = v\cdot u_{i_0} \in F_pV.
\end{equation}
We have proved that the $R(\Fg)$-coaction respects the filtration.

\medskip

Next, we write the $E_1$ terms of the associated spectral sequences. We have
\begin{align}
\begin{split}
& E_1^{j,i}(\Fg,\Fh,V) = H^{i+j}(F_jW(\Fg, \Fh, V)/F_{j-1}W(\Fg, \Fh, V)) \\
& = H^{i+j}(W(\Fg, \Fh, F_jV/F_{j-1}V)) = \bigoplus_{i+j = n \, {\rm mod\,2}}H^n(\Fg, \Fh, F_jV/F_{j-1}V),
\end{split}
\end{align}
where in the last equality we used the fact that $F_{j}V/F_{j-1}V$ has trivial $\Fg$-coaction.

\medskip

Similarly we have
\begin{align}
\begin{split}
& E_1^{j,i}(R(\Fg),V) = H^{i+j}(F_jC(R(\Fg),V)/F_{j-1}C(R(\Fg),V)) \\
& = H^{i+j}(C(R(\Fg),F_jV/F_{j-1}V)) = \bigoplus_{i+j = n\, {\rm mod\,2}}H^n_{\rm coalg}(R(\Fg),F_jV/F_{j-1}V)),
\end{split}
\end{align}
where in the last equality we may use Theorem \ref{Theorem-F-Levi} due to the fact that $F_jV/F_{j-1}V$ has trivial $\Fg$-coaction, hence trivial $R(\Fg)$-action.

\medskip

Finally, under the hypotheses of the theorem, a quasi-isomorphism between the $E_1$-terms is given by Theorem \ref{Theorem-main}.
\end{proof}

\begin{remark}{\rm
If $V$ has trivial $\Fg$-comodule structure, then $d_{\rm K} = 0$ and hence the above theorem gives Theorem \ref{Theorem-main}.
}\end{remark}

\subsection{Hopf-cyclic cohomology of $R(\Fg_2) \acl U(\Fg_1)$}

In this subsection we compute the Hopf-cyclic cohomology of the bicrossed product Hopf algebra $R(\Fg_2) \acl U(\Fg_1)$, associated to a matched pair $(\Fg_1,\Fg_2)$ of Lie algebras, in terms of the relative Lie algebra cohomology.

\medskip

Let $V$ be an AYD module over a double crossed sum Lie algebra $\Fg_1 \bowtie \Fg_2$ with a locally finite $\Fg_1 \bi \Fg_2$-action and a locally conilpotent $\Fg_1 \bi \Fg_2$-coaction. Then by Proposition \ref{aux-46}, $V$ is a YD module over the bicrossed product Hopf algebra $R(\Fg_2) \acl U(\Fg_1)$.

\medskip

Let also $(\delta,\sigma)$ be the canonical modular pair in invalution for the Hopf algebra $R(\Fg_2) \acl U(\Fg_1)$ (Theorem \ref{theorem-MPI}). Then $\, ^{\sigma}V_{\delta} := V \ot \, ^{\sigma}\mathbb{C}_{\delta}$ is an AYD module over the bicrossed product Hopf algebra $R(\Fg_2) \acl U(\Fg_1)$.

\medskip

Finally, let us assume that $V$ is stable over $R(\Fg_2)$ and $U(\Fg_1)$. Then $\, ^{\sigma}V_{\delta}$ is stable if and only if
\begin{align}
\begin{split}
& v \ot 1_{\mathbb{C}} = (v^{\sns{0}}\snsb{0} \ot 1_{\mathbb{C}}) \cdot (v^{\sns{-1}} \acl v^{\sns{0}}\snsb{-1})(\sigma \acl 1) \\
& = (v^{\sns{0}}\snsb{0} \ot 1_{\mathbb{C}}) \cdot (v^{\sns{-1}} \acl 1)(1 \acl v^{\sns{0}}\snsb{-1})(\sigma \acl 1) \\
& = (v^{\sns{0}}\snsb{0} \cdot v^{\sns{-1}} \ot 1_{\mathbb{C}}) \cdot (1 \acl v^{\sns{0}}\snsb{-1})(\sigma \acl 1) \\
& = ((v^{\sns{0}}\snsb{0} \cdot v^{\sns{-1}}) \cdot v^{\sns{0}}\snsb{-1}\delta(v^{\sns{0}}\snsb{-2}) \ot 1_{\mathbb{C}}) \cdot (\sigma \acl 1) \\
& = v\snsb{0}\delta(v\snsb{-1})\sigma \ot 1_{\mathbb{C}},
\end{split}
\end{align}
where in the fourth equality we have used Proposition \ref{aux-59}. In other words, with $V$ satisfying the hypothesis of the Proposition \ref{aux-59}, $\, ^{\sigma}V_{\delta}$ is stable if and only if
\begin{equation}\label{aux-47}
v\snsb{0}\delta(v\snsb{-1})\sigma = v.
\end{equation}

\begin{theorem}\label{aux-63}
Let $(\Fg_1, \Fg_2)$ be a matched pair of Lie algebras and $\Fg_2 = \Fh \ltimes \Fl$ be a Levi decomposition such that $\Fh$ is $\Fg_1$-invariant and $\Fh$ acts on $\Fg_1$ by derivations. Let $V$ be a unimodular SAYD module over $\Fg_1 \bowtie \Fg_2$ with a locally finite action and a locally conilpotent coaction. Finally assume that $\, ^{\sigma}V_{\delta}$ is stable. Then we have
\begin{equation}
HP(R(\Fg_2) \acl U(\Fg_1),\, ^{\sigma}V_{\delta}) \cong \widetilde{HP}(\Fg_1 \bowtie \Fg_2, \Fh, V).
\end{equation}
\end{theorem}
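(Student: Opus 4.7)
The plan is to imitate the strategy of Theorem \ref{theorem-g-R(g)spectral sequence} but now in the bicrossed product setting, using a filtration argument to reduce the general SAYD case to the induced case handled by Theorem \ref{Theorem-main}. Since $V$ is a unimodular SAYD over $\Fg_1 \bowtie \Fg_2$, the module $V_\delta = V \ot \Cb_\delta$ becomes an SAYD over $\Fg_1 \bowtie \Fg_2$, so by \cite[Lemma 6.2]{JaraStef} (as lifted to universal enveloping algebras via Corollary \ref{corollary-58}) we have the coinvariant filtration $V = \cup_{p\ge 0}F_pV$, where $F_0V = V^{\mathrm{co}\,U(\Fg_1\bowtie\Fg_2)}$ and $F_{p+1}V/F_pV = (V/F_pV)^{\mathrm{co}\,U(\Fg_1\bowtie\Fg_2)}$. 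By Proposition \ref{auxx-1} this coincides with the filtrations coming from the $\Fg_1$- and $\Fg_2$-coactions separately, so each $F_pV$ is stable under both actions and coactions, and each quotient $F_pV/F_{p-1}V$ carries the trivial $\Fg_1 \bowtie \Fg_2$-coaction (equivalently, the trivial $R(\Fg_2) \acl U(\Fg_1)$-coaction and trivial $R(\Fg_2)$-action).

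First I would assemble the relevant bicomplexes. On the Hopf side, Proposition \ref{aux-46} gives the YD structure and condition \eqref{aux-47} together with the stability hypothesis on $^\sigma V_\delta$ yields the SAYD structure on $R(\Fg_2)\acl U(\Fg_1)$, so the standard Hopf-cyclic bicomplex $C^{\bullet,\bullet}(\Uc,\Fc,{}^\sigma V_\delta)$ of \eqref{UF} is defined, and Propositions \ref{mixCE} and \ref{mixCE*} reduce it via antisymmetrization and Poincar\'e duality to a mixed bicomplex of the form $V \ot \wg^{\bullet}\Fg_1^* \ot \Fc^{\ot \bullet}$ with the Chevalley--Eilenberg--style vertical coboundary $\partial_{\Fg_1^*}$ and a Hochschild-style horizontal coboundary $b_\Fc^*$; the only change from the induced case is that the coaction $\Db_{V\ot\wg\Fg_1^*}$ now has a genuine $\Fg_2$-contribution through $\Db_{\Fg_1}$ and the $\Fc$-coaction on $V$. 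On the Lie algebra side, Lemma \ref{lemma-natural} identifies the relative perturbed Koszul complex $W(\Fg_1\bowtie\Fg_2,\Fh,V)$ with the total complex of a bicomplex built on $(V\ot\wg^p\Fg_1^*\ot\wg^q\Fl^*)^\Fh$ with coboundaries consisting of a Chevalley--Eilenberg piece and a Koszul piece coming from the full $\Fg_1\bowtie\Fg_2$-coaction.

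Next I would filter both bicomplexes by $F_pV$, yielding $F_jC^{\bullet,\bullet} = \,^\sigma(F_jV)_\delta \ot \wg^\bullet\Fg_1^* \ot \Fc^{\ot \bullet}$ and $F_jW = W(\Fg_1\bowtie\Fg_2,\Fh,F_jV)$, and verify that all four coboundaries $(\partial_{\Fg_1^*}, b_\Fc^*)$ and $(d_{\mathrm{CE}}, d_{\mathrm{K}})$ preserve these filtrations. For the Chevalley--Eilenberg and Hochschild pieces, this is immediate from the fact that each $F_pV$ is a sub-$(\Fg_1\bowtie\Fg_2)$-module. For the Koszul piece $d_{\mathrm{K}}$ and the cyclic-twisted face $b_\Fc^*$ that brings in the coaction, the decomposition $\Db(v) = 1\ot v + v\snsb{-1}\ot v\snsb{0}$ with $v\snsb{-1}\ot v\snsb{0} \in U(\Fg_1\bowtie\Fg_2)^+ \ot F_{p-1}V$ for $v\in F_pV$ shows the offending terms land in the previous filtration step, exactly as in the proofs of Theorem \ref{g-U(g) spectral sequence} and Theorem \ref{theorem-g-R(g)spectral sequence}. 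The van Est map $\Vc$ of \eqref{VE} is visibly filtration-preserving because it does not touch $V$.

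Then I would pass to the $E_1$-pages of the associated spectral sequences. Since $F_jV/F_{j-1}V$ has trivial $\Fg_1\bowtie\Fg_2$-coaction, both Koszul-type perturbations $d_{\mathrm{K}}$ on the Lie side and the coaction-twisted last face on the Hopf side vanish on the associated graded; consequently $E_1^{j,i}$ for the Hopf-cyclic complex reduces to the bicomplex of \eqref{UF+*} with $^\sigma(F_jV/F_{j-1}V)_\delta$ replacing the induced module, which is precisely the setting of Theorem \ref{Theorem-main} applied to the ``induced'' module $F_jV/F_{j-1}V$ equipped with trivial $\Fc$-coaction and its residual $\Fg_1$-action; and $E_1^{j,i}$ for the relative perturbed Koszul complex reduces to $H^{i+j}(W(\Fg_1\bowtie\Fg_2,\Fh,F_jV/F_{j-1}V))$, i.e.\ the relative Chevalley--Eilenberg cohomology of $\Fg_1\bowtie\Fg_2$ with coefficients in $F_jV/F_{j-1}V$. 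Theorem \ref{Theorem-main} now gives a quasi-isomorphism $\Vc\colon E_1^{j,i}(\mathrm{Hopf}) \xrightarrow{\sim} E_1^{j,i}(\mathrm{Lie})$ for each $j$, and standard spectral sequence comparison (both filtrations are exhaustive and bounded below on each fixed total degree, modulo the usual periodicity/convergence check, which is where I expect the main technical obstacle to lie) yields the desired isomorphism on periodic cyclic cohomology. The principal subtlety to nail down carefully is therefore not the $E_1$-identification, which is essentially a consequence of Theorem \ref{Theorem-main} and the triviality of the coaction on the associated graded, but rather the convergence of the spectral sequence in the periodic setting and the verification that the filtration $F_pV$ is compatible with the $\Zb/2$-periodicity of $HP$ (which is the same technical point that appears in the proof of Theorem \ref{g-U(g) spectral sequence}).
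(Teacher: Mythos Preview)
Your overall strategy matches the paper's: filter by the Jara--\c{S}tefan coinvariant filtration, observe that the associated graded has trivial coaction, and invoke the induced-case result (Corollary~\ref{corollary-matched-pair-Lie-algebra-cohomology}, which is the specialization of Theorem~\ref{Theorem-main}) on each $E_1$-term. However, there is a genuine technical gap in how you set up the Hopf side. You invoke the bicocyclic module $C^{\bullet,\bullet}(\Uc,\Fc,{}^\sigma V_\delta)$ of \eqref{UF} and then apply Propositions~\ref{mixCE} and~\ref{mixCE*} \emph{before} filtering. But that bicocyclic module and those two propositions are developed in Subsection~5.2.1 specifically for \emph{induced} $(\Fg,\Fc)$-modules, where the $U(\Fg_1)$-coaction on $V$ is trivial. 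In the general SAYD case the replacement object is the cylindrical module $\FZ(\Hc,R(\Fg_2);{}^\sigma V_\delta)$ of \cite[Theorem~3.16]{MoscRang09}, with the horizontal and vertical operators \eqref{horizontal-operators}--\eqref{vertical-operators} now involving the full $\Hc$-coaction $v\pr{-1}\ot v\pr{0}$; this is \emph{not} bicocyclic, so the antisymmetrization/Poincar\'e reduction does not apply directly, and your ``mixed bicomplex $V\ot\wedge^\bullet\Fg_1^*\ot\Fc^{\ot\bullet}$ with a coaction-twisted last face'' is not something the paper's machinery produces.

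The paper's fix is precisely to reverse the order of operations: first pass to $\FZ(\Hc,R(\Fg_2);{}^\sigma V_\delta)$, filter it by $F_pV$, and only on the associated graded---where ${}^\sigma(F_jV/F_{j-1}V)_\delta$ has trivial $U(\Fg_1)$-comodule and $R(\Fg_2)$-module structure, hence is an ``$R(\Fg_2)$-SAYD'' in the sense of \cite{MoscRang09}---invoke \cite[Proposition~3.16]{MoscRang09} to conclude that $\FZ$ becomes bicocyclic, and \emph{then} apply Proposition~\ref{mixCE}. After that your $E_1$-comparison goes through exactly as you describe. So the missing ingredient is not conceptual but structural: you must carry the filtration on the cylindrical complex rather than on a bicomplex that does not yet exist.
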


\begin{proof}
Let $C(U(\Fg_1) \cl R(\Fg_2),\, ^{\sigma}V_{\delta})$ be the complex computing the Hopf-cyclic cohomology of the bicrossed product Hopf algebra $\mathcal{H} =  R(\Fg_2) \acl U(\Fg_1)$ with coefficients in the SAYD module $^{\sigma}V_{\delta}$.

\medskip

By \cite[Theorem 3.16]{MoscRang09}, this complex is quasi-isomorphic with the total complex of the mixed complex $\FZ(\mathcal{H},R(\Fg_2);\, ^{\sigma}V_{\delta})$ whose cylindrical structure is given by the horizontal operators
\begin{align}\label{horizontal-operators}
\begin{split}
& \hd_{0}(v \ot \td{f} \ot \td{u}) = v \ot 1 \ot f^1 \odots f^p \ot \td{u} \\
& \hd_{i}(v \ot \td{f} \ot \td{u}) = v \ot f^1 \odots \D(f^i)\odots f^p \ot \td{u} \\
& \hd_{p+1}(v \ot \td{f} \ot \td{u}) = v\pr{0} \ot f^1 \odots f^p \ot \td{u}^{\pr{-1}}v\pr{-1} \rt 1_{R(\Fg_2)} \ot \td{u}^{\pr{0}} \\
& \overset{\ra}{\sigma}_{j}(v \ot \td{f} \ot \td{u}) = v \ot f^1 \odots \ve(f^{j+1}) \odots f^p \ot \td{u} \\
&\hta(v \ot \td{f} \ot \td{u}) = v\pr{0} f^1\ps{1} \ot S(f^1\ps{2}) \cdot (f^2 \odots f^p \ot \td{u}^{\pr{-1}}v\pr{-1} \rt 1_{R(\Fg_2)} \ot \td{u}^{\pr{0}}),
\end{split}
\end{align}
and the vertical operators
\begin{align}\label{vertical-operators}
\begin{split}
&\vd_{0}(v \ot \td{f} \ot \td{u}) = v \ot \td{f} \ot \dot 1 \ot u^1\odots  u^q \\
&\vd_{i}(v \ot \td{f} \ot \td{u}) = v \ot \td{f} \ot u^0 \odots \D(u^i) \odots u^q \\
&\vd_{q+1}(v \ot \td{f} \ot \td{u}) = v\pr{0} \ot \td{f} \ot u^1 \odots u^q \ot \wbar{v\pr{-1}} \\
&\vs_j(v \ot \td{f} \ot \td{u}) = v \ot \td{f} \ot u^1 \odots \ve(u^{j+1}) \odots u^q \\
&\vta(v \ot \td{f} \ot \td{u}) = v\pr{0} u^1\ps{4}S^{-1}(u^1\ps{3} \rt 1_{R(\Fg_2)}) \ot \\
& S(S^{-1}(u^1\ps{2}) \rt 1_{R(\Fg_2)}) \cdot \left( S^{-1}(u^1\ps{1}) \rt \td{f} \ot S(u^1\ps{5}) \cdot (u^2 \odots u^q \ot \wbar{v\pr{-1}})
\right)
\end{split}
\end{align}
for any $v \in \, ^{\sigma}V_{\delta}$.

\medskip

Here,
\begin{equation}
U(\Fg_1)^{\ot \, q} \to \mathcal{H} \ot U(\Fg_1)^{\ot \, q}, \quad \td{u} \mapsto \td{u}^{\pr{-1}} \ot \td{u}^{\pr{0}}
\end{equation}
arises from the left $\mathcal{H}$-coaction on $U(\Fg_1)$ that coincides with the original $R(\Fg_2)$-coaction \cite[Proposition 3.20]{MoscRang09}. On the other hand, we have $U(\Fg_1) \cong \mathcal{H} \ot_{R(\Fg_2)} \mathbb{C} \cong \mathcal{H}/\mathcal{H}R(\Fg_2)^+$ as coalgebras via the map $(f \acl u) \ot_{R(\Fg_2)} 1_{\mathbb{C}} \mapsto \ve(f)u$, and $\wbar{f \acl u} = \ve(f)u$ denotes the corresponding class.

\medskip

Since $V$ is a unimodular SAYD module over $\Fg_1 \bowtie \Fg_2$, it admits the filtration $(F_pV)_{p \in \mathbb{Z}}$ defined as before. We recall from Proposition \ref{proposition-comodule-doublecrossed-sum} that $\Fg_1 \bowtie \Fg_2$-coaction is the summation of $\Fg_1$-coaction and $\Fg_2$-coaction. Therefore, since the $\Fg_1 \bi \Fg_2$-coaction respects the filtration, we conclude that the $\Fg_1$-coaction and $\Fg_2$-coaction respect the filtration. Similarly, since the $\Fg_1 \bi \Fg_2$-action respects the filtration, we conclude that the $\Fg_1$-action and $\Fg_2$-action respects the filtration. Finally, by a similar argument to that in the proof of Theorem \ref{theorem-g-R(g)spectral sequence} we can say that the $R(\Fg_2)$-action and $R(\Fg_2)$-coaction respect the filtration.

\medskip

As a result, the (co)boundary maps $d_{\rm CE}$ and $d_{\rm K}$ of the complex $W(\Fg_1 \bi \Fg_2, \Fh, V)$, and $b$ and $B$ of $C(U(\Fg_1) \cl R(\Fg_2), \, ^{\sigma}V_{\delta})$ respect the filtration.

\medskip

Next, we proceed to the $E_1$ terms of the associated spectral sequences. We have
\begin{align}\label{aux-48}
\begin{split}
& E_1^{j,i}(R(\Fg_2) \acl U(\Fg_1), \, ^{\sigma}V_{\delta})  \\
&= H^{i+j}(F_jC(U(\Fg_1) \cl R(\Fg_2),\, ^{\sigma}V_{\delta})/F_{j-1}C(U(\Fg_1) \cl R(\Fg_2),\, ^{\sigma}V_{\delta})),
\end{split}
\end{align}
where
\begin{align}
\begin{split}
& F_jC(U(\Fg_1) \cl R(\Fg_2),\, ^{\sigma}V_{\delta})/F_{j-1}C(U(\Fg_1) \cl R(\Fg_2),\, ^{\sigma}V_{\delta})  \\
& =C(U(\Fg_1) \cl R(\Fg_2), F_j \, ^{\sigma}V_{\delta}/F_{j-1} \, ^{\sigma}V_{\delta}).
\end{split}
\end{align}
Since $$F_j \, ^{\sigma}V_{\delta}/F_{j-1} \, ^{\sigma}V_{\delta} = \, ^{\sigma}(F_jV/F_{j-1}V)_{\delta} =: \, ^{\sigma}\wbar{V}_{\delta}$$ has trivial $\Fg_1 \bowtie \Fg_2$-comodule structure, its $U(\Fg_1)$-comodule structure and $R(\Fg_2)$-module structure are also trivial. Therefore, $\, ^{\sigma}\wbar{V}_{\delta}$ is an $R(\Fg_2)$-SAYD in the sense of \cite{MoscRang09}. In this case, by \cite[Proposition 3.16]{MoscRang09}, $\FZ(\mathcal{H},R(\Fg_2);\, ^{\sigma}V_{\delta})$ is a bicocyclic module and the cohomology in \eqref{aux-48} is computed from the total complex of the bicocyclic complex
\begin{align}\label{bicocyclic-bicomplex}
\begin{xy}  \xymatrix{  \vdots\ar@<.6 ex>[d]^{\uparrow B} & \vdots\ar@<.6 ex>[d]^{\uparrow B}
 &\vdots \ar@<.6 ex>[d]^{\uparrow B} & &\\
 ^\s{\overline{V}}_\d \ot U(\Fg_1)^{\ot 2} \ar@<.6 ex>[r]^{\hb}\ar@<.6
ex>[u]^{  \uparrow b  } \ar@<.6 ex>[d]^{\uparrow B}&
  ^\s{\overline{V}}_\d\ot U(\Fg_1)^{\ot 2}\ot  R(\Fg_2)   \ar@<.6 ex>[r]^{\hb}\ar@<.6 ex>[l]^{\hB}\ar@<.6 ex>[u]^{  \uparrow b  }
   \ar@<.6 ex>[d]^{\uparrow B} & ^\s{\overline{V}}_\d\ot U(\Fg_1)^{\ot 2}\ot R(\Fg_2)^{\ot 2}
   \ar@<.6 ex>[r] \ar@<.6 ex>[l]^{~~\hB} \ar@<.6 ex>[u]^{  \uparrow b  }
   \ar@<.6 ex>[d]^{\uparrow B}& \ar@<.6 ex>[l] \hdots&\\
^\s{\overline{V}}_\d \ot U(\Fg_1) \ar@<.6 ex>[r]^{\hb}\ar@<.6 ex>[u]^{  \uparrow b  }
 \ar@<.6 ex>[d]^{\uparrow B}&  ^\s{\overline{V}}_\d \ot U(\Fg_1) \ot R(\Fg_2) \ar@<.6 ex>[r]^{\hb}
 \ar@<.6 ex>[l]^{\hB}\ar@<.6 ex>[u]^{  \uparrow b  } \ar@<.6 ex>[d]^{\uparrow B}
 &^\s{\overline{V}}_\d\ot U(\Fg_1) \ot  R(\Fg_2)^{\ot 2}  \ar@<.6 ex>[r] \ar@<.6 ex>[l]^{\hB}\ar@<.6 ex>[u]^{  \uparrow b  }
  \ar@<.6 ex>[d]^{\uparrow B}&\ar@<.6 ex>[l] \hdots&\\
^\s{\overline{V}}_\d  \ar@<.6 ex>[r]^{\hb}\ar@<.6 ex>[u]^{  \uparrow b  }&
^\s{\overline{V}}_\d\ot R(\Fg_2) \ar@<.6 ex>[r]^{\hb}\ar[l]^{\hB}\ar@<.6
ex>[u]^{  \uparrow b  }&^\s{\overline{V}}_\d\ot R(\Fg_2)^{\ot 2}  \ar@<.6
ex>[r]^{~~\hb}\ar@<.6 ex>[l]^{\hB}\ar@<1 ex >[u]^{  \uparrow b  }
&\ar@<.6 ex>[l]^{~~\hB} \hdots&}
\end{xy}
\end{align}

 Moreover, by Proposition \ref{mixCE} the total complex of the bicomplex \eqref{bicocyclic-bicomplex} is quasi-isomorphic to the total complex of the bicomplex
\begin{align}
\begin{xy} \xymatrix{  \vdots & \vdots
 &\vdots &&\\
 \;^\s{\overline{V}}_\d\ot \wg^2{\Fg_1}^\ast  \ar[u]^{d_{\rm CE}}\ar[r]^{b^\ast_{R(\Fg_2)}\;\;\;\;\;\;\;\;\;\;}&  \;^\s{\overline{V}}_\d\ot \wg^2{\Fg_1}^\ast\ot R(\Fg_2) \ar[u]^{d_{\rm CE}} \ar[r]^{b^\ast_R(\Fg_2)}& \;^\s{\overline{V}}_\d\ot \wg^2{\Fg_1}^\ast\ot R(\Fg_2)^{\ot 2} \ar[u]^{d_{\rm CE}} \ar[r]^{\;\;\;\;\;\;\;\;\;\;\;\;\;\;\;\;\;\;\;\;\;\;\;b^\ast_{R(\Fg_2)}} & \hdots&  \\
 \;^\s{\overline{V}}_\d\ot {\Fg_1}^\ast  \ar[u]^{d_{\rm CE}}\ar[r]^{b^\ast_{R(\Fg_2)}\;\;\;\;\;\;\;\;\;\;\;\;\;}& \;^\s{\overline{V}}_\d\ot {\Fg_1}^\ast\ot R(\Fg_2) \ar[u]^{d_{\rm CE}} \ar[r]^{b^\ast_{R(\Fg_2)}}& \;^\s{\overline{V}}_\d\ot  {\Fg_1}^\ast\ot R(\Fg_2)^{\ot 2} \ar[u]^{d_{\rm CE}} \ar[r]^{\;\;\;\;\;\;\;\;\;\;\;\;\;\;\;\;\;\;\;b^\ast_R(\Fg_2) }& \hdots&  \\
   \;^\s{\overline{V}}_\d\ar[u]^{d_{\rm CE}}\ar[r]^{b^\ast_{R(\Fg_2)}~~~~~~~}& \;^\s{\overline{V}}_\d\ot R(\Fg_2) \ar[u]^{d_{\rm CE}}\ar[r]^{b^\ast_R(\Fg_2)}& \;^\s{\overline{V}}_\d\ot R(\Fg_2)^{\ot 2} \ar[u]^{d_{\rm CE}} \ar[r]^{\;\;\;\;\;\;\;\;\;\;\;\;\;\;\;b^\ast_{R(\Fg_2)}} & \hdots& }
\end{xy}
\end{align}
where $b^\ast_{R(\Fg_2)}$ is the coalgebra Hochschild coboundary with coefficients in the $R(\Fg_2)$-comodule $\;^\s{\overline{V}}_\d \ot \wg^q{\Fg_1}^\ast$.

\medskip

Similarly,
\begin{align}\label{aux-49}
\begin{split}
& E_1^{j,i}(\Fg_1 \bowtie \Fg_2,\Fh,V) = H^{i+j}(F_jW(\Fg_1 \bowtie \Fg_2,\Fh,V)/F_{j-1}W(\Fg_1 \bowtie \Fg_2,\Fh,V)) \\
& = H^{i+j}(W(\Fg_1 \bowtie \Fg_2,\Fh,F_jV/F_{j-1}V)) = \bigoplus_{i+j = n \, {\rm mod\,2}}H^n(\Fg_1 \bi \Fg_2, \Fh, F_jV/F_{j-1}V)
\end{split}
\end{align}
where the last equality follows from the fact that $F_jV/F_{j-1}V$ has trivial $\Fg_1 \bowtie \Fg_2$-comodule structure.

\medskip

Finally, the quasi isomorphism between the $E_1$-terms \eqref{aux-48} and \eqref{aux-49} is given by the Corollary \ref{corollary-matched-pair-Lie-algebra-cohomology}.
\end{proof}

\begin{remark}
{\rm
In case of trivial $\Fg_1 \bowtie \Fg_2$-coaction, this Theorem becomes Corollary \ref{corollary-matched-pair-Lie-algebra-cohomology}. In this case, the $U(\Fg_1)$-coaction and $R(\Fg_2)$-action are trivial, therefore the condition \eqref{aux-47} is obvious.}
\end{remark}

\section{Computations}

\subsection{Computation of $HP(\Hc_{1\rm S}, V_\d)$}

In this subsection we compute the cohomology of the complex $C(\Uc \cl \Fc, V_{\delta})$ with $V = S(s\ell(2)^\ast)\nsb{1}$, which  computes  the periodic Hopf cyclic cohomology
\begin{equation}
HP(\Hc_{1\rm S}\cop,V_{\delta}) = HP(\Fc \acl \Uc,V_{\delta}).
\end{equation}
We note that since $V_{\delta}$ is also an SAYD module over $U(s\ell(2))$ with the same action and coaction due to the unimodularity of $s\ell(2)$, we have
\begin{equation}\label{aux-filtration-on-V-d}
F_0V_{\delta} = F_0V \ot \Cb_{\delta} = \Cb\Big\langle R^X,R^Y,R^Z\Big\rangle \ot \mathbb{C}_{\delta}, \quad F_pV_{\delta} = \left\{\begin{array}{cc}
                         V_{\delta} & p \geq 1 \\
                         0 & p<0.
                       \end{array}
\right.
\end{equation}

We will first derive a Cartan type homotopy formula for Hopf cyclic cohomology, as in \cite{MoscRang07}. One notes that in \cite{MoscRang07} the SAYD module was one dimensional. We have to upgrade the homotopy formula to fit our situation. To this end, let
\begin{equation}
D_Y:\Hc \to \Hc, \quad D_Y(h) := hY.
\end{equation}
Obviously, $D_Y$ is an $\Hc$-linear coderivation. Hence the operators
\begin{align}
\begin{split}
& \mathcal{L}_{D_Y}:C^n(\Uc \cl \Fc,V_{\delta}) \to C^n(\Uc \cl \Fc,V_{\delta}) \\
& \Lc_{D_Y}(v\ot_\Hc c^0\odots c^n)=\sum_{i=0}^{n}v\ot c^0_\Hc \odots D_Y(c_i) \odots c^n,
\end{split}
\end{align}
\begin{align}
\begin{split}
& e_{D_Y}:C^n(\Uc \cl \Fc,V_{\delta}) \to C^{n+1}(\Uc \cl \Fc,V_{\delta}) \\
& e_{D_Y}(v\ot_\Hc c^0\odots c^n)=(-1)^{n}v\pr{0}\ot_\Hc c^0\ps{2}\ot c^1\odots c^n\ot v\pr{-1}D_Y(c^0\ps{1}),
\end{split}
\end{align}
and
\begin{align}
\begin{split}
& E_{D_Y}:C^n(\Uc \cl \Fc,V_{\delta}) \to C^{n-1}(\Uc \cl \Fc,V_{\delta}) \\
&E_{D_Y}^{j,i}(V\ot_\Hc c^0\odots c^{n})=\\
&=(-1)^{n(i+1)}\ve(c^0)v\pr{0}\ot_\Hc c^{n-i+2}\odots c^{n+1}\ot
v\pr{-1}c_1\odots m\pr{-(j-i)}c^{j-i} \\
&\ot v\pr{-(j-i+1)}D_Y(c^{j-i+1})\ot v\pr{-(j-i+2)}c^{j-i+2}\odots
v\pr{-(n-i+1)}c^{n-i+1},
\end{split}
\end{align}
satisfy, by \cite[Proposition 3.7]{MoscRang07},
\begin{equation}
[E_{D_Y} + e_{D_Y}, b + B] = \mathcal{L}_{D_Y}.
\end{equation}

We next obtain an analogue of \cite[Lemma 3.8]{MoscRang07}.

\begin{lemma}\label{aux-68}
We have
\begin{equation}
\mathcal{L}_{D_Y} = I - \widetilde{\ad}Y,
\end{equation}
where
\begin{equation}
\widetilde{\ad}Y(v_{\delta} \ot \widetilde{f} \ot \widetilde{u}) = v_{\delta} \ot \widetilde{\ad}Y(\widetilde{f} \ot \widetilde{u}) - (v \cdot Y)_{\delta} \ot \widetilde{f} \ot \widetilde{u},
\end{equation}
and $v_{\delta} := v \ot 1_{\mathbb{C}}$.
\end{lemma}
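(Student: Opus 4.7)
\medskip

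The plan is to reduce the claim to a direct computation using the two key structural features: that $Y$ is a primitive element of $\Hc$ and that the left $\Hc$-action on $V$ is related to the right $\Hc$-action on $V_\d$ via the modular pair $(\d,\s)$. First, I would unfold the definition to write
\begin{equation*}
\Lc_{D_Y}(v_\d \ot_\Hc c^0\odots c^n) = \sum_{i=0}^n v_\d \ot_\Hc c^0\odots c^iY \odots c^n.
\end{equation*}
Since $Y \in \Fg_1 \subset \Uc$ is primitive, $\D(Y) = Y\ot 1 + 1\ot Y$, and hence
\begin{equation*}
c^iY = Yc^i - \ad Y(c^i), \qquad \ad Y(c) := [Y,c].
\end{equation*}

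Substituting this into each slot and telescoping, the sum splits as $\sum_i c^0\odots Yc^i\odots c^n$ (the diagonal left action of $Y$ on $\Hc^{\ot (n+1)}$, thanks to $\D^n(Y) = \sum_i 1\odots Y\odots 1$) minus the componentwise adjoint action, which by definition is $\wt{\ad}\,Y$ acting on $\wt{f}\ot\wt{u}$ after the identification of $\Hc^{\ot (n+1)}$ with the bicocyclic presentation $\Fc^{\ot p}\ot\Uc^{\ot q}$. Balancing the diagonal $Y$-action over $\Hc$ gives
\begin{equation*}
v_\d \ot_\Hc (Y\cdot \wt{c}) = v_\d \cdot Y \ot_\Hc \wt{c}.
\end{equation*}
At this point I would compute $v_\d\cdot Y$ via the SAYD structure \eqref{aux-action-SAYD-over-Lie-Hopf}. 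Using $\d_\Fg(Y) = \Tr(\ad Y|_{\Fg_1}) = 1$ and $S(Y) = -Y$, one obtains
\begin{equation*}
v_\d \cdot Y = \d(Y\ps{1})\,S(Y\ps{2})\cdot v = v - Y\cdot v,
\end{equation*}
which, translated to the right-action notation of the lemma, is $v_\d + (v\cdot Y)_\d$. Collecting everything gives precisely the formula $\Lc_{D_Y} = I - \wt{\ad}\,Y$ with the definition of $\wt{\ad}\,Y$ as stated.

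\medskip

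The bulk of the argument is routine, so I expect the only real obstacle to lie in bookkeeping: keeping track of the sign and placement of the modular correction when converting the left $\Hc$-action on $V$ into the right action on $V_\d$, and matching this correction with the term $-(v\cdot Y)_\d \ot \wt{f}\ot \wt{u}$ appearing in the definition of $\wt{\ad}\,Y$. In particular, one must verify that the conversion from the left Lie action $Y\cdot v$ on $V$ to the symbol $(v\cdot Y)_\d$ on $V_\d$ carries exactly the sign dictated by the identity $v_\d\cdot Y = v_\d + (v\cdot Y)_\d$ derived above; once this is pinned down, the identity $\Lc_{D_Y} = I - \wt{\ad}\,Y$ follows by inspection.
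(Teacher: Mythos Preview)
Your decomposition $c^iY = Yc^i - \ad Y(c^i)$ and the balancing argument are correct, and on the complex $C^\bullet_\Hc(\Uc\cl\Fc,V_\d)$ you do obtain
\[
\Lc_{D_Y}(v_\d\ot_\Hc\td c)=v_\d\cdot Y\ot_\Hc\td c - v_\d\ot_\Hc\ad Y(\td c)
=v_\d\ot_\Hc\td c - \bigl[v_\d\ot_\Hc\ad Y(\td c)-(v\cdot Y)_\d\ot_\Hc\td c\bigr].
\]
The gap is in the next sentence: you assert that the componentwise $\ad Y$ on $\Hc^{\ot(n+1)}$ ``by definition is $\wt\ad\,Y$ acting on $\td f\ot\td u$ after the identification.'' But the identification in question is the cocyclic isomorphism $\Theta=\Phi_2\circ\Phi_1\circ\Psi$ of \eqref{aux-76}--\eqref{aux-78}, and this map is far from being the identity on the $V_\d$ slot (for instance $\Phi_1$ sends $v_\d$ to $v_\d\cdot u^0\ps{2}$) and involves the $\Fc$-coaction on $\Uc$ in a nontrivial way. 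Showing that $\Theta$ carries your bracket term to the $\wt\ad\,Y$ of the lemma is precisely the content of the paper's proof, which verifies the key identity $\Phi D_Y\Phi^{-1}(u\cl f)=uY\cl f$ via the matched-pair compatibilities and then tracks the result through $\Psi$, $\Phi_1$, $\Phi_2$ one at a time. Without that verification, the equality $\Lc_{D_Y}=I-\wt\ad\,Y$ on the bicocyclic side $\FZ^{\bullet,\bullet}$ is unproved.

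A smaller point: your derivation of $v_\d\cdot Y=v_\d+(v\cdot Y)_\d$ via \eqref{aux-action-SAYD-over-Lie-Hopf} is misplaced---that formula is for the \emph{induced} SAYD modules of \S\ref{subsection-induced hopf-cyclic}, whereas here $V$ carries a genuine right $\Hc$-action and $V_\d=V\ot\Cb_\d$ is the tensor SAYD. The identity you need follows directly from the tensor action, $(v\ot 1_\Cb)\cdot Y=(v\cdot Y\ps{1})\ot\d(Y\ps{2})1_\Cb$, together with $\d(Y)=1$, exactly as the paper recalls at the end of its proof.
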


\begin{proof}
Let us first recall the isomorphism
\begin{equation}
\Theta := \Phi_2 \circ \Phi_1 \circ \Psi:C^{\bullet}_{\Hc}(\Uc \cl \Fc,V_{\delta}) \to \FZ^{\bullet,\bullet}
\end{equation}
of cocyclic modules. By \cite{MoscRang09}, we know that
\begin{align}\label{aux-76}
\begin{split}
& \Psi(v_{\delta} \ot_{\Hc} u^0 \cl f^0 \ot \ldots \ot u^n \cl f^n)  \\
& =v_{\delta} \ot_{\mathcal{H}} {u^0}^{\pr{-n-1}}f^0 \ot \ldots \ot {u^0}^{\pr{-1}} \ldots \ot {u^n}^{\pr{-1}}f^n \ot {u^0}^{\pr{0}} \ot \ldots \ot {u^n}^{\pr{0}}, \\
& \Psi^{-1}(v_{\delta} \ot_{\mathcal{H}} f^0 \ot \ldots \ot f^n \ot u^0 \ot \ldots \ot u^n)  \\
& =v_{\delta} \ot_{\mathcal{H}} {u^0}^{\pr{0}} \cl S^{-1}({u^0}^{\pr{-1}})f^0 \ot \ldots \ot {u^n}^{\pr{0}} \cl S^{-1}({u^0}^{\pr{-n-1}}{u^1}^{\pr{-n}} \ldots {u^n}^{\pr{-1}})f^n,
\end{split}
\end{align}

\begin{align}\label{aux-77}
\begin{split}
& \Phi_1(v_{\delta} \ot_{\Hc} f^0 \ot \ldots \ot f^n \ot u^0 \ot \ldots \ot u^n)  \\
& =v_{\delta} \cdot u^0\ps{2} \ot_{\Fc} S^{-1}(u^0\ps{1}) \rt (f^0 \ot \ldots \ot f^n) \ot S(u^0\ps{3}) \cdot (u^1 \ot \ldots \ot u^n), \\
& \Phi_1^{-1}(v_{\delta} \ot_{\Fc} f^0 \ot \ldots \ot f^n \ot u^1 \ot \ldots \ot u^n)  \\
& =v_{\delta} \ot_{\Hc} f^0 \ot \ldots \ot f^n \ot 1_{U(\Fg_1)} \ot u^1 \ot \ldots \ot u^n,
\end{split}
\end{align}
and
\begin{align}\label{aux-78}
\begin{split}
& \Phi_2(v_{\delta} \ot_{\Fc} f^0 \ot \ldots \ot f^n \ot u^1 \ot \ldots \ot u^n)  \\
& =v_{\delta} \cdot f^0\ps{1} \ot S(f^0\ps{2}) \cdot (f^1 \ot \ldots \ot f^n \ot u^1 \ot \ldots \ot u^n), \\
& \Phi_2^{-1}(v_{\delta} \ot f^1 \ot \ldots \ot f^n \ot u^1 \ot \ldots \ot u^n)  \\
&= v_{\delta} \ot_{\Fc} 1_{\Fc} \ot f^1 \ot \ldots \ot f^n \ot u^1 \ot \ldots \ot u^n.
\end{split}
\end{align}

Here, the left $\mathcal{H}$-coaction on $\Uc$ is the one corresponding to the right $\Fc$-coaction, namely
\begin{equation}\label{aux-65}
u^{\pr{-1}} \ot u^{\pr{0}} = S(u^{\pr{1}}) \ot u^{\pr{0}}.
\end{equation}

We also recall that
\begin{align}
\begin{split}
& \Phi:\mathcal{H} = \Fc \acl \Uc \to \Uc \cl \Fc \\
& \Phi(f \acl u) = u^{\pr{0}} \cl fu^{\pr{1}}\\
& \Phi^{-1}(u \cl f) = fS^{-1}(u^{\pr{1}}) \acl u^{\pr{0}}.
\end{split}
\end{align}

Therefore, we have
\begin{align}
\begin{split}
& \Theta \circ \mathcal{L}_{D_Y} \circ \Theta^{-1} (v_{\delta} \ot f^1 \ot \ldots \ot f^n \ot u^1 \ot \ldots \ot u^n)  \\
& =\Theta \circ \mathcal{L}_{D_Y} \circ \Psi^{-1} \circ \Phi_1^{-1} \circ \Phi_2^{-1} (v_{\delta} \ot f^1 \ot \ldots \ot f^n \ot u^1 \ot \ldots \ot u^n)  \\
&= \Theta \circ \mathcal{L}_{D_Y} \circ \Psi^{-1} \circ \Phi_1^{-1} (v_{\delta} \ot_{\Fc} 1_{\Fc} \ot f^1 \ot \ldots \ot f^n \ot u^1 \ot \ldots \ot u^n)  \\
& =\Theta \circ \mathcal{L}_{D_Y} \circ \Psi^{-1} (v_{\delta} \ot_{\mathcal{H}} 1_{\Fc} \ot f^1 \ot \ldots \ot f^n \ot 1_{\Uc} \ot u^1 \ot \ldots \ot u^n)  \\
& =\Theta \circ \mathcal{L}_{D_Y} (v_{\delta} \ot_{\mathcal{H}} 1_{\Uc} \cl 1_{\Fc} \ot {u^1}^{\pr{0}} \cl S^{-1}({u^1}^{\pr{-1}}) \rt f^1 \ot \ldots \\
& \ldots \ot {u^n}^{\pr{0}} \cl S^{-1}({u^1}^{\pr{-n}} \ldots {u^n}^{\pr{-1}}) \rt f^n)  \\
& =\Theta \circ \mathcal{L}_{D_Y} (v_{\delta} \ot_{\mathcal{H}} 1_{\Uc} \cl 1_{\Fc} \ot {u^1}^{\pr{0}} \cl {u^1}^{\pr{1}}f^1 \ot \ldots \ot {u^n}^{\pr{0}} \cl {u^n}^{\pr{1}} \ldots {u^1}^{\pr{n}}f^n),
\end{split}
\end{align}
where in the last equality we have used \eqref{aux-65}. In order to apply $\mathcal{L}_{D_Y}$, we make the observation that
\begin{align}
\begin{split}
& \Phi D_Y \Phi^{-1} (u \cl f) = \Phi (fS^{-1}(u^{\pr{1}}) \acl u^{\pr{0}}Y)  \\
& =(u^{\pr{0}}Y)^{\pr{0}} \cl fS^{-1}(u^{\pr{1}})(u^{\pr{0}}Y)^{\pr{1}}  \\
& ={u^{\pr{0}}\ps{1}}^{\pr{0}}Y^{\pr{0}} \cl fS^{-1}(u^{\pr{1}}){u^{\pr{0}}\ps{1}}^{\pr{1}}(u^{\pr{0}}\ps{2} \rt Y^{\pr{1}})  \\
& ={u\ps{1}}^{\pr{0}}Y^{\pr{0}} \cl fS^{-1}({u\ps{1}}^{\pr{2}}{u\ps{2}}^{\pr{1}}){u\ps{1}}^{\pr{1}}({u\ps{2}}^{\pr{0}} \rt Y^{\pr{1}})  \\
& =uY \cl f,
\end{split}
\end{align}
using the action-coaction compatibilities of a bicrossed product. Hence,
\begin{align}
\begin{split}
& \Theta \circ \mathcal{L}_{D_Y} (v_{\delta} \ot_{\mathcal{H}} 1_{\Uc} \cl 1_{\Fc} \ot {u^1}^{\pr{0}} \cl {u^1}^{\pr{1}}f^1 \ot \ldots \ot {u^n}^{\pr{0}} \cl {u^n}^{\pr{1}} \ldots {u^1}^{\pr{n}}f^n)  \\
&= \Theta (v_{\delta} \ot_{\mathcal{H}} Y \cl 1_{\Fc} \ot {u^1}^{\pr{0}} \cl {u^1}^{\pr{1}}f^1 \ot \ldots \ot {u^n}^{\pr{0}} \cl {u^n}^{\pr{1}} \ldots {u^1}^{\pr{n}}f^n)  \\
&+ \sum_{i = 1}^n \Theta (v_{\delta} \ot_{\mathcal{H}} 1_{\Uc} \cl 1_{\Fc} \ot {u^1}^{\pr{0}} \cl {u^1}^{\pr{1}}f^1 \ot \ldots \\
& \ldots \ot {u^i}^{\pr{0}}Y \cl {u^i}^{\pr{1}} \ldots {u^1}^{\pr{i}}f^i \ot \ldots \ot {u^n}^{\pr{0}} \cl {u^n}^{\pr{1}} \ldots {u^1}^{\pr{n}}f^n).
\end{split}
\end{align}

We notice that
\begin{align}
\begin{split}
& \Psi (v_{\delta} \ot_{\mathcal{H}} Y \cl 1_{\Fc} \ot {u^1}^{\pr{0}} \cl {u^1}^{\pr{1}}f^1 \ot \ldots \ot {u^n}^{\pr{0}} \cl {u^n}^{\pr{1}} \ldots {u^1}^{\pr{n}}f^n)  \\
& =v_{\delta} \ot_{\mathcal{H}} Y^{\pr{-n-1}} \cdot 1_{\Fc} \ot Y^{\pr{-n}}({u^1}^{\pr{0}})^{\pr{-n}}{u^1}^{\pr{1}}f^1 \ot \ldots \\
& \ldots \ot Y^{\pr{-1}}({u^1}^{\pr{0}})^{\pr{-1}} \ldots ({u^n}^{\pr{0}})^{\pr{-1}}{u^n}^{\pr{1}} \ldots \\
& {u^1}^{\pr{n}}f^n \ot Y^{\pr{0}} \ot ({u^1}^{\pr{0}})^{\pr{0}} \odots ({u^n}^{\pr{0}})^{\pr{0}} \\
& =v_{\delta} \ot_{\mathcal{H}} 1_{\Fc} \ot f^1 \ot \ldots \ot f^n \ot Y \ot u^1 \ot \ldots \ot u^n,
\end{split}
\end{align}
where in the last equality we have used \eqref{aux-65}. Similarly,
\begin{align}
\begin{split}
& \Psi (v_{\delta} \ot_{\mathcal{H}} 1_{\Uc} \cl 1_{\Fc} \ot {u^1}^{\pr{0}} \cl {u^1}^{\pr{1}}f^1 \ot \ldots \\
& \ldots \ot {u^i}^{\pr{0}}Y \cl {u^i}^{\pr{1}} \ldots {u^1}^{\pr{i}}f^i \ot \ldots \ot {u^n}^{\pr{0}} \cl {u^n}^{\pr{1}} \ldots {u^1}^{\pr{n}}f^n) \\
&= v_{\delta} \ot_{\mathcal{H}} 1_{\Fc} \ot f^1 \ot \ldots \ot f^n \ot 1_{\Uc} \ot u^1 \ot \ldots u^iY \ot \ldots \ot u^n.
\end{split}
\end{align}

Therefore,
\begin{align}
\begin{split}
& \Theta (v_{\delta} \ot_{\mathcal{H}} Y \cl 1_{\Fc} \ot {u^1}^{\pr{0}} \cl {u^1}^{\pr{1}}f^1 \ot \ldots \ot {u^n}^{\pr{0}} \cl {u^n}^{\pr{1}} \ldots {u^1}^{\pr{n}}f^n)  \\
& +\sum_{i = 1}^n \Theta (v_{\delta} \ot_{\mathcal{H}} 1_{\Uc} \cl 1_{\Fc} \ot {u^1}^{\pr{0}} \cl {u^1}^{\pr{1}}f^1 \ot \ldots \\
& \ldots \ot {u^i}^{\pr{0}}Y \cl {u^i}^{\pr{1}} \ldots {u^1}^{\pr{i}}f^i \ot \ldots \ot {u^n}^{\pr{0}} \cl {u^n}^{\pr{1}} \ldots {u^1}^{\pr{n}}f^n)  \\
& =\Phi_2 \circ \Phi_1 (v_{\delta} \ot_{\mathcal{H}} 1_{\Fc} \ot \widetilde{f} \ot Y \ot \widetilde{u} + v_{\delta} \ot_{\mathcal{H}} 1_{\Fc} \ot \widetilde{f} \ot 1_{\Uc} \ot \widetilde{u} \cdot Y)  \\
& =\Phi_2 (v_{\delta} \cdot Y\ps{2} \ot_{\Fc} S^{-1}(Y\ps{1}) \rt (1_{\Fc} \ot \widetilde{f}) \ot S(Y\ps{3}) \cdot \widetilde{u} + v_{\delta} \ot_{\Fc} 1_{\Fc} \ot \widetilde{f} \ot \widetilde{u} \cdot Y).
\end{split}
\end{align}

Considering the fact that $Y \in \mathcal{H}$ is primitive, and hence $\ad Y(f) = [Y,f] = Y \rt f$, we conclude that
\begin{align}
\begin{split}
& \Phi_2 (v_{\delta} \cdot Y\ps{2} \ot_{\Fc} S^{-1}(Y\ps{1}) \rt (1_{\Fc} \ot \widetilde{f}) \ot S(Y\ps{3}) \cdot \widetilde{u} + m_{\delta} \ot_{\Fc} 1_{\Fc} \ot \widetilde{f} \ot \widetilde{u} \cdot Y)  \\
& =\Phi_2 (- v_{\delta} \ot_{\Fc} 1_{\Fc} \ot \ad Y(\widetilde{f}) \ot \widetilde{u} - v_{\delta} \ot_{\Fc} 1_{\Fc} \ot \widetilde{f} \ot Y \cdot \widetilde{u}  \\
& +v_{\delta} \cdot Y \ot_{\Fc} 1_{\Fc} \ot \widetilde{f} \ot \widetilde{u} + v_{\delta} \ot_{\Fc} 1_{\Fc} \ot \widetilde{f} \ot \widetilde{u} \cdot Y)  \\
& =v_{\delta} \cdot Y \ot_{\Fc} \widetilde{f} \ot \widetilde{u} - v_{\delta} \ot_{\Fc} \ad Y(\widetilde{f}) \ot \widetilde{u} - v_{\delta} \ot_{\Fc} \widetilde{f} \ot \ad Y(\widetilde{u}).
\end{split}
\end{align}

Finally, we recall that $v_{\delta} \cdot Y = (v \cdot Y\ps{1})_{\delta}\delta(Y\ps{2}) = (v \cdot Y)_{\delta} + v_{\delta}$ to finish the proof.
\end{proof}

\begin{lemma}\label{aux-69}
The operator $\widetilde{\ad}Y$ commutes with the horizontal operators \eqref{horizontal-operators} and the vertical operators \eqref{vertical-operators}.
\end{lemma}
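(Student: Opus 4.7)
The plan is to verify the commutation of $\widetilde{\ad}Y$ with each structural map in \eqref{horizontal-operators} and \eqref{vertical-operators} individually, exploiting the fact that $\widetilde{\ad}Y$ is the infinitesimal version of the adjoint action of the primitive element $Y \in \Fg_1 \subseteq \Hc$. First I would decompose the operator as
\begin{equation}
\widetilde{\ad}Y \;=\; \widetilde{\ad}Y_{\Fc} \,+\, \widetilde{\ad}Y_{\Uc} \,+\, \widetilde{\ad}Y_V,
\end{equation}
where $\widetilde{\ad}Y_{\Fc}$ acts on $\widetilde{f}=f^1\odots f^p$ by the $\Fg_1$-module structure of $\Fc^{\ot p}$ coming from \eqref{bullet} (restricted to $Y$), $\widetilde{\ad}Y_{\Uc}$ acts on $\widetilde{u}=u^1\odots u^q$ as a diagonal derivation $\sum_j u^1\odots [Y,u^j]\odots u^q$, and $\widetilde{\ad}Y_V$ sends $v_\d \mapsto -(v\cdot Y)_\d$. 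This decomposition is dictated by the computation in Lemma \ref{aux-68}, whose proof shows exactly that $\mathcal{L}_{D_Y} - I$ reduces to these three summands after passing through the isomorphism $\Theta$.

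Next I would verify the commutation with each face and degeneracy operator. For the faces $\hd_i$ and $\vd_j$ arising from comultiplications of $\Fc$ and $\Uc$, the check reduces to the fact that $Y$ is primitive, so $\ad Y$ is a coderivation on $\Uc$, while $\D(Y\rt f) = Y\bullet \D(f)$ (Definition \ref{definition-Lie-Hopf}) provides the corresponding statement on $\Fc$. The extremal faces $\hd_0, \hd_{p+1}, \vd_0, \vd_{q+1}$ involve the insertion of $1$ and of the coaction elements; here the commutation with $\widetilde{\ad}Y$ follows from $Y\rt 1 = 0$, $\ad Y(1)=0$, and the compatibility of the $R(\Fg_2)$-coaction on $\Uc$ with the $Y$-action, which is exactly condition \eqref{aux-matched-pair-2} for the matched pair $(R(\Fg_2),U(\Fg_1))$. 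The degeneracies $\hs_j, \vs_j$ commute with $\widetilde{\ad}Y$ because $\ve(Y\rt f)=0$ (Proposition \ref{Proposition-matched-Lie-Hopf-Lie}) and $\ve([Y,u])=0$.

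The main obstacle, as expected, lies with the cyclic operators $\hta$ and $\vta$. For $\hta$ the difficulty is that the operator involves $v\pr{-1}$ on the rightmost slot and the action $S(f^1\ps{2})\cdot$ at the left, so moving $\widetilde{\ad}Y$ through $\hta$ forces one to commute the $Y$-action past both the Hopf algebra antipode and the SAYD coaction. The compensating terms are controlled by combining the SAYD condition \eqref{aux-SAYD-condition} on $\,^\s V_\d$ with the fact that $(\d,\s)$ is the canonical MPI for $\Fc\acl U(\Fg_1)$ (Theorem \ref{theorem-MPI}): the character $\d$ absorbs exactly the trace $\Tr(\ad_Y)$-type discrepancies while $\s$ absorbs those coming from the right $\Fc$-coaction on $Y$ given by the first-order matrix coefficients $f^i_j$. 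For $\vta$ the analogous cancellation uses in addition the matched pair identities \eqref{aux-matched-pair-4}--\eqref{aux-matched-pair-5} to commute $\widetilde{\ad}Y_{\Uc}$ past the coaction $u\mapsto u\pr{0}\ot u\pr{1}$. Once this bookkeeping is carried out, the three summands of $\widetilde{\ad}Y$ reassemble to give $\hta\circ \widetilde{\ad}Y = \widetilde{\ad}Y\circ \hta$ and $\vta\circ \widetilde{\ad}Y = \widetilde{\ad}Y\circ \vta$, completing the proof.
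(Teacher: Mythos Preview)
Your approach is the same as the paper's: a direct case-by-case verification that $\widetilde{\ad}Y$ commutes with each coface, codegeneracy, and cyclic operator, using the three-part decomposition of $\widetilde{\ad}Y$ into its action on $V$, on $\Fc^{\ot p}$, and on $\Uc^{\ot q}$. The identifications you make for the intermediate cofaces ($\ad Y$ is a coderivation on both $\Fc$ and $\Uc$) and for the codegeneracies are exactly what the paper uses.

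That said, a few of your attributions are off and some crucial intermediate identities are missing. First, the compatibility you need to pass $\ad Y$ through the $\Fc$-coaction on $\Uc$ is \emph{not} \eqref{aux-matched-pair-2}; it is derived from \eqref{aux-matched-pair-4} together with the fact that $Y$ is $\Fc$-coinvariant, and reads
\[
{\ad Y(u)}^{\pr{-1}}\ot {\ad Y(u)}^{\pr{0}} \;=\; \ad Y(u^{\pr{-1}})\ot u^{\pr{0}} \;+\; u^{\pr{-1}}\ot \ad Y(u^{\pr{0}}).
\]
To get this you also need the antipode identity $S(Y\rt f)=Y\rt S(f)$, which you do not mention. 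Second, the AYD condition on $\,^\s V_\d$ is already required for the last \emph{coface} $\hd_{p+1}$ (not only for the cyclic operators), since that coface involves $v\pr{-1}$. Third, for $\hta$ the paper uses the bicrossed-product module compatibility $(v\cdot Y)\cdot f = (v\cdot f)\cdot Y + v\cdot \ad Y(f)$, which is \eqref{aux-25} and not the SAYD condition. Fourth, for $\vta$ the essential new identity is $(v\cdot \ad Y(u\ps{2}))_\d\,\delta(u\ps{1}) = v_\d\cdot \ad Y(u)$; this is where $\delta$ enters, while $\sigma$ plays no role here (indeed $\sigma=1$ in this example), so your description of ``$\sigma$ absorbing'' contributions is misleading.

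In short: correct strategy, but the sketch would not compile into a proof without filling in these specific identities and correcting the references.
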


\begin{proof}
We start with the horizontal operators. For the first horizontal coface, we have
\begin{align}
\begin{split}
& \hd_{0}(\widetilde{\ad}Y(v_{\delta} \ot \widetilde{f} \ot \widetilde{u})) = \hd_0(v_{\delta} \ot \ad Y(\widetilde{f} \ot \widetilde{u}) - (v \cdot Y)_{\delta} \ot \widetilde{f} \ot \widetilde{u}) \\
& =v_{\delta} \ot 1 \ot \ad Y(\widetilde{f} \ot \widetilde{u}) - (v \cdot Y)_{\delta} \ot 1 \ot \widetilde{f} \ot \widetilde{u} \\
&= \widetilde{\ad}Y (\hd_0(v_{\delta} \ot \widetilde{f} \ot \widetilde{u}))),
\end{split}
\end{align}
where $\widetilde{f} = f^1 \odots f^p$ and $\widetilde{u} = u^1 \odots u^q$.

\medskip

For  $\hd_{i}$ with $1 \leq i \leq n$, the commutativity  is a consequence of $\ad Y \circ \D = \D \circ \ad Y$ on $\Fc$. To see this, we notice that
\begin{align}
\begin{split}
& \D(\ad Y(f)) = \D(Y \rt f) = {Y\ps{1}}^{\pr{0}} \rt f\ps{1} \ot {Y\ps{1}}^{\pr{1}}(Y\ps{2} \rt f\ps{2}) \\
& = \ad Y(f\ps{1}) \ot f\ps{2} + f\ps{1} \ot \ad Y(f\ps{2}) = \ad Y(\D(f)).
\end{split}
\end{align}

For the commutation with the last horizontal coface operator, we proceed as follows. First we observe that
\begin{align}
\begin{split}
& \widetilde{\ad}Y(\hd_{n+1}(v_{\delta} \ot \widetilde{f} \ot \widetilde{u})) = \widetilde{\ad}Y({v\pr{0}}_{\delta} \ot \widetilde{f} \ot \widetilde{u}^{\pr{-1}}v\pr{-1} \rt 1_{\Fc} \ot \widetilde{u}^{\pr{0}})  \\
& ={v\pr{0}}_{\delta} \ot \ad Y(\widetilde{f}) \ot \widetilde{u}^{\pr{-1}}v\pr{-1} \rt 1_{\Fc} \ot \widetilde{u}^{\pr{0}} \\
& +{v\pr{0}}_{\delta} \ot \widetilde{f} \ot \ad Y(\widetilde{u}^{\pr{-1}}v\pr{-1} \rt 1_{\Fc})\ot \widetilde{u}^{\pr{0}} \\
&+{v\pr{0}}_{\delta} \ot \widetilde{f} \ot \widetilde{u}^{\pr{-1}}v\pr{-1} \rt 1_{\Fc} \ot \ad Y(\widetilde{u}^{\pr{0}}) \\
&-(v\pr{0} \cdot Y)_{\delta} \ot \widetilde{f} \ot \widetilde{u}^{\pr{-1}}v\pr{-1} \rt 1_{\Fc} \ot \widetilde{u}^{\pr{0}}.
\end{split}
\end{align}

Next, for any $h = g \acl u \in \mathcal{H}$ and $f \in \Fc$, we have on the one hand that
\begin{equation}
\ad Y(h \rt f) = \ad Y(g(u \rt f)) = \ad Y(g)(u \rt f) + g(Yu \rt f),
\end{equation}
and on the other hand that
\begin{align*}
&\ad Y(h) \rt f = (\ad Y(g) \acl u + g \acl Yu - g \acl uY) \rt f \\
&\hspace{1.9cm}= \ad Y(g)(u \rt f) + g(Yu \rt f) - g(uY \rt f).
\end{align*}

In other words,
\begin{equation}
\ad Y(h \rt f) = \ad Y(h) \rt f + h \rt \ad Y(f).
\end{equation}

Therefore we have
\begin{align}
\begin{split}
& {v\pr{0}}_{\delta} \ot \widetilde{f} \ot \ad Y(\widetilde{u}^{\pr{-1}}v\pr{-1} \rt 1_{\Fc})\ot \widetilde{u}^{\pr{0}} \\
& ={v\pr{0}}_{\delta} \ot \widetilde{f} \ot \ad Y(\widetilde{u}^{\pr{-1}})v\pr{-1} \rt 1_{\Fc}\ot \widetilde{u}^{\pr{0}} \\
& +{v\pr{0}}_{\delta} \ot \widetilde{f} \ot \widetilde{u}^{\pr{-1}}\ad Y(v\pr{-1}) \rt 1_{\Fc}\ot \widetilde{u}^{\pr{0}}.
\end{split}
\end{align}

Recalling \eqref{aux-65} and the coaction - multiplication compatibility \eqref{aux-matched-pair-4} on a bicrossed product, we observe that
\begin{align}
\begin{split}
& {\ad Y(u)}^{\pr{-1}} \ot {\ad Y(u)}^{\pr{0}} = S({\ad Y(u)}^{\pr{1}}) \ot {\ad Y(u)}^{\pr{0}} \\
& =S({Y\ps{1}}^{\pr{1}}(Y\ps{2} \rt u^{\pr{1}})) \ot {Y\ps{1}}^{\pr{0}}u^{\pr{0}} - S({u\ps{1}}^{\pr{1}}(u\ps{2} \rt Y^{\pr{1}})) \ot {u\ps{1}}^{\pr{0}}Y^{\pr{0}} \\
& =S(u^{\pr{1}}) \ot Yu^{\pr{0}} + S(Y \rt u^{\pr{1}}) \ot u^{\pr{0}} - S(u^{\pr{1}}) \ot u^{\pr{0}}Y,
\end{split}
\end{align}
where we have used $Y^{\pr{0}} \ot Y^{\pr{1}} = Y \ot 1$.

\medskip

By \cite[Lemma 1.1]{MoscRang11} we also have $S(Y \rt f) = Y \rt S(f)$ for any $f \in \Fc$. Hence we can conclude that
\begin{equation}\label{aux-72}
{\ad Y(u)}^{\pr{-1}} \ot {\ad Y(u)}^{\pr{0}} = \ad Y(u^{\pr{-1}}) \ot u^{\pr{0}} + u^{\pr{-1}} \ot \ad Y(u^{\pr{0}}),
\end{equation}
which implies immediately that
\begin{equation}
{\ad Y(\widetilde{u})}^{\pr{-1}} \ot {\ad Y(\widetilde{u})}^{\pr{0}} = \ad Y(\widetilde{u}^{\pr{-1}}) \ot \widetilde{u}^{\pr{0}} + \widetilde{u}^{\pr{-1}} \ot \ad Y(\widetilde{u}^{\pr{0}}).
\end{equation}

Finally, by the right-left AYD compatibility \eqref{aux-SAYD-condition} of $V$ over $\Hc$ we have
\begin{equation}
(v \cdot Y)\pr{-1} \ot (v \cdot Y)\pr{0} = v\pr{-1} \ot v\pr{0} \cdot Y - \ad Y({v\pr{-1}}) \ot v\pr{0}.
\end{equation}

So, $\widetilde{\ad}Y$ commutes with the last horizontal coface  $\hd_{n+1}$ since
\begin{align}
\begin{split}
& \widetilde{\ad}Y(\hd_{n+1}(v_{\delta} \ot \widetilde{f} \ot \widetilde{u})) = {v\pr{0}}_{\delta} \ot \ad Y(\widetilde{f}) \ot \widetilde{u}^{\pr{-1}}v\pr{-1} \rt 1_{\Fc} \ot \widetilde{u}^{\pr{0}}  \\
& +{v\pr{0}}_{\delta} \ot \widetilde{f} \ot {\ad Y(\widetilde{u})}^{\pr{-1}}v\pr{-1} \rt 1_{\Fc}\ot {\ad Y(\widetilde{u})}^{\pr{0}} \\
& -{(v \cdot Y)\pr{0}}_{\delta} \ot \widetilde{f} \ot \widetilde{u}^{\pr{-1}}(v \cdot Y)\pr{0} \rt 1_{\Fc} \ot \widetilde{u}^{\pr{0}} \\
&= \hd_{n+1}(\widetilde{\ad}Y(v_{\delta} \ot \widetilde{f} \ot \widetilde{u})).
\end{split}
\end{align}

The commutation $\hs_j \circ \widetilde{\ad}Y = \widetilde{\ad}Y \circ \hs_j$ with the horizontal codegeneracy operators is evident.

\medskip

We now consider the horizontal cyclic operator. Let us first note that
\begin{equation}
v_{\delta} \cdot f = (v \cdot f)_{\delta}, \qquad f\in \Fc.
\end{equation}
We then have
\begin{align}
\begin{split}
& \widetilde{\ad}Y(\hta(v_{\delta} \ot \widetilde{f} \ot \widetilde{u})) \\
& =\widetilde{\ad}Y(({v\pr{0}} \cdot f^1\ps{1})_{\delta} \ot S(f^1\ps{2}) \cdot (f^2 \odots f^p \ot \widetilde{u}^{\pr{-1}}v\pr{-1} \rt 1_{\Fc} \ot \widetilde{u}^{\pr{0}})) \\
& =({v\pr{0}} \cdot f^1\ps{1})_{\delta} \ot \ad Y(S(f^1\ps{2})) \cdot (f^2 \odots f^p \ot \widetilde{u}^{\pr{-1}}v\pr{-1} \rt 1_{\Fc} \ot \widetilde{u}^{\pr{0}}) \\
&+ ({v\pr{0}} \cdot f^1\ps{1})_{\delta} \ot S(f^1\ps{2}) \cdot (\ad Y(f^2 \odots f^p) \ot \widetilde{u}^{\pr{-1}}v\pr{-1} \rt 1_{\Fc} \ot \widetilde{u}^{\pr{0}}) \\
& +({v\pr{0}} \cdot f^1\ps{1})_{\delta} \ot S(f^1\ps{2}) \cdot (f^2 \odots f^p \ot \ad Y(\widetilde{u}^{\pr{-1}})v\pr{-1} \rt 1_{\Fc} \ot \widetilde{u}^{\pr{0}}) \\
&+ ({v\pr{0}} \cdot f^1\ps{1})_{\delta} \ot S(f^1\ps{2}) \cdot (f^2 \odots f^p \ot \widetilde{u}^{\pr{-1}}\ad Y(v\pr{-1}) \rt 1_{\Fc} \ot \widetilde{u}^{\pr{0}})\\
&+ ({v\pr{0}} \cdot f^1\ps{1})_{\delta} \ot S(f^1\ps{2}) \cdot (f^2 \odots f^p \ot \widetilde{u}^{\pr{-1}}v\pr{-1} \rt 1_{\Fc} \ot \ad Y(\widetilde{u}^{\pr{0}})) \\
&- (({v\pr{0}} \cdot f^1\ps{1}) \cdot Y)_{\delta} \ot S(f^1\ps{2}) \cdot (f^2 \odots f^p \ot \widetilde{u}^{\pr{-1}}v\pr{-1} \rt 1_{\Fc} \ot \widetilde{u}^{\pr{0}}).
\end{split}
\end{align}

By the commutativity of $\ad Y$ with the left $\mathcal{H}$-coaction on $\Uc$ and the antipode, we can immediately conclude that
\begin{align}
\begin{split}
& \widetilde{ad}Y(\hta(v_{\delta} \ot \widetilde{f} \ot \widetilde{u})) \\
& =({v\pr{0}} \cdot f^1\ps{1})_{\delta} \ot S(adY(f^1\ps{2})) \cdot (f^2 \odots f^p \ot \widetilde{u}^{\pr{-1}}v\pr{-1} \rt 1_{\Fc} \ot \widetilde{u}^{\pr{0}}) \\
& +({v\pr{0}} \cdot f^1\ps{1})_{\delta} \ot S(f^1\ps{2}) \cdot (adY(f^2 \odots f^p) \ot \widetilde{u}^{\pr{-1}}v\pr{-1} \rt 1_{\Fc} \ot \widetilde{u}^{\pr{0}}) \\
&+ ({v\pr{0}} \cdot f^1\ps{1})_{\delta} \ot S(f^1\ps{2}) \cdot (f^2 \odots f^p \ot {adY(\widetilde{u})}^{\pr{-1}}v\pr{-1} \rt 1_{\Fc} \ot {adY(\widetilde{u})}^{\pr{0}}) \\
&+ ({v\pr{0}} \cdot f^1\ps{1})_{\delta} \ot S(f^1\ps{2}) \cdot (f^2 \odots f^p \ot \widetilde{u}^{\pr{-1}}adY(v\pr{-1}) \rt 1_{\Fc} \ot \widetilde{u}^{\pr{0}}) \\
&- (({v\pr{0}} \cdot f^1\ps{1}) \cdot Y)_{\delta} \ot S(f^1\ps{2}) \cdot (f^2 \odots f^p \ot \widetilde{u}^{\pr{-1}}v\pr{-1} \rt 1_{\Fc} \ot \widetilde{u}^{\pr{0}}).
\end{split}
\end{align}

Then by the module compatibility \eqref{aux-25} over the bicrossed product $\mathcal{H} = \Fc \acl \Uc$, we have
\begin{equation}
(v \cdot Y) \cdot f = (v \cdot f) \cdot Y + v \cdot \ad Y(f).
\end{equation}
Therefore,
\begin{align}
\begin{split}
& \widetilde{\ad}Y(\hta(v_{\delta} \ot \widetilde{f} \ot \widetilde{u})) \\
&= ({v\pr{0}})_{\delta} \cdot \ad Y(f^1\ps{1}) \ot S(f^1\ps{2}) \cdot (f^2 \odots f^p \ot \widetilde{u}^{\pr{-1}}v\pr{-1} \rt 1_{\Fc} \ot \widetilde{u}^{\pr{0}})  \\
& +{v\pr{0}}_{\delta} \cdot f^1\ps{1} \ot S(\ad Y(f^1\ps{2})) \cdot (f^2 \odots f^p \ot \widetilde{u}^{\pr{-1}}v\pr{-1} \rt 1_{\Fc} \ot \widetilde{u}^{\pr{0}}) \\
&+ {v\pr{0}}_{\delta} \cdot f^1\ps{1} \ot S(f^1\ps{2}) \cdot (\ad Y(f^2 \odots f^p) \ot \widetilde{u}^{\pr{-1}}v\pr{-1} \rt 1_{\Fc} \ot \widetilde{u}^{\pr{0}}) \\
& +{v\pr{0}}_{\delta} \cdot f^1\ps{1} \ot S(f^1\ps{2}) \cdot (f^2 \odots f^p \ot {\ad Y(\widetilde{u})}^{\pr{-1}}v\pr{-1} \rt 1_{\Fc} \ot {\ad Y(\widetilde{u})}^{\pr{0}}) \\
& -{(v \cdot Y)\pr{0}}_{\delta} \cdot f^1\ps{1} \ot S(f^1\ps{2}) \cdot (f^2 \odots f^p \ot \widetilde{u}^{\pr{-1}}(v \cdot Y)\pr{-1} \rt 1_{\Fc} \ot \widetilde{u}^{\pr{0}}).
\end{split}
\end{align}

Finally, by the commutativity $\ad Y \circ \D = \D \circ \ad Y$ on $\Fc$ we have
\begin{align}
\begin{split}
& \widetilde{\ad}Y(\hta(v_{\delta} \ot \widetilde{f} \ot \widetilde{u})) \\
& ={v\pr{0}}_{\delta} \cdot {\ad(f^1)}\ps{1} \ot S({\ad(f^1)}\ps{2}) \cdot (f^2 \odots f^p \ot \widetilde{u}^{\pr{-1}}v\pr{-1} \rt 1_{\Fc} \ot \widetilde{u}^{\pr{0}}) \\
&+ {v\pr{0}}_{\delta} \cdot f^1\ps{1} \ot S(f^1\ps{2}) \cdot (\ad Y(f^2 \odots f^p) \ot \widetilde{u}^{\pr{-1}}v\pr{-1} \rt 1_{\Fc} \ot \widetilde{u}^{\pr{0}}) \\
& +{v\pr{0}}_{\delta} \cdot f^1\ps{1} \ot S(f^1\ps{2}) \cdot (f^2 \odots f^p \ot {\ad Y(\widetilde{u})}^{\pr{-1}}v\pr{-1} \rt 1_{\Fc} \ot {\ad Y(\widetilde{u})}^{\pr{0}}) \\
& -{(v \cdot Y)\pr{0}}_{\delta} \cdot f^1\ps{1} \ot S(f^1\ps{2}) \cdot (f^2 \odots f^p \ot \widetilde{u}^{\pr{-1}}(v \cdot Y)\pr{-1} \rt 1_{\Fc} \ot \widetilde{u}^{\pr{0}}) \\
& = \hta(\widetilde{\ad}Y(v_{\delta} \ot \widetilde{f} \ot \widetilde{u})).
\end{split}
\end{align}

We continue with the vertical operators. We see that
\begin{equation}
\vd_i \circ \widetilde{\ad}Y = \widetilde{\ad}Y \circ \vd_i, \quad 0 \leq i \leq n
\end{equation}
are similar to their  horizontal counterparts. One notes that this time the commutativity $\ad Y \circ \D = \D \circ \ad Y$ on $\Uc$ is needed.

\medskip

Commutativity with the last vertical coface operator follows, as in the horizontal case, from the AYD compatibility on $V$ over $\Hc$. Indeed,
\begin{align}
\begin{split}
& \widetilde{\ad}Y(\vd_{n+1}(v_{\delta} \ot \widetilde{f} \ot \widetilde{u})) = \widetilde{\ad}Y({v\pr{0}}_{\delta} \ot \widetilde{f} \ot \widetilde{u} \ot \wbar{v\pr{-1}}) \\
& ={v\pr{0}}_{\delta} \ot \ad Y(\widetilde{f} \ot \widetilde{u}) \ot \wbar{v\pr{-1}} + {v\pr{0}}_{\delta} \ot \widetilde{f} \ot \widetilde{u} \ot \wbar{\ad Y(v\pr{-1})} \\
& - {(v\pr{0} \cdot Y)}_{\delta} \ot \widetilde{f} \ot \widetilde{u} \ot \wbar{v\pr{-1}} \\
&= {v\pr{0}}_{\delta} \ot \ad Y(\widetilde{f} \ot \widetilde{u}) \ot \wbar{v\pr{-1}} - {(v \cdot Y)\pr{0}}_{\delta} \ot \widetilde{f} \ot \widetilde{u} \ot \wbar{(v \cdot Y)\pr{-1}}  \\
& =\vd_{n+1}(\widetilde{\ad}Y(v_{\delta} \ot \widetilde{f} \ot \widetilde{u})).
\end{split}
\end{align}

Finally, we show the commutativity of $\widetilde{\ad}Y$  with the vertical cyclic operator. First, we notice that we can rewrite it as
\begin{align}
\begin{split}
&\vta(v_{\delta} \ot \widetilde{f} \ot \widetilde{u}) = ({v\pr{0}}_{\delta} \cdot u^1\ps{4}) \cdot S^{-1}(u^1\ps{3} \rt 1_{\Fc}) \, \ot \\
& S(S^{-1}(u^1\ps{2}) \rt 1_{\Fc}) \cdot \left( S^{-1}(u^1\ps{1}) \rt \widetilde{f} \ot S(u^1\ps{5}) \cdot (u^2 \odots u^q \ot \wbar{v\pr{-1}})
\right) \\
& = {v\pr{0}}_{\delta} \cdot u^1\ps{2} \ot S^{-1}(u^1\ps{1}) \rt \widetilde{f} \ot S(u^1\ps{3}) \cdot (u^2 \odots u^q \ot \wbar{v\pr{-1}}) \\
& = {(v\pr{0} \cdot u^1\ps{3})}_{\delta}\delta(u^1\ps{2}) \ot S^{-1}(u^1\ps{1}) \rt \widetilde{f} \ot S(u^1\ps{4}) \cdot (u^2 \odots u^q \ot \wbar{v\pr{-1}}).
\end{split}
\end{align}

Therefore we have
\begin{align}
\begin{split}
& \widetilde{\ad}Y(\vta(v_{\delta} \ot \widetilde{f} \ot \widetilde{u})) \\
& =\widetilde{\ad}Y({v\pr{0}}_{\delta} \cdot u^1\ps{2} \ot S^{-1}(u^1\ps{1}) \rt \widetilde{f} \ot S(u^1\ps{3}) \cdot (u^2 \odots u^q \ot \wbar{v\pr{-1}})) \\
&= {v\pr{0}}_{\delta} \cdot u^1\ps{2} \ot \ad Y(S^{-1}(u^1\ps{1}) \rt \widetilde{f}) \ot S(u^1\ps{3}) \cdot (u^2 \odots u^q \ot \wbar{v\pr{-1}}) \\
&+ {v\pr{0}}_{\delta} \cdot u^1\ps{2} \ot S^{-1}(u^1\ps{1}) \rt \widetilde{f} \ot \ad Y(S(u^1\ps{3})) \cdot (u^2 \odots u^q \ot \wbar{v\pr{-1}}) \\
&+ {v\pr{0}}_{\delta} \cdot u^1\ps{2} \ot S^{-1}(u^1\ps{1}) \rt \widetilde{f} \ot S(u^1\ps{3}) \cdot (\ad Y(u^2 \odots u^q) \ot \wbar{v\pr{-1}}) \\
&+ {v\pr{0}}_{\delta} \cdot u^1\ps{2} \ot S^{-1}(u^1\ps{1}) \rt \widetilde{f} \ot S(u^1\ps{3}) \cdot (u^2 \odots u^q \ot \wbar{\ad Y(v\pr{-1})}) \\
&- {(v\pr{0} \cdot u^1\ps{3}Y)}_{\delta}\delta(u^1\ps{2}) \ot S^{-1}(u^1\ps{1}) \rt \widetilde{f} \ot S(u^1\ps{4}) \cdot (u^2 \odots u^q \ot \wbar{v\pr{-1}}).
\end{split}
\end{align}

Extending
\begin{equation}\label{aux-71}
\ad Y(h \rt f) = \ad Y(h) \rt f + h \rt \ad Y(f),
\end{equation}
to
\begin{equation}
\ad Y(h \rt \widetilde{f}) = \ad Y(h) \rt \widetilde{f} + h \rt \ad Y(\widetilde{f}),
\end{equation}
where $\widetilde{f} = f^1\odots f^p$, we have
\begin{align}
\begin{split}
{v\pr{0}}_{\delta} \cdot u^1\ps{2} \ot \ad Y(S^{-1}(u^1\ps{1}) \rt \widetilde{f}) \ot S(u^1\ps{3}) \cdot (u^2 \odots u^q \ot \wbar{v\pr{-1}}) \\
={v\pr{0}}_{\delta} \cdot u^1\ps{2} \ot S^{-1}(\ad Y(u^1\ps{1})) \rt \widetilde{f} \ot S(u^1\ps{3}) \cdot (u^2 \odots u^q \ot \wbar{v\pr{-1}}) \\
+{v\pr{0}}_{\delta} \cdot u^1\ps{2} \ot S^{-1}(u^1\ps{1}) \rt \ad Y(\widetilde{f}) \ot S(u^1\ps{3}) \cdot (u^2 \odots u^q \ot \wbar{v\pr{-1}}).
\end{split}
\end{align}

Next, we observe that
\begin{align}
\begin{split}
& - {(v\pr{0} \cdot u^1\ps{3}Y)}_{\delta}\delta(u^1\ps{2}) \ot S^{-1}(u^1\ps{1}) \rt \widetilde{f} \ot S(u^1\ps{4}) \cdot (u^2 \odots u^q \ot \wbar{v\pr{-1}}) \\
&= {(v\pr{0} \cdot \ad Y(u^1\ps{3}))}_{\delta}\delta(u^1\ps{2}) \ot S^{-1}(u^1\ps{1}) \rt \widetilde{f} \ot S(u^1\ps{4}) \cdot (u^2 \odots u^q \ot \wbar{v\pr{-1}}) \\
&- {(v\pr{0} \cdot Y)}_{\delta} \cdot u^1\ps{2} \ot S^{-1}(u^1\ps{1}) \rt \widetilde{f} \ot S(u^1\ps{3}) \cdot (u^2 \odots u^q \ot \wbar{v\pr{-1}}),
\end{split}
\end{align}
where,
\begin{equation}\label{aux-73}
(v \cdot \ad Y(u\ps{2}))_{\delta} \delta(u\ps{1}) = v_{\delta} \cdot \ad Y(u).
\end{equation}

Therefore we have
\begin{align}
\begin{split}
& \widetilde{\ad}Y(\vta(v_{\delta} \ot \widetilde{f} \ot \widetilde{u})) \\
& ={v\pr{0}}_{\delta} \cdot u^1\ps{2} \ot S^{-1}(u^1\ps{1}) \rt \ad Y(\widetilde{f}) \ot S(u^1\ps{3}) \cdot (u^2 \odots u^q \ot \wbar{v\pr{-1}}) \\
&+ {v\pr{0}}_{\delta} \cdot u^1\ps{2} \ot S^{-1}(\ad Y(u^1\ps{1})) \rt \widetilde{f} \ot S(u^1\ps{3}) \cdot (u^2 \odots u^q \ot \wbar{v\pr{-1}}) \\
&+ {v\pr{0}}_{\delta} \cdot \ad Y(u^1\ps{2}) \ot S^{-1}(u^1\ps{1}) \rt \widetilde{f} \ot S(u^1\ps{4}) \cdot (u^2 \odots u^q \ot \wbar{v\pr{-1}}) \\
&+ {v\pr{0}}_{\delta} \cdot u^1\ps{2} \ot S^{-1}(u^1\ps{1}) \rt \widetilde{f} \ot S(\ad Y(u^1\ps{3})) \cdot (u^2 \odots u^q \ot \wbar{v\pr{-1}}) \\
&+ {v\pr{0}}_{\delta} \cdot u^1\ps{2} \ot S^{-1}(u^1\ps{1}) \rt \widetilde{f} \ot S(u^1\ps{3}) \cdot (\ad Y(u^2 \odots u^q) \ot \wbar{v\pr{-1}}) \\
&- {(v \cdot Y)\pr{0}}_{\delta} \cdot u^1\ps{2} \ot S^{-1}(u^1\ps{1}) \rt \widetilde{f} \ot S(u^1\ps{3}) \cdot (u^2 \odots u^q \ot \wbar{(v \cdot Y)\pr{-1}}).
\end{split}
\end{align}

Then the commutativity $\D \circ \ad Y = \ad Y \circ \D$ on $\Uc$ finishes the proof, since
\begin{align}
\begin{split}
& \widetilde{\ad}Y(\vta(v_{\delta} \ot \widetilde{f} \ot \widetilde{u})) \\
&= {v\pr{0}}_{\delta} \cdot u^1\ps{2} \ot S^{-1}(u^1\ps{1}) \rt \ad Y(\widetilde{f}) \ot S(u^1\ps{3}) \cdot (u^2 \odots u^q \ot \wbar{v\pr{-1}}) \\
&+ {v\pr{0}}_{\delta} \cdot \ad Y(u^1)\ps{2} \ot S^{-1}(\ad Y(u^1)\ps{1}) \rt \widetilde{f} \ot S(\ad Y(u^1)\ps{3}) \cdot (u^2 \ot \ldots \\
& \ldots \ot u^q \ot \wbar{v\pr{-1}}) \\
&+ {v\pr{0}}_{\delta} \cdot u^1\ps{2} \ot S^{-1}(u^1\ps{1}) \rt \widetilde{f} \ot S(u^1\ps{3}) \cdot (\ad Y(u^2 \odots u^q) \ot \wbar{v\pr{-1}}) \\
&- {(v \cdot Y)\pr{0}}_{\delta} \cdot u^1\ps{2} \ot S^{-1}(u^1\ps{1}) \rt \widetilde{f} \ot S(u^1\ps{3}) \cdot (u^2 \odots u^q \ot \wbar{(v \cdot Y)\pr{-1}}) \\
& = \vta(\widetilde{\ad}Y(v_{\delta} \ot \widetilde{f} \ot \widetilde{u})).
\end{split}
\end{align}
\end{proof}

For the generators $X,Y,\delta_1 \in \mathcal{H}$, it is already known that
\begin{equation}
\ad Y(Y) = 0, \quad \ad Y(X) = X, \quad \ad Y(\delta_1) = \delta_1.
\end{equation}

We recall here the action of $Y \in s\ell(2,\Cb)$:
\begin{equation}
1_V \lt Y = 0, \quad R^X \lt Y = R^X, \quad R^Y \lt Y = 0, \quad R^Z \lt Y = -R^Z.
\end{equation}

Hence, for the weight defined by
\begin{align}\label{aux-weight}
\begin{split}
& \nm{Y} = 0, \quad \nm{X} = 1, \quad \nm{\delta_1} = 1, \\
& \nm{1_V} = 0, \quad \nm{R^X} = -1, \quad \nm{R^Y} = 0, \quad \nm{R^Z} = 1
\end{split}
\end{align}
on the cyclic complex, the operator $\widetilde{\ad}Y$ has the property
\begin{equation}
\widetilde{\ad}Y(v_{\delta} \ot \widetilde{f} \ot \widetilde{u}) = \nm{v_{\delta} \ot \widetilde{f} \ot \widetilde{u}}v_{\delta} \ot \widetilde{f} \ot \widetilde{u},
\end{equation}
where $\nm{v_{\delta} \ot \widetilde{f} \ot \widetilde{u}} := \nm{v} + \nm{\widetilde{f}} + \nm{\widetilde{u}}$.

\medskip

As a result, the operator $\widetilde{\ad}Y$ acts as a weight operator. Extending \eqref{aux-weight} to the cocyclic complex $\FZ(\Hc,\Fc,V_\d)$, we have
\begin{equation}
\FZ^{\bullet, \bullet} = \bigoplus_{k \in \mathbb{Z}}\FZ[k]^{\bullet, \bullet},
\end{equation}
where
\begin{equation}
\FZ[k] = \Big\{v_{\delta} \ot \widetilde{f} \ot \widetilde{u} \in \FZ(\Hc,\Fc,V_\d) \, \Big| \, \nm{v_{\delta} \ot \widetilde{f} \ot \widetilde{u}}= k\Big\}.
\end{equation}

As a result of Lemma \ref{aux-69}, we can say that $\FZ[k]$ is a subcomplex for any $k \in \mathbb{Z}$, and hence the cohomology inherits the grading. Thus,
\begin{equation}
HP(\mathcal{H},V_{\delta}) = \bigoplus_{k \in \mathbb{Z}}H(\FZ[k]).
\end{equation}

Moreover, using Lemma \ref{aux-68} we get the following analogue of Corollary 3.10 in \cite{MoscRang07}.

\begin{corollary}\label{corollary-weight-1}
The cohomology is captured by the weight 1 subcomplex, \ie
\begin{equation}
H(\FZ[1]) = HP(\mathcal{H},V_{\delta}), \quad H(\FZ[k]) = 0, \quad k \neq 1.
\end{equation}
\end{corollary}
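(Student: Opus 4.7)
The plan is to deduce the corollary from the Cartan homotopy formula $[E_{D_Y} + e_{D_Y},\, b+B] = \mathcal{L}_{D_Y}$ recalled from \cite{MoscRang07} together with Lemma \ref{aux-68} and Lemma \ref{aux-69}. By Lemma \ref{aux-69}, the operator $\widetilde{\ad}Y$ commutes with every horizontal and vertical cocyclic operator, so each weight subspace $\FZ[k]^{\bullet,\bullet}$ is stable under $b$, $B$, and indeed all of the cocyclic structure. Hence the bicocyclic module decomposes as $\FZ^{\bullet,\bullet} = \bigoplus_{k\in\Zb}\FZ[k]^{\bullet,\bullet}$ as a direct sum of subcomplexes, and the periodic Hopf-cyclic cohomology splits accordingly:
\begin{equation}
HP(\Hc,V_\d) \;=\; \bigoplus_{k\in\Zb} H(\FZ[k]).
\end{equation}

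The first step is then to check that the homotopy operators $e_{D_Y}$ and $E_{D_Y}$ also preserve the weight grading. This should be straightforward from their explicit formulas: both operators insert a single factor of $Y$ (via $D_Y(h)=hY$) into one tensor slot and otherwise act by the cofaces/codegeneracies of $\mathcal{H}$. Since $Y \in \Fg_1 \subset\Hc$ has weight $0$ with respect to the grading \eqref{aux-weight}, inserting $Y$ does not change the total weight, and by Lemma \ref{aux-69} the rearrangements performed by the cofaces are weight preserving. Therefore $e_{D_Y}$ and $E_{D_Y}$ restrict to each $\FZ[k]$.

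The second and central step is the following. By Lemma \ref{aux-68}, $\mathcal{L}_{D_Y} = I - \widetilde{\ad}Y$, and on the weight-$k$ subcomplex $\widetilde{\ad}Y$ acts as multiplication by $k$; thus the Cartan homotopy formula reads
\begin{equation}
(1-k)\,\Id_{\FZ[k]} \;=\; \bigl[E_{D_Y} + e_{D_Y},\; b+B\bigr]\Big|_{\FZ[k]}.
\end{equation}
For $k\neq 1$ this exhibits $(1-k)\Id$ on $\FZ[k]$ as a coboundary in the endomorphism complex, so $\tfrac{1}{1-k}(E_{D_Y}+e_{D_Y})$ is a contracting homotopy for the identity on $\FZ[k]$, forcing $H(\FZ[k]) = 0$. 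Combined with the weight decomposition above, this yields $HP(\Hc,V_\d) = H(\FZ[1])$.

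I do not expect a serious obstacle: the Cartan homotopy formula is already in hand (cited from \cite{MoscRang07}), Lemmas \ref{aux-68} and \ref{aux-69} have been established, and the only remaining verification is the weight preservation of $e_{D_Y}$ and $E_{D_Y}$, which is a direct inspection of their defining formulas. The conceptual point is simply that $\widetilde{\ad}Y$ is a weight operator and the Cartan formula produces a contracting homotopy on every eigenspace other than the eigenvalue $1$ of $\widetilde{\ad}Y$.
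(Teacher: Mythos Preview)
Your proposal is correct and is exactly the argument the paper indicates by citing \cite{MoscRang07}, Corollary 3.10, together with Lemma \ref{aux-68}. Your first step (weight-preservation of $e_{D_Y}$ and $E_{D_Y}$) is true but not strictly needed: since $\mathcal{L}_{D_Y}$ is null-homotopic on the total complex it acts as zero on $HP(\Hc,V_\delta)=\bigoplus_k H(\FZ[k])$, and being $(1-k)\Id$ on each summand this already forces $H(\FZ[k])=0$ for $k\neq 1$.
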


\begin{proposition}
The odd and even periodic Hopf cyclic cohomologies of $\Hc_{1\rm S }\cop$ with coefficients in $V_\d$ are both one dimensional. Their classes are ``approximately" given by the cocycles
\begin{align}
&c^{\rm odd}=\one \ot \d_1 \in E_1^{1,{\rm odd}}\\
&c^{\rm even} = \one \ot X \ot Y - \one \ot Y \ot X + \one \ot Y \ot \delta_1Y \in E_1^{1,{\rm even}}.
\end{align}
in the  $E_1$-term of a natural spectral sequence associated to $V_\d$. Here, $\one := 1_V \ot \Cb_\d$.
\end{proposition}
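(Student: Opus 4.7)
The proof proceeds by combining the van Est type isomorphism of Theorem \ref{aux-63} with the computation of $\widetilde{HP}(s\ell(2),V)$ in Proposition \ref{proposition-periodic-cyclic-sl(2)-V}, and then using the weight grading established via the Cartan homotopy formula (Lemma \ref{aux-68} and Corollary \ref{corollary-weight-1}) to locate explicit cocycle representatives.

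First, I would identify $\Hc_{1\rm S}\cop \cong R(\Fg_2) \acl U(\Fg_1)$ with respect to the matched pair decomposition $s\ell(2) = \Fg_1 \bowtie \Fg_2$ used throughout, where $\Fg_1 = g\ell(1)^{\rm aff} = \Cb\langle X,Y\rangle$ and $\Fg_2 = \Cb\langle Z\rangle$. Since $\Fg_2$ is abelian, its Levi decomposition is trivial: we take the reductive piece $\Fh = 0$ and $\Fl = \Fg_2$, and this is automatically $\Fg_1$-invariant with $\Fh$ acting by derivations (vacuously). To apply Theorem \ref{aux-63}, one must check: (i) $V = S(s\ell(2)^\ast)\nsb{1}$ is a unimodular SAYD module over $s\ell(2)$, which is Example \ref{aux-62} combined with the fact that $s\ell(2)$ is unimodular; (ii) the $s\ell(2)$-action is locally finite (trivially, as $V$ is finite dimensional) and the Koszul coaction is locally conilpotent (squares to zero); (iii) $\, ^\s V_\d$ is stable over $\Hc_{1\rm S}\cop$, which was verified by explicit calculation in the preceding subsection (noting that $\s = 1$ there, so $\, ^\s V_\d = V_\d$).

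Granted these, Theorem \ref{aux-63} furnishes an isomorphism
\begin{equation*}
HP^\bullet(\Hc_{1\rm S}\cop, V_\d) \;\cong\; \widetilde{HP}^\bullet(s\ell(2),V),
\end{equation*}
and Proposition \ref{proposition-periodic-cyclic-sl(2)-V} gives $\dim \widetilde{HP}^{\rm even} = \dim \widetilde{HP}^{\rm odd} = 1$. This settles the dimension count.

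To exhibit the cocycle representatives at the level of $E_1$, I would work in the bicocyclic complex $\FZ(\Hc,\Fc;V_\d)$ (quasi-isomorphic to $C(\Hc,V_\d)$ by \cite[Theorem 3.16]{MoscRang09}) equipped with the weight grading \eqref{aux-weight}. By Corollary \ref{corollary-weight-1}, all cohomology concentrates in weight $1$, so it suffices to locate nonzero cocycles of weight $1$. The elements
\begin{equation*}
c^{\rm odd} = \one \ot \d_1, \qquad c^{\rm even} = \one \ot X \ot Y - \one \ot Y \ot X + \one \ot Y \ot \d_1 Y
\end{equation*}
have weights $0+1 = 1$ and $1+0 = 0+1 = 0+1+0 = 1$ respectively, so they lie in the correct strata. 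I would then verify directly that each is annihilated by the $E_1$-differential (coming from $b$ and $B$ on the rows/columns of \eqref{bicocyclic-bicomplex}), using the explicit action and coaction of $\Hc_{1\rm S}\cop$ on $V_\d$ recorded in the preceding subsection. Finally, nontriviality follows from the fact that the classes must match, via the van Est map of Theorem \ref{aux-63} (which on the $E_1$-level is induced by the antisymmetrization map of Proposition \ref{mixCE*} and the Poincar\'e duality \eqref{theta}), the generators of $\widetilde{HP}^\bullet(s\ell(2),V)$ obtained in Proposition \ref{proposition-periodic-cyclic-sl(2)-V}. The qualifier ``approximately'' in the statement reflects that these cocycles represent classes at the $E_1$-level of the spectral sequence associated to the canonical filtration \eqref{aux-filtration-on-V-d} rather than at the level of the complex itself.

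The main technical obstacle is the last step: verifying by direct computation that $c^{\rm odd}$ and $c^{\rm even}$ are indeed $(b+B)$-cocycles modulo the filtration and that under the quasi-isomorphism to the Chevalley--Eilenberg--Koszul complex for $(s\ell(2),V)$ they map to (nonzero multiples of) the classes $\one$ and $(2\t^X\ot R^Z - \t^Y\ot R^Y,\; \t^X\wg\t^Y\wg\t^Z\ot \one)$ respectively. This requires unwinding the Eilenberg--Zilber type maps \eqref{aux-PSI-bicrossed}--\eqref{aux-PSI-bicrossed-inverse} together with \eqref{aux-76}--\eqref{aux-78}, and carefully tracking the filtration degree of each term via the explicit coaction formulas.
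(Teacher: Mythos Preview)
Your proposal is correct and follows essentially the same route as the paper. Both arguments rest on the filtration spectral sequence associated to $(F_pV_\d)_p$: you package this via Theorem \ref{aux-63} together with Proposition \ref{proposition-periodic-cyclic-sl(2)-V}, whereas the paper unwraps the spectral sequence directly on the Hopf-cyclic side, identifying $E_1^{1,i}\cong HP^i(\Hc,\Cb_\d)$ and killing $E_1^{0,i}$ (equivalently $F_0H^i$) by the van Est isomorphism with $H^i(W(s\ell(2),F_0V))$ and Whitehead's theorem.
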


\begin{proof}
We have seen that all cohomology classes are concentrated in the weight 1 subcomplex.
On the other hand, the $E_1$-term of the spectral sequence associated to the filtration \eqref{aux-filtration-on-V-d} on $V_{\delta}$ is
\begin{equation}
E_1^{j,i}(\mathcal{H},V_{\delta}) = H^{i+j}(C(\Uc \cl \Fc, F_jV_{\delta} / F_{j-1}V_{\delta})),
\end{equation}
where $F_{0}V_{\delta} / F_{-1}V_{\delta} \cong F_{0}V_{\delta}$, $F_{1}V_{\delta} / F_0V_{\delta} \cong \mathbb{C}_{\delta}$ and $F_{j+1}V_{\delta} / F_jV_{\delta} = 0$ for $j \geq 1$.

\medskip

Therefore,
\begin{align}
\begin{split}
& E_1^{0,i}(\mathcal{H},V_{\delta}) = 0, \quad E_1^{1,i}(\mathcal{H},V_{\delta}) = H^i(C(\Uc \cl \Fc, \mathbb{C}_{\delta})), \\
& E_1^{j,i}(\mathcal{H},V_{\delta}) = 0, \quad j \geq 1.
\end{split}
\end{align}

So the spectral sequence collapses at the $E_2$-term and we get
\begin{equation}
E_2^{0,i}(\mathcal{H},V_{\d}) \cong E_{\infty}^{0,i}(\mathcal{H},V_{\d}) = 0,
\end{equation}
\begin{equation}\label{aux-70}
E_2^{1,i}(\mathcal{H}, V_\d) \cong E_{\infty}^{1,i}(\mathcal{H}) = F_1H^i(C(\Uc \cl \Fc, V_{\delta})) / F_0H^i(C(\Uc \cl \Fc, V_{\delta})),
\end{equation}
and
\begin{equation}
E_2^{j,i}(\mathcal{H}, V_\d) \cong E_{\infty}^{j,i}(\mathcal{H},V_\d) = 0, \quad j \geq 2.
\end{equation}

By definition of the induced filtration on the cohomology groups, we have
\begin{align}
\begin{split}
& F_1H^i(C(\Uc \cl \Fc, V_{\delta})) = H^i(C(\Uc \cl \Fc, F_1V_{\delta})) = \\
& H^i(C(\Uc \cl \Fc, V_{\delta})),
\end{split}
\end{align}
and
\begin{align}
\begin{split}
& F_0H^i(C(\Uc \cl \Fc, V_{\delta})) = H^i(C(\Uc \cl \Fc, F_0V_{\delta})) \cong \\
& H^i(W(s\ell(2,\Cb), F_0V)) = 0,
\end{split}
\end{align}
where the last equality follows from Whitehead's theorem.
\end{proof}

\subsubsection{Construction of a representative cocycle for the odd class}

We first compute the odd cocycle in the total complex  ${\rm{Tot}}^{\bullet}(\Fc,\Uc,V_\d)$  of the bicomplex  \eqref{bicocyclic-bicomplex}.
Let us  recall the total mixed complex
\begin{equation}
{\rm{Tot}}^{\bullet}(\Fc,\Uc,V_\d):=\bigoplus_{p+q =\, \bullet}V_\d \ot \Fc^{\ot\;p} \ot \Uc^{\ot\;q},
\end{equation}
with the operators
\begin{align}
& \hb_p=\sum_{i=0}^{p+1} (-1)^{i}\hd_i, \qquad \vb_q=\sum_{i=0}^{q+1} (-1)^{i}\vd_i, \qquad b_T=\sum_{p+q=n}\hb_p +(-1)^p \vb_q \\
& \hB_p=(\sum_{i=0}^{p-1}(-1)^{(p-1)i}\hta^i)\overset{\ra}{\sigma}_{p-1}\hta, \quad \vB_q=(\sum_{i=0}^{q-1}(-1)^{(q-1)i}\vta^i)\uparrow\hspace{-4pt}\sigma_{q-1}\vta, \\\notag
& \hspace{3cm} \quad B_T=\sum_{p+q=n}\hB_p+(-1)^p\vB_q.
\end{align}

\begin{proposition}
Let
\begin{equation}
c':=\one \ot \d_1 \in V_\d \ot \Fc
\end{equation}
and
\begin{equation}
c''':= \bfR^Y \ot X + 2\bfR^Z \ot Y \in V_\d \ot \Uc.
\end{equation}
Then $c'+c''' \in {\rm{Tot}}^1(\Fc,\Uc,V_\d)$ is a Hochschild cocycle.
\end{proposition}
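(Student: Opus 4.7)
My approach will be direct verification using the explicit SAYD structure on $V_\d$ established in the previous subsection. The total Hochschild coboundary decomposes as $b_T = \hb_p + (-1)^p\vb_q$ on the bidegree $(p,q)$ part. Since $c'$ sits in bidegree $(1,0)$ and $c'''$ in bidegree $(0,1)$, the cocycle condition $b_T(c'+c''')=0$ breaks into three independent vanishing statements, one in each of the spaces $V_\d\ot\Fc^{\ot 2}$, $V_\d\ot\Fc\ot\Uc$ and $V_\d\ot\Uc^{\ot 2}$. The plan is to compute each of these three components using the formulas for $\hd_i$ and $\vd_i$ given for the cylindrical module $\FZ(\Hc,\Fc;{}^\s V_\d)$, specialized to the case where one of the tensor factors is empty.

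For the pure-horizontal piece $\hb_1(c')\in V_\d\ot\Fc^{\ot 2}$, I will use that $\d_1$ is primitive in $\Fc$ and that the $\mathcal H$-coaction on $\one$ is $1\ot\one+X\ot\bfR^X+Y\ot\bfR^Y$; since $X\rt 1_\Fc=0=Y\rt 1_\Fc$ (because $X,Y\in U(\Fg_1)$ are primitive and $1_\Fc=\ve_{U(\Fg_2)}$), only the $1\ot\one$ term in the last coface survives and the three cofaces $\hd_0,\hd_1,\hd_2$ telescope to zero. For the pure-vertical piece $\vb_1(c''')\in V_\d\ot\Uc^{\ot 2}$, the projection $\overline{f\acl u}=\ve(f)u$ kills all terms in the $\mathcal H$-coactions on $\bfR^Y$ and $\bfR^Z$ that involve a factor of $\d_1$, so $\vd_2(\bfR^Y\ot X)=\bfR^Y\ot X\ot 1$ and similarly for $\bfR^Z\ot Y$; combined with the primitivity of $X$ and $Y$ in $\Uc$, this again gives an obvious telescoping cancellation.

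The main obstacle, and the heart of the proof, is the cross term $-\vb_0(c')+\hb_0(c''')=0$ in $V_\d\ot\Fc\ot\Uc$. On the $c'$ side, $\vd_1(\one\ot\d_1)=\one\pr{0}\ot\d_1\ot\overline{\one\pr{-1}}$ produces $\one\ot\d_1\ot 1+\bfR^X\ot\d_1\ot X+\bfR^Y\ot\d_1\ot Y$, so $-\vb_0(c')=\bfR^X\ot\d_1\ot X+\bfR^Y\ot\d_1\ot Y$. On the $c'''$ side, $\hd_1$ couples the left $\mathcal H$-coaction on $\Uc$ (for which $X\mapsto 1\ot X-\d_1\ot Y$ and $Y\mapsto 1\ot Y$ via $u^{\pr{-1}}=S(u^{\pr{1}})$) with the left $\mathcal H$-coaction on $V_\d$ and with the action of $\mathcal H$ on $\Fc$ by $(f'\acl u)\rt f=f'(u\rt f)$; in particular $\d_1\rt 1_\Fc=\d_1$. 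Carrying out this expansion separately for $\bfR^Y\ot X$ and for $2\bfR^Z\ot Y$ produces four terms, among which a $-\bfR^X\ot\d_1^2\ot Y$ from the first summand and a $+\bfR^X\ot\d_1^2\ot Y$ from the second (whose coefficient is exactly $2\cdot\tfrac12$) cancel against each other—this is the precise place where the coefficient $2$ in the definition of $c'''$ is forced. After this cancellation, $\hb_0(c''')=-\bfR^X\ot\d_1\ot X-\bfR^Y\ot\d_1\ot Y$, which matches $\vb_0(c')$ exactly. The technical point one has to be careful about throughout is bookkeeping: keeping the $\mathcal H$-coaction on $V_\d$, the $\mathcal H$-coaction on $\Uc$, and the $\rt$-action on $1_\Fc$ consistently expanded on the small explicit bases $\{\one,\bfR^X,\bfR^Y,\bfR^Z\}$ and $\{X,Y\}$, and in this respect the fact that $\s_\Fc=1$ for $\Hc_{1\mathrm S}$ and that $\one$ has only a $\Uc$-valued coaction keeps the formulas manageable.
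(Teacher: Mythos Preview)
Your proof is correct and follows the same strategy as the paper: decompose $b_T(c'+c''')$ into its three bigraded components and verify each vanishes separately, with the cross term in $V_\d\ot\Fc\ot\Uc$ being the substantive computation where the coefficient $2$ is forced by the cancellation of the $\bfR^X\ot\d_1^2\ot Y$ terms. The only discrepancy is notational: the paper's proof has the labels $\hb$ and $\vb$ interchanged relative to its own definitions just above (e.g.\ it writes $\hb(c')=-\bfR^X\ot\d_1\ot X-\bfR^Y\ot\d_1\ot Y$, which lands in $V_\d\ot\Fc\ot\Uc$ rather than $V_\d\ot\Fc^{\ot 2}$), whereas you use the labels consistently with the stated conventions---but the actual computations and the three vanishing statements are identical in content.
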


\begin{proof}
We start with the element $c':=\one \ot \d_1 \in V_\d \ot \Fc .$

\medskip

The equality $\vb(c')=0$ is immediate. Next, we observe that
\begin{align}
\begin{split}
& \hb(c') = -\bfR^X \ot \d_1 \ot X - \bfR^Y \ot \d_1 \ot Y \\
& = -\bfR^X \ot \d_1 \ot X + \bfR^Y \ot \d_1 \ot Y + \bfR^X \ot {\d_1}^2 \ot Y +\\
& - \bfR^X \ot {\d_1}^2 \ot Y - 2\bfR^Y \ot \d_1 \ot Y \\
& = \vb(\bfR^Y \ot X+ 2\bfR^Z \ot Y).
\end{split}
\end{align}
So, for $c''':=\bfR^Y \ot X + 2\bfR^Z \ot Y \in V_\d \ot \Uc$, we have $\hb(c') - \vb(c''') = 0$. Finally we notice that $\hb(c''')=0$.
\end{proof}

\begin{proposition}
The element $c'+c''' \in {\rm{Tot}}^1(\Fc,\Uc,V_\d)$ is a Connes cycle.
\end{proposition}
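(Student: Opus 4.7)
The plan is to verify that $B_T(c'+c''') = 0$ in the total complex of the bicocyclic module $\FZ(\Hc,\Fc,V_\d)$. Since $c'$ sits in bidegree $(1,0)$ and $c'''$ in bidegree $(0,1)$, and the total Connes operator decreases total degree by one, applying $B_T$ lands in bidegree $(0,0) = V_\d$. Moreover $\vB(c')$ lands in bidegree $(1,-1) = 0$ and $\hB(c''')$ lands in bidegree $(-1,1) = 0$, so the claim reduces to the single equality $\hB(c')+\epsilon\,\vB(c''')=0$ in $V_\d$, where $\epsilon\in\{\pm 1\}$ is dictated by the sign convention $(-1)^p$.

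The computation of $\hB(c') = \hs_0\circ\hta(c')$ is the simpler half. Plug $\one\ot\d_1$ into the horizontal cyclic operator, using $\Db(\one)=1\ot\one+X\ot\bfR^X+Y\ot\bfR^Y$ and $\D(\d_1)=1\ot\d_1+\d_1\ot 1$. Since the action of $\Uc$ on $\Fc$ is by derivations, $u\rt 1_\Fc=\ve(u)1_\Fc$, and $\ve(X)=\ve(Y)=0$, so only the coinvariant summand $1\ot\one$ of the $\Hc$-coaction contributes. After this collapse, $\hs_0$ (which evaluates $\ve$ on the $\Fc$-factor) kills every term with a non-scalar entry in the first slot. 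Using $\one\cdot\d_1=\bfR^Z$ one arrives at the explicit answer $\hB(c')=\bfR^Z$.

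For $\vB(c''') = \vs_0\circ\vta(c''')$ I would treat the two summands $\bfR^Y\ot X$ and $2\bfR^Z\ot Y$ separately. The formula for $\vta$ simplifies dramatically because each $u^1\ps{i}\rt 1_\Fc=\ve(u^1\ps{i})1_\Fc$ and $S^{-1}$ and $S$ commute with $\ve$; after applying these identities and using counit telescoping, $\vta(v\ot u^1)$ reduces to the familiar form $v\pr{0}\cdot u^1\ps{1}\ot S(u^1\ps{2})\cdot\overline{v\pr{-1}}$. Now $X$ and $Y$ being primitive means $\D(u^1)=u^1\ot 1+1\ot u^1$, and the coactions $\Db(\bfR^Y)=1\ot\bfR^Y+\d_1\ot\bfR^X$ and $\Db(\bfR^Z)=1\ot\bfR^Z+\d_1\ot\bfR^Y+\tfrac{1}{2}\d_1^2\ot\bfR^X$ give only finitely many explicit terms. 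Applying $\vs_0$ (counit on the first $\Uc$-slot) and summing both contributions yields an element of $V_\d$ that I expect to be $\pm\bfR^Z$, exactly cancelling $\hB(c')$ with the convention sign $\epsilon$.

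The main obstacle is the bookkeeping for $\vta$: the twisting terms $S^{-1}(u^1\ps{j}\rt 1_\Fc)$ and $S(S^{-1}(u^1\ps{k})\rt 1_\Fc)$ look intimidating, but collapse under the normalization codegeneracy $\vs_0$ and the vanishing of $\ve$ on $X,Y,\d_1$. The other subtlety is tracking the sign $\epsilon=(-1)^p$ consistently between the two pieces; this can be fixed at the outset by writing $B_T=\hB+(-1)^p\vB$ on bidegree $(p,q)$ and then applying it to each summand. Once the two contributions $\hB(c')$ and $\vB(c''')$ are written in the same basis of $V_\d$, the cancellation is immediate, establishing that $c'+c'''$ is a Connes cycle.
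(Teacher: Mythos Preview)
Your proposal is correct and follows exactly the paper's approach: compute the two Connes-boundary contributions separately and observe that they cancel in $V_\d$. The paper's own proof is a single line---$\vB(c')=\one\cdot\d_1=\bfR^Z$ and $\hB(c''')=\bfR^Y\cdot X+2\,\bfR^Z\cdot Y=-\bfR^Z$ directly from the action table---so your lengthy unwinding of the full $\vta$ formula is unnecessary (in degree one the Connes boundary is just the extra degeneracy $v\ot u\mapsto v\cdot u$), and note that your $h/v$ labels are swapped relative to the paper's usage in this computation section.
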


\begin{proof}
Using the action of $\Fc$ and $\Uc$ on $V_\d$, we directly conclude that on the one hand we have $\vB(c') = \bfR^Z$, and on the other hand  $\hB(c''') = -\bfR^Z$.
\end{proof}

Our next step is to send this cocycle to the cyclic complex $C^1(\Hc,V_\d)$. This is a two step process. We first use the Alexander-Whitney map
\begin{equation}\label{AW}
\, AW:= \bigoplus_{p+q=n} AW_{p,q}: \Tot^n(\Fc,\Uc,V_\d)\ra \FZ^{n,n},
\end{equation}
\begin{equation*}
AW_{p,q}: \Fc^{\ot p}\ot  \Uc^{\ot q}\longrightarrow \Fc^{\ot
p+q}\ot  \Uc^{\ot p+q}
\end{equation*}
\begin{equation*}
AW_{p,q}=(-1)^{p+q}\underset{
\text{p\;times}}{\underbrace{\vd_0\vd_0\dots
\vd_0}}\hd_n\hd_{n-1}\dots \hd_{p+1} \, .
\end{equation*}
to pass to the diagonal complex $\FZ(\Hc,\Fc,V_\d)$. It is readily checked that
\begin{equation}
AW_{1,0}(c') = - \one \ot \d_1 \ot 1 - \bfR^X \ot \d_1 \ot X - \bfR^Y \ot \d_1 \ot Y,
\end{equation}
as well as
\begin{equation}
AW_{0,1}(c''') = - \bfR^Y \ot 1 \ot X - 2\bfR^Z \ot 1 \ot Y.
\end{equation}

Summing up we get
\begin{equation}
c^{\rm odd}_{\rm diag} := - \one \ot \d_1 \ot 1 - \bfR^X \ot \d_1 \ot X - \bfR^Y \ot \d_1 \ot Y - \bfR^Y \ot 1 \ot X - 2\bfR^Z \ot 1 \ot Y.
\end{equation}

Finally, via the quasi-isomorphism \eqref{aux-PSI-bicrossed} we carry $c^{\rm odd}_{\rm diag} \in D^2(\Uc,\Fc,V_\d)$ to
\begin{equation}\label{c-odd}
c^{\rm odd} = - \Big(\one \ot \delta_1 + \bfR^X \ot \delta_1X+ \bfR^Y \ot (X+\delta_1Y) + 2 \bfR^Z \ot Y \Big) \in C^1(\Hc, V_\d).
\end{equation}

\begin{proposition}
The element $c^{\rm odd}$ defined in \eqref{c-odd} is a Hochschild cocycle.
\end{proposition}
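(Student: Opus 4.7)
The plan is to establish $b(c^{\rm odd}) = 0$ in $C^2(\Hc, V_\d)$ by tracing back through the chain of quasi-isomorphisms that produced $c^{\rm odd}$, rather than by direct term-by-term computation. The two preceding propositions already showed that $c' + c'''$ is a Hochschild cocycle in the total mixed complex $\Tot^\bullet(\Fc, \Uc, V_\d)$, that is $(\hb + \vb)(c' + c''') = 0$. Since $c^{\rm odd}$ was obtained as $c^{\rm odd} = \Psi_\acl(AW(c' + c'''))$, it then suffices to check that each of the two transfer maps is a chain map for the Hochschild differentials.

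First I would note that the Alexander-Whitney map $AW$ in \eqref{AW} is a classical chain map from the total complex of a bicocyclic module into its diagonal, intertwining the total Hochschild coboundary $b_T = \hb + \vb$ on $\Tot^\bullet(\Fc, \Uc, V_\d)$ with the Hochschild coboundary $b_{\rm diag}$ on $\FZ^{\bullet, \bullet}(\Hc, \Fc, V_\d)$. This is a standard property of the Alexander-Whitney construction, valid for any bi(co)cyclic module, and it yields $b_{\rm diag}(c^{\rm odd}_{\rm diag}) = AW(b_T(c' + c''')) = 0$.

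Next I would invoke the identification, recorded in \eqref{aux-PSI-bicrossed}--\eqref{aux-PSI-bicrossed-inverse}, of $\Psi_\acl$ as an isomorphism of cocyclic modules between the diagonal subcomplex $\FZ^{\bullet, \bullet}(\Hc, \Fc, V_\d)$ and the standard Hopf-cyclic complex $C^\bullet(\Hc, V_\d)$. In particular $\Psi_\acl$ commutes with the Hochschild coface operators, hence with $b$, so
$$b(c^{\rm odd}) \;=\; b(\Psi_\acl(c^{\rm odd}_{\rm diag})) \;=\; \Psi_\acl(b_{\rm diag}(c^{\rm odd}_{\rm diag})) \;=\; 0.$$

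If a direct verification is desired as a sanity check, one would unpack
$$b(v \ot h) \;=\; v \ot 1 \ot h \;-\; v \ot h\ps{1} \ot h\ps{2} \;+\; v\ns{0} \ot h \ot v\ns{-1}$$
on each of the five summands of $c^{\rm odd}$. The only subtle contribution comes from the SAYD-coaction on $\one$, which produces the extra terms $X \ot \bfR^X$ and $Y \ot \bfR^Y$ paired with $\delta_1$; these are precisely what is needed to cancel the non-primitive pieces of $\Delta(\delta_1 X)$ and $\Delta(X + \delta_1 Y)$ arising from the bicrossed coproduct $\Delta(X) = X \ot 1 + 1 \ot X + \tfrac12 X^a_k \ot f_a + \tfrac12 \sum_a X^a_a \ot f_k$. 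The main obstacle in this direct route is bookkeeping: managing the non-commutative relations $[X, \delta_1] = \tfrac12 \delta_1^2$ and $[Y, \delta_1] = \delta_1$ while simplifying the many resulting monomials. This difficulty is entirely bypassed by the indirect argument above, which is why the plan is to present the proof via $AW$ and $\Psi_\acl$.
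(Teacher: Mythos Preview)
Your indirect argument is correct and genuinely different from the paper's route. The paper proves this proposition by a direct term-by-term computation: it expands $b$ on each of the five summands $\one\ot\delta_1$, $\bfR^Y\ot X$, $\bfR^X\ot\delta_1 X$, $\bfR^Y\ot\delta_1 Y$, $\bfR^Z\ot Y$ using the explicit Hochschild cofaces (including the last coface coming from the $\Hc$-coaction on $V_\d$), and then checks that the resulting expressions in $V_\d\ot\Hc^{\ot 2}$ cancel. Your approach bypasses all of this by invoking that $AW$ is a chain map for the Hochschild coboundaries (the Eilenberg--Zilber theorem for cylindrical modules, \cite{GetzJone93}) and that $\Psi_{\acl}$ is an isomorphism of cocyclic modules, both of which are legitimate to cite here. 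What your argument buys is economy and conceptual transparency; what the paper's direct computation buys is an independent check that the explicit formula \eqref{c-odd} is correct as written, which is valuable given how many sign and ordering conventions are in play. One small remark on your sanity-check paragraph: the coproduct formula you quote is the general $\Hc_{n\,\rm Proj}$ one; in the present $n=1$ case it collapses to $\Delta(X)=X\ot 1+1\ot X+Y\ot\delta_1$, which is what the paper actually uses.
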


\begin{proof}
We first calculate its images under the Hochschild coboundary map $b:C^1(\mathcal{H},V_{\delta}) \to C^2(\mathcal{H},V_{\delta})$. We have
\begin{align}
\begin{split}
& b(\one \ot \delta_1) = \one \ot 1_{\mathcal{H}} \ot \delta_1 - \one \ot \D(\delta_1) + \one \ot \delta_1 \ot 1 + \bfR^Y \ot \delta_1 \ot Y + \bfR^X \ot \delta_1 \ot X \\
& = \bfR^Y \ot \delta_1 \ot Y + \bfR^X \ot \delta_1 \ot X, \\[.2cm]
& b(\bfR^Y \ot X) = \bfR^Y \ot 1_{\mathcal{H}} \ot X - \bfR^Y \ot \D(X) + \bfR^Y \ot X \ot 1_{\mathcal{H}} + \bfR^X \ot X \ot \delta_1 \\
& = \bfR^X \ot X \ot \delta_1 - \bfR^Y \ot Y \ot \delta_1, \\[.2cm]
& b(\bfR^X \ot \delta_1X) = \bfR^X \ot 1_{\mathcal{H}} \ot \delta_1X - \bfR^X \ot \D(\delta_1X) + \bfR^X \ot \delta_1X \ot 1_{\mathcal{H}} \\
& = - \bfR^X \ot \delta_1 \ot X - \bfR^X \ot \delta_1Y \ot \delta_1 - \bfR^X \ot X \ot \delta_1 - \bfR^X \ot Y \ot {\delta_1}^2, \\[.2cm]
& b(\bfR^Y \ot \delta_1Y) = \bfR^Y \ot 1_{\mathcal{H}} \ot \delta_1Y - \bfR^Y \ot \D(\delta_1Y) + \bfR^Y \ot \delta_1Y \ot 1_{\mathcal{H}} + \bfR^X \ot \delta_1Y \ot \delta_1 \\
& = \bfR^X \ot \delta_1Y \ot \delta_1 - \bfR^Y \ot \delta_1 \ot Y - \bfR^Y \ot Y \ot \delta_1, \\[.2cm]
& b(\bfR^Z \ot Y) = \bfR^Z \ot 1_{\mathcal{H}} \ot Y - \bfR^Z \ot \D(Y) + \bfR^Z \ot Y \ot 1_{\mathcal{H}} \\
& + \bfR^Y \ot Y \ot \delta_1 + \frac{1}{2} \bfR^X \ot Y \ot {\delta_1}^2 \\
& = \bfR^Y \ot Y \ot \delta_1 + \frac{1}{2} \bfR^X \ot Y \ot {\delta_1}^2.
\end{split}
\end{align}

Now, summing up we get
\begin{equation}
b(\one \ot \delta_1 + \bfR^X \ot \delta_1X+ \bfR^Y \ot (X+\delta_1Y) + 2 \bfR^Z \ot Y) = 0.
\end{equation}
\end{proof}

\begin{proposition}
The Hochschild cocycle $c^{\rm odd}$ defined in \eqref{c-odd} vanishes under the Connes boundary map.
\end{proposition}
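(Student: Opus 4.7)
The argument is a direct unwinding of the Connes boundary formula on $C^1(\Hc, V_\d)$, combined with the explicit action and coaction tables for the $4$-dimensional SAYD module $V_\d$ over $\Hc = \Hc_{\rm 1S}\cop$ given in the preceding proposition. Specifically, for $q = 1$ we have
\begin{equation*}
B(v\ot h) \;=\; \s_0\,\tau(v\ot h) \;=\; \s_0\bigl(v\ns{0}\cdot h\ps{1}\ot S(h\ps{2})\cdot v\ns{-1}\bigr) \;=\; \ve\bigl(S(h\ps{2})\,v\ns{-1}\bigr)\,v\ns{0}\cdot h\ps{1}.
\end{equation*}
Since $\ve$ is an algebra map, $\ve(S(h\ps{2}) v\ns{-1}) = \ve(h\ps{2})\ve(v\ns{-1})$, and this collapses (using counitality of $\D$) to
\begin{equation*}
B(v \ot h) \;=\; \ve(v\ns{-1})\,v\ns{0}\cdot h .
\end{equation*}

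The first step of the computation is to evaluate $\ve(v\ns{-1})\,v\ns{0}$ on each of the four basis vectors $\one,\bfR^X,\bfR^Y,\bfR^Z$ using the explicit coaction $\Db\colon V_\d\to\Hc\ot V_\d$ recorded in the statement of the module. Because $\ve(X) = \ve(Y) = \ve(\delta_1) = 0$, for each basis element only the leading summand $1\ot v$ of $\Db(v)$ survives, and one obtains $\ve(v\ns{-1}) v\ns{0} = v$ uniformly. Consequently $B(v \ot h) = v \cdot h$ on all generators, and by linearity
\begin{equation*}
B(c^{\rm odd}) \;=\; -\bigl(\,\one\cdot\delta_1 \;+\; \bfR^X\cdot(\delta_1 X) \;+\; \bfR^Y\cdot(X+\delta_1 Y) \;+\; 2\,\bfR^Z\cdot Y\,\bigr).
\end{equation*}

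The second step is to compute each term via the action table, using the module compatibility $(v\cdot u)\cdot f=(v\cdot(u\ps{1}\rt f))\cdot u\ps{2}$ (Lemma \ref{module on bicrossed product}) to evaluate products such as $\bfR^X\cdot(\delta_1 X) = (\bfR^X\cdot\delta_1)\cdot X$ and $\bfR^Y\cdot(\delta_1 Y) = (\bfR^Y\cdot\delta_1)\cdot Y$. From the table, $\bfR^X\cdot\delta_1 = 0 = \bfR^Y\cdot\delta_1$ and $\bfR^Z\cdot Y = 0$, so all the $\delta_1$- and $Y$-suffixed terms drop out, while the two surviving terms
\begin{equation*}
\one\cdot\delta_1 \;=\; \bfR^Z \qquad\text{and}\qquad \bfR^Y\cdot X \;=\; -\bfR^Z
\end{equation*}
cancel each other. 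Therefore $B(c^{\rm odd}) = 0$.

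The argument is essentially mechanical once the Connes operator has been reduced to the form $B(v\ot h) = \ve(v\ns{-1})\,v\ns{0}\cdot h$; there is no substantial obstacle. The only point requiring a little care is the treatment of products in the bicrossed product $\Hc$: one must recognize that $\delta_1 X$ and $\delta_1 Y$ stand for the elements $\delta_1 \acl X$ and $\delta_1\acl Y$, and apply the action compatibility in the correct order so that the computation of $\bfR^X\cdot(\delta_1 X)$ and $\bfR^Y\cdot(\delta_1 Y)$ reduces the $\delta_1$-factor first (and thus annihilates the expression). Once this is done, the cancellation $\bfR^Z - \bfR^Z = 0$ completes the proof.
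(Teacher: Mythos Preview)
Your proof is correct and follows essentially the same route as the paper's. Both arguments reduce $B$ on $C^1$ to the map $v\ot h\mapsto v\cdot h$ (the paper via the extra degeneracy $\sigma_{-1}$ in the normalized complex, you via $\sigma_0\tau$ and the counit axiom for the coaction), and then perform the identical cancellation $\one\cdot\delta_1 + \bfR^Y\cdot X = \bfR^Z - \bfR^Z = 0$ using the action table. One small remark: the identity $\bfR^X\cdot(\delta_1 X)=(\bfR^X\cdot\delta_1)\cdot X$ is simply the definition of the right $\Fc\acl\Uc$-action $v\cdot(f\acl u)=(v\cdot f)\cdot u$, not Lemma~\ref{module on bicrossed product} (which handles the reversed order $(v\cdot u)\cdot f$); but this does not affect the argument.
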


\begin{proof}
The Connes boundary is defined on the normalized bicomplex by the formula
\begin{equation}
B = \sum_{i = 0}^n (-1)^{ni}\tau^i \circ \sigma_{-1},
\end{equation}
where
\begin{equation}
\sigma_{-1}(v_{\delta} \ot h^1 \ot \ldots \ot h^{n+1}) = v_{\delta} \cdot h^1\ps{1} \ot S(h^1\ps{2}) \cdot (h^2 \ot \ldots \ot h^{n+1})
\end{equation}
is the extra degeneracy. Accordingly,
\begin{align}
\begin{split}
& B(\one \ot \delta_1 + \bfR^Y \ot X + \bfR^X \ot \delta_1X + \bfR^Y \ot \delta_1Y + 2 \cdot \bfR^Z \ot Y) = \\
& \one \cdot \delta_1 + \bfR^Y \cdot X + \bfR^X \cdot \delta_1X + \bfR^Y \cdot \delta_1Y + 2 \cdot \bfR^Z \cdot Y = \\
& \bfR^Z - \bfR^Z = 0.
\end{split}
\end{align}
\end{proof}

\subsubsection{Construction of  a representative cocycle for the even class}

Secondly, we will compute the even cocycle in the total complex  ${\rm{Tot}}^{\bullet}(\Fc,\Uc,V_\d)$  of the bicomplex  \eqref{bicocyclic-bicomplex}.

\begin{proposition}
Let
\begin{align}\label{aux-c-even}
\begin{split}
& c := \one \ot X \ot Y - \one \ot Y \ot X - \bfR^X \ot XY \ot X - \bfR^X \ot Y \ot X^2 + \bfR^Y \ot XY \ot Y \\
& \hspace{1.2cm} + \bfR^Y \ot X \ot Y^2 - \bfR^Y \ot Y \ot X \in V_\d \ot \Uc^{\ot\;2}
\end{split}
\end{align}
and
\begin{align}
\begin{split}
& c'' := - \bfR^X \ot \d_1 \ot XY^2 + \frac{2}{3}\bfR^X \ot {\d_1}^2 \ot Y^3 +\frac{1}{3}\bfR^Y \ot \d_1 \ot Y^3 \\
& \hspace{1.2cm} - \frac{1}{4}\bfR^X \ot {\d_1}^2 \ot Y^2 - \frac{1}{2}\bfR^Y \ot \d_1 \ot Y^2 \in V_\d \ot \Fc \ot \Uc.
\end{split}
\end{align}
Then  $c+c'' \in {\rm{Tot}}^2(\Fc,\Uc,V_\d)$ is a Hochschild cocycle.
\end{proposition}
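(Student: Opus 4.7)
The plan is to verify that $b_T(c + c'') = 0$ where $b_T = \sum_{p+q=n} \hb_p + (-1)^p \vb_q$ is the total Hochschild coboundary on the bicomplex. Since $c$ lives in bidegree $(p,q) = (0,2)$ and $c''$ in bidegree $(1,1)$, the potentially nonzero components of $b_T(c+c'')$ land in bidegrees $(1,2)$, $(0,3)$, and $(2,1)$. Concretely I need to check three identities:
\begin{align*}
\vb_2(c) &= 0 \quad \text{in bidegree } (0,3), \\
\hb_1(c'') &= 0 \quad \text{in bidegree } (2,1), \\
\hb_0(c) - \vb_1(c'') &= 0 \quad \text{in bidegree } (1,2).
\end{align*}

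First I would compute $\vb_2(c)$. Since $c$ has no $\Fc$-component, the vertical codegeneracies $\vd_0,\ldots,\vd_q$ simply introduce units in the $\Uc$-slot via comultiplication, while $\vd_{q+1}$ involves the $\Hc$-coaction on $V_\d$ through $\wbar{v\pr{-1}}$; the relevant components of the coaction on $\one$, $\bfR^X$, $\bfR^Y$ are known from the 4-dimensional SAYD module computed earlier. The cancellations should mirror those that produced $c'''$ in the odd-cocycle verification, exploiting the Lie algebra relation $[X,Y]=X$ and the fact that the structure of $c$ is designed so that the $\Uc$-alternation $XY - YX$ gets absorbed.

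Next I would compute $\hb_1(c'')$. Because $c''$ has only a single $\Fc$-slot, $\hd_0$ and $\hd_2$ insert units (with the last one involving the $\Hc$-coaction/antipode through $u^{\pr{-1}}v^{\pr{-1}} \rt 1_\Fc$), while $\hd_1$ comultiplies $\d_1$. Since $\d_1$ is primitive, $\hd_1$ produces $\d_1 \otimes 1 + 1 \otimes \d_1$, and the result should collapse through the module structure $X_i^j \rt f_k$ and the action of $\Uc$ on $V_\d$.

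The bridging identity $\hb_0(c) = \vb_1(c'')$ at bidegree $(1,2)$ is where the choice of $c''$ must match the ``defect'' of $c$ under the horizontal coboundary; this is the main obstacle and the reason $c''$ contains the delicate coefficients $\tfrac{2}{3}$, $\tfrac{1}{3}$, $\tfrac{1}{4}$, $\tfrac{1}{2}$. The left-hand side $\hb_0(c)$ picks up the SAYD coaction contribution via $\hd_{q+1}$ applied to $c$, which writes each monomial $v_\d \otimes u^1 \otimes u^2$ together with $\widetilde u^{\pr{-1}} v^{\pr{-1}} \rt 1_\Fc$ in the new $\Fc$-slot. Using the explicit coaction $\one \mapsto 1 \otimes \one + X \otimes \bfR^X + Y \otimes \bfR^Y$ and the rules $X \rt \d_1 = \tfrac12 \d_1^2$, $Y \rt \d_1 = \tfrac12 \d_1^2$ (and their higher-order analogues arising from $\D(X), \D(Y), \D(X^2)$, etc.), I would expand $\hb_0(c)$ as a linear combination of tensors of the form $\bfR^* \otimes \d_1^a \otimes Y^b$ and $\bfR^* \otimes \d_1^a \otimes XY^b$. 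The right-hand side $\vb_1(c'')$ is computed from the vertical cofaces on $V_\d \otimes \Fc \otimes \Uc$: $\vd_0$ inserts a unit, $\vd_1$ comultiplies the $\Uc$-element, and $\vd_2$ again invokes the coaction. Matching coefficients on each monomial basis vector should force precisely the fractional weights chosen in $c''$; verifying this matching, term by term, is the routine but lengthy part.

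Finally, once all three identities are established, I can conclude $b_T(c+c'') = 0$, so $c + c''$ represents a class in $H^2(\Tot^\bullet(\Fc,\Uc,V_\d), b_T)$. The anticipated obstacle is purely bookkeeping: keeping the signs in $b_T = \hb + (-1)^p \vb$ straight and tracking the three-term output of comultiplications such as $\D(\delta_1 Y) = \d_1 \otimes Y + Y \otimes \d_1 + \d_1Y \otimes 1 + 1 \otimes \d_1 Y$ through the action $\bullet$ defined in \eqref{bullet}. No conceptually new input is required beyond the explicit SAYD action/coaction tables and the commutation rules recorded for $\Hc_{1\rm S}\cop$.
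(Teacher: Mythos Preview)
Your plan is correct and mirrors the paper's own proof: split $b_T(c+c'')$ into its three bidegree components and verify that each vanishes, with the bridging identity in bidegree $(1,2)$ being the substantive computation that determines the coefficients in $c''$. One small slip to watch when you carry it out: the action on $\Fc$ satisfies $Y\rt\d_1=\d_1$ (not $\tfrac12\d_1^2$), and it is this rule together with $X\rt\d_1=\tfrac12\d_1^2$ that produces the terms in $\vb(c)$ which $c''$ is designed to cancel.
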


\begin{proof}
We start with the element \eqref{aux-c-even}. It is immediate that $\hb(c) = 0$ . To be able to compute $\vb(c)$, we need to determine the $\Fc$-coaction
\begin{align}
\begin{split}
& \bfR^X \ot (XY)^{\pr{-1}}X^{\pr{-1}} \ot (XY)^{\pr{0}} \ot X^{\pr{0}} \\
& + \bfR^X \ot (X^2)^{\pr{-1}} \ot Y \ot (X^2)^{\pr{0}} - {\bfR^Y}\pr{0} \ot {\bfR^Y}\pr{-1}(XY)^{\pr{-1}} \ot (XY)^{\pr{0}} \ot Y \\
& - {\bfR^Y}\pr{0} \ot {\bfR^Y}\pr{-1}X^{\pr{-1}} \ot X^{\pr{0}} \ot Y^2 + {\bfR^Y}\pr{0} \ot {\bfR^Y}\pr{-1}X^{\pr{-1}} \ot Y \ot X^{\pr{0}}.
\end{split}
\end{align}

Hence, observing that
\begin{align}
\begin{split}
& \Db(X^2) = (X^2)^{\pr{0}} \ot (X^2)^{\pr{1}} = {X\ps{1}}^{\pr{0}}X^{\pr{0}} \ot {X\ps{1}}^{\pr{1}}(X\ps{2} \rt X^{\pr{1}}) \\
& = X^2 \ot 1 + 2XY \ot \d_1 + X \ot \d_1 + Y^2 \ot {\d_1}^2 + \frac{1}{2}Y \ot {\d_1}^2,
\end{split}
\end{align}
and
\begin{equation}
\Db(XY) = (XY)^{\pr{0}} \ot (XY)^{\pr{1}} = X^{\pr{0}}Y \ot X^{\pr{1}}, \quad XY \mapsto XY \ot 1 + Y^2 \ot \d_1,
\end{equation}
we have
\begin{align}
\begin{split}
& \vb_0(c) = -\bfR^X \ot \d_1 \ot Y^2 \ot X - \bfR^X \ot \d_1 \ot XY \ot Y + \bfR^X \ot {\d_1}^2 \ot Y^2 \ot Y \\
& - 2\bfR^X \ot \d_1 \ot Y \ot XY - \bfR^X \ot \d_1 \ot Y \ot X + \bfR^X \ot {\d_1}^2 \ot Y \ot Y^2 \\
& + \frac{1}{2}\bfR^X \ot {\d_1}^2 \ot Y \ot Y - \bfR^X \ot \d_1 \ot XY \ot Y + \bfR^Y \ot \d_1 \ot Y^2 \ot Y \\
& + \bfR^X \ot {\d_1}^2 \ot Y^2 \ot Y - \bfR^X \ot \d_1 \ot X \ot Y^2 + \bfR^Y \ot \d_1 \ot Y \ot Y^2 \\
& + \bfR^X \ot {\d_1}^2 \ot Y \ot Y^2 + \bfR^X \ot \d_1 \ot Y \ot X - \bfR^Y \ot \d_1 \ot Y \ot Y \\
& - \bfR^X \ot {\d_1}^2 \ot Y \ot Y.
\end{split}
\end{align}

It is now clear that
\begin{align}
\begin{split}
& \vb(c) = \hb\big(\bfR^X \ot \d_1 \ot XY^2 - \frac{2}{3}\bfR^X \ot {\d_1}^2 \ot Y^3 -\frac{1}{3}\bfR^Y \ot \d_1 \ot Y^3 \\
& \hspace{1.6cm} + \frac{1}{4}\bfR^X \ot {\d_1}^2 \ot Y^2 + \frac{1}{2}\bfR^Y \ot \d_1 \ot Y^2 \big).
\end{split}
\end{align}

Therefore, for the element
\begin{align}
\begin{split}
& c'' := - \bfR^X \ot \d_1 \ot XY^2 + \frac{2}{3}\bfR^X \ot {\d_1}^2 \ot Y^3 +\frac{1}{3}\bfR^Y \ot \d_1 \ot Y^3 \\
& \hspace{1.2cm} - \frac{1}{4}\bfR^X \ot {\d_1}^2 \ot Y^2 - \frac{1}{2}\bfR^Y \ot \d_1 \ot Y^2,
\end{split}
\end{align}
we have $\hb(c'') + \vb(c) = 0$.

\medskip

Finally we observe that
\begin{align}
\begin{split}
& \vb(c'') = \bfR^X \ot \d_1 \ot \d_1 \ot Y^3 - \frac{4}{3}\bfR^X \ot \d_1 \ot \d_1 \ot Y^3 + \frac{1}{3}\bfR^X \ot \d_1 \ot \d_1 \ot Y^3 \\
& \hspace{1.5cm} + \frac{1}{2}\bfR^X \ot \d_1 \ot \d_1 \ot Y^2 - \frac{1}{2}\bfR^X \ot \d_1 \ot \d_1 \ot Y^2 = 0.
\end{split}
\end{align}
\end{proof}

\begin{proposition}
The element $c+c'' \in {\rm{Tot}}^2(\Fc,\Uc,V_\d)$ vanishes under the  Connes boundary map.
\end{proposition}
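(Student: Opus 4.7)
The total Connes boundary decomposes on an element of bidegree $(p,q)$ as $B_T = \hB_p + (-1)^p \vB_q$, so for our degree-2 cocycle, with $c$ sitting in bidegree $(0,2)$ and $c''$ in bidegree $(1,1)$, we have
\begin{equation}
B_T(c+c'') \;=\; \vB_2(c) \;+\; \hB_1(c'') \;-\; \vB_1(c''),
\end{equation}
where $\vB_2(c)+\hB_1(c'')$ lies in bidegree $(0,1)$ and $-\vB_1(c'')$ lies in bidegree $(1,0)$. The plan is to show that each of these two pieces vanishes separately.

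First I would dispose of the $(1,0)$-piece $\vB_1(c'')$. Since $c''$ has only a single $\Uc$-factor, $\vB_1$ reduces to $\vs_0 \circ \vta$, which on an element $v\ot \tilde f\ot u$ computes essentially $v\pr 0 u\ps2 \ot S^{-1}(u\ps1)\rt \tilde f\,\ve(u\ps3)\cdots$. Every summand of $c''$ has $\Uc$-entry of the form $Y^k$ or $XY^2$. The coaction on $Y$ is trivial by the coaction \eqref{coactionLieproj} specialized to this example, so on $Y^k$-summands $\vta$ reduces to the ordinary shuffling $v_\d \ot \tilde f\ot Y^k \mapsto v_\d \cdot Y^k\ps{2} \ot S^{-1}(Y^k\ps1)\rt \tilde f\ot \dots$, and after applying $\vs_0$ and using that $\bfR^X\cdot Y = \bfR^X$, $\bfR^Y\cdot Y = 0$ (and that $\d_1\cdot \one = \bfR^Z$ is absent here), these contributions cancel pairwise between the $Y^2$ and $Y^3$ summands. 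The $XY^2$-term produces additional pieces coming from $\Db(X) = X\ot 1 + \tfrac12 X^a\ot f_a + \tfrac12 \sum_a X^a_a \ot f_k$, and these are cancelled against the remainder of the $\d_1^2$-weighted summands. In brief, the coefficients $-1, \tfrac23, \tfrac13, -\tfrac14, -\tfrac12$ appearing in $c''$ were chosen precisely so that these residual contributions annihilate each other.

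Next I would turn to the $(0,1)$-piece, computing $\vB_2(c)$ and $\hB_1(c'')$ separately. For $\vB_2(c)=(1 + \vta)\vs_1\vta$ applied to the seven summands of $c$, only $X,Y \in \Uc$ appear, so the $\Fc$-coactions of $X$ and $Y$ produce at most $1$, $\d_1$ and $\d_1^2$ in the $\Fc$-slot, matching exactly what appears in $c''$. For $\hB_1(c'')=\vs_0\hta$ on $c''$, only $\d_1$ and $\d_1^2$ sit in the $\Fc$-slot and $S(\d_1) = -\d_1$, $S(\d_1^2)=\d_1^2$ keep the computation manageable. I would then compare the resulting expressions term-by-term, checking that each monomial $\bfR^\ast \ot \d_1^a\ot Y^kX^l$ has matching coefficient in $\vB_2(c)$ and $-\hB_1(c'')$.

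The main obstacle is the combinatorial bookkeeping, especially in $\vB_2(c)$: the vertical cyclic $\vta$ activates both the $\Fc$-coaction on $\Uc$ and the $\Hc$-coaction on $V_\d$ coming from the nontrivial $\Fg_1$-part $\one\ns{-1}\ot \one\ns{0} = X\ot \bfR^X + Y\ot \bfR^Y$ of the SAYD structure on $V_\d$. Each summand of $c$ therefore splits into a dozen or so pieces after a single application of $\vta$, and the cyclic average doubles this count. The calculation is purely mechanical, but it is the place where the non-triviality of the coaction $\Db_{V_\d}$ is essential; conceptually, the cancellation expresses the AYD compatibility of $V_\d$ translated to the bicocyclic level, just as in the proof that $c+c''$ is a Hochschild cocycle.
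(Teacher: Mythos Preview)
Your bidegree decomposition of $B_T(c+c'')$ is correct, but you miss the main simplification: the paper shows that all three pieces $\vB_2(c)$, $\hB_1(c'')$, $\vB_1(c'')$ vanish \emph{individually}, so no cancellation argument is needed. For $\hB_1(c'')$ this is immediate: every coefficient appearing in $c''$ lies in $F_0V_\d=\Cb\langle\bfR^X,\bfR^Y,\bfR^Z\rangle$, on which $\d_1$ acts trivially, so the $\Fc$-direction extra degeneracy kills $c''$ outright. For $\vB_2(c)$ the paper computes the extra degeneracy $\overset{\ra}{\sigma}_{-1}(v_\d\ot u^1\ot u^2)=v_\d\cdot u^1\ps{1}\ot S(u^1\ps{2})u^2$ on each of the seven summands of $c$ and observes that the sum is zero; this is a direct seven-line computation, not the ``dozen or so pieces'' that you anticipate from activating the full $\vta$. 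Your plan to set $\vB_2(c)$ against $\hB_1(c'')$ and search for term-by-term cancellation is therefore a detour around two facts that are each simpler than the combined statement.

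Your sketch also contains errors. From the action table for $V_\d$ one has $\bfR^X\cdot Y=2\bfR^X$ and $\bfR^Y\cdot Y=\bfR^Y$, not $\bfR^X$ and $0$ as you write; you seem to be using the untwisted $V$-action rather than the $\d$-twisted $V_\d$-action. Likewise, your treatment of $\vB_1(c'')$ is more complicated than needed: the paper reduces it to the two observations $(\bfR^X\ot\d_1^2)\cdot Y=0$ and $(\bfR^Y\ot\d_1)\cdot Y=0$, which dispose of all the $Y^k$-summands directly and of the $XY^2$-summand after one application of $X$ (since $(\bfR^X\ot\d_1)\cdot X$ is a combination of $\bfR^Y\ot\d_1$ and $\bfR^X\ot\d_1^2$, both annihilated by $Y$). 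There is no cancellation ``between the $Y^2$ and $Y^3$ summands'' as you suggest, and the coefficients in $c''$ play no role in the $B$-vanishing: they were fixed by the Hochschild condition, and the $B$-vanishing comes for free from the module structure.
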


\begin{proof}
As above, we start with \eqref{aux-c-even}. To apply $\hB$, it suffices to consider the horizontal extra degeneracy operator $\overset{\ra}{\sigma}_{-1}:=\overset{\ra}{\sigma}_{1}\hta$. We have,
\begin{equation}
\overset{\ra}{\sigma}_{-1}(\one \ot X \ot Y) = \one \cdot X\ps{1} \ot S(X\ps{2})Y = -\one \ot XY,
\end{equation}
and
\begin{equation}
\overset{\ra}{\sigma}_{-1}(\one \ot Y \ot X) = \one \cdot Y\ps{1} \ot S(Y\ps{2})X = \one \ot X - \one \ot YX = -\one \ot XY.
\end{equation}
We have just proved that  $\overset{\ra}{\sigma}_{-1}(\one \ot X \ot Y - \one \ot Y \ot X) = 0$. For the other  terms in $c \in V_\d \ot \Uc^{\ot\;2}$ we proceed as follows:
\begin{align}
\begin{split}
& \overset{\ra}{\sigma}_{-1}(\bfR^X \ot XY \ot X) = \bfR^X \cdot X\ps{1}Y\ps{1} \ot S(Y\ps{2})S(X\ps{2})X \\
& = \bfR^X \cdot XY \ot X + \bfR^X \ot YX^2 - \bfR^X \cdot X \ot YX - \bfR^X \cdot Y \ot X^2 \\
& = \bfR^Y \ot XY + \bfR^X \ot YX^2 - 2\bfR^X \ot X^2,
\end{split}
\end{align}
\begin{align}
\begin{split}
& \overset{\ra}{\sigma}_{-1}(\bfR^Y \ot XY \ot Y) \\
& =\bfR^Y \cdot XY \ot Y + \bfR^Y \ot YXY - \bfR^Y \cdot X \ot Y^2 - \bfR^Y \cdot Y \ot XY \\
& =\bfR^Y \ot XY^2 + \bfR^Z \ot Y^2,
\end{split}
\end{align}
\begin{equation}
\overset{\ra}{\sigma}_{-1}(\bfR^X \ot Y \ot X^2) = 2\bfR^X \ot X^2 - \bfR^X \ot YX^2,
\end{equation}
\begin{equation}
\overset{\ra}{\sigma}_{-1}(\bfR^Y \ot X \ot Y^2) = -\bfR^Z \ot Y^2 - \bfR^Y \ot XY^2,
\end{equation}
and finally
\begin{equation}
\overset{\ra}{\sigma}_{-1}(\bfR^Y \ot Y \ot X) = \bfR^Y \ot X - \bfR^Y \ot YX = - \bfR^Y \ot XY.
\end{equation}

This way we prove
\begin{align}
\begin{split}
&\overset{\ra}{\sigma}_{-1}(- \bfR^X \ot XY \ot X - \bfR^X \ot Y \ot X^2 + \bfR^Y \ot XY \ot Y \\
& +\bfR^Y \ot X \ot Y^2 - \bfR^Y \ot Y \ot X) = 0.
\end{split}
\end{align}

Hence we conclude $\hB(c)=0$.

\medskip

Since the action of $\d_1$ on $F_0V_\d = \Cb\langle \bfR^X, \bfR^Y, \bfR^Z \rangle$ is trivial, $\vB(c'')=0$. The horizontal counterpart $\hB(c'') = 0$ follows from the observations
\begin{equation}
(\bfR^X \ot {\d_1}^2) \cdot Y = \bfR^X \cdot Y\ps{1} \ot S(Y\ps{2}) \rt {\d_1}^2 = 0
\end{equation}
and
\begin{equation}
(\bfR^Y \ot \d_1) \cdot Y = \bfR^Y \cdot Y\ps{1} \ot S(Y\ps{2}) \rt \d_1 = 0.
\end{equation}
\end{proof}

Next, we send the element $c+c'' \in {\rm Tot}^2(\Fc, \Uc,V_\d)$ to the cyclic complex $C(\Hc,V_\d)$. As before, in the first step we use the Alexander-Whitney map to land in the diagonal complex $\FZ(\Hc,\Fc,V_\d)$. To this end, we have
\begin{align}
\begin{split}
& AW_{0,2}(c) = \; \vd_0\vd_0(c) \\
& = \one \ot 1 \ot 1 \ot X \ot Y - \one \ot 1 \ot 1 \ot Y \ot X - \bfR^X \ot 1 \ot 1 \ot XY \ot X \\
&  - \bfR^X\ot 1 \ot 1 \ot Y \ot X^2 + \bfR^Y \ot 1 \ot 1 \ot XY \ot Y + \bfR^Y \ot 1 \ot 1 \ot X \ot Y^2 \\
& - \bfR^Y \ot 1 \ot 1 \ot Y \ot X,
\end{split}
\end{align}
and
\begin{align}
\begin{split}
& AW_{1,1}(c) =\; \vd_0\hd_1(c) \\
& = - \bfR^X \ot 1 \ot \d_1 \ot XY^2 \ot 1 + \frac{2}{3}\bfR^X \ot 1 \ot {\d_1}^2 \ot Y^3 \ot 1 +\frac{1}{3}\bfR^Y \ot 1 \ot \d_1 \ot Y^3 \ot 1 \\
& - \frac{1}{4}\bfR^X \ot 1 \ot {\d_1}^2 \ot Y^2 \ot 1 - \frac{1}{2}\bfR^Y \ot 1 \ot \d_1 \ot Y^2 \ot 1.
\end{split}
\end{align}

As a result, we obtain the element
\begin{align}
\begin{split}
& c^{{\rm even}}_{{\rm diag}} = \one \ot 1 \ot 1 \ot X \ot Y - \one \ot 1 \ot 1 \ot Y \ot X - \bfR^X \ot 1 \ot 1 \ot XY \ot X \\
& - \bfR^X\ot 1 \ot 1 \ot Y \ot X^2 + \bfR^Y \ot 1 \ot 1 \ot XY \ot Y + \bfR^Y \ot 1 \ot 1 \ot X \ot Y^2 \\
& - \bfR^Y \ot 1 \ot 1 \ot Y \ot X - \bfR^X \ot 1 \ot \d_1 \ot XY^2 \ot 1 + \frac{2}{3}\bfR^X \ot 1 \ot {\d_1}^2 \ot Y^3 \ot 1 \\
& +\frac{1}{3}\bfR^Y \ot 1 \ot \d_1 \ot Y^3 \ot 1 - \frac{1}{4}\bfR^X \ot 1 \ot {\d_1}^2 \ot Y^2 \ot 1 - \frac{1}{2}\bfR^Y \ot 1 \ot \d_1 \ot Y^2 \ot 1.
\end{split}
\end{align}

In the second step, we use the map \eqref{aux-PSI-bicrossed} to obtain
\begin{align}\label{even-cocycle}
\begin{split}
& c^{{\rm even}}:= \Psi\big(c^{{\rm even}}_{{\rm diag}}\big) = \one \ot X \ot Y - \one \ot Y \ot X + \one \ot Y \ot \d_1Y - \bfR^X \ot XY \ot X \\
& - \bfR^X \ot Y^2 \ot \d_1X - \bfR^X \ot Y \ot X^2 + \bfR^Y \ot XY \ot Y + \bfR^Y \ot Y^2 \ot \d_1Y \\
& + \bfR^Y \ot X \ot Y^2 + \bfR^Y \ot Y \ot \d_1Y^2 - \bfR^Y \ot Y \ot X - \bfR^X \ot XY^2 \ot \d_1 \\
& - \frac{1}{3}\bfR^X \ot Y^3 \ot {\d_1}^2 + \frac{1}{3} \bfR^Y \ot Y^3 \ot \d_1 - \frac{1}{4} \bfR^X \ot Y^2 \ot {\d_1}^2 - \frac{1}{2} \bfR^Y \ot Y^2 \ot \d_1,\\
\end{split}
\end{align}
in  $ C^2(\Hc, V_\d)$.

\begin{proposition}
The element  $c^{\rm even}$ defined in \eqref{even-cocycle} is a Hochschild cocycle.
\end{proposition}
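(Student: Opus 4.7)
The natural strategy is to avoid a brute-force term-by-term expansion by exploiting the chain-map property of the two maps $AW$ and $\Psi_\acl$ used to construct $c^{\rm even}$. The plan is as follows.

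First, I would invoke the fact that the Alexander-Whitney map $AW$ of \eqref{AW} is, by the Eilenberg-Zilber theorem for bi-cocyclic modules \cite{GetzJone93,MoscRang07}, a map of chain complexes from the total Hochschild complex $(\Tot^\bullet(\Fc,\Uc,V_\d), b_T)$ to the diagonal Hochschild complex $(\FZ^{\bullet,\bullet}(\Hc,\Fc,V_\d), b_{\rm diag})$. In fact, each $AW_{p,q}$ is expressed purely as an iterated composition of horizontal and vertical cofaces, and the required commutation $b_{\rm diag}\circ AW = AW\circ b_T$ follows from the cosimplicial identities in the horizontal and vertical directions of the bicocyclic structure \eqref{bicocyclic-bicomplex}.

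Second, I would use the fact, recorded in \eqref{aux-PSI-bicrossed} and \eqref{aux-PSI-bicrossed-inverse}, that $\Psi_\acl$ is an isomorphism of cocyclic modules between the diagonal $\FZ^{\bullet,\bullet}(\Hc,\Fc,V_\d)$ and the standard Hopf-cyclic complex $C^\bullet(\Hc,V_\d)$. In particular $\Psi_\acl$ intertwines the respective Hochschild coboundaries. Combining these two facts with the previous proposition, which established $b_T(c+c'')=0$ in the total complex, one obtains
\begin{equation*}
b(c^{\rm even}) \;=\; b\bigl(\Psi_\acl(AW(c+c''))\bigr) \;=\; \Psi_\acl\bigl(AW(b_T(c+c''))\bigr) \;=\; 0,
\end{equation*}
which is the desired conclusion.

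The delicate point is checking that $AW$ does intertwine $b_T$ with $b_{\rm diag}$; this is a purely (co)simplicial statement about the diagonal of a bi-cocyclic module, so it is not specific to our coefficient module $V_\d$, but it has to be invoked correctly. If this conceptual route is deemed too sketchy, the only alternative is a direct verification that $b(c^{\rm even})=0$, carried out term by term as in the proof for $c^{\rm odd}$; this reduces to computing $\Delta$ on each of $X, Y, \delta_1, XY, Y^2, X^2, XY^2, Y^3, \delta_1 X, \delta_1 Y, \delta_1 Y^2$ and combining with the right $\Hc$-action on $\one, \bfR^X, \bfR^Y$ given in the previous subsection, which is straightforward but lengthy and will be the main obstacle in a self-contained write-up.
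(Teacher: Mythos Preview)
Your conceptual argument is correct, and it is genuinely different from the route the paper takes. The paper proves the proposition by direct expansion: it computes $b$ separately on each of the sixteen tensors appearing in $c^{\rm even}$, using the explicit comultiplications $\Delta(X)$, $\Delta(Y)$, $\Delta(\delta_1)$, $\Delta(XY)$, $\Delta(\delta_1X)$, $\Delta(\delta_1Y)$, $\Delta(X^2)$, $\Delta(Y^2)$, $\Delta(Y^3)$, $\Delta(\delta_1Y^2)$, $\Delta(XY^2)$, $\Delta(\delta_1^2)$ together with the $\Hc$-coaction on $\one,\bfR^X,\bfR^Y,\bfR^Z$, and then checks that the resulting elements of $V_\delta\ot\Hc^{\ot 2}\ot\Hc$ cancel in pairs. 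That computation is exactly the ``lengthy but straightforward'' alternative you mention at the end of your proposal.

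Your approach, transporting the cocycle property along $\Psi_\acl\circ AW$, buys a one-line proof once the chain-map properties are in hand; the paper's approach buys an explicit, self-contained sanity check on the formula \eqref{even-cocycle} with no appeal to the Eilenberg--Zilber machinery. One point to tighten in your write-up: for the full coefficient module $V_\delta$ the structure given by \eqref{horizontal-operators}--\eqref{vertical-operators} is a priori only \emph{cylindrical}, not bicocyclic (it becomes bicocyclic on the associated graded $F_jV_\delta/F_{j-1}V_\delta$, cf.\ the proof of Theorem~\ref{aux-63}). This does not affect your argument, because $AW$ intertwines the Hochschild coboundaries for any bi-cosimplicial object and the cylindrical axioms include commutation of the horizontal and vertical cosimplicial structures; but you should say ``bi-cosimplicial'' or ``cylindrical'' rather than ``bi-cocyclic'' when you justify the $b_T$--$b_{\rm diag}$ intertwining.
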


\begin{proof}
We first recall that
\begin{align}
\begin{split}
& b(\one \ot X \ot Y - \one \ot Y \ot X + \one \ot Y \ot \d_1Y) = \\
& -\bfR^X \ot X \ot Y \ot X - \bfR^Y \ot X \ot Y \ot Y + \bfR^X \ot Y \ot X \ot X \\
& + \bfR^Y \ot Y \ot X \ot Y - \bfR^X \ot Y \ot \d_1Y \ot X - \bfR^Y \ot Y \ot \d_1Y \ot Y.
\end{split}
\end{align}

Next we compute
\begin{align}
\begin{split}
& b(\bfR^X \ot XY \ot X) = - \bfR^X \ot X \ot Y \ot X - \bfR^X \ot Y \ot X \ot X \\
& - \bfR^X \ot Y^2 \ot \d_1 \ot X - \bfR^X \ot Y \ot \d_1Y \ot X + \bfR^X \ot XY \ot Y \ot \d_1, \\[.2cm]
\end{split}
\end{align}

\begin{align}
\begin{split}
& b(\bfR^X \ot Y^2 \ot \d_1X) = - 2\bfR^X \ot Y \ot Y \ot \d_1X + \bfR^X \ot Y^2 \ot \d_1 \ot X  \\
& + \bfR^X \ot Y^2 \ot X \ot \d_1 + \bfR^X \ot Y^2 \ot \d_1Y \ot \d_1 + \bfR^X \ot Y^2 \ot Y \ot {\d_1}^2, \\[.2cm]
\end{split}
\end{align}

\begin{align}
\begin{split}
& b(\bfR^X \ot Y \ot X^2) = 2 \bfR^X \ot Y \ot X \ot X + \bfR^X \ot Y \ot XY \ot \d_1 \\
& + \bfR^X \ot Y \ot Y \ot X\d_1 + \bfR^X \ot Y \ot YX \ot \d_1 \\
& + \bfR^X \ot Y \ot Y \ot \d_1X + \bfR^X \ot Y \ot Y^2 \ot {\d_1}^2, \\[.2cm]
\end{split}
\end{align}

\begin{align}
\begin{split}
& b(\bfR^Y \ot XY \ot Y) = - \bfR^Y \ot X \ot Y \ot Y - \bfR^Y \ot Y \ot X \ot Y  \\
& - \bfR^Y \ot Y^2 \ot \d_1 \ot Y - \bfR^Y \ot Y \ot \d_1Y \ot Y - \bfR^X \ot XY \ot Y \ot \d_1, \\[.2cm]
\end{split}
\end{align}

\begin{align}
\begin{split}
& b(\bfR^Y \ot Y^2 \ot \d_1Y) = -2 \bfR^Y \ot Y \ot Y \ot \d_1Y + \bfR^Y \ot Y^2 \ot \d_1 \ot Y \\
& + \bfR^Y \ot Y^2 \ot Y \ot \d_1 - \bfR^X \ot Y^2 \ot \d_1Y \ot \d_1,
\end{split}
\end{align}

as well as

\begin{align}
\begin{split}
& b(\bfR^Y \ot X \ot Y^2) = \\
& - \bfR^Y \ot Y \ot \d_1 \ot Y^2 + 2\bfR^Y \ot X \ot Y \ot Y - \bfR^X \ot X \ot Y^2 \ot \d_1, \\[.2cm]
\end{split}
\end{align}

\begin{align}
\begin{split}
& b(\bfR^Y \ot Y \ot \d_1Y^2) = \bfR^Y \ot Y \ot \d_1 \ot Y^2 + \bfR^Y \ot Y \ot Y^2 \ot \d_1  \\
& + 2\bfR^Y \ot Y \ot \d_1Y \ot Y + 2\bfR^Y \ot Y \ot Y \ot \d_1Y - \bfR^X \ot Y \ot \d_1Y^2 \ot \d_1, \\[.2cm]
\end{split}
\end{align}

\begin{align}
\begin{split}
& b(\bfR^Y \ot Y \ot X) = \bfR^Y \ot Y \ot Y \ot \d_1 - \bfR^X \ot Y \ot X \ot \d_1, \\[.2cm]
\end{split}
\end{align}

\begin{align}
\begin{split}
& b(\bfR^X \ot XY^2 \ot \d_1) = - \bfR^X \ot X \ot Y^2 \ot \d_1 - \bfR^X \ot Y^2 \ot X \ot \d_1 \\
& - 2 \bfR^X \ot XY \ot Y \ot \d_1 -2 \bfR^X \ot Y \ot XY \ot \d_1 \\
& - 2\bfR^X \ot Y^2 \ot \d_1Y \ot \d_1 - \bfR^X \ot Y^3 \ot \d_1 \ot \d_1 \\
& - \bfR^X \ot Y \ot \d_1Y^2 \ot \d_1, \\[.2cm]
\end{split}
\end{align}

\begin{align}
\begin{split}
& b(\bfR^X \ot Y^3 \ot {\d_1}^2) = \\
& -3 \bfR^X \ot Y^2 \ot Y \ot {\d_1}^2 - 3\bfR^X \ot Y \ot Y^2 \ot {\d_1}^2 + 2 \bfR^X \ot Y^3 \ot \d_1 \ot \d_1, \\[.2cm]
\end{split}
\end{align}

\begin{align}
\begin{split}
& b(\bfR^Y \ot Y^3 \ot \d_1) = \\
& -3 \bfR^Y \ot Y^2 \ot Y \ot \d_1 - 3 \bfR^Y \ot Y \ot Y^2 \ot \d_1 - \bfR^X \ot Y^3 \ot \d_1 \ot \d_1, \\[.2cm]
\end{split}
\end{align}

\begin{align}
\begin{split}
& b(\bfR^X \ot Y^2 \ot {\d_1}^2) = -2 \bfR^X \ot Y \ot Y \ot {\d_1}^2 + 2 \bfR^X \ot Y^2 \ot \d_1 \ot \d_1, \\[.2cm]
\end{split}
\end{align}

and finally

\begin{align}
\begin{split}
& b(\bfR^Y \ot Y^2 \ot \d_1) = -2 \bfR^Y \ot Y \ot Y \ot \d_1 - \bfR^X \ot Y^2 \ot \d_1 \ot \d_1.
\end{split}
\end{align}
Summing up, we get the result.
\end{proof}

\begin{proposition}
The Hochschild cocycle $c^{\rm even}$ defined in  \eqref{even-cocycle} vanishes under the  Connes boundary map.
\end{proposition}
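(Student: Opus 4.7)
The plan is to verify $B(c^{\rm even}) = 0$ by a direct computation parallel to the one performed for $c^{\rm odd}$. On the normalized cyclic complex the Connes boundary at degree $2$ takes the form
\begin{equation}
B \;=\; (\Id + \tau + \tau^2) \circ \sigma_{-1}\,:\,C^2(\Hc, V_\d) \longrightarrow C^1(\Hc, V_\d),
\end{equation}
with $\sigma_{-1}(v \ot h^1 \ot h^2) = v \cdot h^1\ps{1} \ot S(h^1\ps{2}) h^2$ the extra degeneracy, and with the cyclic operator on $C^1$ given by $\tau(v \ot h) = v\pr{0} \cdot h\ps{1} \ot S(h\ps{2})\, v\pr{-1}$ (using $\sigma = 1$ for $\Hc_{\rm 1S}\cop$, as established in the construction of the SAYD module $V_\d$).

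First, I would apply $\sigma_{-1}$ to each of the thirteen summands of $c^{\rm even}$. For the three $\one$-terms this is essentially the calculation already used to verify $\hB(c) = 0$ in the total-complex computation: one sees immediately that $\sigma_{-1}(\one \ot X \ot Y - \one \ot Y \ot X) = 0$ in the normalized complex, while $\sigma_{-1}(\one \ot Y \ot \d_1 Y)$ contributes a single manageable monomial. For the ten $\bfR^\bullet$-terms, the decisive simplification—identical to the one used for $\vB(c'') = 0$—is that $\d_1$ acts trivially on $F_0 V_\d = \Cb\langle \bfR^X, \bfR^Y, \bfR^Z\rangle$. This collapses all five $\d_1^k$-summands of $c^{\rm even}$ at once and prunes the remaining $\sigma_{-1}$-images to a short list.

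Next, I would apply $\tau$ and $\tau^2$ to the surviving elements, using the explicit coactions
\begin{equation*}
\Db(\one) = 1 \ot \one + X \ot \bfR^X + Y \ot \bfR^Y, \qquad \Db(\bfR^Y) = 1 \ot \bfR^Y + \d_1 \ot \bfR^X,
\end{equation*}
read off from the SAYD table, together with the antipodes $S(X) = -X$, $S(Y) = -Y$, and the action entries $\bfR^X \lt X = -\bfR^Y$, $\bfR^Y \lt X = -\bfR^Z$, etc. Because the only nontrivial commutator in ${g\ell(1)}^{\rm aff}$ is $[X,Y] = X$, the monomials produced by the three cyclic powers organize themselves into antisymmetric pairs that cancel term by term, mirroring the mechanism by which $\hB(c) + \vB(c'') = 0$ in the total complex.

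The principal obstacle is essentially bookkeeping: thirteen summands times three cyclic rotations yields on the order of forty $V_\d \ot \Hc$-monomials to track with correct signs and coefficients. Two structural safeguards will keep the computation honest. First, the triviality of the $\d_1$-action on $F_0 V_\d$ eliminates more than a third of the terms at the outset. Second, having already established $b(c^{\rm even}) = 0$ in the previous proposition, the relation $(b + B)^2 = 0$ applied to $\sigma_{-1}$-preimages provides a non-trivial compatibility identity that any correct enumeration must satisfy, serving as an internal verification of the final cancellation.
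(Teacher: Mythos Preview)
Your overall plan---compute $\sigma_{-1}$ and then symmetrize by the cyclic operator---matches the paper's approach. But your formula for $B$ is wrong: since $\sigma_{-1}$ lands in $C^1(\Hc,V_\d)$, the cyclic norm acting afterward has only two terms, $\Id \pm \tau$ (the paper uses $B = (\Id - \tau)\circ\sigma_{-1}$), not $\Id + \tau + \tau^2$. On $C^1$ one has $\tau^2 = \Id$, so your expression equals $2\Id + \tau$; applied to the nonzero $\sigma_{-1}$-image of the last five summands of $c^{\rm even}$ you would get $3\,\sigma_{-1}(c'') \neq 0$ rather than zero. This is not merely cosmetic---the computation as you have written it would fail.

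You also miss the organizing observation that carries the paper's argument. The paper splits $c^{\rm even}$ (sixteen summands, not thirteen) into the eleven-term block $c$ and the five-term block $c''$, and checks by direct term-by-term computation that $\sigma_{-1}(c)=0$ already, so no cyclic symmetrization is needed for that part. Only on $c''$ does one compute $\sigma_{-1}(c'')$ and then verify that $\tau$ fixes it. Your ``decisive simplification'' via triviality of the $\d_1$-action on $F_0V_\d$ does not apply where you claim: in $\sigma_{-1}(v\ot h^1\ot h^2)=v\cdot h^1\ps{1}\ot S(h^1\ps{2})h^2$ the element acting on $v$ is $h^1\ps{1}$, and in every summand of $c^{\rm even}$ the first tensor factor $h^1$ lies in $U(\Fg_1)$ with no $\d_1$ present---so that shortcut is vacuous at the $\sigma_{-1}$ stage. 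The actual cancellations in $\sigma_{-1}(c)=0$ come from the commutation relations $[Y,X]=X$ and the explicit $\Fg_1$-action on $\bfR^X,\bfR^Y,\bfR^Z$.
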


\begin{proof}
We will first prove that the extra degeneracy operator $\s_{-1}$ vanishes on
\begin{align}
\begin{split}
& c := \one \ot X \ot Y - \one \ot Y \ot X + \one \ot Y \ot \d_1Y - \bfR^X \ot XY \ot X \\
& - \bfR^X \ot Y^2 \ot \d_1X - \bfR^X \ot Y \ot X^2 + \bfR^Y \ot XY \ot Y + \bfR^Y \ot Y^2 \ot \d_1Y \\
& + \bfR^Y \ot X \ot Y^2 + \bfR^Y \ot Y \ot \d_1Y^2 - \bfR^Y \ot Y \ot X \in C^2(\Hc, V_\d).
\end{split}
\end{align}
We observe that
\begin{align}
\begin{split}
&\s_{-1}(\one \ot X \ot Y - \one \ot Y \ot X + \one \ot Y \ot \d_1Y) = 0,\\[.2cm]
&\s_{-1}(\bfR^X \ot XY \ot X) = \bfR^Y \ot XY + \bfR^X \ot X^2Y - \bfR^X \ot \d_1XY^2,\\[.2cm]
&\s_{-1}(\bfR^X \ot Y^2 \ot \d_1X) = \bfR^X \ot \d_1XY^2,\\[.2cm]
&\s_{-1}(\bfR^X \ot Y \ot X^2) = -\bfR^X \ot X^2Y,\\[.2cm]
&\s_{-1}(\bfR^Y \ot XY \ot Y) = \bfR^Z \ot Y^2 + \bfR^Y \ot XY^2 - \bfR^Y \ot \d_1Y^3,\\[.2cm]
&\s_{-1}(\bfR^Y \ot Y^2 \ot \d_1Y) = \bfR^Y \ot \d_1Y^3,\\[.2cm]
&\s_{-1}(\bfR^Y \ot X \ot Y^2) = -\bfR^Z \ot Y^2 - \bfR^Y \ot XY^2 + \bfR^Y \ot \d_1Y^3,\\[.2cm]
&\s_{-1}(\bfR^Y \ot Y \ot \d_1Y^2) = - \bfR^Y \ot \d_1Y^3,\\[.2cm]
&\s_{-1}(\bfR^Y \ot Y \ot X) = -\bfR^Y \ot XY.
\end{split}
\end{align}

As a result, we obtain $\s_{-1}(c) = 0$. In the second step, we prove that the Connes boundary map $B$ vanishes on
\begin{align}
\begin{split}
& c'' := - \bfR^X \ot XY^2 \ot \d_1 - \frac{1}{3}\bfR^X \ot Y^3 \ot {\d_1}^2 + \frac{1}{3} \bfR^Y \ot Y^3 \ot \d_1 \\
& - \frac{1}{4} \bfR^X \ot Y^2 \ot {\d_1}^2 - \frac{1}{2} \bfR^Y \ot Y^2 \ot \d_1 \in C^2(\Hc, V_\d).
\end{split}
\end{align}

Indeed, as in this case $B = (\Id - \tau) \circ \s_{-1}$, it suffices to observe that
\begin{align}
\begin{split}
&\s_{-1}(\bfR^X \ot XY^2 \ot \d_1)\\
& = - \bfR^Y \ot \d_1Y^2 - \bfR^X \ot \d_1XY^2 - \frac{1}{2}\bfR^X \ot
 {\d_1}^2Y^2 + \bfR^X \ot {\d_1}^2Y^3,\\[.2cm]
&\s_{-1}(\bfR^X \ot Y^3 \ot {\d_1}^2) = - \bfR^X \ot {\d_1}^2Y^3,\\[.2cm]
&\s_{-1}(\bfR^Y \ot Y^3 \ot \d_1) = - \bfR^Y \ot \d_1Y^3,\\[.2cm]
&\s_{-1}(\bfR^X \ot Y^2 \ot {\d_1}^2) = \bfR^X \ot {\d_1}^2Y^2,\\[.2cm]
&\s_{-1}(\bfR^Y \ot Y^2 \ot \d_1) = \bfR^Y \ot \d_1Y^2,\\[.2cm]
\end{split}
\end{align}
together with
\begin{align}
\begin{split}
&\tau(\bfR^Y \ot \d_1Y^2) = - \bfR^Y \ot \d_1Y^2 - \bfR^X \ot {\d_1}^2Y^2,\\[.2cm]
&\tau(\bfR^X \ot {\d_1}^2Y^2) = \bfR^X \ot {\d_1}^2Y^2,\\[.2cm]
&\tau(\bfR^Y \ot \d_1Y^3) = \bfR^Y \ot \d_1Y^3 + \bfR^X \ot {\d_1}^2Y^3,\\[.2cm]
&\tau(\bfR^X \ot {\d_1}^2Y) = - \bfR^Y \ot {\d_1}^2Y,\\[.2cm]
&\tau(\bfR^X \ot {\d_1}^2Y^3) = - \bfR^Y \ot {\d_1}^2Y^3,\\[.2cm]
&\tau(\bfR^X \ot \d_1XY^2) = \bfR^Y \ot \d_1Y^2 + \bfR^X \ot \d_1XY^2 + \frac{1}{2}\bfR^X \ot {\d_1}^2Y^2 - \bfR^X \ot {\d_1}^2Y^3.
\end{split}
\end{align}
\end{proof}

We summarize our results in the following theorem.

\begin{theorem}
The odd and even periodic Hopf cyclic cohomologies of the Schwarzian Hopf algebra $\Hc_{\rm 1S}\cop$ with coefficients in the 4-dimensional SAYD module $V_\d = S({s\ell(2,\Cb)}^\ast)\nsb{1}$ are given by
\begin{equation}
HP^{\rm odd}(\Hc_{\rm 1S}\cop,V_\d) = \Cb \Big\langle \one \ot \delta_1 + \bfR^X \ot \delta_1X + \bfR^Y \ot (X+ \delta_1Y) + 2 \bfR^Z \ot Y \Big\rangle,
\end{equation}
and
\begin{align}
\begin{split}
& HP^{\rm even}(\Hc_{\rm 1S}\cop,V_\d) = \Cb \Big\langle \one \ot X \ot Y - \one \ot Y \ot X + \one \ot Y \ot \d_1Y \\
&  - \bfR^X \ot XY \ot X - \bfR^X \ot Y^2 \ot \d_1X  - \bfR^X \ot Y \ot X^2 + \bfR^Y \ot XY \ot Y \\
& + \bfR^Y \ot Y^2 \ot \d_1Y + \bfR^Y \ot X \ot Y^2 + \bfR^Y \ot Y \ot \d_1Y^2 - \bfR^Y \ot Y \ot X \\
& - \bfR^X \ot XY^2 \ot \d_1- \frac{1}{3}\bfR^X \ot Y^3 \ot {\d_1}^2 + \frac{1}{3} \bfR^Y \ot Y^3 \ot \d_1 \\
& - \frac{1}{4} \bfR^X \ot Y^2 \ot {\d_1}^2 - \frac{1}{2} \bfR^Y \ot Y^2 \ot \d_1 \Big\rangle.
\end{split}
\end{align}
\end{theorem}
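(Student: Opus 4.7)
The plan is to assemble the theorem from three ingredients that have already been developed: the weight-reduction principle (Corollary \ref{corollary-weight-1}), the spectral-sequence dimension count attached to the filtration \eqref{aux-filtration-on-V-d} on $V_\d$, and the explicit cocycles $c^{\rm odd}\in C^1(\Hc,V_\d)$ and $c^{\rm even}\in C^2(\Hc,V_\d)$ constructed above. First I would invoke Corollary \ref{corollary-weight-1} to restrict to the weight-$1$ subcomplex $\FZ[1]$, where the entire periodic cyclic cohomology is concentrated; this is crucial because the filtration on $V$ interacts with the weight in a way that kills $F_0V$ entirely on the $E_1$-page.

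Next I would run the spectral-sequence argument already sketched after Corollary \ref{corollary-weight-1}: since $F_0V_\d\cong F_0V$ is an irreducible $3$-dimensional $s\ell(2)$-module, the coalgebra Hochschild complex $C(\Uc\cl\Fc,F_0V_\d)$ has vanishing cohomology by Whitehead's first and second lemmas, while $F_1V_\d/F_0V_\d\cong\Cb_\d$. Thus $E_1^{0,i}=0$, $E_1^{j,i}=0$ for $j\geq 2$, and $E_1^{1,i}\cong H^i(C(\Uc\cl\Fc,\Cb_\d))$, which by \cite{MoscRang07} is one-dimensional in each parity. The spectral sequence therefore collapses at $E_2$ and yields $\dim HP^{\rm odd}(\Hc_{\rm 1S}\cop,V_\d)=\dim HP^{\rm even}(\Hc_{\rm 1S}\cop,V_\d)=1$.

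It then suffices to prove that the two explicit classes $[c^{\rm odd}]$ and $[c^{\rm even}]$ constructed above are nonzero in $HP^\bullet(\Hc_{\rm 1S}\cop,V_\d)$. The cocycle property of each was verified term by term in the four preceding propositions (both $b$ and $B$ vanish), so the only remaining point is non-triviality. For this I would trace the construction backwards: by the Alexander–Whitney recipe \eqref{AW} and the quasi-isomorphism $\Psi$ of \eqref{aux-PSI-bicrossed}, the image of $c^{\rm odd}$ in $E_1^{1,{\rm odd}}$ is (modulo $F_0V_\d$-terms, which lie in the lower filtration and vanish on $E_1$) the class of $\one\otimes\delta_1\in C^1(\Uc\cl\Fc,\Cb_\d)$, and similarly $c^{\rm even}$ reduces on $E_1^{1,{\rm even}}$ to $\one\otimes X\otimes Y-\one\otimes Y\otimes X+\one\otimes Y\otimes\delta_1 Y$. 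Both of these are known from \cite{MoscRang07} to generate $HP^\bullet(\Hc_{\rm 1S}\cop,\Cb_\d)$ in their respective parities, so the identification $E_2^{1,i}\cong HP^i(\Hc_{\rm 1S}\cop,V_\d)$ forces $[c^{\rm odd}]$ and $[c^{\rm even}]$ to generate.

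The main obstacle is bookkeeping: one must be confident that the higher-weight "correction terms" introduced when lifting from the $E_1$-representative to an honest Hochschild–Connes cocycle (this is the content of the $c''$, $c'''$ steps in the constructions) genuinely descend to zero on the associated graded, so that the leading term in $F_1/F_0$ is exactly the Moscovici–Rangipour generator. Concretely, this amounts to checking that every tensor factor of $c^{\rm odd}$ and $c^{\rm even}$ other than the $\one\otimes\delta_1$ and $\one\otimes X\otimes Y-\one\otimes Y\otimes X+\one\otimes Y\otimes\delta_1 Y$ summands has its $V_\d$-slot in $F_0V_\d$, an inspection that is immediate from \eqref{aux-filtration-on-V-d}. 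Once this is in place, the theorem follows.
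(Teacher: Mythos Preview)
Your proposal is correct and follows essentially the same approach as the paper: the theorem there is a summary statement whose proof is exactly the assembly you describe---the dimension count via the filtration spectral sequence (the proposition immediately preceding the two construction subsections), the verification that $c^{\rm odd}$ and $c^{\rm even}$ are $(b,B)$-cocycles in the four subsequent propositions, and the observation that on $E_1^{1,\bullet}$ these cocycles reduce modulo $F_0V_\d$ to the known generators of $HP^\bullet(\Hc_{\rm 1S}\cop,\Cb_\d)$ from \cite{MoscRang07}. Your closing paragraph about checking that the correction terms lie in $F_0V_\d$ is exactly the inspection the paper relies on (implicitly) when it says the classes are ``approximately'' given by the $E_1$-representatives.
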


\begin{remark}{\rm
We note that we have also verified that
\begin{equation}
HP(\Hc_{\rm 1S}\cop,V_\d) \cong \widetilde{HP}(s\ell(2),V)
\end{equation}
which in turn is a consequence of Theorem \ref{aux-63}.
}\end{remark}

\subsection{Hopf-cyclic cohomology of the projective Hopf algebra}

In this subsection we identify the periodic cyclic cohomology of the projective Hopf algebra in terms of the cohomology of a (perturbed) Weil complex. More explicitly,
\begin{equation}
HP(\Hc_{n\rm Proj},\,\Cb_\d) = H(W(g\ell(n),V_{n\rm Proj})),
\end{equation}
for some (unimodular) SAYD module $V_{n\rm Proj}$ over $g\ell(n)$. Here $(1,\d)$ is the canonical MPI associated to the Hopf algebra $\Hc_{n\rm Proj}$ via Theorem \ref{theorem-MPI}. In view of the fact that the projective Hopf algebra $\Hc_{n\rm Proj}$ is a quotient of the Connes-Moscovici Hopf algebra $\Hc_n$, this result is an analogue of
\begin{equation}
HP(\Hc_n,\,\Cb_\d) = H_{\rm GF}(\Fa_n) = H(W(g\ell(n),V_n), \qquad V_n := S(g\ell(n)^\ast)\nsb{n},
\end{equation}
where $\Fa_n$ is the Lie algebra of formal vector fields on $\Rb^n$.

\medskip

As the Hopf algebra $\Hc_{n\rm Proj}$ is constructed from the Lie algebra decomposition $pg\ell(n) = g\ell(n)^{\rm aff} \oplus \Rb^n$, we know that $\Hc_{n\rm Proj} = R(\Rb^n)\acl U(g\ell(n)^{\rm aff})$. Considering the Levi decomposition of the abelian Lie algebra $\Rb^n$,
\begin{equation}
\Rb^n = 0 \ltimes \Rb^n,
\end{equation}
by Theorem \ref{aux-63} we have
\begin{equation}
HP(\Hc_{n\rm Proj},\,\Cb_\d) = \widetilde{HP}(pg\ell(n),\Cb) = H(W(pg\ell(n),\Cb)),
\end{equation}
where $\Cb$ is the trivial (unimodular) SAYD module over $pg\ell(n)$.

\medskip

Let
\begin{equation}
\left\{\t^i,\t^j_k,\t_l\,|\,1\leq i,j,k,l\leq n\right\}
\end{equation}
be a dual basis of $pg\ell(n)$. Regarding $\t^i$'s and $\t_l$'s as coefficients, we define
\begin{equation}
V_{n\rm Proj} = \left\{(\t^{i_1}\wdots\t^{i_r})\wg(\t_{l_1}\wdots\t_{l_s}) \,|\, r,s\geq0 \right\}.
\end{equation}
The space $V_{n\rm Proj}$ is a $g\ell(n)$-module under the coadjoint action. Explicitly,
\begin{equation}
\one \cdot Y_p^q = 0, \quad \t^i \cdot Y_p^q = \d^i_p\t^q, \quad \t_l \cdot Y_p^q = -\d^q_l\t^p.
\end{equation}
Now, for the sake of avoiding confusion, let us denote the dual basis elements of $g\ell(n)$ by ${\widehat{Y}}^p_q$ while defining the action
\begin{equation}
\t^i \lt {\widehat{Y}}^p_q = 0= \t_l \lt {\widehat{Y}}^p_q, \quad \one \lt {\widehat{Y}}^p_q = -\left(\t^p\wg\t_q + \sum_a\d^p_q\t^a\wg\t_a\right)
\end{equation}
of the abelian Lie algebra $g\ell(n)^\ast$. It is fairly straightforward to observe that this is indeed an action.

\begin{lemma}
The space $V_{n\rm Proj}$ is a right-left AYD module over $g\ell(n)$.
\end{lemma}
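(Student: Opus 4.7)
The plan is to convert the given left $g\ell(n)^\ast$-action into a left $g\ell(n)$-coaction and then verify the AYD compatibility generator by generator, whereupon the only substantive calculation takes place on $\one$. First I would invoke Proposition~\ref{proposition-mod-comod} to extract from the data the coaction $\Db_{g\ell(n)}\colon V_{n\rm Proj}\to g\ell(n)\ot V_{n\rm Proj}$ characterized by $v\lt\widehat{Y}^p_q=\widehat{Y}^p_q(v\nsb{-1})\,v\nsb{0}$. Reading off the prescribed $\lt$-action yields $\Db_{g\ell(n)}(\t^i)=0$, $\Db_{g\ell(n)}(\t_l)=0$, and
\[
\Db_{g\ell(n)}(\one)=-\sum_{p,q}Y^p_q\ot\Bigl(\t^p\wg\t_q+\d^p_q\sum_a\t^a\wg\t_a\Bigr).
\]
The Lie-coaction compatibility $v\nsb{-2}\wg v\nsb{-1}\ot v\nsb{0}=0$ is immediate on the generators: on $\t^i,\t_l$ (and on any wedge of such) the coaction vanishes outright, and on $\one$ a second application of $\Db_{g\ell(n)}$ already lands in the coinvariant subspace.

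For the right-left AYD identity $\Db_{g\ell(n)}(v\cdot X)=v\nsb{-1}\ot v\nsb{0}\cdot X+[v\nsb{-1},X]\ot v\nsb{0}$, the cases $v=\t^i$ and $v=\t_l$ are trivial, since both sides reduce to zero: the coaction side because $\Db_{g\ell(n)}(v)=0$, and $\Db_{g\ell(n)}(v\cdot X)=0$ because $v\cdot X$ is again a scalar multiple of an element of the coinvariant subspace. Precisely the same argument applies to any wedge basis element $\t^{i_1}\wdots\t^{i_r}\wg\t_{l_1}\wdots\t_{l_s}$, because the coadjoint action preserves the coinvariant subspace, being a derivation on wedges. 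Hence the entire verification collapses to a single identity on $\one$.

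For this identity the left-hand side is $\Db_{g\ell(n)}(\one\cdot Y^p_q)=\Db_{g\ell(n)}(0)=0$. On the right I would expand the bracket term using $[Y^r_s,Y^p_q]=\d^p_sY^r_q-\d^r_qY^p_s$ and the coaction-times-action term using the coadjoint action extended as a derivation, then collect the coefficient of each basis element $Y^a_b$. The point is that the $Y^r_q$- and $Y^p_s$-coefficients produced by the bracket match (with opposite sign) those produced by the action of $Y^p_q$ on $\t^r\wg\t_s$, while the $Y^p_q$- and $Y^r_r$-coefficients coming from the trace correction $\d^p_q\sum_a\t^a\wg\t_a$ are exactly those needed to absorb the diagonal contributions arising from $\d^p_s$ and $\d^r_q$. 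Once this cancellation is confirmed, the AYD property is established on $\one$, and the earlier paragraph propagates it to all of $V_{n\rm Proj}$; alternatively, one can finish by appealing to the monoidality of the category of right-left AYD modules over a Lie algebra.

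The hard part will be this final index bookkeeping on $\one$: one has to keep careful track of the mixed upper/lower-index nature of the basis $\{\t^i,\t_l\}$, the Leibniz behaviour of the coadjoint action on wedges, and, most subtly, the exact normalisation of the trace term $\d^p_q\sum_a\t^a\wg\t_a$ in $\Db_{g\ell(n)}(\one)$. It is this correction, rather than the naive coadjoint contribution, that forces the AYD identity to close; any miscounting would leave over spurious monomials of the form $Y^r_r\ot\t^q\wg\t_p$, which is the failure mode one must rule out.
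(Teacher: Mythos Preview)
Your approach is correct and essentially parallel to the paper's, but you and the paper work on opposite sides of the equivalence in Proposition~\ref{23}. You pass to the coaction via Proposition~\ref{proposition-mod-comod} and verify the AYD identity
\[
\Db_{g\ell(n)}(v\cdot Y^p_q)=v\nsb{-1}\ot v\nsb{0}\cdot Y^p_q+[v\nsb{-1},Y^p_q]\ot v\nsb{0}
\]
directly; the paper instead stays in the action picture and checks that $V_{n\rm Proj}$ is a $\widetilde{g\ell(n)}=g\ell(n)^\ast\rtimes g\ell(n)$-module, i.e.\ that
\[
v\cdot[\widehat{Y}^p_q,Y^s_r]=\sum_{k,l}\widehat{Y}^p_q([Y^s_r,Y^l_k])\,v\lt\widehat{Y}^k_l.
\]
Both routes reduce trivially on $\t^i,\t_l$ and land on a single index computation at $v=\one$; the paper's version has the minor practical advantage that each side is a single element of $V_{n\rm Proj}$ rather than an element of $g\ell(n)\ot V_{n\rm Proj}$, so the bookkeeping you flag as ``the hard part'' is somewhat lighter (no need to collect coefficients of each $Y^a_b$ separately). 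Either way the trace correction $\d^p_q\sum_a\t^a\wg\t_a$ is precisely what makes the identity close, as you anticipated.
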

\begin{proof}
In view of Proposition \ref{23} we will prove that $V_{n\rm Proj}$ is a module over $\widetilde{g\ell(n)} = g\ell(n)^\ast \rtimes g\ell(n)$. To this end, we need to check that
\begin{equation}
v \cdot [{\widehat{Y}}^a,Y_b] = \sum_c C^a_{bc} v \lt{\widehat{Y}}^c
\end{equation}
for
\begin{equation}
a = \left(\begin{array}{c}
                                                          p \\
                                                          q
                                                        \end{array}
\right), \quad b= \left(\begin{array}{c}
                  r \\
                  s
                \end{array}
\right), \quad c = \left(\begin{array}{c}
               k \\
               l
             \end{array}
\right).
\end{equation}
Since we can write
\begin{equation}
C^a_{bc} = {\widehat{Y}}^a([Y_b,Y_c]),
\end{equation}
we observe that we need to check that
\begin{equation}\label{aux-AYD-check}
v \cdot [{\widehat{Y}}^p_q,Y_r^s] = \sum_{k,l} {\widehat{Y}}^p_q([Y_r^s,Y_k^l]) v \lt{\widehat{Y}}^k_l.
\end{equation}
In case $v = \t^i$ or $v = \t_l$, both the left and the right hand sides of \eqref{aux-AYD-check} are zero.  Hence it will suffice to consider $v = \one$, in which case the left hand side is
\begin{align}
\begin{split}
& \one \cdot [{\widehat{Y}}^p_q,Y_r^s] = \left(\one \lt {\widehat{Y}}^p_q\right) \cdot Y_r^s - \left(\one \cdot Y_r^s\right) \lt {\widehat{Y}}^p_q \\
& = \left(\one \lt {\widehat{Y}}^p_q\right) \cdot Y_r^s \\
& = -\left(\t^p\wg\t_q + \sum_a\d^p_q\t^a\wg\t_a\right) \cdot Y_r^s \\
& = - \d^p_r\t^s\wg\t_q + \d^s_q\t^p\wg\t_r,
\end{split}
\end{align}
and the right hand side is
\begin{align}
\begin{split}
& \sum_{k,l} {\widehat{Y}}^p_q([Y_r^s,Y_k^l]) \one \cdot{\widehat{Y}}^k_l = \sum_{k,l} {\widehat{Y}}^p_q(\d^s_kY_r^l - \d^l_rY_k^s) \one \lt{\widehat{Y}}^k_l \\
& = \one \lt \left(\d^p_r{\widehat{Y}}^s_q - \d^s_q{\widehat{Y}}^p_r\right) \\
& = -\left(\d^p_r \t^s\wg\t_q + \sum_a \d^p_r\d^s_q\t^a\wg\t_a\right) + \left(\d^s_q \t^p\wg\t_r + \sum_a \d^s_q\d^p_r\t^a\wg\t_a\right)\\
& = - \d^p_r\t^s\wg\t_q + \d^s_q\t^p\wg\t_r.
\end{split}
\end{align}
\end{proof}

\begin{lemma}
The space $V_{n\rm Proj}$ is (unimodular) stable over $g\ell(n)$.
\end{lemma}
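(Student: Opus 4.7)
The plan is to verify the unimodular stability condition
$$\sum_{p,q}(v\cdot Y_p^q)\lt\widehat{Y}^p_q = 0$$
on a spanning set of $V_{n\rm Proj}$. A key structural observation is that the left $g\ell(n)^\ast$-action (equivalently, the $g\ell(n)$-coaction) is concentrated on $\one$: the generators $\t^i$ and $\t_l$ both satisfy $\t^i\lt \widehat{Y}^p_q = 0 = \t_l\lt\widehat{Y}^p_q$, and since the right $g\ell(n)$-action extends to the exterior algebra as a derivation, $v\cdot Y_p^q$ stays in the same wedge degree as $v$. So the checking reduces to the three types of generator together with a degree-of-image argument for higher wedges.

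First, for $v = \one$ the claim is immediate from $\one\cdot Y_p^q = 0$. For $v = \t^i$, the formula $\t^i\cdot Y_p^q = \d^i_p\t^q$ collapses the sum to $\sum_q \t^q\lt\widehat{Y}^i_q$, which vanishes term by term because $\t^q\lt\widehat{Y}^i_q = 0$. The case $v = \t_l$ is symmetric, using $\t_l\cdot Y_p^q = -\d^q_l\t_p$ (with $\t_p$ of lower index, as is necessary for consistency with the AYD verification in the preceding lemma, where the analogous computation $(\sum_a\t^a\wg\t_a)\cdot Y_p^q = 0$ was implicitly used).

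For a higher-degree monomial $v = \t^{i_1}\wdots\t^{i_r}\wg\t_{l_1}\wdots\t_{l_s}$ with $r+s\geq 2$, the derivation property keeps $v\cdot Y_p^q$ in the degree-$(r+s)$ component generated by $\{\t^i,\t_l\}$. The natural extension of the coaction to these higher wedges is $\Db\equiv 0$ in positive degree; this extension is trivially consistent with the AYD condition on any such $v$ (both sides of the AYD identity reduce to zero, since $v\cdot Y_p^q$ again lies in positive degree), and it leaves the AYD check for $\one$ done in the preceding lemma untouched because $\Db(\one)$ has image in degree $2$, on which the trivial extension is zero. With this extension in hand, $\lt\widehat{Y}^p_q$ annihilates every $v\cdot Y_p^q$ for $r+s\ge 2$, and the sum vanishes.

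The only mildly delicate point in the argument is this extension of the coaction to higher wedges and the verification of its compatibility with the AYD structure; but as noted, both checks are tautological given the degree-$2$ image of $\Db(\one)$. Hence the case-by-case index manipulations above complete the proof.
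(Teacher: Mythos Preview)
Your proof is correct, but it proceeds differently from the paper's. You verify the unimodular stability condition $\sum_{p,q}(v\cdot Y_p^q)\lt\widehat{Y}^p_q=0$ by applying the $g\ell(n)$-action first; this makes the case $v=\one$ trivial (since $\one\cdot Y_p^q=0$) and reduces the generator cases $\t^i,\t_l$ to the vanishing of the dual action on them. The paper instead applies the dual action first, so that $\t^i$ and $\t_l$ are the trivial cases, while $v=\one$ becomes the substantive computation: one has to expand $(\t^k\wedge\t_l+\sum_a\delta^k_l\t^a\wedge\t_a)\cdot Y_k^l$ and check that the resulting terms cancel after summing over $k,l$. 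Your route avoids this index manipulation entirely. You are also more explicit than the paper about the extension of the coaction to higher wedge degrees and its compatibility with the AYD structure; the paper checks only the generators $\one,\t^i,\t_l$ and leaves the higher-degree behaviour implicit.
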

\begin{proof}
In view of \eqref{aux-unimodular-stable} we need to observe that
\begin{equation}
v \cdot \left(\sum_{k,l} {\widehat{Y}}^k_l \rtimes Y_k^l\right) = 0, \qquad \forall v \in V_{n\rm Proj}.
\end{equation}
For $v = \t^i$ or $v = \t_l$ this is immediate. For $v = \one$ we have
\begin{align}
\begin{split}
& \one \cdot \left(\sum_{k,l} {\widehat{Y}}^k_l \rtimes Y_k^l\right) = - \sum_{k,l} \left(\t^k\wg\t_l + \sum_a \d^k_l\t^a\wg\t_a\right) \cdot Y_k^l \\
& -\sum_{k,l}\left(\t^l\wg\t_l - \t^k\wg\t_k\right) =0.
\end{split}
\end{align}
\end{proof}

Let us fix the notation
\begin{align}
\begin{split}
& \t^J_K := \t^{j_1}_{k_1}\wdots\t^{j_s}_{k_s}, \qquad s \geq 0 \\
& \t^I:=\t^{i_1}\wdots\t^{i_t}, \qquad t \geq 0 \\
& \t_L:=\t_{l_1}\wdots\t_{l_r}, \qquad r \geq 0
\end{split}
\end{align}
for use in the next lemma.

\begin{lemma}
The map
\begin{equation}
id:W(pg\ell(n,\Rb),\Cb) \to W(g\ell(n,\Rb),V_{n\rm Proj}), \qquad \t^J_K\t^I\t_L \mapsto \t^J_K\ot\t^I\t_L
\end{equation}
commutes with the differentials.
\end{lemma}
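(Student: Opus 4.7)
The plan is to view both sides as differential graded algebras on the common underlying graded algebra $\wedge^\bullet pg\ell(n)^\ast$. The stated map is visibly an isomorphism of graded vector spaces, and in fact of graded algebras once we identify $W(g\ell(n),V_{n\rm Proj}) \cong \wedge^\bullet g\ell(n)^\ast \otimes \wedge^\bullet\left\langle \theta^i,\theta_l\right\rangle$ with $\wedge^\bullet pg\ell(n)^\ast$. The first step is therefore to observe that both $d^{pg\ell}$ and $d_{CE}^{g\ell}+d_K$ are graded derivations on this common algebra, so that it suffices to check commutation on the three classes of generators $\theta^j_k$, $\theta^s$, $\theta_r$. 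For $d^{pg\ell}$ this is standard. For $d_{CE}^{g\ell}+d_K$, the piece $d_{CE}^{g\ell}$ is a derivation because the $g\ell(n)$-action on $V_{n\rm Proj}$ extends from generators by the coadjoint Leibniz rule, while $d_K$ is a derivation because the $g\ell(n)$-coaction on $V_{n\rm Proj}$ extends from generators as a coderivation of the wedge product.

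For the generator $\theta^s$ (dual to $X_s\in\Fm$), the only brackets producing $X_s$ are $[Y^q_p,X_s]=\delta^q_s X_p$, so I would compute $d^{pg\ell}(\theta^s)=-\sum_a \theta^s_a\wg\theta^a$. On the target, $d_K(1\otimes\theta^s)=0$ since the coaction on $\theta^s$ is trivial, while $d_{CE}^{g\ell}(1\otimes\theta^s)=-\sum_{p,q}\theta^p_q\otimes(\theta^s\cdot Y^q_p)=-\sum_a\theta^s_a\otimes\theta^a$, matching under the identification. The computation for $\theta_r$ is analogous, using $[X^r,Y^q_p]=\delta^r_p X^q$ on the source and $\theta_l\cdot Y^q_p$ on the target.

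The substantive case is $\theta^j_k$. Here the source differential has two kinds of contributions: the internal $g\ell(n)$ brackets $[Y^q_p,Y^s_r]$ give $-\sum_a\theta^j_a\wg\theta^a_k$, while the mixed brackets $[X_s,X^r]=Y^r_s+\delta^r_s\sum_a Y^a_a$ land in $g\ell(n)$ and yield $-\theta^j\wg\theta_k-\delta^j_k\sum_a\theta^a\wg\theta_a$. On the target, $d_{CE}^{g\ell}(\theta^j_k\otimes\one)=-\sum_a\theta^j_a\wg\theta^a_k\otimes\one$ (the action part vanishes since $\one\cdot Y^q_p=0$), so the mixed-bracket contribution has to emerge entirely from $d_K$. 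Reading the $g\ell(n)$-coaction off the $g\ell(n)^\ast$-action on $\one$ via Proposition \ref{proposition-mod-comod}, we have $\Db_{g\ell}(\one)=-\sum_{p,q}Y^q_p\otimes(\theta^p\wg\theta_q+\delta^p_q\sum_a\theta^a\wg\theta_a)$, whence
\[
d_K(\theta^j_k\otimes\one)= -\sum_{p,q}\iota(Y^q_p)(\theta^j_k)\otimes(\theta^p\wg\theta_q+\delta^p_q\textstyle\sum_a\theta^a\wg\theta_a) = -(\theta^j\wg\theta_k+\delta^j_k\textstyle\sum_a\theta^a\wg\theta_a),
\]
exactly filling the gap.

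The main obstacle is not any single verification but the uniformization in the second paragraph: I have to be precise about extending the coaction from generators to all of $V_{n\rm Proj}$ as a coderivation of the wedge product, and verify that the resulting $d_K$ really is a graded derivation of the tensor product algebra. Granted this, the three generator checks pin down the differentials, and the derivation property propagates to all products $\theta^J_K\theta^I\theta_L$, so the map intertwines the two differentials.
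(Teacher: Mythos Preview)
Your generator checks for $\theta^s$, $\theta_r$, and $\theta^j_k$ are correct and match the paper's computations. The gap is in the ``uniformization'' step you flag at the end, and it is fatal as stated: the $g\ell(n)$-coaction on $V_{n\rm Proj}$ is \emph{not} a coderivation of the wedge product, so $d_K$ is not a graded derivation of the full algebra $\wedge^\bullet g\ell(n)^\ast \otimes V_{n\rm Proj}$. The coaction is defined to be nonzero only on the unit $\one$, and any coderivation must vanish on the unit. Concretely, $d_K(\theta^j_k \otimes \theta^i)=0$ (trivial coaction on $\theta^i$), whereas the Leibniz expansion gives $d_K(\theta^j_k\otimes\one)\cdot(1\otimes\theta^i) = -\,1\otimes(\theta^j\wedge\theta_k+\delta^j_k\sum_a\theta^a\wedge\theta_a)\wedge\theta^i \neq 0$. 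So your reduction to generators does not propagate to arbitrary products.

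The paper sidesteps this by splitting into two cases. When $t+r>0$ (i.e.\ the coefficient in $V_{n\rm Proj}$ has positive degree), the coaction vanishes, hence $d_K\circ id=0$, and one checks directly that $id\circ d^{pg\ell}_{\rm CE}=d^{g\ell}_{\rm CE}\circ id$ by writing out the Leibniz expansion and regrouping the $\theta^a_b$-terms as the coadjoint action on $\theta^I\theta_L$. When $t+r=0$, the element is $\theta^J_K\otimes\one$, and here $d_K$ \emph{does} behave multiplicatively: $d_K(\alpha\otimes\one)=\iota(\one_{[-1]})\alpha\otimes\one_{[0]}$, and since $\iota$ is a derivation on $\wedge^\bullet g\ell(n)^\ast$ while $\one_{[0]}\cdot\one=\one_{[0]}$, the Leibniz rule holds on products of the $\theta^j_k\otimes\one$. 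Your generator check for $\theta^j_k$ then finishes this case. In short, your framework works once you replace the global derivation claim by this two-case analysis.
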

\begin{proof}
Let $v = \left(\t^{j_1}_{k_1}\wdots\t^{j_s}_{k_s}\right)\wg\left(\t^{i_1}\wdots\t^{i_t}\right)\wg\left(\t_{l_1}\wdots\t_{l_r}\right)$ such that $s \geq 0$ and $t+r>0$. Then for $v \in W(pg\ell(n),\Cb)$ we have
\begin{align}
\begin{split}
& \left(d_{\rm CE}+d_{\rm K}\right)(v) = d_{\rm CE}(v) \\
& = d_{\rm CE}\left(\t^{j_1}_{k_1}\wdots\t^{j_s}_{k_s}\right)\wg\left(\t^{i_1}\wdots\t^{i_t}\right)\wg\left(\t_{l_1}\wdots\t_{l_r}\right) \\
& + (-1)^s\left(\t^{j_1}_{k_1}\wdots\t^{j_s}_{k_s}\right)\wg d_{\rm CE}\left(\t^{i_1}\wdots\t^{i_t}\right)\wg\left(\t_{l_1}\wdots\t_{l_r}\right) \\
&+(-1)^{s+r}\left(\t^{j_1}_{k_1}\wdots\t^{j_s}_{k_s}\right)\wg\left(\t^{i_1}\wdots\t^{i_t}\right)\wg d_{\rm CE}\left(\t_{l_1}\wdots\t_{l_r}\right) \\
& = d_{\rm CE}\left(\t^{j_1}_{k_1}\wdots\t^{j_s}_{k_s}\right)\wg\left(\t^{i_1}\wdots\t^{i_t}\right)\wg\left(\t_{l_1}\wdots\t_{l_r}\right) \\
& -\sum_{a,b}\t^a_b\left(\t^{j_1}_{k_1}\wdots\t^{j_s}_{k_s}\right)\wg\left[\left(\t^{i_1}\wdots\t^{i_t}\right)\wg\left(\t_{l_1}\wdots\t_{l_r}\right)\right]\cdot Y_a^b
\end{split}
\end{align}
On the other hand, since
\begin{equation}
\left(d_{\rm CE}+d_{\rm K}\right)(id(v)) = d_{\rm CE}(id(v))
\end{equation}
in case of $t+r >0$, we immediately conclude that
\begin{equation}
\left(d_{\rm CE}+d_{\rm K}\right)\circ id = id \circ \left(d_{\rm CE}+d_{\rm K}\right), \qquad \text{for}\,\,t+r>0.
\end{equation}
For $t=r=0$ we consider $w = \t^{j_1}_{k_1}\wdots\t^{j_s}_{k_s}$, $s \geq 0$. First, for $\t^j_k \in W(pg\ell(n),\Cb)$,
\begin{equation}
d_{\rm CE}\left(\t^j_k\right) =\left(d_{\rm CE}+d_{\rm K}\right)\left(\t^j_k\right)= -\sum_a \t^j_a\wg\t^a_k - \t^j\wg\t_k - \sum_a \d^j_k\t^a\wg\t_a.
\end{equation}
Now since
\begin{equation}
\left(d_{\rm CE}\right)\circ id \left(\t^j_k\right) = -\sum_a \t^j_a\wg\t^a_k
\end{equation}
and
\begin{align}
\begin{split}
& \left(d_{\rm K}\right)\circ id \left(\t^j_k\right) = \sum_{p,q} \iota(Y_p^q)\left(\t^j_k\right) \ot \one\lt {\widehat{Y}}^p_q \\
& - \t^j\wg\t_k - \sum_a \d^j_k\t^a\wg\t_a,
\end{split}
\end{align}
we have
\begin{align}
\xymatrix {
 \ar[d]_{d_{\rm CE}+d_{\rm K}} \t^j_k  \ar[r]^{id} &  \t^j_k   \ar[d]^{d_{\rm CE}+d_{\rm K}} \\
 -\sum_a \t^j_a\wg\t^a_k - \t^j\wg\t_k - \sum_a \d^j_k\t^a\wg\t_a   \ar[r]^{\,\,\,\,\,\,\,\,\,\,\,\,\,\,\,\,\,\,\,\,\,\,\,\,\,\,id} &  d_{\rm CE}\left(\t^j_k\right) + d_{\rm K}\left(\t^j_k\right).
}
\end{align}
Finally for the element $w \in W(pg\ell(n),\Cb)$, the commutativity of the above diagram follows from the multiplicativity of the differentials $d_{\rm CE}$ and $d_{\rm K}$.
\end{proof}

\chapter{Future Research}\label{chapter-future-research}


A foliation on a smooth manifold $M$ is an involutive subbundle $L \subseteq TM$ of the tangent bundle $TM$. Then the normal bundle $Q$ of the foliation is given by the short exact sequence
\begin{equation}
\xymatrix{
0 \ar[r] & L \ar[r] & TM \ar[r]^p & Q\ar[r]&0
}\end{equation}
of bundles. The codimension $n$ of the foliation is the dimension of the normal bundle $Q$.

\medskip

The normal bundle $Q$ of a foliation is canonically equipped with a partial connection, called the Bott connection, written
\begin{equation}\label{aux-Bott-connection}
\nb_Xs = p[X,Y]
\end{equation}
for any local section $s \in \G(M,Q)$, local vector field $X \in \chi(L)$ and a local vector field $Y \in \chi(M)$ such that $p(Y) = s$. The partial connection is flat if
\begin{equation}
\nb_X\nb_Y - \nb_Y\nb_X - \nb_{[X,Y]} = 0.
\end{equation}
Then it is possible to carry the discussion to the level of principal bundles. A principal bundle $P \to M$ with structure group $G$ is foliated if $P$ is equipped with a flat partial connection \cite{KambTond71,Moli71}. For instance, if $L = TM$, then this definition of foliated bundle reduces to that of a flat bundle. If, on the other hand $L = 0$, then we have the ordinary principal $G$-bundle.

\medskip

Next, an adapted connection on the normal bundle $Q$ is a covariant derivative that reduces to \eqref{aux-Bott-connection} on $\chi(L)$. From the principal bundle point of view, an adapted connection on $Q$ corresponds to a connection 1-form on the frame bundle $F(Q)$. Let $L \subseteq TM$ be a $G$-foliation and $\om$ be the corresponding connection 1-form on the $G$-reduction $P$ of $F(Q)$. Then an adapted connection on $Q$ is called basic if
\begin{equation}
L_{\widetilde{X}}\om = 0
\end{equation}
for any partial horizontal lift to $P$ of $X \in \chi(L)$.

\medskip

Finally, a codimension $n$ foliation $L \subseteq TM$ of a manifold $M$ is called a $G$-foliation if there exists a $G$ reduction $P$ of the frame bundle $F(Q)$, \ie $F(Q) = P \times_G {\rm GL}(n)$, such that the canonical foliated bundle structure of $F(Q)$ arises from a foliated bundle structure of $P$. See \cite[Proposition 1.8]{KambTond78} or \cite{Duch-thesis} for other equivalent characterizations.

\medskip

It is known that submersions form simple examples of foliations. More explicitly, a submersion endows the domain manifold with a foliation whose leaves are the connected components of the inverse images of the points in the target manifold. If the manifold on the range is equipped with a $G$-structure \cite{Cher66}, then such a foliation is an example of a $G$-foliation. Other examples involve (oriented) Riemannian foliations, volume preserving foliations, conformal foliations, spin foliations and almost complex foliations \cite{KambTond78}.

\medskip

The characteristic classes of $G$-foliations are defined by making use of the notion of adapted connection. Let $L \subseteq TM$ be a $G$-foliation of codimension $n$, and $\om$ be the connection 1-form on the $G$-reduction $P$ corresponding to an adapted connection on $Q$. Then the Weil homomorphism defines a $G$-DG-homomorphism
\begin{equation}
k(\om):W(\Fg) \to \Om(P)
\end{equation}
from the Weil algebra of the Lie algebra $\Fg$ of $G$ to the de Rham complex of $P$. This homomorphism vanishes on the differential ideal $I:=S^{n+1}(\Fg^\ast)$ \cite[Corollary 4.27]{KambTond-book}. As a result, the Weil homomorphism induces
\begin{equation}
k(\om):W(\Fg)\nsb{n} \to \Om(P), \qquad W(\Fg)\nsb{n}:= W(\Fg)/I,
\end{equation}
and hence a characteristic homomorphism
\begin{equation}
k_\ast:H(W(\Fg)\nsb{n}) \to H_{\rm dR}(P).
\end{equation}
Moreover, in case of the normal bundle of the foliation to be trivialized, we have
\begin{equation}\label{aux-characteristic-homomorphism}
k_\ast:H(W(\Fg)\nsb{n}) \to H_{\rm dR}(M).
\end{equation}
With this at hand, we seek a noncommutative analogue of this construction. In other words, we want to replace the de Rham cohomology of the manifold $M$ with the cyclic cohomology of the algebra $\Ac$ and the characteristic homomorphism \eqref{aux-characteristic-homomorphism} with a Hopf-cyclic characteristic homomorphism.

\medskip

In view of the isomorphisms
\begin{equation}
H(W(g\ell(n))\nsb{n}) \cong H_{\rm GF}(\Fa_n) \cong HP(\Hc_n),
\end{equation}
between the $n$-truncated Weil algebra of $g\ell(n)$, the Gelfand-Fuks cohomology $\Fa_n$, the Lie algebra of formal vector fields, and the periodic cyclic cohomology of $\Hc_n$, the Connes-Moscovici Hopf algebra, a noncommutative characteristic homomorphism is given by Connes and Moscovici in \cite{ConnMosc98}:
\begin{align}\label{aux-Connes-Moscovici-characteristic-map}
\begin{split}
& \chi_\tau:C^n_{\Hc_1}(\Hc_1) \to C^n(\Ac), \\
& \chi_\tau(h^1 \odots h^n)(a_0 \odots a_n) := \tau(a_0h^1(a_1)\ldots h^n(a_n)),
\end{split}
\end{align}
where $\Ac = C^\infty_c(F(\Rb^n)) \rtimes \Diff(\Rb^n)$. Using this characteristic homomorphism, the characteristic classes of transversely orientable foliations of codimension 1 are realized in the cyclic cohomology of the algebra $\Ac$, see Connes and Moscovici \cite{ConnMosc}. The classes are the transverse fundamental class $[{\rm TF}] \in HC^2(\Ac)$,
\begin{equation}
{\rm TF}(f^0U^\ast_{\psi_0},f^1U^\ast_{\psi_1},f^2U^\ast_{\psi_2}) = \left\{\begin{array}{cc}
                                                                        \int_{F\Rb}f^0\widetilde{\psi_0}^\ast(df^1)\widetilde{\psi_0}^\ast\widetilde{\psi_1}^\ast(df^2), & \psi_2\psi_1\psi_0 = \Id \\
                                                                        0, & \psi_2\psi_1\psi_0 \neq \Id,
                                                                      \end{array}\right.
\end{equation}
and the Godbillon-Vey class $[{\rm GV}] \in HC^1(\Ac)$,
\begin{equation}
{\rm GV}(f^0U^\ast_{\psi_0},f^1U^\ast_{\psi_1}) = \left\{\begin{array}{cc}
                                                                        \int_{F\Rb}f^0\widetilde{\psi_0}^\ast(f^1)y\frac{\psi_0''(x)}{\psi_0'(x)}\frac{dx\wg dy}{y^2}, & \psi_1\psi_0 = \Id \\
                                                                        0, & \psi_1\psi_0 \neq \Id.
                                                                      \end{array}\right.
\end{equation}

We start with the truncated Weil complex $W(\Fg)\nsb{n}$ viewed as a Hopf-cyclic complex. Indeed, we have
\begin{equation}
W(\Fg)\nsb{n} = W(\Fg,V_n), \qquad V_n = S(\Fg^\ast)\nsb{n},
\end{equation}
where $V_n$ is a (unimodular) SAYD module over the Lie algebra $\Fg$. Moreover, by the Poncar\'e isomorphism between the perturbed Koszul complex $W(\Fg,V_n)$ and the complex $C(\Fg,V_n)$ of \eqref{aux-complex-cyclic-Lie-algebra}, as well as Theorem \ref{g-U(g) spectral sequence}, we can see the truncated Weil complex as a Hopf-cyclic complex.

\medskip

Once this is understood, we can then look for an algebra $\Ac$ on which $\Fg$ acts as derivations. Hence
the cup product construction \cite{Rang08}
\begin{align}\label{aux-cup-product-Rangipour}
\begin{split}
& \cup:HC^p(U(\Fg),V_n) \ot HC^q_{U(\Fg)}(\Ac,V_n) \to HC^{p+q}(\Ac) \\
& (x \cup \vp)(a_0 \odots a_{p+q}) := \\
& \sum_{\s \in Sh(p,q)} (-1)^\s \p_{\wbar{\s}(p)} \ldots \p_{\wbar{\s}(1)}\vp(\langle \p_{\wbar{\s}(p+q)} \ldots \p_{\wbar{\s}(p+1)} x, a_0 \odots a_{p+q} \rangle).
\end{split}
\end{align}
produces a characteristic homomorphism
\begin{equation}
\xymatrix{
H(W(\Fg,V)) \ar[r]^{\FD_P} & H(C(\Fg,V)) \ar[r]^\cong& HC(U(\Fg),V) \ar[r]^{\;\;\;\;\;\chi_\vp} & HC(\Ac),
}
\end{equation}
as a composition of the Poincar\'e isomorphism, the isomorphism given by Theorem \ref{g-U(g) spectral sequence}, and
\begin{align}
\begin{split}
\chi_\vp: HC^p(U(\Fg),V) \to HC^{p+q}(\Ac), \qquad [\vp] \in HC^q_{U(\Fg)}(\Ac,V_n).
\end{split}
\end{align}

We illustrate this construction in codimension 1. Letting $\Ac = C^\infty_c(F(\Rb))\rtimes\Diff(\Rb)$, $\Uc = U(g\ell(1))$ and $V_1 = S(g\ell(1)^\ast)\nsb{1}$, a 2-dimensional SAYD module over the Hopf algebra $\Uc$, we define
\begin{equation}
\vp :V_1 \ot \Ac^{\ot\;2} \to \Cb, \quad \vp((\a\one + \b\t) \ot a_0 \ot a_1) = \a\tau(a_0X(a_1)) - \b\tau(a_0\d_1(a_1)).
\end{equation}
It is then straightforward to check that $[\vp] \in HC^1_\Uc(\Ac,V_1)$ is a nontrivial $\Uc$-equivariant cyclic class.

\medskip

On the other hand, the cohomology $H(W(g\ell(1))\nsb{1})$ of the truncated Weil complex is represented by the cocycles $[\one] \in H^0(W(g\ell(1))\nsb{1})$ and $[R \ot Y] \in H^1(W(g\ell(1))\nsb{1})$. By Poincar\'e duality, the cohomology $H(C(g\ell(1), V_1))$ is generated by the cocycles $[\one \ot Y]\in H^1(C(g\ell(1), V_1))$ and $[R]\in H^0(C(g\ell(1), V_1))$. Moreover, by Theorem \ref{g-U(g) spectral sequence} we can find the cocycle representatives of the cohomology $HC(\Uc,V_1)$. They are $[R] \in HC^0(\Uc,V_1)$ and $[\one \ot Y + \frac{1}{2}R \ot Y^2] \in HC^1(\Uc,V_1)$.

\medskip

Next we use the cup product construction \eqref{aux-cup-product-Rangipour}. Cupping with the cyclic 1-cocycle $\vp \in HC^1_\Uc(\Ac,V_1)$ yields a characteristic map
\begin{equation}
\chi_\vp:C_\Uc^p(\Uc,V) \to C^{p+1}(\Ac), \qquad p = 0,1.
\end{equation}
Then we have
\begin{equation}
[\chi_\vp(\one \ot Y + \frac{1}{2}\t \ot Y^2)] = [{\rm TF}],
\end{equation}
and
\begin{equation}
[\chi_\vp(\t)] = [{\rm GV}].
\end{equation}
That is, we obtain the transverse fundamental class and the Godbillon-Vey class.

\medskip

In future, we would like to investigate a conceptual understanding on the equivariant cyclic cohomology $HC_\Uc(A,V)$ of an algebra $A$ with coefficients. Recovering the results of Connes and Moscovici in \cite{ConnMosc} by obtaining the fundamental class and the Godbillon-Vey class, proves that our method results in a computationally powerful noncommutative  characteristic homomorphism.

\bibliographystyle{amsplain}
\bibliography{Rangipour-Sutlu-References}{}

\end{document}